\def\A{\ensuremath{\mathbb{A}}}
\def\C{\ensuremath{\mathbb{C}}}
\def\D{\ensuremath{\mathbb{D}}}
\def\G{\ensuremath{\mathbb{G}}}
\def\H{\ensuremath{\mathbb{H}}}
\def\P{\ensuremath{\mathbb{P}}}
\def\Q{\ensuremath{\mathbb{Q}}}
\def\R{\ensuremath{\mathbb{R}}}
\def\Z{\ensuremath{\mathbb{Z}}}
\def\cA{\ensuremath{\mathcal A}}
\def\AA{\ensuremath{\mathcal A}}
\def\cB{\ensuremath{\mathcal B}}
\def\cC{\ensuremath{\mathcal C}}
\def\CC{\ensuremath{\mathcal C}}
\def\cD{\ensuremath{\mathcal D}}
\def\cE{\ensuremath{\mathcal E}}
\def\cF{\ensuremath{\mathcal F}}
\def\cG{\ensuremath{\mathcal G}}
\def\cL{\ensuremath{\mathcal L}}
\def\cM{\ensuremath{\mathcal M}}
\def\cN{\ensuremath{\mathcal N}}
\def\cO{\ensuremath{\mathcal O}}
\def\cP{\ensuremath{\mathcal P}}
\def\cQ{\ensuremath{\mathcal Q}}
\def\cT{\ensuremath{\mathcal T}}
\def\cW{\ensuremath{\mathcal W}}
\def\cX{\ensuremath{\mathcal X}}
\def\cY{\ensuremath{\mathcal Y}}
\def\cZ{\ensuremath{\mathcal Z}}
\def\uu{\ensuremath{\mathbf u}}
\def\vv{\ensuremath{\mathbf v}}
\def\ww{\ensuremath{\mathbf w}}
\def\bA{\ensuremath{\mathbf A}}
\def\bZ{\ensuremath{\mathbf Z}}
\def\llambda{\ensuremath{\boldsymbol{\lambda}}} 
\def\eeta{\ensuremath{\boldsymbol{\eta}}} 
\def\fM{\mathcal M}
\def\fP{\mathfrak P}
\def\ii{\mathfrak i}
\def\sI{\ensuremath{\mathscr I}}
\def\rH{\ensuremath{\mathrm H}}
\def\ov{\ensuremath{\overline{v}}}
\def\us{\underline{\sigma}}
\def\uvs{\underline{\varsigma}}
\def\utau{\underline{\tau}}
\def\ut{\underline{\tau}}
\def\tH{\ensuremath{\widetilde{H}}}
\def\tF{\ensuremath{\widetilde{F}}}
\def\tM{\ensuremath{\widetilde{M}}}
\def\Tr{\mathrm{Tr}}
\def\Art{\mathop{{\mathrm{Art}}}\nolimits}
\def\Br{\mathop{\mathrm{Br}}\nolimits}
\def\ch{\mathop{\mathrm{ch}}\nolimits}
\def\Coh{\mathop{\mathrm{Coh}}\nolimits}
\def\cone{\mathop{\mathrm{cone}}\nolimits}
\def\cok{\mathop{\mathrm{Coker}}\nolimits}
\def\Db{\mathop{\mathrm{D}^{\mathrm{b}}}\nolimits}
\def\Dperf{\mathop{\mathrm{D}_{\mathrm{perf}}}\nolimits}
\def\perf{\mathop{{\mathrm{perf}}}\nolimits}
\def\Dqc{\mathop{\mathrm{D}_{\mathrm{qc}}}\nolimits}
\def\qc{\mathop{{\mathrm{qc}}}\nolimits}
\def\dim{\mathop{\mathrm{dim}}\nolimits}
\def\Dqc{\mathop{\mathrm{D}_{\mathrm{qc}}}\nolimits}
\def\Ext{\mathop{\mathrm{Ext}}\nolimits}
\def\Fix{\mathop{\mathrm{Fix}}}
\def\GL{\mathop{\mathrm{GL}}\nolimits}
\def\Hdg{\mathrm{Hdg}}
\def\Hom{\mathop{\mathrm{Hom}}\nolimits}
\def\lHom{\mathop{\mathcal Hom}\nolimits}
\def\RHom{\mathop{\mathrm{RHom}}\nolimits}
\def\id{\mathop{\mathrm{id}}\nolimits}
\def\im{\mathop{\mathrm{Im}}\nolimits}
\def\Ker{\mathop{\mathrm{Ker}}\nolimits}
\def\mod{\mathop{\mathrm{mod}}\nolimits}
\def\min{\mathop{\mathrm{min}}\nolimits}
\def\num{\mathop{\mathrm{num}}\nolimits}
\def\Qcoh{\mathop{\mathrm{Qcoh}}\nolimits}
\def\rk{\mathop{\mathrm{rk}}}
\def\Sch{\mathop{{\mathrm{Sch}}}}
\def\Spec{\mathop{\mathrm{Spec}}}
\def\Tor{\mathop{\mathrm{Tor}}\nolimits}
\def\Tr{\mathop{\mathrm{Tr}}\nolimits}
\def\Stab{\mathop{\mathrm{Stab}}\nolimits}
\def\Tot{\mathop{\mathrm{Tot}}\nolimits}
\def\len{\mathop{\mathrm{length}}\nolimits}
\def\st{\mathrm{st}}
\def\Stor{{\ensuremath{S\text{-}\mathrm{tor}}}}
\def\Stf{{\ensuremath{S\text{-}\mathrm{tf}}}}
\def\Ctor{{\ensuremath{C\text{-}\mathrm{tor}}}}
\def\Ctf{{\ensuremath{C\text{-}\mathrm{tf}}}}
\def\Ku{\operatorname{Ku}}
\def\uPP{\ensuremath{\widetilde{\mathbb{P}}}}
\def\uXX{\ensuremath{\widetilde{\cX}}}
\def\oLambda{\ensuremath{\overline{\Lambda}}}
\def\Knum{K_{\mathrm{num}}}
\def\Ku{\mathcal{K}u}
\def\ZK{\ensuremath{Z_K}}
\def\Zc{\ensuremath{Z_{\Ctor}}}
\newcommand{\set}[1]{\left\{#1\right\}}
\newcommand{\gen}[1]{\left\langle#1\right\rangle}
\newcommand{\sth}{\;\vline\;} 
\newcommand{\res}[2]{#1_{#2}} 
\def\norm#1{\left\|#1\right\|}
\def\blank{\underline{\hphantom{A}}}
\def\into{\ensuremath{\hookrightarrow}}
\def\onto{\ensuremath{\twoheadrightarrow}}
\def\wGL2{\ensuremath{\widetilde{\mathrm{GL}_2^+(\R)}}}
\newtheorem{Thm}{Theorem}[section]
\newtheorem{Prop}[Thm]{Proposition}
\newtheorem{PropDef}[Thm]{Proposition and Definition}
\newtheorem{Lem}[Thm]{Lemma}
\newtheorem{Cor}[Thm]{Corollary}
\newtheorem*{Ques*}{Question}
\theoremstyle{definition}
\newtheorem{Def}[Thm]{Definition}
\newtheorem{Rem}[Thm]{Remark}
\newtheorem{Ex}[Thm]{Example}
\newtheorem{Setup}{Setup}[part]
\newtheoremstyle{italicsname}
 {3pt}
 {3pt}
 {\itshape}
 {}
 {\itshape}
 {.}
 {.5em}
 {\thmname{#1}\thmnumber{\@ifnotempty{#1}{ }#2}%
 \thmnote{ {\the\thm@notefont(#3)}}}
\theoremstyle{italicsname}
\newenvironment{step}[1]
 {\innerstep}
 {\endinnerstep}
\newenvironment{claim}[1]
 {\innerclaim}
 {\endinnerclaim}
\newcommand{\xrightarrowdbl}[2][]{%
 \xrightarrow[#1]{#2}\mathrel{\mkern-14mu}\rightarrow
}
\newcommand{\cMpug}{{\cM}_{\rm pug}} 
\newcommand{\scM}{{s\cM}}
\newcommand{\CH}{\mathrm{CH}}
\numberwithin{equation}{section}
\newcommand{\mylabel}[2]{#2\def\@currentlabel{#2}\label{#1}}
\def\Ann{\mathrm{Ann}}
\newcommand{\Cat}{\mathrm{Cat}}
\newcommand{\PrCat}{\mathrm{PrCat}}
\newcommand{\QCoh}{\mathrm{QCoh}}
\newcommand{\rb}{\mathrm{b}}
\newcommand{\pr}{\mathrm{pr}}
\def\qc{\mathop{{\mathrm{qc}}}\nolimits}
\newcommand{\hf}{\mathrm{hf}}
\newcommand{\Dpc}{\mathrm{D}_{\rm pc}}
\newcommand{\Ind}{\mathrm{Ind}}
\newcommand{\pc}{\mathrm{pc}}
\def\fq{\mathfrak q}
\def\frm{\mathfrak m}
\newcommand{\cHom}{\mathcal{H}\!{\it om}}
\newcommand{\rS}{\mathrm{S}}
\newcommand{\rL}{\mathrm{L}}
\def\abs#1{\left\lvert#1\right\rvert}
\def\citestacks#1{\cite[\href{https://stacks.math.columbia.edu/tag/#1}{Tag #1}]{stacks-project}}
\def\Rtf{{\ensuremath{R\text{-}\mathrm{tf}}}}
\newcommand{\rD}{\mathrm{D}}
\newcommand{\Dpug}{\mathrm{D}_{\rm pug}}
\newcommand{\pug}{\mathrm{pug}}
\newcommand\uHom{\underline{\mathrm{Hom}}}
\newcommand{\Sets}{\mathrm{Sets}}
\newcommand{\Gpds}{\mathrm{Gpds}}
\newcommand{\op}{\mathrm{op}}
\newcommand{\Quot}{\mathrm{Quot}}
\newcommand{\rR}{\mathrm{R}}
\DeclareMathOperator{\colim}{\mathrm{colim}}
\newcommand\stv[2]{\left\{#1\,\colon\,#2\right\}}
\begin{document}

\title[Stability conditions in families]{Stability conditions in families}

\author[A.~Bayer, M.~Lahoz, E.~Macr\`i, H.~Nuer, A.~Perry, P.~Stellari]{Arend Bayer, Mart\'i Lahoz, Emanuele Macr\`i, Howard Nuer,\\ Alexander Perry, Paolo Stellari}

\address{A.B.: School of Mathematics and Maxwell Institute,
University of Edinburgh,
James Clerk Maxwell Building,
Peter Guthrie Tait Road, Edinburgh, EH9 3FD,
United Kingdom}
\email{arend.bayer@ed.ac.uk}
\urladdr{\url{http://www.maths.ed.ac.uk/~abayer/}}

\address{M.L.: Universit\'{e} Paris Diderot -- Paris~7, B\^{a}timent Sophie Germain, Case 7012, 75205 Paris Cedex 13, France}
\email{marti.lahoz@ub.edu}
\urladdr{\url{http://www.ub.edu/geomap/lahoz/}}
\curraddr{Departament de Matem\`atiques i Inform\`atica,
Universitat de Barcelona, Gran Via de les Corts Catalanes, 585, 08007 Barcelona, Spain}

\address{E.M.: Department of Mathematics, Northeastern University, 360 Huntington Avenue, Boston, MA 02115, USA}
\email{emanuele.macri@universite-paris-saclay.fr}
\urladdr{\url{https://www.imo.universite-paris-saclay.fr/~macri/}}
\curraddr{Universit\'e Paris-Saclay, CNRS, Laboratoire de Math\'ematiques d'Orsay, Rue Michel Magat, B\^at. 307, 91405 Orsay, France}

\address{H.N.: Department of Mathematics, Northeastern University, 360 Huntington Avenue, Boston, MA 02115, USA}
\email{hnuer@technion.ac.il}
\urladdr{\url{https://sites.google.com/site/howardnuermath/home}}
\curraddr{Department of Mathematics, Technion, Israel Institute of Technology, Amado 914, Haifa 32000, Israel}

\address{A.P.: Department of Mathematics, Columbia University, 2990 Broadway, New York, NY 10027, USA}
\email{arper@umich.edu}
\urladdr{\url{http://www-personal.umich.edu/~arper/}}
\curraddr{Department of Mathematics, University of Michigan, 530 Church Street, Ann Arbor, MI
48109, USA}

\address{P.S.: Dipartimento di Matematica ``F.~Enriques'', Universit{\`a} degli Studi di Milano, Via Cesare Saldini 50, 20133 Milano, Italy}
\email{paolo.stellari@unimi.it}
\urladdr{\url{https://sites.unimi.it/stellari}}

\keywords{Bridgeland stability conditions, base change for semiorthogonal decompositions, base change for t-structures, relative moduli spaces, non-commutative varieties, K3 surfaces, cubic fourfolds}
\subjclass[2010]{14A22, 14D20, 14F05, 14J28, 14M20, 14N35, 18E30}
\thanks{
A.~B.~was supported by the ERC Starting Grant ERC-2013-StG-337039-WallXBirGeom, by the ERC Consolidator Grant ERC-2018-CoG-819864-WallCrossAG, and by the NSF Grant DMS-1440140 while the author was in residence at the MSRI in Berkeley, during the Spring 2019.
M.~L.~was supported by a Ram\'on y Cajal fellowship and partially by the Spanish MINECO research project PID2019-104047GB-I00.
E.~M.~was partially supported by the NSF grant DMS-1700751, by a Poincar\'e Chair from the Institut Henri Poincar\'e and the Clay Mathematics Institute, by the Institut des Hautes \'Etudes Scientifiques (IH\'ES), by a Poste Rouge CNRS at Universit\'e Paris-Sud, by the ERC Synergy Grant ERC-2020-SyG-854361-HyperK, and by the National Group for Algebraic and Geometric Structures, and their Applications (GNSAGA-INdAM).
H.~N.~was partially supported by the NSF postdoctoral fellowship DMS-1606283, by the NSF RTG grant DMS-1246844, and by the NSF FRG grant DMS-1664215.
A.~P.~was partially supported by the NSF postdoctoral fellowship DMS-1606460 and the NSF grant DMS-2002709.
P.~S.~was partially supported by the ERC Consolidator Grant ERC-2017-CoG-771507-StabCondEn, by the research project PRIN 2017 ``Moduli and Lie Theory'', and by research project FARE 2018 HighCaSt (grant number R18YA3ESPJ)}

\begin{abstract}
We develop a theory of Bridgeland stability conditions and moduli spaces of semistable objects for a family of varieties.
Our approach is based on and generalizes previous work by Abramovich--Polishchuk, Kuznetsov, Lieblich, and Piyaratne--Toda.
Our notion includes openness of stability, semistable reduction, a support property uniformly across the family, and boundedness of semistable objects.
We show that such a structure exists whenever stability conditions are known to exist on the fibers.

Our main application is the generalization of Mukai's theory for moduli spaces of semistable sheaves on K3 surfaces to moduli spaces of Bridgeland semistable objects in the Kuznetsov component associated to a cubic fourfold.
This leads to the extension of theorems by Addington--Thomas and Huybrechts on the derived category of special cubic fourfolds, to a new proof of the integral Hodge conjecture, and to the construction of an infinite series of unirational locally complete families of polarized hyperk\"ahler manifolds of K3 type.

Other applications include the deformation-invariance of Donaldson--Thomas invariants counting Bridgeland stable objects on Calabi--Yau threefolds, and a method for constructing stability conditions on threefolds via degeneration.
\end{abstract}

\maketitle
\setcounter{tocdepth}{1}
\tableofcontents


\section{Introduction}\label{sec:intro}

Stability conditions on triangulated categories were introduced by Bridgeland in \cite{Bridgeland:Stab} and further studied by Kontsevich and Soibelman in \cite{Kontsevich-Soibelman:stability};
originally based on work by Douglas \cite{Douglas:stability} in string theory, they have found many applications to algebraic geometry via wall-crossing.
In this paper we develop the corresponding theory for derived categories in a family of varieties over a base.
Our definition is guided by the requirement that it should come with a notion of \emph{relative moduli spaces}, but also be flexible enough to allow deformations.

\subsection*{Stability conditions over a base}

Let $X\to S$ be a flat family of projective varieties over some base scheme $S$.
Consider a collection of stability conditions $\sigma_s$ on the derived categories $\Db(X_s)$ of the fibers.
When does this form a well-behaved family $\us$ of stability conditions?
Our proposed answer is the main content of this article.
It can be paraphrased as follows;
see Definitions~\ref{def:familyfiberstabilities} and~\ref{def:fiberwisesupport} for the precise formulation.

\begin{Def} \label{MainDef:Intro}
	A collection of numerical stability conditions 
\[ \us = \left(\sigma_s = (Z_s, \cP_s)\right)_{s \in S}
\]
on the fibers is a \emph{stability condition on $\Db(X)$ over $S$} if it satisfies
 the following conditions: 
 \begin{enumerate}[{\rm (1)}]
 	\item \label{enum:introZlocallyconstant}
 	The central charge is locally constant in families of objects.
 	\item \label{enum:introstableisopen}
 	Geometric stability is open in families of objects.
 	\item \label{enum:introHNstructure}
 	After base change $C \to S$ to any Dedekind scheme $C$,
 	the stability conditions $\sigma_c$ on the fibers over $C$ are induced by a \emph{HN structure} on $\Db(X_C)$.
 	\item \label{enum:introsupport}
 	Each stability condition $\sigma_s$ satisfies the support property, an inequality on classes of semistable objects, in a form that is uniform across all fibers.
 	\item \label{enum:introboundedness}
 	The set of semistable objects in the fibers of $X \to S$ satisfies 
 	a boundedness condition.
 \end{enumerate} 
\end{Def}

Our precise setup is quite general, see \hyperref[MainSetup]{Main Setup}; 
for instance $S$ could be defined over a non-algebraically closed field, or it could be a scheme of mixed characteristic.
In the definition, we consider \emph{all} points of $S$, closed or non-closed.
We work with numerical stability conditions, as they allow base change under field extensions, see Theorem~\ref{thm:base-change-stability-condition}.
This gives a stability condition $\sigma_t$ on $X_t$ for any point $t$ over $S$.

For a base change $T \to S$, the notion of a family of objects over $T$ appearing in~\eqref{enum:introZlocallyconstant} and~\eqref{enum:introstableisopen} is formalized by Lieblich's notion of a relatively perfect object $E \in \Db(X_T)$ \cite{Lieblich:mother-of-all};
the fibers $E_t$ of such an object lie in $\Db(X_t)$.

Condition \eqref{enum:introstableisopen} is the glue tying together the stability conditions on different fibers;
for example, we will see in Proposition~\ref{prop:ProductStabilityCondition} that in the case where $X$ is a product $X_0 \times S$, it forces the collection of stability conditions on $X_0$ parametrized by $s \in S$ to be constant.

The concept of a Harder--Narasimhan (HN) structure in \eqref{enum:introHNstructure} will be introduced in Section~\ref{sec:defnHNoveraCurve} of this paper (see Definition~\ref{def:HNstructure_C} for a precise definition) and studied for the remainder of Part~\ref{part:HNStrCurve}.
The condition \eqref{enum:introHNstructure} requires that the hearts $\cA_c$ of the stability conditions $\sigma_c$ ``integrate'' to a global heart $\cA_C$;
in the product case, such an $\cA_C$ has been constructed by Abramovich--Polishchuk \cite{AP:t-structures,Polishchuk:families-of-t-structures}.
Moreover, every object in $\Db(X_C)$ is required to have a global HN filtration;
this combines the classical notion of generic HN filtrations with the existence of semistable reductions, and will thus imply the valuative criterion of properness for relative moduli spaces.

Let us elaborate on conditions \eqref{enum:introsupport} and \eqref{enum:introboundedness}.
We fix a finite rank free abelian group $\Lambda$, and a morphism $v_s \colon \Knum(\Db(X_s)) \to \Lambda$ for every $s \in S$, such that $v_t(E_t)$ is locally constant in families.
Then $\us$ is a stability condition with respect to $\Lambda$ if each $Z_s$ factors as $Z \circ v_s$ for some fixed $Z \in \Hom(\Lambda, \C)$.
In \eqref{enum:introsupport}, we require that there is a quadratic form $Q$ on $\Lambda_\R$ such that 	\begin{enumerate}
 		\item the kernel $(\ker Z)\subset\Lambda$ is negative definite with respect to $Q$, and
 		\item for every $s \in S$ and for every $\sigma_s$-semistable object
 		$E \in \cD_s$, we have $Q(v_s(E)) \geqslant 0$.
 	\end{enumerate} 
Our boundedness condition \eqref{enum:introboundedness} says that given $\vv \in \Lambda$, there is a finite type family parametrizing the union for all $s \in S$ of objects $E \in \Db(X_s)$ that are $\sigma_s$-semistable with $v_s(E) = \vv$.

Our first main result is a version of Bridgeland's Deformation Theorem.

\begin{Thm}[Theorem~\ref{thm:deformfamiliystability}]\label{MainThm:IntroDeformation}
	The space $\Stab_\Lambda(\Db(X)/S)$ of stability conditions on $\Db(X)$ over $S$ with respect to $\Lambda$ is a complex manifold, and the forgetful map 
	\[
	\cZ \colon \Stab(\Db(X)/S) \to \Hom(\Lambda,\C),
	\]
	is a local isomorphism.
\end{Thm}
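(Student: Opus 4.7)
The plan is to adapt Bridgeland's classical deformation argument to the relative setting. Fix $\us = (\sigma_s)_{s \in S} \in \Stab_\Lambda(\Db(X)/S)$ with central charge $Z$ and a small perturbation $Z' \in \Hom(\Lambda,\C)$. The goal is to produce a unique family $\us'$ deforming $\us$ with $\cZ(\us') = Z'$; the complex manifold structure on $\Stab_\Lambda(\Db(X)/S)$ will then be pulled back from $\Hom(\Lambda,\C)$ via this forgetful map.

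First, I would apply Bridgeland's theorem fiber by fiber. Because every $\sigma_s$ satisfies the support property with the \emph{same} quadratic form $Q$, there is a neighborhood $U \subset \Hom(\Lambda,\C)$ of $Z$, depending only on $Q$ and not on $s$, such that for each $Z' \in U$ and each $s \in S$ there is a unique $\sigma'_s = (Z', \cP'_s)$ on $\cD_s$ obtained by tilting the heart of $\sigma_s$ along the torsion pair cut out by $\sigma_s$-semistable objects whose phases lie above (resp.\ below) a chosen threshold $\phi_0 \in \R$. The candidate is then $\us' := (\sigma'_s)_{s \in S}$, and fiberwise uniqueness already gives local injectivity of $\cZ$.

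The bulk of the work is to verify the axioms of Definition~\ref{def:familyfiberstabilities} for $\us'$. Condition~(1) is automatic since $Z'$ factors through each $v_s$; condition~(4), the uniform support property, holds fiberwise by Bridgeland's theorem with the same $Q$; condition~(5) reduces to boundedness for $\us$, since the classes of $\sigma'_s$-semistable objects are bounded in $\Lambda$ because their $\sigma_s$-HN factors satisfy $Q \ge 0$ and have masses controlled by $Z'$, yielding only finitely many classes to which boundedness for $\us$ applies. Openness~(2) follows by writing $\sigma'_s$-semistables as iterated extensions of $\sigma_s$-semistables in a narrow phase window and invoking openness for $\us$.

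The principal obstacle will be verifying condition~(3): that $\us'$ is governed by an HN structure after every Dedekind base change $C \to S$. My plan is to lift the fiberwise tilts to a \emph{global} tilt of the heart $\cA_C \subset \Db(X_C)$ provided by the HN structure of $\us$, by assembling the fiberwise torsion pairs into a global torsion pair on $\cA_C$; this produces a candidate heart $\cA'_C$. The delicate point is proving the existence of global HN filtrations for the slicing determined by $\cA'_C$ and $Z'$, which requires combining the fiberwise HN filtrations from Bridgeland with the valuative-type gluing encoded in the HN structure of $\us$; boundedness~(5) ensures that only finitely many HN classes appear, and openness~(2) provides the generic constancy needed to spread out filtrations over $C$. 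Once~(3) is established, local bijectivity of $\cZ$ is in hand, and pulling back the complex structure from $\Hom(\Lambda,\C)$ equips $\Stab_\Lambda(\Db(X)/S)$ with a complex manifold structure making $\cZ$ a local isomorphism.
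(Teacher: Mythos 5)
Your overall plan is recognizably Bridgeland's, but you miss the crucial simplification that the paper uses and, as a result, open a genuine gap in verifying condition~(3) and a potential circularity in the logic.

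The paper begins by exploiting the $\wGL2$-action on $\Stab_\Lambda(\cD/S)$ (Remark~\ref{rem:fiberwiseGL2action}) to reduce to a purely \emph{real} perturbation of the central charge, $W = Z + u\circ p$ with $p$ the orthogonal projection onto $\ker Z$. Under such a perturbation the hearts $\cA_s$ on the fibers, \emph{and hence also} the local hearts $\cA_C$ over any Dedekind scheme $C\to S$, are literally unchanged. This is not a cosmetic convenience: it is what makes condition~(3) of Definition~\ref{def:familyfiberstabilities} tractable, because one can reuse the already-established HN structure $\cA_C$ and deduce the new HN structure by invoking Theorem~\ref{thm:HNstructureviafibers}. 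Your proposal instead deforms the heart, and then proposes to build a new $\cA'_C$ by ``assembling the fiberwise torsion pairs into a global torsion pair on $\cA_C$.'' There is no argument given that the fiberwise torsion pairs glue (the torsion pair for $\sigma'_s$ depends on $\sigma_s$-phases, which are not constant in families in the relevant regime), and even granting the existence of $\cA'_C$, establishing the HN property for it would require a Langton-type semistable reduction for the \emph{deformed} family — precisely the kind of argument the paper carefully sets up over many sections. You gesture at ``valuative-type gluing encoded in the HN structure of $\us$,'' but the old HN structure governs a different heart and a different central charge; the transfer is not automatic.

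There is also a dependency problem in your outline. You propose to prove openness~(2) for $\us'$ by writing $\sigma'_s$-semistables as extensions of $\sigma_s$-semistables and ``invoking openness for $\us$,'' and then to use openness~(2) as an input to establish~(3). But the paper's proof of openness for the deformed family does not reduce to openness for $\us$ in that way. It shows the non-geometrically-$W$-stable locus is \emph{closed} by exhibiting it as a finite union of images of components of the universally closed Quot space $\Quot_T^{\le\phi+\epsilon}(E)$, relying on Lemma~\ref{lem:GrothendieckLemma} (boundedness and universal closedness of that Quot space) together with Lemma~\ref{lem:boundQsPs}, which controls how far a $W$-destabilizing quotient can drift in $\sigma$-phase. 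Lemma~\ref{lem:GrothendieckLemma} in turn requires the valuative criterion for Quot spaces for the \emph{original} $\us$, which is available because $\us$ already satisfies condition~(3). Your argument ``extensions of semistables in a narrow window'' does not produce a closed unstable locus — it only places $E$ in $\cP_\sigma[\phi-\epsilon,\phi+\epsilon]$, which is open but says nothing about closedness of the $W$-unstable locus. And if you feed the as-yet-unproved~(2) into the proof of~(3), you are in danger of a circle.

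In short: without the $\wGL2$-reduction keeping the hearts fixed, and without the Quot-space closedness machinery (Lemmas~\ref{lem:GrothendieckLemma} and~\ref{lem:boundQsPs}) to prove openness of geometric stability, your proposal does not close the argument for conditions~(2) and~(3). The fiberwise verification of~(1),~(4),~(5), and the use of a uniform $Q$, are fine and essentially match the paper.
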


In the absolute case where $S$ is a point, 
the support property \eqref{enum:introsupport} is enough to prove Bridgeland's Deformation Theorem for stability conditions \cite{Bridgeland:Stab}. 
In our setting, the main step is to show that openness of geometric stability is preserved under small deformations of the central charge $Z$;
this both requires boundedness of semistable objects \eqref{enum:introboundedness} (to obtain boundedness of destabilizing quotients, and thereby constructibility of the unstable locus) and the existence of HN structures over DVRs \eqref{enum:introHNstructure} (to show that the unstable locus is closed under specialization).

Our second main result (which is the content of Part~\ref{part:Tilting}) is that, whenever stability conditions are known to exist fiberwise, they also exist in families.

\begin{Thm}[{Theorem~\ref{mainthm:construction}}]\label{MainThm:IntroConstruction}
Let $g \colon X \to S$ be a polarized flat family of smooth projective varieties.
\begin{enumerate}[{\rm (1)}]
\item
If the fibers of $g$ are one- or two-dimensional, then the standard construction of stability conditions on curves or surfaces produces a stability condition $\us$ on $\Db(X)$ over $S$.
\item
If the fibers of $g$ are three-dimensional and satisfy the conjectural Bogomolov--Gieseker inequality of \cite{BMT:3folds-BG, BMS:abelian3folds}, then the construction of stability conditions proposed in [ibid.] produces a stability condition $\us$ on
$\Db(X)$ over $S$.
\end{enumerate} 
\end{Thm}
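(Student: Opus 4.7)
The plan is to construct, dimension by dimension, a relative heart $\cA \subset \Db(X)$ together with a numerical central charge $Z$ factoring through the relative Chern character, then verify the five conditions of Definition~\ref{MainDef:Intro}. The central charges will be built from the polarization and the relative Chern character, so conditions (1) (local constancy of $Z$) and (4) (support property) reduce to properties of the Chern character in families, which varies locally constantly across $S$. This reduces the real work to constructing the relative hearts in a manner compatible with base change, and checking the HN structure condition together with boundedness.

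In the one-dimensional case I would take $\cA = \Coh(X/S)$ with $Z(E) = -\deg_\omega(E) + i\,\rk(E)$: no tilting is needed, and fiberwise this is just Mumford slope stability on curves. In the two-dimensional case I would apply the relative tilting formalism developed in Part~\ref{part:Tilting} to tilt $\Coh(X/S)$ at a torsion pair defined by the $\omega$-slope and a twist $\beta$, producing a relative heart $\Coh^\beta(X/S)$ whose fibers recover the Arcara--Bertram tilted hearts on $X_s$, together with the standard tilt central charge. In the three-dimensional case I would iterate the procedure: first tilt $\Coh(X/S)$ to obtain $\Coh^\beta(X/S)$ as in the surface case, then apply a second relative tilt at the auxiliary slope function of \cite{BMT:3folds-BG, BMS:abelian3folds} to produce the heart and central charge of the BMT construction. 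The conjectural Bogomolov--Gieseker inequality, assumed in the hypothesis, supplies the quadratic support property fiberwise, and uniformity across $S$ is automatic from local constancy of Chern characters.

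The main obstacle is verifying the HN structure condition (3): after base change to any Dedekind scheme $C \to S$, every object of $\Db(X_C)$ must admit a global Harder--Narasimhan filtration with respect to the iteratively tilted heart. This requires showing that the torsion pairs defining the tilts behave well under specialization and generization along $C$, which is exactly what the relative tilting machinery of Part~\ref{part:Tilting} is engineered to handle; the key auxiliary input is boundedness (5) of semistable objects of fixed numerical class, which rests on fiberwise Bogomolov-type inequalities together with the finiteness of the relevant relative Quot schemes. Once (3), (4), and (5) are established, openness of geometric stability (2) is a formal consequence of the deformation theory underlying Theorem~\ref{MainThm:IntroDeformation}, and the proof is complete.
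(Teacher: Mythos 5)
Your broad strategy --- start from slope-stability on the fibers, tilt once for surfaces and twice for threefolds, and read off central charges from the relative Chern character --- is the same as the paper's, which builds up through the weak stability conditions $\us^{\sharp\beta}$ (Proposition~\ref{prop:RotatingSlopeStability}), $\us^{\alpha,\beta}$ (Proposition~\ref{prop:tiltstabfamily}), $\us^{\alpha,\beta\sharp\gamma}$ (Proposition~\ref{prop:RotatingTiltStability}), and $\us_{\alpha,\beta}^{a,b}$ (Proposition~\ref{prop:Bridgstabfamily}). But two of the steps you treat as formal are exactly where the proof lives.

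The assertion that openness of geometric stability is ``a formal consequence of the deformation theory underlying Theorem~\ref{MainThm:IntroDeformation}'' is circular: that deformation result takes as input a stability condition over $S$ --- which already contains Condition~(2) --- and propagates openness only to nearby central charges; it cannot produce openness at the starting point. The paper instead proves openness directly at the first tilt (classifying the objects of the rotated heart $\Coh^\beta \cX_s$, using flattening stratifications for constructibility, and a DVR specialization argument for closedness of the unstable locus, in the proof of Proposition~\ref{prop:RotatingSlopeStability}), and only then runs the deformation machinery (finiteness and universal closedness of $\Quot_T^{\leq \phi}$, Lemma~\ref{lem:GrothendieckLemma}) to carry openness along the one-parameter family from $\us^{\sharp\beta}$ to $\us^{\alpha,\beta}$.

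The more serious gap is in Condition~(3). You correctly flag HN structures over Dedekind schemes as the obstacle, but ``the relative tilting machinery of Part~\ref{part:Tilting} is engineered to handle it'' is precisely where your outline should be doing work: the enabling hypothesis is the \emph{tilting property} (Definition~\ref{def:tiltingproperty} on fibers, Definition~\ref{def:tiltingpropertyoverC} over a curve), namely that every $F$ with $\mu^+(F) < +\infty$ admits a short exact sequence $F \into \tF \onto F^0$ with $F^0 \in \cA^0$ and $\Hom(\cA^0, \tF[1]) = 0$. This is established via the derived dual functor (double dual of a torsion-free sheaf and its iterations, e.g.\ Example~\ref{ex:slopestabilityhastiltingproperty} and Example~\ref{ex:slopestabilitycurveshastiltingproperty}), and it is precisely what makes Langton--Maruyama semistable reduction (Theorem~\ref{thm:Langton}), the automatic $C$-torsion theory (Theorem~\ref{thm:Ctorsiontheoryautomatic}), the recovery of an HN structure from fiberwise data (Theorem~\ref{thm:HNstructureviafibers}), and the tilting of weak HN structures (Proposition~\ref{prop:tiltingweakHNstructures}) go through; without it the HN filtrations over $C$ need not terminate. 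Relatedly, your account of boundedness is inverted: Condition~(5) does not rest on Quot-scheme finiteness --- Proposition~\ref{prop:RotatingSlopeStability} derives it from Langer's boundedness of slope-semistable sheaves plus the same derived-dual trick, and Quot spaces are then used in the service of openness and the valuative criterion, not boundedness.
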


The known construction of stability conditions on smooth projective varieties is based on the operation of tilting bounded t-structures, starting from coherent sheaves, and uses weaker notions of stability similar to slope-stability.
We first generalize Definition~\ref{MainDef:Intro} to allow for a collection of weak stability conditions.
Then we show that this tilting procedure extends to our setup.
The main tool is the derived dual functor, which we show to provide a notion of double dual inside the hearts $\cA_s$ for $s \in S$, and $\cA_C$ for a Dedekind scheme $C$ over $S$.

\subsection*{Relative moduli spaces and properness}

Moduli spaces of semistable objects, and their wall-crossing, are what makes stability conditions useful to algebraic geometers.
Given a class $\vv\in\Lambda$, we denote by $\fM_{\us}(\vv)$ the stack parameterizing $\us$-semistable objects of class $\vv$.
Using results of Lieblich, Piyaratne, and Toda \cite{Lieblich:mother-of-all,Toda:K3,PT15:bridgeland_moduli_properties}, we show the following. 

\begin{Thm}[{Theorem~\ref{thm:modulispacesArtinstacks}}]\label{MainThm:IntroProper}
Let $\us$ be a stability condition on $\Db(X)$ over $S$ and $\vv\in\Lambda$.
Then $\fM_{\us}(\vv)$ is an algebraic stack of finite type over $S$.
In characteristic 0 it admits a good moduli space $M_{\us}(\vv)$ which is an algebraic space proper over $S$.
\end{Thm}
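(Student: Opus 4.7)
The plan is to decompose the statement into three goals — algebraicity together with finite type, existence of a good moduli space, and its properness — and to handle each by appealing to the structural axioms in Definition~\ref{MainDef:Intro} together with general results on moduli of complexes. For the algebraic stack structure, I would start from Lieblich's stack $\mathcal{M}_{X/S}$ of universally gluable, relatively perfect complexes on $X/S$ \cite{Lieblich:mother-of-all}, which is algebraic and locally of finite presentation over $S$. Since $v_s$ is locally constant in families of relatively perfect objects, fixing $\vv \in \Lambda$ cuts out an open-closed substack $\mathcal{M}_{X/S}(\vv)$, and the key step is to show that the locus of $\us$-semistable objects is open. Openness of geometric stability is condition~\eqref{enum:introstableisopen}; the upgrade to semistability uses the support property~\eqref{enum:introsupport} to bound the set of possible numerical classes of destabilizing subobjects, together with a standard constructibility argument for the locus where such a destabilizer exists. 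Boundedness~\eqref{enum:introboundedness} then yields finite type over $S$.

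For the existence of a good moduli space in characteristic $0$, I would invoke the Alper--Halpern-Leistner--Heinloth existence theorem, which asserts that a finite-type algebraic stack with affine stabilizers admits a separated good moduli space provided it is $\Theta$-reductive and $S$-complete. Stabilizers are open subgroups of endomorphism algebras of complexes, hence affine. Both $\Theta$-reductivity (extending a filtration of the generic fiber over a DVR to the closed point) and $S$-completeness (uniqueness of degenerations up to S-equivalence) follow from condition~\eqref{enum:introHNstructure}: the global heart provided by the HN structure supplies the required filtrations over $\Spec R$, and the HN filtrations of an extension produce the canonical semistable degeneration. Alternatively, one can follow the more direct approach of Piyaratne--Toda via a GIT-type construction, once boundedness and openness are in place.

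Properness of $M_{\us}(\vv) \to S$ then reduces to the valuative criterion. Given a DVR $R$ with fraction field $K$ and residue field $\kappa$, and a $\sigma_K$-semistable object $E_K$ of class $\vv$, condition~\eqref{enum:introHNstructure} allows us to move $E_K$ into the global heart $\cA_R$ on $\Db(X_R)$ and to run a Langton-style semistable reduction: starting from any such extension $E_R$, if its special fiber fails to be $\sigma_\kappa$-semistable, the HN filtration on the special fiber provides an elementary modification that strictly decreases a suitable numerical invariant. The support property~\eqref{enum:introsupport} combined with the boundedness~\eqref{enum:introboundedness} ensures that the sequence of modifications terminates at an $R$-flat semistable object extending $E_K$; separatedness follows from uniqueness of HN filtrations in the global heart. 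The main obstacle is precisely this valuative criterion: the termination of the Langton procedure requires using condition~\eqref{enum:introHNstructure} beyond its formal statement, combining the support property and boundedness uniformly across fibers to control the numerical invariants of HN factors in degenerating families.
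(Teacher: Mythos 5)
Your proposal is essentially correct and follows the same overall decomposition as the paper (algebraicity and finite type via Lieblich's stack, openness, and boundedness; good moduli space in characteristic $0$ via \cite{AHLH:good_moduli}; properness via the valuative criterion). The one genuine difference — and a place where you do more work than the paper — is the valuative criterion. You plan to run a Langton-style iterative semistable reduction over the DVR $R$, using condition~\eqref{enum:introHNstructure} only to provide the global heart $\cA_R$, and then re-proving termination. But condition~\eqref{enum:introHNstructure} as formulated in Definition~\ref{MainDef:Intro} gives strictly more: a \emph{HN structure} on $\Db(X_C)$ over the Dedekind scheme $C$ means, by Definition~\ref{def:HNstructure_C} and Proposition~\ref{prop:stabviaheart}, that every object of $\cA_R$ already admits a HN filtration whose factors are $Z_R$-semistable with decreasing phase. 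Consequently, given a $\sigma_K$-semistable lift $E_R$, one does not iterate elementary modifications: one simply takes the HN filtration of $E_R$ in $\cA_R$ (which collapses over $K$), replaces $E_R$ by the appropriate $\sigma_R$-semistable HN factor, and then passes to the $R$-torsion-free quotient (using the $C$-torsion theory from Corollary~\ref{cor:Ctorsionautomatic}). Semistability of the special fiber then follows from Lemma~\ref{lem:allfibersstable}, and this is exactly the content of Lemma~\ref{lem:ModuliSpaceValuativeCriteria}. The Langton argument you describe does appear in the paper, in Theorem~\ref{thm:Langton}, but as an \emph{ingredient for establishing} that HN structures over Dedekind schemes exist (i.e., for verifying condition~\eqref{enum:introHNstructure} in concrete constructions), not for deriving the valuative criterion from a stability condition over $S$ that already satisfies the axioms. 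Your route would work, but you would be re-deriving part of the content of a hypothesis you already have.
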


The notion of good moduli space was introduced by Alper in \cite{Alper:GoodModuli}.
The existence of a good moduli space is a consequence of the very recent result \cite{AHLH:good_moduli}.

When moduli stacks of Bridgeland stable objects are well-behaved, associated Donaldson--Thomas invariants have been defined in \cite{PT15:bridgeland_moduli_properties}, based on \cite{Kontsevich-Soibelman:stability,Joyce-Song}.
When $M_{\sigma}(\vv)=M_{\sigma}^{\st}(\vv)$, namely there are no properly semistable objects, the definition is the same as in \cite{Thomas:Casson,Behrend:DT-microlocal}, via virtual classes or weighted Euler characteristic:
\[
\mathrm{DT}_{\sigma}(\vv) := \int_{[M_{\sigma}(\vv)]^{\mathrm{vir}}} 1 = \int_{M_{\sigma}(\vv)} \chi_B \in\Z.
\]

As a first application of Theorem~\ref{MainThm:IntroConstruction} and Theorem~\ref{MainThm:IntroProper}, we can prove the deformation-invariance of DT invariants for those CY threefolds on which Bridgeland stability conditions exist;
in particular, by \cite{Li:Quintic3fold}, for quintic threefolds.

\begin{Cor}[{Theorem~\ref{thm:DTquintic}}]\label{MainCor:IntroDT} Let $X \to S$ be a flat family of Calabi--Yau threefolds, defined over $\C$, with $S$ connected.
If the fibers $X_s$ satisfy the generalized Bogomolov--Gieseker inequality of \cite{BMT:3folds-BG, BMS:abelian3folds}, then the Donaldson--Thomas invariant $\mathrm{DT}_{\sigma_s}(\vv)$ is independent of $s$.
In particular, this holds for smooth quintic threefolds.
\end{Cor}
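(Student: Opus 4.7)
The plan is to combine the two main theorems of the paper with the standard machinery for deformation invariance of DT-type invariants. First, apply Theorem~\ref{MainThm:IntroConstruction}(2) to the family $X\to S$: the fiberwise generalized Bogomolov--Gieseker hypothesis is exactly what is required to produce a stability condition $\us$ on $\Db(X)$ over $S$, whose fiber at $s\in S$ is the Bridgeland stability condition $\sigma_s$ constructed in \cite{BMT:3folds-BG, BMS:abelian3folds}. For the quintic case this hypothesis is Li's main theorem in \cite{Li:Quintic3fold}, so the argument applies uniformly. Then, for each $\vv\in\Lambda$, Theorem~\ref{MainThm:IntroProper} produces a moduli stack $\fM_{\us}(\vv)$ of finite type over $S$ and a good moduli space $M_{\us}(\vv)\to S$ which is an algebraic space, proper over $S$.

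Next, one equips this relative moduli stack with a symmetric perfect obstruction theory over $S$. On each fiber, the classical deformation--obstruction theory of a semistable object $E$ is controlled by the truncation of $\RHom(E,E)$, and the Calabi--Yau threefold condition together with Serre duality makes it symmetric of amplitude $[-1,0]$. The fact that families of $\us$-semistable objects are relatively perfect in the sense of Lieblich (as is built into Definition~\ref{MainDef:Intro}) ensures compatibility of $\RHom$ under base change, so this construction globalizes to a relative symmetric obstruction theory on $\fM_{\us}(\vv)/S$.

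If there are no strictly semistable objects for the class $\vv$, the statement then follows immediately from Behrend--Fantechi's deformation invariance of the degree of the virtual fundamental class, applied to the proper family $M_{\us}(\vv)\to S$: the function $s\mapsto \int_{[M_{\sigma_s}(\vv)]^{\mathrm{vir}}} 1$ is locally constant, hence constant since $S$ is connected. In the presence of strictly semistable objects, one must use the generalized DT invariant of Joyce--Song and Kontsevich--Soibelman as packaged for Bridgeland stability in \cite{PT15:bridgeland_moduli_properties}: one works in the motivic Hall algebra of $\Db(X_s)$, extracts a virtual class on the semistable locus from the logarithm of its characteristic function, and integrates against Behrend's constructible function $\chi_B$. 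Deformation invariance then follows from the wall-crossing formalism, whose hypotheses in our relative setting are supplied precisely by our main theorems: relative finite type of $\fM_{\us}(\vv)$, properness of $M_{\us}(\vv)\to S$ (which encodes semistable reduction, i.e.\ Definition~\ref{MainDef:Intro}(3)), and the relative symmetric obstruction theory above.

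The main obstacle is the strictly semistable case: the virtual fundamental class alone is no longer sufficient, and one must relativize the motivic Hall algebra arguments (the no-pole theorem, the integration map, and the wall-crossing identities) over the base $S$. The key point that makes this work here, rather than being a new difficulty, is that HN structures on base changes to Dedekind schemes -- built into our definition of a family of stability conditions -- provide exactly the specialization compatibility required for $\chi_B$ and for the Hall algebra identities to be respected across fibers, so that the Joyce--Song machinery can be applied verbatim to the relative setting produced by Theorems~\ref{MainThm:IntroConstruction} and~\ref{MainThm:IntroProper}.
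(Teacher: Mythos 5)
Your argument for the case $\fM_{\us}(\vv) = \fM_{\us}^{\st}(\vv)$ is exactly the paper's route: Theorem~\ref{mainthm:construction} produces $\us$ over $S$, Theorem~\ref{thm:modulispacesArtinstacks} gives the finite-type moduli stack with proper good moduli space $M_{\us}(\vv)\to S$, the Huybrechts--Thomas truncated $\RHom$-complex yields a relative symmetric obstruction theory, and Behrend--Fantechi deformation invariance of the virtual degree over the connected base finishes. One caveat: the invariant $\mathrm{DT}_\sigma(\vv)$ is defined in the introduction, and treated in Section~\ref{sec:DT}, only when $\fM_\sigma(\vv)=\fM_\sigma^{\st}(\vv)$, so the strictly semistable case in your last paragraph goes beyond the stated claim; moreover the assertion there that the Joyce--Song / Kontsevich--Soibelman machinery applies ``verbatim'' in the relative setting is not an argument --- a relative motivic Hall algebra, no-pole theorem, and integration map are never constructed in the paper, and the specialization compatibility you invoke from HN structures over Dedekind schemes is not by itself a substitute for that. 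You can safely drop the final paragraph; the reduction to $\fM_\sigma=\fM_\sigma^{\st}$ is built into the definition of $\mathrm{DT}_\sigma(\vv)$.
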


Another application, pointed out by Koseki \cite[Proposition~3.2]{koseki:2}, is a method to construct stability conditions on threefolds via degeneration, see Proposition~\ref{prop:Koseki}. 

\subsection*{Cubic fourfolds}

In the case of higher-dimensional Fano varieties, moduli spaces of semistable objects often become more useful when we restrict our attention to objects lying in certain semiorthogonal components of $\Db(X_s)$, called \emph{Kuznetsov components}. 

This can also be done in families.
Let $\cD\subset\Db(X)$ be an admissible subcategory that is invariant under tensoring with perfect complexes on $S$ (see Section~\ref{sec:SO} for the precise assumptions).
Then Theorems~\ref{MainThm:IntroDeformation} and \ref{MainThm:IntroProper} hold similarly with $\Db(X)$ replaced by $\cD$.
We also give conditions under which a weak stability condition on $\Db(X)$ over $S$ induces a stability condition on $\cD$ over $S$, see Theorem~\ref{thm:InducingStabilityOverBase} which extends the corresponding results in the absolute case from \cite{BLMS}.

Our main application, treated in Part~\ref{part:CubicFourfolds}, concerns cubic fourfolds.
Let $X\subset\P^5$ be a smooth cubic fourfold over the complex numbers.
We denote by $\Ku(X)$ its \emph{Kuznetsov component}
\[
\Ku(X):= \cO_X^\perp \cap \cO_X(H)^\perp \cap \cO_X(2H)^\perp \subset \Db(X).
\]

Over the moduli space of cubic fourfolds, these categories give a family of polarized non-commutative K3 surfaces \cite{Kuz:fourfold,AT:CubicFourfolds} (see also \cite{Huy:cubics, MS:survey}).
There is no analogue of slope- or Gieseker-stability for such non-commutative K3 surfaces; instead, Bridgeland stability conditions have been constructed in \cite{BLMS}.
The main result of Part~\ref{part:CubicFourfolds} is that our notion of stability over a base exists in this setup.

The first application concerns moduli spaces $M_\sigma(\Ku(X), \vv)$ of $\sigma$-stable objects in $\Ku(X)$ with Mukai vector $\vv$, for $\sigma$ a Bridgeland stability condition in the distinguished connected component $\Stab^\dagger(\Ku(X))$, generalizing a series of results for K3 surfaces \cite{Beauville:HK,Mukai:Symplectic,Mukai:BundlesK3,O'Grady:weight2,Huybrechts:BirationaSymplectic,Yoshioka:Abelian,Toda:K3,BM:proj}.

\begin{Thm}[{Theorem~\ref{thm:YoshiokaMain}}]\label{MainThm:IntroYoshioka}
Let $X$ be a cubic fourfold with Kuznetsov component $\Ku(X)$.
Let $\tH(\Ku(X),\Z)$ be its extended Mukai lattice, together with the Mukai Hodge structure.
Then $\tH_{\Hdg}(\Ku(X),\Z)=\tH_{\mathrm{alg}}(\Ku(X),\Z)$.
Moreover, assume that $\vv\in \tH_{\Hdg}(\Ku(X),\Z)$ is a non-zero primitive vector and let $\sigma\in\Stab^\dagger(\Ku(X))$ be a stability condition on $\Ku(X)$ that is generic for $\vv$.
Then:
\begin{enumerate}[{\rm (1)}]
\item $M_{\sigma}(\Ku(X),\vv)$ is non-empty if and only if $\vv^2\geqslant-2$.
Moreover, it is a smooth projective irreducible holomorphic symplectic variety of dimension $\vv^2 + 2$, deformation-equivalent to a Hilbert scheme of points on a K3 surface.
\item If $\vv^2\geqslant 0$, then there exists a natural Hodge isometry 
\[
\theta\colon H^2(M_\sigma(\Ku(X),\vv),\Z)\xrightarrow{\quad\sim\quad}
\begin{cases}\vv^\perp & \text{if }\vv^2>0\\ \vv^\perp/\Z\vv & \text{if } \vv^2=0,\end{cases}
\]
where the orthogonal is taken in $\tH(\Ku(X),\Z)$.
\end{enumerate}
\end{Thm}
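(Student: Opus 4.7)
The plan is to combine the relative stability theory of Theorem~\ref{MainThm:IntroProper} applied to the Kuznetsov component of the universal family $\cX \to S$ of cubic fourfolds with a deformation argument along special loci where $\Ku(X)$ becomes (twisted) equivalent to $\Db$ of a K3 surface, so that all three statements can be reduced to the classical Mukai--Yoshioka--Bayer--Macrì theory on K3 surfaces.

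First I would fix a vector $\vv \in \tH_{\Hdg}(\Ku(X),\Z)$ and, using the period domain for $\Ku(-)$ (the unimodular extension of the primitive cohomology $H^4_{\mathrm{prim}}(X,\Z)$), find a deformation $\cX \to S$ of $X$ over a smooth connected base such that $\vv$ stays of Hodge type throughout and such that some fiber $X_0$ satisfies $\Ku(X_0) \simeq \Db(K3, \alpha)$ in the sense of Kuznetsov--Addington--Thomas. On the K3 fiber the integral Hodge conjecture of Mukai gives that $\vv$ is the Mukai vector of an object of $\Db(K3,\alpha)$; this proves $\tH_{\Hdg} = \tH_{\mathrm{alg}}$ for $X_0$. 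To transport algebraicity to $X$, I would build a stability condition $\us$ on $\Ku(\cX)$ over $S$ (using the family version of \cite{BLMS} provided by Theorem~\ref{thm:InducingStabilityOverBase}), choose $\sigma_0$ generic with respect to $\vv$ on $X_0$ so that there is a $\sigma_0$-stable object $E_0$ of class $\vv$, and invoke Theorem~\ref{MainThm:IntroProper} to obtain a proper relative moduli space $M_{\us}(\Ku(\cX/S),\vv) \to S$ with $E_0$ as a point; spreading out a section through $E_0$ then produces an object in $\Ku(X)$ with Mukai vector $\vv$.

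For part (1), non-emptiness under $\vv^2 \geq -2$ and the other properties now follow fiberwise by the same deformation argument: the $\sigma_0$-generic K3 moduli space $M_{\sigma_0}(\Db(K3,\alpha),\vv)$ is a smooth projective irreducible hyperkähler of the prescribed dimension, deformation-equivalent to a Hilbert scheme of points (by Yoshioka and Bayer--Macrì); the properness and finite type statements of Theorem~\ref{MainThm:IntroProper} then give that the entire relative moduli space $M_{\us}(\Ku(\cX/S),\vv) \to S$ is flat and proper with hyperkähler fibers, which are forced to be deformation-equivalent to the one over $X_0$. Smoothness of the fibers comes from the fact that $\Ku(-)$ is a CY2 category in families, so Serre duality produces a holomorphic symplectic form on $M_{\us}(\Ku(X),\vv)$ via the (quasi-)universal family that exists étale-locally by the relative moduli construction; fiberwise non-emptiness together with semicontinuity of dimension and the universal deformation theory argument upgrades these to the asserted smoothness and irreducibility over every fiber.

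For part (2), the Mukai morphism $\theta$ is constructed in the standard way from a (quasi-)universal family $\cE$ on $M_{\us}(\Ku(X),\vv) \times X$ as $\theta(w) = p_{M,*}(\wch(\cE) \cdot p_X^*(w))$, with the proviso that $\cE$ need only exist étale-locally; the resulting class-function is well-defined globally because the ambiguity is killed after restricting to $\vv^\perp$. That $\theta$ is an isomorphism of lattices and a Hodge isometry is again verified on the K3 fiber $X_0$ where it reduces exactly to Mukai's isometry, and then transferred to $X$ by noting that both sides and the morphism $\theta$ are compatible with the local system structure over $S$. The main obstacle, in my view, is not any single one of these steps but the setup verifying that the stability conditions constructed in Part~\ref{part:CubicFourfolds} genuinely satisfy our fiberwise definition (openness, HN structure, uniform support, boundedness) along the deformation $S$ and that the generic-stability chamber for $\vv$ contains $\sigma_0$ and $\sigma$ simultaneously or at least is preserved up to wall-crossing, so that the deformation-equivalence of the relative moduli spaces is unobstructed.
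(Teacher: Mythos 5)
Your proposal follows the same overall strategy as the paper: deform within the Hodge locus of $\vv$ to a cubic fourfold whose Kuznetsov component is a twisted K3 category, build a relative stability condition, transport (non-)emptiness, smoothness, and the Hodge isometry across the family via proper relative moduli spaces and the unobstructedness of deformations (generalized Mukai). You correctly identify the two main inputs (Theorem~\ref{MainThm:IntroProper} and a CY2-category form of Mukai's smoothness) and flag the compatibility of stability chambers along the deformation as a point of concern. However, there are two genuine gaps that the paper has to work harder to close.

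First, projectivity of $M_\sigma(\Ku(X),\vv)$ does not follow formally from deformation-equivalence to a projective hyperk\"ahler manifold. You write that the fibers of the relative moduli space are ``forced to be deformation-equivalent to the one over $X_0$'' and are ``smooth projective irreducible hyperk\"ahler,'' but a smooth proper algebraic space that is deformation-equivalent to a projective manifold need not itself be projective --- and the paper explicitly warns that the relative moduli space over a curve is a proper algebraic space whose total morphism ``might not be projective.'' What the paper does instead is construct a relatively nef numerical divisor class $\ell_{\us}$ via the positivity lemma (Theorem~\ref{thm:PositivityLemmaFamily}), show that it is ample on a nearby twisted-K3 fiber using \cite{BM:proj}, use semicontinuity of $h^0$ to deduce bigness of $\ell_\sigma$ on the original fiber, and then invoke the Base Point Free Theorem (available because the canonical bundle is trivial) together with the strict positivity of $\ell_\sigma$ on contracted curves to conclude ampleness. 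This argument is not a formality and is missing from your proposal.

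Second, not every equivalence $\Ku(X_0)\simeq \Db(S,\alpha)$ sends $\Stab^\dagger(\Ku(X_0))$ to the distinguished component $\Stab^\dagger(S,\alpha)$ of the twisted K3 stability manifold, so it is not enough to know an abstract equivalence exists; one must arrange for a \emph{$\dagger$-equivalence}. You flag ``the generic-stability chamber'' issue as the main obstacle but do not resolve it. The paper addresses this by deforming within the Hodge locus for $\vv$ to a cubic fourfold lying also in a second Hassett divisor whose generic member has no $(-2)$-classes in $\tH_{\Hdg}$, using \cite[Lemma~A.4]{BLMS} to force stability of the spherical object there, and then deducing (Lemmas~\ref{Lem-Ku-twisted-K3}--\ref{Lem-defo-spherical-dagger} and Proposition~\ref{proposition-specialize-twisted}) that the resulting equivalence with a twisted K3 surface is a $\dagger$-equivalence. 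Without this step, the reduction to \cite{BM:proj} is not justified.
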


Theorem~\ref{MainThm:IntroYoshioka} has many interesting applications to cubic fourfolds and to families of polarized hyperk\"ahler manifolds. We
invite the reader to jump to Section~\ref{sec:MainResultsCubics} for a comprehensive summary of our related results, and give a shorter overview here.
First of all, we can extend a result by Addington and Thomas, \cite[Theorem~1.1]{AT:CubicFourfolds}:

\begin{Cor}[{Corollary~\ref{cor:completeAT}}]\label{MainCor:IntroAT}
Let $X$ be a cubic fourfold.
Then $X$ has a Hodge-theoretically associated K3 if and only if there exists a smooth projective K3 surface $S$ and an equivalence $\Ku(X) \cong \Db(S)$.
\end{Cor}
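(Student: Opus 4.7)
The backward implication is essentially formal: any equivalence $\Ku(X) \iso \Db(S)$ with $S$ a smooth projective K3 surface induces a Hodge isometry of extended Mukai lattices $\tH(\Ku(X),\Z) \iso \tH(S,\Z)$, and restricting to transcendental parts produces a Hodge-theoretically associated K3 on $X$ in Hassett's sense.

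For the forward direction, the plan is to realize the K3 surface $S$ directly as a two-dimensional moduli space of $\sigma$-stable objects in $\Ku(X)$ via Theorem~\ref{MainThm:IntroYoshioka}, and then promote the universal family to a Fourier--Mukai equivalence. The starting point is to translate the Hodge-theoretic condition on $X$ into a statement about the Mukai lattice of $\Ku(X)$: it is standard (essentially the numerical half of the Addington--Thomas criterion) that $X$ admits a Hodge-theoretically associated K3 exactly when $\tH_{\Hdg}(\Ku(X),\Z)$ contains a primitive isotropic class $\vv$ that pairs to $1$ with some other lattice element, equivalently when the algebraic part of the Mukai lattice contains a hyperbolic plane. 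Since Theorem~\ref{MainThm:IntroYoshioka} asserts $\tH_{\Hdg} = \tH_{\mathrm{alg}}$, such a $\vv$ is automatically algebraic, so the theorem applies to this class.

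Given such $\vv$, I choose $\sigma \in \Stab^\dagger(\Ku(X))$ generic for $\vv$ and set $S := M_\sigma(\Ku(X),\vv)$. Part~(1) of Theorem~\ref{MainThm:IntroYoshioka} then yields that $S$ is a smooth projective irreducible hyperk\"ahler manifold of dimension $\vv^2+2 = 2$, hence a K3 surface; part~(2) provides a Hodge isometry $H^2(S,\Z) \iso \vv^\perp/\Z\vv$. The existence of $\vv' \in \tH_{\mathrm{alg}}(\Ku(X),\Z)$ with $(\vv,\vv')=1$ guarantees that the universal $\sigma$-stable family $\cE \in \Db(S \times X)$ exists as an untwisted complex, without Brauer obstruction, providing a genuine Fourier--Mukai kernel.

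The final step is to show that the induced Fourier--Mukai transform $\Phi_\cE \colon \Db(S) \to \Ku(X)$ is an equivalence. I would verify full faithfulness on the spanning class of skyscrapers via a Parseval-type argument: for distinct $s, s' \in S$ the objects $\cE_s, \cE_{s'} \in \Ku(X)$ are pairwise non-isomorphic $\sigma$-stable objects, and the $\Ext$ groups between them, computed using stability and the $2$-Calabi--Yau property of $\Ku(X)$, match exactly those between skyscrapers on $S$. Essential surjectivity then follows from the fact that the induced map $\tH(S,\Z) \to \tH(\Ku(X),\Z)$ is a Hodge isometry (by part~(2)), hence surjective on numerical classes, combined with the standard $2$-CY argument. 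The hard part will be verifying full faithfulness \emph{uniformly} for every cubic fourfold in the Hassett divisor, not merely for a generic one: this is precisely where the family version of stability and the full generality of Theorem~\ref{MainThm:IntroYoshioka} replace the deformation argument of Addington--Thomas and remove the genericity hypothesis from their theorem.
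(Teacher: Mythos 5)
Your backward direction and the overall forward structure — reformulate the Hassett condition as the existence of a primitive isotropic $\vv$ paired to $1$ with some $\vv'$, apply Theorem~\ref{MainThm:IntroYoshioka} to get $S := M_\sigma(\Ku(X),\vv)$ a K3 surface, use $(\vv,\vv')=1$ to obtain an untwisted universal family, and build a Fourier--Mukai equivalence — match the paper's proof, which goes via Proposition~\ref{prop:KuzHass} and Lemmas~\ref{Lem:MukaiEquivalenceFM}/\ref{Lem-Ku-twisted-K3}. Your full-faithfulness step (Parseval on the spanning class of fibers of $\cE$ via stability and the $2$-Calabi--Yau property) is morally what the cited result of Bridgeland establishes, so that part is sound.

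The gap is your essential surjectivity argument. Surjectivity of the induced map on numerical classes does \emph{not} imply essential surjectivity of a fully faithful exact functor between triangulated categories: a nonzero admissible subcategory need not be detected by its numerical $K$-group, so you cannot rule out a nontrivial right orthogonal this way. Moreover, the Hodge isometry you cite from Theorem~\ref{MainThm:IntroYoshioka}.(2) is $\theta\colon H^2(M_\sigma(\Ku(X),\vv),\Z) \to \vv^\perp/\Z\vv$; it is not literally the statement that $\Phi_\cE$ induces a surjection on $\tH$, so it does not directly feed into the argument you sketch. The step the paper actually uses (see Lemma~\ref{Lem-Ku-twisted-K3}) is Bridgeland's trick: once $\Phi_\cE$ is fully faithful, its image is admissible in $\Ku(X)$, giving a semiorthogonal decomposition with the image as one piece; since $\mathrm{HH}^0(\Ku(X))$ is one-dimensional, $\Ku(X)$ admits no nontrivial semiorthogonal decompositions, so the orthogonal complement vanishes and $\Phi_\cE$ is essentially surjective. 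This $\mathrm{HH}^0$ input is the substance of the "standard $2$-CY argument" you gesture at, and it — not the Hodge isometry — is what makes essential surjectivity go through.
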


A version of the corollary also holds for K3 surfaces with a Brauer twist, extending a result of Huybrechts \cite[Theorem~1.4]{Huy:cubics};
the corresponding Hodge-theoretic condition is the existence of a square-zero class in $\tH_\Hdg(\Ku(X),\Z)$.

A second application is that relative moduli spaces give rise to unirational locally-complete families of polarized hyperk\"ahler manifolds of arbitrarily large dimension and degree over the moduli space of cubic fourfolds:

\begin{Cor}[{Corollary~\ref{cor:locfam20dim}}]\label{MainCor:IntroLocCompleteFam}
For any pair $(a,b)$ of coprime integers, there is a unirational locally complete $20$-dimensional family, over an open subset of the moduli space of cubic fourfolds, of polarized smooth projective irreducible holomorphic symplectic manifolds of dimension $2n+2$, where $n=a^2-ab+b^2$.
The polarization has either degree $6n$ and divisibility $2n$ if $3$ does not divide $n$, or degree and divisibility $\frac{2}{3}n$ otherwise.
\end{Cor}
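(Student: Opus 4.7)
The plan is to apply Theorem~\ref{MainThm:IntroYoshioka} fiberwise to the universal family of cubic fourfolds via the relative moduli space construction of Theorem~\ref{MainThm:IntroProper}. The moduli space $\cM$ of smooth complex cubic fourfolds is classically unirational and $20$-dimensional. For a very general $X\in\cM$, the algebraic part $\tH_{\mathrm{alg}}(\Ku(X),\Z)$ is isomorphic to the rank-$2$ lattice $A_2$ spanned by two natural monodromy-invariant classes $\lambda_1,\lambda_2$ with intersection matrix $\bigl(\begin{smallmatrix}2 & -1 \\ -1 & 2\end{smallmatrix}\bigr)$; being monodromy-invariant, these classes extend to global algebraic sections of the extended Mukai lattice over a suitable Zariski open $U\subseteq\cM$.

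For coprime $a,b$, set $\vv := a\lambda_1+b\lambda_2$; it is primitive and satisfies $\vv^2 = 2(a^2-ab+b^2) = 2n$. Choosing $\sigma\in\Stab^\dagger(\Ku(X))$ generic with respect to $\vv$, Theorem~\ref{MainThm:IntroYoshioka} shows that each fiber $M_\sigma(\Ku(X),\vv)$ is a smooth projective irreducible holomorphic symplectic (IHS) variety of dimension $2n+2$, deformation-equivalent to $\mathrm{Hilb}^{n+1}$ of a K3 surface. I would then globalize: the Kuznetsov-component variants of Theorems~\ref{MainThm:IntroConstruction} and~\ref{MainThm:IntroProper} produce a relative moduli space $\pi\colon\cM_\sigma(\vv)\to U$ that is proper over $U$; after shrinking $U$ to the open locus where no strictly semistable object exists (non-empty by primitivity of $\vv$), the fibers are smooth and projective, and the total space is a smooth family over a unirational base, giving the first assertion.

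Local completeness follows by combining the Hodge isometry $\theta$ from Theorem~\ref{MainThm:IntroYoshioka} with local Torelli for cubic fourfolds. Under $\theta$, the transcendental part of $H^2$ of each fiber of $\pi$ is identified with the transcendental part of $\tH(\Ku(X),\Z)$, which matches the transcendental part of the primitive $H^4(X,\Z)$. Since the period map for $\cM$ is a local isomorphism onto a $20$-dimensional period domain, so is the induced classifying map from $U$ into the moduli space of polarized hyperk\"ahler manifolds of the given deformation type.

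To construct the polarization, I consider the orthogonal complement of $\vv$ inside $A_2$. A direct computation shows that $\ww := (2b-a)\lambda_1+(b-2a)\lambda_2$ satisfies $(\vv,\ww)=0$ and $\ww^2=6n$. If $3\nmid n$, then $\gcd(2b-a,b-2a)=1$, so $\ww$ is primitive in $A_2$ and $\theta^{-1}(\ww)$ yields a polarization class of Beauville--Bogomolov square $6n$; a standard lattice-theoretic analysis of $\vv^\perp\subset\tH(\Ku(X),\Z)$, exploiting the unimodularity of $\tH$, shows its divisibility is $2n$. If $3\mid n$, then $a+b\equiv 0\pmod 3$ and $\ww=3\ww'$ with $\ww'$ primitive of square $2n/3$, yielding both degree and divisibility $\tfrac{2}{3}n$. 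I expect the main obstacle to be to verify that the class $\theta^{-1}(\ww)$ (or $\theta^{-1}(\ww')$) is not merely well-defined on each fiber but is in fact a relatively ample class descending to a polarization on the good moduli space; this reduces, via flatness of $\pi$ and openness of ampleness in flat families, to ampleness on a single fiber, which is established using the techniques developed in earlier parts of the paper (e.g.\ via a determinantal line bundle construction of Mukai--Markman type applied to the universal object).
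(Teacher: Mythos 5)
Your overall strategy coincides with the paper's: restrict to the open locus where $\tH_{\mathrm{alg}}(\Ku(X),\Z)=A_2$ with the monodromy-invariant basis $\lambda_1,\lambda_2$, set $\vv=a\lambda_1+b\lambda_2$, invoke the relative moduli space theorems to obtain a smooth projective family of $(2n+2)$-dimensional hyperk\"ahler manifolds over an open $S^0$, and then read off the polarization from the orthogonal class $\ww\in A_2\cap\vv^\perp$ (your $\ww$ is $-1$ times the paper's, which makes no difference). Your computation $\ww^2=6n$ and the primitivity analysis mod $3$ are correct.

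Where you diverge, and where a real gap remains, is in how you manufacture the polarization. You propose to construct $\theta^{-1}(\ww)$ by hand and then prove it is relatively ample, flagging this as the ``main obstacle'' and appealing to openness of ampleness plus a Mukai--Markman determinantal construction. But a class in $H^2$ with positive Beauville--Bogomolov square is not automatically ample -- it need not even lie in the positive cone, and you have not fixed the sign of $\ww$ -- so this is not merely a technicality. The paper avoids the issue entirely by reversing the logic: Theorem~\ref{thm:Msigmarelative}.\eqref{enum:ProjMorph} already comes equipped with a canonical relatively ample class $\ell_{\us}$ (produced by Theorem~\ref{thm:PositivityLemmaFamily}), and one then \emph{identifies} $\ell_{\sigma_s}$ with a multiple of $\ww$ by observing, via Theorem~\ref{thm:YoshiokaMain}.\eqref{enum:YoshiokaMain2}, that $\theta(\ell_{\sigma_s})\in\vv^\perp\cap\Knum(\Ku(X))=A_2\cap\vv^\perp=\Z\ww$ at a very general point. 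Ampleness comes for free; only the lattice computation is left. Your divisibility argument is also stated too loosely: primitivity of $\ww$ (or $\ww/3$) in $A_2$ does not by itself give divisibility $2n$ (resp.\ $\frac{2}{3}n$) in $\vv^\perp$; the paper's discriminant computation -- $\mathrm{disc}(\vv^\perp)=2n$, $\mathrm{disc}(A_2^\perp)=3$, hence the glue index is $3$ and the pairing of $\ww$ with the glue vector is forced -- is what actually pins it down. Finally, your sketch of local completeness via $\theta$ plus local Torelli is a correct supplement (the paper leaves this implicit), but the descent from a finite cover to the moduli space itself requires the monodromy-invariance of the central charge, which you should state explicitly.
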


When $a=1$ and $b=1$, this is nothing but the family of Fano varieties of lines in the cubic fourfold from \cite{BD:fano_lines} (with the Pl\"ucker polarization of degree $6$), while when $a=2$ and $b=1$, we find the family of polarized eightfold from \cite{LLSvS} (with the Pl\"ucker polarization of degree $2$).
For the proof of these two examples, we refer to \cite{LiPertusiZhao:TwistedCubics} (see also \cite{LLMS}).

\subsection*{Strategy of the proof: base change, Quot spaces, HN structures over a curve, and the support property}

The theory of base change for semiorthogonal decompositions has been developed by Kuznetsov \cite{Kuz:base-change}; the corresponding theory for t-structures is originally due to Abramovich and Polishchuk \cite{AP:t-structures,Polishchuk:families-of-t-structures}.
In Part~\ref{part:sod-t-structure-families} we recall and generalize these results to our context.
In particular, Theorem~\ref{Thm-D-bc} addresses an important technical point, base change of t-structures with respect to essentially perfect morphisms.

A second technical point of this paper is a rigorous treatment of Quot spaces for fiberwise t-structures.
This is the main goal of Section~\ref{sec:quotspaces}, in Part~\ref{part:ModuliSpaces}.
The existence of Quot spaces allows us, among other things, to give a proof of Langton-Maruyama's semistable reduction without using completions, see Theorem~\ref{thm:Langton}.
This result is fundamental for us, since it gives examples for HN structures over a curve, and is thus part of the proof of Theorem~\ref{MainThm:IntroConstruction}.  It is worth mentioning that we use the algebraicity of the moduli stack to prove the algebraicity of the Quot functor.  This implication is in the opposite direction from the construction of classical moduli spaces of sheaves, so our results depend heavily on the powerful machinery of \cite{lieblich:remarks-coherent-algebras}. 

The support property in Definition~\ref{MainDef:Intro}.\eqref{enum:introsupport} is studied in detail in Section~\ref{sec:StabCondOverS}.
In combination with boundedness and universal closedness of relative moduli spaces it ensures, for example, that openness of stability is preserved by deformations, by tilting, or when inducing stability conditions on the admissible subcategory $\cD$.
It also allows us to avoid imposing the noetherianity of the heart of the t-structure.

\subsection*{Relation with existing works and open questions}

The theory of deformations of Bridgeland stable objects and relative moduli spaces can be applied to other families of polarized non-commutative K3 surfaces.
In fact, Theorem~\ref{MainThm:IntroYoshioka} and its Corollaries will hold in a polarized family of non-commutative K3 surfaces,
as long as at least one fiber is an actual K3 surface, and stability conditions exist on the whole family. 
For example, the case of Gushel--Mukai fourfolds (see \cite{IlievManivel:GM4,DIM:GM4,DebarreKuznetsov:GM,DebarreKuznetsov:GM2,KP:GM,KP:CatJoins}) has now been established in \cite{GM-stability}. 
Another interesting example to study is the case of Debarre--Voisin manifolds \cite{DV:HK}.

Even in the case of cubic fourfolds, there are other examples of families of polarized hyperk\"ahler manifolds associated to some geometric construction which should be possible to interpret as relative moduli spaces.
For instance, in Corollary~\ref{MainCor:IntroLocCompleteFam}, when $a=2$ and $b=2$ the (singular) relative moduli space is birational to the construction in \cite{Voisin:twisted} by \cite{LiPertusiZhao:EllipticQuintics}.

\subsection*{Structure of the paper}
Part~\ref{part:sod-t-structure-families} and Part~\ref{part:ModuliSpaces} of the paper are mostly review sections; in these sections, we prove results about base change for triangulated categories and t-structures, and existence of moduli spaces in the generality needed in the paper.
In particular, base change results for t-structures for essentially perfect morphisms in Section~\ref{section-bc-t-structure} are new, as well as the complete proof of representability of the Quot functor in Section~\ref{sec:quotspaces}.

Part~\ref{part:HNStrCurve} is the first instance of a (weak) stability condition over a base.
We consider the case where the base is a Dedekind scheme and develop the theory of HN structures over it.
We do not yet introduce the support property, but we do require stronger conditions in the one-dimensional case, which essentially translate into the valuative criterion of properness for relative moduli spaces later on.
In Section~\ref{sec:StabCond} we briefly review Bridgeland stability conditions over a field, and prove a base change result in this setting.
In Section~\ref{sec:defnHNoveraCurve} we present the main definition and results for HN structures. In order to construct examples, 
we first extend this to the less well-behaved, and more technical, notion of weak stability conditions in Section~\ref{sec:defnweakHNstructure}. Then, in Section~\ref{sec:SemistableReduction}, we prove the fundamental result of semistable reduction in our context.
The remaining sections are more technical. Section~\ref{sec:HNstructuresTorsiontheory} shows that a HN structure comes with a well-behaved notion of $C$-torsion, 
and Section~\ref{sec:HNstructuresfibers} gives a criterion for
a collection of stability conditions on the fibers to ``integrate'' to a HN structure.
Finally, in Section~\ref{sec:tilt1}, we give conditions under which a (weak) HN structure can be rotated.

Part~\ref{part:higher-dimensional-bases} includes the main sections in the paper.
Definition~\ref{MainDef:Intro} is presented there (in Section~\ref{sec:FlatFamilyFiberwiseStability} and Section~\ref{sec:StabCondOverS}), together with the proofs of Theorem~\ref{MainThm:IntroDeformation} (in Section~\ref{sec:deformfamiliystability}) and Theorem~\ref{MainThm:IntroProper} (in Section~\ref{sec:StabCondOverS}).
In Section~\ref{sec:inducing}, we discuss how to induce stability conditions on semiorthogonal components, which will be important in applications, e.g.~to cubic fourfolds.

In Part~\ref{part:Tilting} we deal with the construction of stability conditions over a base, thus proving Theorem~\ref{MainThm:IntroConstruction} and its immediate application to Donaldson--Thomas invariants (in Section~\ref{sec:DT}).
Finally, the applications to cubic fourfolds are contained in Part~\ref{part:CubicFourfolds}.

\subsection*{Acknowledgments}
The paper benefited from many useful discussions with Nicolas Addington, Jarod Alper, Tom Bridgeland, Fran\c{c}ois Charles, Olivier Debarre, Johan de Jong, Laure Flapan, Fabian Haiden, Daniel Halpern-Leistner, Brendan Hassett, Jochen Heinloth, Daniel Huybrechts, Alexander Kuznetsov, Chunyi Li, Kieran O'Grady, Arvid Perego, Laura Pertusi, Alexander Polishchuk, Giulia Sacc\`a, Claire Voisin, and Xiaolei Zhao.

Parts of the paper were written while the authors were visiting several institutions.
In particular, we would like to thank Institut des Hautes \'Etudes Scientifiques, Institut Henri Poincar\'e, Northeastern University, Universit\`a degli studi di Milano, University of Edinburgh, and the MSRI in Berkeley for the excellent working conditions.

We would also like to thank the referees for their careful reading of the manuscript and insightful comments.

\section{Setup, notation, and some terminology} \label{sec:setupnotation}

\subsection*{Main Setup}\label{MainSetup}
The main results in this paper work in the following setup:
\begin{itemize}
 \item $S$ is a noetherian \emph{Nagata scheme} which is quasi-projective over a noetherian affine scheme (for the existence of good moduli spaces, we need $S$ to have characteristic $0$);
 \item if $S$ is also integral, regular, and one-dimensional, we call it a \emph{Dedekind scheme}, and often write $C$ instead of $S$;
 \item $X$ is a noetherian scheme of finite Krull dimension;
 \item $g\colon X \to S$ is a flat projective morphism (the notion of projective morphism we use is the one in \citestacks{01W8}); in our constructions of stability conditions we further assume $g$ to be smooth;
 \item $\cD\subset\Db(X)$ is an $S$-linear strong semiorthogonal component of finite cohomological amplitude (see Definitions~\ref{def-strong-sod} and \ref{def:finitecohamplitude}).
\end{itemize}

We assume $S$ to be Nagata since in Parts~\ref{part:ModuliSpaces} and \ref{part:higher-dimensional-bases} we need the valuative criterion for relative moduli spaces of stable objects, or for Quot spaces.
However, we will only consider DVRs that are essentially of finite type over $S$; this is sufficient to deduce universal closedness or properness if $S$ is locally given by a Nagata ring, see Section~\ref{subsection-nagata}.
We refer to \citestacks{032E} for the definition of Nagata rings.
It is preserved by morphisms essentially of finite type, and it is implied by being excellent \citestacks{07QV}.
Examples include localizations of finite type ring extensions of a field, of $\Z$, or more generally of a Dedekind domain with generic point of characteristic zero.

\smallskip

The general assumptions in each part of the paper are the following.
Some results work is greater generality and we will explain this by recalling the precise assumptions in each section.
\begin{description*}
\item[Part~\ref{part:sod-t-structure-families}]\mbox{}
\begin{enumerate}[(1)]
\item Sections~\ref{sec:SO}--\ref{section-bc-t-structure} work with $g \colon X \to S$ a morphism of quasi-compact schemes with affine diagonal and $X$ noetherian of finite Krull dimension; 
\item In Section~\ref{sec:flat-torsion-tf-obj}, $g$ is in addition flat;
\item In Section~\ref{sec:inducingtstructures}, the key results hold when $g$ is smooth and proper and $S$ is noetherian with an ample line bundle.
\end{enumerate}
\item[Part~\ref{part:ModuliSpaces}]\mbox{}
\begin{enumerate}[(a)]
 \item $g$ is a flat, proper, finitely presented morphism of schemes which are quasi-compact with affine diagonal, where $X$ is noetherian of finite Krull dimension;
 \item $\cD\subset\Db(X)$ is an $S$-linear strong semiorthogonal component of finite cohomological amplitude.
\end{enumerate}
\item[Part~\ref{part:HNStrCurve}]\mbox{}
\begin{enumerate}[(a)]
 \item $g \colon \cX \to C$ is as in Part~\ref{part:ModuliSpaces} and it is in addition projective, where $C$ is a Dedekind scheme;
 \item $\cD\subset\Db(\cX)$ is as in Part~\ref{part:ModuliSpaces}.
\end{enumerate}
\item[Part~\ref{part:higher-dimensional-bases}]\mbox{}
\begin{enumerate}[(a)]
 \item $g \colon \cX \to S$ is as in Part~\ref{part:ModuliSpaces} and it is in addition projective, where $S$ is a Nagata scheme which is quasi-projective over a noetherian affine scheme;
 \item In Section~\ref{sec:inducing}, we further assume $g$ to be smooth;
 \item $\cD\subset\Db(\cX)$ is as in Part~\ref{part:ModuliSpaces}.
\end{enumerate}
\item[Part~\ref{part:Tilting}]\mbox{}
\begin{enumerate}[(a)]
 \item $g\colon \cX\to S$ is as in Part~\ref{part:ModuliSpaces} and it is in addition smooth, projective of relative dimension $n\leqslant 3$, where $S$ is a Nagata scheme which is quasi-projective over a noetherian affine scheme.
 \item $\cD\subset\Db(\cX)$ is as in Part~\ref{part:ModuliSpaces}.
\end{enumerate}
\item[Part~\ref{part:CubicFourfolds}]\mbox{}
\begin{enumerate}[(a)]
 \item $S$ is a quasi-projective variety over $\C$;
 \item $g\colon \cX\to S$ is a smooth, projective family of cubic fourfolds.
\end{enumerate}\end{description*}

\subsection*{Derived categories}

For a scheme $X$ we consider various versions of the derived category: 
\begin{itemize}
\item the category of perfect complexes $\Dperf(X)$, 
\item the category of pseudo-coherent complexes with bounded cohomology $\Db(X)$, 
\item the category of pseudo-coherent complexes $\Dpc(X)$, and 
\item the unbounded derived category of $\cO_X$-modules with quasi-coherent cohomology $\Dqc(X)$, 
\item the unbounded derived category of $\cO_X$-modules $\rD(X)$.
\end{itemize}
These categories are related by inclusions 
\begin{equation*}
\Dperf(X) \subset \Db(X) \subset \Dpc(X) \subset \Dqc(X) \subset \rD(X).
\end{equation*}
For background on pseudo-coherent complexes, see~\citestacks{08E4}.
We note that a complex $E \in \rD(X)$ is pseudo-coherent if and only if there exists an open cover $X = \bigcup U_i$ such that $\res{E}{U_i}$ is quasi-isomorphic to a bounded above complex of finitely generated locally free sheaves on $U_i$;
indeed, the forward direction follows directly from the definitions, and the converse follows from~\citestacks{064U}.
If $X$ is noetherian, then pseudo-coherence boils down to a more classical notion: 
$\Dpc(X)$ coincides with the bounded above derived category of $\cO_X$-modules with coherent cohomologies \citestacks{08E8}.
Moreover, in the noetherian case we have equivalences 
$\Db(X) \simeq \Db(\Coh X)$ and $\Dqc(X) \simeq \rD(\Qcoh X)$.

We use the ``classical'' language of triangulated categories, with the following exceptions 
that occur in a few places in Sections~\ref{sec:SO} and \ref{section-bc-t-structure}.
The triangulated categories we consider arise as subcategories of 
the derived categories of schemes, and hence come with natural stable $\infty$-category enhancements.
All colimits in such categories are meant in the $\infty$-categorical sense.
Moreover, we sometimes use the enhanced structure to make natural constructions, e.g.~tensor products, with such categories; see Remark~\ref{Rem-S-linear} for further discussion.

We exclusively use the language of ``classical'' scheme theory.
Using derived algebraic geometry, it would be possible to remove various flatness and transversality 
assumptions.
We have not done so because our results already cover the situations of interest in applications.

Unless otherwise stated, all derived functors (e.g.~pushforward, pullback, tensor 
products) will be denoted as if they are underived.
We denote the $S$-relative derived sheaf Hom by 
\begin{equation*}
\cHom_S(-,-) = g_* \cHom(-,-).
\end{equation*}

Given $E \in \Dqc(S)$ and a morphism $T \to S$, we let $X_T$ denote the base change of $X$, and $E_T$ the pullback of $E$.
In particular, for a closed subset $W \subset S$, we write $i_W \colon X_W \to X$ for the embedding of the fiber over $W$, and $E_W = i_W^* E$ for the derived restriction.
We write $s \in S$ for (closed or non-closed) points of $S$, and $E_s$ for the pullback to the residue field $k(s)$.
In the case of a Dedekind scheme $C$, we write $p \in C$ for a closed point, $c \in C$ for an arbitrary point, $K$ for its fraction field, and $E_p$, $E_C$ or $E_K$ for the corresponding pullbacks.

In Section~\ref{sec:SO} we will consider an \emph{$S$-linear strong semiorthogonal component $\cD \subset \Db(X)$ of finite cohomological amplitude}; we will write $\cD_T$ for its base change to $T$ when that exists, in particular $\cD_s$ and $\cD_W$ for its base change to points or closed subschemes of $S$, and similarly $\cD_p$, $\cD_K$, or $\cD_c$ in the case of a Dedekind scheme $C$.

\newpage
\part{Semiorthogonal decompositions and t-structures in families}
\label{part:sod-t-structure-families}

\section{Semiorthogonal decompositions and base change}
\label{sec:SO}

In this section, following \cite{Kuz:base-change}, we recall some results on base change for semiorthogonal decompositions.

\subsection{Semiorthogonal decompositions}\label{subsec:SO}

\begin{Def}
\label{def-sod}
Let $\cD$ be a triangulated category.
A \emph{semiorthogonal decomposition} 
\begin{equation*}
\cD = \langle \cD_1, \dots, \cD_n \rangle
\end{equation*}
\index{D@$\cD=\langle \cD_1, \dots, \cD_n \rangle$, semiorthogonal decomposition}
is a sequence of full triangulated subcategories $\cD_1, \dots, \cD_n$ of $\cD$ 
--- called the \emph{components} of the decomposition --- such that: 
\begin{enumerate}[{\rm (1)}] 
\item \label{definition-sod-1} 
$\Hom(F, G) = 0$ for all $F \in \cD_i$, $G \in \cD_j$ and $i>j$.
\item \label{definition-sod-2}
For any $F \in \cD$, there is a sequence of morphisms
\begin{equation*}
0 = F_n \to F_{n-1} \to \cdots \to F_1 \to F_0 = F,
\end{equation*}
such that $\cone(F_i \to F_{i-1}) \in \cD_i$ for $1 \leqslant i \leqslant n$.
\end{enumerate}
\end{Def}

\begin{Rem}
\label{remark-projection-functors}
Condition~\eqref{definition-sod-1} of the definition implies the 
``filtration'' in~\eqref{definition-sod-2} and its ``factors'' are unique and functorial.
The functor $\pr_i \colon \cD \to \cD$ given by the $i$-th ``factor'', i.e., 
\begin{equation*}
\pr_i(F) = \cone(F_i \to F_{i-1}),
\end{equation*}
is called the \emph{projection functor} onto $\cD_i$.
\end{Rem}

A full triangulated subcategory $\cC \subset \cD$ is called \emph{right admissible} 
if the embedding functor \mbox{$\gamma \colon \CC \to \cD$} admits a right adjoint $\gamma^!$, 
\emph{left admissible} if $\gamma$ admits a left adjoint $\gamma^*$, 
and \emph{admissible} if $\gamma$ admits both right and left adjoints.
If $\cD = \langle \cD_1, \cD_2 \rangle$ is a semiorthogonal decomposition, 
then $\cD_1$ is left admissible and $\cD_2$ is right admissible.
Vice versa, if $\CC \subset \cD$ is left admissible then there is a semiorthogonal decomposition $\cD = \langle \CC, {^\perp}\CC \rangle$, and if $\CC \subset \cD$ is right 
admissible then there is a semiorthogonal decomposition $\cD = \langle \CC^{\perp}, \CC \rangle$.
Here, 
\begin{align*}
{}^{\perp}\CC & = \{ \, G \in \cD \sth \Hom(G, F) = 0 \text{ for all } F \in \CC \, \}, \\ 
\CC^{\perp} & = \{ \, G \in \cD \sth \Hom(F, G) = 0 \text{ for all } F \in \CC \, \}, 
\end{align*} 
denote respectively the \emph{left} and \emph{right orthogonals} to $\CC \subset \cD$.
These remarks lead to the notion of mutation functors.

\begin{Def}
\label{def-mutation-functor}
Let $\cC \subset \cD$ be an inclusion of triangulated categories.
If $\cC$ is right admissible, then the inclusion $\alpha \colon \cC^{\perp} \to \cD$ admits a left adjoint 
$\alpha^*$, and the functor 
\begin{equation*}
\rL_{\cC} = \alpha \circ \alpha^* \colon \cD \to \cD
\end{equation*} 
is called the \emph{left mutation functor} through $\cC$.
\index{LC@$\rL_{\cC}$, left mutation functor through $\cC$}
Similarly, if $\cC \subset \cD$ is left admissible, then the inclusion $\beta \colon {^\perp}\cC \to \cD$ 
admits a right adjoint $\beta^!$, and the functor 
\begin{equation*}
\rR_{\cC} = \beta \circ \beta^! \colon \cD \to \cD
\end{equation*} 
is called the \emph{right mutation functor} through $\cC$.
\index{RC@$\rR_{\cC}$, right mutation functor through $\cC$}
\end{Def}

\begin{Rem} When they exist, the mutation functors fit into exact triangles
\begin{equation*}
\gamma \circ \gamma^! \to \id_{\cD} \to \rL_{\cC} \quad \text{and} \quad 
\rR_{\cC} \to \id_{\cD} \to \gamma \circ \gamma^*.
\end{equation*} 
\end{Rem}

In our discussion of base change below, we need the following technical notions.

\begin{Def}
\label{def-strong-sod}
A semiorthogonal decomposition $\cD = \langle \cD_1, \dots, \cD_n \rangle$ is 
called \emph{strong} if for each $i$ the category $\cD_i$ is right admissible in $\cD$. We will call a subcategory $\cD' \subset \cD$ a \emph{strong semiorthogonal component} if it is part of a strong semiorthogonal decomposition.
\index{D@$\cD=\langle \cD_1, \dots, \cD_n \rangle$, semiorthogonal decomposition!strong}
\end{Def}

\begin{Rem}
Definition~\ref{def-strong-sod} is slightly different than the one given 
in~\cite[Definition~2.6]{Kuz:base-change}, but is easily seen to be equivalent.
\end{Rem}

\begin{Def} \label{def:finitecohamplitude}
Let $X$ be a scheme and let 
$\cD \subset \Dqc(X)$ be a triangulated subcategory.
If $Y$ is a scheme and $\Phi \colon \cD \to \Dqc(Y)$ is a triangulated functor, 
we say that $\Phi$ has \emph{cohomological amplitude} $[a,b]$ if 
\begin{equation*}
\Phi(\cD \cap \rD_{\qc}^{[p,q]}(X)) \subset \rD_{\qc}^{[p+a, q+b]}(Y) 
\end{equation*}
for all $p,q \in \Z$.
We say $\Phi$ has \emph{left finite cohomological amplitude} if $a$ can be chosen finite, 
\emph{right finite cohomological amplitude} if $b$ can be chosen finite, and 
\emph{finite cohomological amplitude} if $a$ and $b$ can be chosen finite.
We say that a semiorthogonal decomposition 
$\cD = \langle \cD_1, \dots, \cD_n \rangle$ is of \emph{(right or left) finite cohomological amplitude} 
if its projection functors have (right or left) finite cohomological amplitude.
\end{Def}

\subsection{Linear categories and base change} 
\label{subsection-linear-cats}
We will be interested in triangulated subcategories that occur as subcategories of the derived category of a scheme.
A crucial point is that there is a good notion of a ``family'' of such categories, made precise by the notion of a linear category.

\begin{Def}
Let $g \colon X \to S$ be a morphism of schemes.
A triangulated subcategory $\cD \subset \Dqc(X)$ is called \emph{$S$-linear} if it is stable 
with respect to tensoring by pullbacks of perfect complexes on $S$, i.e., if for every $F \in \cD$ and $G \in \Dperf(S)$ we have $F \otimes g^*G \in \cD$.
\index{D@$\cD=\langle \cD_1, \dots, \cD_n \rangle$, semiorthogonal decomposition!$S$-linear}
A semiorthogonal decomposition of $\cD$ is called \emph{$S$-linear} if all of its components are $S$-linear.
\end{Def}

\begin{Rem}
\label{remark-relative-Hom-sod}
By \cite[Lemma~2.7]{Kuz:base-change}, for an $S$-linear semiorthogonal decomposition the condition~\eqref{definition-sod-1} of Definition~\ref{def-sod} is equivalent to the following:
$\cHom_S(F,G) = 0$ for all $F \in \cD_i$, $G \in \cD_j$ and $i > j$.
\end{Rem}

In the rest of this subsection, we review a formalism of base change for linear categories from \cite{Kuz:base-change}.
Along the way we explain how to upgrade results proved in 
\cite{Kuz:base-change} for quasi-projective varieties over a field to more general settings.
Another generalization is discussed in \cite[Section~3]{belmans-sod}; we also refer to \cite[Proposition~3.13]{Antieau-Elmanto:semiorthogonal-descent} for a statement in derived algebraic geometry language.

Our main interest is in linear categories that occur as semiorthogonal components of $\Db(X)$ for a scheme $X$, 
but it is convenient to consider the whole chain of derived categories 
\begin{equation*} 
\Dperf(X) \subset \Db(X) \subset \Dpc(X) \subset \Dqc(X).
\end{equation*} 
Namely, we will see that base change for semiorthogonal decompositions of $\Db(X)$ can 
be obtained by a combination of inducing and restricting semiorthogonal decompositions 
along the above inclusions and base change for $\Dperf(X)$.

\begin{Lem}
\label{lem-sod-Db-perf}
Let $X \to S$ be a morphism of schemes where $X$ is noetherian of finite Krull dimension.
Let 
\begin{equation*}
\Db(X) = \langle \cD_1, \dots, \cD_n \rangle
\end{equation*} 
be a strong $S$-linear semiorthogonal decomposition.
Define $\cD_{i,\perf} = \cD_i \cap \Dperf(X)$.
Then there is an $S$-linear semiorthogonal decomposition 
\begin{equation*}
\Dperf(X) = \langle \cD_{1,\perf}, \dots, \cD_{n, \perf} \rangle .
\end{equation*} 
\end{Lem}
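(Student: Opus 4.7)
The plan is to verify the two defining conditions of a semiorthogonal decomposition for the proposed factors $\cD_{i,\perf}$, together with the $S$-linearity claim. Semiorthogonality is immediate from the given sod: if $F \in \cD_{i,\perf} \subset \cD_i$ and $G \in \cD_{j,\perf} \subset \cD_j$ with $i>j$, then $\Hom(F,G) = 0$ by condition~\eqref{definition-sod-1} of Definition~\ref{def-sod} for $\Db(X) = \langle \cD_1, \dots, \cD_n\rangle$. The $S$-linearity of each $\cD_{i,\perf}$ is inherited from that of $\cD_i$, because the tensor product of a perfect complex on $X$ with the pullback of a perfect complex from $S$ is again perfect.

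The content of the lemma is therefore the filtration condition. Given $F \in \Dperf(X)$, the strong sod on $\Db(X)$ produces a tower
\[
0 = F_n \to F_{n-1} \to \cdots \to F_1 \to F_0 = F
\]
in $\Db(X)$ with $\cone(F_i \to F_{i-1}) = \pr_i(F) \in \cD_i$. Since $\Dperf(X)$ is closed under cones and shifts, it is enough to establish the key claim that each projection $\pr_i(F)$ already lies in $\Dperf(X)$; granting this, induction on $i$ (starting from $F_n = 0$) shows every $F_i$ is perfect. By composing the mutation triangles associated to the strong sod, this reduces to the two-component case: if $\Db(X) = \langle \cD_1, \cD_2\rangle$ is a strong $S$-linear sod and $F \in \Dperf(X)$, then both $\gamma_2 \gamma_2^!(F)$ and $\rL_{\cD_2}(F)$ lie in $\Dperf(X)$; and since these two pieces fit into an exact triangle with $F$, it suffices to treat one of them.

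To prove this preservation of perfection, I would extend the given strong $S$-linear sod of $\Db(X)$ to an $S$-linear sod $\Dqc(X) = \langle \cD_{1,\qc}, \dots, \cD_{n,\qc}\rangle$, where $\cD_{i,\qc}$ is the closure of $\cD_i$ in $\Dqc(X)$ under small colimits; this extension is made precise using the stable $\infty$-categorical enhancements discussed in Section~\ref{subsection-linear-cats}, and requires exactly the right admissibility of each $\cD_i$. The inclusion functors $\cD_{i,\qc} \hookrightarrow \Dqc(X)$ then commute with arbitrary small colimits by construction, so their right adjoints preserve compact objects. Since $X$ is noetherian of finite Krull dimension, the compact objects of $\Dqc(X)$ are exactly the perfect complexes, and compactness of an object of $\cD_{i,\qc}$ is detected after embedding into $\Dqc(X)$. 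It follows that each projection functor $\pr_i$ sends $\Dperf(X)$ into $\cD_i \cap \Dperf(X) = \cD_{i,\perf}$, completing the argument.

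The main obstacle is the passage from $\Db(X)$ to $\Dqc(X)$: a semiorthogonal decomposition does not automatically extend from a triangulated category to a larger one containing arbitrary colimits, and one must verify that the colimit closures $\cD_{i,\qc}$ really do form an $S$-linear sod of $\Dqc(X)$ with the inclusion preserving all colimits. This is where the enhancement and the $S$-linearity, together with the strong admissibility hypothesis, are indispensable; once the extension is in place, the remaining compactness argument is formal.
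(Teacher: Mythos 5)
Your reductions are fine: semiorthogonality and $S$-linearity are immediate, and the content is indeed to show that the projection functors $\pr_i$ of the given sod send $\Dperf(X)$ into itself. The problem is the step you yourself flag as the ``main obstacle'': the claim that the colimit closures $\cD_{i,\qc}$ of the $\cD_i$ form a sod of $\Dqc(X)$. The $\infty$-categorical Ind-machinery does not do what you want here, because $\Dqc(X) = \Ind(\Dperf(X))$, \emph{not} $\Ind(\Db(X))$. Applying $\Ind$ to the sod of $\Db(X)$ produces a sod of $\Ind(\Db(X))$, a category which differs from $\Dqc(X)$ whenever $X$ is not regular (precisely because the objects of $\Db(X)$ are not compact in $\Dqc(X)$). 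Concretely, semiorthogonality of the colimit closures already breaks down: for $F \in \cD_i$ and $G = \colim G_\beta \in \cD_{j,\qc}$, you would need $\Hom(F, \colim G_\beta) = \colim \Hom(F, G_\beta)$, which is unjustified since $F$ need not be compact. The natural way to repair this --- first cutting down to perfect objects so you can apply $\Ind$ --- is exactly the content of the lemma, so the argument is circular. (A secondary issue: ``the inclusion commutes with colimits, so its right adjoint preserves compacts'' is the wrong implication; for the projection $\iota\iota^!$ to preserve compacts you would need both $\iota$ and $\iota^!$ to do so, and verifying the latter again reduces to the statement of the lemma.)

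The paper avoids the passage to $\Dqc(X)$ entirely. Its proof characterizes $\Dperf(X)$ \emph{intrinsically} inside $\Db(X)$ as the subcategory of \emph{homologically finite} objects, i.e., those $E$ with $\Hom(E, F[i]) = 0$ for all but finitely many $i$, for every $F \in \Db(X)$. It then invokes Orlov \cite[Proposition~1.10]{orlov-LG}, which says that for a strong sod the homologically finite objects inherit the decomposition, and \cite[Proposition~1.11]{orlov-LG} (suitably adapted) to identify homologically finite objects with perfect complexes. This sidesteps your obstacle, since it never needs to extend the sod to a larger category where the $\cD_i$ cease to consist of compact objects. The order of the paper's development is $\Db(X) \to \Dperf(X) \to \Dqc(X)$ (the last step being Lemma~\ref{lem-perf-qc}), precisely because the passage to $\Dqc$ requires starting from compacts.
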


\begin{proof}
An object $E \in \Db(X)$ is called \emph{homologically finite} if 
for all $F \in \Db(X)$ we have $\Hom(E, F[i]) = 0$ for all but finitely many $i \in \Z$.
Let $\Db(X)_{\hf} \subset \Db(X)$ denote the full triangulated subcategory of 
homologically finite objects, and let $\cD_{i, \hf} = \cD_{i} \cap \Db(X)_{\hf}$.
Then by \cite[Proposition~1.10]{orlov-LG} we have a semiorthogonal decomposition 
\begin{equation*}
\Db(X)_{\hf} = \langle \cD_{1,\hf}, \dots, \cD_{n,\hf} \rangle.
\end{equation*}
So it suffices to show that $\Db(X)_{\hf} = \Dperf(X)$.
This follows by an easy modification of the argument of \cite[Proposition~1.11]{orlov-LG}.
\end{proof}

We will often work with quasi-compact, quasi-separated schemes.
The relevance of these conditions for us is the following result.

\begin{Prop}[{\cite{neeman, thomason-trobaugh}}]
\label{Prop-qcqs-cg}
If $X$ is a quasi-compact and quasi-separated scheme, then $\Dqc(X)$ is compactly generated with compact objects the perfect complexes.
\end{Prop}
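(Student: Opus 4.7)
The plan is to combine the affine case (pure commutative algebra) with an induction on the size of an affine cover, using Thomason's extension theorem for perfect complexes as the key technical input. For $X = \Spec A$ affine, one has $\Dqc(X) \simeq \rD(A)$ and the structure sheaf $\cO_X$ is a compact generator: compactness follows from $\Hom(\cO_X, -) = \rH^0$ commuting with small coproducts on $\Dqc(X)$, and generation from the fact that $\Hom(\cO_X, M[i]) = \rH^i(M)$ for all $i$ detects whether $M$ is zero. The thick closure of $\cO_X$ in $\rD(A)$ equals the full subcategory of perfect modules (bounded complexes of finitely generated projectives), which recovers the identification of compacts with perfects in the affine case.

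For general quasi-compact and quasi-separated $X$, I would induct on the minimum number $n$ of affine opens needed to cover $X$. Suppose $X = U \cup V$ with $U$ affine and $V$ coverable by fewer affines, so that by the inductive hypothesis $\Dqc(V)$ admits a perfect compact generator $G_V$. Let $j \colon U \hookrightarrow X$ be the inclusion, with complement $Z = X \setminus U$ (which is quasi-compact because $X$ is quasi-separated). There is a localization sequence
\[
\{F \in \Dqc(X) \sth j^*F = 0\} \hookrightarrow \Dqc(X) \xrightarrow{j^*} \Dqc(U).
\]
The main obstacle — and the technical heart of Thomason--Trobaugh — is to produce perfect complexes on $X$ with prescribed restriction to $U$ (at least up to direct summands), and to produce a perfect complex on $X$ whose support is exactly $Z$. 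Granted this, one extends $G_V|_{U \cap V}$ to a perfect object on $X$ and combines it with a perfect complex supported on $Z$ to obtain a single perfect object $G \in \Dperf(X)$: its restriction $j^*G$ recovers a compact generator of $\Dqc(U)$, while the piece killed by $j^*$ compactly generates the kernel subcategory, which itself is handled by applying the induction to a suitable scheme structure on $Z$ (or by an infinite coproduct of Koszul-type complexes and the fact that compactly generated localization sequences admit compatible generators).

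Once a perfect compact generator $G$ is constructed, the general theory of compactly generated triangulated categories identifies the compact objects of $\Dqc(X)$ with the thick triangulated closure of $\{G\}$. That this closure equals $\Dperf(X)$ is a Zariski-local statement on $X$, reducing to the affine case already handled; conversely, every perfect complex is compact by a local check, since commutation of $\Hom$ out of the object with small coproducts can be verified on an affine cover via the projection formula and the affine case. The main difficulty throughout is thus genuinely the extension-of-perfect-complexes step in the inductive construction of the generator; the rest is a formal consequence of the theory of compactly generated categories.
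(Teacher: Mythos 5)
The paper does not prove this proposition --- it is cited directly to Neeman and Thomason--Trobaugh (and is also available as \cite[Propositions~3.6 and 3.9]{bzfn}, as noted in the surrounding Remark~\ref{Rem-qcqs-cg}), so there is no paper argument for you to match. Your sketch correctly identifies the shape of the standard proof: handle the affine case directly with $\cO_X$ as a compact generator, induct on the number of affine opens in a cover via a Mayer--Vietoris decomposition $X = U \cup V$, use a localization sequence and Thomason's extension theorem for perfect complexes (up to direct summands, i.e.\ the $K_0$-obstruction), and then identify compacts with perfects by a local thick-subcategory argument.

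That said, the inductive step as you have set it up is not quite right, and the confusion is worth flagging. You localize along the \emph{affine} open $j \colon U \hookrightarrow X$, so that $Z = X \setminus U$ is a closed subset of $V$. The kernel of $j^*$ is then $\Dqc_Z(X) \simeq \Dqc_Z(V)$, and you assert it is compactly generated --- but your inductive hypothesis only says that $\Dqc(V)$ is compactly generated, not that its full subcategory of objects supported on an arbitrary closed $Z \subset V$ with quasi-compact complement is. Producing a compact generator of $\Dqc_Z(V)$ for non-affine $V$ is not easier than the problem you started with. The standard fix, due to Bondal--Van den Bergh and Neeman, is to localize along the other open immersion $V \hookrightarrow X$: then the closed complement $X \setminus V$ lands inside the \emph{affine} piece $U$, where a single Koszul complex on generators of the vanishing ideal gives an explicit perfect compact generator of $\Dqc_{X \setminus V}(U) \simeq \Dqc_{X \setminus V}(X)$. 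One then extends $G_V$ (not $G_V|_{U \cap V}$) from $V$ to a perfect complex on $X$ --- possibly after replacing $G_V$ by $G_V \oplus G_V[1]$ so that its $K_0$-class lifts --- and takes the direct sum with the Koszul generator. Relatedly, the parenthetical ``applying the induction to a suitable scheme structure on $Z$'' does not work: $\Dqc_Z(X)$ is not $\Dqc(Z)$ for any closed-subscheme structure, and the Koszul-complex argument is the one that actually goes through. With these corrections the argument you outline is the standard one.
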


\begin{Rem}
\label{Rem-qcqs-cg} 
By \cite[Propositions 3.6 and 3.9]{bzfn}, for a scheme $X$ we have $\Dqc(X) = \Ind(\Dperf(X))$ if and only 
if $\Dqc(X)$ is compactly generated with compact objects the perfect complexes; 
in particular, by Proposition~\ref{Prop-qcqs-cg} this holds if $X$ is quasi-compact and quasi-separated.
Here, $\Dqc(X)$ and $\Dperf(X)$ are regarded as $\infty$-categories, and 
$\Ind(\Dperf(X))$ denotes the category of Ind-objects in the sense of \cite[Section~5.3]{lurie-HTT}.
This recipe for recovering $\Dqc(X)$ from $\Dperf(X)$ will be used several times below.

We also note that in \cite{bzfn}, $X$ is called \emph{perfect} if it has affine 
diagonal and $\Dqc(X) = \Ind(\Dperf(X))$.
In particular, a quasi-compact scheme with affine diagonal is perfect.
Below we will often consider such schemes in order to take advantage of the results of \cite{bzfn}.
\end{Rem} 

Next we show that a semiorthogonal decomposition of $\Dperf(X)$ induces one of 
$\Dqc(X)$ and $\Dpc(X)$.
We use the following terminology.
Let $\Phi \colon \cD \to \cD'$ be a functor between triangulated categories.
We say that $\Phi$ is \emph{compatible} with semiorthogonal decompositions 
$\cD = \langle \cD_1, \dots, \cD_n \rangle$ and $\cD' = \langle \cD'_1, \dots, \cD'_n \rangle$ 
if $\Phi(\cD_i) \subset \cD'_i$ for all $i$.
If $\Phi$ is fully faithful, then $\Phi$ is compatible if and only if $\cD_i = \Phi^{-1}(\cD'_i)$, 
see \cite[Lemma~3.3]{Kuz:base-change}.
Moreover, if $\Phi$ is fully faithful and $\cD' = \langle \cD'_1, \dots, \cD'_n \rangle$ is given, 
then $\cD_i = \Phi^{-1}(\cD'_i)$ defines a semiorthogonal decomposition of $\cD$ if and only if 
the image of $\Phi$ is preserved by the projection functors of the decomposition of $\cD'$, 
see \cite[Lemma~3.4]{Kuz:base-change}.

\begin{Lem}
\label{lem-perf-qc}
Let $X \to S$ be a morphism of schemes where $X$ is quasi-compact and quasi-separated.
Let 
\begin{equation}
\label{lem-perf-qc-sodperf}
\Dperf(X) = \langle \cD_1, \dots, \cD_n \rangle 
\end{equation}
be an $S$-linear semiorthogonal decomposition.
\begin{enumerate}[{\rm (1)}] 
\item 
\label{perf-to-qc}
Define $\cD_{i,\qc} \subset \Dqc(X)$ to be the minimal triangulated 
subcategory of $\Dqc(X)$ which is closed under arbitrary direct sums 
and contains $\cD_i$.
Then there is an $S$-linear semiorthogonal decomposition
\begin{equation}
\label{DqcX-sod}
\Dqc(X) = \langle \cD_{1,\qc}, \dots, \cD_{n,\qc} \rangle 
\end{equation} 
whose projection functors are cocontinuous.
\index{Dqc@$\cD_{\qc}$, quasi-coherent component}

\item 
\label{perf-to-pc} 
Assume that the decomposition~\eqref{DqcX-sod} has right finite cohomological amplitude.
Define $\cD_{i,\pc} = \cD_{i, \qc} \cap \Dpc(X)$.
Then there is an $S$-linear semiorthogonal decomposition 
\begin{equation}
\label{Dpc-sod}
\Dpc(X) = \langle \cD_{1, \pc}, \dots, \cD_{n, \pc} \rangle.
\end{equation} 

\item 
\label{qc-amplitude}
Assume that $X$ is noetherian of finite Krull dimension, 
and that the decomposition~\eqref{lem-perf-qc-sodperf} 
is induced via Lemma~\ref{lem-sod-Db-perf} by a strong $S$-linear decomposition 
\begin{equation*}
\label{DbX-sod}
\Db(X) = \langle \cD_{1}^{\rb}, \dots, \cD_{n}^{\rb} \rangle 
\end{equation*}
of right finite cohomological amplitude.
Then the inclusion $\Db(X) \to \Dqc(X)$ is compatible with the given decomposition of $\Db(X)$ and the decomposition~\eqref{DqcX-sod} of $\Dqc(X)$, and the projection functors of $\Dqc(X)$ have the same cohomological amplitude 
as those of $\Db(X)$.
\end{enumerate}
\end{Lem}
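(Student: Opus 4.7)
The plan is to exploit the identification $\Dqc(X) \simeq \Ind(\Dperf(X))$ from Remark~\ref{Rem-qcqs-cg} to Ind-extend the decomposition of $\Dperf(X)$ to $\Dqc(X)$, and then to restrict down to $\Dpc(X)$ and (in part~\eqref{qc-amplitude}) to $\Db(X)$ under the appropriate boundedness assumptions.

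For~\eqref{perf-to-qc}, I would take $\cD_{i,\qc}$ to be the essential image of $\Ind(\cD_i) \into \Ind(\Dperf(X)) \simeq \Dqc(X)$, which agrees with the smallest cocomplete triangulated subcategory of $\Dqc(X)$ containing $\cD_i$. Each perfect-level projection functor extends by Ind-completion to a cocontinuous endofunctor $\pi_i$ of $\Dqc(X)$ landing in $\cD_{i,\qc}$, and Ind-extension of the functorial filtration triangles on $\Dperf(X)$ yields the filtration required by Definition~\ref{def-sod} on any $E \in \Dqc(X)$ (using cocontinuity to see that the outer terms restrict to $0$ and to $E$ itself). Semiorthogonality reduces to a clean computation in the Ind-category: for $F = \colim_\alpha F_\alpha \in \cD_{i,\qc}$ and $G = \colim_\beta G_\beta \in \cD_{j,\qc}$ with $i > j$ (where $F_\alpha, G_\beta$ are perfect, hence compact in $\Dqc(X)$), one has
\[
\Hom(F, G) = \lim_\alpha \colim_\beta \Hom(F_\alpha, G_\beta) = 0,
\]
where each term vanishes by the given semiorthogonality of the $\Dperf(X)$-decomposition. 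The $S$-linearity of the resulting decomposition is inherited from that of $\Dperf(X)$ because tensoring by $g^*G'$ for $G' \in \Dperf(S)$ is cocontinuous and preserves each $\cD_{i,\perf}$.

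For~\eqref{perf-to-pc}, the main task is to show that $\pi_i$ preserves pseudo-coherence. Using the characterization of pseudo-coherent complexes by approximation by perfect complexes---every $E \in \Dpc(X)$ admits, for each $n$, a morphism $P \to E$ from a perfect $P$ whose cone lies in $\rD_{\qc}^{<n}(X)$---applying the cocontinuous $\pi_i$ gives $\pi_i(P) \to \pi_i(E)$ with $\pi_i(P) \in \cD_{i,\perf}$ (since $\pi_i$ extends the perfect projection) and cone in $\rD_{\qc}^{<n+b}(X)$ by the right finite amplitude hypothesis; this verifies that $\pi_i(E)$ is pseudo-coherent, and the induced decomposition then follows by restriction.

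For~\eqref{qc-amplitude}, compatibility of the decompositions on $\Dperf(X)$ is automatic, and extends to compatibility on $\Db(X)$ using Lemma~\ref{lem-sod-Db-perf} together with the uniqueness of the semiorthogonal filtration, so that the projection functors of $\Db(X)$ and $\Dqc(X)$ must agree on $\Db(X)$. The main obstacle, and the place where the argument requires genuine care, is controlling the cohomological amplitude on all of $\Dqc(X)$: given $E \in \rD_{\qc}^{[p,q]}(X)$, I would write $E$ as a filtered colimit of objects in $\Db^{[p,q]}(X)$---available because $X$ is noetherian of finite Krull dimension, so every quasi-coherent sheaf is a filtered colimit of coherent subsheaves and this lifts to cohomology-bounded complexes---and then invoke cocontinuity of $\pi_i$ together with exactness of filtered colimits in the Grothendieck abelian heart of $\Dqc(X)$ to conclude that $\pi_i(E)$ has cohomology concentrated in $[p+a, q+b]$.
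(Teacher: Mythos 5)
Your treatment of parts~\eqref{perf-to-qc} and~\eqref{perf-to-pc} is essentially the paper's approach: for~\eqref{perf-to-qc} the paper explicitly offers the alternative route $\Dqc(X) = \Ind(\Dperf(X))$ plus \cite[Lemma~3.12]{NCHPD}, and your $\lim$/$\colim$ computation of semiorthogonality is correct (to be read at the enhanced level, though since every individual group vanishes there is no $\lim^1$ obstruction even at the homotopy-category level). Your characterization of pseudo-coherence in~\eqref{perf-to-pc}---one perfect approximation with cone in $\rD_{\qc}^{<n}$ at a time, rather than a compatible colimit system---is equivalent to the characterization the paper invokes (\citestacks{0DJN}) on a quasi-compact quasi-separated scheme, and your amplitude-shift argument with it is sound.

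Part~\eqref{qc-amplitude} has a genuine gap in the compatibility step. You assert that compatibility of $\Db(X) \to \Dqc(X)$ with the two decompositions ``extends'' from $\Dperf(X)$ ``using Lemma~\ref{lem-sod-Db-perf} together with the uniqueness of the semiorthogonal filtration.'' Uniqueness of the filtration only operates \emph{within} a fixed semiorthogonal decomposition: it tells you the $\Db$-filtration of $F \in \Db(X)$ is unique among filtrations with graded pieces in the $\cD_j^{\rb}$, and the $\Dqc$-filtration is unique among those with graded pieces in the $\cD_{j,\qc}$. To conclude the two filtrations coincide you need to know a priori either that each $\pr_j^{\rb}(F)$ lies in $\cD_{j,\qc}$, or that each $\hat{\pr}_j(F)$ lies in $\cD_j^{\rb}$---and both of these are precisely what you are trying to prove. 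In other words, the containment $\cD_i^{\rb} \subset \cD_{i,\qc}$ is not automatic: $\cD_{i,\qc}$ is generated under colimits by $\cD_{i,\perf}$, and an object of $\cD_i^{\rb}$ is a colimit of \emph{some} perfects $F_k$, but these $F_k$ need not lie in $\cD_{i,\perf}$. What is actually needed is the explicit computation $\hat{\pr}_i(F) \simeq \colim \pr_i^{\rb}(F_k) \simeq F$ for $F \in \cD_i^{\rb}$, where the last isomorphism is established by chasing the triangle $\pr_i^{\rb}(F_k) \to F \to \pr_i^{\rb}(C_k)$ (with $C_k = \cone(F_k \to F)$) using the right finite amplitude of $\pr_i^{\rb}$ to control the truncations, and then invoking a colimit criterion (the paper uses \citestacks{0CRK}). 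This gap also undermines your amplitude argument: writing $E = \colim E_\alpha$ with $E_\alpha \in \Db(X)^{[p,q]}$ and applying cocontinuity only helps if you know $\hat{\pr}_i(E_\alpha) = \pr_i^{\rb}(E_\alpha)$, which is exactly the compatibility you have not yet established.
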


\begin{proof}
Using Proposition~\ref{Prop-qcqs-cg}, 
the argument of \cite[Proposition~4.2]{Kuz:base-change} 
goes through to prove~\eqref{perf-to-qc} in the stated generality.
Alternatively, $\Dqc(X) = \Ind (\Dperf(X))$ by {Remark~\ref{Rem-qcqs-cg}}, 
so the result follows from the more general \cite[Lemma~3.12]{NCHPD}, 
which also shows $\cD_{i, \qc} = \Ind(\cD_i)$.

For~\eqref{perf-to-pc} we must show that the projection functors $\hat{\pr}_i \colon \Dqc(X) \to \Dqc(X)$ 
of the decomposition~\eqref{DqcX-sod} preserve $\Dpc(X)$.
We use the following characterization of pseudo-coherent objects on a quasi-compact and quasi-separated scheme, see \citestacks{0DJN}: 
an object $F \in \Dqc(X)$ is pseudo-coherent if and only if $F = \colim_{k \in \Z} F_k$ where $F_k$ is perfect and for any $n \in \Z$ the map $\tau^{\geqslant n} F_k \to \tau^{\geqslant n} F$ is an isomorphism for $k \gg 0$.
Given such an $F$, by the cocontinuity of $\hat{\pr}_i$ we have 
\begin{equation}
\label{hatpri-F}
\hat{\pr}_i(F) \simeq \colim \hat{\pr}_i(F_k).
\end{equation}
By construction, $\hat{\pr}_i$ restricts to the projection functor of the decomposition 
\eqref{lem-perf-qc-sodperf} of $\Dperf(X)$, so $\hat{\pr}_i(F_k)$ is perfect.
Defining $C_k$ as the cone of $F_k \to F$, the above condition on the truncations of $F_k \to F$ can be rephrased as follows:
for any $n \in \Z$ we have $C_k \in \rD_{\qc}^{\leqslant n}(X)$ for $k \gg 0$.
By the assumption that $\hat{\pr}_i$ has right finite cohomological amplitude, it follows that 
for any $n \in \Z$ we have \mbox{$\hat{\pr}_i(C_k) \in \rD_{\qc}^{\leqslant n}(X)$} for \mbox{$k \gg 0$}.
In other words, for any $n \in \Z$ the map $\tau^{\geqslant n} \hat{\pr}_i(F_k) \to \tau^{\geqslant n} \hat{\pr}_i(F)$ is an isomorphism for $k \gg 0$.
All together, this proves that $\hat{\pr}_i(F)$ satisfies the above criterion for pseudo-coherence.

Now we turn to~\eqref{qc-amplitude}.
To show the compatibility of $\Db(X) \to \Dqc(X)$, we must show that if $F \in \cD_i^{\rb}$ then $\hat{\pr}_i(F) \simeq F$.
Since $F$ is in particular pseudo-coherent, the description~\eqref{hatpri-F} above applies, 
so we need to show $\colim \hat{\pr}_i(F_k) \simeq F$.
By construction both $\hat{\pr}_i$ and the projection functor $\pr_{i}^{\rb}$ of~\eqref{DbX-sod} 
restrict to the same projection functor of the decomposition 
\eqref{lem-perf-qc-sodperf}, hence $\colim \hat{\pr}_i(F_k) \simeq \colim {\pr}_i^{\rb}(F_k)$.
As above, we consider the cone $C_k$ of $F_k \to F$, which we note lies in $\Db(X)$ in our current situation.
Since $\pr_{i}^{\rb}(F) \simeq F$, we obtain an exact triangle 
\begin{equation*}
\pr_i^{\rb}(F_k) \to F \to \pr_i^{\rb}(C_k).
\end{equation*} 
Using the assumption that $\pr_i^{\rb}$ has right finite cohomological amplitude, the argument 
from the previous paragraph shows that for any $n \in \Z$ the map 
$\tau^{\geqslant n} {\pr}^{\rb}_i(F_k) \to \tau^{\geqslant n} F$ is an isomorphism for $k \gg 0$.
Applying \citestacks{0CRK} 
(taking $H$ there to be the cohomology functors on $\rD(X)$), it follows that 
$\colim {\pr}^{\rb}_i(F_k) \simeq F$, as required.

Finally, we show that if $\pr_i^{\rb}$ has cohomological amplitude $[a_i, b_i]$, then 
so does $\hat{\pr}_i$.
In other words, we claim that for any object $F \in \rD_{\qc}^{[p,q]}(X)$ we have 
$\hat{\pr}_i(F) \in \rD_{\qc}^{[p+a_i, q+b_i]}(X)$.
By Lemma~\ref{lemma-qc-colimit-Db} below, 
we can write such an object as filtered colimit $F = \colim F_\alpha$ where $F_\alpha \in \Db(X)^{[p,q]}$.
Then we have 
\begin{equation*}
\hat{\pr}_i(F) \simeq \colim \hat{\pr}_i(F_\alpha) \simeq \colim \pr_i^{\rb}(F_\alpha), 
\end{equation*} 
where the second equivalence holds by the compatibility of $\Db(X) \to \Dqc(X)$ 
shown above.
But $\pr_i^{\rb}(F_\alpha) \in \Db(X)^{[p+a_i, q+b_i]}$, so 
Lemma~\ref{lemma-qc-colimit-Db} gives $\hat{\pr}_i(F) \in \rD_{\qc}^{[p+a_i, q+b_i]}(X)$.
\end{proof}

\begin{Lem}
\label{lemma-qc-colimit-Db}
Let $X$ be a noetherian scheme.
Let $F \in \Dqc(X)$.
Then $F \in \rD_{\qc}^{[a,b]}(X)$ if and only if $F = \colim F_\alpha$ for a filtered system of 
$F_\alpha \in \Db(X)^{[a,b]}$.
\end{Lem}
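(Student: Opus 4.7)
The plan is to prove each direction separately. For the ``if'' direction I use that cohomology $\cH^i \colon \Dqc(X) \to \Qcoh(X)$ commutes with filtered colimits; this is because $\Qcoh(X)$ is a Grothendieck abelian category, so filtered colimits there are exact and model the colimit in $\Dqc(X)$. Consequently $\cH^i(F) = \colim \cH^i(F_\alpha) = 0$ for $i \notin [a,b]$, giving $F \in \rD_{\qc}^{[a,b]}(X)$.

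For the ``only if'' direction I first invoke the classical fact, valid since $X$ is noetherian, that $\Dqc(X) \simeq \rD(\Qcoh(X))$; combined with the good truncation $\tau^{\geq a}\tau^{\leq b}$, this lets me represent $F$ by an honest complex of quasi-coherent sheaves concentrated in degrees $[a,b]$,
\[
F = \bigl[\, F^a \to F^{a+1} \to \cdots \to F^b \,\bigr].
\]
Since $X$ is noetherian, every quasi-coherent sheaf on $X$ is the filtered union of its coherent subsheaves, and every quotient of a coherent sheaf is coherent.

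The core step is the construction of the filtered system. Let $I$ be the poset of tuples $\alpha = (G_a, G_{a+1}, \ldots, G_b)$ with $G_i \subset F^i$ a coherent subsheaf, ordered by componentwise inclusion; $I$ is filtered since any two tuples admit a common upper bound given by componentwise sum. To $\alpha$ I associate the subcomplex $F_\alpha \subset F$ defined in degree $i$ by
\[
F_\alpha^i \;:=\; G_i + d(G_{i-1}) \quad (\text{with } G_{a-1}:=0).
\]
Each $F_\alpha^i$ is coherent (as a sum of coherent subsheaves of $F^i$), and the identity $d(F_\alpha^i) = d(G_i) \subset F_\alpha^{i+1}$ shows that $F_\alpha$ is a genuine subcomplex, so $F_\alpha \in \Db(X)^{[a,b]}$. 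Varying $\alpha$, any coherent $G \subset F^i$ is contained in $F_\alpha^i$ for the tuple with $G_i = G$ and all other components zero, hence $\colim_\alpha F_\alpha^i = F^i$ degreewise. Because filtered colimits in $\Qcoh(X)$ are exact and realize the colimit in $\Dqc(X)$, this degreewise identification promotes to $\colim_\alpha F_\alpha \simeq F$ in $\Dqc(X)$, as required.

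The only non-formal point is the well-definedness of the subcomplexes $F_\alpha$: an arbitrary tuple of coherent subsheaves of the $F^i$ need not be closed under $d$, and the correction term $d(G_{i-1})$ is the minimal adjustment that closes it up while keeping each term coherent. Everything else is standard manipulation inside a Grothendieck abelian category.
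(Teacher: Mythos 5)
Your proof is correct, but for the ``only if'' direction you take a genuinely different route from the paper. The paper uses the $\Ind$-structure $\Dqc(X) = \Ind(\Dperf(X))$ (available from Remark~\ref{Rem-qcqs-cg}) to write $F = \colim G_\alpha$ with $G_\alpha$ perfect, and then applies the good truncation $\tau^{\geq a}\tau^{\leq b}$ termwise, using the already-observed commutation of truncation with filtered colimits to get $F \simeq \colim \tau^{\geq a}\tau^{\leq b} G_\alpha$. You instead represent $F$ by an honest complex in degrees $[a,b]$ and build a filtered system of bounded coherent subcomplexes $F_\alpha$ directly, using the noetherian fact that a quasi-coherent sheaf is the filtered union of its coherent subsheaves; the correction term $d(G_{i-1})$ in your definition of $F_\alpha^i$ is exactly what is needed to close the tuple of subsheaves into a subcomplex while staying coherent. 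Your construction is more elementary and self-contained --- it does not invoke $\Ind$-categories at all --- whereas the paper's argument is shorter and recycles the $\Ind$-presentation already set up for other purposes; the paper's method also transports more readily to situations (e.g.\ quasi-compact quasi-separated but non-noetherian schemes, with $\Db$ replaced by $\Dperf$) where one no longer has ``every quasi-coherent sheaf is a union of coherent subsheaves'', while your approach is tied to the noetherian hypothesis. For the ``if'' direction your argument matches the paper's in substance: both reduce to the fact that cohomology/truncation in $\rD(\QCoh X) \simeq \Dqc(X)$ commutes with filtered colimits (the paper cites Lurie; you invoke exactness of filtered colimits in a Grothendieck category, which is the underlying reason).
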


\begin{proof}
By \citestacks{09T4} we have $\Dqc(X) \simeq \rD(\QCoh X)$, so by \cite[Propositions 1.3.5.21 and 1.4.4.13]{lurie-HA} the truncation functors for the standard t-structure on $\Dqc(X)$ commute with filtered colimits.
This implies that if $F = \colim F_\alpha$ with $F_\alpha \in \Db(X)^{[a,b]}$, 
then $F \in \rD_{\qc}^{[a,b]}(X)$.
Conversely, since $\Dqc(X) = \Ind(\Dperf(X))$ 
by {Remark~\ref{Rem-qcqs-cg}}, 
any $F \in \Dqc(X)$ can be expressed as a filtered colimit 
$F = \colim G_{\alpha}$ where $G_{\alpha} \in \Dperf(X)$.
If $F \in \rD_{\qc}^{[a,b]}(X)$, then we have 
\begin{equation*}
F \simeq \tau^{\geqslant a} \tau^{\leqslant b} F \simeq \colim \tau^{\geqslant a} \tau^{\leqslant b} G_{\alpha} .
\end{equation*} 
Hence we may take $F_{\alpha} = \tau^{\geqslant a} \tau^{\leqslant b} G_{\alpha} \in \Db(X)^{[a,b]}$.
\end{proof} 

Now we discuss base change for semiorthogonal decompositions.
Given morphisms of schemes $g \colon X \to S$ and $\phi \colon T \to S$, we 
use the following notation for the base change diagram: 
\begin{equation*}
\label{diagram-bc}
\xymatrix{
X_{T} \ar[d]_{g'} \ar[r]^{\phi'} & X \ar[d]^{g} \\
T \ar[r]^{\phi} & S 
}
\end{equation*}
Following~\cite{Kuz:base-change}, we say that $\phi \colon T \to S$ is \emph{faithful} with respect to $g \colon X \to S$ if the canonical morphism of functors $\phi^*g_* \to g'_*\phi'^*$ is an isomorphism; 
this holds, for instance, if either $\phi \colon T \to S$ or $g \colon X \to S$ is flat.

\begin{Prop}
\label{prop-sod-bc-perf-qc} 
Let $g \colon X \to S$ be a morphism of {schemes which are quasi-compact with affine diagonal}.
Let 
\begin{equation*}
\Dperf(X) = \langle \cD_1, \dots, \cD_n \rangle 
\end{equation*} 
be an $S$-linear semiorthogonal decomposition.
Let $\phi \colon T \to S$ be a morphism from {a scheme $T$ which is quasi-compact with affine diagonal,}
such that $\phi$ is faithful with respect to $g$.
Define the following categories: 
\begin{enumerate}[{\rm (1)}] 
\item $\cD_{iT} \subset \Dperf(X_T)$ is the minimal 
triangulated category subcategory which is idempotent complete\footnote{Recall that an additive category is idempotent complete if any idempotent $e \colon F \to F$ arises from a splitting $A = \operatorname{Im}(e) \oplus \Ker(e)$.} and 
contains $\phi'^*F \otimes g'^*G$ for all $F \in \cD_{i}$ 
and $G \in \Dperf(T)$.

\item $(\cD_{i,\qc})_T \subset \Dqc(X_T)$ is the 
minimal triangulated subcategory which is closed under arbitrary 
direct sums and contains $\cD_{iT}$.
\end{enumerate}
Then there are $T$-linear semiorthogonal decompositions 
\begin{align}
 \Dperf(X_T) & = \langle \cD_{1T}, \dots, \cD_{nT} \rangle , \\ 
\label{DqcXT0} \Dqc(X_T) & = \langle (\cD_{1,\qc})_T, \dots, (\cD_{n,\qc})_T \rangle , 
\end{align} 
where the projection functors of~\eqref{DqcXT0} are cocontinuous.
Moreover, the functors $\phi'_*$ and $\phi'^*$ are compatible with all of the 
above decompositions.
\end{Prop}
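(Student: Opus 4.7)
The approach is to first produce the semiorthogonal decomposition on $\Dqc(X_T)$ and then restrict to perfect complexes. The key technical input is that under our hypotheses (quasi-compact with affine diagonal) we have $\Dqc = \Ind(\Dperf)$ by Remark~\ref{Rem-qcqs-cg}, so semiorthogonal decompositions transfer freely between the two worlds and cocontinuous functors are determined by their restrictions to $\Dperf$. As a preliminary step, I apply Lemma~\ref{lem-perf-qc}\eqref{perf-to-qc} to the given decomposition to obtain a cocontinuous $S$-linear SOD $\Dqc(X) = \langle \cD_{1,\qc}, \ldots, \cD_{n,\qc} \rangle$ with $\cD_{i,\qc} = \Ind(\cD_i)$, together with cocontinuous projection functors $\hat\pr_i$.

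Since each $\hat\pr_i$ is $\Dperf(S)$-linear and cocontinuous, base-changing along $\phi$ via the identification $\Dqc(X_T) \simeq \Dqc(X) \otimes_{\Dqc(S)} \Dqc(T)$ (valid by \cite{bzfn} for perfect schemes) produces cocontinuous functors $\hat\pr_{iT} \colon \Dqc(X_T) \to \Dqc(X_T)$. Applied to an arbitrary object of $\Dqc(X_T)$, these yield the required filtration in Definition~\ref{def-sod}\eqref{definition-sod-2}, once one verifies that their images lie in $(\cD_{i,\qc})_T$; this is a direct computation on the generating objects $\phi'^* F_0 \otimes g'^* H$ with $F_0 \in \cD_i$ and $H \in \Dperf(T)$, together with cocontinuity.

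For the semiorthogonality condition, take $F \in (\cD_{i,\qc})_T$ and $G \in (\cD_{j,\qc})_T$ with $i > j$. By cocontinuity and the generation of each $(\cD_{k,\qc})_T$, I reduce to the case $F = \phi'^* F_0 \otimes g'^* H$ and $G = \phi'^* G_0 \otimes g'^* K$ with $F_0 \in \cD_i$, $G_0 \in \cD_j$, $H,K \in \Dperf(T)$. Adjunction and the projection formula rewrite
\[
\Hom_{X_T}(F,G) \simeq \rR\Gamma\bigl(T,\, H^{\vee} \otimes K \otimes g'_* \cHom(\phi'^* F_0, \phi'^* G_0)\bigr),
\]
and the faithfulness of $\phi$ with respect to $g$ identifies the relative sheaf-Hom with $\phi^* \cHom_S(F_0, G_0)$, which vanishes by Remark~\ref{remark-relative-Hom-sod}.

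Finally, to descend to $\Dperf(X_T)$, I observe that each $\hat\pr_{iT}$ preserves perfect complexes: on the explicit generators $\phi'^* F_0 \otimes g'^* H$ this follows because they are compact and $\hat\pr_{iT}$ of such an object agrees with $(\pr_i F_0)$ tensored against $g'^* H$ up to pullback, so the result lies in $\cD_{iT}$ by construction. Thus the cocontinuous SOD on $\Dqc(X_T)$ restricts to a $T$-linear SOD on $\Dperf(X_T)$ with components $\cD_{iT}$. The compatibility of $\phi'^*$ with all decompositions is immediate from the generators defining $\cD_{iT}$ and $(\cD_{i,\qc})_T$; the compatibility of $\phi'_*$ follows from the projection formula $\phi'_*(\phi'^* F_0 \otimes g'^* G) \simeq F_0 \otimes g^* \phi_* G$ on generators, together with cocontinuity in the $\Dqc$ case. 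The main obstacle throughout is the semiorthogonality verification, where one must carefully commute $\Hom$, pushforward, and pullback using only faithfulness (rather than flatness) to invoke base change; everything else is formal bookkeeping within the Ind-category framework.
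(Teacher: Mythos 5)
Your proposal takes essentially the same route as the paper's proof, which cites Kuznetsov's argument and supplements it with the $\infty$-categorical tensor product formalism of \cite{bzfn} and \cite{NCHPD}; you are reconstructing in more detail what the paper compresses into a reference to ``[NCHPD, Lemma 3.15 and 3.12].'' The semiorthogonality computation, the reduction to generators by compactness and cocontinuity, and the descent from $\Dqc$ to $\Dperf$ are all correct in spirit.

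The one place where you are imprecise about a genuinely load-bearing point is at the very start, when you invoke $\Dqc(X_T) \simeq \Dqc(X) \otimes_{\Dqc(S)} \Dqc(T)$ ``by \cite{bzfn} for perfect schemes.'' The BZFN theorem identifies the right-hand side with $\Dqc$ of the \emph{derived} fiber product $X \times^{\mathrm{der}}_S T$, not of $X_T$; the two agree only when $X$ and $T$ are Tor-independent over $S$. The hypothesis that $\phi$ is faithful with respect to $g$ is precisely what guarantees this, and the paper flags it explicitly (``Note that $X_T$ agrees with the derived fiber product\ldots since $\phi$ is faithful''). Your closing remark identifies the semiorthogonality check as the only place faithfulness enters; in fact it enters earlier and more fundamentally here, since without it the tensor product you use to define $\hat\pr_{iT}$ is not a functor on $\Dqc(X_T)$ at all. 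Relatedly, when descending to $\Dperf(X_T)$ you need that $\Dperf(X_T)$ is the thick closure of the external products $\phi'^* F_0 \otimes g'^* H$ --- this is the ``claim'' the paper singles out and proves via the same BZFN equivalence at the perfect level; you use it implicitly when concluding from the behaviour of $\hat\pr_{iT}$ on generators that it preserves all of $\Dperf(X_T)$. Neither issue is fatal, but both should be stated: they are exactly the places where the quasi-compact-with-affine-diagonal hypothesis and the faithfulness hypothesis do real work, beyond ``formal bookkeeping.''
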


\begin{proof}
In the setting of quasi-projective varieties over a field, this is a combination of \cite[Proposition~5.1 and 5.3]{Kuz:base-change}.
The proof there goes through once we know the following claim: the category $\Dperf(X_T)$ coincides with the minimal idempotent complete triangulated subcategory of $\Dqc(X_T)$ which is idempotent complete and contains $\phi'^*F \otimes g'^*G$ for all $F \in \cD_{i}$ and $G \in \Dperf(T)$.
Note that $X_T$ agrees with the derived fiber product of $X$ and $T$ over $S$, 
since $\phi$ is faithful with respect to $g$.
Hence by \cite[Theorem~1.2]{bzfn} we have an equivalence 
\begin{equation*}
\Dperf(X_T) \simeq \Dperf(X) \otimes_{\Dperf(S)} \Dperf(T), 
\end{equation*}
so the above claim follows from \cite[Lemma~2.7]{NCHPD}.
Alternatively, we note that the proposition follows by combining the above equivalence, 
the fact that $X_T$ is perfect (see \cite[Proposition~3.24]{bzfn}), and \cite[Lemma~3.15 and 3.12]{NCHPD}.
\end{proof} 

\begin{Rem}
\label{Rem-S-linear}
As hinted in the proof, Proposition~\ref{prop-sod-bc-perf-qc} can be formulated in a much more general setting, 
which is sometimes useful.
We briefly summarize the situation; see \cite[Section~2.3]{NCHPD} for details.
Namely, there is an abstract notion of an \emph{$S$-linear $\infty$-category} (called simply an 
``$S$-linear category'' in \cite{NCHPD}), which is a small 
idempotent-complete stable $\infty$-category $\cC$ equipped with a module structure over 
$\Dperf(S)$.
These categories are organized into an $\infty$-category $\Cat_S$.
As an example, the semiorthogonal components $\cD_i$ in Proposition~\ref{prop-sod-bc-perf-qc} 
have a canonical $S$-linear $\infty$-category structure, and hence may be regarded as objects of $\Cat_S$.

Further, there is a notion of a \emph{presentable $S$-linear $\infty$-category} (called simply a 
``presentable $S$-linear category'' in \cite{NCHPD}), which is a presentable 
$\infty$-category equipped with a module structure over $\Dqc(S)$.
These categories also organize into an $\infty$-category $\PrCat_S$.
The operation of taking Ind-categories gives a functor $\Ind \colon \Cat_S \to \PrCat_S$.
For instance, $\cD_{i, \qc} = \Ind(\cD_i)$ is an object of $\PrCat_S$.

There are several advantages of working in this setting. For instance, for any $S$-scheme $T$ 
we get categories $\Dperf(T) \in \Cat_S$ and $\Dqc(T) \in \PrCat_S$, and using the monoidal structures 
on $\Cat_S$ and $\PrCat_S$ we can form for any $\cC \in \Cat_S$ and $\cC' \in \PrCat_S$ new 
categories 
\begin{equation*}
\cC_T = \cC \otimes_{\Dperf(S)} \Dperf(T) \in \Cat_T \quad \text{and} \quad \cC'_T = \cC' \otimes_{\Dqc(S)} \Dqc(T) \in \PrCat_T. 
\end{equation*}
This construction makes sense for \emph{any} $S$-scheme $T$, and plays well with semiorthogonal 
decompositions by \cite[Lemma~3.15 and 3.12]{NCHPD}. 
In case $\cC = \cD_i$ and $\cC' = \cD_{i, \qc}$, this construction recovers the categories from 
Proposition~\ref{prop-sod-bc-perf-qc} \emph{if} $T \to S$ is faithful with respect to $g$ and $T$ is 
{quasi-compact with affine diagonal}. 

Besides the use of this technology in the proof of Proposition~\ref{prop-sod-bc-perf-qc}, the only 
other time we will need it in this paper is in the proof of Theorem~\ref{thm-Dqc-bc} {below}. 
\end{Rem}

\begin{Thm}
\label{theorem-bc-sod}
Let $g \colon X \to S$ be a morphism of {schemes which are quasi-compact with affine diagonal,} 
where $X$ is noetherian of finite Krull dimension. 
Let 
\begin{equation}
\label{DbX-bc-sod}
\Db(X) = \langle \cD_1, \dots, \cD_n \rangle
\end{equation} 
be a strong $S$-linear semiorthogonal decomposition. 
Let $\phi \colon T \to S$ be a morphism from 
{a scheme $T$ which is quasi-compact with affine diagonal,} 
such that $\phi$ is faithful with respect to~$g$. 
Let 
\begin{equation*}
(\cD_{i,\perf})_T \subset \Dperf(X_T) \quad \text{and} \quad (\cD_{i,\qc})_T \subset \Dqc(X_T) 
\end{equation*}
be the categories obtained by combining Lemma~\ref{lem-sod-Db-perf} 
and Proposition~\ref{prop-sod-bc-perf-qc}, and define 
\begin{equation*}
(\cD_{i,\pc})_T = (\cD_{i,\qc})_T \cap \Dpc(X_T) \quad \text{and} \quad 
\cD_{iT} = (\cD_{i,\qc})_T \cap \Db(X_T) . 
\end{equation*} 
Then there are $T$-linear semiorthogonal decompositions 
\begin{align}
\label{DperfXT} \Dperf(X_T) & = \langle (\cD_{1, \perf})_T, \dots, (\cD_{n,\perf})_T \rangle, \\ 
\label{DqcXT} \Dqc(X_T) & = \langle (\cD_{1,\qc})_T, \dots, (\cD_{n,\qc})_T \rangle , 
\end{align} 
where the projection functors of~\eqref{DqcXT} are cocontinuous. 
If the decomposition~\eqref{DbX-bc-sod} has right finite cohomological amplitude, 
then the projection functors of~\eqref{DqcXT} have the same cohomological amplitude 
as those of~\eqref{DbX-bc-sod}. 
In this case, we have a $T$-linear semiorthogonal decomposition 
\begin{equation}
\label{DpcXT} \Dpc(X_T) = \langle (\cD_{1,\pc})_T, \dots, (\cD_{n, \pc})_T \rangle , 
\end{equation}
and if~\eqref{DbX-bc-sod} has finite cohomological amplitude we have a $T$-linear semiorthogonal decomposition 
\index{DT@$\cD_T$, base change of an $S$-linear strong semiorthogonal component $\cD \subset \Db(X)$ via $T\to S$} 
\begin{equation}
\label{DbXT} \Db(X_T) = \langle \cD_{1T}, \dots, \cD_{nT} \rangle. 
\end{equation}
Moreover, the functors $\phi'_*$ and $\phi'^*$ are compatible with all of the above decompositions. 
\end{Thm}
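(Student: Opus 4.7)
The strategy is to bootstrap the given $\Db(X)$-decomposition through a chain of refinements: I first pass to a $\Dperf(X)$-decomposition via Lemma~\ref{lem-sod-Db-perf}, then base change via Proposition~\ref{prop-sod-bc-perf-qc} to obtain the $\Dperf(X_T)$- and $\Dqc(X_T)$-decompositions, and finally restrict back to $\Dpc(X_T)$ and $\Db(X_T)$ using Lemma~\ref{lem-perf-qc}. All the assertions in the theorem except the cohomological amplitude claim follow directly from this assembly; the amplitude transfer across the base change $\phi'$ is the single substantive step and is what I expect to be the main obstacle.

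Concretely, applying Lemma~\ref{lem-sod-Db-perf} to \eqref{DbX-bc-sod} gives an $S$-linear decomposition $\Dperf(X) = \langle \cD_{1,\perf}, \dots, \cD_{n,\perf} \rangle$, and Proposition~\ref{prop-sod-bc-perf-qc} then yields \eqref{DperfXT} and \eqref{DqcXT} with cocontinuous projection functors and with $\phi'^*$ and $\phi'_*$ compatible with both. Granting the amplitude claim for the moment, \eqref{DpcXT} follows immediately from Lemma~\ref{lem-perf-qc}\eqref{perf-to-pc} applied to \eqref{DqcXT}, and \eqref{DbXT} follows because finite two-sided amplitude of $\pr_i^{\qc,T}$ forces it to preserve the subcategory $\Db(X_T) \subset \Dqc(X_T)$. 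Compatibility of $\phi'^*$ and $\phi'_*$ with the $\Dpc$ and $\Db$ decompositions is then inherited by restriction from the $\Dqc$-level compatibility, since each $(\cD_{i,\pc})_T$ and $\cD_{iT}$ is defined as an intersection with $(\cD_{i,\qc})_T$.

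For the amplitude claim, write $[a_i,b_i]$ for the amplitude of $\pr_i^{\rb}$ on $\Db(X)$; by Lemma~\ref{lem-perf-qc}\eqref{qc-amplitude} the projection $\pr_i^{\qc}$ on $\Dqc(X)$ has the same amplitude. My plan is to deduce the analogous bound for $\pr_i^{\qc,T}$ from the compatibility relation $\pr_i^{\qc,T} \circ \phi'^* \simeq \phi'^* \circ \pr_i^{\qc}$ together with the $T$-linearity of the decomposition, which give
\[
\pr_i^{\qc,T}(\phi'^* F' \otimes g'^* G) \simeq \phi'^* \pr_i^{\qc}(F') \otimes g'^* G
\]
for $F' \in \Dperf(X)$ and $G \in \Dperf(T)$. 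Given an arbitrary $F \in \Dqc(X_T)^{[p,q]}$, the equivalence $\Dqc(X_T) \simeq \rD(\QCoh X_T)$ (valid since $X_T$ is quasi-compact with affine diagonal) allows me to write $F$ as a filtered colimit of bounded approximations whose cohomology lies in $[p,q]$; cocontinuity of $\pr_i^{\qc,T}$ and commutation of truncations with filtered colimits in $\rD(\QCoh X_T)$ then reduce the desired amplitude bound to such approximations. The hard part will be promoting the control on perfect-generator-type objects to a uniform bound on all of $\Dqc(X_T)^{[p,q]}$: since $g$ is not assumed flat, $\phi'^*$ need not be exact, so the amplitudes of the generators $\phi'^* F' \otimes g'^* G$ in $\Dqc(X_T)$ are not directly inherited from those of $F'$. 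Closing this gap will likely require either a Tor-dimension bookkeeping on local resolutions or a reduction over a trivializing affine cover of $T$ where $\phi'^*$ becomes a well-understood pullback between derived categories of modules.
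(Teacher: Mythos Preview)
Your overall architecture is exactly the paper's: pass to $\Dperf(X)$ via Lemma~\ref{lem-sod-Db-perf}, base change via Proposition~\ref{prop-sod-bc-perf-qc}, then descend to $\Dpc(X_T)$ and $\Db(X_T)$ via Lemma~\ref{lem-perf-qc}. You have also correctly isolated the amplitude transfer as the only nontrivial step.

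The gap is in how you attack that step. You try to control $\pr_i^{\qc,T}$ through $\phi'^*$ and the generator formula $\pr_i^{\qc,T}(\phi'^*F'\otimes g'^*G)\simeq \phi'^*\pr_i^{\qc}(F')\otimes g'^*G$, and you correctly diagnose the obstruction: without flatness of $g$, $\phi'^*$ has no amplitude control, so bounding the generators does not bound an arbitrary $F\in\Dqc(X_T)^{[p,q]}$. Your proposed fixes (Tor bookkeeping, affine cover of $T$) stay on the pullback side and do not close this.

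The paper's move is to switch to pushforward. First reduce to $T$ affine by the compatibility $\hat{\pr}_{iT}(F)|_U\simeq\hat{\pr}_{iU}(F|_U)$ on affine opens $U\subset T$. Now, since $T$ is affine and $S$ has affine diagonal, $\phi$ is affine, hence so is $\phi'\colon X_T\to X$; thus $\phi'_*$ is \emph{conservative and t-exact}. Therefore $F\in\Dqc(X_T)^{[p,q]}$ iff $\phi'_*F\in\Dqc(X)^{[p,q]}$, and the compatibility $\phi'_*\circ\hat{\pr}_{iT}\simeq\hat{\pr}_i\circ\phi'_*$ from Proposition~\ref{prop-sod-bc-perf-qc} reduces the amplitude bound for $\hat{\pr}_{iT}$ to the one for $\hat{\pr}_i$ on $\Dqc(X)$, which is Lemma~\ref{lem-perf-qc}\eqref{qc-amplitude}. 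No Tor-dimension or colimit argument is needed.
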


\begin{proof}
Combining Lemma~\ref{lem-sod-Db-perf} and Proposition~\ref{prop-sod-bc-perf-qc} 
gives the decompositions~\eqref{DperfXT} and~\eqref{DqcXT}, and guarantees that 
the projection functors of~\eqref{DqcXT} are cocontinuous. 
To prove the theorem, it suffices to show that if 
the decomposition~\eqref{DbX-bc-sod} has right finite cohomological amplitude, 
then the projection functors of~\eqref{DqcXT} have the same cohomological amplitude 
as those of~\eqref{DbX-bc-sod}. 
Indeed, then the decomposition~\eqref{DpcXT} holds 
by Lemma~\ref{lem-perf-qc}.\eqref{perf-to-pc}. 
Moreover, if~\eqref{DbX-bc-sod} has finite cohomological amplitude, then the projection 
functors of~\eqref{DpcXT} preserve $\Db(X_T) \subset \Dpc(X_T)$ because they are of finite cohomological amplitude, so~\eqref{DbXT} holds. 

Let $\pr_i, \hat{\pr}_i$, and $\hat{\pr}_{iT}$ denote the projection functors for the semiorthogonal decompositions 
of $\Db(X), \Dqc(X)$, and $\Dqc(X_T)$. 
We want to show that 
if $[a_i,b_i]$ is the cohomological amplitude of $\pr_i$, then 
it is also the amplitude of $\hat{\pr}_{iT}$, i.e., for any $F \in \rD_{\qc}^{[p,q]}(X_T)$ we 
have $\hat{\pr}_{iT}(F) \in \rD_{\qc}^{[p+a_i, q+b_i]}(X_T)$. 

If $U \subset T$ is an affine open subset, then Proposition~\ref{prop-sod-bc-perf-qc} also gives a semiorthogonal 
decomposition of $\Dqc(X_U)$, whose projection functors we denote by $\hat{\pr}_{iU}$. 
By the compatibility of the restriction functor $\Dqc(X_T) \to \Dqc(X_U)$ with the semiorthogonal decompositions, 
we have
\begin{equation*}
\hat{\pr}_{iT}(F)_{U} \simeq \hat{\pr}_{iU}(F_{U}) . 
\end{equation*} 
It follows that the claim on the amplitude of $\hat{\pr}_{iT}$ can be checked 
on an affine open cover of~$T$, so we may assume $T$ is affine. 

Since $T$ is affine and $S$ has affine diagonal, the morphism $\phi \colon T \to S$ is affine. 
Therefore so is the base change $\phi' \colon X_T \to X$. 
In particular, $\phi'_* \colon \Dqc(X_T) \to \Dqc(X)$ is conservative and t-exact, which 
implies that an object $F \in \Dqc(X_T)$ satisfies $F \in \rD_{\qc}^{[p,q]}(X_T)$ if and only if 
$\phi'_*(F) \in \rD_{\qc}^{[p,q]}(X)$. 
Further, by compatibility of $\phi'_*$ with the semiorthogonal decompositions, we have 
\begin{equation*}
\phi'_*(\hat{\pr}_{iT}(F)) \simeq \hat{\pr}_i(\phi'_*(F)). 
\end{equation*} 
It follows that to prove the result, we just need to show that $\hat{\pr}_i$ has 
the same cohomological amplitude as $\pr_i$. 
But this holds by Lemma~\ref{lem-perf-qc}.\eqref{qc-amplitude}. 
\end{proof} 

We will often apply the above results in the following way. 
Let $X \to S$ be a morphism of schemes with $X$ noetherian of finite Krull dimension. 
Let $\cD \subset \Db(X)$ be an admissible $S$-linear subcategory. 
Then it follows directly from the definitions that 
\begin{equation}
\label{cD-cDperp}
\Db(X) = \langle \cD, {^\perp}\cD \rangle 
\end{equation}
is a strong $S$-linear semiorthogonal decomposition. 
Hence by Lemmas~\ref{lem-sod-Db-perf} and~\ref{lem-perf-qc} we obtain $S$-linear 
left admissible subcategories $\cD_{\perf} \subset \Dperf(X)$ and $\cD_{\qc} \subset \Dqc(X)$. 

Moreover, {if $\phi \colon T \to S$ is a morphism 
which is faithful with respect to $X \to S$ and all of $X, S$, and $T$ are quasi-compact with 
affine diagonal,} then 
by Theorem~\ref{theorem-bc-sod} we obtain $T$-linear left admissible subcategories 
$(\cD_{\perf})_T \subset \Dperf(X_T)$ and $(\cD_{\qc})_T \subset \Dqc(X_T)$. 
We call the projection functor onto $\cD$ in the decomposition~\eqref{cD-cDperp} 
the \emph{left projection functor of $\cD$}. 
The decomposition~\eqref{cD-cDperp} has finite cohomological amplitude if this 
projection functor does. 
Hence in this case we also obtain by Theorem~\ref{theorem-bc-sod} a $T$-linear 
left admissible subcategory $\cD_{T} \subset \Db(X_T)$. 

If $T \to S$ and $T' \to S$ are morphisms of quasi-compact schemes with affine diagonal which are faithful with respect to $X \to S$, and if $f \colon T' \to T$ is a morphism over $S$, then we consider the base change 
morphism $f' \colon X_{T'} \to X_{T}$. 
It follows from the definitions that pushforward and pullback along $f'$ induce functors 
\begin{equation*}
f'_* \colon (\cD_{\qc})_{T'} \to (\cD_{\qc})_T \quad \text{and} \quad 
f'^* \colon (\cD_{\qc})_T \to (\cD_{\qc})_{T'}. 
\end{equation*} 
Further, if the projection functor of $\cD$ has finite cohomological amplitude, 
then we get a functor $f'_* \colon \cD_{T'} \to \cD_T$ if $f$ is proper, 
and a functor $f'^* \colon \cD_{T} \to \cD_{T'}$ if $f$ has finite $\Tor$-dimension.
We will often refer to pullback as restriction and use the notation $\res{F}{T} = f'^*F$. 

Below we use the following observations. 
The inclusion $\phi \colon U \to S$ of an open subscheme is always faithful 
with respect to $X \to S$ and of finite $\Tor$-dimension; 
{moreover, if $S$ has affine diagonal then so does $U$.} 
Similarly, if $S = \Spec(A)$ and $B$ is a localization of $A$, then $\Spec(B) \to \Spec(A)$ is 
faithful with respect to $X \to S$ and of finite $\Tor$-dimension. 

\begin{Lem}
\label{lem-open-restriction-es} 
Let $X \to S$ be a morphism of {schemes which are quasi-compact with affine diagonal, where}
$X$ {is} noetherian of finite Krull dimension. 
Let $\cD\subset \Db(X)$ be an $S$-linear strong semiorthogonal component.
If $T \to S$ is either: 
\begin{enumerate}[{\rm (1)}] 
\item \label{es-open-inclusion}
the inclusion of a quasi-compact open subset, or 
\item \label{es-localization}
a morphism of affine schemes corresponding to a localization 
of rings $A \to B$, 
\end{enumerate}
then the pullback functors 
\begin{equation*}
\cD_{\qc} \to (\cD_{\qc})_T \quad \text{and} \quad 
\cD \to \cD_T 
\end{equation*}
are essentially surjective. 
For the statement for $\cD \to \cD_T$, we assume the projection functor of $\cD$ has 
finite cohomological amplitude so that $\cD_T$ is defined.
\end{Lem}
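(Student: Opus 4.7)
The plan is to exploit the fact that in both cases the base change morphism $\phi' \colon X_T \to X$ is flat and satisfies $\phi'^* \phi'_* \iso \id_{\Dqc(X_T)}$. For an open immersion this is classical; for a localization $A \to B = S^{-1}A$, the morphism $\Spec B \to \Spec A$ is a monomorphism (since $B \otimes_A B \iso B$), hence $\phi'_*$ is fully faithful and the counit is invertible, which one can also verify directly via $B \otimes_A N \iso N$ for every $B$-module $N$. Since $\phi'$ is flat, $\phi$ is faithful with respect to $g$, so Theorem~\ref{theorem-bc-sod} supplies compatible projection functors $\hat{\pr} \colon \Dqc(X) \to \cD_{\qc}$ and $\hat{\pr}_T \colon \Dqc(X_T) \to (\cD_{\qc})_T$; in particular compatibility of $\phi'^*$ yields the intertwining $\phi'^* \hat{\pr} \iso \hat{\pr}_T \phi'^*$.

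For the quasi-coherent statement, fix $F \in (\cD_{\qc})_T$ and set $G \coloneqq \hat{\pr}(\phi'_* F) \in \cD_{\qc}$. Then
\[
\phi'^* G \iso \hat{\pr}_T(\phi'^* \phi'_* F) \iso \hat{\pr}_T(F) \iso F,
\]
where the three isomorphisms use, respectively, the intertwining relation, the counit isomorphism $\phi'^*\phi'_* \iso \id$, and the fact that $F$ already lies in $(\cD_{\qc})_T$. This yields essential surjectivity of $\cD_{\qc} \to (\cD_{\qc})_T$.

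For the refinement to $\cD \to \cD_T$, the assumption that $\cD$ is a strong $S$-linear semiorthogonal component together with the finite cohomological amplitude hypothesis ensures via Theorem~\ref{theorem-bc-sod} the existence of compatible bounded projections $\hat{\pr}^{\rb} \colon \Db(X) \to \cD$ and $\hat{\pr}^{\rb}_T \colon \Db(X_T) \to \cD_T$ intertwined by $\phi'^*$. It therefore suffices, for each $F \in \cD_T \subset \Db(X_T)$, to produce an extension $\tilde F \in \Db(X)$ with $\phi'^* \tilde F \iso F$; then $\hat{\pr}^{\rb}(\tilde F) \in \cD$ restricts to $\hat{\pr}^{\rb}_T(F) \iso F$. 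In case~\eqref{es-open-inclusion}, such $\tilde F$ exists by the classical result that for a quasi-compact open immersion of noetherian schemes $X_T \hookrightarrow X$ the restriction $\Db(X) \to \Db(X_T)$ is essentially surjective; this is proved by induction on the amplitude of $F$ using that every coherent sheaf on $X_T$ extends to a coherent sheaf on $X$. In case~\eqref{es-localization}, writing $B = \varinjlim_{f \in S} A_f$ so that $X_T = \varprojlim X_{A_f}$, the standard noetherian limit theorem descends $F$ to some $F_f \in \Db(X_{A_f})$ for $f$ sufficiently divisible, after which case~\eqref{es-open-inclusion} applied to the quasi-compact open immersion $X_{A_f} \hookrightarrow X$ finishes the argument.

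The hard part is this descent through the filtered limit in case~\eqref{es-localization}: one must simultaneously descend each cohomology sheaf of $F$ and each extension class binding them together to a common level $f$. Fortunately, the noetherian hypothesis and boundedness of $F$ reduce this to finitely many applications of the limit theorem for finitely presented modules.
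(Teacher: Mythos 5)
Your proof is correct and follows essentially the same route as the paper: reduce to essential surjectivity of the ambient restriction functors and then apply compatibility of $\phi'^*$ with the semiorthogonal decompositions to project an extension back into $\cD$ (or $\cD_{\qc}$). The only cosmetic difference is that for the $\cD_{\qc}$ case you build the preimage explicitly as $\hat{\pr}(\phi'_*F)$ using $\phi'^*\phi'_*\iso\id$, whereas the paper instead cites the essential surjectivity of $\Dqc(X)\to\Dqc(X_T)$ directly (Stacks 08ED) and projects an abstract lift; for the bounded coherent case you correctly abandon the pushforward trick (since $\phi'_*$ does not preserve coherence) and fall back on the same lift-then-project argument, citing the same extension results the paper references.
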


\begin{proof}
In both cases~\eqref{es-open-inclusion} and~\eqref{es-localization}, 
the pullback functors 
\begin{equation}
\label{Dqc-Db-restriction}
\Dqc(X) \to \Dqc(X_T) \quad \text{and} \quad 
\Db(X) \to \Db(X_T) 
\end{equation}
are essentially surjective. 
In case~\eqref{es-open-inclusion} this holds for $\Dqc(-)$ by \citestacks{08ED} and for $\Db(-)$ by~\cite[Lemma~2.3.1]{Polishchuk:families-of-t-structures}. 
In case~\eqref{es-localization} essential surjectivity holds by the same arguments. 
The result follows since 
pullback along $T \to S$ is compatible with the semiorthogonal 
decompositions of the source and target categories in~\eqref{Dqc-Db-restriction}.
\end{proof}

\subsection{Relative exceptional collections}\label{subsec:RelativeExceptional}

In the applications later in this paper, we will consider $S$-linear semiorthogonal 
decompositions induced by relative exceptional collections, which are defined 
and studied below. 

\begin{Def} \label{def:relativeexceptional} 
Let $X \to S$ be a morphism of schemes, and 
let $\cD \subset \Dqc(X)$ be an $S$-linear subcategory.
A \emph{relative exceptional object in $\cD$} is an object $E \in \cD$ such that $E$ is perfect and 
$\cHom_S(E, E) = \cO_S$. 
\index{E1@$E_1, \dots, E_m$, relative exceptional objects in $\cD$}
A \emph{relative exceptional collection in $\cD$} is a sequence $E_1, \dots, E_m$ of relative exceptional objects in $\cD$ such that $\cHom_S(E_i, E_j) = 0$ for all $i > j$.
\end{Def}

\begin{Rem}
If $S = \Spec(k)$ is a point, then $\cHom_S(-,-)$ is simply $\RHom(-,-)$ regarded as a 
$k$-complex. Hence the above definition reduces to the usual one in this case. 
\end{Rem}

We note that the relative Hom functor behaves well under base change. 

\begin{Lem}
\label{Lem-cHom-bc} 
Let $g \colon X \to S$ be a quasi-compact and quasi-separated morphism of schemes. 
Let $E \in \rD_{\perf}(X)$ and $F \in \Dqc(X)$. 
Let $\phi \colon T \to S$ be a morphism which is faithful with respect to $g$. 
Then we have 
\begin{equation*}
\cHom_S(E,F)_T \simeq \cHom_T(E_T, F_T). 
\end{equation*} 
\end{Lem}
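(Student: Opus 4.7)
The plan is to reduce the identity to the base change formula $\phi^\ast g_\ast \iso g'_\ast \phi'^\ast$, which holds by the hypothesis that $\phi$ is faithful with respect to $g$. First I would use that $E$ is perfect to rewrite the internal Hom as a tensor product: there is a natural isomorphism
\[
\cHom(E,F) \iso E^{\vee} \otimes F
\]
in $\Dqc(X)$, where $E^{\vee} = \cHom(E,\cO_X) \in \Dperf(X)$ is the derived dual. Applying $g_\ast$ and then $\phi^\ast$ yields
\[
\cHom_S(E,F)_T = \phi^\ast g_\ast (E^{\vee} \otimes F).
\]

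Next I would invoke faithful base change along the Cartesian square with $g, g', \phi, \phi'$: by assumption we have $\phi^\ast g_\ast \iso g'_\ast \phi'^\ast$ as functors on $\Dqc(X)$, so
\[
\phi^\ast g_\ast(E^{\vee}\otimes F) \iso g'_\ast \phi'^\ast(E^{\vee}\otimes F).
\]
Then I would use that $\phi'^\ast$ is symmetric monoidal and commutes with taking duals of perfect complexes, so $\phi'^\ast(E^{\vee}\otimes F) \iso (E_T)^{\vee} \otimes F_T$, and finally rewrite $(E_T)^{\vee}\otimes F_T \iso \cHom(E_T, F_T)$ using that $E_T$ is again perfect (perfection is preserved by pullback). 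Pushing forward along $g'$ gives $\cHom_T(E_T, F_T)$, completing the chain of isomorphisms.

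The main technical input is the base change formula for quasi-coherent cohomology in the form $\phi^\ast g_\ast \iso g'_\ast \phi'^\ast$ for faithful $\phi$; the rest is bookkeeping using standard properties of perfect complexes under pullback. Since $g$ is quasi-compact and quasi-separated and we work in $\Dqc$, all functors behave well (e.g.\ $g_\ast$ preserves $\Dqc$), so no extra finiteness or flatness beyond what is already assumed is required. The only point to be careful about is to check naturality of the isomorphism chain, but each step is given by a standard natural transformation, and their composition produces the claimed identification $\cHom_S(E,F)_T \iso \cHom_T(E_T, F_T)$.
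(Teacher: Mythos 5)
Your proof is correct and follows essentially the same approach as the paper: both hinge on the faithful base change identity $\phi^\ast g_\ast \simeq g'_\ast \phi'^\ast$ and on the fact that $\phi'^\ast$ commutes with $\cHom(E,-)$ because $E$ is perfect. You spell out the latter step by factoring through $E^\vee \otimes F$, whereas the paper states it directly, but the argument is the same.
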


\begin{proof}
Indeed, using the notation of~\eqref{diagram-bc}, we have 
\begin{equation*}
\cHom_S(E,F)_T \simeq g'_* \phi'^* \cHom_{X}(E, F) 
 \simeq g'_{*} \cHom_{X_T}(E_T, F_T) 
 = \cHom_T(E_T, F_T), 
\end{equation*} 
where the first isomorphism holds by base change~\citestacks{08IB} and the second since $E$ is perfect.
\end{proof} 

The property that a set of perfect objects is a relative exceptional collection can be checked fiberwise: 

\begin{Lem}
Let $X \to S$ be a flat quasi-compact and quasi-separated morphism of schemes. 
Then an object $E \in \Dperf(X)$ is relative exceptional if and only if $E_s \in \Dperf(X_s)$ is exceptional 
for all points $s \in S$, if and only if $E_s \in \Dperf(X_s)$ is exceptional for all closed points $s \in S$. 
Similarly, a sequence $E_1, \dots, E_m \in \Dperf(X)$ is a relative exceptional collection 
if and only its restriction to $X_s$ is an exceptional collection for all points $s \in S$, 
or equivalently for all closed points $s \in S$. 
\end{Lem}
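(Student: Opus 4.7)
The plan is to deduce both claims directly from Lemma~\ref{Lem-cHom-bc}, which translates relative Hom into ordinary $\RHom$ on fibers. Since $g \colon X \to S$ is assumed flat, every morphism $T \to S$ — in particular every inclusion $\iota_s \colon \Spec k(s) \to S$ of a point — is faithful with respect to $g$. Applied to perfect objects, Lemma~\ref{Lem-cHom-bc} then gives a natural isomorphism
\[
\iota_s^* \cHom_S(E_i, E_j) \simeq \RHom_{X_s}(E_{i,s}, E_{j,s})
\]
in $\Dperf(k(s))$, for every point $s \in S$ and all perfect $E_i, E_j \in \Dperf(X)$.

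The ``only if'' direction is immediate from this isomorphism: restricting $\cHom_S(E,E) = \cO_S$ to any $s$ gives $\RHom(E_s, E_s) = k(s)$, i.e.~exceptionality of $E_s$; and restricting $\cHom_S(E_i, E_j) = 0$ gives the required vanishing on fibers. This holds at every $s$, in particular at every closed~$s$.

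For the ``if'' direction I would argue as follows. The unit $\cO_S \to \cHom_S(E,E)$ adjoint to $\id_E$ restricts over each closed point $s$ to the map $k(s) \to \RHom(E_s, E_s)$ sending $1$ to $\id_{E_s}$, which is an isomorphism whenever $E_s$ is exceptional. Similarly, the assumption that the $E_{\bullet,s}$ form an exceptional collection on each closed fiber means that $\cHom_S(E_i, E_j)$ has vanishing fiber at every closed point for $i > j$. The task is therefore to promote these fiberwise statements at closed points to global statements on $S$, and this is the only real obstacle in the proof. The natural way to do this is to use that $\cHom_S(E_i, E_j) \in \Dperf(S)$ (in the paper's main setup $g$ is proper in addition to flat, so relative Hom of perfect objects is perfect on~$S$); then by cohomology and base change / Nakayama, a map of perfect complexes that is a fiberwise isomorphism on every closed fiber is an isomorphism globally, and a perfect complex with vanishing fiber at every closed point is $0$ (for this we use that $S$ is locally Jacobson in the relevant setups, so a coherent sheaf with zero stalks at closed points vanishes).

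Finally, the equivalence between the ``all $s$'' and ``all closed $s$'' formulations is automatic: ``all $s$'' is formally stronger, while the global conclusion $\cHom_S(E,E) = \cO_S$ (or the vanishing of $\cHom_S(E_i, E_j)$) obtained from the closed-point hypothesis implies, by the same base-change isomorphism above, the fiber condition at every point of $S$. The same argument applied componentwise proves the statement for collections.
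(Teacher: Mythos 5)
Your proof follows the same strategy as the paper's: use Lemma~\ref{Lem-cHom-bc} to identify the derived restriction over a point $s$ of the canonical map $\cO_S \to \cHom_S(E,E)$ (respectively of $\cHom_S(E_i,E_j)$ for $i>j$) with the corresponding map on the fiber $X_s$, and then test the global assertion pointwise. The paper's proof states this pointwise test without further comment; you are right to flag that the globalization step ("fiberwise iso at closed points implies global iso") requires justification, since for an arbitrary object of $\Dqc(S)$ the vanishing of all closed fibers does not imply vanishing. Your proposed repair---observe that $\cHom_S(E_i,E_j)$ is perfect on $S$ when $g$ is in addition proper, so the cone of $\cO_S \to \cHom_S(E,E)$ is perfect and Nakayama-type arguments apply---is the right one, and in fact exposes a mild imprecision in the lemma's stated hypotheses: as written they only require $g$ to be flat, quasi-compact, and quasi-separated, whereas the paper only ever applies the lemma with $g$ proper (where pushforward of perfect complexes is perfect). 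One thing you can streamline: the Jacobson hypothesis on $S$ is not needed. Once $\cHom_S(E_i,E_j)$ is perfect (or even just bounded pseudo-coherent), the derived support of the cone is a closed subset of $S$; since $S$ is quasi-compact, any nonempty closed subset contains a closed point, so vanishing of all closed fibers already forces the cone to vanish globally.
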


\begin{proof}
Let $E \in \Dperf(X)$. The canonical morphism $\cO_S \to \cHom_S(E, E)$ is an isomorphism if and only if its restriction 
$\kappa(s) \to \cHom_S(E,E)_s$ is an isomorphism for every point $s \in S$, 
or equivalently for every closed point $s \in S$. 
But by Lemma~\ref{Lem-cHom-bc} and the flatness of $g$, this restriction is identified 
with $\kappa(s) \to \cHom_s(E_s, E_s)$, which is an isomorphism if and only if $E_s$ is 
exceptional. This proves the first claim of the lemma, and the second follows similarly. 
\end{proof}

In the next lemma, we will use the following observation. 
If $g \colon X \to S$ is a proper morphism of finite Tor-dimension between 
noetherian schemes, then 
pushforward and pullback give functors $g_* \colon \Db(X) \to \Db(S)$ 
and $g^* \colon \Db(S) \to \Db(X)$. 
The functor $g_*$ is right adjoint to $g^*$. 
If the relative dualizing complex $\omega_g$ of $g$ is a shift of a line bundle on $X$, 
then $g_*$ also admits a right adjoint $g^! \colon \Db(S) \to \Db(X)$ and 
$g^*$ admits a left adjoint $g_! \colon \Db(X) \to \Db(S)$, given by 
\begin{equation*}
g^!(-) = g^*(-) \otimes \omega_g \quad \text{and} \quad g_!(-) = g_*(- \otimes \omega_g). 
\end{equation*} 
Indeed, the first formula holds by Grothendieck duality.
More precisely, \cite[Theorem~5.4]{neeman} applies because $S$ is separated and $g^!$ preserves arbitrary coproducts since $g_*$ sends perfect complexes to perfect complexes by our assumption. 
Moreover, since $\omega_g$ is the shift of a line bundle, $g^!$ preserves bounded derived categories.
The formula for $g_!$ follows from the one for $g^!$. 
The condition that $\omega_g$ is the shift of a line bundle  holds 
for instance if $g \colon X \to S$ is smooth, or, more generally, if $X$ and $S$ are smooth over some common base.  

\begin{Lem} 
\label{Lem-relexcept-adjoint}
Let $g \colon X \to S$ be a proper morphism of finite Tor-dimension between noetherian schemes. 
Let $E \in \Dperf(X)$ be a relative exceptional object. 
Then the functor 
\index{alphaE@$\alpha_E\colon \Db(S) \to \Db(X)$, pullback functor, tensor with a relative exceptional object $E$} 
\begin{equation*}
\alpha_E \colon \Db(S) \to \Db(X), ~ F \mapsto g^*(F) \otimes E. 
\end{equation*} 
is fully faithful, and admits a right adjoint given by 
\begin{equation*}
\alpha_E^! \colon \Db(X) \to \Db(S), ~ G \mapsto \cHom_S(E, G). 
\end{equation*} 
Moreover, if $\omega_g$ is a shift of a line bundle, 
then $\alpha_E$ admits a left adjoint given by 
\begin{equation*}
\alpha_E^* \colon \Db(X) \to \Db(S), ~ G \mapsto g_! \cHom_X(E, G). 
\end{equation*} 
\end{Lem}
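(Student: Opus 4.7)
The plan is to verify both adjunction formulas by direct computation using the perfectness of $E$ together with the projection formula, and then deduce full faithfulness from the fact that the unit of the right adjunction becomes an isomorphism.

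First I would check that each claimed functor takes $\Db$ to $\Db$. Since $g$ has finite Tor-dimension, $g^* \colon \Db(S) \to \Db(X)$ is well defined, and tensoring with the perfect complex $E$ preserves $\Db(X)$, so $\alpha_E$ lands in $\Db(X)$. Because $E$ is perfect, $\cHom_X(E,G) \simeq E^\vee \otimes G$ lies in $\Db(X)$ whenever $G$ does, and then properness of $g$ between noetherian schemes ensures $g_*\cHom_X(E,G) \in \Db(S)$, so $\alpha_E^!$ is well defined. Under the additional assumption that $\omega_g \in \Dperf(X)$, the formula $g_!(-) = g_*(-\otimes \omega_g)$ defines a functor $\Db(X) \to \Db(S)$ that is left adjoint to $g^*$ by Grothendieck duality, so $\alpha_E^* = g_!\cHom_X(E,-)$ is well defined on $\Db(X)$.

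Next I would establish the right adjunction via the chain
\[
\Hom_X(g^*F \otimes E,\, G) \simeq \Hom_X(g^*F,\, \cHom_X(E,G)) \simeq \Hom_S(F,\, g_*\cHom_X(E,G)),
\]
using the tensor-hom adjunction (valid because $E$ is perfect) followed by the $(g^*,g_*)$-adjunction; the target is $\Hom_S(F, \alpha_E^! G)$ by definition. Full faithfulness of $\alpha_E$ then reduces to checking that the unit $\id \to \alpha_E^!\alpha_E$ is an isomorphism, and this follows from
\[
\alpha_E^!\alpha_E(F) \simeq g_*\cHom_X(E,\, g^*F \otimes E) \simeq g_*(g^*F \otimes \cHom_X(E,E)) \simeq F \otimes \cHom_S(E,E) \simeq F,
\]
where the second isomorphism uses that $E$ is perfect, the third is the projection formula together with the base-change identity of Lemma~\ref{Lem-cHom-bc} (or simply the projection formula applied to $\cHom_X(E,E)$), and the last uses the defining property $\cHom_S(E,E) = \cO_S$ of a relative exceptional object.

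For the left adjoint, an entirely parallel computation gives
\[
\Hom_X(G,\, g^*F \otimes E) \simeq \Hom_X(\cHom_X(E,G),\, g^*F) \simeq \Hom_S(g_!\cHom_X(E,G),\, F),
\]
via the tensor-hom adjunction (using perfectness of $E$ again) and the $(g_!, g^*)$-adjunction, identifying $\alpha_E^* = g_!\cHom_X(E,-)$ as left adjoint to $\alpha_E$. The only real subtlety throughout is bookkeeping on boundedness hypotheses, where perfectness of $E$ (respectively of $\omega_g$) is what allows every intermediate object to remain in $\Db$; the essential algebraic content reduces to the projection formula and the collapse $\cHom_S(E,E) = \cO_S$.
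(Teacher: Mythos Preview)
Your proof is correct and follows essentially the same approach as the paper: both verify the right adjunction via tensor--hom and $(g^*,g_*)$-adjunction, prove full faithfulness by computing $\alpha_E^!\alpha_E(F) \simeq F \otimes \cHom_S(E,E) \simeq F$ via the projection formula, and handle the left adjoint by the analogous computation with $(g_!,g^*)$. You add the explicit checks that each functor preserves $\Db$, which the paper leaves implicit.
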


\begin{proof}
For $F \in \Db(S)$ and $G \in \Db(X)$, we compute 
\begin{align*}
\Hom(\alpha_E(F), G) & = \Hom(g^*(F) \otimes E, G) \\
& \simeq \Hom(g^*(F), \cHom_X(E, G)) \\ 
& = \Hom(F, \cHom_S(E, G)), 
\end{align*} 
which proves the formula for $\alpha_E^!$. 
Moreover, we have 
\begin{align*}
\alpha_E^! \alpha_E(F) & = \cHom_S(E, g^*(F) \otimes E) \\ 
& = g_* \cHom_X(E, g^*(F) \otimes E) \\ 
& \simeq g_* (g^*(F) \otimes \cHom_X(E, E)) \\ 
& = F \otimes \cHom_S(E,E) 
\simeq F, 
\end{align*}
where the final line holds because $E$ is relative exceptional. 
This proves that $\alpha_E$ is fully faithful. 
Finally, an argument similar to the one for $\alpha_E^!$ shows the existence and 
claimed formula for $\alpha_E^*$ in the presence of a relative dualizing complex which is a shift of a line bundle. 
\end{proof}

\begin{Rem}
If the assumption in Lemma~\ref{Lem-relexcept-adjoint} guaranteeing the existence 
of $\alpha_E^*$ holds and $X$ is in addition regular of finite Krull dimension, 
then in fact $\alpha_E^*(G) \simeq \cHom_S(G, E)^{\vee}$. 
Indeed, the regularity of $X$ guarantees that $\Db(X) = \Dperf(X)$, 
and then the claim follows easily from Grothendieck duality. 
\end{Rem} 

In the situation of Lemma~\ref{Lem-relexcept-adjoint}, we write $\rL_{E/S}$ and $\rR_{E/S}$ for 
the right and left mutation functors through $\alpha_E(\Db(S)) \subset \Db(X)$, see Definition~\ref{def-mutation-functor}. 
Note that these can be computed via exact triangles 
\begin{equation*}
g^*\cHom_S(E,F) \otimes E \to F \to \rL_{E/S}(F) \quad \text{and} \quad 
\rR_{E/S}(F) \to F \to g^*g_! \cHom_X(E, F). 
\end{equation*} 

\begin{Lem} 
\label{lem-E1Em-sod}
Let $g \colon X \to S$ be a proper morphism of finite Tor-dimension between noetherian schemes. 
Let $E_1, \dots, E_m$ be a relative exceptional collection in $\Dperf(X)$. 
Then there is an $S$-linear semiorthogonal decomposition of finite cohomological amplitude
\begin{equation}
\label{E1Em-sod1} \Db(X) = \langle \cD, \alpha_{E_1}(\Db(S)), \dots, \alpha_{E_m}(\Db(S)) \rangle, \\ 
\end{equation} 
where the left adjoint to the inclusion $\cD \to \Db(X)$ is given by 
\begin{equation}
\label{LE1Em}
\rL_{E_1/S} \circ \rL_{E_2/S} \cdots \circ \rL_{E_m/S} . 
\end{equation} 
If further $g \colon X \to S$ is smooth, then the components appearing in \eqref{E1Em-sod1} are all admissible; in particular, \eqref{E1Em-sod1} is a strong semiorthogonal decomposition. 
\end{Lem}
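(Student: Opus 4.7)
The plan is to proceed in three main steps: establish semiorthogonality of the subcategories $\alpha_{E_i}(\Db(S))$, produce the filtration via iterated mutation to get the semiorthogonal decomposition, and then check the amplitude and admissibility claims.

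\textbf{Step 1 (Semiorthogonality).} For $j > i$ and arbitrary $F, G \in \Db(S)$, I would use adjunction from Lemma~\ref{Lem-relexcept-adjoint} to compute
\[
\Hom(\alpha_{E_j}(G),\alpha_{E_i}(F)) \;=\; \Hom\!\bigl(G,\alpha_{E_j}^!\alpha_{E_i}(F)\bigr) \;=\; \Hom\!\bigl(G, \cHom_S(E_j,\, g^*F\otimes E_i)\bigr).
\]
Since $E_j$ is perfect, projection formula for $\cHom_X$ and $g_*$ gives $\cHom_S(E_j, g^*F\otimes E_i)\simeq F\otimes \cHom_S(E_j,E_i)$, which vanishes because $E_1,\dots,E_m$ is a relative exceptional collection. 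Thus $\alpha_{E_1}(\Db(S)),\dots,\alpha_{E_m}(\Db(S))$ satisfy the semiorthogonality condition of Definition~\ref{def-sod}.\eqref{definition-sod-1}.

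\textbf{Step 2 (Existence of the decomposition and the formula for the projection).} Each subcategory $\alpha_{E_i}(\Db(S))$ is right admissible by Lemma~\ref{Lem-relexcept-adjoint}, so the left mutation functor $\rL_{E_i/S}$ is defined. Given $F\in\Db(X)$, I would set $F_m := \rL_{E_m/S}(F)$ and, inductively, $F_{k-1}:=\rL_{E_{k-1}/S}(F_k)$. By construction $F_k\in \alpha_{E_k}(\Db(S))^\perp$; using the defining triangle $\alpha_{E_{k-1}}\alpha_{E_{k-1}}^!(F_k)\to F_k\to F_{k-1}$ together with the semiorthogonality already established in Step~1 (which shows that $\alpha_{E_{k-1}}(\Db(S))\subset\alpha_{E_j}(\Db(S))^\perp$ for $j\geq k$), one sees that $F_{k-1}$ still lies in $\bigcap_{j\geq k}\alpha_{E_j}(\Db(S))^\perp$. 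Hence $F_0=\rL_{E_1/S}\circ\cdots\circ\rL_{E_m/S}(F)$ lies in $\cD$, and reading off the successive cones produces the required filtration of $F$, proving both the existence of the semiorthogonal decomposition~\eqref{E1Em-sod1} and the formula~\eqref{LE1Em} for the left adjoint of $\cD\hookrightarrow\Db(X)$. The $S$-linearity is immediate from the $S$-linearity of $\alpha_{E_i}$ and $\alpha_{E_i}^!$.

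\textbf{Step 3 (Finite cohomological amplitude and smooth case).} For each $i$, the functor $\alpha_{E_i}^!(-)=\cHom_S(E_i,-)=g_*(E_i^\vee\otimes -)$ has finite cohomological amplitude: tensoring with the perfect complex $E_i^\vee$ has finite amplitude, and $g_*$ does as well because $g$ is proper and of finite Tor-dimension between noetherian schemes. Similarly, $\alpha_{E_i}=g^*(-)\otimes E_i$ has finite amplitude. Hence the defining triangle $\alpha_{E_i}\alpha_{E_i}^!\to\id\to\rL_{E_i/S}$ shows that $\rL_{E_i/S}$, and therefore the composition~\eqref{LE1Em}, has finite cohomological amplitude. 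Finally, if $g$ is smooth, then a relative dualizing complex exists and is a shift of a line bundle, so Lemma~\ref{Lem-relexcept-adjoint} provides a left adjoint $\alpha_{E_i}^*$ as well; thus each $\alpha_{E_i}(\Db(S))$ is admissible. The admissibility of $\cD$ then follows inductively: mutating the admissible subcategories $\alpha_{E_i}(\Db(S))$ to the left yields an admissible sequence whose orthogonal is $\cD$, so $\cD$ is both left and right admissible, making~\eqref{E1Em-sod1} a strong semiorthogonal decomposition. The main subtlety is the bookkeeping in Step~2 to ensure the inductively constructed $F_k$ land in the intersection of the appropriate orthogonals; once that is in place, the remaining statements are formal consequences of Lemma~\ref{Lem-relexcept-adjoint}.
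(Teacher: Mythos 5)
Your Steps 1 and 2 are correct and essentially reproduce what the paper does, with the minor difference that you carry out the iterated-mutation construction explicitly where the paper simply cites Lemma~3.10 of \cite{NCHPD}. Your explicit treatment of the finite cohomological amplitude at the start of Step~3, via the triangle $\alpha_{E_i}\alpha_{E_i}^! \to \id \to \rL_{E_i/S}$, is a welcome elaboration of a point the paper's proof leaves implicit.

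However, your argument for the admissibility of $\cD$ at the end of Step~3 has a genuine gap. From the admissibility of each $\alpha_{E_i}(\Db(S))$ you do correctly conclude (by your Step~2 argument, or by the cited lemma) that $\cB := \langle \alpha_{E_1}(\Db(S)), \dots, \alpha_{E_m}(\Db(S)) \rangle$ is admissible; this yields two decompositions $\Db(X) = \langle \cB^\perp, \cB \rangle$ and $\Db(X) = \langle \cB, {}^\perp\cB \rangle$. From the first, $\cD = \cB^\perp$ is \emph{left} admissible; from the second, ${}^\perp\cB$ is \emph{right} admissible. But $\cD = \cB^\perp$ and ${}^\perp\cB$ are different subcategories of $\Db(X)$, and the equivalence $\rL_\cB \colon {}^\perp\cB \to \cB^\perp$ is only an equivalence of abstract triangulated categories, not an autoequivalence of $\Db(X)$ compatible with the embeddings; it does not transport the right adjoint of ${}^\perp\cB \hookrightarrow \Db(X)$ to a right adjoint of $\cB^\perp \hookrightarrow \Db(X)$. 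And mutating $\cB$ to the left \emph{past} $\cD$ already presupposes $\cD$ admissible, so that reading is circular. In fact, for an admissible $\cB$ the right orthogonal $\cB^\perp$ need not be admissible in general; the implication holds precisely when the ambient category carries a Serre functor, which forces left and right admissibility to coincide. This is exactly what the paper uses via Lemma~\ref{lem-smooth-proper-admissible}: smoothness of $g$ gives a relative Serre functor $\rS_{\Db(X)/S}$, and the identity $\rS_{\Db(X)/S}(\cB^\perp) = {}^\perp\cB$ converts the known right admissibility of ${}^\perp\cB$ into right admissibility of $\cB^\perp$. You invoke the smoothness hypothesis only to produce the left adjoints $\alpha_{E_i}^*$; you also need it to invoke the Serre functor argument for $\cD$, as the paper does.
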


\begin{proof}
First we claim the sequence $\alpha_{E_1}(\Db(S)), \dots, \alpha_{E_m}(\Db(S))$ is semiorthogonal. 
For this, it suffices to show that the composition $\alpha_{E_i}^! \circ \alpha_{E_j}$ vanishes for $i > j$. 
For $F \in \Db(S)$, we compute 
\begin{align*}
\alpha_{E_i}^! \alpha_{E_j}(F) & = \cHom_S(E_i, g^*(F) \otimes E_j) \\ 
& = g_* \cHom_X(E_i, g^*(F) \otimes E_j) \\ 
& \simeq g_* (g^*(F) \otimes \cHom_X(E_i, E_j)) \\ 
& \simeq F \otimes \cHom_S(E_i, E_j) = 0 , 
\end{align*}
where in the third line we used that $E_i$ is perfect and in the fourth we used the projection formula.
Now the decomposition \eqref{E1Em-sod1} and the formula~\eqref{LE1Em} follow from \cite[Lemma~3.10]{NCHPD}, 
since the subcategory $\alpha_{E_i}(\Db(S)) \subset \Db(X)$ is right admissible by Lemma~\ref{Lem-relexcept-adjoint}. 

If further the morphism $g \colon X \to S$ is smooth, then it follows from Lemma~\ref{Lem-relexcept-adjoint} that the $\alpha_{E_i}(\Db(S))$ are in fact admissible in $\Db(X)$. 
Then again by \cite[Lemma~3.10]{NCHPD} it follows that the subcategory 
$\langle \alpha_{E_1}(\Db(S)), \dots, \alpha_{E_m}(\Db(S)) \rangle$ they generate is admissible. 
Thus $\cD$ is also admissible by Lemma~\ref{lem-smooth-proper-admissible} below. 
\end{proof} 

\begin{Lem}
\label{lem-smooth-proper-admissible}
Let $X \to S$ be a smooth and proper morphism of noetherian schemes.
Let 
\begin{equation*}
\Db(X) = \langle \cD_1, \cD_2 \rangle
\end{equation*} 
be an $S$-linear semiorthogonal decomposition.
Then $\cD_1$ is admissible if and only if $\cD_2$ is admissible. 
\end{Lem}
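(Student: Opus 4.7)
The plan is to use the relative Serre functor $\Sigma := (-) \otimes \omega_{X/S}[n]$, where $n$ is the relative dimension of $g$. Since $\omega_{X/S}$ is a line bundle (because $g$ is smooth), $\Sigma$ is an autoequivalence of $\Db(X)$, and because it commutes with tensoring by pullbacks of perfect complexes on $S$, it preserves $S$-linear subcategories and carries (right or left) admissible subcategories to (right or left) admissible subcategories. By symmetry, it suffices to show that if $\cD_1$ is admissible then $\cD_2$ is admissible. The semiorthogonal decomposition $\Db(X) = \langle \cD_1, \cD_2 \rangle$ automatically gives $\cD_1$ left admissible and $\cD_2$ right admissible, so the content is to produce a left adjoint to the inclusion $\cD_2 \hookrightarrow \Db(X)$ under the hypothesis that $\cD_1$ is also right admissible.

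The key step is to establish, for every $S$-linear subcategory $\cC \subset \Db(X)$, the orthogonality identity $\cC^\perp = {}^\perp \Sigma(\cC)$. This comes from relative Grothendieck--Serre duality: since $g$ is smooth and proper, $g^! \cO_S \simeq \omega_{X/S}[n]$, and for $F,G \in \Db(X)$ one obtains a natural isomorphism
$$\cHom_S(F,G) \simeq \RcHom_S\bigl(\cHom_S(G, \Sigma(F)), \cO_S\bigr).$$
Iterating and using that $\Sigma$ commutes with $\cHom$ shows that $\cHom_S(F,G)$ is reflexive over $\cO_S$, so its vanishing is equivalent to the vanishing of its $\cO_S$-dual. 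Because $\cC$ is $S$-linear, the condition $G \in \cC^\perp$ is equivalent to $\cHom_S(F,G) = 0$ for all $F \in \cC$ (tensoring with perfect complexes from $S$ would otherwise produce a nonzero $\Hom$-group). Combining these observations, $G \in \cC^\perp$ is equivalent to $\cHom_S(G, \Sigma(F)) = 0$ for all $F \in \cC$, which is the same as $G \in {}^\perp \Sigma(\cC)$.

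The lemma now follows by manipulating semiorthogonal decompositions. Applying the identity to $\cC = \cD_2$ gives $\cD_1 = \cD_2^\perp = {}^\perp \Sigma(\cD_2)$. Since $\cD_2$ is right admissible and $\Sigma$ is an autoequivalence, $\Sigma(\cD_2)$ is right admissible too, yielding the semiorthogonal decomposition
$$\Db(X) = \langle {}^\perp \Sigma(\cD_2), \Sigma(\cD_2) \rangle = \langle \cD_1, \Sigma(\cD_2) \rangle.$$
The hypothesis that $\cD_1$ is also right admissible allows us to flip this decomposition to $\Db(X) = \langle \Sigma(\cD_2), \cD_1 \rangle$, from which $\Sigma(\cD_2)$ is left admissible. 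Applying $\Sigma^{-1}$, we conclude that $\cD_2$ is left admissible, and hence admissible. The converse implication is entirely parallel, replacing $\Sigma$ by $\Sigma^{-1}$ throughout, or equivalently applying the forward direction with the roles of $\cD_1,\cD_2$ reversed after a Serre twist.

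The only real technical point is the reflexivity input to the orthogonality identity. It relies crucially on smoothness of $g$ (which ensures every object of $\Db(X)$ is perfect and that $\omega_{X/S}$ is a shifted line bundle) and properness (so that $g_*$ lands in a derived category of coherent sheaves where finiteness is available). Everything else is formal manipulation of adjoints and orthogonals.
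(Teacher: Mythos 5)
Your strategy matches the paper's: both invoke the relative Serre functor (your $\Sigma$, the paper's $\rS_{\Db(X)/S}$) and the principle that it exchanges the two complements of $\cD_1$. Your orthogonality identity $\cC^\perp = {}^\perp\Sigma(\cC)$, with the reflexivity argument for $\cHom_S$, makes explicit what the paper dismisses as ``follows directly from the definition of the Serre functor.''

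However, the semiorthogonal-decomposition bookkeeping contains an error. When $\Sigma(\cD_2)$ is right admissible, the resulting decomposition is $\Db(X) = \langle \Sigma(\cD_2)^\perp, \Sigma(\cD_2)\rangle$, with the \emph{right} orthogonal in the first slot; and $\Sigma(\cD_2)^\perp = \Sigma(\cD_2^\perp) = \Sigma(\cD_1)$, which is not $\cD_1$. Your claimed decomposition $\langle {}^\perp\Sigma(\cD_2), \Sigma(\cD_2)\rangle = \langle \cD_1, \Sigma(\cD_2)\rangle$ is therefore invalid (the orthogonality of $\cD_1 = {}^\perp\Sigma(\cD_2)$ against $\Sigma(\cD_2)$ goes in the wrong direction for that ordering), and the ``flip'' that follows rests on a false premise. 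The correct route is already available from the identity you established: apply it to $\cC = \cD_1$. Since $\Sigma$ is a line-bundle-and-shift twist it commutes with taking orthogonals, so ${}^\perp\Sigma(\cD_1) = \Sigma({}^\perp\cD_1) = \Sigma(\cD_2)$, and the identity gives $\cD_1^\perp = \Sigma(\cD_2)$. Then right admissibility of $\cD_1$ yields $\Db(X) = \langle \cD_1^\perp, \cD_1\rangle = \langle \Sigma(\cD_2), \cD_1\rangle$, so $\Sigma(\cD_2)$ is left admissible, hence so is $\cD_2$, and thus $\cD_2$ is admissible. This is precisely what the paper's proof does.

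One further caution: your closing remark that smoothness of $g$ ensures every object of $\Db(X)$ is perfect is false; that would require $S$ to be regular, not merely noetherian. Thus the justification you offer for the boundedness and dualizability of $\cHom_S(F,G)$ does not hold as stated. The paper's own discussion of the relative Serre functor passes over the same subtlety, so you are in good company, but the claim as you wrote it is incorrect.
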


\begin{proof}
By our assumptions, $\Db(X)$ admits a relative Serre functor $\rS_{\Db(X)/S}$; see the discussion preceding Corollary~\ref{cor:induce-relative-t} for a review of this notion. 
Assume that $\cD_1$ is admissible. 
Then there is a semiorthogonal decomposition $\Db(X) = \langle \cD'_2, \cD_1 \rangle$. Moreover, it follows directly from the definition of the Serre functor that the autoequivalence $\rS_{\Db(X)/S}$ takes $\cD_2$ to $\cD_2'$. 
Since $\cD'_2$ admits a left adjoint, it follows that $\cD_2$ does too. 
Hence $\cD_2$ is admissible. 
The proof that admissibility of $\cD_2$ implies that of $\cD_1$ is similar.
\end{proof}

\section{Local t-structures}
\label{sec:Polishchuk}

In this section, we discuss basic definitions and results about local t-structures. 
We start by recalling the absolute case. 

\subsection{t-structures}\label{subsec:Tstructures}

\begin{Def}\label{def:tstructure}
A \emph{t-structure} $\tau$ on a triangulated category $\cD$ is a pair of full subcategories $(\cD^{\leqslant 0},\cD^{\geqslant 0})$ satisfying the following conditions:
\index{tau@$\tau=(\cD^{\leqslant 0},\cD^{\geqslant 0})$, t-structure on $\cD$}
\begin{enumerate}[{\rm (1)}] 
\item $\cD^{\leqslant 0}[1]\subset\cD^{\leqslant 0}$ and $\cD^{\geqslant 0}[-1]\subset\cD^{\geqslant 0}$;
\item \label{def-t-structure-orthogonality} 
$\Hom(F,G)=0$, for every $F\in\cD^{\leqslant 0}$ and $G\in\cD^{\geqslant 1}$;
\item every object $E\in \cD$ fits into an exact triangle
\[
\tau^{\leqslant 0} E\to E \to \tau^{\geqslant 1} E\to \tau^{\leqslant 0} E[1]
\]
with $\tau^{\leqslant 0} E\in \cD^{\leqslant 0}$ and $\tau^{\geqslant 1}E \in \cD^{\geqslant 1}$.
\end{enumerate}
\end{Def}

Here we used the notation $\cD^{\leqslant n}:=\cD^{\leqslant 0}[-n]$ and $\cD^{\geqslant n}:=\cD^{\geqslant 0}[-n]$, for any $n\in \Z$.
Similarly, for the truncation functors $\tau^{\leqslant n}, \tau^{\geqslant n}$.
Moreover, we let $\cD^{[a,b]} =\cD^{\leqslant b}\cap \cD^{\geqslant a}$, for all $a,b\in\Z \cup \set{ \pm \infty}$, $a\leqslant b$.
 
\begin{Def}\label{def:heart}
The \emph{heart} of a t-structure $(\cD^{\leqslant 0},\cD^{\geqslant 0})$ is 
the abelian category $\AA=\cD^{\leqslant 0}\cap \cD^{\geqslant 0}$.
\index{A@$\cA$, heart of a t-structure $(\cD^{\leqslant 0},\cD^{\geqslant 0})$}
\end{Def}

The cohomology objects of an object $E\in\cD$ with respect to the heart of a t-structure $\cA$ will be denoted by $\rH_\AA^\bullet(E)$.
When $\cD=\Db(X)$ and $\cA=\Coh X$ we simply write $\rH^\bullet(E)$.

\begin{Def}
A t-structure $(\cD^{\leqslant 0},\cD^{\geqslant 0})$ is \emph{bounded} if 
$\cD = \bigcup_{n,m\in\Z} \cD^{\leqslant n}\cap \cD^{\geqslant m}$.
\index{tau@$\tau=(\cD^{\leqslant 0},\cD^{\geqslant 0})$, t-structure on $\cD$!bounded}
\end{Def}

\begin{Rem}
Our terminology for bounded t-structures follows \cite{Bridgeland:Stab}. 
Such a t-structure is called nondegenerate in 
\cite{AP:t-structures,Polishchuk:families-of-t-structures}, and bounded and nondegenerate in \cite{BBD}. 
\end{Rem}

A bounded t-structure is uniquely determined by its heart.

\begin{Prop}[{\cite[Lemma~3.1]{Bridgeland:K3}}]\label{prop:BridgelandCriterion}
If $\AA \subset\cD$ is a full additive subcategory of a triangulated category $\cD$, 
then $\AA$ is the heart of a bounded t-structure on $\cD$ if and only if 
\begin{enumerate}[{\rm (1)}]
\item\label{eq:BridgelandCriterion1} for $F,G\in\AA$, we have $\Hom_\cD (F, G[k]) = 0$, for all $k<0$; and
\item\label{eq:BridgelandCriterion2} for all $E \in \cD$, there are integers $m < n$ and a collection of triangles
\begin{equation*}\label{eqn:collTriang}
\xymatrix@C=.3em{
0_{\ } \ar@{=}[r] & E_{m-1} \ar[rrrr] &&&& E_{m} \ar[rrrr] \ar[dll] &&&& E_{m+1}
\ar[rr] \ar[dll] && \ldots \ar[rr] && E_{n-1}
\ar[rrrr] &&&& E_n \ar[dll] \ar@{=}[r] & E_{\ } \\
&&& A_m \ar@{-->}[ull] &&&& A_{m+1} \ar@{-->}[ull] &&&&&&&& A_n \ar@{-->}[ull]
}
\end{equation*}
with $A_i[i] \in\AA$ for all $i$.
\end{enumerate}
\end{Prop}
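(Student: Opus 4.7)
The plan is to prove both implications, with essentially all the content in the converse.

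For the forward direction, assume $\AA$ is the heart of a bounded t-structure $(\cD^{\leq 0}, \cD^{\geq 0})$. Condition~\eqref{eq:BridgelandCriterion1} is immediate from Definition~\ref{def:tstructure}\eqref{def-t-structure-orthogonality}: for $k \leq -1$ we have $F \in \cD^{\leq 0}$ and $G[k] \in \cD^{\geq -k} \subset \cD^{\geq 1}$. For~\eqref{eq:BridgelandCriterion2}, boundedness produces $E \in \cD^{[m,n]}$ for some $m \leq n$; setting $E_i := \tau^{\leq i}E$ and $A_i := \rH_\AA^i(E)[-i]$ then assembles the required filtration from the standard cohomology triangles.

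For the converse, assume~\eqref{eq:BridgelandCriterion1} and~\eqref{eq:BridgelandCriterion2}, and define
\begin{align*}
\cD^{\leq 0} &:= \{E\in\cD : E \text{ admits a filtration as in \eqref{eq:BridgelandCriterion2} with } A_i = 0 \text{ for all } i>0\}, \\
\cD^{\geq 0} &:= \{E\in\cD : E \text{ admits a filtration as in \eqref{eq:BridgelandCriterion2} with } A_i = 0 \text{ for all } i<0\}.
\end{align*}
The trivial filtration gives $\AA \subset \cD^{\leq 0}\cap\cD^{\geq 0}$, and boundedness of the t-structure, once established, will be automatic from the finite filtrations in~\eqref{eq:BridgelandCriterion2}.

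The main step, and the principal obstacle, is the orthogonality $\Hom_\cD(\cD^{\leq 0},\cD^{\geq 1})=0$, which I would prove by a two-stage induction. Write $B_i := A_i[i] \in \AA$ throughout. First, for $F\in\AA$ and $G\in\cD^{\geq 1}$ with filtration pieces $A'_j$, $j\geq 1$, induct on the length of the filtration of $G$ using the exact sequence
\[
\Hom(F,G_{j-1})\to \Hom(F,G_j)\to \Hom(F,A'_j);
\]
the outer terms vanish — the first by the inductive hypothesis, and the third because $\Hom(F, A'_j) = \Hom(F, B'_j[-j])$ is zero by~\eqref{eq:BridgelandCriterion1} since $j\geq 1$. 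Second, for $F\in\cD^{\leq 0}$ with pieces $A_i$, $i\leq 0$, induct on the length of $F$'s filtration: the exact sequence $\Hom(A_i,G)\to \Hom(F_i,G)\to \Hom(F_{i-1},G)$ reduces to showing $\Hom(A_i,G) = \Hom(B_i, G[i]) = 0$. The shift $G[i]$ inherits a filtration from $G$ whose pieces $A'_j[i]$ satisfy $A'_j[i][j-i] = B'_j\in \AA$, placing them at indices $j-i\geq 1$ (since $j\geq 1$, $i\leq 0$); hence $G[i] \in \cD^{\geq 1}$, and the vanishing follows from the first stage.

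The remaining axioms and conclusion are routine. Given $E$ with filtration $0 = E_{m-1}\to \cdots \to E_n = E$ (assume without loss of generality $m\leq 0\leq n$, else one piece of the truncation is already zero), set $\tau^{\leq 0}E := E_0$: the truncated subfiltration places $E_0\in\cD^{\leq 0}$, and iterated applications of the octahedral axiom to $E_0\to E_1\to\cdots\to E_n$ produce a filtration of $\cone(E_0\to E)$ with pieces $A_1,\dots,A_n$ at indices $1,\dots,n$, so $\cone(E_0\to E)\in \cD^{\geq 1}$. Combined with orthogonality, this verifies Definition~\ref{def:tstructure}\eqref{def-t-structure-orthogonality} and the truncation axiom, and boundedness is immediate from~\eqref{eq:BridgelandCriterion2}. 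Finally, for $E \in \cD^{\leq 0}\cap\cD^{\geq 0}$, the triangle $E_{-1}\to E \to A_0$ extracted from an $i\leq 0$ filtration, combined with $E_{-1}\in \cD^{\leq -1}$ and the long exact cohomology sequence in the now-established t-structure, yields $E\simeq A_0\in \AA$, completing the identification of $\AA$ as the heart.
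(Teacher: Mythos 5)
The paper does not give its own proof of this proposition; it simply cites \cite[Lemma~3.1]{Bridgeland:K3}, where the result is proved by the same route as in BBD. Your reconstruction is correct and matches that standard argument: defining $\cD^{\leq 0}$ and $\cD^{\geq 0}$ via the filtrations of~\eqref{eq:BridgelandCriterion2}, proving orthogonality by a two-stage d\'evissage using~\eqref{eq:BridgelandCriterion1}, producing truncation triangles by cutting the filtration at index $0$ and applying the octahedral axiom, and finally identifying $\AA$ as the heart via the long exact cohomology sequence.
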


\begin{Def}\label{def:TorsionPair} Let $\AA\subset\cD$ be the heart of a t-structure.
\index{T@$(\cT,\cF)$, torsion pair of an abelian category $\cA$}
A pair of additive subcategories $(\cT,\cF)$ of $\AA$ is called a \emph{torsion pair} if
\begin{itemize}
\item for all $T\in\cT$ and for all $F\in\cF$, we have $\Hom(T,F)=0$,
\item for all $E\in\AA$, there exist $T_E\in\cT$, $F_E\in\cF$, and an exact sequence
\[
0\to T_E \to E \to F_E \to 0.
\]
\end{itemize}
\end{Def}

Note that if $(\cT, \cF)$ is a torsion pair, then $\cF = \cT^{\perp}$ is the right orthogonal to $\cT$ in $\cA$; we will call a subcategory $\cT \subset \cA$ a \emph{torsion subcategory} if $(\cT, \cT^{\perp})$ forms a torsion pair. 

Given a torsion pair $(\cT,\cF)$ in $\AA$, we can \emph{tilt} to obtain a new t-structure $(\cD^{\sharp, \leqslant 0}, \cD^{\sharp, \geqslant 0})$
\index{D@$(\cD^{\sharp, \leqslant 0}, \cD^{\sharp, \geqslant 0})$, tilted t-structure at a torsion pair}
with
\[ \cD^{\sharp, \leqslant 0} := \left\{ E \in \cD^{\leqslant 0} \sth \rH^{0}_\cA(E) \in \cT \right\}, \quad
\cD^{\sharp, \geqslant 0} := \left\{ E \in \cD^{\geqslant -1} \sth \rH^{-1}_\cA(E) \in \cF \right\},\]
see \cite{Happel-al:tilting}.
\index{Asharp@$\cA^\sharp= \langle \cF[1], \cT \rangle$, tilted heart at a torsion pair}
Its heart $\AA^\sharp$ can be described as the extension-closure
\[
\AA^\sharp := \langle \cF[1], \cT \rangle.
\]
If $(\cD^{\leqslant 0}, \cD^{\geqslant 0})$ is bounded, then so is $(\cD^{\sharp, \leqslant 0}, \cD^{\sharp, \geqslant 0})$. By \cite[Lemma~1.1.2]{Polishchuk:families-of-t-structures}, 
$(\cD^{\sharp, \leqslant 0}, \cD^{\sharp, \geqslant 0})$ can be obtained by tilting from $(\cD^{\leqslant 0}, \cD^{\geqslant 0})$ if and only if
$\cD^{\leqslant -1} \subset \cD^{\sharp, \leqslant 0}_{} \subset \cD^{\leqslant 0}$.

\begin{Def}
A t-structure $(\cD^{\leqslant 0},\cD^{\geqslant 0})$ is
\begin{enumerate}[{\rm (1)}] 
\item \emph{noetherian} if its heart is noetherian, and
\index{tau@$\tau=(\cD^{\leqslant 0},\cD^{\geqslant 0})$, t-structure on $\cD$!noetherian}
\item \emph{tilted-noetherian} if it can be obtained from a noetherian t-structure $(\cD_0^{\leqslant 0}, \cD_0^{\geqslant 0})$ 
on $\cD$ by tilting, i.e., if $\cD_0^{\leqslant -1} \subset \cD^{\leqslant 0}_{} \subset \cD_0^{\leqslant 0}$. 
\index{tau@$\tau=(\cD^{\leqslant 0},\cD^{\geqslant 0})$, t-structure on $\cD$!tilted-noetherian}
\end{enumerate}
\end{Def}

In~\cite{Polishchuk:families-of-t-structures} a 
tilted-noetherian t-structure is called close to noetherian. 

\begin{Def}
Let $\cD_1$ and $\cD_2$ be a pair of triangulated categories equipped with t-structures.
An exact functor $\Phi\colon \cD_1\to\cD_2$ is called \emph{left} (resp. \emph{right}) \emph{t-exact} if $\Phi(\cD_1^{\geqslant 0})\subseteq \cD_2^{\geqslant 0}$ (resp. $\Phi(\cD_1^{\leqslant 0})\subseteq \cD_2^{\leqslant 0}$).
A \emph{t-exact functor} is a functor which is both left and right t-exact.
\end{Def}

\begin{Rem} \label{rem:conservativeandexact}
Recall that a functor between triangulated categories is \emph{conservative} if $\Phi(E) \cong 0$ implies $E \cong 0$.
Now assume $\Phi \colon \cD_1 \to \cD_2$ is both conservative and t-exact, and write
$\cA_i \subset \cD_i$ for the corresponding hearts. Then $\Phi(E) \in \cA_2$ if and only if $E \in \cA_1$. Similarly, in this case two morphisms 
$A \to B \to C$ in $\cD_1$ form a short exact sequence in $\cA_1$ if and only if $\Phi(A) \to \Phi(B) \to \Phi(C)$ is a short exact sequence in $\cA_2$.
\end{Rem}

\subsection{Local t-structures}\label{subsec:localTstructures} 

In the case of a linear category, we will be interested in t-structures that are local over the base scheme in the following sense. 

\begin{Def}
\label{def-local-t-structure}
Let $X \to S$ be a morphism of {schemes which are quasi-compact with affine diagonal, where 
$X$ is noetherian of finite Krull dimension.} 
Let $\cD\subset \Db(X)$ be an $S$-linear strong semiorthogonal component. 
\begin{enumerate}[{\rm (1)}] 
\item A t-structure on $\cD_{\qc}$ is called \emph{$S$-local} if for every quasi-compact open $U \subset S$, there exists a t-structure on $(\cD_{\qc})_U$ such that the restriction functor $\cD_{\qc} \to (\cD_{\qc})_U$ is t-exact.
\item \label{D-local-t-structure}
Assume the projection functor of $\cD$ has finite cohomological amplitude. 
\index{tau@$\tau=(\cD^{\leqslant 0},\cD^{\geqslant 0})$, t-structure on $\cD$!$S$-local}
Then a t-structure on $\cD$ is called \emph{$S$-local} if for every quasi-compact open $U \subset S$, there exists a t-structure on $\cD_U$ such that the restriction functor $\cD \to \cD_U$ is t-exact. 
\end{enumerate}
\end{Def}

In Definition~\ref{def-local-t-structure} we require 
{$U \subset S$ to be quasi-compact so that the base change categories 
 $(\cD_{\qc})_U$ and}
(if the projection functor has finite cohomological amplitude) $\cD_U$ are defined; 
see the discussion preceding Lemma~\ref{lem-open-restriction-es}. 
{In particular, $U$ may be any affine open in $S$, or an arbitrary open if $S$ is noetherian.} 

Lemma~\ref{lem-open-restriction-es} implies that, given an $S$-local t-structure on $\cD$ or $\cD_{\qc}$, for every quasi-compact open $U \subset S$ the t-structure on $\cD_U$ or $(\cD_{\qc})_U$ is uniquely determined. 
We shall sometimes refer to this as the induced t-structure on $\cD_U$ or $(\cD_{\qc})_U$. 
We denote by $\cA_U \subset \cD_U$ or $(\cA_{\qc})_U \subset (\cD_{\qc})_U$ the heart of the corresponding t-structure.
\index{Aqc@$\cA_{\qc}$, induced heart on the quasi-coherent component $\cD_{\qc}$}

Our primary interest is local t-structures on $\cD$, but parallel to the situation for 
base change of linear categories from~Section~\ref{subsection-linear-cats}, 
when we discuss base change of t-structures in Section~\ref{section-bc-t-structure} 
we will also need to consider $\cD_{\qc}$. 
In the rest of this section, we focus on results that do not require the use of $\cD_{\qc}$. 

Note that for $F \in \cD$ the condition $F \in \cD^{[a,b]}$ can be checked 
locally on $S$, since the cohomology functors $\rH^\bullet_{\cA_U}$ commute 
with restriction and the condition that an object in $\cD$ vanishes can be checked locally. 
This observation has the following consequence. 

\begin{Lem}
\label{lem-tensor-vb}
Let $g\colon X \to S$ be a morphism {schemes which are quasi-compact with affine diagonal, where 
$X$ is noetherian of finite Krull dimension.} 
Let $\cD\subset \Db(X)$ be an $S$-linear strong semiorthogonal component whose projection 
functor is of finite cohomological amplitude, and which is equipped with an $S$-local t-structure. 
Then for any vector bundle $V$ on $S$, the functor $(g^*V \otimes -) \colon \cD \to \cD$ is 
t-exact. 
\end{Lem}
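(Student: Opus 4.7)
The plan is to exploit the locality of the t-structure together with the fact that any vector bundle becomes trivial after passing to a suitable affine open cover of $S$.

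First, I would observe that the functor $g^*V \otimes (-)$ preserves $\cD$: indeed, $V \in \Dperf(S)$ since it is a vector bundle, and $\cD$ is $S$-linear by assumption. Next, I would invoke the remark preceding the lemma, namely that membership in $\cD^{[a,b]}$ can be checked on any (quasi-compact) open cover $\{U_i\}$ of $S$, because the truncation functors for the restricted t-structures on $\cD_{U_i}$ commute with the (essentially surjective) restriction functors $\cD \to \cD_{U_i}$ from Lemma~\ref{lem-open-restriction-es}. In particular, to prove t-exactness of $(g^*V \otimes -)$ it suffices to prove that, after restriction to each $U_i$, the induced endofunctor $(g_{U_i}^* V|_{U_i} \otimes -) \colon \cD_{U_i} \to \cD_{U_i}$ is t-exact with respect to the induced t-structure on $\cD_{U_i}$.

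Now I would choose an affine open cover $\{U_i\}$ of $S$ trivializing $V$, so that $V|_{U_i} \simeq \cO_{U_i}^{\oplus r}$ for each $i$. Then $g_{U_i}^* V|_{U_i} \simeq \cO_{X_{U_i}}^{\oplus r}$, and consequently the functor $(g_{U_i}^* V|_{U_i} \otimes -)$ is naturally isomorphic to the $r$-fold direct sum of the identity functor on $\cD_{U_i}$. Since the identity is trivially t-exact, and since finite direct sums preserve each of $\cD_{U_i}^{\leq 0}$ and $\cD_{U_i}^{\geq 0}$ (as these are full additive subcategories closed under finite coproducts, being parts of a t-structure), the restricted functor is t-exact. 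Combining with the previous paragraph, this yields t-exactness of $(g^*V \otimes -)$ on $\cD$.

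No step here strikes me as a real obstacle; the main subtlety is just to justify that restriction to an affine open $U$ converts the tensor-with-$g^*V$ operation into tensor-with-$g_U^*(V|_U)$, which is immediate from the monoidal compatibility of pullback. The lemma is really a formal consequence of the definition of ``$S$-local t-structure'' once Lemma~\ref{lem-open-restriction-es} and the local nature of membership in $\cD^{[a,b]}$ are in hand.
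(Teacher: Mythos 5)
Your proof is correct and takes essentially the same approach as the paper: the lemma is stated as a consequence of the observation immediately preceding it (that membership in $\cD^{[a,b]}$ can be checked locally on $S$), and the paper leaves the details implicit. Your argument — preserve $\cD$ by $S$-linearity, reduce to a trivializing affine cover via locality of the t-structure, and observe that tensoring with a free module is a finite direct sum of identities, hence t-exact — is precisely the intended filling-in of those details.
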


The following gives a relative analogue of condition~\eqref{def-t-structure-orthogonality} 
in Definition~\ref{def:tstructure}; recall from Section~\ref{sec:setupnotation} our 
notation $\cHom_S(-,-)$ for the $S$-relative derived sheaf Hom. 

\begin{Lem}\label{lem:HomS-heart}
Let $X \to S$ be a proper morphism of noetherian schemes {with affine diagonals}, where $X$ has finite Krull dimension and $S$ admits an ample line bundle. 
Let $\cD\subset \Db(X)$ be an $S$-linear strong semiorthogonal component whose projection functor is of finite cohomological amplitude, and which is equipped with an $S$-local t-structure. 
Let $F \in \cD^{\leqslant a} \cap \Dperf(X)$ and $G \in \cD^{\geqslant b}$. 
Then 
\begin{equation*}
\cHom_S(F,G) \in \Db(S)^{\geqslant b-a} . 
\end{equation*} 
\end{Lem}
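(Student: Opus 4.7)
The plan is to use the ample line bundle $L$ on $S$ to test the bound on $\cHom_S(F,G) = g_*\cHom_X(F,G)$ after twisting by high powers of $L$, and then to deduce the resulting global Ext-vanishing from the orthogonality axiom of the t-structure on $\cD$.

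First, I would observe that $\cHom_S(F,G)$ really lies in $\Db(S)$: since $F \in \Dperf(X)$ and $G \in \Db(X)$, the complex $\cHom_X(F,G)$ is bounded, and $g_*$ preserves boundedness because $g$ is proper between noetherian schemes of finite Krull dimension. Next, I would invoke the standard ampleness criterion: for $E \in \Db(S)$, one has $E \in \Db(S)^{\geq c}$ if and only if $\Gamma(S, E \otimes L^n) \in \Db(\mathrm{Ab})^{\geq c}$ for all $n \gg 0$. The forward direction follows from Serre vanishing combined with the convergent hypercohomology spectral sequence
\begin{equation*} E_2^{p,q} = H^p(S, \cH^q(E) \otimes L^n) \Longrightarrow H^{p+q}(\Gamma(S, E \otimes L^n)), \end{equation*}
which for $n$ large enough degenerates into isomorphisms $H^k(\Gamma(S, E \otimes L^n)) \cong \Gamma(S, \cH^k(E) \otimes L^n)$; the reverse direction detects the lowest non-vanishing $\cH^k(E)$ by ampleness of $L$.

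Applied to $E = \cHom_S(F,G)$, the projection formula together with the $(g^*, g_*)$-adjunction yields
\begin{equation*} \Gamma(S, \cHom_S(F,G) \otimes L^n) \simeq \RHom_X(F, G \otimes g^*L^n). \end{equation*}
By Lemma~\ref{lem-tensor-vb}, tensoring with the vector bundle $g^*L^n$ is t-exact on $\cD$, so $G \otimes g^*L^n \in \cD^{\geq b}$. Combined with $F \in \cD^{\leq a}$, the orthogonality axiom of the t-structure forces $\Hom_X(F, G \otimes g^*L^n[i]) = 0$ for all $i < b-a$, which is exactly the required bound.

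The only delicate point is the preliminary characterization of $\Db(S)^{\geq c}$ by ample twists, which is precisely where the assumption that $S$ admits an ample line bundle enters; everything else is bookkeeping with standard functorial identities and t-structure axioms. An entirely local alternative --- reducing via Lemma~\ref{Lem-cHom-bc} to the case $S = \Spec R$ affine, identifying $\cHom_S(F,G)$ on $\Spec R$ with the $R$-complex $\RHom_{X_R}(F_R, G_R)$, and then invoking $S$-locality of the t-structure together with the orthogonality axiom on $\cD_U$ --- is also available and avoids the ample line bundle altogether, but the global argument above uses the stated hypothesis more transparently.
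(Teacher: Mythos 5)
Your main argument is essentially the paper's: both twist $G$ by $g^*L$ for a suitably ample $L$, use the projection formula to rewrite $\cHom_S(F,G)\otimes L \simeq \cHom_S(F, G\otimes g^*L)$, pass through the hypercohomology spectral sequence $\rH^p(S,\cdot)\Rightarrow \Hom(F, G\otimes g^*L[p+q])$, and read off the vanishing from the orthogonality axiom via Lemma~\ref{lem-tensor-vb}. Your concluding remark is also correct and worth recording: once one localizes to an affine open $U=\Spec R\subset S$ using Lemma~\ref{Lem-cHom-bc} and the $S$-locality of the t-structure, $\cHom_U(F_U,G_U)$ is the $R$-complex $\RHom_{X_U}(F_U,G_U)$ whose cohomologies are the groups $\Hom_{\cD_U}(F_U,G_U[i])$, and these vanish for $i<b-a$ directly by the orthogonality axiom in $\cD_U$ — so the ample line bundle hypothesis can in fact be dispensed with here.
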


\begin{proof}
Note that since $g$ is a proper morphism between noetherian schemes and $F$ is perfect, 
the object $\cHom_S(F,G)$ indeed lies in $\Db(S)$. 
Let $q \in \Z$ be the smallest integer so that $\rH_{\Coh S}^q\cHom_S(F,G) \neq 0$. 
We must show $q \geqslant b-a$. 
Note that if $L$ is a line bundle on $S$, then by the projection formula 
and perfectness of $F$ we have 
\begin{equation*}
\cHom_S(F,G) \otimes L \simeq \cHom_S(F, G \otimes g^*L). 
\end{equation*} 
Hence there is a spectral sequence 
\begin{equation*}
E_2^{p,q} = \rH^p(S, \rH_{\Coh S}^q\cHom_S(F,G) \otimes L) \Rightarrow 
\Hom(F, G \otimes g^*L [p+q]). 
\end{equation*} 
For degree reasons, the term $\rH^0(S, \rH_{\Coh S}^q\cHom_S(F,G) \otimes L)$ must 
survive in the spectral sequence. 
By choosing a suitably ample $L$ we can ensure this term, and hence 
also $\Hom(F, G \otimes g^*L [q])$, is nonzero. 
But by Lemma~\ref{lem-tensor-vb} we have $G \otimes g^*L \in \cD^{\geqslant b}$, 
so it follows that $q \geqslant b-a$. 
\end{proof} 

\begin{Thm}
\label{thm:local-t-structure-tensor-ample}
Let $X \to S$ be a morphism 
{schemes which are quasi-compact with affine diagonal, where 
$X$ is noetherian of finite Krull dimension} 
and $S$ admits an ample line bundle $L$. 
Let $\cD\subset \Db(X)$ be an $S$-linear strong semiorthogonal component whose projection functor is of finite cohomological amplitude. 
Then a bounded t-structure on $\cD$ is $S$-local if and only if tensoring with $g^*L$ is left t-exact, or equivalently, if and only if tensoring with $g^*L$ is t-exact. 
In particular, if $S$ is affine then any bounded t-structure on $\cD$ is automatically $S$-local. 
\end{Thm}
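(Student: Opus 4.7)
The plan is to prove the cycle $(i) \Rightarrow (iii) \Rightarrow (ii) \Rightarrow (i)$, where $(i)$, $(ii)$, $(iii)$ denote respectively ``$\cA$ is $S$-local'', ``$(-\otimes g^*L)$ is t-exact'', and ``$(-\otimes g^*L)$ is left t-exact''. The implication $(i) \Rightarrow (ii)$ is immediate from Lemma~\ref{lem-tensor-vb} applied to $V = L$, and $(ii) \Rightarrow (iii)$ is tautological. The ``in particular'' sentence will then follow by observing that on an affine $S$ the structure sheaf $\cO_S$ is ample and $(-\otimes g^*\cO_S) = \id_{\cD}$ trivially satisfies $(ii)$.

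The substantive step is $(iii) \Rightarrow (i)$. For each quasi-compact open $U \subset S$, I will build a bounded t-structure on $\cD_U$ by setting
\[
\cA_U \;:=\; \{\, F \in \cD_U : F \cong E|_U \text{ for some } E \in \cA \,\}
\]
and verifying Bridgeland's criterion (Proposition~\ref{prop:BridgelandCriterion}); t-exactness of the restriction $\cD \to \cD_U$ is then automatic from the construction. The filtration axiom is immediate from essential surjectivity of restriction (Lemma~\ref{lem-open-restriction-es}): lift an arbitrary $F \in \cD_U$ to $\tilde F \in \cD$ and restrict its $\cA$-filtration. The non-formal ingredient is the orthogonality $\Hom_{\cD_U}(E|_U, E'|_U[k]) = 0$ for $E, E' \in \cA$ and $k < 0$, which is where ampleness of $L$ and hypothesis $(iii)$ enter.

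The key computation will use ampleness to reduce to affine opens of the form $U_s$ with $s \in H^0(S, L^{\otimes N})$, whose union covers $U$. For such a $U_s$, the inclusion $j\colon U_s \hookrightarrow S$ is affine and flat; writing $j'\colon X_{U_s} \hookrightarrow X$ for its base change, flat base change and the projection formula give
\[
Rj'_* j'^* E' \;\cong\; E' \otimes g^*\!\bigl(\colim_m L^{\otimes mN}\bigr) \;\cong\; \colim_m\! \bigl(E' \otimes g^*L^{\otimes mN}\bigr),
\]
the transitions being multiplication by $s$. Iterating hypothesis $(iii)$ places each $E' \otimes g^*L^{\otimes mN}$ in $\cD^{\geq 0}$, hence for $k<0$ each shifted term lies in $\cD^{\geq 1}$ and pairs trivially with $E \in \cA \subset \cD^{\leq 0}$. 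Combining with adjunction to rewrite $\Hom_{\cD_{U_s}}(E|_{U_s}, E'|_{U_s}[k])$ as the filtered colimit of those vanishing groups yields the desired vanishing on each $U_{s_i}$. A \v{C}ech--Mayer--Vietoris spectral sequence for the cover of $U$ by the $U_{s_i}$ (whose intersections are again of the same form) then propagates the vanishing from the affine pieces to $U$ itself.

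The main obstacle will be justifying the commutation of $\Hom_{\Dqc(X)}(E, -)$ with the filtered colimit $\colim_m E' \otimes g^*L^{\otimes mN}[k]$, because $E$ need not be perfect. The idea is to use hypothesis $(iii)$ to uniformly bound the whole colimit system below in $\cD$ (after the shift by $k$), so that passing to the extended $\Dqc$-side keeps the colimit in a bounded-below part of a t-structure, where Hom from a pseudo-coherent complex is controlled term by term. The second technical point is the \v{C}ech descent, which requires $\cA_U$ to be well-behaved under further restriction; this is built in because the construction of $\cA_U$ is compatible with composition of open immersions. Once the vanishing is secured, Bridgeland's criterion yields the t-structure on $\cD_U$ and closes the argument.
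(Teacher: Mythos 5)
Your proof is correct and runs along essentially the same lines as the argument of Abramovich--Polishchuk and Polishchuk (\cite[Theorem~2.1.4]{AP:t-structures}, \cite[Theorem~2.3.2]{Polishchuk:families-of-t-structures}) that the paper cites without reproving: reduce to the basis of affines $U_s$ coming from sections of powers of $L$, identify $Rj'_*j'^*E'$ with the filtered colimit of twists $E'\otimes g^*L^{\otimes mN}$, apply left t-exactness iteratively, and propagate by \v{C}ech descent. One streamlining for the step you flag as the main obstacle: rather than commuting $\Hom(E,-)$ with the colimit term by term, observe via Lemma~\ref{lem-induced-t-Dqc}.\eqref{Dqc-tr-continuous} that the truncation functors on $\cD_{\qc}$ preserve filtered colimits, so $\colim_m \bigl(E'\otimes g^*L^{\otimes mN}\bigr)[k]$ lies in $\cD_{\qc}^{\geq 1}$ for $k<0$, and the required vanishing then follows at once from the t-structure orthogonality on $\cD_{\qc}$.
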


\begin{proof}
This is the analogue of \cite[Theorem~2.3.2]{Polishchuk:families-of-t-structures} 
(that builds on \cite[Theorem~2.1.4]{AP:t-structures}) in our setup, 
and follows by the same argument. 
\end{proof} 

\begin{Def} 
Let $X \to S$ be a morphism {schemes which are quasi-compact with affine diagonal, where 
$X$ is noetherian of finite Krull dimension.} 
Let $\cD\subset \Db(X)$ be an $S$-linear strong semiorthogonal component whose projection 
functor is of finite cohomological amplitude. 
An $S$-local t-structure $(\cD^{\leqslant 0}, \cD^{\geqslant 0})$ on $\cD$ is called: 
\begin{enumerate}[{\rm (1)}] 
\item \emph{noetherian locally on $S$} if for every quasi-compact 
open subscheme $U \subset S$, the induced t-structure on $\cD_U$ is noetherian. 
\item \emph{tilted-noetherian locally on $S$} 
if there exists an $S$-local t-structure $(\cD_0^{\leqslant 0}, \cD_0^{\geqslant 0})$ 
on $\cD$ which is noetherian locally on $S$, such that 
$\cD_0^{\leqslant -1} \subset \cD^{\leqslant 0} \subset \cD_0^{\leqslant 0}$. 
\end{enumerate}
\end{Def}

An $S$-local t-structure is tilted-noetherian locally on $S$ if and only if it is obtained by tilting 
(on each quasi-compact open) from an $S$-local t-structure which is noetherian 
locally on~$S$.

\begin{Lem}
\label{lem-noetherian-t-structure}
Let $X \to S$ be a morphism of 
{schemes which are quasi-compact with affine diagonal, where 
$X$ is noetherian of finite Krull dimension.} 
Let $\cD\subset \Db(X)$ be an $S$-linear strong semiorthogonal component whose projection 
functor is of finite cohomological amplitude, and which is equipped with an $S$-local t-structure. The following are equivalent:
\begin{enumerate}[{\rm (1)}]
\item \label{noetherian-t-structure-1}
The t-structure on $\cD$ is noetherian locally on $S$. 
\item 
\label{noetherian-t-structure-2}
For every quasi-compact open $U \subset S$ in a basis for the topology of $S$, 
the induced t-structure on $\cD_{U}$ is noetherian. 
\item \label{noetherian-t-structure-3}
There is an open cover $S = \cup_i U_i$ by quasi-compact opens $U_i \subset S$ 
such that the induced t-structure on $\cD_{U_i}$ is noetherian locally on $U_i$ for each $i$. 

\item \label{noetherian-t-structure-4} 
There is an affine open cover $S = \cup_i U_i$ 
such that the induced t-structure on $\cD_{U_i}$ is noetherian for each $i$. 

\end{enumerate}
Moreover, if $S$ admits an ample line bundle, then the above 
conditions are further equivalent to the following: 
\begin{enumerate}[{\rm (1)}]\setcounter{enumi}{4}
\item
\label{noetherian-t-structure-5}
The t-structure on $\cD$ is noetherian. 
\end{enumerate}
\end{Lem}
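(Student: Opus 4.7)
The plan is to separate the two blocks: first establish the equivalences among (1)--(4), which require only the local structure, and then show (1)$\Leftrightarrow$(5) under the ample hypothesis. The formal implications (1)$\Rightarrow$(2), (1)$\Rightarrow$(3), and (1)$\Rightarrow$(5) (take $U=S$) are immediate, and (2)$\Rightarrow$(4) follows by choosing the basis to consist of affine opens of $S$ and extracting a finite subcover by quasi-compactness of $S$. The substance is concentrated in (4)$\Rightarrow$(1); once that is in hand, (3)$\Rightarrow$(4) follows by refining the cover of (3) to an affine cover inside each $U_i$ and applying (4)$\Rightarrow$(1) within each $U_i$.

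For (4)$\Rightarrow$(1), fix an affine cover $S=\bigcup U_i$ with $\cA_{U_i}$ noetherian, and let $U\subset S$ be quasi-compact. Cover $U$ by finitely many affine opens $V_1,\dots,V_N$ with $V_j\subset U_{i(j)}$. The key sub-claim is that $\cA_V$ is noetherian whenever $V\subset U_i$ is an affine open: by Lemma~\ref{lem-open-restriction-es} the restriction $\cD_{U_i}\to\cD_V$ is essentially surjective and t-exact, which should identify $\cA_V$ as the Gabriel localization of $\cA_{U_i}$ at the Serre subcategory of objects supported on $U_i\setminus V$; such localizations preserve noetherianity. With the sub-claim in hand, an ascending chain $E_0\subset E_1\subset\cdots$ in $\cA_U$ stabilizes on each $V_j$ at some step $n_j$, hence at $n=\max_j n_j$ on all of them simultaneously; because monomorphisms in $\cA_U$ are isomorphisms iff their cokernels vanish on a finite cover (by t-exactness of restriction and conservativity of $\{V_j\}$), the chain stabilizes at $n$ in $\cA_U$.

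For the ample case, (1)$\Rightarrow$(5) is just $U=S$, so it suffices to prove (5)$\Rightarrow$(4). Using ampleness of $L$, cover $S$ by finitely many principal affines $S_{f_k}$ with $f_k\in\HH^0(S,L^{\otimes n_k})$. For each such $S_f$, I would show $\cA_{S_f}$ is noetherian by lifting a chain $E_0\subset E_1\subset\cdots$ in $\cA_{S_f}$ object by object to $\tilde E_k\in\cA$ via essential surjectivity of $\cD\to\cD_{S_f}$ (Lemma~\ref{lem-open-restriction-es}), and lifting each inclusion $E_k\hookrightarrow E_{k+1}$ to $\cA$ after tensoring with a sufficiently high power of $g^*L$ via the colimit presentation
\[
\Hom_{\cA_{S_f}}(\tilde E_k|_{S_f},\tilde E_{k+1}|_{S_f})=\colim_m\Hom_{\cA}(\tilde E_k,\tilde E_{k+1}\otimes g^*L^{\otimes m n_f}).
\]
A diagonal/bootstrap construction then produces a single chain in $\cA$ whose stabilization, forced by (5), descends to stabilization of the original chain, yielding (4).

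The main obstacle I expect is making the sub-claim in the second paragraph fully rigorous in the t-structure setting: the classical Gabriel localization picture must be combined with Lemma~\ref{lem-open-restriction-es} and the $S$-local property to correctly identify $\cA_V$ as the localization of $\cA_{U_i}$ and to verify that the Serre subcategory of objects supported on $U_i\setminus V$ is the kernel of restriction. Theorem~\ref{thm:local-t-structure-tensor-ample} is convenient here because on affines it automatically grants the required $S$-local behaviour. Once the sub-claim is settled, the remaining steps are routine local-to-global stabilization, and the ample-case twisting argument in paragraph three proceeds along the same lines as in~\cite{AP:t-structures,Polishchuk:families-of-t-structures}.
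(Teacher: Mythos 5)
Your overall strategy is sound and runs parallel to the paper's: both arguments rest on (a)~the observation that an ascending chain in $\cA_U$ stabilizes if and only if its restrictions to the pieces of a finite open cover all stabilize, and (b)~a lifting of chains from the heart over a localization back to the heart over the bigger scheme. The differences are in how (b) is packaged, and it is precisely in the sub-claim you yourself flag as the ``main obstacle'' that a genuine gap remains.

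Your appeal to Gabriel/Serre localization has two problems. First, identifying $\cA_V$ as the Serre quotient of $\cA_{U_i}$ by the kernel of restriction requires more than essential surjectivity and t-exactness: one must also show the induced functor $\cA_{U_i}/\ker \to \cA_V$ is \emph{full}, i.e.\ that every morphism over $V$ is represented by a suitable diagram over $U_i$. This fullness is not supplied by Lemma~\ref{lem-open-restriction-es} alone. Second, even granting that, the lifting apparatus the paper provides --- Lemma~\ref{lem-extend-from-localisation}.\eqref{enum:lem-extend-filtration-from-localisation} --- is stated only for opens cut out by the nonvanishing of a section of a line bundle (principal affines) and for ring localizations, not for an arbitrary affine open $V \subset U_i$. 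So for a general affine $V$ you cannot invoke the filtration-lift lemma directly, and the abstract fact ``Serre quotients of noetherian abelian categories are noetherian,'' while true, is not obviously applicable.

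The cleanest fix is to work with \emph{principal} affines of the affines $U_i$. Since any quasi-compact $U \subset S$ is covered by finitely many principal affines $W_k$ of the $U_{i}$'s, and since Lemma~\ref{lem-extend-from-localisation}.\eqref{enum:lem-extend-filtration-from-localisation} applies directly to each $W_k \subset U_{i(k)}$ to lift chains from $\cA_{W_k}$ to $\cA_{U_{i(k)}}$, each $\cA_{W_k}$ is noetherian; the finite-cover observation then finishes (4)$\Rightarrow$(1). This is essentially what the paper does, packaged slightly differently: it proves (5)$\Rightarrow$(2) under the ample hypothesis by choosing the basis in (2) to consist of principal affines (which exist for any $S$ with an ample line bundle) and lifting chains via the same lemma; (2)$\Rightarrow$(1) and (3)$\Rightarrow$(1) are then the finite-cover argument; and (4)$\Rightarrow$(3) comes for free by applying the ample case to each affine $U_i$. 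This route never needs the Serre-quotient structure of the heart at all. Also note that your third paragraph ((5)$\Rightarrow$(4) via the colimit presentation of $\Hom$) is, modulo bookkeeping, a re-derivation of Lemma~\ref{lem-extend-from-localisation}; citing it directly is both shorter and avoids having to carry out the ``diagonal/bootstrap'' by hand.
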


\begin{proof}
Clearly~\eqref{noetherian-t-structure-1} implies~\eqref{noetherian-t-structure-2}, \eqref{noetherian-t-structure-3}, and \eqref{noetherian-t-structure-4}.

To see~\eqref{noetherian-t-structure-2} implies~\eqref{noetherian-t-structure-1}, 
let $U \subset S$ be an arbitrary quasi-compact open. 
Choose a finite cover $U = \cup U_i$ by quasi-compact opens $U_i \subset S$ in 
the given basis for the topology of $S$. 
Now observe that an increasing sequence $F_1 \subset F_2 \subset \cdots \subset F$ 
in $\cA_U \subset \cD_U$ stabilizes at the $N$-th term if and only if its 
restriction to $U_i$ stabilizes at the $N$-th term for all $i$. 
Since there are finitely many $U_i$, it follows that~\eqref{noetherian-t-structure-2} 
implies~\eqref{noetherian-t-structure-1}. 

Similarly, if~\eqref{noetherian-t-structure-3} holds and $U \subset S$ is a quasi-compact 
open, then we can write $U = \cup_j V_j$ as a union of finitely many $V_j = U \cap U_j$. 
Since the t-structure on $\cD_{V_j}$ is noetherian by assumption 
(note that each $V_j$ is quasi-compact since $S$ has affine diagonal), we conclude as 
above that the t-structure on $\cD_{U}$ is noetherian, i.e., \eqref{noetherian-t-structure-1} holds. 

If $S$ admits an ample line bundle, then $S$ is in particular quasi-compact, 
so \eqref{noetherian-t-structure-1} implies~\eqref{noetherian-t-structure-5}. 
Conversely, we show that if $S$ admits an ample line bundle, 
then~\eqref{noetherian-t-structure-5} implies~\eqref{noetherian-t-structure-2}. 
Note that $S$ has a basis for its topology given by affine open sets 
of the form $U = \set{ x \in S \sth f(x) \neq 0}$, 
where $f$ is a global section of a line bundle $L$ on $S$. 
Hence it suffices to show that for such a $U$, any sequence $F_1 \subset F_2 \subset \cdots \subset F$ 
of inclusions in the induced heart $\cA_U \subset \cD_U$ stabilizes. 
By Lemma~\ref{lem-extend-from-localisation}.\eqref{enum:lem-extend-filtration-from-localisation} below, there is a sequence 
$\tilde{F_1} \subset \tilde{F_2} \subset \cdots \subset \tilde{F}$ 
in $\cA_S$ which restricts to the given sequence $F_1 \subset F_2 \subset \cdots \subset F$ 
in $\cA_U$. Since the sequence in $\cA_S$ must stabilize by assumption, this claim 
implies the result. 

So far we have shown \eqref{noetherian-t-structure-1}, \eqref{noetherian-t-structure-2}, and 
\eqref{noetherian-t-structure-3} are equivalent, imply~\eqref{noetherian-t-structure-4}, 
and if $S$ admits an ample line bundle then they are further equivalent to~\eqref{noetherian-t-structure-5}. 
Since any affine scheme admits an ample line bundle, we thus conclude that~\eqref{noetherian-t-structure-4} 
implies~\eqref{noetherian-t-structure-3}, finishing the proof. 
\end{proof}

The proof relied on the following Lemma, which we state in a more general form that will become useful in Section~\ref{section-bc-t-structure}. 
\begin{Lem}
\label{lem-extend-from-localisation}
Let $g\colon X \to S$ be a morphism of 
{schemes which are quasi-compact with affine diagonal, where 
$X$ is noetherian of finite Krull dimension.} Let $\cD \subset \Db(X)$ be an $S$-linear strong semiorthogonal component whose projection functor is of finite cohomological amplitude. 
Let $f \colon T \to S$ be one, or the composition, of
\begin{enumerate}[{\rm (a)}]
 \item \label{enum:opensubset-extension}
 the inclusion of an open subset
given by the non-vanishing locus of a section $h \in \Gamma(S, L)$ of a line bundle $L$ on $S$, and
 \item \label{enum:localisation-extension} a morphism between affine schemes given by a localization of rings.
\end{enumerate} 
\begin{enumerate}[{\rm (1)}]
 \item\label{enum:lem-extend-morphism-from-localisation} Let $\tilde{F} \in \cD$ be an object with pullback $F = \tilde{F}_T \in \cD_T$. 
Let $\beta \colon G \to F$ be a morphism in $\cD_T$. 
Then there exists a morphism $\tilde{\beta} \colon \tilde{G} \to \tilde{F}$ in $\cD$ such that $g^*\tilde \beta = \beta$. 
\item \label{enum:lem-extend-morphism-in-heart}
Assume that $\cD$ and $\cD_T$ have t-structures such that $f^*$ is t-exact. Let $\tilde{F}\in\cA_S$ and $F=\tilde{F}_T$, and let $\beta\colon G\to F$ be a morphism in $\cA_T$. Then there exists a morphism $\tilde{\beta}\colon \tilde{G}\to \tilde{F}$ in $\cA_S$ with $f^*\tilde \beta = \beta$. 
If $\beta$ is injective, we can choose $\tilde{\beta}$ to be injective; if $\beta$ is instead surjective, we can replace
$\tilde F$ by another lift $\tilde F'$ to make $\tilde \beta$ surjective.
\item \label{enum:lem-extend-filtration-from-localisation}
More generally, any (not necessarily finite) filtration in $\cA_T$
\[
F_1 \subset F_2 \subset \ldots \subset F,
\]
can be lifted to a filtration in $\cA_S$
\[
\tilde{F}_1 \subset \tilde{F}_2 \subset \ldots \subset \tilde{F}.
\]
\end{enumerate}
\end{Lem}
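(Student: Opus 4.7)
My plan is to prove the three parts in order, with~(1) containing the essential content and~(2), (3) following essentially formally. For~(1), the underlying idea is that in both settings~\eqref{enum:opensubset-extension} and~\eqref{enum:localisation-extension}, pullback along $f$ amounts to ``inverting'' a section of a line bundle or a scalar. Treating case~\eqref{enum:opensubset-extension} first: by Lemma~\ref{lem-open-restriction-es}, pick any lift $\tilde G_0 \in \cD$ of $G$. The key identity to establish is
\[
\Hom_{\cD_T}((\tilde G_0)_T, \tilde F_T) \;\simeq\; \colim_n \Hom_{\cD}\bigl(\tilde G_0, \tilde F \otimes g^* L^n\bigr),
\]
with transition maps induced by multiplication by $h$. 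This follows by applying $\Gamma(X,-)$ to the standard identification $j'_* j'^* \cF \simeq \colim_n (\cF \otimes g^*L^n)$ (for $j' \colon X_T \hookrightarrow X$ the base change of the open inclusion $T \hookrightarrow S$) applied to $\cF = \cHom_X(\tilde G_0, \tilde F)$, and using that derived global sections commute with filtered colimits on our quasi-compact, quasi-separated schemes. Granted this, $\beta$ lifts to some $\beta_0 \colon \tilde G_0 \to \tilde F \otimes g^* L^n$ in $\cD$, and setting $\tilde G := \tilde G_0 \otimes g^* L^{-n}$ with $\tilde\beta := \beta_0 \otimes \mathrm{id}$, the canonical trivialization of $g^* L|_{X_T}$ by $h$ identifies $\tilde G_T$ with $G$ and the restriction of $\tilde\beta$ with $\beta$. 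Case~\eqref{enum:localisation-extension} is parallel, using the localization formula $\Hom_{\cD_T} \simeq \Hom_{\cD} \otimes_A B$; compositions of the two types are handled in two steps.

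For~(2), first apply~(1) to obtain $\tilde\beta_0 \colon \tilde G_0 \to \tilde F$ in $\cD$ lifting $\beta$, then set $\tilde G := H^0_{\cA_S}(\tilde G_0)$. By t-exactness of $f^*$ together with $G \in \cA_T$, one has $\tilde G_T = H^0_{\cA_T}(G) = G$. Because $\tilde F \in \cA_S$ is concentrated in degree zero, $\tilde\beta_0$ factors uniquely through $\tilde G_0 \to \tilde G$ in $\cD$, producing $\tilde\beta$ in $\cA_S$ whose restriction to $T$ equals $\beta$. For the injective case, replace $\tilde G$ by $\tilde G/\ker(\tilde\beta)$: since $\beta$ is injective, $\ker(\tilde\beta)$ has vanishing pullback, so the pullback of the quotient is still $G$ and the induced map into $\tilde F$ is injective. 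For the surjective case, replace $\tilde F$ by $\tilde F' := \mathrm{Im}(\tilde\beta) \subset \tilde F$, which lifts $\mathrm{Im}(\beta) = F$ and admits a surjection from $\tilde G$.

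For~(3), induct on the filtration. Begin with any lift $\tilde F \in \cA_S$ of $F$, obtained as $H^0_{\cA_S}$ of an arbitrary lift in $\cD$. Given $\tilde F_n \hookrightarrow \tilde F$ lifting $F_n$, the quotient $\tilde F/\tilde F_n \in \cA_S$ lifts $F/F_n$, so applying~(2) (injective case) to $F_{n+1}/F_n \hookrightarrow F/F_n$ produces a subobject $\tilde F_{n+1}/\tilde F_n \hookrightarrow \tilde F/\tilde F_n$; take $\tilde F_{n+1}$ to be its preimage in $\tilde F$. This treats countable filtrations; the general case follows by transfinite induction, taking unions of subobjects of $\tilde F$ at limit ordinals. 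The main obstacle is establishing the $\Hom$ identity in~(1): objects of $\cD \subset \Db(X)$ are only pseudo-coherent rather than perfect, so one must apply the base-change and projection formulas at the level of $\cHom$-sheaves in $\Dqc(X)$ before passing to global sections, rather than manipulating $\Hom$-groups on $X$ directly.
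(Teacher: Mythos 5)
Your argument follows essentially the same route as the paper's: an arbitrary lift plus a twist by $g^*L^{-n}$ in case~(a) (you reprove \cite[Lemma~2.1.8]{AP:t-structures} via $j'_*j'^*\cF \simeq \colim_n(\cF\otimes g^*L^n)$, where the paper cites it directly), flat base change in case~(b), truncation to pass to the heart in~(2), and kernel/image replacements for the injective/surjective refinements.

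One small correction in part~(2): there is no natural morphism $\tilde G_0 \to H^0_{\cA_S}(\tilde G_0)$ in $\cD$, so $\tilde\beta_0$ does not ``factor through $\tilde G_0 \to \tilde G$.'' Rather, since $\tilde F \in \cD^{\ge 0}$, $\tilde\beta_0$ factors uniquely through $\tilde G_0 \to \tau^{\ge 0}\tilde G_0$; one then \emph{precomposes} the resulting map $\tau^{\ge 0}\tilde G_0 \to \tilde F$ with the natural morphism $\tilde G = \tau^{\le 0}\tau^{\ge 0}\tilde G_0 \to \tau^{\ge 0}\tilde G_0$. Under $f^*$ both of these intermediate maps become identities on $G$, so $f^*\tilde\beta = \beta$ as desired. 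Also, note that the filtration in~(3) is $\mathbb N$-indexed (``$F_1 \subset F_2 \subset \ldots$''), so no transfinite step is needed; the paper avoids even your ordinary induction by lifting each inclusion $F_i \hookrightarrow F$ separately via~(2) to $\tilde\beta_i \colon \tilde F'_i \hookrightarrow \tilde F$ and then setting $\tilde F_i := \sum_{j \le i}\operatorname{im}\tilde\beta_j$, which is marginally cleaner since it avoids passing to quotients.
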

\begin{proof}
To prove \eqref{enum:lem-extend-morphism-from-localisation}, we first use Lemma~\ref{lem-open-restriction-es} and choose an arbitrary lift $\tilde G_0 \in \cD$ of $G$.
In case \eqref{enum:opensubset-extension}, \cite[Lemma~2.1.8]{AP:t-structures} shows that $h^k\beta$ extends to a morphism $G_0 \otimes L^{-k} \to F$, which restricts to $\beta$ after the identification $L_U^{-1} \cong \cO_U$ induced by $h$.
In case \eqref{enum:localisation-extension}, flat base change implies $\Hom(G, F) = \Hom(\tilde G_0, \tilde F)\otimes_{\cO_S} \cO_T$;
thus a similar statement holds with $L$ trivial, and $h \in H^0(\cO_S)$ in the localizing subset;
replacing $\tilde G_0$ by the isomorphic object $\tilde G_0 \otimes (h)$ then yields a morphism as claimed.

To prove \eqref{enum:lem-extend-morphism-in-heart}, we first choose a lift $\tilde G_1 \in \cD$ as in \eqref{enum:lem-extend-morphism-from-localisation}. 
Since $f^*$ is exact, we can replace $\tilde G_1$ by $\tilde G_2 = \rH^0_{\cA_S}(\tilde G_1)$ and obtain a lift of $G$ and $\beta$ in $\cA_S$.
Finally, since $f^*$ is t-exact, the morphism $\beta$ is injective if and only if $f^* \Ker \tilde \beta = 0$;
in this case, we can thus replace $\tilde G_2$ by the image of $\tilde \beta$ to obtain an injective lift of $\beta$ in $\cA_T$.
The case of $\beta$ surjective follows similarly.

Finally, to prove \eqref{enum:lem-extend-filtration-from-localisation}, we first choose lifts $\tilde \beta_i \colon \tilde F'_i \into \tilde F$ given by the previous steps.
Then we obtain a desired filtration of $\tilde F$ by replacing $\tilde F'_i$ with 
\[
\tilde F_i := \bigoplus_{j \leqslant i} \im \tilde \beta_i. \qedhere
\]
\end{proof}

\begin{Rem}
We will see in Theorem~\ref{Thm-D-bc} that such a t-structure on $\cD_T$ always exists.
\end{Rem}

\section{Base change of local t-structures}
\label{section-bc-t-structure} 
In this section we prove results on base changes of local t-structures. 
Namely, given an $S$-linear category $\cD \subset \Db(X)$ with an $S$-local t-structure, 
we construct under suitable hypotheses induced t-structures on the base changes 
of $\cD_{\qc}$ and $\cD$ along a morphism $T \to S$ 
(Theorems~\ref{thm-Dqc-bc} and~\ref{Thm-D-bc}). 
The result for base changes of $\cD$ generalizes the results on ``constant t-structures'' from 
\cite[Section~2]{AP:t-structures} and \cite[Theorem~3.3.6]{Polishchuk:families-of-t-structures}, 
which correspond to the case where $\cD = \Db(X)$ and $S$ is a point. 
In fact, many of the ingredients in our proof come from 
\cite{AP:t-structures, Polishchuk:families-of-t-structures}. 

\subsection{The unbounded quasi-coherent case} 
In this subsection we focus on base change of t-structures in the setting of 
unbounded derived categories of quasi-coherent sheaves. 
In this setting it is possible to prove results with very few hypotheses, because 
it is very easy to construct t-structures, as the following lemma illustrates. 

\begin{Lem}
\label{lem-induced-t-Dqc}
Let $X \to S$ be a morphism of schemes where $X$ is noetherian of 
finite Krull dimension. 
Let $\cD \subset \Db(X)$ be an $S$-linear strong semiorthogonal component endowed with a
t-structure $(\cD^{\leqslant 0}, \cD^{\geqslant 0})$. 
Then there is a t-structure on $(\cD_{\qc}^{\leqslant 0}, \cD_{\qc}^{\geqslant 0})$ on $\cD_{\qc}$ 
where:
\begin{itemize}
\item $\cD_{\qc}^{\leqslant 0}$ is the smallest full subcategory of $\cD_{\qc}$ which contains $\cD^{\leqslant 0}$ and is closed under extensions and small colimits. 
\item $\cD_{\qc}^{\geqslant 0} = \set{ F \in \cD_{\qc} \sth \Hom(G, F) = 0 \text{ for all } G \in \cD^{\leqslant -1} }$. 
\end{itemize} 
This t-structure has the following properties: 
\begin{enumerate}[{\rm (1)}] 
\item 
\label{Dqc-tr-continuous}
The truncation functors commute with filtered colimits. 
\item 
\label{cD-cDqc-exact}
The inclusion $\cD \to \cD_{\qc}$ is t-exact. 
\item 
\label{colim-Dqc-D}
For $F \in \cD_{\qc}$, we have $F \in \cD_{\qc}^{[a,b]}$ if and only if 
$F = \colim F_\alpha$ for a filtered system of objects $F_\alpha \in \cD^{[a,b]}$. 
\item \label{Dqc-local}
Assume {$X$ and $S$ are quasi-compact with affine diagonal,} 
the projection functor of $\cD$ has finite cohomological 
amplitude, and the t-structure on $\cD$ is $S$-local. 
Then the above t-structure on $\cD_{\qc}$ is $S$-local. 
More precisely, if $U \subset S$ is a quasi-compact open, then the t-structure on 
$(\cD_{\qc})_U$ making $\cD_{\qc} \to (\cD_{\qc})_U = (\cD_{U})_{\qc}$ t-exact is obtained 
by applying the above construction to the t-structure on $\cD_{U}$. 
\end{enumerate}
\end{Lem}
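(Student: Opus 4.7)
The plan is to first establish that $(\cD_{\qc}^{\leq 0}, \cD_{\qc}^{\geq 0})$ is a t-structure, then derive the listed properties in order. Since $X$ is noetherian (hence quasi-compact and quasi-separated), Proposition~\ref{Prop-qcqs-cg} and Remark~\ref{Rem-qcqs-cg} give $\cD_{\qc} = \Ind(\cD_{\perf})$, so $\cD_{\qc}$ is a presentable stable $\infty$-category. The orthogonality $\Hom(\cD_{\qc}^{\leq 0}, \cD_{\qc}^{\geq 1}) = 0$ is automatic: since $\Hom(-,F)$ for $F \in \cD_{\qc}^{\geq 1}$ sends colimits to limits and triangles to long exact sequences, right-orthogonality to $\cD^{\leq -1}$ propagates to the smallest subcategory of $\cD_{\qc}$ containing $\cD^{\leq -1}$ and closed under extensions and small colimits, which is $\cD_{\qc}^{\leq -1}$. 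For the existence of truncation triangles I would invoke the standard construction of the aisle generated by an essentially small set of objects in a presentable stable $\infty$-category: $\cD^{\leq 0}$ is essentially small because $\cD \subset \Db(X)$ for $X$ noetherian, so the aisle $\cD_{\qc}^{\leq 0}$ is accessible, and the left-adjoint truncation $\tau^{\leq 0}$ exists via the usual transfinite small object argument.

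Property (1) (continuity of truncation) then follows from the observation that $\cD_{\qc}^{\leq 0}$ is closed under filtered colimits (being closed under all small colimits), while $\cD_{\qc}^{\geq 0}$ is likewise closed under filtered colimits: the generators $G \in \cD^{\leq -1} \cap \cD_{\perf}$ are compact in $\cD_{\qc}$, and every object of $\cD^{\leq -1}$ is a filtered colimit of such perfect objects via truncation of the Ind-presentation of any given object. These two closure statements force $\tau^{\leq 0}$ to commute with filtered colimits. Property (2) follows because $\cD^{\leq 0} \subset \cD_{\qc}^{\leq 0}$ is tautological, while $\cD^{\geq 0} \subset \cD_{\qc}^{\geq 0}$ uses full faithfulness of $\cD \hookrightarrow \cD_{\qc}$ to equate $\Hom$-groups. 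Property (3): given $F \in \cD_{\qc}^{[a,b]}$, the Ind-presentation $F \simeq \colim G_\alpha$ with $G_\alpha \in \cD_{\perf} \subset \cD$ combined with continuity and property (2) gives $F \simeq \tau^{\geq a}\tau^{\leq b} F \simeq \colim F_\alpha$ for $F_\alpha := \tau^{\geq a}\tau^{\leq b} G_\alpha \in \cD^{[a,b]}$; the converse direction is immediate from closure of $\cD_{\qc}^{[a,b]}$ under colimits and extensions together with the orthogonality characterization of the coaisle.

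For property (4) ($S$-locality), given a quasi-compact open $U \subset S$, the restriction $\cD_{\qc} \to (\cD_U)_{\qc}$ is cocontinuous and exact by Theorem~\ref{theorem-bc-sod}, and its restriction to $\cD$ is $t$-exact by the $S$-locality hypothesis on $\cD$. Hence it sends $\cD^{\leq 0}$ into $\cD_U^{\leq 0}$ and commutes with the formation of the aisle; by uniqueness of the construction, the t-structure on $(\cD_U)_{\qc}$ produced by our procedure applied to the induced t-structure on $\cD_U$ coincides with the one making restriction $t$-exact. The main obstacle is the first paragraph: namely, showing that the aisle generated by a small set of objects in a presentable stable $\infty$-category admits a functorial truncation. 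This is standard in the $\infty$-categorical framework but crucially uses both the presentability of $\cD_{\qc}$ and essential smallness of $\cD^{\leq 0}$; once truncation triangles are in place, all remaining properties follow by formal manipulations with colimits, the Ind-presentation, and the compatibility of the construction with base change.
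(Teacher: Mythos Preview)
Your overall strategy matches the paper's: invoke the Lurie machinery for the existence of the t-structure generated by a small set in a presentable stable $\infty$-category, then deduce the remaining properties formally. The paper does exactly this, citing \cite[Propositions~1.4.4.11 and~1.4.4.13]{lurie-HA} for both existence \emph{and} continuity of truncation in one step, and then argues (3) and (4) much as you do.

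There is one genuine gap, in your treatment of property~(1). You want to show $\cD_{\qc}^{\geq 0}$ is closed under filtered colimits by testing orthogonality only against compact objects in $\cD^{\leq -1}\cap\cD_{\perf}$, and you justify this by asserting that ``every object of $\cD^{\leq -1}$ is a filtered colimit of such perfect objects via truncation of the Ind-presentation.'' This step is circular: truncating the perfect approximants $G_\alpha$ to land in $\cD^{\leq -1}$ uses the very continuity of truncation you are trying to establish, and moreover $\tau^{\leq -1}G_\alpha$ need not be perfect when $X$ is singular. The correct move is simply to cite \cite[Proposition~1.4.4.13]{lurie-HA}, which shows directly that for any accessible t-structure on a presentable stable $\infty$-category the coaisle is closed under filtered colimits; accessibility comes for free from \cite[Proposition~1.4.4.11]{lurie-HA}. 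Once you have that, your deduction that truncation commutes with filtered colimits (by taking colimits of truncation triangles and using uniqueness) is fine.

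A smaller point on (4): you establish that restriction sends the aisle into the aisle, but t-exactness also needs the coaisle direction, and ``by uniqueness of the construction'' does not supply this. The paper's argument via property~(3) is cleaner: an object of $\cD_{\qc}^{[a,b]}$ is a filtered colimit of objects in $\cD^{[a,b]}$; restriction is cocontinuous and, by $S$-locality of the t-structure on $\cD$, sends $\cD^{[a,b]}$ into $\cD_U^{[a,b]}$; hence by (3) applied on $(\cD_U)_{\qc}$ the image lies in $(\cD_U)_{\qc}^{[a,b]}$.
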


\begin{proof}
By \cite[Propositions 1.4.4.11 and 1.4.4.13]{lurie-HA} 
there is a t-structure on $\cD_{\qc}$ with $\cD_{\qc}^{\leqslant 0}$ as described, 
whose truncation functors commute with filtered colimits. 
Then $\cD_{\qc}^{\geqslant 0}$ is the right orthogonal to $\cD_{\qc}^{\leqslant -1}$, which 
is easily seen to be given by the stated formula. 
The t-exactness of $\cD \to \cD_{\qc}$ then follows directly. 
We note that the above argument is essentially the same as \cite[Lemma~2.1.1]{Polishchuk:families-of-t-structures}, but we use \cite{lurie-HA} for the statement about truncation functors. 

By construction $\cD_{\qc} = \Ind(\cD_{\perf})$, so using parts~\eqref{Dqc-tr-continuous} 
and~\eqref{cD-cDqc-exact} of the lemma, the argument of 
Lemma~\ref{lemma-qc-colimit-Db} proves part~\eqref{colim-Dqc-D}. 

Now assume we are in the situation of part~\eqref{Dqc-local}. 
Let $U \subset S$ be a quasi-compact open subset. 
By what we have already shown, the t-structure on $\cD_U$ induces one on $(\cD_U)_{\qc}$. 
Note that $(\cD_U)_{\qc} = (\cD_{\qc})_U$. 
We claim that the restriction functor $\cD_{\qc} \to (\cD_{U})_{\qc}$ is t-exact, 
which will prove that the t-structure on $\cD_{\qc}$ is $S$-local. 
This follows from part~\eqref{colim-Dqc-D} of the lemma, the fact that 
the restriction functor $\cD_{\qc} \to (\cD_{U})_{\qc}$ commutes with colimits, and 
the $S$-locality of the t-structure on $\cD$. 
\end{proof}

\begin{Rem}
In Lemma~\ref{lem-induced-t-Dqc}, if $\cD = \Db(X)$ with the standard t-structure, then 
the induced t-structure on $\cD_{\qc} = \Dqc(X)$ is the standard one. 
\end{Rem}

In \cite{Polishchuk:families-of-t-structures}, the focus is on inducing t-structures in the setting of 
bounded derived categories of coherent sheaves, but the idea of first constructing t-structures on 
unbounded derived categories of quasi-coherent sheaves is used extensively. 
The following theorem can be thought of as an elaboration on this idea; 
see also \cite[Section~5]{DHL-instability} for similar results in the setting of varieties over a field. 

\begin{Thm}
\label{thm-Dqc-bc}
Let $g \colon X \to S$ be a morphism of {schemes which are quasi-compact with affine diagonal, where 
$X$ is noetherian of finite Krull dimension.} 
Let $\cD \subset \Db(X)$ be an $S$-linear strong semiorthogonal component with a t-structure $(\cD^{\leqslant 0}, \cD^{\geqslant 0})$. 
Let $\phi \colon T \to S$ be a morphism from a 
{scheme $T$ which is quasi-compact with affine diagonal,} 
such that $\phi$ is faithful with respect to~$g$.  Let $g^\prime$ and $\phi^\prime$ be the morphisms in the following base change diagram:
\begin{equation*}
\xymatrix{
X_{T} \ar[d]_{g'} \ar[r]^{\phi'} & X \ar[d]^{g} \\
T \ar[r]^{\phi} & S 
}
\end{equation*}
If $T$ is affine, then there is a t-structure $((\cD_{\qc})_T^{\leqslant 0}, (\cD_{\qc})_T^{\geqslant 0})$ on $(\cD_{\qc})_T$ where: 
\index{D@$(\cD_T^{\leqslant 0}, \cD_T^{\geqslant 0})$, base change of a t-structure via $T\to S$!$T$ affine}
\index{AT@$\cA_T$, base change of the heart of a bounded t-structure via $T\to S$!$T$ affine}
\begin{itemize}
\item $(\cD_{\qc})_T^{\leqslant 0}$ is the smallest full subcategory of 
$(\cD_{\qc})_T$ which contains $\phi'^*(\cD^{\leqslant 0})$ and is closed under extensions and small colimits. 
\item $(\cD_{\qc})_T^{\geqslant 0} = \set{ F \in (\cD_{\qc})_T \sth \Hom(G,F) = 0 \text{ for all } G \in \phi'^*(\cD^{\leqslant -1}) }$. 
\end{itemize}
In general, there is a t-structure on $(\cD_{\qc})_T$ 
given by 
\begin{equation} 
\label{DT-t-structure-1} 
(\cD_{\qc})_{T}^{[a,b]} = 
\set{ F \in (\cD_{\qc})_T \sth 
\begin{array}{c}
F_U \in (\cD_{\qc})_U^{[a,b]} \text{ for any flat morphism } \\ 
U \to T \text{ with } U \text{ affine}
\end{array}} . 
\end{equation}
This t-structure has the following properties: 
\begin{enumerate}[{\rm (1)}] 
\item 
\label{DT-tr-functors} 
The truncation functors of $(\cD_{\qc})_T$ commute with filtered colimits. 

\item 
\label{DT-Ui}
For any fpqc cover $\set{U_i \to T}$ of $T$ by affine schemes $U_i$, we have 
\begin{equation}
\label{DT-Ui-formula} 
(\cD_{\qc})_{T}^{[a,b]} = \set{ F \in (\cD_{\qc})_T \sth
F_{U_i} \in (\cD_{\qc})_{U_i}^{[a,b]} \text{ for all } i} . 
\end{equation} 
In particular, for $T$ affine the above prescriptions for t-structures on $(\cD_{\qc})_T$ agree. 

\item 
\label{DT-tensor-S}
For any $G \in \rD_{\qc}^{\leqslant 0}(T)$, the functor 
$(g_T^*(G) \otimes -) \colon (\cD_{\qc})_T \to (\cD_{\qc})_T$ is right t-exact, 
where $g_T \colon X_T \to T$ denotes the projection. 

\item 
\label{DT-f}
Let $T' \to S$ be another morphism {with the same assumptions as $\phi$}, 
let $f \colon T' \to T$ be a morphism of schemes over $S$, and let $f' \colon X_{T'} \to X_{T}$ be the induced morphism. 
\begin{enumerate}[{\rm (a)}]
\item 
\label{DT-f-pullback}
$f'^* \colon (\cD_{\qc})_T \to (\cD_{\qc})_{T'}$ is right t-exact. 
\item 
\label{DT-f-pushforward}
$f'_* \colon (\cD_{\qc})_{T'} \to (\cD_{\qc})_{T}$ is left t-exact. 
\item 
\label{DT-f-flat}
If $f$ is flat, then $f'^* \colon (\cD_{\qc})_T \to (\cD_{\qc})_{T'}$ is t-exact. 
\item 
\label{DT-f-affine} 
If $f$ is affine, then $f'_* \colon (\cD_{\qc})_{T'} \to (\cD_{\qc})_{T}$ is t-exact. 

\end{enumerate}

\item 
\label{DT-S-local}
Assume the projection functor of $\cD$ has finite cohomological amplitude and the t-structure on $\cD$ is $S$-local. 
Then for $T = S$ the above t-structure agrees with the one on $\cD_{\qc}$ from Lemma~\ref{lem-induced-t-Dqc}. 
Moreover, for general $T$ we have 
\begin{equation} 
\label{DT-t-structure} 
(\cD_{\qc})_{T}^{[a,b]} = \set{ F \in (\cD_{\qc})_T \sth 
\begin{array}{c}
\phi'_{U*}(\res{F}{U}) \in \cD_{\qc}^{[a,b]} \text{ for any flat morphism } \\ 
U \to T \text{ with } U \text{ affine}
\end{array}} 
\end{equation}
where $\phi'_U \colon X_U \to X$ is the morphism induced by $U \to T \xrightarrow{\, \phi \,} S$, 
and for any fpqc cover $\set{ U_i \to T }$ by affine schemes $U_i$, 
we have 
\begin{equation}
\label{DT-t-structure-Ui}
(\cD_{\qc})_{T}^{[a,b]} = \set{ F \in (\cD_{\qc})_T \sth
\phi'_{U_i*}(\res{F}{U_i}) \in \cD_{\qc}^{[a,b]} \text{ for all } i} . 
\end{equation} 
\end{enumerate}
\end{Thm}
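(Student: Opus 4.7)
The plan is to construct the t-structure first in the affine case by an Ind-completion argument, then globalize by fpqc descent. For $T$ affine, I would mimic Lemma~\ref{lem-induced-t-Dqc}: applying \cite[Propositions~1.4.4.11 and~1.4.4.13]{lurie-HA} to the essentially small generating set $\phi'^*(\cD^{\leq 0}) \subset (\cD_{\qc})_T$ yields a t-structure with the stated connective part, truncation functors commuting with filtered colimits, and coconnective part automatically given as the right orthogonal to $\phi'^*(\cD^{\leq -1})$. Before globalizing, I would establish the affine-case versions of (3) and (4). Right t-exactness of $f'^*$ and left t-exactness of $f'_*$ for affine morphisms $f \colon T' \to T$ of $S$-schemes are formal: $f'^*$ sends generators to generators via $f'^*\phi'^* = \phi'_{T'}{}^*$ and commutes with colimits, and $f'_*$ is its right adjoint. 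The t-exactness of $f'^*$ for flat $f$, and of $f'_*$ for affine $f$, then reduces via the projection formula $f'_* f'^*(\phi'^* H) \simeq \phi'^*H \otimes g_T^*(f_* \cO_{T'})$ to the right t-exactness of tensoring by $g_T^*(G)$ for $G \in \Dqc(T)^{\leq 0}$. That last property I would prove directly by writing $G$ as a filtered colimit of truncations of perfect complexes and observing that tensor products by such objects send generators to generators.

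For general $T$, I would take formula~\eqref{DT-t-structure-1} as the definition of $(\cD_{\qc})_T^{[a,b]}$. Since $T$ has affine diagonal (built into the Main Setup), $T$ admits a finite Zariski cover by affines $U_i$ whose pairwise intersections are again affine, so the corresponding restriction maps are flat and affine in both directions. Hom-orthogonality in the t-structure axioms reduces to the affine case tested on such a cover. For the existence of truncation triangles, I would glue the local truncations produced in the affine case, using the affine-case t-exactness of flat pullback established above to verify agreement on double overlaps, and fpqc descent for $(\cD_{\qc})_T$ (valid because $\Dqc(X_T) = \Ind \Dperf(X_T)$, see Remark~\ref{Rem-qcqs-cg}) to assemble the global triangle. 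This gluing simultaneously establishes the cover-independence~(2), properties (4a)--(4b), and extends~(4c)--(4d) to arbitrary $f$.

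The remaining properties (1) and (3) then follow from the affine case combined with descent along an affine cover. For (5), under the $S$-local hypothesis on the t-structure of $\cD$, the morphism $\phi'_U \colon X_U \to X$ is affine for any $U \to T$ flat affine, so $\phi'_{U*}$ is conservative and t-exact with respect to the $S$-local t-structure on $\cD_{\qc}$ from Lemma~\ref{lem-induced-t-Dqc}; this converts the restriction formula~\eqref{DT-t-structure-1} into the pushforward formula~\eqref{DT-t-structure} and identifies the new t-structure with the one from Lemma~\ref{lem-induced-t-Dqc} when $T = S$. The main obstacle I expect is the bootstrap ordering: globalization requires the affine-case t-exactness of flat pullback, whose proof in turn needs the affine-case right t-exactness of $g_T^*$-tensoring. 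These must be established in strict sequence — first the affine t-structure, then its tensor compatibility, then affine-case flat-pullback t-exactness — and only then may descent be invoked to promote (4) to its full generality and to globalize (1) and (3).
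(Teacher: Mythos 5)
Your proposal follows the same overall strategy as the paper: affine case first via Lurie's generation criterion (Lemma~\ref{lem-induced-t-Dqc}), then the affine-case versions of (3) and (4) with the bootstrap ordering you correctly identify, and finally globalization by fpqc descent. The affine case and the tensor/flatness arguments are handled in essentially the same way.

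There is, however, a genuine imprecision in the globalization step. First, your justification for descent for $(\cD_{\qc})_T$ --- ``valid because $\Dqc(X_T) = \Ind \Dperf(X_T)$'' --- is not quite the right reason. Descent for $\Dqc$ along fpqc covers is standard, but to inherit it for the semiorthogonal summand $(\cD_{\qc})_T \subset \Dqc(X_T)$ one needs an additional input: the paper's Step~3 uses that $\cD_{\qc}$ is compactly generated (by $\cD_{\perf}$) and therefore \emph{dualizable} as an object of $\PrCat_S$ by \cite[Lemma~4.3]{NCHPD}, so that $\cD_{\qc} \otimes_{\Dqc(S)}(-)$ commutes with limits and hence carries the totalization $\Dqc(T) \simeq \Tot(\Dqc(U^\bullet))$ to the totalization $(\cD_{\qc})_T \simeq \Tot((\cD_{\qc})_{U^\bullet})$. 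Second, ``glue the local truncations, checking agreement on double overlaps'' is not sufficient in the $\infty$-categorical setting: one needs to produce a t-structure on the limit of the entire cosimplicial diagram $(\cD_{\qc})_{U^\bullet}$, with compatible truncations at all simplicial levels (not only on double overlaps). This is precisely what \cite[Chapter I.3, Lemma~1.5.8]{gaitsgory-DAG} packages: a limit of a cosimplicial diagram of categories with compatible t-structures, with t-exact transition functors whose truncations preserve filtered colimits, inherits a t-structure given by the expected formula. Without invoking such a result, ``gluing'' is an incomplete argument. With the dualizability observation and the Gaitsgory lemma in place, your outline becomes the paper's proof; cover-independence~(2) then follows, as in the paper's Step~4, by adding one affine to a given cover and comparing the two resulting t-structures, rather than being an automatic by-product of the gluing as you assert.

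Your treatment of~(5) is fine: restrict to a flat affine $U \to T$, observe $\phi'_U$ is affine so that $\phi'_{U*}$ is conservative and t-exact (by the affine case of (4d)), and this converts~\eqref{DT-t-structure-1} into~\eqref{DT-t-structure}. One should also check, as the paper does, that for $T = S$ the new t-structure coincides with that of Lemma~\ref{lem-induced-t-Dqc}; this uses Lemma~\ref{lem-induced-t-Dqc}.\eqref{Dqc-local} and reduces to the affine case.
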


\begin{proof}
We prove the theorem in several steps. 

\begin{step}{1}
\label{DT-step-T-affine}
If $T$ is affine, the prescription for $((\cD_{\qc})_T^{\leqslant 0}, (\cD_{\qc})_T^{\geqslant 0})$ defines a t-structure on $(\cD_{\qc})_T$ such that parts~\eqref{DT-tr-functors} and~\eqref{DT-tensor-S} of the theorem hold. 
\end{step}

This argument of Lemma~\ref{lem-induced-t-Dqc} shows the prescription defines a t-structure satisfying~\eqref{DT-tr-functors}. 
Since $T$ is affine, the object $\cO_T$ generates $\Dqc(T)^{\leqslant 0}$ under colimits. 
But $g^*_T$ and tensor products commute with colimits, and tensoring with $\cO_{X_T} = g_T^*(\cO_T)$ preserves $\phi'^*(\cD^{\leqslant 0})$. 
So it follows that $g_T^*(G) \otimes (\cD_{\qc})_T^{\leqslant 0} \subset (\cD_{\qc})_T^{\leqslant 0}$ for any $G \in \Dqc(T)^{\leqslant 0}$, i.e., \eqref{DT-tensor-S} holds. 

\begin{step}{2}
\label{DT-step-affine-f} 
If $T$ and $T'$ are affine, part~\eqref{DT-f} of the theorem holds for the t-structures 
from Step~\ref{DT-step-T-affine}. 
\end{step} 

Since $f'^*$ admits a right adjoint, it commutes with colimits. 
It follows that $f'^*((\cD_{\qc})_T^{\leqslant 0}) \subset (\cD_{\qc})_{T}^{\leqslant 0}$, i.e., 
$f'^*$ is right t-exact. 

Since $f'_*$ is right adjoint to $f'^*$, it follows formally that $f'_*$ is left t-exact. 
Note that for any object $F \in (\cD_{\qc})_T$ we have 
$f'_*f'^*(F) \simeq F \otimes f'_*(\cO_{T'})$. 
The morphism $f'$ is affine, being the base change of $f$, 
and hence $f'_*(\cO_{T'})$ is a sheaf. 
Therefore by the property~\eqref{DT-tensor-S} proved in Step~\ref{DT-step-T-affine}, 
it follows that $f'_*(f'^*\phi'^*(\cD^{\leqslant 0})) \subset (\cD_{\qc})_T^{\leqslant 0}$. 
But since $f'$ is affine $f'_*$ preserves colimits (see for instance 
\cite[Proposition~2.5.1.1]{lurie-SAG}), so it follows that 
$f'_*((\cD_{\qc})_{T'}^{\leqslant 0}) \subset (\cD_{\qc})_T^{\leqslant 0}$. 
This proves $f'_*$ is also right t-exact, and hence t-exact. 

Finally, assume $f$ is flat. 
Since the functor $f'_*$ is conservative 
and by the above t-exact, 
the t-exactness of $f'^*$ is equivalent to t-exactness of the functor 
\begin{equation*}
f'_* \circ f'^* \simeq (f'_*(\cO_{T'}) \otimes -) \colon \cD_{T} \to \cD_{T}. 
\end{equation*} 
Note that $f'_*(\cO_{T'}) \simeq g_T^*(f_*\cO_T)$. 
Since $f \colon T' \to T$ is a flat morphism of affine schemes, by 
Lazard's theorem $f_* \cO_T$ is a filtered colimit of finite free $\cO_T$-modules. 
Hence $g_T^*(f_* \cO_T)$ is a filtered colimit of finite free $\cO_{X_T}$-modules. 
Now it follows from the property~\eqref{DT-tr-functors} proved in Step~\ref{DT-step-T-affine} that the above functor is t-exact. 

\begin{step}{3}
\label{DT-step-Ui} 
For any fpqc cover $\set{U_i \to T}$ of $T$ by affine schemes $U_i$, the prescription~\eqref{DT-Ui-formula} defines a t-structure on $(\cD_{\qc})_T$ such that part~\eqref{DT-tr-functors} of the theorem holds. 
\end{step}

Let $U^{\bullet} \to T$ be the \v{C}ech nerve of the map $U = \bigsqcup U_i \to T$, i.e., 
the simplicial scheme which in degree $n$ is given by the $(n+1)$-fold fiber product of 
$U$ over $T$. Then by fpqc descent for $\Dqc(-)$, pullback induces an equivalence 
\begin{equation*}
\Dqc(T) \simeq \Tot(\Dqc({U^\bullet}))
\end{equation*}
where the right side denotes the totalization, i.e., the limit, of the cosimplicial diagram of 
$\infty$-categories $\Dqc({U^\bullet})$. 
Note that $\cD_{\qc}$ is compactly generated (by $\cD_{\perf}$). 
Hence by \cite[Lemma~4.3]{NCHPD} the category $\cD_{\qc}$ is dualizable as an 
object of $\PrCat_S$ (see Remark~\ref{Rem-S-linear}). 
It follows that the functor from $\PrCat_S$ to itself given by tensoring with $\cD_{\qc}$ 
admits a right adjoint (given by tensoring with the dual of $\cD_{\qc}$), and thus commutes 
with limits. In particular, the above equivalence implies that pullback induces an equivalence 
\begin{equation}
\label{Dqc-tot}
(\cD_{\qc})_T \simeq \Tot((\cD_{\qc})_{U^\bullet}) . 
\end{equation} 

For $i_0, i_1, \dots, i_n$, we write 
\begin{equation*}
U_{i_0, \dots, i_n} = U_{i_0} \times_T U_{i_1} \dots \times_T U_{i_n}. 
\end{equation*}
Then $U^n$ is the coproduct of the $U_{i_0, \dots, i_n}$ over all $i_0, \dots, i_n$. 
Hence pullback induces an equivalence 
\begin{equation*}
\Dqc(U^n) \simeq \prod_{i_0, \dots, i_n} \Dqc(U_{i_0, \dots, i_n}) . 
\end{equation*} 
By the observation above, this implies that pullback induces an equivalence 
\begin{equation}
\label{DUn-prod} 
(\cD_{\qc})_{U^n} \simeq \prod_{i_0, \dots, i_n} (\cD_{\qc})_{U_{i_0, \dots, i_n}}. 
\end{equation} 
Combining~\eqref{Dqc-tot} and~\eqref{DUn-prod}, we see that $(\cD_{\qc})_T$ is 
expressed as a limit of the diagram of categories $(\cD_{\qc})_{U_{i_0, \dots, i_n}}$. 

Note that each $U_{i_0, \dots, i_n}$ is affine since $T$ has affine diagonal. 
Hence by Step~\ref{DT-step-T-affine} each term $(\cD_{\qc})_{U_{i_0, \dots, i_n}}$ carries a t-structure whose truncation functors preserve filtered colimits. 
Moreover, the projection morphisms between the $U_{i_0, \dots, i_n}$ are flat. 
Hence by property~\eqref{DT-f-flat} verified in Step~\ref{DT-step-affine-f}, the pullback functors between the $(\cD_{\qc})_{U_{i_0, \dots, i_n}}$ are t-exact. 
By \cite[Chapter I.3, Lemma~1.5.8]{gaitsgory-DAG} their limit $(\cD_{\qc})_T$ thus carries a t-structure given by 
\begin{equation*}
(\cD_{\qc})_{T}^{[a,b]} = \set{ F \in (\cD_{\qc})_T \sth
F_{U_{i_0, \dots, i_n}} \in (\cD_{\qc})_{U_{i_0, \dots, i_n}}^{[a,b]} \text{ for all } i_0, \dots, i_n}, 
\end{equation*} 
whose truncation functors preserve filtered colimits. 
Since the pullback functors between the $(\cD_{\qc})_{U_{i_0, \dots, i_n}}$ are t-exact, 
the condition $F_{U_{i_0, \dots, i_n}} \in (\cD_{\qc})_{U_{i_0, \dots, i_n}}^{[a,b]}$ for all $i_0, \dots, i_n$
is equivalent to $F_{U_i} \in (\cD_{\qc})_{U_i}^{[a,b]}$ for all $i$. 

\begin{step}{4} 
\label{DT-step-any-U}
The prescription~\eqref{DT-t-structure-1} defines a t-structure on $(\cD_{\qc})_T$ such 
that parts~\eqref{DT-tr-functors} and~\eqref{DT-Ui} 
of the theorem hold. 
\end{step}

Take any fpqc cover $\set{U_i \to T}$ of $T$ by affine schemes $U_i$, 
and let $U \to T$ be a flat morphism with $U$ affine. 
Then $\set{U_i \to T} \cup \set{ U \to T}$ is also an fpqc cover. 
By Step~\ref{DT-step-Ui}, both the formula 
\begin{equation}
\label{DT-Ui-formula-2} 
(\cD_{\qc})_{T}^{[a,b]} = \set{ F \in (\cD_{\qc})_T \sth
F_U \in (\cD_{\qc})_U^{[a,b]} \text{ and } F_{U_i} \in (\cD_{\qc})_{U_i}^{[a,b]} \text{ for all } i} 
\end{equation} 
and~\eqref{DT-Ui-formula} define t-structures on $(\cD_{\qc})_T$ such that part~\eqref{DT-tr-functors} of the theorem holds. 
Since the right side of \eqref{DT-Ui-formula-2} is contained in the right side of \eqref{DT-Ui-formula} 
and both define t-structures, they coincide. 
It follows that~\eqref{DT-t-structure-1} defines a t-structure on $(\cD_{\qc})_T$ such that parts~\eqref{DT-tr-functors} and~\eqref{DT-Ui} 
of the theorem hold. 

\begin{step}{5}
Parts~\eqref{DT-tensor-S} and~\eqref{DT-f} of the theorem hold. 
\end{step}

Using~\eqref{DT-Ui-formula}, parts~\eqref{DT-tensor-S}, \eqref{DT-f-pullback}, \eqref{DT-f-flat}, and \eqref{DT-f-affine} 
reduce to the case where $T$ and $T'$ are affine, which were handled in Step~\ref{DT-step-T-affine}. 
Part~\eqref{DT-f-pushforward} follows from \eqref{DT-f-pullback} since $f'_*$ is right adjoint to $f'^*$. 

\begin{step}{6}
Part~\eqref{DT-S-local} of the theorem holds. 
\end{step}

To show the two t-structures on $\cD_{\qc}$ agree, by part~\eqref{DT-Ui} and Lemma~\ref{lem-induced-t-Dqc}.\eqref{Dqc-local} 
it suffices to show that for any affine open $U \subset S$ the t-structure on $(\cD_{\qc})_U$ constructed in this 
theorem agrees with the one induced by the local t-structure on $\cD_{\qc}$ from Lemma~\ref{lem-induced-t-Dqc}.\eqref{Dqc-local}. 
This follows from the description of the induced t-structure on $(\cD_{\qc})_U$ in Lemma~\ref{lem-induced-t-Dqc}.\eqref{Dqc-local}. 

Finally, by~\eqref{DT-t-structure-1} and~\eqref{DT-Ui-formula}, 
the formulas~\eqref{DT-t-structure} and~\eqref{DT-t-structure-Ui} reduce to showing that for $\phi \colon T \to S$ 
a morphism from an affine scheme $T$, we have 
\begin{equation*}
(\cD_{\qc})_T^{[a,b]} = 
\set{ F \in (\cD_{\qc})_T \sth
\phi'_*(F) \in \cD_{\qc}^{[a,b]} }. 
\end{equation*} 
The functor $\phi'_*$ is conservative since $\phi'$ is affine, so this follows from \eqref{DT-f-affine}. 
\end{proof}

\subsection{The bounded coherent case} 

Our next goal is to show that under suitable hypotheses, the base changed t-structures 
constructed in Theorem~\ref{thm-Dqc-bc} induce t-structures at the level of bounded derived 
categories of coherent sheaves. 

A map $A \to B$ of rings is called \emph{perfect} if it is pseudo-coherent and $B$ has finite $\Tor$-dimension over $A$, see \citestacks{067G}. 
A morphism of schemes $X \to Y$ is called \emph{perfect} if 
if there exists an affine open cover $Y = \bigcup_{j \in J} V_j$ and affine open covers 
$f^{-1}(V_j) = \bigcup_{i \in I_j} U_i$ such that the ring map $\cO_Y(V_j) \to \cO_X(U_i)$ is perfect for all $j \in J$, $i \in I_j$. 
For a discussion of this notion, see \citestacks{0685}. 
In other words, 
a morphism $f \colon X \to Y$ is perfect if and only if it is pseudo-coherent and of finite $\Tor$-dimension. 
Note that if $Y$ is locally noetherian, then $f$ is pseudo-coherent if and only if $f$ is locally of finite type \citestacks{0684}.
In particular, if $Y$ is regular of finite Krull dimension, then $f$ is perfect if and only if $f$ is locally 
of finite type. 
We will consider base changes along the following mild generalization of the class of perfect morphisms.

\begin{Def}\label{def:EssPerfect}
A map $A \to B$ of rings is called \emph{essentially perfect} if {it} is a localization of a perfect $A$-algebra. 
A morphism of schemes $X \to Y$ is called \emph{essentially perfect} 
if there exists an affine open cover $Y = \bigcup_{j \in J} V_j$ and affine open covers 
$f^{-1}(V_j) = \bigcup_{i \in I_j} U_i$ such that the ring map $\cO_Y(V_j) \to \cO_X(U_i)$ is essentially 
perfect for all $j \in J$, $i \in I_j$. 
\end{Def} 

\begin{Rem}
As a warning, if $\Spec(B) \to \Spec(A)$ is an essentially perfect morphism of affine schemes, 
then $A \to B$ may not be essentially perfect. 
\end{Rem}

If $\tilde{\cD}$ is a triangulated category with a t-structure $(\tilde{\cD}^{\leqslant 0}, \tilde{\cD}^{\geqslant 0})$ and 
$\cD \subset \tilde{\cD}$ is a triangulated subcategory, we say that $(\tilde{\cD}^{\leqslant 0}, \tilde{\cD}^{\geqslant 0})$ 
induces a t-structure on $\cD$ if setting $\cD^{\leqslant 0} = \tilde{\cD}^{\leqslant 0} \cap \cD$ and 
$\cD^{\geqslant 0} = \tilde{\cD}^{\geqslant 0} \cap \cD$ defines a t-structure. This is equivalent to the truncation functors 
of $\tilde{\cD}$ preserving $\cD$. 
The following analog of \cite[Theorem~2.3.5]{Polishchuk:families-of-t-structures} in our setting, 
which holds by the same argument, is a key ingredient for the general base change result below. 
 
\begin{Thm}
\label{thm-t-structure-finite-map}
Let $g \colon X \to S$ be a morphism of {schemes which are quasi-compact with affine diagonal, where 
$X$ is noetherian of finite Krull dimension.}
Let $\cD\subset \Db(X)$ be an $S$-linear strong semiorthogonal component whose projection 
functor is of finite cohomological amplitude, and which is equipped with an $S$-local t-structure. 
Let $\phi \colon T \to S$ be a finite perfect morphism from a 
{scheme $T$ which is quasi-compact with affine diagonal,} such that $\phi$ is faithful with respect to $g$. 
Then the t-structure on $(\cD_{\qc})_T$ from Theorem~\ref{thm-Dqc-bc} induces a 
t-structure on $\cD_{T}$, which is bounded or tilted-noetherian or noetherian if the 
given t-structure on $\cD$ is. 
\end{Thm}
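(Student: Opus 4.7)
The plan is to exploit the pushforward $\phi'_*\colon (\cD_{\qc})_T \to \cD_{\qc}$, where $\phi' \colon X_T \to X$ is the base change of $\phi$, as the main reduction tool. Since $\phi$ is finite, $\phi'$ is finite, hence affine and proper; consequently $\phi'_*$ is conservative, preserves bounded coherent complexes (using that $X$ is noetherian), and by Theorem~\ref{theorem-bc-sod} sends $\cD_T$ into $\cD$. By Theorem~\ref{thm-Dqc-bc}.\eqref{DT-f-affine} it is also t-exact for the base-change t-structures. I would first observe that the t-structure on $\cD$ coincides with the restriction of the one on $\cD_{\qc}$, by a short argument combining t-exactness of $\cD \subset \cD_{\qc}$ with the right-orthogonality characterization of $\cD_{\qc}^{\geq 0}$ in Lemma~\ref{lem-induced-t-Dqc}.

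The core step is then to show that the truncation functors of $(\cD_{\qc})_T$ preserve $\cD_T = (\cD_{\qc})_T \cap \Db(X_T)$. For $F \in \cD_T$, one has $\tau^{\leq 0} F \in (\cD_{\qc})_T$ automatically, and $\phi'_*(\tau^{\leq 0} F) \simeq \tau^{\leq 0}(\phi'_* F) \in \cD^{\leq 0} \subset \Db(X)$, so what remains is the descent statement: if $G \in (\cD_{\qc})_T$ satisfies $\phi'_* G \in \Db(X)$, then $G \in \Db(X_T)$. Boundedness of $G$ follows from conservativity of $\phi'_*$ applied to $\phi'_*(H^i G) = H^i(\phi'_* G)$. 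Coherence of each $H^i(G)$ follows from the standard correspondence for finite morphisms: via $\phi'_*$, coherent sheaves on $X_T$ correspond to coherent $\phi'_*\cO_{X_T}$-modules on $X$, and the latter agree with those that are coherent as $\cO_X$-modules (since $\phi'_*\cO_{X_T}$ is itself coherent on $X$).

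With the induced t-structure on $\cD_T$ established, the remaining properties follow by further applications of $\phi'_*$. Boundedness of the t-structure on $\cD$ passes to $\cD_T$ since $\phi'_* F \in \cD^{[a,b]}$ forces $F \in \cD_T^{[a,b]}$ by t-exactness and conservativity. For noetherianity, an ascending chain in $\cA_T$ pushes forward to a chain in $\cA$ that stabilizes by hypothesis, and conservativity of $\phi'_*$ applied to the successive quotients forces stabilization of the original chain. For tilted-noetherianity, I would apply the base-change construction both to the given t-structure $\tau$ on $\cD$ and to the underlying noetherian t-structure $\tau_0$, obtaining two t-structures on $(\cD_{\qc})_T$; the tilting relation $\cD_0^{\leq -1} \subset \cD^{\leq 0} \subset \cD_0^{\leq 0}$ propagates to the corresponding relation for their base changes, since the relevant classes in $(\cD_{\qc})_T$ are closed under extensions and colimits and contain the appropriate pullbacks. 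Descending to $\cD_T$, the $\tau_0$-base-change is noetherian by the previous step, so $\tau_T$ on $\cD_T$ is tilted-noetherian. The main obstacle throughout is the descent of bounded coherence through $\phi'_*$ in the core step, and this is where the finite perfect hypothesis on $\phi$ becomes essential: finiteness yields conservativity and coherence preservation, while perfectness keeps the various base-change operations well-behaved at the level of bounded derived categories.
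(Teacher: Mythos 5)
Your argument follows exactly the strategy the paper attributes to Polishchuk's Theorem~2.3.5, namely reducing everything to the original t-structure on $\cD$ via the t-exact, conservative pushforward $\phi'_*$ along the finite morphism, and using that $\phi'_*$ detects boundedness and coherence of cohomology sheaves over the noetherian scheme $X$. The core descent step and the transfers of boundedness, noetherianity, and (via base change of the underlying noetherian t-structure and propagation of the tilting inclusions) tilted-noetherianity are all handled correctly.
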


Now we can prove our main base change result. 

\begin{Thm}
\label{Thm-D-bc} 
Let $g \colon X \to S$ be a morphism of {schemes which are quasi-compact with affine diagonal, where 
$X$ is noetherian of finite Krull dimension.} 
Let $\cD \subset \Db(X)$ be an $S$-linear strong semiorthogonal component whose projection functor is of finite cohomological amplitude. 
Let $(\cD^{\leqslant 0}, \cD^{\geqslant 0})$ be a bounded $S$-local t-structure on $\cD$ which is tilted-noetherian locally on $S$. 
Let $\phi \colon T \to S$ be an essentially perfect morphism from a 
{scheme $T$ which is quasi-compact with affine diagonal,} 
such that $\phi$ is faithful with respect to $g$. 
Then: 
\begin{enumerate}[{\rm (1)}]
\item 
\label{DqcT-to-DT}
The t-structure on $(\cD_{\qc})_T$ from Theorem~\ref{thm-Dqc-bc} induces a bounded $T$-local t-structure on $\cD_{T}$ which is tilted-noetherian locally on $T$. 
\index{D@$(\cD_T^{\leqslant 0}, \cD_T^{\geqslant 0})$, base change of a t-structure via $T\to S$!$T\to S$ essentially of finite type}
\index{AT@$\cA_T$, base change of the heart of a bounded t-structure via $T\to S$!$T\to S$ essentially of finite type}
\item 
\label{D-bc-noetherian}
If the t-structure on $\cD$ is noetherian locally on $S$, so is the induced t-structure on 
$\cD_T$. 
\item 
\label{D-bc-projective}
If $\phi \colon T \to S$ is projective, $L$ is a $\phi$-relatively ample line bundle on $T$, 
and $L_{X_T}$ denotes its pullback to $X_T$, 
then the t-structure on $\cD_T$ satisfies 
\begin{equation*} 
\cD_T^{[a,b]} = \set{ F \in \cD_T \sth
\phi'_{*}(F \otimes L_{X_T}^n) \in \cD^{[a,b]} \text{ for all } n \gg 0 } . 
\end{equation*}
\item 
\label{DbT-f}
Let $T' \to S$ be another morphism with the same assumptions as $\phi$, 
let $f \colon T' \to T$ be a morphism of schemes over $S$, 
and let $f' \colon X_{T'} \to X_{T}$ be the induced morphism. 
\begin{enumerate}[{\rm (a)}]
\item 
\label{DbT-f-pullback}
$f'^* \colon \cD_T \to \cD_{T'}$ is right t-exact. 
\item 
\label{DbT-f-flat}
If $f$ is flat, then $f'^* \colon \cD_T \to \cD_{T'}$ is t-exact. 
\item 
\label{DbT-f-finite-pushforward} 
If $f$ is finite, then $f'_* \colon \cD_{T'} \to \cD_{T}$ is t-exact. \end{enumerate}
\end{enumerate} 
\end{Thm}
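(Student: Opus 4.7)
The plan is to construct the t-structure on $\cD_T$ by restricting the t-structure on $(\cD_{\qc})_T$ of Theorem~\ref{thm-Dqc-bc}; equivalently, to show that its truncation functors preserve the subcategory $\cD_T \subset (\cD_{\qc})_T$. By the $T$-locality of that t-structure (Theorem~\ref{thm-Dqc-bc}.\eqref{DT-S-local}) and the local character of every remaining assertion via Lemma~\ref{lem-open-restriction-es}, I may assume $S=\Spec A$ and $T=\Spec B$ are affine. Since $\phi$ is essentially perfect, $B=C[W^{-1}]$ for some perfect $A$-algebra $C$. Setting $T_0=\Spec C$ and presenting $C$ as a quotient of $A[x_1,\dots,x_n]$, I obtain a factorization
\[
T \xrightarrow{\,\ell\,} T_0 \xrightarrow{\,\iota\,} \P^n_S \xrightarrow{\,p\,} S,
\]
in which $\ell$ is a flat localization, $\iota$ is a closed immersion (hence finite and perfect), and $p$ is smooth and projective. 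By transitivity of base change at the quasi-coherent level, it suffices to handle each of these three morphism types in turn.

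The step $\iota$ is immediate from Theorem~\ref{thm-t-structure-finite-map}, which induces a bounded t-structure on $\cD_{T_0}$ from one on $\cD_{\P^n_S}$ and preserves the noetherian or tilted-noetherian property. For the localization $\ell$, the pullback $\ell'^*$ is t-exact on the quasi-coherent level by Theorem~\ref{thm-Dqc-bc}.\eqref{DT-f-flat} and essentially surjective on $\cD$ by Lemma~\ref{lem-open-restriction-es}, so boundedness and t-exactness transfer immediately, and the (tilted-)noetherian property follows by lifting arbitrary chains in $\cA_T$ to $\cA_{T_0}$ via Lemma~\ref{lem-extend-from-localisation}.\eqref{enum:lem-extend-filtration-from-localisation}. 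The central case is the projection $p\colon\P^n_S\to S$: I would define
\[
\cD_{\P^n_S}^{[a,b]} := \set{ F\in\cD_{\P^n_S} \sth p'_*(F\otimes\cO(k))\in\cD^{[a,b]}\text{ for all }k\gg 0 },
\]
an analogue of~\eqref{DT-t-structure-Ui} in which the relatively ample family $\cO(k)$ replaces an affine cover, and identify it with the restriction to $\cD_{\P^n_S}$ of the t-structure on $(\cD_{\qc})_{\P^n_S}$. Boundedness in this description rests on Serre vanishing (giving uniform cohomological amplitude of $p'_*$ on twists for $k\gg 0$) together with the finite cohomological amplitude of the projection functor of $\cD$; the tilted-noetherian property transfers by lifting chains in $\cA_{\P^n_S}$ along the standard affine cover $\{D_+(x_i)\}$ and invoking Theorem~\ref{thm-t-structure-finite-map} on each chart.

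With \eqref{DqcT-to-DT} and \eqref{D-bc-noetherian} in hand, part \eqref{D-bc-projective} is an instance of the construction applied to the projective $\phi$: relative ampleness of $L$ together with Theorem~\ref{thm-Dqc-bc}.\eqref{DT-S-local} identify the displayed formula with the t-structure built in the previous paragraph. For part \eqref{DbT-f}, statement \eqref{DbT-f-pullback} is inherited from Theorem~\ref{thm-Dqc-bc}.\eqref{DT-f-pullback}, \eqref{DbT-f-flat} from Theorem~\ref{thm-Dqc-bc}.\eqref{DT-f-flat}, and \eqref{DbT-f-finite-pushforward} from Theorem~\ref{thm-Dqc-bc}.\eqref{DT-f-affine}; in each case the only extra input is that the functor in question preserves the subcategory $\cD_{(-)}\subset(\cD_{\qc})_{(-)}$, which is immediate for flat pullback and for finite pushforward. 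I expect the main obstacle to lie in the projection case: showing that $(\cD_{\qc})_{\P^n_S}$-truncations of objects of $\cD_{\P^n_S}$ remain pseudo-coherent with bounded cohomology. This is precisely where the tilted-noetherian hypothesis enters essentially, as it rules out infinite filtrations appearing in the quasi-coherent heart and so enables the uniform control of truncations through the ampleness argument above.
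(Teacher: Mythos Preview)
Your approach is essentially the paper's, but there is a genuine technical error in the factorization. You claim that presenting $C$ as a quotient of $A[x_1,\dots,x_n]$ gives a closed immersion $\iota\colon T_0 \to \P^n_S$. This is false: the map $T_0 = \Spec C \hookrightarrow \A^n_S \subset \P^n_S$ is only locally closed in general; it is closed precisely when $T_0 \to S$ is proper, i.e., when $C$ is finite over $A$, which a perfect $A$-algebra need not be. So Theorem~\ref{thm-t-structure-finite-map} does not apply to $\iota$ as you have written it.

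The paper repairs this by inserting an intermediate step. After constructing the t-structure on $\cD_{\P^r_S}$ via the twisting formula you describe (Step~1, citing the argument of \cite[Theorem~2.3.6]{AP:t-structures}), it restricts to the open chart $\A^r_S \subset \P^r_S$ and checks that the induced $\A^r_S$-local t-structure is given by $\cD^{[a,b]}_{\A^r_S} = \{F : \phi'_*F \in \cD^{[a,b]}_{\qc}\}$ (Step~2). Only then does it use the genuine closed immersion $T_0 \hookrightarrow \A^r_S$, which is finite and perfect (pseudo-coherent by \citestacks{0683}, of finite Tor-dimension by \citestacks{068X}), so that Theorem~\ref{thm-t-structure-finite-map} applies (Step~3). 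Your remaining steps—the localization, the reduction of \eqref{D-bc-projective} to the projective-space construction, and the deduction of \eqref{DbT-f} from Theorem~\ref{thm-Dqc-bc}.\eqref{DT-f}—match the paper. Your aside about ``invoking Theorem~\ref{thm-t-structure-finite-map} on each chart'' for the noetherian property of $\cD_{\P^n_S}$ is also off: that theorem concerns finite perfect maps, not the open charts $D_+(x_i)\to S$; the paper instead appeals directly to the Abramovich--Polishchuk argument for noetherianity over $\P^r_S$.
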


\begin{proof}
We prove the theorem in several steps. 

\begin{step}{1}
\label{DbT-step-Pr}
If $S$ is affine and $\phi \colon T = \P^r_S \to S$ is a projective space over $S$, 
then the formula 
\begin{equation}
\label{t-structure-Pr}
\cD_T^{[a,b]} = \set{ F \in \cD_T \sth \phi'_{*}(F(n)) \in \cD^{[a,b]} 
\text{ for all } n \gg 0 }
\end{equation}
defines a bounded $T$-local t-structure on $\cD_T$ 
which is tilted-noetherian locally on $T$, 
and noetherian locally on $T$ if the given t-structure on $\cD$ is. 
\end{step}

In~\cite[Theorem~2.3.6]{AP:t-structures} it is shown that if $\cD = \Db(X)$, 
$X$ is smooth and projective over the spectrum $S$ of a field, and the t-structure on $\cD$ is noetherian, 
then~\eqref{t-structure-Pr} defines a bounded noetherian $T$-local t-structure on $\cD_T$. 
We leave it to the reader to verify that their proof goes through 
in greater generality: in our setup if $S$ is affine and the t-structure on $\cD$ is noetherian, 
then~\eqref{t-structure-Pr} defines a bounded noetherian $T$-local t-structure on $\cD_T$. 
Since by Lemma~\ref{lem-noetherian-t-structure} a t-structure on $\cD$ or $\cD_T$ is noetherian if and only if it is noetherian locally on the base, this proves the desired claim in case the given t-structure on $\cD$ is noetherian locally on $S$. 
From this, the tilted-noetherian case follows as in the proof of~\cite[Lemma~3.3.2]{Polishchuk:families-of-t-structures}.

\begin{step}{2}
\label{DbT-step-Ar}
If $S$ is affine and $\phi \colon T = \A_S^r \to S$ is an affine space over $S$, then 
the bounded $T$-local t-structure on $\cD_{T}$ induced by the $\P_S^r$-local t-structure 
on $\cD_{\P_S^r}$ from Step~\ref{DbT-step-Pr} is given by 
\begin{equation*}
\cD_T^{[a,b]} = \set{ F \in \cD_T \sth\phi'_{*}(F) \in \cD_{\qc}^{[a,b]} }. 
\end{equation*} 
\end{step} 

This holds by the same argument as in \cite[Lemma~3.3.4]{Polishchuk:families-of-t-structures}. 

\begin{step}{3}
\label{DbT-step-affine-perfect} 
If $S$ and $T$ are affine and $\phi \colon T \to S$ is perfect, then the 
t-structure on $(\cD_{\qc})_T$ from Theorem~\ref{thm-Dqc-bc} induces a bounded 
t-structure on $\cD_{T}$, which is noetherian if the t-structure on $\cD$ is. 
\end{step}

The morphism $\phi$ is of finite type (see \citestacks{0682}), so it factors through a closed immersion $T \hookrightarrow \A^r_S$. 
The morphism $T \hookrightarrow \A^r_S$ is pseudo-coherent by \citestacks{0683}, of finite $\Tor$-dimension by \citestacks{068X}, and hence perfect. 
Thus the claim follows from Theorem~\ref{thm-t-structure-finite-map} combined with Step~\ref{DbT-step-Ar}. 

\begin{step}{4} 
\label{DbT-step-affine-eperfect} 
If $S = \Spec(A)$ and $T = \Spec(B)$ are affine and $\phi \colon T \to S$ corresponds to an 
essentially perfect map of rings $A \to B$, then the 
t-structure on $(\cD_{\qc})_T$ from Theorem~\ref{thm-Dqc-bc} induces a bounded 
t-structure on $\cD_{T}$, which is noetherian if the t-structure on $\cD$ is. 
\end{step}

By Step~\ref{DbT-step-affine-perfect} we reduce to the case where $B$ is a localization of $A$. 
Then $\phi \colon T \to S$ is flat, so by Theorem~\ref{thm-Dqc-bc}.\eqref{DT-f-flat} the functor 
$\phi'^* \colon \cD_{\qc} \to (\cD_{\qc})_T$ is t-exact. 
Since the functor $\cD \to \cD_{T}$ is essentially surjective by Lemma~\ref{lem-open-restriction-es}, 
it follows that the t-structure on $(\cD_{\qc})_T$ induces a bounded t-structure on $\cD_T$. 
To prove noetherianity, we first note that any subobject $F_i \subset F$ of a fixed object $F$ is already
defined over the complement $U_i \subset S$ of the zero locus of some $f_i \in A$. Thus we can proceed exactly
as in Lemma~\ref{lem-extend-from-localisation}.\eqref{enum:lem-extend-filtration-from-localisation} to lift any 
possibly infinite filtration
of $F$ in $\cA_T$ to a filtration of $\tilde F$ in $\cA_S$, which proves that $\cA_T$ is noetherian if $\cA_S$ is.

\begin{step}{5}
Parts~\eqref{DqcT-to-DT} and \eqref{D-bc-noetherian} of the theorem hold in general. 
\end{step}

First we show that $(\cD_{\qc})_T$ induces a t-structure on $\cD_T$. 
For any $a \in \Z$, let $\hat{\tau}^{\geqslant a}_{T}$ denote the truncation functor for the 
t-structure on $(\cD_{\qc})_T$. 
Then we must show that for any $F \in \cD_T$ we have $\hat{\tau}^{\geqslant a}_{T}(F) \in \cD_T$. 
Since $T$ is quasi-compact and $\phi$ is essentially perfect, we may choose a finite 
affine open cover $T = \cup U_i$ and affine opens $V_i \subset S$ such that for each $i$ 
the morphism $U_i \hookrightarrow T \xrightarrow{\, \phi \,} S$ factors through $V_i$ and 
the corresponding ring map $\cO_S(V_i) \to \cO_T(U_i)$ is essentially perfect. 
To show $\hat{\tau}^{\geqslant a}_{T}(F) \in \cD_T$ it suffices to show 
$\hat{\tau}^{\geqslant a}_{T}(F)_{U_i} \in \cD_{U_i}$ for each $i$ 
because pseudo-coherence is a local property 
and boundedness can be checked on a finite open cover. 
We have $\hat{\tau}^{\geqslant a}_{T}(F)_{U_i} \simeq \hat{\tau}^{\geqslant a}_{U_i}(F_{U_i})$ because 
the restriction functor $(\cD_{\qc})_{T} \to (\cD_{\qc})_{U_i}$ is t-exact. 
In view of Theorem~\ref{thm-Dqc-bc}.\eqref{DT-S-local}, we thus reduce to the case of the 
affine morphism $U_i \to V_i$, which was handled in Step~\ref{DbT-step-affine-eperfect}. 
This shows that $(\cD_{\qc})_T$ induces a t-structure on $\cD_T$, and a similar argument 
shows that this t-structure on $\cD_T$ is bounded. 

Moreover, the t-structure on $\cD_T$ is $T$-local. Indeed, if $U \subset T$ is a quasi-compact 
open subset, then {$U$ also has affine diagonal} and the morphism $U \to T$ is perfect. 
Hence by what we have already shown, we conclude that $(\cD_{\qc})_U$ induces a t-structure 
on $\cD_U$. 
Since the restriction functor $(\cD_{\qc})_T \to (\cD_{\qc})_U$ is t-exact by Theorem~\ref{thm-Dqc-bc}.\eqref{DT-f-flat}, 
so is the functor $\cD_T \to \cD_U$. Hence the t-structure on $\cD_T$ is $T$-local. 

Further, the t-structure on $\cD_{T}$ is noetherian locally on $T$ 
if the t-structure on $\cD$ is noetherian locally on $S$. 
Indeed, then for each affine $U_i \subset T$ in the affine cover considered above, 
the induced t-structure on $\cD_{U_i}$ is noetherian by Step~\ref{DbT-step-affine-eperfect}. 
So by Lemma~\ref{lem-noetherian-t-structure} the claim holds. 

Finally, it follows directly from the definitions and the result of the 
previous paragraph that the t-structure on $\cD_{T}$ is tilted-noetherian locally on $T$. 

\begin{step}{6}
Part~\eqref{D-bc-projective} of the theorem holds. 
\end{step}
The proof of~\cite[Theorem~3.3.6(ii)]{Polishchuk:families-of-t-structures} goes through 
in our setup. 

\begin{step}{7}
Part~\eqref{DbT-f} of the theorem holds.
\end{step}
These claims follow immediately from the corresponding statements in Theorem~{\ref{thm-Dqc-bc}.\eqref{DT-f}}.
\end{proof}

We make explicit the following immediate consequence of the theorem: 
\begin{Cor} \label{cor:base-change-tstructure-point}
 In the assumptions of Theorem~\ref{Thm-D-bc}, assume also that $g$ is flat, that $S$ is regular of finite Krull dimension. Let $s$ be a point of $S$ and let $\cD_s$ be the base change category to $\Spec \kappa(s)$.
 \index{As@$\cA_s$, base change of the heart of a bounded t-structure to a point $s\in S$!$S$ regular}
\begin{enumerate}[{\rm (1)}] 
 \item \label{enum:base-change-point}
Then $(\cD^{\leqslant 0}, \cD^{\geqslant 0})$ induces a bounded t-structure on $\cD_s$.
\item \label{enum:base-change-generic}
Moreover, if $S$ is irreducible and $s \in S$ the generic point, then base change to $\cD_s$ is t-exact.
\end{enumerate}
\end{Cor}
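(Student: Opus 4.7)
The plan is to reduce both claims to Theorem~\ref{Thm-D-bc} applied to the morphism $\phi \colon T = \Spec\kappa(s) \to S$. Since $g$ is flat, faithfulness of $\phi$ with respect to $g$ is automatic, and $\Spec\kappa(s)$ is plainly quasi-compact with affine diagonal, so the only hypothesis of Theorem~\ref{Thm-D-bc} that needs justification is that $\phi$ is essentially perfect.

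To verify essentially perfectness, I would pick an affine open $V = \Spec A$ of $S$ containing $s$ and let $\frp \subset A$ be the prime corresponding to $s$, so that $\kappa(s) = (A/\frp)_\frp$. The ring map $A \to \kappa(s)$ then factors as the quotient $A \twoheadrightarrow A/\frp$ followed by the localization $A/\frp \to \kappa(s)$, so it is enough to show $A \to A/\frp$ is perfect. Pseudo-coherence is automatic because $A$ is noetherian and $A/\frp$ is finitely generated. For finite $\Tor$-dimension, the regularity of $S$ implies that every local ring $A_\fq$ is a regular local ring, and hence the finitely generated module $(A/\frp)_\fq$ has finite projective dimension over $A_\fq$; this gives finite $\Tor$-dimension globally. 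This is the only place the hypothesis that $S$ be regular enters, and it is the step I expect to require the most care.

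With essentially perfectness in hand, part~\eqref{enum:base-change-point} follows immediately from Theorem~\ref{Thm-D-bc}.\eqref{DqcT-to-DT}, observing that a $\Spec\kappa(s)$-local t-structure on $\cD_s$ is just a bounded t-structure since the base is a point. For part~\eqref{enum:base-change-generic}, regularity together with irreducibility of $S$ forces $S$ to be integral, so when $s$ is the generic point one has $\cO_{S,s} = \kappa(s)$, the function field of $S$. Hence $\phi$ is a cofiltered limit of the inclusions of non-empty affine opens into $S$ and in particular is flat, so Theorem~\ref{Thm-D-bc}.\eqref{DbT-f-flat} (applied with $T = S$, $T' = \Spec\kappa(s)$, $f = \phi$) yields t-exactness of the pullback $\cD \to \cD_s$.

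The main obstacle is thus really quite mild: once the factorization $A \to A/\frp \to \kappa(s)$ is observed and the finite global dimension of regular local rings is invoked, both statements are formal consequences of Theorem~\ref{Thm-D-bc}. No further input beyond the assumptions already in the corollary is needed.
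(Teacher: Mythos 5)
Your proposal is correct and takes essentially the same route as the paper. The paper argues globally: the inclusion $\bar s \subset S$ of the reduced closure is a finite morphism of Noetherian schemes with regular target, hence perfect (Stacks Project Tag 068B), and $\Spec\kappa(s)\to\bar s$ is a localization, so the composite is essentially perfect; your local factorization $A \to A/\frp \to \kappa(s)$ is precisely the affine incarnation of this, and your direct invocation of finite projective dimension over regular local rings is the content of that Stacks tag (the uniform bound across localizations needed to pass to finite $\Tor$-dimension comes from the finite-Krull-dimension hypothesis, which you could have flagged explicitly but is present in the statement). Part~\eqref{enum:base-change-generic} is handled identically in both: the generic point gives a flat localization, and Theorem~\ref{Thm-D-bc}.\eqref{DbT-f-flat} applies.
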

\begin{proof} 
Let $f \colon \bar s \subset S$ be the inclusion of the closure of $s$. Since $f$ is finite and $S$ is regular, it is automatically perfect \citestacks{068B}. Hence the composition $\Spec \kappa(s) \to \bar s \to S$ is essentially perfect, and part~\eqref{enum:base-change-point} is a special case of Theorem~\ref{Thm-D-bc}. Then part~\eqref{enum:base-change-generic} follows from flatness, i.e., Theorem~\ref{Thm-D-bc}.\eqref{DbT-f-flat}.
\end{proof}

Another important consequence of the theorem is base change for t-structures along field extensions. 
By a common abuse of notation, when we base change to an affine scheme $\Spec(A)$, we denote $\cD_{\Spec (A)}$ by $\cD_{A}$.
 \index{Dl@$\cD_\ell$, base change of $\cD$ to $\Spec(\ell)$}

\begin{Prop}
\label{Prop-D-bc-fields} 
Let $X$ be a noetherian scheme of finite Krull dimension over a field $k$. 
Let $\cD \subset \Db(X)$ be a $k$-linear strong semiorthogonal component whose projection 
is of finite cohomological amplitude, and which is equipped with a bounded tilted-noetherian 
t-structure. 
Let $k \subset \ell$ be a (not necessarily finitely generated) field extension. Then: 
\begin{enumerate}[{\rm (1)}]
\item The t-structure on $(\cD_{\qc})_{\ell}$ from Theorem~\ref{thm-Dqc-bc} induces a bounded 
t-structure on $\cD_{\ell}$. 
\index{Al@$\cA_\ell$, base change of the heart of a bounded tilted-noetherian t-structure to a field extension $k\subset\ell$}
\index{AK@$\cA_K$, base change of the heart of a bounded tilted-noetherian t-structure to $K$ the function field of $S$}
\item \label{Prop-D-bc-fields-exact-bc}
Base change $\cD \to \cD_\ell$ is t-exact. 

\item \label{Prop-D-bc-fields-Efg}
For every object $E \in \cA_\ell$ there exists a subfield $\ell' \subset \ell$, finitely generated over $k$, and an object
$F \in \cA_{\ell'}$ such that $E$ is the base change of $F$.

\item 
\label{Prop-D-bc-fields-fg}
If the field extension $k \subset \ell$ is finitely generated, then 
the induced t-structure on $\cD_{\ell}$ is tilted-noetherian. 
If further the t-structure on $\cD$ is noetherian, then so is the induced 
t-structure on $\cD_{\ell}$. 

\end{enumerate} 
\end{Prop}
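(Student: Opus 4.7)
The plan is to first establish part~(4), then deduce parts~(1)--(3) by approximating $\ell$ by its finitely generated subextensions of $k$.

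If $k \subset \ell$ is finitely generated, then $\ell$ is a localization of a finitely generated $k$-algebra, so the morphism $\phi \colon \Spec(\ell) \to \Spec(k)$ is flat and essentially perfect, with $\Spec(\ell)$ quasi-compact with affine diagonal. Flatness makes $\phi$ faithful with respect to $g$, so Theorem~\ref{Thm-D-bc} applies directly to yield a bounded tilted-noetherian (or noetherian, if the given t-structure on $\cD$ is) $\ell$-local t-structure on $\cD_\ell$, proving part~(4). Moreover, Theorem~\ref{Thm-D-bc}.\eqref{DbT-f-flat} gives t-exactness of base change $\cD \to \cD_\ell$ in this case.

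For general $\ell$, I would write $\ell = \colim_\alpha \ell_\alpha$ as a filtered colimit of its finitely generated $k$-subextensions, giving $X_\ell = \lim_\alpha X_{\ell_\alpha}$ as a limit of noetherian schemes with affine, flat transition morphisms. The crucial input from the geometry of schemes is the standard approximation statement (cf.~\citestacks{01ZM} and its extensions to bounded pseudo-coherent complexes) that $\Db(X_\ell)$ is the filtered $2$-colimit of the $\Db(X_{\ell_\alpha})$: every object and every morphism descends to some $\ell_\alpha$, and two morphisms coinciding in $\cD_\ell$ already coincide in some $\cD_{\ell_\beta}$. For each inclusion $\ell_\alpha \subset \ell_\beta$, Theorem~\ref{thm-Dqc-bc}.\eqref{DT-f-flat} combined with part~(4) produces a t-exact base change functor $\cD_{\ell_\alpha} \to \cD_{\ell_\beta}$ between the t-structures already constructed in the finitely generated case.

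With these ingredients, parts~(1)--(3) follow cleanly. For~(1), given $E \in \cD_\ell$, descend to $F \in \cD_{\ell_\alpha}$, form the truncation $\tau^{\leq n} F$ in $\cD_{\ell_\alpha}$ using part~(4), and pull back along the flat morphism $\Spec(\ell) \to \Spec(\ell_\alpha)$: by Theorem~\ref{thm-Dqc-bc}.\eqref{DT-f-flat} this pullback agrees with the truncation of $E$ in $(\cD_{\qc})_\ell$, which is thus bounded pseudo-coherent and lies in $\cD_\ell$. Part~(2) follows from the factorization $\cD \to \cD_{\ell_\alpha} \to \cD_\ell$ as a composition of t-exact functors. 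For~(3), if $E \in \cA_\ell$ descends to $F \in \cD_{\ell_\alpha}$, the finitely many nonzero cohomologies $\rH^i_{\cA_{\ell_\alpha}}(F)$ with $i \neq 0$ pull back to zero in $\cA_\ell$; the colimit description of morphisms allows us to enlarge $\ell_\alpha$ to a finitely generated $\ell' \subset \ell$ on which these cohomologies already vanish, so $F_{\ell'} \in \cA_{\ell'}$ lifts $E$. The main obstacle is the noetherian approximation statement invoked above: while each $X_{\ell_\alpha}$ is noetherian, $X_\ell$ itself need not be, so one must verify that pseudo-coherence and boundedness (the finiteness conditions defining $\Db(X_\ell)$) suffice to run the standard Stacks-project-style limit arguments.
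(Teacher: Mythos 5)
Your overall strategy (prove (4) first, then handle general $\ell$ by descending an object to a finitely generated intermediate stage, truncate there, and pull back) is the same as the paper's, and your proof of~(4) is identical.

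The substantive difference, and the source of the gap you correctly flag, is that you descend to finitely generated \emph{subfields} $\ell_\alpha \subset \ell$, whereas the paper descends to finitely generated $k$-\emph{subalgebras} $A \subset \ell$. For subalgebras, the diagram $X_\ell = \lim_A X_A$ has transition morphisms $X_{A'} \to X_A$ of finite presentation between noetherian schemes, so the standard noetherian-approximation machinery applies cleanly to show that an object of $\Db(X_\ell)$ descends to some $\Db(X_A)$; this is exactly the step where your citation of \citestacks{01ZM} ``and its extensions'' breaks down for subfields, since $\Spec \ell_\beta \to \Spec \ell_\alpha$ is \emph{not} of finite type. The fix is precisely the paper's move: first descend to $F_A \in \Db(X_A)$, \emph{project} into $\cD_A$ (which you also omit --- the descended object of $\Db(X_{\ell_\alpha})$ has no reason to lie in $\cD_{\ell_\alpha}$ without applying the projection functor, whose finite cohomological amplitude keeps you inside $\Db$), and if you prefer fields, then pass to $\ell' := \operatorname{Frac}(A)$. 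After this, the limit you want to take is no longer needed.

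Two minor simplifications worth noting. Once you have the t-exact, conservative pullback $\cD_{\ell'} \to \cD_\ell$ (t-exactness from Theorem~\ref{Thm-D-bc}.\eqref{DbT-f-flat}, conservativity from faithful flatness of the field extension), part~\eqref{Prop-D-bc-fields-Efg} is immediate: $F_{\ell'} \in \cD_{\ell'}$ satisfies $(F_{\ell'})_\ell = E \in \cA_\ell$, so the higher cohomologies $\rH^i_{\cA_{\ell'}}(F_{\ell'})$ already vanish --- no enlargement of $\ell'$ is needed, since conservativity detects the vanishing directly. Likewise, part~\eqref{Prop-D-bc-fields-exact-bc} does not require the factorization through $\ell_\alpha$: the t-exactness of $\cD \to (\cD_{\qc})_\ell$ is Theorem~\ref{thm-Dqc-bc}.\eqref{DT-f-flat} applied to $\Spec(\ell) \to \Spec(k)$ directly, and then restriction to $\cD \to \cD_\ell$ is t-exact because, by part~(1), the t-structure on $\cD_\ell$ is induced from the one on $(\cD_{\qc})_\ell$.
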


\begin{proof}
Let $F \in \cD_{\ell}$. 
Since $F \in \Db(X_{\ell})$ it descends to an object $F_A \in \Db(X_{A})$ for 
some finitely generated $k$-subalgebra $A \subset \ell$.
By projecting into $\cD_{A} \subset \Db(X_{A})$, we may assume 
$F_A \in \cD_{A}$. 
But by Theorem~\ref{Thm-D-bc} the t-structure on $(\cD_{\qc})_A$ induces a 
bounded t-structure on $\cD_{A}$, and by its statement \eqref{DbT-f-flat} 
the pullback functor $(\cD_{\qc})_A \to (\cD_{\qc})_{\ell}$ is t-exact. 
It follows that the t-structure on $(\cD_{\qc})_{\ell}$ induces a bounded t-structure 
on $\cD_{\ell}$, and that it satisfies \eqref{Prop-D-bc-fields-exact-bc}. 

To prove \eqref{Prop-D-bc-fields-Efg}, we can let $\ell'$ be the fraction field of $A$; then
$F_{\ell'} \in \cA_{\ell'}$ as pullback to $\ell$ is t-exact and conservative.

If $k \subset \ell$ is finitely generated, then $\ell$ can be written as the fraction 
field of a finitely generated $k$-algebra. 
Hence $\Spec(\ell) \to \Spec(k)$ is an essentially perfect morphism, so~\eqref{Prop-D-bc-fields-fg} 
holds by Theorem~\ref{Thm-D-bc}.
\end{proof}

\section{Flat, torsion, and torsion free objects} 
\label{sec:flat-torsion-tf-obj}

In this section, we consider the following situation: 
\begin{itemize}
\item $g \colon X \to S$ is a flat morphism of 
{schemes which are quasi-compact with affine diagonal, where 
$X$ is noetherian of finite Krull dimension.} 
\item $\cD \subset \Db(X)$ is an $S$-linear strong semiorthogonal component whose projection functor
is of finite cohomological amplitude. 
\item $(\cD^{\leqslant 0 }, \cD^{\geqslant 0})$ is an $S$-local t-structure on $\cD$ with heart $\cA_S$. 
\end{itemize} 
The significance of $g \colon X \to S$ being flat is that any morphism $\phi \colon T \to S$ is faithful with respect to $g$. 
Following~\cite{AP:t-structures}, we adapt to our setup the relative notions of flat, torsion, and torsion free objects.
In Section~\ref{subsection-flat-objects} and Section~\ref{subsection-torsion-objects} 
we discuss these notions in a general setting, while in Section~\ref{subsection-dedekind-bases} we study in more detail the case where $S = C$ is a Dedekind scheme, which will be particularly important later in the paper. 

\subsection{Flat objects} 
\label{subsection-flat-objects}
For any (not necessarily closed) point $s \in S$, we write $\cD_s$, $(\cD_{\perf})_s$, and 
$(\cD_{\qc})_s$ for the base change categories along 
the canonical morphism $\phi \colon \Spec(\kappa(s)) \to S$.

\begin{Def} \label{def-flat-family-T}
	Let $\phi \colon T \to S$ be a morphism, and let $E \in \Dqc(X_T)$. Then $E$ is \emph{$T$-flat} if $E_t \in (\cA_{\qc})_t$ for every point $t \in T$, where $(\cA_{\qc})_t \subset (\cD_{\qc})_t$ is the heart of the t-structure given by Theorem~\ref{thm-Dqc-bc} applied to the composition $\Spec \kappa(t) \to T \to S$.
\end{Def}

\begin{Rem}
\label{Rem-t-flat-usual-flat}
If $\cD = \Db(X)$ with the standard t-structure, then Definition~\ref{def-flat-family-T} agrees with the usual notion of flatness for an object $F \in \Coh X$ under either of the following hypotheses: $T$ is noetherian or $X_T \to T$ is of finite presentation. 
Indeed, the statement reduces to the local affine case, where for $T$ noetherian it holds by the local criterion for flatness \citestacks{00MK}, and for $X_T \to T$ of finite presentation we can reduce to the case where $S$ is noetherian using \citestacks{00QX}. 
\end{Rem}

In Section~\ref{subsubsection-flat-objects} 
below we prove some results about flat objects in the 
case of a Dedekind base.

\subsection{Torsion and torsion free objects} 
\label{subsection-torsion-objects} 

In this subsection we assume $S$ is integral. Recall that, in accordance with Section~\ref{sec:setupnotation}, if $W\subset S$ is a closed subscheme, then $i_W\colon X_W\to X$ denotes the embedding of the fiber over $W$.

\begin{Def}\label{def:D-Stor}
An object $E \in \cD$ is called \emph{$S$-torsion} if it is the pushforward 
of an object in $\cD_Z$ for some proper closed subscheme $Z \subset S$. 
We denote by $\cD_\Stor$ the subcategory of $S$-torsion objects in $\cD$.
\index{DStor@$\cD_\Stor$, subcategory of $S$-torsion objects in $\cD$}
\end{Def}

\begin{Lem}\label{lem:D-Stor}
Let $K$ be the function field of $S$. 
For $E \in \cD$, we have $E\in \cD_\Stor$ if and only if $E_K = 0$, namely there is an exact sequence of triangulated categories
\begin{equation}\label{eqn:D-Stor}
\cD_\Stor \to \cD \to \cD_K.
\end{equation}
\end{Lem}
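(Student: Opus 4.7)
The forward direction is immediate: if $E = i_{Z,*} F$ for some $F \in \cD_Z$ with $Z \subset S$ a proper closed subscheme, then $\eta \notin Z$, so the base change $Z \times_S \Spec K$ is empty, and hence $E_K = 0$.

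For the converse, the plan is to proceed in two stages. First, I spread out the vanishing. Viewing $E \in \cD$ as an object of $\Db(X)$, its cohomology sheaves $\rH^j(E)$ are coherent since $X$ is noetherian, and by flat base change along the flat morphism $\Spec K \to S$ each satisfies $\rH^j(E)_K = 0$. Covering the quasi-compact $S$ by finitely many affines $S_i = \Spec A_i$ and restricting, the coherent sheaf $\rH^j(E)|_{X_{S_i}}$ vanishes after base change to the fraction field of $A_i$; since it is locally finitely generated, it is annihilated by some nonzero $a_{j,i} \in A_i$. Taking a product over the finitely many degrees $j$ with $\rH^j(E) \neq 0$ and intersecting the resulting open subschemes $D(a_i) \subset S_i$ over the finitely many $i$, I obtain an open $U \subset S$ containing $\eta$ with $E_U = 0$, and I let $Z_0 = S \setminus U$ with the reduced subscheme structure.

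Second, I upgrade this vanishing to the assertion $E \in \cD_\Stor$. Since $X$ is noetherian, each cohomology sheaf $\rH^j(E)$ is annihilated by some power of the ideal sheaf of $X_{Z_0}$, and taking the maximum over the finitely many non-vanishing degrees yields an infinitesimal thickening $Z = Z_0^{[n]} \subset S$, still a proper closed subscheme, such that every $\rH^j(E)$ is scheme-theoretically supported on $X_Z$. A standard d\'evissage argument then shows that $E$ itself lies in the essential image of the fully faithful pushforward $i_{Z,*} \colon \Db(X_Z) \to \Db(X)$: one inducts on the cohomological amplitude of $E$, applying the truncation triangle $\tau^{\leq b-1} E \to E \to \rH^b(E)[-b]$ and using full faithfulness of $i_{Z,*}$ to lift the boundary map to a morphism in $\Db(X_Z)$ whose cone provides the desired preimage $F$. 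Finally, $F$ lies in $\cD_Z$ rather than merely in $\Db(X_Z)$: by Theorem~\ref{theorem-bc-sod}, $i_{Z,*}$ is compatible with the semiorthogonal decompositions, so the component of $F$ in $\cD_Z^\perp$ pushes forward to the component of $E$ in $\cD^\perp$, which vanishes because $E \in \cD$, and full faithfulness of $i_{Z,*}$ then gives $F \in \cD_Z$.

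The main obstacle is the d\'evissage in the second stage. The naive hope that cohomology sheaves being set-theoretically supported on $X_{Z_0}$ already places $E$ in the image of pushforward from $\Db(X_{Z_0})$ is false; one must first replace $Z_0$ by a sufficiently large infinitesimal thickening $Z$ to make the annihilation scheme-theoretic, and even then the passage from the cohomology-level statement to the object-level statement requires the inductive argument together with the full faithfulness of the closed immersion $i_Z$.
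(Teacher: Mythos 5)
Your first stage (spreading out the vanishing of cohomology sheaves to an open $U \ni \eta$) and the forward direction are fine, and the paper's own proof indeed reduces the lemma in one line to the corresponding, known fact for $\Db(X)$. The trouble is in your d\'evissage, which is where you are effectively reproving that known fact, and there the argument as written does not go through.

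The specific gap is the invocation of full faithfulness of $i_{Z,*}\colon \Db(X_Z) \to \Db(X)$. A closed immersion pushforward is fully faithful on the abelian hearts, but \emph{not} on bounded derived categories: already for $i\colon \{0\} \into \A^1$ one has $\Hom_{\Db(\A^1)}(\cO_0, \cO_0[1]) = \Ext^1_{k[t]}(k,k) \neq 0$ while $\Hom_{\Db(k)}(k, k[1]) = 0$. So the step where you lift the connecting morphism $\rH^b(E)[-b] \to (\tau^{\leq b-1}E)[1]$ from $\Db(X)$ back to $\Db(X_Z)$ is unjustified, and in fact the conclusion you want from it is false as stated: having all cohomology sheaves scheme-theoretically supported on a fixed thickening $X_Z$ does not place $E$ in the essential image of $i_{Z,*}$. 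For instance, over $X = S = \Spec k[x,y]$ with $Z_0 = V(x,y)$, a two-term complex $E$ with $\rH^{-1}(E) \cong \rH^0(E) \cong k$ glued by a nonzero class in $\Ext^2_{k[x,y]}(k,k) \cong k$ has both cohomologies scheme-theoretically supported on $X_{Z_0}$, but $E$ is not a direct sum of shifted copies of $k$ and hence not in the image of $\Db(k) \to \Db(k[x,y])$. One must enlarge the thickening further, by an amount that sees the extension data and not just the annihilators of the $\rH^j$.

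The correct argument replaces this by the standard d\'evissage for torsion subcategories: one shows that a bounded complex whose cohomology is set-theoretically supported on $X_{Z_0}$ can be represented by a bounded complex of coherent sheaves each of which is $\cI_{X_{Z_0}}$-power torsion, and then takes $n$ large enough to annihilate the finitely many terms of that complex simultaneously, i.e., one establishes an equivalence $\Db(\Coh_{Z_0\text{-tors}}(X)) \isomor \Db_{Z_0\text{-tors}}(X)$ and observes that the left side is the filtered union of the images of $\Db(X_{Z_0^{[n]}})$. This is the content of the ``corresponding statement for $\Db(X)$'' that the paper cites. A separate, minor point: your final paragraph uses full faithfulness of $i_{Z,*}$ again to deduce $F \in \cD_Z$ from the vanishing of $i_{Z,*}\bigl(\pr_{\cD_Z^\perp}(F)\bigr)$; what you actually need and have there is conservativity of $i_{Z,*}$, which does hold for a closed immersion.
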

\begin{proof}
The statement follows directly from the corresponding statement for $\Db(X)$.
\end{proof}

\begin{Def}
\label{def-torsion}
An object $E \in \cA_S$ is called \emph{$S$-torsion free} if it contains no nonzero $S$-torsion subobject.
We denote by $\cA_{\Stor} \subset \cA_S$ the subcategory of $S$-torsion objects, and by $\cA_{\Stf} \subset \cA_S$ the subcategory of $S$-torsion free objects. 
\index{AStor@$\cA_{\Stor}$, subcategory of $S$-torsion objects in $\cA_S$}
\index{AStf@$\cA_{\Stf}$, subcategory of $S$-torsion free objects in $\cA_S$}
\end{Def}

\begin{Lem}
\label{Lem-AS-subs}
The following hold: 
\begin{enumerate}[{\rm (1)}]
\item \label{Astf-subs} The subcategory $\cA_{\Stf} \subset \cA_S$ is closed under subobjects and extensions. 
\item \label{Astor-subs} The subcategory $\cA_{\Stor} \subset \cA_S$ is closed under subobjects, quotients, and extensions. 
\item \label{Dstor-t-structure} The t-structure on $\cD$ induces one on $\cD_{\Stor}$, which is bounded 
if the given t-structure on $\cD$ is. 
\item \label{Astf-Astor-tensor} If $L$ is a line bundle on $S$, then tensoring by $g^*L$ preserves $\AA_{\Stor}$ and $\AA_{\Stf}$.
\end{enumerate}
\end{Lem}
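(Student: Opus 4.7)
The plan is to derive all four parts from a single observation: the restriction $(-)_K \colon \cD \to \cD_K$ to the generic point of $S$ is t-exact. To verify this, I note that $\Spec K \to S$ is flat (locally, the passage from a domain to its fraction field), faithful with respect to $g$ (by flatness of $g$), and essentially perfect in the sense of Definition~\ref{def:EssPerfect} (being locally a localization of the coordinate ring); hence Theorem~\ref{Thm-D-bc}.\eqref{DbT-f-flat} applies. Combined with Lemma~\ref{lem:D-Stor}, this identifies $\cA_\Stor$ with $\{E \in \cA_S \sth E_K = 0\}$ and $\cD_\Stor$ with $\{E \in \cD \sth E_K = 0\}$.

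From t-exactness of $(-)_K$, part~\eqref{Astor-subs} is immediate: a short exact sequence in $\cA_S$ pulls back to a short exact sequence in $\cA_K$, so the middle term vanishes after restriction if and only if the outer two do. Part~\eqref{Astf-subs} then follows formally. Closure of $\cA_\Stf$ under subobjects is trivial, since a torsion subobject of $A \subset B$ is a torsion subobject of $B$. For extensions $0 \to A \to B \to C \to 0$ with $A, C \in \cA_\Stf$, any torsion subobject $T \subset B$ satisfies $T \cap A \in \cA_\Stor$ by~\eqref{Astor-subs}, hence $T \cap A = 0$; then $T$ embeds into $C$ with image a torsion quotient of $T$, so zero by~\eqref{Astor-subs}, forcing $T = 0$.

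For part~\eqref{Dstor-t-structure}, truncation commutes with the t-exact $(-)_K$, so $E_K = 0$ implies $(\tau^{\leq n}E)_K = 0$; thus $\cD_\Stor$ is stable under the truncation functors of $\cD$, and inherits boundedness. For part~\eqref{Astf-Astor-tensor}, Lemma~\ref{lem-tensor-vb} shows that $(-)\otimes g^*L$ is t-exact, hence an autoequivalence of $\cA_S$ with inverse $(-)\otimes g^*L^{-1}$. Since $(E \otimes g^*L)_K \simeq E_K \otimes (g^*L)_K$ vanishes whenever $E_K$ does, both tensor functors preserve $\cA_\Stor$; torsion-freeness is then preserved because any torsion subobject $T \subset E \otimes g^*L$ would yield a torsion subobject $T \otimes g^*L^{-1} \subset E$. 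The only point that requires real checking is the t-exactness of $(-)_K$; once that is in hand, the remaining arguments are essentially formal and I anticipate no further obstacle.
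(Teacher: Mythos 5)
Your proof is correct and takes essentially the same route as the paper: the paper's own argument also reduces everything to the t-exactness of the pullback $(-)_K \colon \cD \to \cD_K$ obtained from Theorem~\ref{Thm-D-bc}.\eqref{DbT-f-flat} (combined with Lemma~\ref{lem:D-Stor}), then observes that parts~\eqref{Astf-subs}, \eqref{Astor-subs}, \eqref{Dstor-t-structure} follow, and handles part~\eqref{Astf-Astor-tensor} via Lemma~\ref{lem-tensor-vb} just as you do. Your version is a bit more explicit about the hypotheses of Theorem~\ref{Thm-D-bc} and about the extension-closure argument in part~\eqref{Astf-subs} (which you correctly deduce from part~\eqref{Astor-subs}), but there is no substantive difference.
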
 

\begin{proof}
Part~\eqref{Astf-subs} follows immediately from the definitions. 
Let $K$ be the function field of $S$. 
Then $\Spec(K) \to S$ is flat, so by Theorem~\ref{Thm-D-bc}.\eqref{DbT-f-flat} the 
pullback functor $\cD \to \cD_K$ is t-exact. 
Using this and Lemma~\ref{lem:D-Stor}, parts~\eqref{Astor-subs} and~\eqref{Dstor-t-structure} follow easily. Part~\eqref{Astf-Astor-tensor} is immediate as $\blank \otimes g^*L$ preserves $\cA_S$, see Lemma~\ref{lem-tensor-vb}, and $\cD_{\Stor}$.
\end{proof}

\begin{Lem}
\label{lem:cohpushpull}
Let $T \subset S$ be the zero locus of a regular section of a vector bundle, and let $\cN$ be the normal bundle. 
Then for any $E \in \cA_T$, there are isomorphisms 
\begin{equation}\label{eqn:CohomologyOfPushPull}
\rH^i_{\cA_T}(i_T^*i_{T*}^{} E) \cong \bigwedge^{-i}g_T^*\cN^\vee\otimes E, 
\end{equation}
for all $i\in\Z$. In particular, we always have $\rH^0_{\cA_T}(i_T^*i_{T*}^{}E)\cong E$, and if $T$ is zero-dimensional, so that $\cN\cong\cO_T^{\oplus\dim S}$, then we also have $\rH^{-\dim S}_{\cA_T}(i_T^*i_{T*}^{}E)\cong E$.
Finally, $i_{T*}\colon \cA_T\to\cA_S$ is fully faithful. 
\end{Lem}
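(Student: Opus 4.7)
The plan is to compute $i_T^* i_{T*} E$ explicitly via a Koszul resolution, extract the cohomology in $\cA_T$, and then deduce full faithfulness by adjunction. Let $\cV$ be the vector bundle on $S$ with regular section $s$ cutting out $T$, so that $\cN = \cV|_T$ and set $r=\rk\cV$. The Koszul complex
\[
\cK^\bullet = [\bigwedge^r \cV^\vee \to \cdots \to \cV^\vee \to \cO_S],
\]
with differential given by contraction with $s$, is a locally free resolution of $i_{T*}\cO_T$ on $S$. Since $g$ is flat, $g^*\cK^\bullet$ is a locally free resolution of $\cO_{X_T}$ on $X$. Throughout I will, by slight abuse of notation, also write $i_T \colon X_T \to X$ for the induced embedding.

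The key claim to establish is the direct sum decomposition
\[
i_T^* i_{T*} E \cong \bigoplus_{p \ge 0} g_T^*\bigwedge^p \cN^\vee \otimes E[p]
\]
in $\cD_T$. By the projection formula applied to $\cO_{X_T}$ and $i_{T*}E$, one has $i_{T*}(i_T^* i_{T*} E) \cong i_{T*}\cO_{X_T} \otimes_{\cO_X} i_{T*}E$, and via the Koszul resolution this is represented by the total complex $g^*\cK^\bullet \otimes_{\cO_X} i_{T*}E$, with no derived corrections needed since $g^*\cK^\bullet$ is termwise locally free. The crucial observation is that the differentials of $g^*\cK^\bullet$, being contraction with $g^*s$, annihilate $i_{T*}E$: each term of a complex representative of $i_{T*}E$ is a sheaf supported on $X_T$ and therefore killed by the ideal generated by $g^*s$. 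Consequently the total complex splits as $\bigoplus_p g^*\bigwedge^p\cV^\vee \otimes i_{T*}E[p]$. A second, simpler application of the projection formula to each locally free summand rewrites $g^*\bigwedge^p\cV^\vee \otimes i_{T*}E$ as $i_{T*}(g_T^*\bigwedge^p\cN^\vee \otimes E)$, and full faithfulness of $i_{T*}$ on bounded derived categories of sheaves yields the decomposition. The result lives in $\cD_T$ because $\cD_T$ is closed under tensoring with pullbacks of perfect complexes from $T$.

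The cohomology formula now follows from Lemma~\ref{lem-tensor-vb} applied to $g_T \colon X_T \to T$ with the $T$-local t-structure on $\cD_T$ coming from Theorem~\ref{Thm-D-bc}: tensoring with $g_T^*\bigwedge^p\cN^\vee$ is t-exact on $\cD_T$, so each summand $g_T^*\bigwedge^p\cN^\vee \otimes E$ lies in $\cA_T$, giving
\[
\rH^i_{\cA_T}(i_T^*i_{T*}E) \cong g_T^*\bigwedge^{-i}\cN^\vee \otimes E
\]
for every $i\in\Z$. This vanishes for $i > 0$ and reduces to $E$ for $i = 0$ since $\bigwedge^0\cN^\vee = \cO_T$. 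When $T$ is zero-dimensional, $\cN$ is trivial, so $\bigwedge^{\dim S}\cN^\vee \cong \cO_T$ and the additional isomorphism in degree $-\dim S$ follows. For full faithfulness of $i_{T*}\colon\cA_T\to\cA_S$, given $E, F \in \cA_T$, the concentration of $i_T^*i_{T*}E$ in non-positive degrees together with $\rH^0_{\cA_T}(i_T^*i_{T*}E)\cong E$ yield, via adjunction,
\[
\Hom_{\cA_S}(i_{T*}E, i_{T*}F) = \Hom_{\cD_T}(i_T^*i_{T*}E, F) = \Hom_{\cA_T}(E, F).
\]
The main technical point is the vanishing of the Koszul differentials after tensoring with $i_{T*}E$; once this local observation is established in $\Db(X)$, the transfer of the argument to $\cD_T$ is immediate because each summand visibly lies in $\cD_T$.
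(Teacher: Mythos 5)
Your proof is correct and follows essentially the same route as the paper: the cohomology formula comes from the Koszul resolution (where you spell out the vanishing of the Koszul differential on objects scheme-theoretically supported on $X_T$, whereas the paper simply cites \cite[Corollary~11.2]{Huybrechts:FM} together with t-exactness of $i_{T*}$), and the full-faithfulness argument is the same adjunction computation using $i_T^*i_{T*}E \in \cD_T^{\le 0}$ and $\rH^0_{\cA_T}(i_T^*i_{T*}E) \cong E$. You in fact prove the slightly stronger direct-sum decomposition $i_T^*i_{T*}E \cong \bigoplus_p g_T^*\bigwedge^p\cN^\vee \otimes E[p]$, which holds here precisely because $T$ is the zero locus of a \emph{global} regular section, giving a global Koszul resolution.
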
 
\begin{proof}
The isomorphism on cohomology objects follows by the $t$-exactness of $i_{T*}$ and the Koszul complex as in \cite[Corollary~11.2]{Huybrechts:FM}.

To see the claim about the fully faithfulness of $i_{T*}$, we let $E,F\in\cA_T$, and observe that $i_T^*i_{T*}^{}E\in\cD_T^{\leqslant 0}$ by \eqref{eqn:CohomologyOfPushPull}. It then follows by adjunction and the $i=0$ case of \eqref{eqn:CohomologyOfPushPull} that
\begin{align*}\Hom_{\cA}(i_{T*}E,i_{T*}F)&\cong\Hom_{\cD}(i_{T*}E,i_{T*}F)\cong\Hom_{\cD_T}(i_T^*i_{T*}E,F)\\&\cong\Hom_{\cD_T}(\rH_{\cA_T}^0(i_T^*i_{T*}^{}E),F)\cong\Hom_{\cD_T}(E,F)\cong\Hom_{\cA_T}(E,F),\end{align*}
as required.
\end{proof}

The following is a version of Lemma~\ref{lem-extend-from-localisation}.\eqref{enum:lem-extend-morphism-in-heart} for $S$-torsion free objects.

\begin{Lem} \label{lem:tensortrick}
Assume that $g\colon X\to S$ is flat and $S$ is integral with function field $K$.
Let $L$ be an ample line bundle on $S$, and let $A, B \in \cA_S$ be objects with $A$ being $S$-torsion free. Given
any isomorphism $A_K \cong B_K$, there exists $k \in \Z$ and an injective map
$A \otimes g^* L^{-k} \into B$ inducing the given isomorphism over $K$.
\end{Lem}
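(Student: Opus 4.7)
The idea is to extend the given generic isomorphism $\phi \colon A_K \xrightarrow{\sim} B_K$ in two stages: first to a morphism defined over a dense affine open subset of the form $S_h$, and then globally at the cost of a twist by a negative power of $L$; at the end, $S$-torsion-freeness of $A$ will force injectivity.

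First I would choose a dense affine open subset $U \subset S$ of the form $U = S_h$ for some section $h \in H^0(S, L^N)$; such a $U$ exists because $L$ is ample and $S$ is integral, so any affine open containing the generic point is automatically dense. The morphism $\Spec K \to U$ is then a localization of the integral ring $\cO_S(U)$ at its set of nonzero elements, hence falls into case \eqref{enum:localisation-extension} of Lemma~\ref{lem-extend-from-localisation}. Applying Lemma~\ref{lem-extend-from-localisation}.\eqref{enum:lem-extend-morphism-in-heart} with $\tilde F = B|_U$ and arbitrary lift of $A_K$ to $\cD_U$ chosen to be $A|_U$ itself, one produces a morphism $\phi_U \colon A|_U \to B|_U$ in $\cA_U$ restricting to $\phi$ over the generic point.

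Next I would apply the same lemma in case \eqref{enum:opensubset-extension} to the open inclusion $U = S_h \hookrightarrow S$, taking the arbitrary lift of $A|_U$ to be $A$ itself and $\tilde F = B$. Tracing through the proof of the lemma (which invokes \cite[Lemma~2.1.8]{AP:t-structures}), one finds that for some integer $k \geq 0$ the morphism $h^k \phi_U$ extends to a morphism
\[
\tilde \phi \colon A \otimes g^*L^{-Nk} \longrightarrow B
\]
in $\cA_S$ whose restriction to $U$ coincides with $\phi_U$ under the trivialization of $L^{-Nk}|_U$ given by $h^k$. Base-changing further to $K$ then recovers $\phi$ under the induced trivialization of $L_K^{-Nk}$, so $\tilde\phi$ indeed induces the prescribed isomorphism over $K$.

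Finally, to prove injectivity, set $K_S = \ker(\tilde \phi) \subset A \otimes g^*L^{-Nk}$ in $\cA_S$. Since base change $\cA_S \to \cA_K$ is t-exact by Proposition~\ref{Prop-D-bc-fields}.\eqref{Prop-D-bc-fields-exact-bc}, we have $(K_S)_K = \ker(\phi) = 0$, so by Lemma~\ref{lem:D-Stor} the object $K_S$ is $S$-torsion. But $A \otimes g^*L^{-Nk}$ is $S$-torsion-free by Lemma~\ref{Lem-AS-subs}.\eqref{Astf-Astor-tensor}, forcing $K_S = 0$. Thus $\tilde \phi$ is injective, completing the argument. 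The main technical content — extending morphisms across opens while controlling the line-bundle twist — is entirely packaged in Lemma~\ref{lem-extend-from-localisation}, so the principal difficulty is mostly bookkeeping: choosing the lifts carefully so that the twisted source is precisely $A \otimes g^*L^{-Nk}$ rather than some a priori abstract lift.
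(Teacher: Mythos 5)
Your proof is correct and follows essentially the same strategy as the paper: extend the generic isomorphism to a global morphism at the cost of a twist by $g^*L^{-k}$ (via AP's Lemma~2.1.8, which you reach through Lemma~\ref{lem-extend-from-localisation}, where the paper invokes it directly after a one-line ``spreading out'' observation), then use torsion-freeness of $A$ to kill the kernel. Looking at $K_S = \ker\tilde\phi$ directly is equivalent to the paper's computation $\rH^{-1}_{\cA_S}(\cone\tilde\phi)$. One small citation slip: the t-exactness of $\cA_S \to \cA_K$ that you need is not Proposition~\ref{Prop-D-bc-fields}.\eqref{Prop-D-bc-fields-exact-bc} (that result concerns base change of a category linear over a \emph{field} along a field extension), but rather Theorem~\ref{Thm-D-bc}.\eqref{DbT-f-flat} applied to the flat, essentially perfect morphism $\Spec K \to S$ — this is what Lemma~\ref{Lem-AS-subs} and Corollary~\ref{cor:base-change-tstructure-point}.\eqref{enum:base-change-generic} use. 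The fact itself is true and available, so this is a bookkeeping matter rather than a gap.
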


\begin{proof}
 Since $A, B$ are bounded complexes, the isomorphism is defined over some open subset $U \subset S$, so by \cite[Lemma~2.1.8]{AP:t-structures}, there exists a map $A \otimes g^* L^{-k} \to B$ that induces the isomorphism over $K$.
Let $Q \in \langle \cA_S, \cA_S[1]\rangle$ be the cone of this map; note $Q_K = 0$. Hence $Q\in \cD_\Stor$ and all cohomology objects $\rH^i_{\cA_S}(Q)$ are $S$-torsion. Since $A$ is $S$-torsion free, the long exact
cohomology sequence shows $\rH^{-1}_{\cA_S}(Q) = 0$, proving the injectivity as claimed.
\end{proof}

\subsection{Dedekind bases} 
\label{subsection-dedekind-bases}

In this subsection we assume 
$S = C$ is a Dedekind scheme. 

\subsubsection{Torsion objects} 
For any non-trivial closed subscheme $W \subset C$, the ideal sheaf $I_W$ is a line bundle. 
We will abuse notation by writing $I_W \otimes (-)$ for the tensor product $g^* (I_W) \otimes (-)$, which preserves $\cA_C$.
Note that 
by Theorem~\ref{thm-t-structure-finite-map} there is an induced t-structure on $\cD_W$ whose heart we denote by $\cA_W$.

\begin{Lem} \label{lem:fibersinheart}
Let $W \subset C$ be a 0-dimensional subscheme with ideal sheaf $I_W$. 
Let $\cA_W$ be the heart of the induced t-structure on $\cD_W$ given by Theorem~\ref{thm-t-structure-finite-map}. 
Then for any $E \in \cA_C$ we have short exact sequences
\begin{equation}\label{eqn:fibersinheart1} 
0 \to \Ann(I_W; E) = i_{W*}^{} \rH^{-1}_{\cA_W} \left( E_W \right) \into I_W \otimes E \onto I_W \cdot E \to 0 \end{equation}
and
\begin{equation*}
0 \to I_W \cdot E \into E \onto E/I_W\cdot E = i_{W*}^{} \rH^0_{\cA_W} \left(E_W \right) \to 0,
\end{equation*} 
where $\Ann(I_W; E)$ and $I_W \cdot E$ denote the kernel and the image of the canonical map 
\begin{equation*}
I_W \otimes E \to E. 
\end{equation*}
Moreover, $\rH^i_{\cA_W}(E_W) = 0$ for $i \neq -1,0$.
\end{Lem}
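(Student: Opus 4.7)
The plan is to extract everything from the standard short exact sequence
\[
0 \to I_W \to \cO_C \to \cO_W \to 0
\]
pulled back to $X$, which is a \emph{honest} short exact sequence because $g$ is flat and $I_W$ is a line bundle (here using that $W\subset C$ is $0$-dimensional on a Dedekind scheme, so an effective Cartier divisor).

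First I would tensor this sequence with $E$ to obtain a distinguished triangle
\[
I_W \otimes E \;\longrightarrow\; E \;\longrightarrow\; i_{W*}i_W^* E
\]
in $\cD$, using the projection formula to identify the third term as $E \otimes^{\rL} i_{W*}\cO_{X_W} \simeq i_{W*} i_W^*E$. Next I would observe that the two outer terms lie in the heart $\cA_C$: by assumption $E\in\cA_C$, and tensoring with the line bundle $g^*I_W$ is t-exact on $\cA_C$ by Lemma~\ref{lem-tensor-vb} (the t-structure is $C$-local, $C$ being covered by affines trivializing $I_W$). Then taking the long exact cohomology sequence of the triangle in $\cA_C$, we see that $i_{W*}i_W^* E$ has at most two non-zero cohomologies, in degrees $-1$ and $0$, namely the kernel and cokernel of $I_W\otimes E\to E$, which are exactly $\Ann(I_W;E)$ and $E/I_W\cdot E$ by definition. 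Splicing through the image $I_W\cdot E$ produces the two short exact sequences in the statement.

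The remaining step is to identify $\rH^{-1}_{\cA_C}(i_{W*}i_W^*E)$ and $\rH^0_{\cA_C}(i_{W*}i_W^*E)$ with $i_{W*}\rH^{-1}_{\cA_W}(E_W)$ and $i_{W*}\rH^0_{\cA_W}(E_W)$, respectively. For this I would invoke Theorem~\ref{Thm-D-bc}.\eqref{DbT-f-finite-pushforward}: the inclusion $W\hookrightarrow C$ is finite (since $W$ is $0$-dimensional and $C$ is Dedekind, hence the map is even essentially perfect and finite), so $i_{W*}\colon \cD_W\to\cD$ is t-exact, and commutes with $\rH^\bullet$. That gives the claimed identifications, and the vanishing $\rH^i_{\cA_W}(E_W)=0$ for $i\neq -1,0$ follows from the vanishing of the corresponding cohomologies of $i_{W*}i_W^*E$ in $\cA_C$ together with the fact that $i_{W*}$ is conservative (again by t-exactness plus that $i_W$ is a closed immersion, so $i_{W*}F=0$ implies $F=0$).

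The only mildly delicate point is justifying that the Koszul-type triangle really is the underived pullback of a short exact sequence — i.e.~that $g^*I_W\otimes E \to E$ agrees with $I_W\otimes^{\rL}E\to E$ — which reduces to the flatness of $g$ and the fact that $I_W$ is a line bundle, hence $\Tor^g_i(\cO_W,E)=0$ for $i>0$. With that in hand the rest is a purely formal consequence of t-exactness of $I_W\otimes(-)$ and of $i_{W*}$.
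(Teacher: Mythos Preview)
Your proposal is correct and follows exactly the same approach as the paper: take cohomology of the exact triangle $I_W \otimes E \to E \to i_{W*}i_W^*E$ with respect to $\cA_C$, using that $i_{W*}$ is t-exact by Theorem~\ref{Thm-D-bc}.\eqref{DbT-f-finite-pushforward}. You have simply spelled out the details (t-exactness of $I_W\otimes(-)$ via Lemma~\ref{lem-tensor-vb}, conservativity of $i_{W*}$) that the paper leaves implicit.
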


\begin{proof}
We take cohomology of the exact triangle 
$I_W \otimes E \to E \to i_{W*}i_W^* E$ with respect to $\cA_S$, using that $i_{W*}$ is t-exact by Theorem~\ref{Thm-D-bc}.\eqref{DbT-f-finite-pushforward}.
\end{proof}

In the situation of Lemma~\ref{lem:fibersinheart}, we call the essential image of 
$i_{W*} \colon \cA_W \to \cA_C$ the subcategory of objects \emph{schematically supported over $W$}; 
it is equivalent to $\cA_W$ by the last claim of Lemma~\ref{lem:cohpushpull}.

\begin{Cor} \label{cor:schematicsupportsubsquotients}
Let $W \subset C$ be a 0-dimensional subscheme. 
Then the subcategory of $\cA_C$ of objects schematically supported over $W$ is closed under subobjects and quotients.
\end{Cor}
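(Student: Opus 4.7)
The approach is to recast the condition of being schematically supported over $W$ as annihilation by $I_W$, and then exploit the fact that tensoring with $g^*I_W$ is an exact endofunctor of $\cA_C$. Once these two facts are in place, closure under subobjects and quotients becomes a one-line diagram chase.

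First I would establish the characterisation: an object $E\in\cA_C$ lies in the essential image of $i_{W*}$ if and only if $I_W\cdot E=0$, i.e. the canonical multiplication map $g^*I_W\otimes E\to E$ vanishes. The ``only if'' direction uses Lemma~\ref{lem:cohpushpull}: if $E\cong i_{W*}F$, then $\rH^0_{\cA_W}(i_W^*i_{W*}F)\cong F$, so pushing forward the short exact sequence of Lemma~\ref{lem:fibersinheart} yields $I_W\cdot E=0$. Conversely, if $I_W\cdot E=0$, the sequence of Lemma~\ref{lem:fibersinheart} collapses to an isomorphism $E\xrightarrow{\sim}i_{W*}\rH^0_{\cA_W}(E_W)$, exhibiting $E$ in the image of $i_{W*}$. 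The full faithfulness of $i_{W*}\colon\cA_W\to\cA_C$ recorded in Lemma~\ref{lem:cohpushpull} ensures this image is an honest (replete) subcategory.

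Next I would observe that, since $C$ is Dedekind, $I_W$ is a line bundle on $C$, so by Lemma~\ref{lem-tensor-vb} the functor $g^*I_W\otimes(-)$ on $\cD$ is $t$-exact; restricted to $\cA_C$ it is therefore exact, preserving both injections and surjections. Given $G\hookrightarrow E$ in $\cA_C$ with $I_W\cdot E=0$, the naturality of the multiplication map produces a commutative square in $\cA_C$ whose right column is the given inclusion, whose left column $g^*I_W\otimes G\to g^*I_W\otimes E$ is injective by exactness, and whose bottom row is zero by hypothesis; chasing the square forces the top row $g^*I_W\otimes G\to G$ to be zero, so $I_W\cdot G=0$. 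The same argument with ``injection'' replaced by ``surjection'' handles a quotient $E\twoheadrightarrow Q$: the left column becomes a surjection, the bottom row is zero, and the composite $g^*I_W\otimes E\to Q$ vanishes, forcing $g^*I_W\otimes Q\to Q$ to be zero. Since no single step is genuinely hard, the main subtlety is the clean identification of the essential image of $i_{W*}$ in the first step, but this is essentially already contained in the two preceding lemmas.
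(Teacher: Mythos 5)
Your proof is correct and follows essentially the same route as the paper's own argument: both characterize ``schematically supported over $W$'' as the vanishing of the multiplication map $I_W\otimes E\to E$ (via Lemma~\ref{lem:fibersinheart}) and then conclude by a one-line functoriality argument. One cosmetic note: in your subobject case the injectivity you actually invoke in the chase is that of the given inclusion $G\hookrightarrow E$ (the right column), not that of $g^*I_W\otimes G\to g^*I_W\otimes E$; the remark about the left column being injective is harmless but unused.
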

\begin{proof}
By Lemma~\ref{lem:fibersinheart}, an object $E \in \cA_C$ is schematically 
supported on $W$ if and only if the map $I_W \otimes E \to E$ vanishes. Given a subobject
$A \into E$, it follows that the composition
$I_W \otimes A \to A \into E$ vanishes (as it also factors via $I_W \otimes E$), and therefore $A$ is also schematically 
supported on $W$. The case of 
quotients follows similarly.
\end{proof}
We say that $E \in \cA_{\Ctor}$ is set-theoretically supported over a closed point $p \in C$ if it is scheme-theoretically supported over some infinitesimal neighborhood of $p$.
\begin{Lem} \label{lem:filtrationatp}
Let $p \in C$ be a closed point, $\pi$ be a local generator of $I_p$ and $E \in \cA_{\Ctor}$ be an object set-theoretically supported over $p$. Then $E$ admits a filtration 
\[ 0= \pi^{m+1}\cdot E \subset \pi^m \cdot E \subset \dots \subset \pi\cdot E \subset E
\]
where all filtration quotients $\pi^i\cdot E/\pi^{i+1} \cdot E$ are quotients of $E/\pi \cdot E$ in $\cA_p$.
\end{Lem}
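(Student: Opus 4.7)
The plan is to localize to a neighborhood of $p$ where $\pi$ trivializes $I_p$, interpret the inclusion $I_p\into\cO_C$ as a ``multiplication by $\pi$'' morphism $\mu_\pi\colon E\to E$ in $\cA_C$, and then run an easy Artin--Rees style argument inside $\cA_C$. Since $E$ is supported over $p$, replacing $C$ by an affine open containing $p$ over which $\pi$ generates $I_p$ loses nothing. By Lemma~\ref{lem-tensor-vb}, tensoring with $g^*I_p$ preserves $\cA_C$, and the chosen generator $\pi$ identifies $g^*I_p$ with $\cO_C$; the natural map $I_p\otimes E\to E$ then becomes an endomorphism $\mu_\pi\in\Hom_{\cA_C}(E,E)$, and its $i$-th iterate $\mu_{\pi^i}=\mu_\pi^i$ has image, by definition, $\pi^i\cdot E\subset E$.

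The first step is to see the filtration terminates. Since $E$ is set-theoretically supported on $p$, it lies in the essential image of $i_{W_n*}\colon\cA_{W_n}\to\cA_C$ for some $n$, where $W_n\subset C$ is the $n$-th infinitesimal neighborhood of $p$. Applying Lemma~\ref{lem:fibersinheart} to $E$ with the 0-dimensional subscheme $W_n$ (or directly from $E\cong i_{W_n*}E'$), the natural map $I_{W_n}\otimes E\to E$ vanishes; since $I_{W_n}=I_p^n$ this gives $\pi^n\cdot E=0$, so we may take $m+1=n$.

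The second, and main, step is the quotient structure. The morphism $\mu_{\pi^i}\colon E\to E$ factors as a surjection $E\onto\pi^i\cdot E$ in $\cA_C$ followed by the inclusion $\pi^i\cdot E\into E$. Composing the surjection with the quotient map $\pi^i\cdot E\onto\pi^i\cdot E/\pi^{i+1}\cdot E$ yields a surjection
\[ f_i\colon E\onto \pi^i\cdot E/\pi^{i+1}\cdot E \]
in $\cA_C$. The identity $\mu_{\pi^{i+1}}=\mu_\pi\circ\mu_{\pi^i}=\mu_{\pi^i}\circ\mu_\pi$ implies $\pi^{i+1}\cdot E=\im(\mu_\pi|_{\pi^i\cdot E})$, so $f_i\circ\mu_\pi=0$. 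Since $\mu_\pi$ factors as $E\onto \pi\cdot E\into E$, the map $f_i$ vanishes on $\pi\cdot E$ and therefore factors through a surjection $\bar f_i\colon E/\pi\cdot E\onto\pi^i\cdot E/\pi^{i+1}\cdot E$ in $\cA_C$.

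Finally, by Lemma~\ref{lem:fibersinheart} applied with $W=\{p\}$, the object $E/\pi\cdot E$ is schematically supported on $p$, and by Corollary~\ref{cor:schematicsupportsubsquotients} the same holds for its quotient $\pi^i\cdot E/\pi^{i+1}\cdot E$. Using the full faithfulness of $i_{p*}\colon\cA_p\to\cA_C$ (last claim of Lemma~\ref{lem:cohpushpull}), the surjection $\bar f_i$ in $\cA_C$ descends to a surjection in $\cA_p$. The only delicate point is the correct handling of the line bundle identifications (tracking how $\pi$ trivializes $I_p$ and ensuring $\mu_\pi$ and the $\pi^i\cdot E$ subobjects are defined intrinsically as images in $\cA_C$); but this is routine once one localizes so that $\pi$ is a global generator of $I_p$.
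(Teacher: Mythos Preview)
Your proof is correct and follows essentially the same approach as the paper's: localize so that $\pi$ trivializes $I_p$, interpret $\pi^i\cdot E$ as the image of the resulting endomorphism $\mu_\pi^i$, use the support hypothesis to get termination, and observe that $\mu_{\pi^i}$ induces the surjection $E/\pi\cdot E \onto \pi^i\cdot E/\pi^{i+1}\cdot E$. The paper records this in three sentences; your version spells out the factorization $f_i\circ\mu_\pi=0$ and adds the justification (via Corollary~\ref{cor:schematicsupportsubsquotients} and the full faithfulness of $i_{p*}$) that the quotients genuinely live in $\cA_p$, which the paper leaves implicit.
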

\begin{proof}
By assumption, there exists $m$ such that $I_p^{m+1}\cdot E =0$. 
The local isomorphism between $\cO_C$ and $I_p$ induced by $\pi$ identifies $I_p^i \cdot E$ with $\pi^i \cdot E$, i.e., the image of the endomorphism of $E$ induced by $\pi^i$. Finally, by definition, $\pi^i$ induces a surjection $E/\pi \cdot E \onto \pi^i \cdot E/\pi^{i+1} \cdot E$.
\end{proof}

\subsubsection{Flat objects}
\label{subsubsection-flat-objects}

For any point $c \in C$, there is an induced heart $\cA_c$ on $\cD_c$. 
Indeed, if $c \in C$ is closed then this holds by Theorem~\ref{thm-t-structure-finite-map}, 
while if $c \in C$ is the generic point this holds by Theorem~\ref{Thm-D-bc}. 
In particular, suppose $E \in \Dqc(X)$ is an object such that $E_c \in \cD_c$ for every $c \in C$; 
this holds for instance if $E \in \cD$ (see Lemma~\ref{lem-relations-S-perfect}). 
Then $E$ is $C$-flat in the sense of Definition~\ref{def-flat-family-T} if and only if 
$E_c \in \cA_c$ for every point $c \in C$. 
This observation will be used without mention below. 

\begin{Lem} \label{lem:FlatIffTFreeCurve}
Let $E \in \cA_C$. 
Then $E$ is $C$-flat if and only if $E$ is $C$-torsion free. 
\end{Lem}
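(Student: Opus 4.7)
The plan is to characterize both properties via the evaluation maps $I_p\otimes E\to E$ at closed points $p\in C$. Since $E\in\cA_C$ and the inclusion of the generic point $\Spec K\to C$ is flat, Theorem~\ref{Thm-D-bc}.\eqref{DbT-f-flat} guarantees $E_K\in\cA_K$ for free, so $E$ is $C$-flat if and only if $E_p\in\cA_p$ for every closed point~$p$. By Lemma~\ref{lem:fibersinheart}, this is in turn equivalent to $\Ann(I_p;E)=0$ for every closed~$p$, i.e., to the evaluation map $I_p\otimes E\to E$ being injective for every closed~$p$.

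The direction ``$C$-torsion free implies $C$-flat'' is then immediate: since $\Ann(I_p;E)\subset I_p\otimes E$ is annihilated by $I_p$, Lemma~\ref{lem:fibersinheart} identifies it with the pushforward $i_{p*}\rH^{-1}_{\cA_p}(E_p)$, so it is schematically supported over $p$ and thus lies in $\cA_{\Ctor}$; but $I_p\otimes E$ is $C$-torsion free by Lemma~\ref{Lem-AS-subs}.\eqref{Astf-Astor-tensor}, so any $C$-torsion subobject of it must vanish, giving $\Ann(I_p;E)=0$.

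For the converse, assume $E$ is $C$-flat and suppose for contradiction that $T\into E$ is a nonzero $C$-torsion subobject. Writing $T=i_{W*}T_W$ with $W\subset C$ a zero-dimensional closed subscheme, the splitting $W=\bigsqcup_p W_p$ into connected components induces a direct sum decomposition $\cA_W=\bigoplus_p\cA_{W_p}$ and hence $T=\bigoplus_p T^{(p)}$, so I may replace $T$ by a nonzero summand set-theoretically supported over a single closed point~$p$. Lemma~\ref{lem:filtrationatp} then produces a nonzero subobject $T'=\pi^m T\subset T\subset E$ annihilated by $I_p$. Tensoring $T'\into E$ with the line bundle $I_p$ yields an injection $I_p\otimes T'\into I_p\otimes E$, and its composition with the injection $I_p\otimes E\into E$ agrees by naturality with the composition $I_p\otimes T'\to T'\to E$, which vanishes because the first map is zero; since both components of the first composition are injective, this forces $I_p\otimes T'=0$, hence $T'=0$, a contradiction. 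The main obstacle is this last reduction to a single closed point in the support of $T$, which requires the direct sum decomposition of $\cA_W$ into the contributions of the connected components of $W$; once that is in hand, Lemma~\ref{lem:filtrationatp} together with the invertibility of $I_p$ closes the argument.
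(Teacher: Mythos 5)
Your proof is correct and follows essentially the same route as the paper's: both reduce $C$-flatness to injectivity of $I_p\otimes E\to E$ via Lemma~\ref{lem:fibersinheart}, both identify $\Ann(I_p;E)$ as a $C$-torsion subobject of $I_p\otimes E$ (and invoke that tensoring with a line bundle preserves torsion-freeness) for one direction, and both reduce the other direction to a torsion subobject supported over a single closed point and exploit nilpotency of the $I_p$-action. Your only deviation is cosmetic: where the paper notes that $I_p^{\otimes m}\otimes E\to E$ is non-injective and deduces the same for $I_p\otimes E\to E$, you first extract the nonzero piece $T'=\pi^m T$ annihilated by a single power of $I_p$ via Lemma~\ref{lem:filtrationatp} and run the contradiction there — equivalent, and if anything you spell out the reduction to a single point (via the direct-sum decomposition of $\cA_W$ over components of $W$) more explicitly than the paper does.
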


\begin{proof} 
If $c \in C$ is the generic point, then by Corollary~\ref{cor:base-change-tstructure-point}.\eqref{enum:base-change-generic} the pullback $\cD \to \cD_c$ is t-exact, i.e., $E_c \in \cA_c$ is automatic. 
Thus, $E$ is $C$-flat if and only if $E_c \in \cA_c$ for all closed points $c \in C$. 

Now it follows from Lemma~\ref{lem:fibersinheart} that $E$ is $C$-flat if and only if for every closed point $c \in C$ 
the map $I_c \otimes E \to E$ is injective in $\cA_C$. 
Moreover, Lemma~\ref{lem:fibersinheart} also shows that if this map is not injective for some $c$, then $E$ is not $C$-torsion free. 
Conversely, assume $E$ has a torsion subobject $A \into E$. 
We may assume that $A$ is set-theoretically supported over a closed point $c \in C$. 
Then there exists a positive integer $m$ so that 
the natural map $I_c^{\otimes m} \otimes A \to A$
vanishes. 
In particular this map $I_c^{\otimes m} \otimes E \to E$ is non-injective as a map in $\cA_C$, which implies the same for $I_c \otimes E \to E$. 
\end{proof}

\begin{Lem} \label{lem:flatinheart}
Let $E \in \cD$ be a $C$-flat object. Then $E \in \cA_C$.
\end{Lem}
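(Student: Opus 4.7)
The plan is to show that the cohomology objects $\rH^i_{\cA_C}(E)$ vanish for every $i \neq 0$; since $E$ lies in $\cD$ with its bounded t-structure, this will force $E = \rH^0_{\cA_C}(E) \in \cA_C$.

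First, I would use Corollary~\ref{cor:base-change-tstructure-point}.\eqref{enum:base-change-generic}: pullback to the generic point $K$ of $C$ is t-exact, so combined with $E_K \in \cA_K$ (from $C$-flatness) this gives $\rH^i_{\cA_C}(E)_K = 0$ for $i \neq 0$. By Lemma~\ref{lem:D-Stor} it follows that each $\rH^i_{\cA_C}(E)$ with $i \neq 0$ lies in $\cA_\Ctor$, so in particular is set-theoretically supported on a finite set of closed points of $C$.

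Next, for each closed point $p \in C$, I would pull back the short exact sequence $0 \to I_p \to \cO_C \to \cO_p \to 0$ along $g$ and tensor with $E$ to obtain the distinguished triangle
\[ g^*I_p \otimes E \to E \to i_{p*}\, i_p^* E \]
in $\cD$. The $C$-flatness hypothesis gives $i_p^* E \in \cA_p$, and since $i_{p*}$ is t-exact by Theorem~\ref{Thm-D-bc}.\eqref{DbT-f-finite-pushforward}, the third term lies in $\cA_C$. Moreover, tensoring with $g^*I_p$ is t-exact since $I_p$ is a line bundle (Lemma~\ref{lem-tensor-vb}), so $\rH^i_{\cA_C}(g^*I_p \otimes E) = g^*I_p \otimes \rH^i_{\cA_C}(E)$. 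Reading off the long exact cohomology sequence in $\cA_C$ and using that $\rH^j(i_{p*}\, i_p^* E) = 0$ for $j \neq 0$, I would deduce that for every $i \neq 0$ the canonical map
\[ g^*I_p \otimes \rH^i_{\cA_C}(E) \longrightarrow \rH^i_{\cA_C}(E) \]
is surjective (and an isomorphism for $i \notin \{0,1\}$).

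Finally, I would conclude by a Nakayama-style argument. Localizing $\rH^i_{\cA_C}(E)$ at $p$ produces an object in $\cA_{C_{(p)}}$ supported set-theoretically on $p$ on which multiplication by a uniformizer $\pi$ is surjective. By Lemma~\ref{lem:filtrationatp} such an object admits a finite filtration whose quotients are all quotients of $M/\pi M = 0$, and hence vanishes. Varying $p$ over the finitely many closed points in the support of $\rH^i_{\cA_C}(E)$ gives $\rH^i_{\cA_C}(E) = 0$ for $i \neq 0$, as desired. The one mild obstacle is that at $i = 1$ the long exact sequence does not yield an isomorphism but only surjectivity; fortunately surjectivity is exactly what is needed for the Nakayama step.
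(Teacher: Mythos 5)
Your proof is correct, and it takes a genuinely different route from the paper's. Both proofs open identically, observing from t-exactness of base change to the generic point that $\rH^i_{\cA_C}(E) \in \cA_{\Ctor}$ for $i \neq 0$, but the strategies diverge from there. The paper argues by contradiction from the extremal nonzero cohomology object: it writes $\rH^i_{\cA_C}(E) = i_{W*}(F)$ for a thickened point $W$, shows $E_W \in \cA_W$ via an iterated-extension argument over $\cO_W$, and then derives a contradiction from the adjunction identities $\Hom(E, \rH^i_{\cA_C}(E)[-i]) = \Hom(E_W, F[-i]) = 0$ for $i>0$ and, for $i<0$, the exceptional-inverse-image formula $i_W^! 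E = E_W[-1]$. Your approach instead extracts from the triangle $g^*I_p \otimes E \to E \to i_{p*}i_p^*E$ — whose third term lies in $\cA_C$ precisely because $E$ is $C$-flat — the surjectivity of the multiplication map $g^*I_p \otimes \rH^i_{\cA_C}(E) \to \rH^i_{\cA_C}(E)$ for every $i \neq 0$, and then kills each torsion cohomology object by a Nakayama argument (a $\pi^{m+1}$-torsion object on which $\pi$ acts surjectively is zero). Both arguments rely on the same basic inputs — t-exactness of $\blank \otimes g^*L$, t-exactness and full faithfulness of $i_{p*}$, and $E_p \in \cA_p$ — but yours treats $i>0$ and $i<0$ uniformly in one stroke and dispenses with both the adjunction computation and the $i_W^!$ formula, at the cost of the small bookkeeping around the $i=1$ case, which you correctly notice only yields a surjection but observe that surjectivity is all the Nakayama step requires. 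Both proofs are essentially elementary; yours is arguably the more self-contained.
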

\begin{proof}
Since $E_K \in \cA_K$ by assumption, and since pullback to $\cD_K$ is t-exact by Corollary~\ref{cor:base-change-tstructure-point}.\eqref{enum:base-change-generic}, we have $\rH^i_{\cA_C}(E) \in \cA_{\Ctor}$ for all $i \neq 0$.
Consider the maximal $i$ such that $0 \neq \rH^i_{\cA_C}(E) \in \cA_{\Ctor}$ and assume $i > 0$; by the previous lemmas, it is of the form
$i_{W*}(F)$ for some 0-dimensional closed subscheme $W \subset C$. Since the question is local on $C$, we may assume it is supported over a single closed point $p \in C$. Then as $\cO_W$ is an iterated extension of copies of $\cO_p$, it follows that $i_{W*} E_W=E\otimes i_{W*}\cO_W$ is a self-extension of a number of copies of $i_{p*} E_p=E\otimes i_{p*}\cO_p$. Therefore $i_{W*} E_W \in \cA_C$, and as $i_{W*}$ is t-exact and conservative, we have
$E_W \in \cA_W$.
Therefore, by adjunction
\[
\Hom\left(E, \rH^i_{\cA_C}(E)[-i]\right) = 
\Hom\left(E_W, F[-i]\right) = 0,
\]
a contradiction. Similarly, if $i< 0$ is minimal with $\rH^i_{\cA_C}(E) \neq 0$, and hence of the form $i_{W*}(F)$ for $F \in \cA_W$, we observe that since $X_W \subset X$ is the inclusion of a Cartier divisor with trivial restriction to itself, we have 
$i_W^! E = E_W[-1]$ (see e.g.~\citestacks{0AA4}). We therefore obtain an analogous contradiction from
\[
\Hom\left(\rH^i_{\cA_C}(E)[-i], E\right) =
\Hom(F[-i], i_W^!E) = \Hom(F, E_W[-1+i]) = 0.\qedhere
\]
\end{proof}

We say the heart $\cA_C$ satisfies \emph{openness of flatness} if for every $E \in \cD$, the 
set 
\begin{equation*}
\left\{c \in C \colon E_c \in (\cA_{\qc})_c \right\} 
\end{equation*} 
is open. 
Later in Section~\ref{subsec:fiberwise-collec-t-str} we will study this property in a more general setting (using Lemma~\ref{lem-relations-S-perfect}, our definition there --- Definition~\ref{def-open-flat-utau} --- 
is easily seen to be equivalent to the one above 
under the assumption that the base $C$ is Dedekind). 
As a consequence of Lemma~\ref{lem:flatinheart}, openness of flatness implies \emph{openness of the heart}: 

\begin{Cor} \label{cor:opennessflatness-opennessheart}
	Assume that $\cA_C$ satisfies openness of flatness. 
	Let $E \in \cD$ and assume $E_c \in \cA_c$ for some $c \in C$. Then there exists an open neighborhood $U \subset C$ of $c$ such that
	$E_U \in \cA_U$.
\end{Cor}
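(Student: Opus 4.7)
The plan is to combine the openness-of-flatness hypothesis with Lemma~\ref{lem:flatinheart} applied over a suitable open subscheme of $C$. The key observation is that Lemma~\ref{lem:flatinheart} already asserts that any $C$-flat object of $\cD$ lies in $\cA_C$, so the role of the corollary is simply to produce an open $U\subset C$ on which $E$ becomes $U$-flat, and then to invoke the lemma there.

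Concretely, I would set
\[
U := \left\{c' \in C \sth E_{c'} \in (\cA_{\qc})_{c'}\right\}.
\]
By the openness-of-flatness hypothesis on $\cA_C$, the set $U \subset C$ is open, and it contains $c$ by assumption. Any nonempty open subscheme of the Dedekind scheme $C$ is again integral, regular, and one-dimensional, so $U$ is itself Dedekind in the sense of Section~\ref{sec:setupnotation}; moreover the $C$-local t-structure on $\cD$ restricts to a $U$-local t-structure on $\cD_U$ with heart $\cA_U$. Since the open immersion $U\hookrightarrow C$ is flat and essentially perfect, pullback along it sends $\cD$ to $\cD_U$ and commutes with further restriction to fibers. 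Thus for every $c'\in U$ we have $(E_U)_{c'} \simeq E_{c'} \in (\cA_{\qc})_{c'}$, which by Definition~\ref{def-flat-family-T} exactly means that $E_U \in \cD_U$ is $U$-flat.

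Finally, I would apply Lemma~\ref{lem:flatinheart} to $E_U$ over the Dedekind base $U$ to conclude $E_U \in \cA_U$, as desired. There is no real obstacle here: all the substantive work has already been done in Lemma~\ref{lem:flatinheart}, whose proof relies on the t-exactness of pullback to the generic point (Corollary~\ref{cor:base-change-tstructure-point}) and the Koszul-type cohomological computation of Lemma~\ref{lem:cohpushpull}. The corollary simply packages that lemma together with openness of flatness in order to upgrade pointwise membership in the heart at a single fiber to membership of $E_U$ in $\cA_U$ over a whole open neighborhood.
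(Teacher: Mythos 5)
Your proof is correct and takes essentially the same route the paper intends (the paper gives no explicit argument, simply stating the corollary as ``a consequence of Lemma~\ref{lem:flatinheart}''): openness of flatness produces an open neighborhood $U$ of $c$ on which $E$ is fiberwise in the quasi-coherent heart, so $E_U$ is $U$-flat, and Lemma~\ref{lem:flatinheart} over the Dedekind base $U$ then gives $E_U \in \cA_U$. The one small point worth noting explicitly, which you use implicitly, is that the hypothesis $E_c\in\cA_c$ does put $c$ into the set $U$, because $\cA_c = \cD_c \cap (\cA_{\qc})_c$ (Theorem~\ref{thm-t-structure-finite-map} says the t-structure on $\cD_c$ is induced from that on $(\cD_{\qc})_c$), and $E_c\in\cD_c$ since $E\in\cD$ is $C$-perfect by Lemma~\ref{lem-relations-S-perfect}.\eqref{db-S-perfect}.
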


\subsubsection{Torsion theories}
The following property will play an important role later in the paper. 

\begin{Def}
\label{Def-C-tt} 
We say $\cA_C$ has a \emph{$C$-torsion theory} if the pair of subcategories $(\cA_{\Ctor}, \cA_{\Ctf})$ forms a torsion pair.
\end{Def}

\begin{Rem} \label{rem:noetherianandtorsiontheory}
It follows from Lemma~\ref{Lem-AS-subs}.\eqref{Astor-subs} that 
the heart $\cA_C$ has a $C$-torsion theory if and only if every object $E \in \cA_C$ contains a unique maximal $C$-torsion subobject $E_{\Ctor} \subset E$. 
In this case, we denote 
by $E_{\Ctf} = E/E_{\Ctor}$ the $C$-torsion free quotient in $\cA_C$. We also note that $\cA_C$ being noetherian implies the existence of a $C$-torsion theory.
\end{Rem} 

The following {lemma} can be helpful in proving the existence of a $C$-torsion theory.
\begin{Lem} \label{lem:maxtorsionses}
	Let $A \into E \onto B$ be a short exact sequence in a $C$-local heart,
	and assume that $A$ and $B$ admit maximal $C$-torsion subobjects. Then the same holds for $E$.
\end{Lem}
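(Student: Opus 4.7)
The plan is to identify a single $0$-dimensional closed subscheme $W \subset C$ on which every $C$-torsion subobject of $E$ is schematically supported, and then to construct the maximal subobject of $E$ schematically supported on $W$ via an adjunction argument.

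For the first step, write $\pi \colon E \onto B$, and let $T \subset E$ be any $C$-torsion subobject. By Lemma~\ref{Lem-AS-subs}.\eqref{Astor-subs}, the subcategory $\cA_{\Ctor}$ is closed under subobjects and quotients, so $T \cap A \subset A_{\Ctor}$ and $\pi(T) \subset B_{\Ctor}$. Choose a $0$-dimensional closed subscheme $W_0 \subset C$ on which both $A_{\Ctor}$ and $B_{\Ctor}$ are schematically supported, which exists by taking a thickening of the union of their individual supports. Then $I_{W_0}$ annihilates $T \cap A$ and $\pi(T)$, so from the exact sequence $T \cap A \into T \onto \pi(T)$ one deduces $I_{W_0}\cdot T \subset T \cap A$, and hence $I_{W_0}^2 \cdot T = 0$. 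Taking $W \subset C$ to be the closed subscheme defined by $I_{W_0}^2$, every $C$-torsion subobject of $E$ is schematically supported on $W$.

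For the second step, $W$ is a Cartier divisor in $C$, so by flatness of $g$ the closed immersion $i \colon X_W \to X$ is perfect and admits a right adjoint $i^! F \simeq i^* F \otimes \cN[-1]$, where $\cN$ is the normal line bundle of $X_W$ in $X$. Since $\cN$ is pulled back from $W$, this restricts to a right adjoint $i^! \colon \cD \to \cD_W$ of $i_* \colon \cD_W \to \cD$. Using $i^* E \in \cD_W^{[-1,0]}$ (Lemma~\ref{lem:fibersinheart}) and $t$-exactness of tensoring with line bundles (Lemma~\ref{lem-tensor-vb}), we have $i^! E \in \cD_W^{[0,1]}$, so $E_0 := \rH^0_{\cA_W}(i^! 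E) \in \cA_W$; by orthogonality, $E_0$ represents the right adjoint to $i_* \colon \cA_W \to \cA_C$ at the abelian level, with counit $\epsilon \colon i_* E_0 \to E$.

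To conclude, I will show that $\epsilon$ is injective and that $E_{\Ctor} := i_* E_0$ is the maximum $C$-torsion subobject of $E$. For injectivity, let $K := \ker \epsilon \subset i_* E_0$; by Corollary~\ref{cor:schematicsupportsubsquotients}, $K$ is schematically supported on $W$ and so $K = i_* K'$, and since $i_*$ is fully faithful (Lemma~\ref{lem:cohpushpull}), the inclusion $K \into i_* E_0$ arises from an inclusion $K' \into E_0$. The vanishing of the composition $K \to i_* E_0 \to E$ translates via the adjunction to the vanishing of this inclusion, forcing $K = 0$. Now $i_* E_0$ is itself $C$-torsion, and by the first step together with the adjunction every $C$-torsion subobject of $E$ is of the form $i_* T'$ and factors through $i_* E_0$, so $E_{\Ctor} := i_* E_0$ is the maximum. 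The main technical obstacle is passing the right adjoint $i^!$ to the abelian level and verifying that its counit is injective.
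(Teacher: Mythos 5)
Your proof is correct, but it takes a genuinely different route from the paper's. The paper first reduces to the case where $A$ is $C$-torsion free and $B$ is $C$-torsion (any torsion subobject of $E$ factors through the preimage of $B_{\Ctor}$); with $W$ the schematic support of $B$, the natural map $I_W \otimes E \to E$ then factors through $A$, so $I_W \cdot E$ is torsion free, and tensoring the resulting short exact sequence by $I_W^{-1}$ produces $E$ as an extension of a torsion-free quotient by the torsion subobject $\Ann(I_W; E) \otimes I_W^{-1}$. You instead first locate a single $W$ supporting \emph{every} torsion subobject of $E$ (using the two-step argument $I_{W_0} \cdot T \subset T \cap A$ and $I_{W_0} \cdot (T \cap A) = 0$), then construct the maximal such subobject via the right adjoint $i^! = i^*(-) \otimes \cN[-1]$ at the abelian level and verify injectivity of the counit. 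The two constructions end up naming the same object — unwinding $i_* \rH^0_{\cA_W}(i^! E)$ gives exactly $\Ann(I_W; E) \otimes I_W^{-1}$, since $\cN$ pulls back $I_W^{-1}|_W$ — but the paper's proof is considerably shorter because the preliminary reduction lets it work with a smaller $W$ and avoid the adjunction formalism, while your argument is more self-contained and makes the universal property of the maximal torsion subobject explicit from the start. Both are valid.
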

\begin{proof}
	We may assume that $B$ is $C$-torsion, as any $C$-torsion subobject of $E$ will factor via the preimage of the maximal $C$-torsion subobject of $B$; and evidently we may assume that $A$ is $C$-torsion free. Let $W$ be the schematic support of $B$, in the sense of Lemma~\ref{lem:fibersinheart}, and consider the short exact sequence
	$\Ann(I_W; E) \into I_W \otimes E \onto I_W \cdot E$; recall that
	$I_W \cdot E$ is the image of the natural map $I_W \otimes E \to E$. Since $I_W \cdot B = 0$, this map factors via $A \into E$; therefore,
	$I_W \cdot E \subset A$ is $C$-torsion free. Replacing $E$ by $E \otimes I_W^{-1}$ in this argument, we have found a torsion free quotient of $E$ by a $C$-torsion subobject.
\end{proof}

In fact, $E_{\Ctor}$ can be identified somewhat more explicitly as follows. 

\begin{Lem} 
\label{Lem-ECtor-H-1}
Assume $E \in \cA_C$ admits a maximal $C$-torsion subobject $E_{\Ctor} \subset E$. 
Let $W \subset C$ be the schematic support of $E_{\Ctor}$. 
Then $E_{\Ctor} = I_W^{-1} \otimes i_{W*}^{}\rH^{-1}_{\cA_W}(E_W)$. 
\end{Lem}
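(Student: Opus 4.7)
The plan is to exhibit $I_W^{-1} \otimes i_{W*}^{}\rH^{-1}_{\cA_W}(E_W)$ explicitly as a $C$-torsion subobject of $E$, then show it coincides with $E_{\Ctor}$ by a two-sided containment, using the maximality of $E_{\Ctor}$ for one direction and the schematic-support hypothesis for the other.

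First, I would apply Lemma~\ref{lem:fibersinheart} to $E$ and $W$, obtaining the four-term exact sequence in $\cA_C$
\[ 0 \to i_{W*}^{}\rH^{-1}_{\cA_W}(E_W) \to I_W \otimes E \to E \to i_{W*}^{}\rH^0_{\cA_W}(E_W) \to 0, \]
whose first arrow identifies $i_{W*}^{}\rH^{-1}_{\cA_W}(E_W)$ with $\Ann(I_W;E)$. Tensoring by the line bundle $I_W^{-1}$ (which is t-exact by Lemma~\ref{lem-tensor-vb} and preserves $\cA_{\Ctor}$ by Lemma~\ref{Lem-AS-subs}.\eqref{Astf-Astor-tensor}) and composing with the natural identification $I_W^{-1} \otimes (I_W \otimes E) \simeq E$ yields an injection
\[ T := I_W^{-1} \otimes i_{W*}^{}\rH^{-1}_{\cA_W}(E_W) \hookrightarrow E. \]
Moreover, $T$ is $C$-torsion, since it is set-theoretically supported on $W$ and lies in $\cA_{\Ctor}$ by Lemma~\ref{Lem-AS-subs}.\eqref{Astf-Astor-tensor}. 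By maximality of $E_{\Ctor}$, this gives $T \subseteq E_{\Ctor}$.

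For the reverse inclusion, the key point is that $W$ being the schematic support of $E_{\Ctor}$ means the canonical map $I_W \otimes E_{\Ctor} \to E_{\Ctor}$ vanishes (equivalently, $E_{\Ctor} \simeq i_{W*}^{} \rH^0_{\cA_W}((E_{\Ctor})_W)$, cf.\ Lemma~\ref{lem:fibersinheart}). Applying $I_W \otimes (-)$ to the inclusion $E_{\Ctor} \hookrightarrow E$ gives an injection $I_W \otimes E_{\Ctor} \hookrightarrow I_W \otimes E$, whose composition with $I_W \otimes E \to E$ factors as $I_W \otimes E_{\Ctor} \to E_{\Ctor} \hookrightarrow E$ and is therefore zero. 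Hence $I_W \otimes E_{\Ctor}$ factors through $\ker(I_W \otimes E \to E) = i_{W*}^{}\rH^{-1}_{\cA_W}(E_W)$, and tensoring back by $I_W^{-1}$ yields $E_{\Ctor} \subseteq T$.

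The only mild technical point to verify is that $E_{\Ctor}$ indeed admits a schematic support as a $0$-dimensional subscheme of $C$: this follows from Definition~\ref{def:D-Stor} and the fact that $C$ is Dedekind, so any proper closed subscheme of $C$ is $0$-dimensional, together with the existence of a smallest such subscheme $W$ annihilating $E_{\Ctor}$ (by coherence of the defining ideal in the local rings at the supporting points). No further obstacle appears.
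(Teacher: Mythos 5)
Your proof is correct, and it takes a genuinely different route from the paper's. Both arguments use Lemma~\ref{lem:fibersinheart} as the central input, but you and the paper deploy it differently. The paper applies the lemma to $E_{\Ctor}$ itself, so that $I_W \cdot E_{\Ctor} = 0$ immediately yields $E_{\Ctor} = I_W^{-1} \otimes i_{W*}\rH^{-1}_{\cA_W}((E_{\Ctor})_W)$; it then replaces $(E_{\Ctor})_W$ by $E_W$ in the cohomology group by restricting the short exact sequence $E_{\Ctor} \into E \onto E_{\Ctf}$ to $W$ and using the $C$-flatness of $E_{\Ctf}$ (Lemma~\ref{lem:FlatIffTFreeCurve} and the proof of Lemma~\ref{lem:flatinheart}) to kill the terms $\rH^{i}_{\cA_W}((E_{\Ctf})_W)$ for $i \neq 0$. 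You instead apply the lemma to $E$, exhibit the object $T = I_W^{-1} \otimes i_{W*}\rH^{-1}_{\cA_W}(E_W)$ as a $C$-torsion subobject of $E$, and run a two-sided inclusion: $T \subseteq E_{\Ctor}$ follows from maximality, and $E_{\Ctor} \subseteq T$ from $I_W \cdot E_{\Ctor} = 0$ together with the functoriality of the multiplication map $I_W \otimes (-) \to (-)$. Your version avoids the flatness input entirely (at the cost of a slightly longer chain of subobject manipulations), while the paper's version computes the answer more directly once the flatness machinery is in hand. Both are correct; the trade-off is essentially flatness versus a maximality-plus-functoriality argument. Your closing remark about existence of the schematic support is not needed, since the statement already takes $W$ as given, but it does no harm.
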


\begin{proof}
By the choice of $W$ we have $I_W \cdot E_{\Ctor} = 0$, so by Lemma~\ref{lem:fibersinheart} 
we find 
\begin{equation*}
E_{\Ctor} = I_W^{-1} \otimes i_{W*}^{}\rH^{-1}_{\cA_W}\left((E_{\Ctor})_W\right). 
\end{equation*} 
Moreover, upon restricting the short exact sequence $E_{\Ctor} \into E\onto E_{\Ctf}$ to $W$, we see that $\rH^{-1}_{\cA_W}\left((E_{\Ctor})_W\right)=\rH^{-1}_{\cA_W}\left(E_W\right)$, giving the claimed equality. Indeed, $E_{\Ctf}$ is $C$-flat by Lemma~\ref{lem:FlatIffTFreeCurve} so we also have $\rH^{i}_{\cA_W}\left((E_{\Ctf})_W\right)=0$ for $i\neq 0$ by the proof of Lemma~\ref{lem:flatinheart}.
\end{proof}

\begin{Prop} \label{prop:Ctorsiontheory-opennessflatness}
	Assume that $\cA_C$ has a $C$-torsion theory.
	\begin{enumerate}[{\rm (1)}]
		\item (Nakayama's Lemma) \label{enum:Nakayama}
		If $E \in \cA_C$ satisfies $E/I_c\cdot E = 0$ for some $c \in C$, then there exists an open neighborhood $c \in U \subset C$ such that $E_U = 0$. In particular, if $E$ is also $C$-torsion free, then $E = 0$.
		\item \label{enum:Ctorsiontheory-opennessflatness}
		The heart $\cA_C$ satisfies openness of flatness.
	\end{enumerate}
\end{Prop}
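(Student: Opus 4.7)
The plan is to prove (1) first, then deduce (2) from it via a hypercohomology spectral sequence computing the $\cA_{c}$-cohomology of $E_c$ in terms of the $\cA_C$-cohomology of $E$.

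For (1), I would split $E$ via its canonical $C$-torsion sequence $E_{\Ctor} \into E \onto E_{\Ctf}$. Applying the snake lemma to the map of short exact sequences induced by tensoring with $I_c \into \cO_C$, and using that $\Ann(I_c; E_{\Ctf})$ must vanish (being simultaneously $C$-torsion by Lemma~\ref{lem:fibersinheart} and a subobject of the $C$-torsion free object $I_c \otimes E_{\Ctf}$, by Lemma~\ref{Lem-AS-subs}), one obtains a short exact sequence
\[
0 \to E_{\Ctor}/I_c E_{\Ctor} \to E/I_c E \to E_{\Ctf}/I_c E_{\Ctf} \to 0.
\]
Hence the hypothesis $E/I_c E = 0$ passes separately to $E_{\Ctor}$ and $E_{\Ctf}$. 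The torsion case is dispatched by Lemma~\ref{lem:filtrationatp}: after shrinking $C$ to a neighborhood $U$ of $c$ on which $E_{\Ctor}|_U$ is set-theoretically supported only at $c$ and $I_c$ admits a local generator $\pi$, the resulting filtration of $E_{\Ctor}|_U$ has all successive quotients equal to quotients of the zero object $E_{\Ctor}/\pi E_{\Ctor}|_U$, so $E_{\Ctor}|_U = 0$.

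For the $C$-torsion free piece $E_{\Ctf}$, the simultaneous vanishings of $\Ann(I_c; E_{\Ctf})$ and $E_{\Ctf}/I_c E_{\Ctf}$ promote the canonical map $g^* I_c \otimes E_{\Ctf} \to E_{\Ctf}$ to an isomorphism in $\Db(X)$. Passing to the classical coherent cohomology sheaves on $X$, and using the flatness of $g$ to identify $g^* I_c$ with the ideal sheaf $I_{X_c}$ of the fiber $X_c = g^{-1}(c)$, each $\rH^i(E_{\Ctf}) \in \Coh X$ satisfies $I_{X_c}\cdot \rH^i(E_{\Ctf}) = \rH^i(E_{\Ctf})$; classical Nakayama at each stalk $x \in X_c$ then yields $\rH^i(E_{\Ctf})|_{X_c} = 0$. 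Boundedness of $E_{\Ctf}$ and properness of $g$ produce an open $U' \ni c$ on which all $\rH^i(E_{\Ctf})$ vanish, hence $E_{\Ctf}|_{U'} = 0$. Intersecting $U$ and $U'$ proves the first assertion; the ``in particular'' then follows since $E_{U \cap U'} = 0$ forces $E_K = 0$, so $E$ is $C$-torsion, and together with $C$-torsion freeness gives $E = 0$.

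For (2), given $E \in \cD$ with $E_{c_0} \in (\cA_{\qc})_{c_0}$ at a closed point $c_0$, write $\cH^n = \rH^n_{\cA_C}(E)$. Since $i_{c_0}^*$ has cohomological amplitude $[-1, 0]$ on $\cA_C$ by Lemma~\ref{lem:fibersinheart}, the spectral sequence $E_2^{p,q} = \rH^p_{\cA_{c_0}}(i_{c_0}^* \cH^q) \Rightarrow \rH^{p+q}_{\cA_{c_0}}(E_{c_0})$ is concentrated in $p \in \{-1, 0\}$, degenerates at $E_2$, and splits into short exact sequences. Vanishing of the abutment for $n \neq 0$ therefore forces $\cH^n/I_{c_0} \cH^n = 0$ for all $n \neq 0$, together with $\rH^{-1}_{\cA_{c_0}}(i_{c_0}^* \cH^0) = 0$. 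Applying (1) to each $\cH^n$ with $n \neq 0$ (finitely many, since $E$ is bounded) produces an open $U_1 \ni c_0$ on which $E \simeq \cH^0|_{U_1}$ lies in $\cA_{U_1}$. The residual condition says, via the proof of Lemma~\ref{lem:FlatIffTFreeCurve}, that $\cH^0$ is $C$-flat at $c_0$, i.e., $c_0 \notin \supp((\cH^0)_{\Ctor})$; since this support is a finite set of closed points, shrinking to its complement yields an open $U_2 \ni c_0$ on which $\cH^0$, and hence $E$, is $C$-flat. The main obstacle is the torsion-free Nakayama step above: the key is to convert the abstract isomorphism $g^* I_c \otimes E_{\Ctf} \cong E_{\Ctf}$ in $\cA_C$ into one in $\Db(X)$, where the flatness of $g$ identifies $g^* I_c$ with the ideal sheaf of the fiber and the classical Nakayama lemma for coherent sheaves applies stalkwise over $X_c$.
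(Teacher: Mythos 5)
Your proof is correct, and the overall strategy (decompose via the $C$-torsion theory, run a Nakayama-type argument on each piece, then deduce (2) from (1) together with the constraint $\Ann(I_{c_0};\cH^0)=0$) matches the paper's. The packaging, however, differs at two points worth noting. For (1), the paper applies $i_c^*$ to the torsion sequence to get $\rH^0_{\cA_c}((E_\Ctor)_c)\into\rH^0_{\cA_c}(E_c)\onto(E_\Ctf)_c$, concludes $(E_\Ctf)_c=0$, and then asserts without further elaboration that this forces $E_\Ctf=0$, leaving only the torsion piece to dispose of via the argument inside the proof of Lemma~\ref{lem:FlatIffTFreeCurve}. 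Your snake-lemma split of the hypothesis is equivalent to the paper's short exact sequence, but you carry out the torsion-free Nakayama step explicitly: you promote the $\cA_C$-isomorphism $g^*I_c\otimes E_\Ctf\cong E_\Ctf$ to a $\Db(X)$-isomorphism, identify $g^*I_c$ with the ideal sheaf $I_{X_c}$, and invoke classical Nakayama on the $\Coh X$-cohomology sheaves stalkwise along $X_c$ together with properness of $g$. This unpacks exactly what the paper's terse ``and so $E_\Ctf=0$'' is relying on. For (2), the paper argues by hand with truncations: it kills the maximal positive $F^i=\rH^i_{\cA_C}(E)$ by observing that right-$t$-exactness of $i_c^*$ forces $F^i/I_cF^i=0$, then iterates downward using the estimate $(\tau^{\ge j+1}E)_c\in\cD_c^{\ge j}$. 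You reorganize the identical content into the two-column $t$-structure spectral sequence $\rH^p_{\cA_{c_0}}(i_{c_0}^*\cH^q)\Rightarrow\rH^{p+q}_{\cA_{c_0}}(E_{c_0})$, which degenerates at $E_2$ by the amplitude bound of Lemma~\ref{lem:fibersinheart}; the vanishing of the abutment for $n\neq 0$ then simultaneously yields all the vanishings the paper extracts iteratively. The spectral-sequence formulation is tidier, at the cost of presupposing the reader has that machinery available, whereas the paper's version is more elementary and self-contained. One small housekeeping point: after producing $U_1$ (on which $E\in\cA_{U_1}$) and $U_2$ (on which $\cH^0$ is $C$-torsion free), you should intersect them so that both conditions hold at once, but this is clearly what you intend.
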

\begin{proof}
	By Lemmas~\ref{lem:fibersinheart} and~\ref{lem:FlatIffTFreeCurve}, there is a short exact sequence
	\[ \rH_{\cA_c}^0(\left(E_{\Ctor}\right)_c) \into \rH_{\cA_c}^0(E_c) \onto \rH_{\cA_c}^0(\left(E_{\Ctf}\right)_c) = \left(E_\Ctf\right)_c. \]
	Hence the assumption in \eqref{enum:Nakayama} implies $\left(E_\Ctf\right)_c = 0$, and so $E_\Ctf =0$. 
	So $E$ is $C$-torsion. 
	If its support contains $c$, then $c$ is a closed point, and we proceed as in the proof of Lemma~\ref{lem:FlatIffTFreeCurve} to show that the canonical map $I_c \otimes E \to E$ cannot be surjective, so that its cokernel $\rH^0_{\cA_c}(E_c)$ cannot vanish. 
	This contradiction completes the proof of \eqref{enum:Nakayama}.
	
	To prove \eqref{enum:Ctorsiontheory-opennessflatness}, assume that $E_c \in \cA_c$. Let $i$ be maximal with $F^i := \rH^i_{\cA_C}(E) \neq 0$, and assume $i > 0$. 
	Since $i_c^*$ is right t-exact by Theorem~\ref{Thm-D-bc}.\eqref{DbT-f-flat}, one deduces $F^i/I_c\cdot F^i = i_{c*}\rH^0_{\cA_c}(F^i_c) = 0$. 
	By \eqref{enum:Nakayama} we can replace $C$ by an open neighborhood $c \in U \subset C$ such that $F^i_U$ vanishes; repeating this process, we obtain $\rH^i_{\cA_U}(E_U) = 0$ for $i > 0$. 
	Similarly, if $i \leqslant 0 $ is minimal with $F^i \neq 0$, then we can conclude from Lemma~\ref{lem:fibersinheart} that $\rH^{-1}_{\cA_c}((F^i)_c)=0$ and thus that $F^i$ has no torsion supported at $c$. Indeed, we first observe that for any $j$ and $E\in\cD$, we can prove by induction and the final statement of Lemma~\ref{lem:fibersinheart} that $\left(\tau^{\geqslant j+1}(E)\right)_c\in\cD_c^{\geqslant j}$. Then the vanishing of $\rH^{-1}_{\cA_c}((F^i)_c)$ follows from the vanishing of both $\rH^{i-1}_{\cA_c}(E_c)$ and $\rH^{i-2}_{\cA_c}\left(\left(\tau^{\geqslant i+1}(E)\right)_c\right)$, and it follows from Lemma~\ref{lem:fibersinheart} that $F_i$ has no torsion supported at $c$. After restricting to a smaller open neighborhood of $c$, we may even assume that $F^i$ is $C$-torsion free.
	If $i < 0$, then $\left(\tau^{\geqslant i+1}(E)\right)_c\in\cD_c^{\geqslant i}$ implies that $F^i_c=\rH^i_{\cA_c}(E_c) =0 $, so that $F^i_U = 0$ by \eqref{enum:Nakayama}. Repeating this process, we obtain an open neighborhood $U$ such that $E_U\in\cA_U$ and is $U$-torsion free, and thus flat over $U$ by Lemma~\ref{lem:FlatIffTFreeCurve}.
\end{proof}

\begin{Rem}
\label{Rem-Nak}
Even if $\cA_C$ does not have a $C$-torsion theory, if $E \in \cA_C$ is any particular object that \emph{does} contain a maximal $C$-torsion subobject, then the conclusion of Proposition~\ref{prop:Ctorsiontheory-opennessflatness}.\eqref{enum:Nakayama} holds by the same argument. 
\end{Rem}
		
Note that in \cite[Section~3]{AP:t-structures} some of the previous results are proven in a more general setting when $S$ is smooth but not necessarily Dedekind, under the assumption that $\cA_S$ is noetherian.

\section{Inducing local t-structures on semiorthogonal components}
\label{sec:inducingtstructures}

In this section, we consider the following situation: 
\begin{itemize}
\item $g \colon X \to S$ is a morphism of 
{schemes which are quasi-compact with affine diagonal, where 
$X$ is noetherian of finite Krull dimension.}
\item $\cD \subset \Db(X)$ is an $S$-linear strong semiorthogonal component whose projection functor
is of finite cohomological amplitude. 
\end{itemize} 
Our goal is to generalize to the relative case the results of \cite[Section~4]{BLMS} on inducing t-structures on semiorthogonal components.

\begin{Def}\label{def:rel-spanning-class}
A \emph{relative spanning class} of $\cD$ is a set of objects $\cG$ of $\cD$ such that if $F \in\cD$ satisfies $\cHom_S(G,F)=0$ for all $G\in\cG$, then $F=0$.
\end{Def}

This property can be checked fiberwise: 

\begin{Lem}\label{lem:SpanningClassOnFibers}
Assume $g \colon X \to S$ is flat.
Let $\cG$ be a set of perfect objects in $\cD$. 
Then $\cG$ is a relative spanning class if and only if for all closed points $s\in S$ the restriction $\cG_s:=i_s^*\cG$ is a spanning class of $\cD_s$. 
\end{Lem}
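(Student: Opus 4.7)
The proof naturally splits into two directions, both resting on the base-change identity $\cHom_S(G, F)_s \simeq \cHom_s(G_s, F_s)$ supplied by Lemma~\ref{Lem-cHom-bc}: each $G \in \cG$ is perfect, and flatness of $g$ ensures that every morphism $\Spec \kappa(s) \to S$ is faithful with respect to $g$, so this identity is valid for any point $s \in S$.

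For the \emph{if} direction, suppose $F \in \cD$ satisfies $\cHom_S(G, F) = 0$ for all $G \in \cG$. Base-changing to a closed point $s$ yields $\cHom_s(G_s, F_s) = 0$, i.e., $\Hom(G_s, F_s[i]) = 0$ for all $i \in \Z$; the fiberwise spanning class hypothesis then forces $F_s = 0$ in $\cD_s$ for every closed $s$. To conclude $F = 0$ I invoke properness of $g$ (in force throughout this section): taking $i$ maximal with $\rH^i(F) \neq 0$, the image $g(\supp \rH^i(F))$ is closed in $S$ and hence contains a closed point $s$; a standard Nakayama argument then produces a nonzero contribution to $\rH^i(F_s)$, contradicting $F_s = 0$.

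For the \emph{only if} direction, fix a closed $s \in S$ and $F_s \in \cD_s$ with $\Hom(G_s, F_s[i]) = 0$ for all $i \in \Z$ and $G \in \cG$; the goal is $F_s = 0$. Set $H := i_{s*} F_s \in \Db(X)$. Combining adjunction for the closed immersion $i_s$ with pushforward along $g$ one computes
\[
\cHom_S(G, H) \simeq (i_s^S)_* \cHom_s(G_s, F_s) = 0
\]
for every $G \in \cG$, where $i_s^S \colon \Spec \kappa(s) \to S$. The main difficulty is that $H$ need not lie in $\cD$; to bypass this, $\cD$ is right admissible as a component of a strong SOD, giving an $S$-linear semiorthogonal decomposition $\Db(X) = \langle \cD^\perp, \cD \rangle$. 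The projection $H_\cD \in \cD$ satisfies $\cHom_S(G, H_{\cD^\perp}) = 0$ for $G \in \cD$ by Remark~\ref{remark-relative-Hom-sod}, so $\cHom_S(G, H_\cD) \simeq \cHom_S(G, H) = 0$ for every $G \in \cG$. The relative spanning class hypothesis then forces $H_\cD = 0$, i.e., $i_{s*} F_s \in \cD^\perp$, and adjunction translates this to $\Hom(i_s^* E, F_s[n]) = 0$ for all $n \in \Z$ and $E \in \cD$. Finally, by Theorem~\ref{theorem-bc-sod} the base-change category $(\cD_{\perf})_s$ is the smallest idempotent complete triangulated subcategory of $\Dperf(X_s)$ containing $i_s^* \cD$; under the smooth-and-proper assumption $\Db(X_s) = \Dperf(X_s)$ and $\cD_s = (\cD_{\perf})_s$, so the full subcategory of objects right-orthogonal (in the graded sense) to $i_s^* \cD$ contains all of $\cD_s$; in particular $\Hom(F_s, F_s) = 0$, forcing $F_s = 0$. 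The main obstacle is precisely this forward direction: there is no natural way to push an arbitrary $F_s \in \cD_s$ into $\cD$, and the projection trick above --- combined with the triangulated generation of $\cD_s$ by $i_s^* \cD$ --- is what circumvents this.
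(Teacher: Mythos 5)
Your ``if'' direction is essentially the paper's, plus a properness-and-Nakayama argument to go from $F_s = 0$ for all closed $s$ to $F = 0$; that is a reasonable explication of a step the paper treats as implicit. The ``only if'' direction, however, is where you diverge, and the divergence reveals a missed observation that also forces you to smuggle in a hypothesis not stated in the lemma.

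You set $H := i_{s*}F_s$, worry that ``$H$ need not lie in $\cD$,'' and then launch the projection-and-generation detour. But $H$ \emph{does} lie in $\cD$: Theorem~\ref{theorem-bc-sod} records that $\phi'_*$ (here $i_{s*}$, which is proper since $X_s \into X$ is a closed immersion) is compatible with the semiorthogonal decompositions, so $i_{s*}(\cD_s) \subset \cD$. Once you know $H \in \cD$, the computation $\cHom_S(G, H) \simeq j_{s*}\cHom_s(G_s, F_s) = 0$ for all $G \in \cG$ lets you apply the relative-spanning-class hypothesis directly to $H$, giving $H = 0$, hence $F_s = 0$ because $i_{s*}$ is conservative. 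That is the paper's proof, and it is four lines.

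Your workaround ends with $i_{s*}F_s \in \cD^\perp$, and to leverage that you need $i_s^*\cD$ (equivalently $i_s^*\cD_{\perf}$) to generate $\cD_s$ up to idempotent completion; the paper's base-change machinery gives this at the level of $(\cD_{\perf})_s$, not $\cD_s$. To identify the two you invoke smoothness and $\Db(X_s) = \Dperf(X_s)$, but the lemma assumes only that $g$ is flat, so this is an added hypothesis. Your closing $\Hom(F_s, F_s) = 0$ step thus proves a weaker statement than claimed. (The gap can in fact be patched without smoothness by arguing in $(\cD_{\qc})_s$, which is compactly generated by $(\cD_{\perf})_s$, but that is extra work you would need to supply.) The cleaner fix, and the one the paper takes, is simply to notice $H \in \cD$ from the outset; then your entire projection argument is unnecessary, and $\cD^\perp \cap \cD = 0$ gives the conclusion immediately. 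The lesson is that the compatibility of pushforward with base-changed semiorthogonal decompositions is precisely the tool for passing from a fiber to the total space here.
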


\begin{proof}
For the forward direction, assume $\cG$ is a relative spanning class, and let $F \in \cD_s$ be an object which satisfies $\cHom_s(G_s, F) = 0$ for all $G \in \cG$.
Note that $\cHom_s(-,-)$ is simply $\RHom(-,-)$ regarded as a $\kappa(s)$-complex. 
We must show that $F = 0$. 
Consider the base change diagram 
\begin{equation*}
\vcenter{
\xymatrix{
 X_s \ar[r]^{i_s} \ar[d]_{g_s} & X \ar[d]^{g} \\ 
s \ar[r]^{j_s} & S 
}
}
\end{equation*} 
We have isomorphisms 
\begin{align*}
\cHom_S(G, i_{s*}F) & \simeq 
g_* \cHom(G, i_{s*}F) \\
& \simeq g_* i_{s*} \cHom(G_s, F) \\
& \simeq j_{s*} g_{s*} \cHom(G_s, F) \\ 
& \simeq j_{s*} \cHom_s(G_s, F), 
\end{align*} 
where the second holds by the local adjunction between $i_s^*$ and $i_{s*}$ 
and the others are evident. 
By assumption this vanishes for any $G \in \cG$, so $i_{s*}F = 0$ since $\cG$ is a relative spanning class, and thus $F = 0$. 

Conversely, assume $\cG_s$ is a spanning class of $\cD_s$ for all $s$, and let $F \in \cD$ be an object which satisfies $\cHom_S(G,F) = 0$ for all $G \in \cG$. 
We must show that $F = 0$. 
For any $s \in S$, we have $\cHom_S(G,F)_s \simeq \cHom_s(G_s, F_s)$ by Lemma~\ref{Lem-cHom-bc} and the flatness of $g$. 
By assumption this vanishes for any $G \in \cG$, so $F_s = 0$ since $\cG_s$ is a relative spanning class, 
and thus $F = 0$. 
\end{proof}

The following is an easy consequence of the definitions. 
\begin{Lem}\label{lem:spanning-class-orthogonal} 
Let $\cD = \langle \cD_1, \cD_2 \rangle$ be an $S$-linear semiorthogonal decomposition. 
Let $\cG$ be a spanning class of $\cD_2$. Then for an object $F \in \cD$, we have 
$F \in \cD_1$ if and only if $\cHom_S(G, F) = 0$ for all $G \in \cG$. 
\end{Lem}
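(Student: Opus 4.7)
The plan is to prove both directions directly from the definitions, using the $S$-linear reformulation of semiorthogonality from Remark~\ref{remark-relative-Hom-sod} and the projection triangle associated with the decomposition $\cD = \langle \cD_1, \cD_2 \rangle$.

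For the forward direction, suppose $F \in \cD_1$. Since $\cG \subset \cD_2$, and the decomposition is $S$-linear, Remark~\ref{remark-relative-Hom-sod} applies and gives $\cHom_S(G, F) = 0$ for every $G \in \cG$.

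For the converse, let $F \in \cD$ satisfy $\cHom_S(G, F) = 0$ for all $G \in \cG$. Using the semiorthogonal decomposition, I would write down the projection triangle
\[
F_1 \to F \to F_2
\]
with $F_1 \in \cD_1$ and $F_2 \in \cD_2$ (existence is Definition~\ref{def-sod}.\eqref{definition-sod-2} applied to $\cD = \langle \cD_1, \cD_2\rangle$, unique and functorial by Remark~\ref{remark-projection-functors}). Applying $\cHom_S(G, -)$ for $G \in \cG$ produces an exact triangle
\[
\cHom_S(G, F_1) \to \cHom_S(G, F) \to \cHom_S(G, F_2).
\]
The first term vanishes by $S$-linear semiorthogonality (Remark~\ref{remark-relative-Hom-sod}) since $G \in \cD_2$ and $F_1 \in \cD_1$, and the middle term vanishes by hypothesis. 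Hence $\cHom_S(G, F_2) = 0$ for all $G \in \cG$.

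The final step is to invoke the spanning class property: $F_2 \in \cD_2$ has $\cHom_S(G, F_2) = 0$ for all $G \in \cG$, so by Definition~\ref{def:rel-spanning-class} applied inside $\cD_2$, we conclude $F_2 = 0$. Therefore $F \iso F_1 \in \cD_1$, as desired. There is no serious obstacle here; the only subtle point is to ensure that the spanning class is used in the category $\cD_2$ where it is defined, and that the vanishing of $\cHom_S(G, F_1)$ really follows from the $S$-linear semiorthogonality in the correct order (maps from $\cD_2$ to $\cD_1$).
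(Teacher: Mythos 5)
Your proof is correct in substance and is the natural ``easy consequence of the definitions'' argument the paper has in mind (the paper omits the proof entirely), but you have the orientation of the projection triangle reversed. For the decomposition $\cD = \langle \cD_1, \cD_2 \rangle$, Definition~\ref{def-sod}.\eqref{definition-sod-2} with $n=2$ gives a filtration $0 \to F_1' \to F$ with $F_1' \in \cD_2$ and $\cone(F_1' \to F) \in \cD_1$, i.e.\ the canonical triangle is
\[
F_2 \to F \to F_1, \qquad F_2 \in \cD_2,\ F_1 \in \cD_1,
\]
with the $\cD_2$-piece as the sub and the $\cD_1$-piece as the quotient, not $F_1 \to F \to F_2$ as you wrote; the latter triangle need not exist. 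Once this is corrected the argument goes through verbatim: applying $\cHom_S(G,-)$ to the correct triangle, the term $\cHom_S(G, F_1)$ vanishes as a complex by Remark~\ref{remark-relative-Hom-sod}, so the long exact sequence forces $\cHom_S(G, F_2) \cong \cHom_S(G, F) = 0$ for all $G \in \cG$, and Definition~\ref{def:rel-spanning-class} applied inside $\cD_2$ kills $F_2$, whence $F \cong F_1 \in \cD_1$. One more small point of hygiene: your phrasing ``the first term vanishes \dots\ and the middle term vanishes \dots\ hence the third vanishes'' is not literally a valid three-term deduction in a triangle; what you actually use is that $\cHom_S(G,F_1)$ vanishes as an object of $\Db(S)$, so both it and its shift vanish, and the triangle then identifies $\cHom_S(G,F_2)$ with $\cHom_S(G,F)$.
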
 

The following is a relative version of \cite[Lemma~4.3]{BLMS}. 

\begin{Lem}\label{lem:induce-relative-t}
Assume $g \colon X \to S$ is proper, and $S$ is noetherian and admits an ample line bundle. 
Let $\cA_S \subset \cD$ be the heart of a bounded $S$-local t-structure.
Let $\cD = \langle \cD_1, \cD_2 \rangle$ be an $S$-linear semiorthogonal decomposition. 
Let $\cG$ be a relative spanning class of $\cD_2$ 
such that $\cG \subset \cA_S \cap \cD_{2} \cap \Dperf(X)$ and 
\begin{equation}
\label{Hom-leq1}
\cHom_S(G, F) \in \Db(S)^{\leqslant 1} 
\end{equation} 
for all $G\in\cG$ and $F \in \cA_S$. 
Then 
\begin{equation*}
(\cA_{S})_1 =\cA_S \cap \cD_1 \subset \cD_1
\end{equation*}
is the heart of a bounded $S$-local t-structure on $\cD_1$, such 
that the inclusion $\cD_1 \to \cD$ is t-exact. 
\end{Lem}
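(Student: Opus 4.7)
The plan is to verify Bridgeland's criterion (Proposition~\ref{prop:BridgelandCriterion}) for $(\cA_S)_1 = \cA_S \cap \cD_1 \subset \cD_1$. Condition~\eqref{eq:BridgelandCriterion1} is immediate, as $(\cA_S)_1 \subset \cA_S$ and the latter is the heart of a bounded t-structure. Combined with the boundedness of $\cA_S$, condition~\eqref{eq:BridgelandCriterion2} reduces to the following:

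\medskip
\noindent\textbf{Key Claim.} For every $E \in \cD_1$ and every $i \in \Z$, the object $\rH^i_{\cA_S}(E)$ lies in $(\cA_S)_1$.
\medskip

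Granting the claim, each truncation $\tau^{\le i}E$ is an iterated extension of shifts of objects in $(\cA_S)_1 \subset \cD_1$, hence lies in $\cD_1$; the Postnikov filtration of $E$ with respect to $\cA_S$ is then the filtration required by Bridgeland's criterion.

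To prove the Key Claim, I induct on the cohomological length of $E$. Let $m$ be the smallest integer with $\rH^m_{\cA_S}(E) \ne 0$, so $E = \tau^{\ge m}E$, and consider the truncation triangle
\[
\rH^m_{\cA_S}(E)[-m] \to E \to \tau^{\ge m+1}E.
\]
For each $G \in \cG$, Lemma~\ref{lem:spanning-class-orthogonal} gives $\cHom_S(G, E) = 0$, hence an isomorphism
\[
\cHom_S\bigl(G, \tau^{\ge m+1}E\bigr) \simeq \cHom_S\bigl(G, \rH^m_{\cA_S}(E)\bigr)[1 - m].
\]
Lemma~\ref{lem:HomS-heart} applied to $G \in \cA_S \cap \Dperf(X)$ and $\rH^m_{\cA_S}(E) \in \cA_S$ forces $\cHom_S(G, \rH^m_{\cA_S}(E)) \in \Db(S)^{\ge 0}$; combined with the hypothesis~\eqref{Hom-leq1}, it lies in $\Db(S)^{[0,1]}$, so the right-hand side lies in $\Db(S)^{[m-1, m]}$. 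On the other hand, the same lemma applied to $\tau^{\ge m+1}E \in \cD^{\ge m+1}$ places the left-hand side in $\Db(S)^{\ge m+1}$. These bounds are incompatible, so both sides vanish. In particular $\cHom_S(G, \rH^m_{\cA_S}(E)) = 0$ and $\cHom_S(G, \tau^{\ge m+1}E) = 0$ for every $G \in \cG$, and Lemma~\ref{lem:spanning-class-orthogonal} yields $\rH^m_{\cA_S}(E) \in (\cA_S)_1$ and $\tau^{\ge m+1}E \in \cD_1$. The induction hypothesis applied to $\tau^{\ge m+1}E$ finishes the proof.

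Boundedness of the induced t-structure and t-exactness of $\cD_1 \hookrightarrow \cD$ are evident from $(\cA_S)_1 = \cA_S \cap \cD_1$. For $S$-locality, given a quasi-compact open $U \subset S$ and any $F \in (\cD_1)_U$, lift $F$ to $\tilde F \in \cD$ via Lemma~\ref{lem-open-restriction-es} and replace $\tilde F$ by its projection $\pr_1(\tilde F) \in \cD_1$, which still restricts to $F$ by the $S$-linearity of $\pr_1$ and the base-change compatibility from Theorem~\ref{theorem-bc-sod}. The Key Claim applied to $\pr_1(\tilde F)$, combined with the t-exactness of restriction, gives $\rH^i_{\cA_U}(F) \in \cA_U \cap (\cD_1)_U$, so the canonical truncation triangles of $F$ in $\cD_U$ already live in $(\cD_1)_U$; this produces the desired induced t-structure on $(\cD_1)_U$ compatible with restriction. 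The hardest step is the Key Claim, whose crux is the tight interplay between the upper bound~\eqref{Hom-leq1} and the lower bound of Lemma~\ref{lem:HomS-heart}.
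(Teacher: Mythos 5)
Your argument for the Key Claim is the same as the paper's: both take the bottom truncation triangle, use $\cHom_S(G,E)=0$ (from $G\in\cD_2$, $E\in\cD_1$) to turn the triangle into an isomorphism between two relative Hom complexes, and then derive a contradiction from the incompatible cohomological bounds supplied by~\eqref{Hom-leq1} and Lemma~\ref{lem:HomS-heart}; your extra lower bound $\cHom_S(G,\rH^m_{\cA_S}(E))\in\Db(S)^{\ge 0}$ is harmless but not needed. The one place you diverge is the $S$-locality step: the paper simply invokes Theorem~\ref{thm:local-t-structure-tensor-ample} (tensoring with $g^*L$ preserves $(\cA_S)_1 = \cA_S\cap\cD_1$ because $\cA_S$ is $S$-local and $\cD_1$ is $S$-linear), whereas you give a hands-on descent of the Key Claim to each $\cD_U$ via lifting and projection. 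Both are correct; the paper's is shorter, while yours has the minor virtue of not relying on the ample-line-bundle characterization of $S$-locality.
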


\begin{proof}
To show $(\cA_{S})_1$ is the heart of a bounded t-structure on $\cD_1$, 
we check the conditions of Proposition~\ref{prop:BridgelandCriterion}. 
Condition~\eqref{eq:BridgelandCriterion1} is clearly satisfied.

For condition~\eqref{eq:BridgelandCriterion2} it suffices to show that for 
$F \in \cD_1$ we have $\rH^q_{\cA_S}(F) \in (\cA_S)_1$ for all $q \in \Z$. 
Note that this will also show $\rH^q_{(\cA_S)_1}(F) = \rH^q_{\cA_S}(F)$, so that $\cD_1 \to \cD$ is t-exact. 
Let $q$ be the smallest integer such that $\rH^q_{\cA_S}(F) \neq 0$, so that there is an exact 
triangle 
\begin{equation*} 
\rH^q_{\cA_S}(F)[-q] \to F \to \tau^{> q}(F). 
\end{equation*} 
It suffices to show $\rH^q_{\cA_S}(F) \in (\cA_S)_1$ for this particular $q$, because then by induction the statement 
follows for all $q$. 
By Lemma~\ref{lem:spanning-class-orthogonal} we have 
$\rH^q_{\cA_S}(F) \in (\cA_S)_1$ if and only if 
\begin{equation}
\label{HqASF-vanishing}
\cHom_S(G, \rH^q_{\cA_S}(F)) = 0
\end{equation}
for all $G \in \cG$. 
Since $G \in \cD_2$ and $F \in \cD_1$ we have $\cHom_S(G, F) = 0$ (see Remark~\ref{remark-relative-Hom-sod}), 
so by the 
above exact triangle we have an isomorphism 
\begin{equation*}
\cHom_S(G, \rH^q_{\cA_S}(F)) \simeq \cHom_S(G, \tau^{> q}(F))[q-1]. 
\end{equation*} 
By the assumption~\eqref{Hom-leq1} the left 
side lies in $\Db(S)^{\leqslant 1}$, while by Lemma~\ref{lem:HomS-heart} the right side lies in $\Db(S)^{>1}$. 
Hence both sides vanish, which proves~\eqref{HqASF-vanishing} for all $G \in \cG$. 

Finally, it follows from Theorem~\ref{thm:local-t-structure-tensor-ample} that 
the t-structure $(\cA_S)_1 \subset \cD_1$ is $S$-local. 
\end{proof}

To formulate a useful situation in which Lemma~\ref{lem:induce-relative-t} applies, 
we need the notion of a relative Serre functor. 
An $S$-linear functor $\rS_{\cD/S} \colon \cD \to \cD$ is called a relative Serre functor for 
$\cD$ over $S$ if there are functorial isomorphisms 
\begin{equation*}
\cHom_S(F, \rS_{\cD/S}(G)) \simeq \cHom_S(G, F)^{\vee} 
\end{equation*} 
for all $F, G \in \cD$. The case where $S$ is a point reduces to the usual notion of a Serre functor. 
For instance, if $g \colon X \to S$ is a 
smooth and proper morphism of noetherian schemes, then the functor 
\begin{equation*}
(- \otimes \omega_{g}[\dim(g)]) \colon \Db(X) \to \Db(X)
\end{equation*} 
is a relative Serre functor by Grothendieck duality. 
Moreover, if $\gamma \colon \cC \to \cD$ is a right admissible subcategory 
and $\rS_{\cD/S}$ is a relative Serre functor for $\cD$, then it follows easily 
from the definitions that $\cC$ has a relative Serre functor given by 
$\rS_{\cC/S} = \gamma^! \circ \rS_{\cD/S} \circ \gamma$. 
In particular, putting the previous two remarks together, we see that if $g \colon X \to S$ is a smooth and proper 
morphism of noetherian schemes, then any right admissible subcategory of $\Db( X)$ admits a relative Serre functor. 

Now we can give the relative version of \cite[Corollary~4.4]{BLMS}. 
\begin{Cor}\label{cor:induce-relative-t}
Assume $g \colon X \to S$ is smooth and proper, and $S$ is noetherian and admits an ample line bundle. 
Let $\cA_S \subset \cD$ be the heart of a bounded $S$-local t-structure. 
Let $\cD = \langle \cD_1, \cD_2 \rangle$ be an $S$-linear semiorthogonal decomposition. 
Let $\cG$ be a relative spanning class of $\cD_2$ such that $\cG \subset \cA_S \cap \cD_{2} \cap \Dperf(X)$ 
and every $G \in \cG$ satisfies $\rS_{\cD/S}(G) \in \cA_S[1]$, where 
$\rS_{\cD/S}$ denotes the relative Serre functor of $\cD$. 
Then 
\begin{equation*}
(\cA_S)_1 = \cA_S \cap \cD_1 \subset \cD_1
\end{equation*} 
is the heart of a bounded $S$-local t-structure on $\cD_1$, such 
that the inclusion $\cD_1 \to \cD$ is t-exact. 
\end{Cor}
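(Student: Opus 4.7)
The strategy is to reduce the statement to Lemma~\ref{lem:induce-relative-t}, all of whose hypotheses on $\cG$ are already in place by assumption except for condition~\eqref{Hom-leq1}. So the task is to show that for every $G \in \cG$ and $F \in \cA_S$,
\[
\cHom_S(G, F) \in \Db(S)^{\leq 1}.
\]
Once this is verified, Lemma~\ref{lem:induce-relative-t} directly produces the bounded $S$-local heart $(\cA_S)_1 = \cA_S \cap \cD_1 \subset \cD_1$ and the t-exactness of $\cD_1 \hookrightarrow \cD$.

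To verify~\eqref{Hom-leq1} I would use relative Serre duality. Since $g$ is smooth and proper and $\cD$ is an $S$-linear admissible subcategory, $\cD$ admits a relative Serre functor $\rS_{\cD/S}$, as recalled just before the statement of the corollary. The defining property of $\rS_{\cD/S}$, combined with Grothendieck--Verdier duality for the perfect object $G$, identifies $\cHom_S(G,F)$ with a shifted $\cO_S$-linear dual of $\cHom_S(F, \rS_{\cD/S}(G))$. This identification interchanges upper and lower cohomological bounds on $\Db(S)$, so the desired bound is equivalent to
\[
\cHom_S(F, \rS_{\cD/S}(G)) \in \Db(S)^{\geq -1}.
\]
This lower bound matches exactly the output of Lemma~\ref{lem:HomS-heart} applied to $F \in \cA_S \subset \cD^{\leq 0}$ and $\rS_{\cD/S}(G) \in \cA_S[1] \subset \cD^{\geq -1}$.

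The main obstacle is a perfectness subtlety. Lemma~\ref{lem:HomS-heart} requires its \emph{first} argument to be perfect, but the first argument in $\cHom_S(F, \rS_{\cD/S}(G))$ is $F$, which is only assumed to lie in $\cA_S$, and $X$ need not be regular. The clean way around this, paralleling~\cite[Corollary~4.4]{BLMS} in the absolute case, is to rearrange the duality so that the perfectness of $G$ is what is exploited: one writes $\cHom_S(G,F) = g_*(G^\vee \otimes F)$ and then uses Grothendieck duality for the smooth proper morphism $g$ to compare this with the relative Hom pairing against the perfect complex $G^\vee \otimes \omega_g[\dim g]$, so that the required amplitude bound follows from Lemma~\ref{lem:HomS-heart} applied to a pair whose perfect side is this twist of~$G^\vee$. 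Alternatively, since the vanishing of $\rH^k_{\Coh S}\cHom_S(G,F)$ is local on $S$, one can reduce to the affine case and apply the lemma there.
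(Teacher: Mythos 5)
Your strategy is identical to the paper's: reduce to Lemma~\ref{lem:induce-relative-t} by verifying condition~\eqref{Hom-leq1}, then use relative Serre duality to rewrite $\cHom_S(G,F)$ as $\cHom_S(F,\rS_{\cD/S}(G))^\vee$ and invoke Lemma~\ref{lem:HomS-heart} on the dualized expression. The one substantive point where you go beyond the paper is in flagging the perfectness hypothesis of Lemma~\ref{lem:HomS-heart}, and you should be aware that your worry is aimed at exactly what the paper does: the printed proof applies Lemma~\ref{lem:HomS-heart} to $\cHom_S(F,\rS_{\cD/S}(G))$ with $F\in\cA_S$ in the first slot, even though the lemma as stated requires the first argument to lie in $\Dperf(X)$ and $F$ is not assumed perfect. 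The paper is silent on this.

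Your proposed repair, however, does not resolve the issue. After Grothendieck duality for $g$ one has $\cHom_S(G,F)^\vee \cong \cHom_S(F, G\otimes\omega_g[\dim g])$, in which the perfect object is in the \emph{second} slot, not the first; moreover $G\otimes\omega_g[\dim g]$ need not lie in $\cD$, so Lemma~\ref{lem:HomS-heart} is not applicable to this pair. Replacing $G\otimes\omega_g[\dim g]$ by its projection into $\cD$ (which is exactly what makes the expression $\cHom_S(F,\rS_{\cD/S}(G))$ and puts it in $\cA_S[1]$) returns you to the paper's original formula with $F$ in the first slot. Your "reduce to the affine case" alternative also does not help, since perfectness of $F$ as an object of $\Dperf(X)$ is unaffected by restricting $S$. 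The point is quietly harmless in practice: the corollary is only invoked when $S$ is regular (a point, or a Dedekind scheme), so $X$ is regular because $g$ is smooth, $\Db(X)=\Dperf(X)$, every $F\in\cA_S$ is automatically perfect, and both the application of Lemma~\ref{lem:HomS-heart} and the double-dual identification underlying the first display go through verbatim. If one wants a clean statement without this implicit assumption, the simplest fix is to add the hypothesis that $S$ (equivalently $X$) be regular, as is already done in Corollary~\ref{cor:induce-relative-t-fiberwise}.
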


\begin{proof}
Let $G \in \cG$ and $F \in \cA_S$. 
We have 
\begin{equation*}
\cHom_S(G, F) \simeq \cHom_S(F, \rS_{\cD/S}(G))^{\vee}. 
\end{equation*} 
Since $\rS_{\cD/S}(G) \in \cA_S[1]$, Lemma~\ref{lem:HomS-heart} gives 
\begin{equation*}
\cHom_S(F, \rS_{\cD/S}(G)) \in \Db(S)^{\geqslant -1}, 
\end{equation*} 
and hence 
\begin{equation*}
\cHom_S(F, \rS_{\cD/S}(G))^{\vee} \in \Db(S)^{\leqslant 1}. 
\end{equation*} 
Therefore the assumptions of Lemma~\ref{lem:induce-relative-t} are satisfied. 
\end{proof}

We also have the following fibral variant. 
\begin{Cor}\label{cor:induce-relative-t-fiberwise}
Assume $g \colon X \to S$ is smooth and proper, and $S$ is noetherian, regular, and admits an ample line bundle. 
Let $\cA_S \subset \cD$ be the heart of a bounded $S$-local t-structure. 
Let $\cD = \langle \cD_1, \cD_2 \rangle$ be an $S$-linear semiorthogonal decomposition. 
Let $\cG$ be a relative spanning class of $\cD_2$ such that $\cG \subset \cA_S$ 
and every $G \in \cG$ satisfies $\rS_{\cD_s}(G_s)\in \cA_s[1]$ for all closed points $s \in S$, 
where $\rS_{\cD_s}$ denotes the Serre functor of $\cD_s$ and $\cA_s \subset \cD_s$ is the 
induced heart given by Theorem~\ref{thm-t-structure-finite-map}. 
Then 
\begin{equation*}
(\cA_S)_1 = \cA_S \cap \cD_1 \subset \cD_1
\end{equation*} 
is the heart of a bounded $S$-local t-structure on $\cD_1$, such 
that the inclusion $\cD_1 \to \cD$ is t-exact. 
\end{Cor}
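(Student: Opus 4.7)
The plan is to reduce to Lemma~\ref{lem:induce-relative-t} by verifying its remaining hypothesis fiberwise. Since $S$ is regular and $g\colon X\to S$ is smooth, $X$ itself is regular of finite Krull dimension, so $\Db(X)=\Dperf(X)$; in particular $\cG\subset\cA_S\cap\cD_2\cap\Dperf(X)$, which disposes of the perfectness clause in Lemma~\ref{lem:induce-relative-t}. The remaining task is to show that $\cHom_S(G,F)\in\Db(S)^{\leq 1}$ for every $G\in\cG$ and every $F\in\cA_S$.

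I would tackle this by contradiction. Supposing that $M:=\rH^k(\cHom_S(G,F))$ is nonzero for some $k\geq 2$, I would take $k$ maximal with this property and, by Nakayama's lemma applied to the noetherian scheme $S$, choose a closed point $s\in S$ with $M\otimes\kappa(s)\neq 0$. Applying $-\otimes_{\cO_S}^{\rL}\kappa(s)$ to the truncation triangle
\[ \tau^{<k}\cHom_S(G,F)\to \cHom_S(G,F)\to M[-k] \]
and using right t-exactness of derived base change to $\kappa(s)$ together with the maximality of $k$ would yield
\[ \rH^k\!\bigl(\cHom_S(G,F)_s\bigr)\cong M\otimes\kappa(s)\neq 0. \]
Because $g$ is flat and $G$ is perfect, Lemma~\ref{Lem-cHom-bc} identifies $\cHom_S(G,F)_s$ with $\cHom_s(G_s,F_s)=\RHom_{X_s}(G_s,F_s)$, so one would deduce $\Ext^k_{X_s}(G_s,F_s)\neq 0$.

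The contradiction would then come from fiberwise Serre duality. The fiber $X_s$ is smooth and projective over $\kappa(s)$, and $\cD_s\subset\Db(X_s)$ is admissible by Theorem~\ref{theorem-bc-sod} together with Lemma~\ref{lem-smooth-proper-admissible}; thus $\cD_s$ carries a Serre functor $\rS_{\cD_s}$ for which
\[ \Ext^k_{X_s}(G_s,F_s)\cong \Hom_{\cD_s}\!\bigl(F_s,\rS_{\cD_s}(G_s)[-k]\bigr)^{\vee}. \]
The hypothesis $\rS_{\cD_s}(G_s)\in\cA_s[1]$ places $\rS_{\cD_s}(G_s)[-k]$ in $\cD_s^{[k-1,k-1]}\subseteq\cD_s^{\geq 1}$ for $k\geq 2$, while right t-exactness of pullback $\cD\to\cD_s$ (Theorem~\ref{Thm-D-bc}.\eqref{DbT-f-pullback}) places $F_s$ in $\cD_s^{\leq 0}$. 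The orthogonality axiom of the t-structure on $\cD_s$ then forces the displayed Hom to vanish, contradicting $\Ext^k_{X_s}(G_s,F_s)\neq 0$.

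The hard part is the propagation of the fiberwise Serre duality vanishing to a sheaf-level statement on $S$; concretely, the identification $\rH^k(\cHom_S(G,F)_s)\cong M\otimes\kappa(s)$ for the top surviving cohomology class relies crucially on the maximality of $k$, on Nakayama, and on the perfectness of $G$ (to apply Lemma~\ref{Lem-cHom-bc}). Once these ingredients are in place, Serre duality on the smooth projective variety $X_s/\kappa(s)$ concludes the argument and Lemma~\ref{lem:induce-relative-t} delivers the bounded $S$-local t-structure on $\cD_1$ with t-exact inclusion into $\cD$.
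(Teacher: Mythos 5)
Your proof is correct and takes essentially the same approach as the paper: reduce to Lemma~\ref{lem:induce-relative-t} by identifying $\cHom_S(G,F)_s$ with $\cHom_s(F_s,\rS_{\cD_s}(G_s))^{\vee}$ via Lemma~\ref{Lem-cHom-bc} and fiberwise Serre duality, then conclude the coconnectivity bound on $\cHom_S(G,F)$ from the bound on all fibers. You have simply unfolded the Nakayama/maximal-degree argument that the paper compresses into the sentence ``Because this holds for all closed points $s$, it follows that $\cHom_S(G,F)\in\Db(S)^{\leq 1}$.''
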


\begin{proof}
Note that our assumptions imply that $X$ is regular of finite Krull dimension, and thus $\Db(X) = \Dperf(X)$. 
Let $G \in \cG$ and $F \in \cA_S$. For every closed point $s \in S$, we have 
\begin{equation*}
\cHom_S(G,F)_s \simeq \cHom_s(G_s, F_s) \simeq \cHom_s(F_s, \rS_{\cD_s}(G_s))^{\vee} , 
\end{equation*} 
where the first isomorphism holds by Lemma~\ref{Lem-cHom-bc} and the flatness of $g$, and for the 
second we used that $F_s, G_s \in \cD_s$ since $F$ and $G$ are perfect. 
This object lies in $\Db(\kappa(s))^{\leqslant 1}$ since $F_s \in \cD_s^{\leqslant 0}$ and $\rS_{\cD_s}(G_s) \in \cA_s[1]$. 
Because this holds for all closed points $s$, it follows that $\cHom_S(G,F) \in \Db(S)^{\leqslant 1}$. Therefore the assumptions of Lemma~\ref{lem:induce-relative-t} are satisfied. 
\end{proof} 

\begin{Rem}
\label{rem-inducing-2-implies-1} 
The assumptions of Corollary~\ref{cor:induce-relative-t-fiberwise} imply those of 
Corollary~\ref{cor:induce-relative-t} if the heart $\cA_S$ is noetherian or if the base is Dedekind.
Indeed, it follows easily from the definitions that there is an isomorphism of functors 
$i_{s*} \circ i_s^* \circ \rS_{\cD/S} \simeq i_{s*} \circ \rS_{\cD_s} \circ i_s^*$. 
From this we see that $\rS_{\cD_s}(G_s)\in \cA_s[1]$ is equivalent to 
$i_s^* \rS_{\cD/S}(G) \in \cA_s[1]$. 
Hence \cite[Proposition~3.3.2]{AP:t-structures} (in the case $\cA_S$ noetherian) or Lemma~\ref{lem:flatinheart} (in case the base is Dedekind) implies $\rS_{\cD/S}(G) \in \cA_S[1]$ for all $G \in \cG$ 
under the assumptions of Corollary~\ref{cor:induce-relative-t-fiberwise}. 
\end{Rem}

\begin{Rem}
\label{rem-inducing-commute-fibers} 
In the situation of Corollary~\ref{cor:induce-relative-t-fiberwise}, the formation of the 
induced heart commutes with restriction to fibers. 
In symbols, if $s \in S$ is a closed point, then $((\cA_S)_1)_s = (\cA_s)_1$. 
This follows easily from the definitions. 
\end{Rem}

We will also need the following observation. 

\begin{Lem} \label{lem:induceCtorsion_Dedekind}
Assume in the setting of Lemma~\ref{lem:induce-relative-t} (or Corollary~\ref{cor:induce-relative-t} or \ref{cor:induce-relative-t-fiberwise}) that $S = C$ is a Dedekind scheme $C$ and $\cA_C$ has a $C$-torsion theory. 
Then $(\cA_C)_1$ also has a $C$-torsion theory.
\end{Lem}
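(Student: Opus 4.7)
The approach is: for $E \in (\cA_C)_1$, I would show that the maximal $C$-torsion subobject $E_{\Ctor} \subset E$ taken in $\cA_C$ (which exists by hypothesis and Remark~\ref{rem:noetherianandtorsiontheory}) automatically lies in $\cD_1$. Then $E_{\Ctor} \in (\cA_C)_1$, and it is also the maximal $C$-torsion subobject of $E$ in $(\cA_C)_1$: any $C$-torsion subobject $T \into E$ in $(\cA_C)_1$ is also one in $\cA_C$, since being $C$-torsion is detected on the generic fiber by Lemma~\ref{lem:D-Stor}, and the inclusion $(\cA_C)_1 \hookrightarrow \cA_C$ is exact; hence $T$ factors through $E_{\Ctor}$ in $\cA_C$, and the factorization is a morphism in the full subcategory $(\cA_C)_1$. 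Combined with Lemma~\ref{Lem-AS-subs}.\eqref{Astor-subs} and Remark~\ref{rem:noetherianandtorsiontheory}, this produces the desired $C$-torsion pair on $(\cA_C)_1$.

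For the key claim that $E_{\Ctor} \in \cD_1$, I would use the explicit formula of Lemma~\ref{Lem-ECtor-H-1}:
\[ E_{\Ctor} = I_W^{-1} \otimes i_{W*}\,\rH^{-1}_{\cA_W}(E_W), \]
where $W \subset C$ is the zero-dimensional schematic support of $E_{\Ctor}$. Tensoring with $g^* I_W^{-1}$ preserves $\cD_1$ by $C$-linearity, and $i_{W*}$ sends $(\cD_1)_W$ into $\cD_1$ by compatibility of pushforward with the base-changed semiorthogonal decomposition (Theorem~\ref{theorem-bc-sod}). Since $E \in \cD_1$ pulls back to $E_W \in (\cD_1)_W$, the remaining task is to show that the cohomology $\rH^{-1}_{\cA_W}(E_W)$, taken with respect to the base-changed t-structure on $\cD_W$, already lies in $(\cD_1)_W$.

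This reduces to t-exactness of the inclusion $(\cD_1)_W \hookrightarrow \cD_W$ with respect to the t-structures obtained by base change via Theorem~\ref{Thm-D-bc} along the essentially perfect morphism $W \hookrightarrow C$ (finite closed immersion into a regular scheme). I would verify it by pushing forward along $i_{W*}$: given $F \in (\cD_1)_W \cap \cD_W^{\leq 0}$, the functor $i_{W*}\colon \cD_W \to \cD$ is t-exact and fully faithful (Theorem~\ref{Thm-D-bc}.\eqref{DbT-f-finite-pushforward} and Lemma~\ref{lem:cohpushpull}), so $i_{W*}F \in \cD^{\leq 0} \cap \cD_1 = \cD_1^{\leq 0}$, where the last equality uses t-exactness of $\cD_1 \hookrightarrow \cD$. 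Using also t-exactness of $i_{W*}\colon (\cD_1)_W \to \cD_1$, we obtain $i_{W*}\,\rH^i_{(\cA_W)_1}(F) = \rH^i_{(\cA_C)_1}(i_{W*}F) = 0$ for $i>0$, and conservativity of $i_{W*}$ then forces $\rH^i_{(\cA_W)_1}(F) = 0$, so $F \in (\cD_1)_W^{\leq 0}$; the $\geq 0$ direction is symmetric.

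The principal technical obstacle is establishing this t-exactness of $(\cD_1)_W \hookrightarrow \cD_W$ from the given t-exactness of $\cD_1 \hookrightarrow \cD$, which requires carefully combining the base change theorems for semiorthogonal decompositions and for t-structures along the finite closed immersion $W \hookrightarrow C$. Once this is in hand, the rest of the argument is a direct bookkeeping with the explicit formula for $E_{\Ctor}$.
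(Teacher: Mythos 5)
Your proposal is correct and is built on the same ingredients as the paper's proof: the explicit formula of Lemma~\ref{Lem-ECtor-H-1} for $E_{\Ctor}$, $C$-linearity for tensoring with $g^*I_W^{-1}$, and detecting membership in $\cD_1$ via the pushforward $i_{W*}$ and t-exactness of $\cD_1 \hookrightarrow \cD$. The difference is in the order of operations, and it affects how much work you need. You first take cohomology $\rH^{-1}_{\cA_W}(E_W)$ \emph{inside} $\cD_W$ and then try to argue it lies in $(\cD_1)_W$, which forces you to establish t-exactness of the base-changed inclusion $(\cD_1)_W \hookrightarrow \cD_W$ (the step you flag as the principal technical obstacle). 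The paper sidesteps this by pushing forward \emph{before} taking cohomology: t-exactness of $i_{W*}\colon \cD_W \to \cD$ gives $i_{W*}\rH^{-1}_{\cA_W}(E_W) = \rH^{-1}_{\cA_C}(i_{W*}E_W)$, and since $i_{W*}E_W \in \cD_1$ (compatibility of pushforward with the decomposition) one can then compute this cohomology in $(\cA_C)_1$ directly using the given t-exactness of $\cD_1 \hookrightarrow \cD$ over $C$, with no need to touch the fibered t-structure on $(\cD_1)_W$ at all. Your argument for the auxiliary t-exactness is sound (push forward, use conservativity and t-exactness of $i_{W*}$), so the proof goes through — it is simply longer, and proves an intermediate statement that the lemma doesn't require.
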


\begin{proof}
It suffices to show that if $E \in (\cA_C)_1$ and $E_{\Ctor} \in \cA_C$ is 
its maximal $C$-torsion subobject in $\cA_C$, then $E_{\Ctor}$ lies in $(\cA_C)_1$. 
Let $W \subset C$ be the support of $E_{\Ctor}$. 
Then by Lemma~\ref{Lem-ECtor-H-1} we have $E_{\Ctor} = g^*(I_W^{-1}) \otimes i_{W*}\rH^{-1}_{\cA_W}(E_W) $. 
Note that 
\begin{equation*}
i_{W*}^{}\rH^{-1}_{\cA_W}(E_W) = \rH^{-1}_{\cA_S}(i_{W*}E_W) = \rH^{-1}_{(\cA_{S})_1}(i_{W*}E_W), 
\end{equation*} 
where the first equality holds by t-exactness of $i_{W*}$ and the second by t-exactness of the inclusion 
$\cD_1 \to \cD$ and the fact that $i_{W*}E_W \in \cD_1$. Since tensoring with the line bundle $g^*(I_W^{-1})$ 
is t-exact as an endofunctor of $\cD_1$, we conclude $E_{\Ctor} \in (\cA_C)_1$. 
\end{proof}

\newpage
\part{Moduli spaces}\label{part:ModuliSpaces}

\section{Moduli of complexes} \label{sec:modComplexes}
In \cite{Lieblich:mother-of-all}, Lieblich showed that for $g \colon X \to S$ a proper, flat, finitely presented morphism of schemes, 
there is an algebraic stack parametrizing ``families'' of objects in the bounded derived categories of the fibers of $g$ with vanishing negative self-$\Ext$s. 
In fact, \cite{Lieblich:mother-of-all} handles more generally the case where $X$ and $S$ are algebraic spaces, but we will not need this. 
In this section we review Lieblich's results; we often give references to \cite{stacks-project}, which contains a slightly different exposition that we find convenient.

\subsection{Relatively perfect objects}
\label{subsection-relatively-perfect} 
The meaning of a family of objects in the bounded derived categories of the 
fibers is made precise by the notion of a relatively perfect object. 
We use the definition from 
\citestacks{0DI0}, 
but it agrees with Lieblich's original definition, see 
\citestacks{0DI9}. 
Recall from~Section~\ref{sec:setupnotation} that a pseudo-coherent complex is one that is locally quasi-isomorphic to a bounded above complex of finitely generated locally free sheaves. 

\begin{Def} 
\label{def-S-perfect}
Let $g \colon X \to S$ be a morphism of schemes which is flat and locally of finite presentation. 
Then an object $E \in \rD(X)$ is \emph{$S$-perfect} if $E$ is pseudo-coherent and 
locally of finite $\Tor$-dimension over $g^{-1} \cO_S$.
\end{Def}

\begin{Rem}
\label{Rem-finite-tor}
If $X \to S$ is a morphism of schemes, then $E$ is locally of finite $\Tor$-dimension over $g^{-1} \cO_S$ 
if and only if for any affine open $U \subset X$ mapping into an affine open $V \subset S$, the 
complex $\rR\Gamma(U, E)$ is of finite $\Tor$-dimension over $\cO_S(V)$. 
\end{Rem}

The following summarizes the relations between the notions of $S$-perfect, bounded coherent, 
and perfect complexes. 

\begin{Lem}
\label{lem-relations-S-perfect}
Let $g \colon X \to S$ be a morphism of schemes which is 
flat and locally of finite presentation. 
\begin{enumerate}[{\rm (1)}] 

\item \label{S-perfect-db} 
Assume $X$ is quasi-compact. 
If $E \in \rD(X)$ is $S$-perfect, then $E \in \Db(X)$. 

\item \label{db-S-perfect}
Assume $S$ is regular of finite Krull dimension. 
If $E \in \Db(X)$, then $E$ is $S$-perfect. 

\item \label{perfect-S-perfect}
If $E \in \Dperf(X)$, then $E$ is $S$-perfect. 

\end{enumerate}
\end{Lem}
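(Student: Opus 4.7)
The plan is to verify each of the three parts separately, using standard properties of pseudo-coherent complexes and finite Tor-dimension together with the definition of $S$-perfectness recalled in Definition~\ref{def-S-perfect}; in each case the argument reduces to a local computation on affine opens.

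For part~\eqref{S-perfect-db}, I would first observe that pseudo-coherence implies that, locally on $X$, the complex $E$ is quasi-isomorphic to a bounded-above complex of finite locally free $\cO_X$-modules; in particular its cohomology is bounded above on such opens. On the other hand, being locally of finite Tor-dimension over $g^{-1}\cO_S$ means that locally $E$ is quasi-isomorphic to a bounded complex of $g^{-1}\cO_S$-flat modules (\citestacks{08CC}), so its cohomology is also bounded below on each such open. Since $X$ is quasi-compact we may cover it by finitely many such opens and take the extremes of the resulting bounds, giving $E \in \Db(X)$ globally.

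For part~\eqref{db-S-perfect}, I would first note that on the noetherian scheme $X$ (which is locally of finite presentation over the noetherian $S$) the condition $E \in \Db(X)$ is equivalent to $E$ being pseudo-coherent in our sense, cf.~\citestacks{08E8}. It remains to establish finite Tor-dimension over $g^{-1}\cO_S$. Working on an affine open $U = \Spec(B) \subset X$ mapping to an affine open $V = \Spec(A) \subset S$, the ring $A$ is regular noetherian of Krull dimension at most $\dim S < \infty$, so every $A$-module has finite projective (equivalently Tor-) dimension bounded by $\dim A$. A bounded complex of $B$-modules therefore has Tor-dimension over $A$ bounded by $\dim A + \mathrm{length}(E)$, which is finite; this gives the local condition of Remark~\ref{Rem-finite-tor}.

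For part~\eqref{perfect-S-perfect}, perfect complexes are by definition (locally) quasi-isomorphic to bounded complexes of finite locally free $\cO_X$-modules, hence in particular pseudo-coherent and locally of finite Tor-dimension over $\cO_X$. Since $g$ is flat, $\cO_X$ is flat over $g^{-1}\cO_S$, so composing these flatness statements shows that any $\cO_X$-flat module is also $g^{-1}\cO_S$-flat; therefore a bounded complex of finite locally free $\cO_X$-modules has finite Tor-dimension over $g^{-1}\cO_S$ as well. No real obstacle arises in any of the three parts: the main subtlety is merely bookkeeping of the ``locally'' qualifier in part~\eqref{S-perfect-db}, which is exactly why the quasi-compactness hypothesis on $X$ is imposed.
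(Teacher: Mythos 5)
Your argument for each of the three parts agrees in substance with the paper's: bounded cohomology from the local characterization of finite Tor-dimension plus quasi-compactness in (1), finite global dimension of the regular base ring in (2), and flatness of $g$ transferring finite Tor-dimension over $\cO_X$ to $g^{-1}\cO_S$ in (3). The only cosmetic difference is that in (1) you split the bound into an upper bound from pseudo-coherence and a lower bound from Tor-dimension, whereas finite Tor-dimension over $g^{-1}\cO_S$ already gives two-sided bounds locally — but this does not change the argument in any essential way.
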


\begin{proof}
Recall from Section~\ref{sec:setupnotation} that $\Db(X)$ is defined as the category of pseudo-coherent 
complexes with bounded cohomology. 
Hence in the situation of~\eqref{S-perfect-db}, it suffices to show that $E$ has bounded cohomology. 
Since $X$ is quasi-compact this can be checked locally, where it holds by Remark~\ref{Rem-finite-tor}. 

In the situation of~\eqref{db-S-perfect}, 
to check $E$ is locally of finite $\Tor$-dimension over $g^{-1} \cO_S$ 
we may assume that $X = \Spec(A)$ and $S = \Spec(R)$. Then we must show 
$\textrm{R}\Gamma(X,E) \in \Db(A)$ has finite $\Tor$-dimension over $R$. 
But by regularity $R$ has finite global dimension \citestacks{00OE}, 
so we conclude by \citestacks{066P}. 

Finally, for \eqref{perfect-S-perfect} note that $E \in \rD(X)$ is 
perfect if and only if $E$ is pseudo-coherent and locally has finite $\Tor$-dimension 
\citestacks{08CQ}. 
Since $g \colon X \to S$ is flat, $E$ is then also 
locally of finite $\Tor$-dimension over $g^{-1} \cO_S$.
\end{proof} 

\begin{Lem}[{\citestacks{0DI5}}]
\label{lemma-S-perfect-bc} 
Let $g \colon X \to S$ be a morphism of schemes which is flat and locally of 
finite presentation. 
Let $\phi \colon T \to S$ be a morphism of schemes. 
If $E \in \rD(X)$ is $S$-perfect, then $\res{E}{T} \in \rD(X_T)$ is $T$-perfect. 
\end{Lem}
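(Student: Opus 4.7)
The plan is to verify the two defining conditions of $T$-perfectness --- pseudo-coherence and local finite Tor-dimension over $g_T^{-1}\cO_T$ --- for $E_T := \phi'^* E$, where pullbacks are derived and $\phi'\colon X_T \to X$, $g_T\colon X_T \to T$ denote the base-change morphisms. Both properties are local, and pseudo-coherence is stable under arbitrary derived pullback between schemes (in fact between ringed topoi, see e.g.\ \citestacks{06YR}); applied to $\phi'$ this immediately gives pseudo-coherence of $E_T$.

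For the Tor-dimension condition, the claim is local on $X_T$, $X$, $S$, and $T$ by Remark~\ref{Rem-finite-tor}, so using that $g$ is flat and locally of finite presentation we reduce to the affine case. Write $S = \Spec R$, $X = \Spec A$, $T = \Spec R'$, so that $X_T = \Spec A'$ with $A' = A \otimes_R R'$. Since $g$ is flat, $A$ is flat over $R$, and this tensor product coincides with its derived version. Under $\Dqc(X) \simeq \rD(A)$, the complex $E$ corresponds to a complex $M$ of $A$-modules that has finite Tor-dimension over $R$ by hypothesis, and $E_T$ corresponds to
\[
M \otimes_A^L A' \simeq M \otimes_R^L R',
\]
again by flatness of $A$ over $R$, viewed as a complex of $A'$-modules.

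For any $R'$-module $N$, associativity of the derived tensor product gives
\[
(M \otimes_R^L R') \otimes_{R'}^L N \simeq M \otimes_R^L N,
\]
whose cohomology is concentrated in a bounded range of degrees \emph{independent of $N$}, by the $S$-perfectness assumption on $M$. Hence $M \otimes_R^L R'$ has finite Tor-dimension over $R'$, as required, and combined with pseudo-coherence this completes the proof.

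No significant obstacle arises: the argument is essentially an unwinding of the definitions together with two standard preservation results (pseudo-coherence under derived pullback, and finite Tor-dimension under base change of the base ring). The only point worth flagging is that flatness of $g$ is used precisely to guarantee $A \otimes_R R' \simeq A \otimes_R^L R'$, so that the affine model of the derived restriction $\phi'^*$ is unambiguously given by $(-)\otimes_R^L R'$; without this the identification of $E_T$ with $M \otimes_R^L R'$ would require additional care.
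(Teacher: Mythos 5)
Your proof is correct and takes essentially the same approach as the cited Stacks Project result \citestacks{0DI5}, which is what the paper relies on for this lemma: reduce to the affine case, use flatness of $g$ to identify the derived restriction with $M \otimes_R^{\rL} R'$, and read off finite Tor-dimension over $R'$ from associativity of the derived tensor product.
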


In particular, if $E \in \rD(X)$ 
is $S$-perfect, and we let $T = \Spec(\kappa(s))$ for a point $s\in S$, then Lemma~\ref{lem-relations-S-perfect} and Lemma~\ref{lemma-S-perfect-bc} show that the restriction $E_s$ of $E$ to the fiber of $g$ over $s$ is a bounded coherent complex. 
Therefore we may think of an $S$-perfect object as a family of bounded coherent complexes.
However, these restrictions $E_s$  are not necessarily perfect complexes, so one should not mistake ``$S$-perfect'' to mean a family of perfect complexes.

\subsection{Moduli of objects on a proper morphism} 
\label{subsection-lieblich-moduli} 
We will consider relatively perfect objects satisfying the following condition introduced in \cite[Definition~2.1.8, Proposition~2.1.9]{Lieblich:mother-of-all}.

\begin{Def}
Let $g \colon X \to S$ be a flat, proper, finitely presented morphism of schemes. 
An $S$-perfect object $E \in \rD(X)$ is \emph{universally gluable} if for every point $s \in S$ 
we have 
\index{Dpug@$\Dpug(X/S)$, category of universally gluable $S$-perfect objects}
\begin{equation*}
\Ext^i(E_s, E_s) = 0 \quad \text{ for } i < 0. 
\end{equation*}
We denote by $\Dpug(X/S) \subset \rD(X)$ the full subcategory 
of universally gluable $S$-perfect objects. 
\end{Def}

Now we can define the moduli stack of interest. 
In this paper, we will regard stacks as groupoid-valued (pseudo)functors (instead of fibered categories), but otherwise we follow the conventions of~\cite{stacks-project}. 
Let $(\Sch/S)$ denote the category of all $S$-schemes, and let $\Gpds$ denote the category of groupoids. 

\begin{Def}
Let $g \colon X \to S$ be a flat, proper, finitely presented morphism of schemes. 
We denote by 
\begin{equation*}
\cMpug(X/S) \colon (\Sch/S)^{\op} \to \Gpds 
\end{equation*} 
\index{Mpug@$\cMpug(X/S)$, functor of universally gluable $T$-perfect objects in $\Dpug(X_T/T)$}
the functor whose value on $T \in (\Sch/S)$ consists of 
all $E \in \Dpug(X_T/ T)$. 
On morphisms, $\cMpug(X/S)$ is given by pullback. 
\end{Def}

The main theorem of~\cite{Lieblich:mother-of-all} is as follows. 

\begin{Thm}[{\cite[Theorem~4.2.1]{Lieblich:mother-of-all}}]
\label{theorem-lieblich-moduli} 
Let $g \colon X \to S$ be a flat, proper, finitely presented morphism of schemes. 
Then $\cMpug(X/S)$ is an algebraic stack locally of finite presentation and locally quasi-separated over $S$, with separated diagonal. 
\end{Thm}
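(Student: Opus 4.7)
\medskip

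\noindent\textbf{Proof plan.} The strategy is Lieblich's, which verifies the axioms of an algebraic stack directly: (i) descent (to check $\cMpug(X/S)$ is a stack for the fppf/fpqc topology), (ii) representability and separatedness of the diagonal, (iii) local finite presentation, and (iv) existence of a smooth atlas via Artin's representability criterion. I would organize the proof so that the ``hard'' Artin axioms are prepared for by first establishing the formal properties of relatively perfect, universally gluable objects.

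First, I would verify the descent axioms. Universal gluability ensures that for $E \in \Dpug(X_T/T)$ the canonical map $\cO_T \to \cHom_T(E,E)$ lifts gluing data without ambiguity: the obstructions to gluing an object and its automorphisms live in $\Ext^{-1}$ and $\Ext^{-2}$, both of which vanish fiberwise. Combined with fpqc descent for pseudo-coherent and perfect complexes (via the results used in the proof of Proposition~\ref{prop-sod-bc-perf-qc} and Remark~\ref{Rem-qcqs-cg}, or \citestacks{08H6}), this yields that $\cMpug(X/S)$ is an fppf stack. Local finite presentation reduces to the fact that $S$-perfect, universally gluable complexes commute with filtered colimits of affine schemes over $S$, which follows from standard approximation for pseudo-coherent complexes of finite Tor-amplitude (\citestacks{0CRJ}).

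Next, I would handle the diagonal. Given $T \in (\Sch/S)$ and two objects $E,F \in \Dpug(X_T/T)$, the functor $\uHom_T(E,F)$ sending $T' \to T$ to $\Hom_{X_{T'}}(E_{T'},F_{T'})$ is representable by the linear $T$-scheme $\mathbf{V}(\cHom_T(E,F))$ associated to the pseudo-coherent complex $\cHom_T(E,F)$, which by universal gluability has Tor-amplitude in $[0,\infty)$; thus this linear scheme is affine and of finite presentation over $T$, by the classical linearization construction (essentially Grothendieck's $\mathbf{Hom}$, cf.~Lieblich~\cite[Proposition~2.1.3]{Lieblich:mother-of-all}). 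The isomorphism functor $\uIsom_T(E,F)$ is then the open subfunctor of $\uHom_T(E,F) \times_T \uHom_T(F,E)$ cut out by the two composition conditions, hence is representable, separated, and of finite presentation over $T$. This proves the diagonal of $\cMpug(X/S)$ is representable by separated algebraic spaces of finite presentation.

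Finally, and this is the main obstacle, I would construct a smooth cover. The cleanest route is Artin's representability criterion (\citestacks{07Y4}): beyond the axioms already checked, one must verify (a) effectivity of formal deformations, (b) the existence of a deformation and obstruction theory of finite type, and (c) the Rim--Schlessinger condition. Deformation theory is controlled by $\Ext^1(E,E)$ and $\Ext^2(E,E)$ (both finitely generated over the base, by properness of $g$ and coherence of higher pushforwards); Rim--Schlessinger is formal, and effectivity of formal deformations is the place where most technical work concentrates, and is handled via Lieblich's ``formal GAGA for perfect complexes'' together with the universal gluability hypothesis that promotes compatible systems over Artinian thickenings to an actual object in $\Dpug(X_{\hat A}/\hat A)$. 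Alternatively, following Lieblich more closely, one constructs explicit smooth charts from Quot schemes: any universally gluable $S$-perfect object is, locally in the fppf topology on $S$, the cone of a morphism between complexes of locally free sheaves of bounded range, and these parametrize a locally finitely presented smooth cover of $\cMpug(X/S)$ by open subschemes of products of relative $\Quot$-schemes. The latter approach bypasses some of Artin's criteria, at the cost of a detailed analysis of the smoothness of the resulting maps; I expect effectivity/atlas construction to be the principal technical difficulty and would defer to \cite[Sections 3--4]{Lieblich:mother-of-all} for the detailed execution.
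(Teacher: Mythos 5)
The paper does not prove this theorem; it is cited directly from Lieblich~\cite[Theorem~4.2.1]{Lieblich:mother-of-all} as a known result, and the surrounding section simply recalls statements from that paper. Your sketch is a reasonable high-level outline of Lieblich's original argument (descent via vanishing of negative $\Ext$, representability of the diagonal via the linear scheme associated to $\cHom_T(E,F)$, and a smooth atlas via Artin's criteria with formal effectivity being the main technical burden). Two small points of caution for your own bookkeeping: the affine scheme representing $\uHom_T(E,F)$ is $\mathbf{V}$ applied to $\rH^0$ of the \emph{derived dual} of $\cHom_T(E,F)$ rather than to the complex itself, and the crucial property making this work under universal gluability is that $\cHom_T(E,F)$ (resp.\ $\cHom_T(E,E)$) lies in $\Db(T)^{\geq 0}$ after base change to any fiber --- which is exactly the hypothesis on negative $\Ext$ groups. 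None of this affects the correctness of the structure you describe; it simply reproduces Lieblich's route, which is what the paper implicitly relies on.
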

 
Later we will also need an auxiliary result on representability of $\Hom$ functors. 

\begin{Def}
Let $g \colon X \to S$ be a morphism of schemes. 
For $E, F \in \rD(X)$, we denote by 
\begin{equation*}
\uHom_S(E, F) \colon (\Sch/S)^{\op} \to \Sets 
\end{equation*}
the functor given by $T \mapsto \Hom_{\rD(X_T)}(\res{E}{T}, \res{F}{T})$. 
\end{Def}

The following criterion can be proved using the methods of~\cite{Lieblich:mother-of-all}. 

\begin{Lem}
\label{lemma-hom-representable}
Let $g \colon X \to S$ be a flat, proper, finitely presented morphism of schemes. 
Let $E, F \in \rD(X)$. 
Assume $E$ is pseudo-coherent, $F$ is $S$-perfect.
Then there exist a scheme $B$, which is affine and of finite presentation over~$S$, and $\xi\in \Hom_{\rD(X_B)}(\res{E}{B}, \res{F}{B})$ such that, for every geometric point $\bar{s}$ over $S$ and $\varphi \in \Hom_{\rD(X_{\bar{s}})}(\res{E}{\bar{s}}, \res{F}{\bar{s}})$, there exists a $\kappa(\bar{s})$-rational point $b$ of $B \times_{S} \Spec(\kappa(\bar{s}))$ such that $\xi_b=\varphi$.

Furthermore, if for every point $s$ of $S$ we have 
\begin{equation*}
\Ext^i(\res{E}{s}, \res{F}{s}) = 0 \quad \text{ for } i < 0,
\end{equation*}
then $\uHom_S(E, F)$ is representable by a scheme which is affine and of finite presentation over~$S$. 
\end{Lem}

\begin{proof}
The second part of the statement is \citestacks{0DLC}.
To prove the first statement we use a similar argument as in \emph{loc.~cit.}, which we now briefly recall.

We can reduce to the case where $S$ is affine. 
By \citestacks{0DKY}, there exists $L\in \Db(S)$ such that for every geometric point $\bar{s}$ over $S$ and for all $i$ we have that
\[
\Ext^i(\res{E}{\bar{s}}, \res{F}{\bar{s}})=\Ext^i(\res{L}{\bar{s}}, \kappa(\bar{s})).
\]
We can now consider the exact triangle given by the ``stupid'' truncation
\[
\sigma_{>0}L\to L\to\sigma_{\leqslant 0}L
\]
where, we can assume that $\sigma_{>0}L$ is a perfect complex sitting in strictly positive degrees.
In particular, $\Hom((\res{\sigma_{>0}L)}{\bar{s}}, \kappa(\bar{s}))=0$.
Hence we get a surjective morphism
\[
\Hom(\res{(\sigma_{\leqslant 0}L)}{\bar{s}}, \kappa(\bar{s}))\twoheadrightarrow\Hom(\res{L}{\bar{s}}, \kappa(\bar{s})).
\]
Now we can apply the argument in the proof of \citestacks{0DLC} to the complex $\sigma_{\leqslant 0}L$.
\end{proof}

\section{Moduli of objects in a subcategory}
\label{sec-moduli-objects-D}
In this section we explain how the results from Section~\ref{subsection-lieblich-moduli} extend to moduli of objects in an admissible subcategory, and prove some general results on boundedness of sub-moduli problems. 
We consider the following situation: 
\begin{itemize}
\item $g \colon X \to S$ is a flat, proper, finitely presented morphism of 
{schemes which are quasi-compact with affine diagonal, where 
$X$ is noetherian of finite Krull dimension.}
\item $\cD \subset \Db(X)$ is an $S$-linear strong semiorthogonal component whose projection functor is of finite cohomological amplitude.
\end{itemize} 

\subsection{Moduli of objects in \texorpdfstring{$\cD$}{D}} 
\label{subsection-moduli-admissible-subcat} 

\begin{Def}
\label{definition-cMpug(D/S)} 
We denote by 
\index{Mpug@$\cMpug(\cD/S)$, functor of universally gluable $T$-perfect objects in $\Dpug(X_T/T)$ such that $E_t\in\cD_t$ for all $t\in T$}
\begin{equation*}
\cMpug(\cD/S) \colon (\Sch/S)^{\op} \to \Gpds 
\end{equation*}
the functor whose value on $T \in (\Sch/S)$ consists of all $E \in \Dpug(X_T/ T)$ such that $E_t \in \cD_t$ for all $t \in T$. 
\end{Def}
We prove the following result in this section.
\begin{Prop}
\label{proposition-cMpug(D/S)-algebraic} 
The functor $\cMpug(\cD/S)$ is an algebraic stack locally of finite presentation over~$S$, and the canonical morphism $\cMpug(\cD/S) \to \cMpug(X/S)$ is an open immersion. 
\end{Prop}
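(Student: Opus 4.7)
The plan is to show that the canonical morphism $\cMpug(\cD/S) \to \cMpug(X/S)$ is an open immersion; algebraicity and local finite presentation over $S$ will then follow immediately from Theorem~\ref{theorem-lieblich-moduli}. Since being an object of $\cMpug(\cD/S)$ is a \emph{property} of an object of $\cMpug(X/S)$ (not extra structure), the morphism is fully faithful on $T$-points for every scheme $T \to S$, and hence a monomorphism of stacks. It therefore suffices to prove: for any $E \in \Dpug(X_T/T)$, the locus
\[
T_\cD := \{\, t \in T : E_t \in \cD_t \,\}
\]
is open in $T$, and the restriction $\res{E}{T_\cD}$ then lies in $\cMpug(\cD/S)(T_\cD)$.

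First I would recast the fiberwise condition as the vanishing of a single global object. Working Zariski-locally on $T$ (harmless, since openness is local), Theorem~\ref{theorem-bc-sod} produces the base-changed subcategory $\cD_T \subset \Db(X_T)$ together with its right adjoint $\pr_{\cD_T} \colon \Db(X_T) \to \cD_T$ of finite cohomological amplitude. Let $G := \cone(\pr_{\cD_T}(E) \to E) \in \Db(X_T)$. By the compatibility of the projection with further base change to residue fields $\Spec \kappa(t) \to T$ (faithful with respect to $g_T$ by flatness of $g$), we have $G_t \simeq \cone(\pr_{\cD_t}(E_t) \to E_t)$, which vanishes if and only if $E_t \in \cD_t$. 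Hence $T_\cD = \{\, t \in T : G_t = 0 \,\}$.

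The main technical step, and the principal obstacle, is to verify that $G$ is $T$-perfect; once this is established, the locus $\{t : G_t = 0\}$ is open by the standard Nakayama argument for $T$-perfect complexes (local quasi-isomorphism to a bounded complex of finite locally free sheaves, whose fiberwise vanishing at $t$ propagates to a neighborhood). Pseudo-coherence of $G$ is automatic since $G \in \Db(X_T)$ and $X_T$ is noetherian. For finite local $\Tor$-dimension over $g_T^{-1}\cO_T$, it suffices (via the defining triangle) to show that both $E$ and $\pr_{\cD_T}(E)$ are $T$-perfect; the second follows because the $T$-linear projection $\pr_{\cD_T}$, having finite cohomological amplitude, preserves the class of $T$-perfect objects (checked affine-locally: a finite-amplitude $T$-linear triangulated functor sends bounded-$\Tor$-amplitude complexes to the same). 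Once $G$ is $T$-perfect, $T_\cD$ is open, and Lemma~\ref{lemma-S-perfect-bc} ensures that $\res{E}{T_\cD}$ remains $T_\cD$-perfect and universally gluable with the desired fiberwise condition, completing the proof.
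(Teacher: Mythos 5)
Your overall strategy is the same as the paper's: show the morphism to $\cMpug(X/S)$ is representable by open immersions, which reduces to showing the locus $T_\cD = \{t \in T : E_t \in \cD_t\}$ is open, via the vanishing of the complementary piece $G$ in the base-changed semiorthogonal decomposition of $\Db(X_T)$. However, you take a detour that is both unnecessary and, as written, contains a factual slip.

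The slip: you assert that $T$-perfect complexes are ``locally quasi-isomorphic to a bounded complex of finite locally free sheaves.'' That is the characterization of a \emph{perfect} complex, not of a $T$-perfect (i.e.\ relatively perfect) one. $T$-perfect means pseudo-coherent plus finite $\Tor$-dimension over $g_T^{-1}\cO_T$; this is strictly weaker and does not give local finite-free resolutions. So even if you did establish $T$-perfectness of $G$, the justification you give for openness of its vanishing locus would not follow from that definition.

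The detour: you insist that $G$ be $T$-perfect, which forces you to argue that the projection $\pr_{\cD_T}$ preserves $T$-perfectness. This is true, but it is a genuine claim, not a one-liner --- the finite cohomological amplitude hypothesis is about the standard t-structure, not about $\Tor$-amplitude over $T$, and to bridge them one must invoke $\Dqc(T)$-linearity for arbitrary (not just perfect) complexes on $T$, which requires the cocontinuous extension to $\Dqc$. The paper avoids all of this: since $E \in \Db(X_T)$ (by Lemma~\ref{lem-relations-S-perfect}) and $\cD_T \subset \Db(X_T)$ by construction of the base-changed decomposition, $G$ lies in $\Db(X_T)$, so it is automatically bounded and pseudo-coherent. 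The top nonvanishing cohomology sheaf of a pseudo-coherent complex on a noetherian scheme is coherent (Stacks Tag 064U), so the set $\{x \in X_T : G_x \neq 0\}$ is closed, and since $g$ is proper, $\{t : G_t = 0\}$ is the complement of $g_T(\supp G)$, hence open. No $T$-perfectness is needed. Finally, you also elide the observation (paper's Lemma~\ref{lemma-Ds-field-extension}) that the condition $E_t \in \cD_t$ is stable under field extension of $t$, which is what guarantees the open subscheme $T_\cD$ actually represents the fiber product on all $T$-schemes; this is straightforward but should be stated.
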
 

The key to showing that $\cMpug(\cD/S) \to \cMpug(X/S)$ is an open immersion is the following lemma.

\begin{Lem}
\label{lemma-D-Ds}
Let $T \to S$ be a morphism from a quasi-compact scheme with affine diagonal, and let $E \in \rD(X_T)$ be 
$T$-perfect. 
\begin{enumerate}[{\rm(1)}] 
\item \label{D-iff-Ds}
$E \in \cD_T$ if and only if $\res{E}{t} \in \cD_t$ for all 
$t \in T$. 

\item \label{Ds-locus-open} 
The set 
\begin{equation*}
\{ t \in T \sth \res{E}{t} \in \cD_t \} 
\end{equation*}
is open in $T$. 
\end{enumerate}
\end{Lem}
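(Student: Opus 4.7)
The strategy will be to reduce both claims to corresponding facts about the right projection $F := \pi_{\cD_T^\perp}(E)$ of $E$ with respect to the $T$-linear semiorthogonal decomposition $\Db(X_T) = \langle \cD_T, \cD_T^\perp \rangle$ supplied by Theorem~\ref{theorem-bc-sod}. Since the given decomposition $\Db(X) = \langle \cD, \cD^\perp \rangle$ has finite cohomological amplitude, the base-changed projection inherits finite cohomological amplitude and in particular preserves bounded complexes; thus $F \in \Db(X_T)$. Moreover, since $g_T$ is flat, every inclusion $\Spec \kappa(t) \to T$ is faithful with respect to $g_T$, so Theorem~\ref{theorem-bc-sod} yields the base-change isomorphism $F_t \simeq \pi_{\cD_t^\perp}(E_t)$ in $\Db(X_t)$. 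Combining these, $E \in \cD_T$ if and only if $F = 0$, and $E_t \in \cD_t$ if and only if $F_t = 0$.

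Both claims therefore reduce to statements about the bounded coherent complex $F$ on the noetherian scheme $X_T$: part (1) to the implication ``$F_t = 0$ for every $t \in T$ implies $F = 0$,'' and part (2) to openness of $\{t \in T : F_t = 0\}$ in $T$. For part (1), my plan is a Nakayama argument on the top cohomology: assuming $F \ne 0$, let $b$ be the largest integer with $\cH^b(F) \ne 0$ and pick any $x \in \supp \cH^b(F)$. Then $\cH^b(F)_x \ne 0$, so by Nakayama $\cH^b(F)_x \otimes \kappa(x) \ne 0$; since $b$ is the top nonvanishing degree this coincides with $H^b(F \otimes^L_{\cO_{X_T}} \kappa(x))$, and hence $F \otimes^L_{\cO_{X_T}} \kappa(x) \ne 0$. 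But for $t := g_T(x)$ this derived stalk factors through $F_t$, contradicting $F_t = 0$.

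For part (2), let $W := \supp F = \bigcup_i \supp \cH^i(F) \subset X_T$; this is closed, and its image $g_T(W) \subset T$ is closed by properness of $g_T$. I will show $g_T(W) = \{t \in T : F_t \ne 0\}$, which gives the required openness of the complement. One direction is immediate: if $t \notin g_T(W)$, then $X_t \cap W = \emptyset$, so $F$ vanishes on an open neighborhood of $X_t$, forcing $F_t = 0$. Conversely, given $t \in g_T(W)$, pick any $x \in X_t \cap W$ and let $b$ be the largest integer with $x \in \supp \cH^b(F)$; then $F$ is concentrated in degrees $\le b$ in a neighborhood of $x$, and the same top-cohomology Nakayama argument as in part (1) gives $F \otimes^L_{\cO_{X_T}} \kappa(x) \ne 0$, whence $F_t \ne 0$.

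The principal technical input is the base-change compatibility of strong semiorthogonal decompositions of finite cohomological amplitude, i.e., Theorem~\ref{theorem-bc-sod}; once that is in hand, the proof reduces to routine Nakayama-style arguments with bounded coherent complexes on the noetherian scheme $X_T$, and no serious additional obstacle is anticipated.
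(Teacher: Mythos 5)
Your proof takes essentially the same route as the paper: decompose $E$ via the base-changed semiorthogonal decomposition, reduce both claims to the vanishing of the complementary projection (fiberwise, and globally), and deduce these by Nakayama-type arguments on the top cohomology. The one small notational slip is writing the decomposition as $\langle \cD_T, \cD_T^\perp \rangle$: the paper's standing convention for a left-admissible component is $\Db(X_T) = \langle \cD_T, {}^\perp\cD_T \rangle$ (the \emph{left} orthogonal is the second component), whereas $\langle \cD_T, \cD_T^\perp\rangle$ is not generally a semiorthogonal decomposition; this does not affect the argument, since all you use is the projection onto the complementary component. One point in your favour: the paper gestures at both parts following ``easily from Nakayama's Lemma and finite-typeness of the top cohomology sheaf,'' but part (2) genuinely needs properness of $g_T$ (to know that $g_T(\supp G)$ is closed — without it, a complex supported on a horizontal hyperbola over $\bA^1$ would give a non-open vanishing locus), and you state and use this explicitly, which is a welcome clarification.
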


\begin{proof}
Note that since $X_T \to T$ is quasi-compact and $T$ is quasi-compact, the scheme $X_T$ is quasi-compact. 
Thus by Lemma~\ref{lem-relations-S-perfect}.\eqref{S-perfect-db} we have $E \in \Db(X_T)$. 
Decomposing $E$ with respect to the semiorthogonal decomposition 
\begin{equation*}
\Db(X_T) = \langle \cD_T, ({^\perp}\cD)_T \rangle 
\end{equation*}
gives an exact triangle 
\begin{equation*}
G \to E \to F
\end{equation*} 
where $F \in \cD_T$ and $G \in ({^\perp}\cD)_T$.
By Theorem~\ref{theorem-bc-sod}, for any $t \in T$ the triangle 
\begin{equation*}
 \res{G}{t} \to \res{E}{t} \to \res{F}{t}
\end{equation*}
obtained by restriction gives the decomposition of $\res{E}{t}$ with respect to the 
semiorthogonal decomposition 
\begin{equation*}
\Dqc(X_t) = \langle \cD_{t, \qc}, {^\perp}\cD_{t, \qc} \rangle. 
\end{equation*} 
In the above terms, we have $E \in \cD_T$ if and only if $G = 0$ and 
$\res{E}{t} \in \cD_t$ if and only if $\res{G}{t} = 0$. 
Hence part~\eqref{D-iff-Ds} holds since $G = 0$ if and only if 
$\res{G}{t} = 0$ for all $t \in T$, and part~\eqref{Ds-locus-open} holds 
since the set of $t \in T$ where $\res{G}{t} = 0$ is open. 
Indeed, both statements follow easily from Nakayama's Lemma and from the fact \citestacks{064U} that the maximal cohomology sheaf of a pseudo-coherent complex is a quasi-coherent sheaf of finite type.
\end{proof}

With this preparation, we can prove the proposition.

\begin{proof}[Proof of Proposition~\ref{proposition-cMpug(D/S)-algebraic}]
We show that the functor $\cMpug(\cD/S) \colon (\Sch/S)^{\op} \to \Gpds$ is a stack in the fppf topology and that the canonical morphism $\cMpug(\cD/S) \to \cMpug(X/S)$ is representable by 
open immersions.  The result then follows from the fact that $\cMpug(X/S)$ is an algebraic stack locally of finite presentation 
over $S$ by Theorem~\ref{theorem-lieblich-moduli}. 

As $\cMpug(\cD/S)$ is a subfunctor of the stack $\cMpug(X/S)$, it suffices to show that for $T \in (\Sch/S)$ and $E \in \Dpug(X_T/T)$, 
the condition $E \in \cMpug(\cD/S)(T)$ can be checked fppf locally on $T$.  To see that this is true, let $k \subset \ell$ be a field extension for field $k$ with a morphism $\Spec(k) \to S$.  
Denote by  $\cD_{k}$ and $\cD_{\ell}$ the base changes of $\cD$ along 
$\Spec(k) \to S$ and the induced map $\Spec(\ell) \to S$, respectively.    
Then for an $S$-perfect $E\in\rD(X)$, we claim that $E_k \in \cD_k$ if and only if $E_{\ell} \in \cD_{\ell}$. 
Indeed, by the same argument as in the proof of Lemma~\ref{lemma-D-Ds}, this boils 
down to the fact that the functor $\Db(X_k) \to \Db(X_{\ell})$ is conservative. It follows that the condition $E \in \cMpug(\cD/S)(T)$ can be checked fppf locally on $T$,
and therefore that $\cMpug(\cD/S)$ is stack.

It remains to show that $\cMpug(\cD/S) \to \cMpug(X/S)$ is representable by open immersions.
So let $T \in (\Sch/S)$ with a morphism $T \to \cMpug(X/S)$.
Since $\cMpug(\cD/S)$ is a stack, checking that second projection $\cMpug(\cD/S) \times_{\cMpug(X/S)} T\to T$ is an 
open immersion reduces to the case where $T$ is affine.
Let $E \in \Dpug(X_T/T)$ be the object corresponding to $T \to \cMpug(X/S)$. 
Then the fiber product $\cMpug(\cD/S) \times_{\cMpug(X/S)} T$ with its second projection is represented by the set 
\begin{equation*}
U = \{ t \in T \sth \res{E}{t} \in \cD_t \} 
\end{equation*}
with its inclusion into $T$.  This is an open immersion by Lemma~\ref{lemma-D-Ds}, so we are done. 
\end{proof}

\subsection{Boundedness} 
One of our ultimate goals in this paper is to construct well-behaved substacks of $\cMpug(\cD/S)$ in the presence of a ``stability condition on $\cD$ over $S$'', a notion that will be studied in Part~\ref{part:higher-dimensional-bases}. 
In particular, we will be interested in moduli functors which are bounded, in the following sense. 
 
\begin{Def}
\label{def-bounded}
A subfunctor $\cM \subset \cMpug(\cD/S)$ is \emph{bounded} if there exists a pair $(B, \cE)$ where $B$ is a scheme of finite type over $S$ and $\cE \in \cM(B)$ is an object such that for every geometric point $\bar{s}$ over $S$ and $E \in \cM(\kappa(\bar{s}))$, there exists a $\kappa(\bar{s})$-rational point $b$ of $B \times_{S} \Spec(\kappa(\bar{s}))$ such that $\cE_b \cong E$. 
\end{Def} 

If $\cM \subset \cMpug(\cD/S)$ is an open substack, then following \cite{Toda:K3}, boundedness can be phrased in terms of 
the intrinsic geometry of $\cM$ as follows. 

\begin{Lem}
\label{lem-open-bounded}
Let $\cM$ be an open and bounded substack of $\cMpug(\cD/S)$. 
Then $\cM$ is an algebraic stack of finite type over $S$. 
\end{Lem}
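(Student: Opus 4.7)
The plan is to reduce the statement to verifying two properties of $\cM$ over $S$: being locally of finite presentation, and being quasi-compact. The first is immediate from the setup, while the second is where boundedness enters.

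First I would observe that since $\cM \subset \cMpug(\cD/S)$ is an open substack and $\cMpug(\cD/S)$ is an algebraic stack locally of finite presentation over $S$ by Proposition~\ref{proposition-cMpug(D/S)-algebraic}, the substack $\cM$ is itself algebraic and locally of finite presentation over $S$. Thus it remains to prove that $\cM \to S$ is quasi-compact; combined with being locally of finite presentation this yields finite type over $S$.

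Next, I would use the boundedness hypothesis to produce a pair $(B, \cE)$ with $B$ a finite type $S$-scheme and $\cE \in \cM(B)$ such that every geometric-point object of $\cM$ over $S$ arises from a $\kappa(\bar{s})$-point of $B$. The object $\cE$ corresponds to a morphism of stacks $f \colon B \to \cM$ over $S$, and the defining property of boundedness says exactly that $f$ is surjective on geometric points. By the standard characterization of surjective morphisms of algebraic stacks (see e.g.~\citestacks{04ZR}), this is equivalent to $f$ being surjective as a morphism of algebraic stacks.

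Finally, since $B$ is quasi-compact (being of finite type over the quasi-compact $S$ in any local-on-$S$ verification) and $f \colon B \to \cM$ is a surjective morphism of algebraic stacks, $\cM$ is quasi-compact. Combining with the local finite presentation established in the first paragraph concludes that $\cM$ is of finite type over $S$. There is no real obstacle here beyond checking that the geometric-point surjectivity packaged in Definition~\ref{def-bounded} genuinely translates into the stack-theoretic surjectivity of $f$; this is the only step requiring care, and it is standard.
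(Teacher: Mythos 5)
Your proposal is correct and follows essentially the same route as the paper's proof: algebraicity and local finite presentation come from openness in $\cMpug(\cD/S)$ via Proposition~\ref{proposition-cMpug(D/S)-algebraic}, the boundedness datum $(B,\cE)$ gives a morphism $B\to\cM$ that is surjective on geometric points and hence surjective by \citestacks{04ZR}, and surjectivity from a finite-type $B$ yields quasi-compactness. One small point of precision: the conclusion you want is that the \emph{morphism} $\cM\to S$ is quasi-compact (not that $\cM$ itself is quasi-compact), which follows from surjectivity of $B\to\cM$ and quasi-compactness of $B\to S$ via \citestacks{050X}; your parenthetical ``local-on-$S$'' remark shows you understand this, but it should be stated as a property of the structure morphism.
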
 

\begin{proof}
Since $\cMpug(\cD/S)$ is an algebraic stack locally of finite presentation over $S$ by Proposition~\ref{proposition-cMpug(D/S)-algebraic}, the same holds for $\cM$. 

Now consider $B$ as in Definition~\ref{def-bounded}.
Then $B \to \cM$ is surjective by \citestacks{04ZR}, and so $\cM \to S$ is quasi-compact by \citestacks{050X}.
Hence it is of finite type. 
\end{proof} 

The following observation will be needed later. 

\begin{Lem}\label{lem:ExtensionOfBounded}
Assume $S$ is noetherian. 
Let $\cM \subset \cMpug(\cD/S)$ be an open substack. 
Assume there exist bounded subfunctors $\cM_1$ and $\cM_2$ of $\cMpug(\cD/S)$ 
such that for every geometric point $\bar{s}$ over $S$ and every $E \in \cM(\kappa(\bar{s}))$, 
there is an exact triangle 
\begin{equation*}
E_1 \to E \to E_2
\end{equation*}
with $E_1 \in \cM_1(\kappa(\bar{s}))$ and $E_2 \in \cM_2(\kappa(\bar{s}))$.
Then $\cM$ is bounded.
\end{Lem}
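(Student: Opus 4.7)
The plan is to produce a finite type scheme $V$ over $S$ with an object $\cE \in \cM(V)$ witnessing boundedness in the sense of Definition~\ref{def-bounded}. The strategy is to parametrize all possible extensions $E_1 \to E \to E_2$ with $E_i \in \cM_i$, using the universal families of $\cM_1$ and $\cM_2$ together with an auxiliary finite type scheme carrying a universal extension class.

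First, I would choose $(B_i, \cE_i)$ witnessing boundedness of $\cM_i$ for $i=1,2$, and set $B = B_1 \times_S B_2$, which is of finite type over $S$ because $S$ is noetherian. Writing $\cF_i$ for the pullback of $\cE_i$ to $B$ and $\pi \colon X_B \to B$ for the projection, I would form the relative derived Hom complex $K = R\pi_* R\cHom(\cF_2, \cF_1) \in \rD(B)$. Since $\cF_1, \cF_2$ are $B$-perfect and $\pi$ is flat, proper, and of finite presentation, $K$ is pseudo-coherent with bounded cohomology, and its formation commutes with arbitrary base change $T \to B$ by a standard base change argument. Because $B$ is noetherian, after shrinking $B$ to an affine open cover (which does not affect the boundedness question) I may assume $K$ is quasi-isomorphic to a bounded complex $P^\bullet$ of finitely generated projective $\cO_B$-modules.

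Next I would construct the parameter scheme as follows. Let $\V = \Spec_B(\Sym(P^{1,\vee}))$ be the total space of $P^1$, a finite type $B$-scheme carrying a tautological section $\tau$ of $P^1_{\V}$, and let $V' \subset \V$ be the closed subscheme cut out by $d\tau = 0 \in \Gamma(\V, P^2_\V)$. Then $V'$ is of finite type over $B$, and the restriction $\tau|_{V'}$ is a cocycle representing a universal class
\[
[\tau|_{V'}] \in H^1(K_{V'}) = \Ext^1_{X_{V'}}(\cF_{2,V'}, \cF_{1,V'}),
\]
i.e., a morphism $\cF_{2,V'} \to \cF_{1,V'}[1]$ in $\rD(X_{V'})$. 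Completing to an exact triangle produces an object $\cE$ on $X_{V'}$ fitting into $\cF_{1,V'} \to \cE \to \cF_{2,V'}$. Since $\cF_{i,V'} \in \cMpug(\cD/S)(V')$ and $\cD$ is closed under extensions in each fiber, the triangle shows $\cE$ is $V'$-perfect, universally gluable, and has fibers in $\cD$, so $\cE \in \cMpug(\cD/S)(V')$. Using the openness of $\cM \subset \cMpug(\cD/S)$ from Proposition~\ref{proposition-cMpug(D/S)-algebraic}, the associated morphism $V' \to \cMpug(\cD/S)$ pulls back $\cM$ to an open subscheme $V \subset V'$, still of finite type over $S$, and $\cE|_V \in \cM(V)$.

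Finally, to verify the boundedness condition, given a geometric point $\bar s$ and $E \in \cM(\kappa(\bar s))$, the hypothesis supplies a triangle $E_1 \to E \to E_2$ with $E_i \in \cM_i(\kappa(\bar s))$; the $E_i$ are represented by points $b_i \in B_i(\kappa(\bar s))$, giving $b \in B(\kappa(\bar s))$, and the extension class in $\Ext^1(E_2,E_1) = H^1(K_b)$ lifts to a cocycle in $\ker(P^1_b \to P^2_b)$, i.e., a $\kappa(\bar s)$-point of $V'$ over $b$ whose image $\cE_v \cong E$ forces it to land in $V$. The main obstacle I expect is in the second step: relative $\Ext^1$ need not be represented directly by a finite type scheme (one cannot simply invoke Lemma~\ref{lemma-hom-representable} because the required fiberwise vanishing of $\Hom(E_{2,s}, E_{1,s})$ fails in general); the argument has to pass through a Zariski-local perfect model of $K$ and use the total space of its degree-one piece (plus the cocycle condition) to obtain a finite type scheme that surjects onto $\Ext^1$ fiberwise, which is enough for boundedness even though it overparametrizes extensions by cochain representatives rather than cohomology classes.
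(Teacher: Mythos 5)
Your cocycle-level approach is a sound alternative, and you are right to scrutinize the appeal to Lemma~\ref{lemma-hom-representable}: after unwinding the shift, its hypothesis for $\uHom_{B_{12}}((\cE_2)_{B_{12}},(\cE_1)_{B_{12}}[1])$ asks for $\Ext^{j}((\cE_2)_b,(\cE_1)_b)=0$ for all $j\le 0$ and all $b\in B_{12}$, and in particular for $\Hom((\cE_2)_b,(\cE_1)_b)=0$, which is not a consequence of $\cE_1,\cE_2$ being families of universally gluable objects. Your replacement (a Zariski-local bounded-above free model $P^\bullet$ of $K=\cHom_{B}(\cF_2,\cF_1)$, the affine $B$-scheme carrying the tautological section of $P^1$, and its closed cocycle locus $V'$) is the natural workaround: Definition~\ref{def-bounded} only demands a finite type scheme over $S$ carrying an object of $\cM$ whose fibers exhaust $\cM(\kappa(\bar s))$, so it is enough to surject fiberwise onto $\Ext^1(\cF_{2,b},\cF_{1,b})$, and representability of the cohomology functor $\uHom$ itself is not needed. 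The overall shape of your argument (product of bounding families, universal extension, open locus) is otherwise the same as the paper's.

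There is, however, one genuine gap in your proposal. You claim that the triangle $\cF_{1,V'}\to\cE\to\cF_{2,V'}$ shows $\cE$ is universally gluable; it does not. The long exact sequence bounding $\Ext^{<0}(\cE_{\bar t},\cE_{\bar t})$ involves the cross terms $\Ext^{<0}(\cF_{1,\bar t},\cF_{2,\bar t})$ and $\Ext^{<0}(\cF_{2,\bar t},\cF_{1,\bar t})$, which are completely uncontrolled for arbitrary universally gluable $\cF_1,\cF_2$, so $\cE$ need not lie in $\Dpug(X_{V'}/V')$. Until this is addressed you have no morphism $V'\to\cMpug(\cD/S)$ and hence cannot "pull back $\cM$." The repair is to first pass to the open subscheme $V'^{\circ}\subset V'$ on which $\cE$ is universally gluable (openness holds because $\cE$ is $V'$-perfect, exactly as the paper argues for its $Z^{\circ}\subset Z$), define the morphism $V'^{\circ}\to\cMpug(\cD/S)$, and only then take the preimage of $\cM$; the $\kappa(\bar s)$-points of $V'$ produced from the hypothesized triangles land in $V'^{\circ}$, since the middle term $E$ is given to lie in $\cM(\kappa(\bar s))\subset\cMpug(\cD/S)(\kappa(\bar s))$. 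A secondary imprecision: pseudo-coherence with bounded cohomology of $K$ on a noetherian affine piece of $B$ yields a bounded-\emph{above} complex of finite free modules, not the two-sided bounded complex of projectives you assert (that would require perfectness of $K$, which needs a separate argument). Since only the terms in degrees $\le 2$ enter your construction this is harmless, but the stronger claim should be dropped.
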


\begin{proof}
For $i=1,2$, let $(B_i, \cE_i)$ be a pair as in Definition~\ref{def-bounded} witnessing the boundedness of $\cM_i$. 
Let $B_{12} = B_1 \times_S B_2$, and let $(\cE_{i})_{B_{12}} \in \Dpug(X_{B_{12}}/B_{12})$ denote the pullback of $\cE_{i} \in \Dpug(X_{B_i}/B_{i})$. 
By Lemma~\ref{lemma-hom-representable}, the functor 
\begin{equation*}
\uHom_{B_{12}}((\cE_{2})_{B_{12}}, (\cE_{1})_{B_{12}}[1]) \colon 
(\Sch/B_{12})^{\op} \to \Sets 
\end{equation*}
is covered by a scheme $Z$ which is affine and of finite presentation over $B_{12}$. 
By composition with the morphism $B_{12} \to S$, we think of $Z$ as a scheme over $S$. 
Note that since each $B_i \to S$ is of finite type, so is $B_{12} \to S$, and hence 
so is $Z \to S$. 

By definition, there is a universal morphism 
\begin{equation*}
\alpha \in \Hom_{\rD(X_Z)}((\cE_2)_Z, (\cE_1)_Z[1]),
\end{equation*}
where $(\cE_i)_Z$ denotes the pullback of $\cE_i$ along the base change $Z \to B_{12} \to B_i$.
Define $\cE$ as the $[-1]$-shifted cone of $\alpha$, so that there is an exact 
triangle 
\begin{equation*}
(\cE_1)_Z \to \cE \to (\cE_2)_Z. 
\end{equation*}
Note that since $(\cE_1)_Z$ and $(\cE_2)_Z$ are $Z$-perfect objects of $\rD(X_Z)$, so is $\cE$ by \citestacks{0DI3}. 
Hence by \citestacks{0DLC} the locus in $Z$ where $\cE$ is universally gluable is an open subscheme $Z^{\circ} \subset Z$, i.e., there is an open subscheme $Z^{\circ} \subset Z$ characterized by the property that $z \in |Z^{\circ}|$ if and only if $\Ext^i(\cE_z, \cE_z) = 0$ for $i < 0$. 
Then $\cE_{Z^{\circ}} \in \Dpug(X_{Z^{\circ}}/Z^{\circ})$. 
Moreover, for all $z \in Z$ we have $\cE_z \in \cD_z$ because the analogous statement is true for $(\cE_1)_{Z}$ and $(\cE_2)_{Z}$ by Lemma~\ref{lemma-D-Ds}.
Thus $\cE_{Z^{\circ}}$ corresponds to a morphism 
\begin{equation*}
Z^{\circ} \to \cMpug(\cD/S)
\end{equation*} 
over $S$. 
Let $B \subset Z^{\circ}$ be the open subscheme given by the preimage of the open substack $\cM \subset \cMpug(\cD/S)$. 
Then the restriction $\cE_{B}$ lies in $\cM(B)$. 

We claim that the pair $(B, \cE_{B})$ witnesses the boundedness of $\cM$. 
Indeed, as observed above $Z \to S$ is of finite type. 
Since $S$ is noetherian, so is $Z$. Therefore, the open immersions $B \subset Z^{\circ} \subset Z$ are of finite type, and the same holds for the composition $B \to S$.
Further, by the assumption of the lemma and the construction of $B$, 
for every geometric point $\bar{s}$ over $S$ and $E \in \cM(\kappa(\bar{s}))$, 
there is a $\kappa(\bar{s})$-rational point $b$ of $B \times_S \Spec(\kappa(\bar{s}))$ such that $\cE_b \cong E$. 
\end{proof}

\subsection{Simple objects} 
Finally, we discuss substacks of $\cMpug(\cD/S)$ parameterizing objects which are simple in the following sense. 
\begin{Def} \label{def:simple}
An object $E \in \Dpug(X_T/T)$ is called \emph{simple} if $\Hom(E_{\overline{t}}, E_{\overline{t}}) = \kappa(\overline{t})$ holds for every 
geometric point $\overline{t}$ of $T$. 
\end{Def}

The above definition is equivalent to Lieblich's \cite[Definition~4.3.1]{Lieblich:mother-of-all}, which requires that the automorphism stack of $E$ is given by the multiplicative group over $T$. 
\index{sM@$\scM \subset \cM$, subfunctor parameterizing simple objects of $\cM$}
Given a subfunctor $\cM \subset \cMpug(\cD/S)$, we write $\scM \subset \cM$ for the subfunctor parameterizing simple objects of $\cM$. 

\begin{Lem}\label{Lem:SimpleAlgebraicSpace}
Let $\cM$ be an open substack of $\cMpug(\cD/S)$. 
\index{sM@$sM$, algebraic moduli space}
Then $\scM$ is an algebraic stack locally of finite presentation over $S$ and admits the structure of a $\mathbb{G}_m$-gerbe over an algebraic space $sM$ locally of finite presentation over $S$. If, moreover, $s\cM$ is bounded, then $\scM$ and $sM$ are of finite type over $S$. 
\end{Lem}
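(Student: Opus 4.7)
The plan is to deduce the structural statements about $\scM$ from Lieblich's corresponding result for the ambient stack $\cMpug(X/S)$, and then extract the finite type statement from Lemma~\ref{lem-open-bounded}.

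First, by Lieblich \cite[Theorem~4.3.4]{Lieblich:mother-of-all}, the locus $\scMpug(X/S) \subset \cMpug(X/S)$ parameterizing simple universally gluable objects is an open substack, and it carries the structure of a $\mathbb{G}_m$-gerbe over an algebraic space $sM_{X/S}$ locally of finite presentation over $S$. Combining this with the fact that $\cMpug(\cD/S) \to \cMpug(X/S)$ is an open immersion (Proposition~\ref{proposition-cMpug(D/S)-algebraic}) and that $\cM \to \cMpug(\cD/S)$ is an open immersion by hypothesis, we see that $\scM = \cM \cap \scMpug(X/S)$ is an open substack of $\cMpug(X/S)$, and in particular an algebraic stack locally of finite presentation over $S$. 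Alternatively, one may verify openness of simplicity directly using Lemma~\ref{lemma-hom-representable}, which applied to $(E,E)$ for $E \in \cM(T)$ represents $\uHom_T(E,E)$ by an affine $T$-scheme of finite presentation whose fiber dimension $t \mapsto \dim_{\kappa(\bar t)} \Hom(E_{\bar t}, E_{\bar t})$ is upper semicontinuous; since the identity forces this dimension to be at least one, the simplicity locus $\{\dim \leq 1\}$ is open.

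Second, the gerbe structure restricts. Because the $\mathbb{G}_m$-action on $\scMpug(X/S)$ is by scalar automorphisms, it acts trivially on the underlying set of isomorphism classes; hence the open substack $\scM \subset \scMpug(X/S)$ is the preimage of its image $sM \subset sM_{X/S}$ under the gerbe map, and $sM$ is an open subspace. The restriction of a $\mathbb{G}_m$-gerbe to an open substack remains a $\mathbb{G}_m$-gerbe, so $\scM \to sM$ is a $\mathbb{G}_m$-gerbe with $sM$ an algebraic space locally of finite presentation over $S$.

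Finally, assume $\scM$ is bounded as a subfunctor of $\cMpug(\cD/S)$. Lemma~\ref{lem-open-bounded}, applied to the open and bounded substack $\scM$, yields that $\scM$ is of finite type over $S$. Since $\scM \to sM$ is a $\mathbb{G}_m$-gerbe, it is smooth, surjective, and quasi-compact, so $sM$ is also of finite type over $S$. The main technical input is Lieblich's result on simple objects; once invoked, the remainder is bookkeeping about compatibility of the gerbe structure with restriction to open substacks.
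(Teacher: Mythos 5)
Your proof is correct and takes essentially the same approach as the paper: both invoke Lieblich's results on simple universally gluable objects to get openness and the $\mathbb{G}_m$-gerbe structure, and both conclude finite type via Lemma~\ref{lem-open-bounded}. The only organizational difference is that you obtain $sM$ as an open subspace of Lieblich's $sM_{X/S}$ by restricting the ambient gerbe, whereas the paper applies the $\mathbb{G}_m$-rigidification directly to $\scM$; these are equivalent routes through the same underlying machinery.
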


\begin{proof}
The simple objects $s\cMpug(\cD/S)$ form an open substack of $\cMpug(\cD/S)$ by \cite[Lemma~4.3.2]{Lieblich:mother-of-all} 
combined with Proposition~\ref{proposition-cMpug(D/S)-algebraic}.
So $\scM$, being the intersection of $\cM$ and $s\cMpug(\cD/S)$, is an open substack of $\cMpug(\cD/S)$ and 
therefore locally of finite presentation over $S$. 
Now as in \cite[Corollary~4.3.3]{Lieblich:mother-of-all}, we obtain $sM$ as the $\mathbb{G}_m$-rigidification 
of $\scM$. 
If $\scM$ is bounded, then it is of finite type over $S$ by Lemma~\ref{lem-open-bounded}, hence 
$sM$ is also of finite type. 
\end{proof}

\section{Fiberwise t-structures}

In this section, we consider the following situation: 
\begin{itemize}
\item $g \colon X \to S$ is a flat, finitely presented morphism of 
{schemes which are quasi-compact with affine diagonal, where 
$X$ is noetherian of finite Krull dimension.} 
\item $\cD \subset \Db(X)$ is an $S$-linear strong semiorthogonal component whose projection 
is of finite cohomological amplitude. 
\end{itemize} 

In Section~\ref{sec:Polishchuk} we studied the notion of an $S$-local 
t-structure on $\cD$, which is a compatible specification of 
t-structures over every quasi-compact open $U \subset S$. 
In this section, we consider the notion of a fiberwise collection of t-structures 
on $\cD$, which is just the specification of a t-structure over every point $s \in S$;
to get reasonable behavior, we focus on collections where ``openness of flatness'' holds. 
If $S$ is regular of finite Krull dimension, 
then by base change, i.e., Theorem~\ref{Thm-D-bc}, a bounded $S$-local t-structure on $\cD$ which 
is titled-noetherian locally on $S$ induces a fiberwise collection of t-structures on $\cD$. 
We study the latter, weaker notion since it is sufficient and well-suited for formulating moduli problems.

\subsection{Fiberwise collections of t-structures}

Let us start with the following.

\label{subsec:fiberwise-collec-t-str}
\begin{Def}
\label{definition-fiberwise-t-structures}
\index{tau@$\utau = (\tau_s)_{s \in S}$, fiberwise collection of t-structures on $\cD$ over $S$}
A \emph{fiberwise collection of t-structures on $\cD$ over $S$} is a collection $\utau = (\tau_s)_{s \in S}$ of t-structures on $\cD_s$ for every (closed or non-closed) point $s \in S$. 
\end{Def} 

In this setting, we have enough structure to define flat objects, 
analogously to Definition~\ref{def-flat-family-T}. 

\begin{Def}
\label{def-flat-family-utau}
Let $\utau$ be a fiberwise collection of t-structures on $\cD$ over $S$, and let $T \to S$ be a morphism. 
Let $t \in T$ and let $s \in S$ be its image. 
We denote by $\hat{\tau}_{t}$ the t-structure on $(\cD_{\qc})_t$ 
obtained via Theorem~\ref{thm-Dqc-bc} by base change of $\tau_s$ along $\Spec(\kappa(t)) \to \Spec(\kappa(s))$, 
and write $(\cA_{\qc})_t$ for the heart of $\hat{\tau}_t$. 
We say an object $E \in \Dqc(X_T)$ is $T$-\emph{flat with respect to $\utau$} if $E_t \in (\cA_{\qc})_t$ for 
every point $t \in T$. 
\end{Def} 

Note that the pullback of any $T$-flat object along a morphism $T' \to T$ is $T'$-flat, 
by t-exactness of the base change functor along field extensions (Theorem~\ref{thm-Dqc-bc}.\eqref{DT-f-flat}). 

\begin{Rem}
When they both apply, the notions of $T$-flatness from 
Definitions~\ref{def-flat-family-T} and~\ref{def-flat-family-utau} agree, and hence we may 
unambiguously use the term ``$T$-flat''.
\end{Rem}

Without any extra conditions, a fiberwise collection of t-structures is not well-behaved, 
because there is no compatibility imposed between the t-structures on different fibers. 
To remedy this, we consider the following additional conditions. 

\begin{Def}
\label{def-open-flat-utau}
Let $\utau$ be a fiberwise collection of t-structures on $\cD$ over $S$. 
Then $\utau$ \emph{satisfies openness of flatness} if for every $S$-perfect 
object $E \in \rD(X)$, the set 
\begin{equation*}
\set{ s \in S \sth \res{E}{s} \in (\cA_{\qc})_s } 
\end{equation*} 
is open. 
Similarly, $\utau$ \emph{universally satisfies openness of flatness} if for every $T \to S$ and 
every $T$-perfect object $E \in \rD(X_T)$, the set 
\begin{equation*}
\set{ t \in T \sth \res{E}{t} \in (\cA_{\qc})_t } 
\end{equation*} 
is open. 
\end{Def}

\begin{Rem}
In Definition~\ref{def-open-flat-utau} we only consider relatively perfect objects, since 
these are the only type of objects we shall need to consider in our discussion of moduli problems. 
\end{Rem}
In the next result, we show that in Definition~\ref{def-open-flat-utau} it suffices to check openness for affine schemes $T$ finitely presented over $S$.
\begin{Lem} \label{lem:universalopennessfromfiniteopenness}
Let $\utau$ be a fiberwise collection of t-structures on $\cD$ over $S$. 
Assume that for every morphism 
of finite presentation $T \to S$ from an affine scheme $T$
and every $T$-perfect object $E \in \rD(X_T)$, the set 
\begin{equation*}
\set{ t \in T \sth \res{E}{t} \in (\cA_{\qc})_t } 
\end{equation*} 
is open. 
Then $\utau$ universally satisfies openness of flatness. 
\end{Lem}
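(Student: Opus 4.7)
The plan is to reduce the general statement to the hypothesis by a spreading-out argument along a filtered system of finite-presentation approximations.

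First, since openness is a local property on $T$, I would argue it suffices to check the statement in an open neighborhood of each point $t\in T$. Replacing $T$ by an affine open neighborhood of $t$ and $S$ by an affine open containing the image of $t$, one may assume both $T$ and $S$ are affine, so that $T\to S$ corresponds to a ring map $\cO_S(S)\to \cO_T(T)$.

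Next I would spread $T\to S$ out as a cofiltered limit $T=\lim_\alpha T_\alpha$ of affine $S$-schemes $T_\alpha$ of finite presentation, by writing $\cO_T(T)$ as a filtered colimit of its finitely presented sub-$\cO_S(S)$-algebras (\citestacks{09MV}). The crucial technical step is to then descend the $T$-perfect object $E\in\rD(X_T)$ to a $T_\alpha$-perfect object $E_\alpha\in\rD(X_{T_\alpha})$ for some sufficiently large index $\alpha$, in such a way that $E$ is isomorphic to the pullback of $E_\alpha$ along $\phi_\alpha\colon T\to T_\alpha$. Such a descent combines classical approximation of pseudo-coherent complexes (\citestacks{09U7}) with control of $\Tor$-amplitude (\citestacks{0DL0}), and uses that $g\colon X\to S$ is of finite presentation so that $X_{T_\alpha}\to T_\alpha$ exists and is of finite presentation as well.

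Once such a descent is obtained, the hypothesis applied to $T_\alpha$ guarantees that the set $U_\alpha=\set{t_\alpha\in T_\alpha \sth (E_\alpha)_{t_\alpha}\in(\cA_{\qc})_{t_\alpha}}$ is open in $T_\alpha$, so $\phi_\alpha^{-1}(U_\alpha)$ is open in $T$. I would then verify that this preimage coincides with the locus $\set{t\in T\sth E_t\in(\cA_{\qc})_t}$. Given $t\in T$ with $t_\alpha:=\phi_\alpha(t)\in T_\alpha$ and common image $s\in S$, the fiber $E_t$ is the pullback of $(E_\alpha)_{t_\alpha}$ along the field extension $\kappa(t_\alpha)\hookrightarrow\kappa(t)$. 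By functoriality of the base-changed t-structure, $\hat\tau_t$ agrees with the base change of $\hat\tau_{t_\alpha}$ along this extension; since the extension is flat, Theorem~\ref{thm-Dqc-bc}.\eqref{DT-f-flat} ensures that this pullback is t-exact, and since it is moreover faithfully flat, it is conservative. Hence $E_t\in(\cA_{\qc})_t$ if and only if $(E_\alpha)_{t_\alpha}\in(\cA_{\qc})_{t_\alpha}$, giving the required equality of loci.

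The main obstacle will be the descent step: producing an approximating object $E_\alpha$ that is genuinely $T_\alpha$-perfect (and not merely pseudo-coherent) demands careful use of the approximation theorems for relatively perfect complexes. Once this is in hand, the remainder of the argument is a formal consequence of the base-change properties already established in Theorem~\ref{thm-Dqc-bc}.
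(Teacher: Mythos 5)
Your proposal follows essentially the same strategy as the paper's proof: reduce to $T$ affine, write $T = \lim_\alpha T_\alpha$ as a cofiltered limit of affine finitely presented $S$-schemes via \citestacks{09MV}, descend the $T$-perfect object $E$ to a $T_\alpha$-perfect object $E_\alpha$ for some $\alpha$, apply the hypothesis to get an open $U_\alpha \subset T_\alpha$, and transfer openness along $\phi_\alpha^{-1}$ using the base-change compatibility of the fiberwise t-structures (Theorem~\ref{thm-Dqc-bc}.\eqref{DT-f-flat} for the flat direction and the fpqc-cover characterization \eqref{DT-Ui} to compare $\hat{\tau}_t$ with $\hat{\tau}_{t_\alpha}$). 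Two minor differences: (i) you treat the descent of $E$ to $E_\alpha$ as the main obstacle and cite a combination of approximation tags, whereas the paper invokes the single-purpose \citestacks{0DI8}, which does the descent of relatively perfect objects in one step; after this descent, the paper additionally replaces $E_\alpha$ by its projection into $\cD_{T_\alpha}$ (justified via Lemma~\ref{lemma-D-Ds}, having first replaced $E$ by its projection into $\cD_T$) so that the comparison of t-structures over $t$ and $t_\alpha$ takes place inside $(\cD_{\qc})_{t_\alpha}$ and $(\cD_{\qc})_t$ rather than the ambient $\Dqc$ — you should add this projection step since the hearts $(\cA_{\qc})_t$ live inside $(\cD_{\qc})_t$ and the t-exactness/conservativity argument is stated for the category $\cD$. (ii) You assert an equality of loci $\phi_\alpha^{-1}(U_\alpha) = \set{t : E_t \in (\cA_{\qc})_t}$; the paper only needs the containment $\phi_\alpha^{-1}(U_\alpha) \subset \set{t : E_t \in (\cA_{\qc})_t}$ together with $t \in \phi_\alpha^{-1}(U_\alpha)$ for the specific starting point $t$, but both arguments ultimately rest on the same two ingredients from Theorem~\ref{thm-Dqc-bc}, so this is a cosmetic difference. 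Overall the reasoning is sound and matches the paper.
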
 

\begin{proof}
For $T \to S$ an arbitrary morphism and $E \in \rD(X_T)$ a $T$-perfect object we must show 
the set $\set{ t \in T \sth \res{E}{t} \in (\cA_{\qc})_t }$ is open, 
assuming this holds for $T \to S$ of finite presentation with $T$ affine. 
We may immediately reduce to the case where $T$ is affine. 
In this case, by Lemma~\ref{lemma-limit-affine-fp} below we can write $T = \lim T_i$ 
as a cofiltered limit of affine schemes $T_i, i \in I$, which are of finite presentation over $S$. 
Assume $t \in T$ is a point such that $\res{E}{t} \in (\cA_{\qc})_t$. 
Then we must show there is an open $U \subset T$ containing $t$ such that 
for every $u \in U$ we have $\res{E}{u} \in (\cA_{\qc})_u$. 

By Lemma~\ref{lemma-D-Ds} we may assume $E \in \cD_T$. 
The object $E$ descends to a $T_0$-perfect object $E_0 \in \rD(X_{T_0})$ for 
some index $0 \in I$, see \citestacks{0DI8}. 
We have $E_0 \in \Db(X_{T_0})$ by Lemma~\ref{lem-relations-S-perfect}.\eqref{S-perfect-db}, hence we may assume $E_0 \in \cD_{T_0}$ by replacing $E_0$ with its projection into $\cD_{T_0}$. 
If $t_0$ denotes the image of $t$ under the projection $T \to T_0$, then 
the assumption $\res{E}{t} \in (\cA_{\qc})_t$ together with Theorem~\ref{thm-Dqc-bc}.\eqref{DT-Ui} shows that $(E_0)_{t_0} \in (\cA_{\qc})_{t_0}$. 
Since $T_0 \to S$ is a finitely presented morphism from an affine scheme, by assumption this means there is an open subset $U_0 \subset T_0$ containing $t_0$ such that for every $u_0 \in U_0$ we have $(E_0)_{{u_0}} \in \cA_{u_0}$. 
Then the preimage $U \subset T$ of $U_0$ under the projection $T \to T_0$ 
is the sought-for neighborhood of $t \in T$. 
Indeed, $\res{E}{u} \in (\cA_{\qc})_u$ holds for every $u \in U$ by Theorem~\ref{thm-Dqc-bc}.\eqref{DT-f-flat}. 
\end{proof}

The following lemma, invoked above, says there are ``enough'' affine schemes of finite presentation over 
a quasi-separated scheme. 
\begin{Lem}
\label{lemma-limit-affine-fp} 
Let $T \to S$ be a morphism of schemes with $T$ affine. 
Then $T \cong \lim T_i$ is a cofiltered limit of affine schemes $T_i$ 
which are of finite presentation over $S$. 
\end{Lem}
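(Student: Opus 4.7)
The plan is to reduce this to the standard approximation result \citestacks{09MV}, which states that any affine morphism to a quasi-separated scheme is a cofiltered limit of affine morphisms of finite presentation. So the main content is to bring the situation into a form where that result applies.

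First, I would verify that $T \to S$ is an affine morphism. This uses the standing hypothesis that $S$ has affine diagonal. For any affine open $U \subset S$, the fiber product $T \times_S U$ sits as a closed subscheme of the affine scheme $T \times U$ via the base change of the diagonal $\Delta_S \colon S \to S \times S$ (which is affine by assumption); hence $T \times_S U$ is affine. Thus $T \to S$ is an affine morphism, and $\cB := g_* \cO_T$ is a quasi-coherent $\cO_S$-algebra with $T \cong \uSpec_S(\cB)$.

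Next, I would apply \citestacks{09MV}, which is available because $S$ is quasi-separated (its diagonal being affine is in particular quasi-compact). This yields an expression $\cB = \colim_{i \in I} \cB_i$ as a filtered colimit of finitely presented quasi-coherent $\cO_S$-algebras. Setting $T_i = \uSpec_S(\cB_i)$ produces a cofiltered system of affine $S$-schemes of finite presentation (with affine transition maps), whose inverse limit equals $T$ by the equivalence between quasi-coherent $\cO_S$-algebras and affine $S$-schemes.

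The substance of \citestacks{09MV} is the existence of enough finitely presented quasi-coherent sub-algebras of $\cB$: one first writes $\cB$ as the filtered colimit of its finite type quasi-coherent sub-$\cO_S$-algebras $\cB_\alpha$ (via \citestacks{01PG}, which uses quasi-separatedness of $S$), each of which is a quotient $\Sym_{\cO_S}(\cF_\alpha) \onto \cB_\alpha$ by some quasi-coherent ideal $\cJ_\alpha$, with $\cF_\alpha$ of finite type; then one writes $\cJ_\alpha = \colim_\beta \cJ_{\alpha,\beta}$ as a filtered colimit of finite type quasi-coherent sub-ideals (again by \citestacks{01PG}), and sets $\cB_{\alpha,\beta} := \Sym_{\cO_S}(\cF_\alpha)/\cJ_{\alpha,\beta}$. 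No geometric obstacle arises; the only point requiring care is the quasi-separatedness hypothesis, which is ensured by our assumption of affine diagonal, and the bookkeeping to organize the $(\alpha,\beta)$ into a genuine cofiltered index category.
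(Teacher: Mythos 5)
Your plan identifies a genuinely different (and in part nicer) reduction — observing that $T \to S$ is an affine morphism when $S$ has affine diagonal and $T$ is affine, and then approximating the quasi-coherent $\cO_S$-algebra $\cB = g_*\cO_T$ by finitely presented ones — but it stops one step short. Setting $T_i = \uSpec_S(\cB_i)$ produces schemes that are \emph{affine over $S$}, as you in fact write (``a cofiltered system of affine $S$-schemes''), and these are not affine schemes unless $S$ itself is affine. For instance, the trivial sub-algebra $\cO_S \subset \cB$ appears in the approximation and gives $T_i = S$, which need not be affine; and $\uSpec_S(\Sym_{\cO_S}\cF_\alpha / \cJ_{\alpha,\beta})$ is generically a closed subscheme of a relative affine space over $S$, not of $\A^n$. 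The lemma asks for the $T_i$ to be honest affine schemes, and nothing in your argument delivers that.

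The missing step is exactly what the paper supplies. One must note that each $T_i$ is quasi-compact (it is affine over the quasi-compact $S$ of the standing hypotheses) and quasi-separated (it is affine over the quasi-separated $S$), that the transition morphisms are affine (both $T_i \to S$ and $T_j \to S$ are affine, so any $S$-morphism between them is), and that the limit $T$ is affine. Then \citestacks{01Z6} guarantees the $T_i$ are affine for a cofinal set of indices, and restricting to that cofinal subsystem finishes the proof. The paper bypasses the observation that $T\to S$ is affine (it applies \citestacks{09MV} directly to the quasi-compact, quasi-separated morphism $T\to S$, with no affine-over-$S$ structure on the approximating $T_i$ at all), but both approaches ultimately funnel through the same last step of upgrading quasi-compact quasi-separated $T_i$ to affine $T_i$ via \citestacks{01Z6}, which your write-up omits. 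A secondary imprecision: in your first paragraph, affine diagonal only makes $T \times_S U \to T \times U$ an \emph{affine} morphism, not a closed immersion (that would require $S$ separated); the conclusion that $T \times_S U$ is affine is still correct since it is affine over the affine scheme $T\times U$.
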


\begin{proof}
By \citestacks{09MV}, 
we can write $T = \lim T_i$ as a cofiltered limit of schemes $T_i$ of 
finite presentation over $S$ with affine transition morphisms.  We show that $T_i$ is affine for all $i$ large enough, which gives the result by omitting the first few terms.
The scheme $T_i$ is quasi-separated, being of finite presentation 
over the quasi-separated scheme $S$. 
Moreover, as $T$ is affine and thus quasi-compact, it follows from \citestacks{0CUF} and \citestacks{01YX} 
that after shrinking $T_i$ we can assume 
$T_i$ is quasi-compact for all $i$.  But then $T_i$ is affine for all $i$ large enough 
by \citestacks{01Z6}. 
\end{proof}

The following result will be useful later in our construction of Quot spaces. 
\begin{Lem}
\label{lem-openness-surjectivity}
Let $\utau$ be a fiberwise collection of t-structures on $\cD$ over $S$ which 
universally satisfies openness of flatness. 
Let $T \to S$ be a morphism from a quasi-compact scheme with affine diagonal. 
Let $E \to F$ be a morphism of $T$-perfect and $T$-flat objects in $\rD(X_T)$. 
Then the set 
\begin{equation*}
\set{ t \in T \sth \res{E}{t} \to \res{F}{t} \textup{ is surjective 
in } (\cA_{\qc})_t } 
\end{equation*}
is open. 
\end{Lem}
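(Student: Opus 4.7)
The plan is to reduce the surjectivity condition on fibers to a flatness statement for an auxiliary object, and then invoke the universal openness of flatness hypothesis directly.

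First, I would form the cone $G = \cone(E \to F)$ in $\rD(X_T)$. Since the class of $T$-perfect complexes is a triangulated subcategory of $\rD(X_T)$ (as being pseudo-coherent and of locally finite $\Tor$-dimension over $g^{-1}\cO_T$ are both stable under taking cones), $G$ and hence $G[-1]$ are $T$-perfect.

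Next, I would verify the key pointwise reformulation: for any $t \in T$, the map $E_t \to F_t$ is surjective in $(\cA_{\qc})_t$ if and only if $G_t[-1] \in (\cA_{\qc})_t$. The pullback to a fiber is exact, so $G_t \simeq \cone(E_t \to F_t)$ in $(\cD_{\qc})_t$. By $T$-flatness of $E$ and $F$ we have $E_t, F_t \in (\cA_{\qc})_t$; applying the cohomology functors of $\hat\tau_t$ to the triangle $E_t \to F_t \to G_t$ yields $\rH^{-1}_{(\cA_{\qc})_t}(G_t) = \ker(E_t\to F_t)$, $\rH^0_{(\cA_{\qc})_t}(G_t) = \coker(E_t\to F_t)$, and all other cohomologies zero. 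Hence surjectivity of $E_t \to F_t$ is equivalent to $G_t \in (\cA_{\qc})_t[1]$, i.e., $G_t[-1] \in (\cA_{\qc})_t$. Since pullback commutes with shifts, $G_t[-1] = (G[-1])_t$.

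Finally, I would apply the hypothesis that $\utau$ universally satisfies openness of flatness to the $T$-perfect object $G[-1] \in \rD(X_T)$. This yields that
\[
\set{ t \in T \sth (G[-1])_t \in (\cA_{\qc})_t }
\]
is open in $T$, and by the reformulation above this set coincides with the locus where $E_t \to F_t$ is surjective in $(\cA_{\qc})_t$.

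There is no real obstacle here; the entire argument is essentially a bookkeeping exercise, with the only subtlety being the triangulated stability of relatively perfect complexes (which is standard, cf.~\citestacks{0DI3}) and the correct identification of cohomology of a cone of a morphism of objects in the heart.
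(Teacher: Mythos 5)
Your argument is correct and follows the paper's proof essentially verbatim, differing only in that you work with the cone $G = \cone(E \to F)$ and apply openness of flatness to $G[-1]$, while the paper defines $G$ directly as the fiber (so that the condition becomes $G_t \in (\cA_{\qc})_t$ with no shift). Both versions are the same reduction — form the auxiliary $T$-perfect object, compute its cohomology fiberwise using $T$-flatness of $E$ and $F$, and invoke the openness hypothesis.
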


\begin{proof}
Let $G \in \rD(X_T)$ be the object determined by the exact triangle 
\begin{equation*}
G \to E \to F . 
\end{equation*} 
Note that $G$ is $T$-perfect because $E$ and $F$ are 
\citestacks{0DI3}, and moreover $G\in\cD_{T,\qc}$ by considering its decomposition with respect to the semiorthogonal decomposition $$\Dqc(X_T)=\langle \cD_{T,\qc}, ({^\perp}\cD)_{T,\qc} \rangle .$$ 
For $t \in T$ we consider the exact triangle 
\begin{equation*}
\res{G}{t} \to \res{E}{t} \to \res{F}{t}
\end{equation*} 
obtained by restriction. 
By $T$-flatness we have $\res{E}{t}, \res{F}{t} \in (\cA_{\qc})_t$, 
so the long exact cohomology sequence shows 
that $\res{G}{t} \in \cD_{t}^{[0,1]}$, and $\res{E}{t} \to \res{F}{t}$ is 
surjective if and only if $\res{G}{t} \in (\cA_{\qc})_t$. 
Thus the result follows from the assumption that $\utau$ universally 
satisfies openness of flatness. 
\end{proof}

\begin{Rem}\label{rmk:TiltedNoetherianInduced}
When $\tau_s$ is a tilted-noetherian t-structure for every $s \in S$, then the t-structure on $(\cD_{\qc})_t$ descends to one on $\cD_t$ by Proposition~\ref{Prop-D-bc-fields}; therefore, for a $T$-perfect object $E$ we can replace the condition $E_t \in (\cA_{\qc})_t$ by $E_t \in \cA_t$ everywhere.
\end{Rem}


\subsection{Integrable collections of t-structures} 

In this subsection, we compare the notion of a fiberwise collection of t-structures 
to the notion of a local t-structure from Section~\ref{sec:Polishchuk}. 

\begin{Def}\label{def:integrateststructure20210217}
\index{Integrate@integrable t-structure over a scheme}
Let $\utau$ be a fiberwise collection of t-structures on $\cD$ over $S$, and let $T \to S$ be a morphism 
from a {scheme $T$ which is quasi-compact with affine diagonal.} 
We say $\utau$ is \emph{integrable over $T$} if there exists a t-structure 
$\tau_T$ on $\cD_T$ such 
that for every $t \in T$ the t-structure on $(\cD_{\qc})_t$ induced via Theorem~\ref{thm-Dqc-bc} by 
base change along the composition $\Spec \kappa(t) \to T$ agrees with the t-structure 
$\hat{\tau}_t$ from Definition~\ref{def-flat-family-utau}. 
In this situation, we say $\utau$ \emph{integrates over $T$} to the t-structure $\tau_T$ . 
\end{Def} 

The question of whether a given fiberwise collection of t-structures integrates over a 
scheme $T \to S$ is subtle. We will be particularly interested in cases 
where this holds for $T$ a Dedekind scheme. 
The following two results in this context will be needed later. 

\begin{Lem}
\label{lem-AC-C-flat}
Let $\utau$ be a fiberwise collection of t-structures on $\cD$ over $S$. 
Let $C \to S$ be a morphism from a Dedekind scheme $C$. 
Assume $\utau$ integrates over $C$ to a $C$-local t-structure on $\cD_C$ with 
heart $\cA_C$. 
Then for an object $E \in \rD(X_C)$, the following conditions are equivalent: 
\begin{enumerate}[{\rm (1)}] 
\item \label{E-heart-flat-C} $E \in \cA_C$ and $E$ is $C$-flat. 
\item \label{E-perf-flat-C} $E$ is $C$-perfect and $C$-flat. 
\end{enumerate}
\end{Lem}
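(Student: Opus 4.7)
The plan is to deduce both directions from results already established for local t-structures on a Dedekind base, specifically Lemma~\ref{lem-relations-S-perfect} and Lemma~\ref{lem:flatinheart}. For \eqref{E-heart-flat-C} $\Rightarrow$ \eqref{E-perf-flat-C}, since $E \in \cA_C \subset \cD_C \subset \Db(X_C)$ and any Dedekind scheme $C$ is noetherian, regular and of Krull dimension one, Lemma~\ref{lem-relations-S-perfect}.\eqref{db-S-perfect} applied to $X_C \to C$ immediately gives that $E$ is $C$-perfect; $C$-flatness is already part of the hypothesis.

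For \eqref{E-perf-flat-C} $\Rightarrow$ \eqref{E-heart-flat-C} I would proceed in three steps. First, from $C$-perfectness and the quasi-compactness of $X_C$, Lemma~\ref{lem-relations-S-perfect}.\eqref{S-perfect-db} places $E$ in $\Db(X_C)$. Second, I would argue $E \in \cD_C$ by looking fiberwise: for every $c \in C$, base change (Lemma~\ref{lemma-S-perfect-bc}) combined with Lemma~\ref{lem-relations-S-perfect}.\eqref{S-perfect-db} over the field $\kappa(c)$ shows $E_c \in \Db(X_c)$, while the $C$-flatness assumption gives $E_c \in (\cA_{\qc})_c \subset (\cD_{\qc})_c$; since $\cD_c = (\cD_{\qc})_c \cap \Db(X_c)$ by Theorem~\ref{theorem-bc-sod}, we conclude $E_c \in \cD_c$ for all $c \in C$, and Lemma~\ref{lemma-D-Ds}.\eqref{D-iff-Ds} upgrades this fiberwise membership to $E \in \cD_C$.

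The third step invokes the integration hypothesis. Because $\utau$ integrates over $C$ to a $C$-local t-structure on $\cD_C$ with heart $\cA_C$, the fiber t-structures $\hat{\tau}_c$ of Definition~\ref{def-flat-family-utau} coincide with those induced from $\cA_C$ by base change. In particular, for an object of $\cD_C$ whose fibers lie in $\Db(X_c)$, $C$-flatness in the sense of Definition~\ref{def-flat-family-utau} is the same as $C$-flatness in the sense of Section~\ref{subsection-dedekind-bases}. Hence Lemma~\ref{lem:flatinheart}, applied to $\cD_C$ with its $C$-local t-structure, yields $E \in \cA_C$.

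I do not anticipate a genuine obstacle: the argument is essentially an assembly of Lemmas~\ref{lem-relations-S-perfect}, \ref{lemma-D-Ds}, and~\ref{lem:flatinheart}. The only subtle point worth emphasizing is the translation between the two notions of ``$C$-flat'' --- the fiberwise one from Definition~\ref{def-flat-family-utau} and the ambient one implicit in Lemma~\ref{lem:flatinheart} --- which is precisely what the integration hypothesis buys us.
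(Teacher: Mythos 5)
Your proposal is correct and follows essentially the same route as the paper: both directions rest on Lemma~\ref{lem-relations-S-perfect}.\eqref{db-S-perfect}, Lemma~\ref{lemma-D-Ds}.\eqref{D-iff-Ds}, and Lemma~\ref{lem:flatinheart}. The only difference is presentational --- the paper states ``in particular $E_c \in \cD_c$'' without unwinding the fiberwise argument, whereas you spell out the intermediate steps (via $\Db(X_c)$, $(\cD_{\qc})_c$, and the reconciliation of the two $C$-flatness notions afforded by the integration hypothesis), which is exactly the implicit content.
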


\begin{proof}
If $E \in \cA_C$ then $E$ is $C$-perfect by Lemma~\ref{lem-relations-S-perfect}.\eqref{db-S-perfect}, 
so~\eqref{E-heart-flat-C} implies~\eqref{E-perf-flat-C}. 
Conversely, assume $E$ is $C$-perfect and $C$-flat. 
Then in particular $E_c \in \cD_c$ for every $c \in C$, 
so by Lemma~\ref{lemma-D-Ds}.\eqref{D-iff-Ds} we find that 
$E \in \cD_C$. 
(Strictly speaking in Section~\ref{sec-moduli-objects-D}, and thus tacitly in Lemma~\ref{lemma-D-Ds}, 
the morphism $X \to S$ is assumed to be proper, but this assumption 
is not used in the proof.) 
Thus $E \in \cA_C$ by Lemma~\ref{lem:flatinheart}. 
\end{proof}

\begin{Lem}
\label{lem-E-F-surjective}
Let $\utau$ be a fiberwise collection of t-structures on $\cD$ over $S$. 
Assume $\utau$ integrates over $C$ to a $C$-local t-structure on $\cD_C$ whose heart $\cA_C$ 
has a $C$-torsion theory. 
Then for a morphism $E \to F$ in $\cA_C$, the following conditions are equivalent: 
\begin{enumerate}
\item \label{E-F-surjective-AC}
$E \to F$ is surjective as a morphism in $\cA_C$. 
\item \label{E-F-surjective-fiberwise} 
For every point $c \in C$ the induced morphism 
 $\rH^0_{(\cA_{\qc})_c}(E_c) \to \rH^0_{(\cA_{\qc})_c}(F_c)$ is 
surjective in $(\cA_{\qc})_c$, where $(\cA_{\qc})_c \subset (\cD_{\qc})_c$ is the heart of the t-structure $\hat{\tau}_c$ 
from Definition~\ref{def-flat-family-utau}. 
\end{enumerate}
\end{Lem}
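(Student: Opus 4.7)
The plan is to reduce both implications to computations with the cone of $\phi\colon E \to F$, and then use right t-exactness of base change to fibers together with Nakayama's lemma.

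Denote $Q' = \cone(\phi) \in \cD_C$. Since $E, F \in \cA_C$, we have $Q' \in \cD_C^{[-1,0]}$ with $\rH^{-1}_{\cA_C}(Q') = \ker\phi$ and $\rH^0_{\cA_C}(Q') = \mathrm{coker}\,\phi =: Q$. For any point $c \in C$, base change to $c$ is right t-exact by Theorem~\ref{thm-Dqc-bc}.\eqref{DT-f-pullback}, so the triangle $E_c \to F_c \to Q'_c$ lies in $(\cD_{\qc})_c^{\leq 0}$. The long exact cohomology sequence therefore yields an exact sequence
\[
\rH^0_{(\cA_{\qc})_c}(E_c) \to \rH^0_{(\cA_{\qc})_c}(F_c) \to \rH^0_{(\cA_{\qc})_c}(Q'_c) \to 0.
\]
Moreover, applying the same right t-exactness to the triangle $\tau^{\leq -1}_{\cA_C} Q' \to Q' \to Q$ shows that $(\tau^{\leq -1}_{\cA_C} Q')_c \in (\cD_{\qc})_c^{\leq -1}$, so its top cohomology in degree $0$ vanishes, giving a canonical isomorphism $\rH^0_{(\cA_{\qc})_c}(Q'_c) \cong \rH^0_{(\cA_{\qc})_c}(Q_c)$. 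Consequently, the surjectivity condition in (2) is equivalent to the vanishing of $\rH^0_{(\cA_{\qc})_c}(Q_c)$ for every $c \in C$.

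This already yields the direction (1) $\Rightarrow$ (2): surjectivity of $\phi$ in $\cA_C$ means $Q = 0$, which clearly implies the vanishing. For the converse, assume $\rH^0_{(\cA_{\qc})_c}(Q_c) = 0$ for every $c \in C$. At the generic point $\eta \in C$, base change is even t-exact by Theorem~\ref{thm-Dqc-bc}.\eqref{DT-f-flat}, so $Q_\eta = \rH^0_{(\cA_{\qc})_\eta}(Q_\eta) = 0$; by Lemma~\ref{lem:D-Stor}, $Q$ is $C$-torsion, and hence supported on a proper closed subset of $C$, i.e., on finitely many closed points. At any closed point $c \in C$, Lemma~\ref{lem:fibersinheart} identifies $i_{c*}\rH^0_{(\cA_{\qc})_c}(Q_c)$ with $Q/I_c \cdot Q$, so our vanishing translates to $Q/I_c \cdot Q = 0$.

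At this stage, Nakayama's Lemma (Proposition~\ref{prop:Ctorsiontheory-opennessflatness}.\eqref{enum:Nakayama}), whose hypothesis is precisely that $\cA_C$ admits a $C$-torsion theory, implies that $Q$ vanishes on an open neighborhood of each such closed point. Combining these finitely many open neighborhoods with the fact that $Q$ is supported there concludes $Q = 0$, as desired. The conceptual role of the $C$-torsion theory hypothesis is exactly to enable Nakayama's lemma on the cokernel $Q$, which a priori need not be $C$-torsion free, so this is the step where that assumption is essential.
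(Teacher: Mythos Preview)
Your proof is correct and follows the same approach as the paper's: reduce to showing the cokernel $Q$ vanishes, identify condition~(2) with fiberwise vanishing of $\rH^0(Q_c)$ via right t-exactness of restriction, and conclude by Nakayama's Lemma (Proposition~\ref{prop:Ctorsiontheory-opennessflatness}.\eqref{enum:Nakayama}). The paper's version is more compressed, leaving implicit the cone manipulations and the identification $\rH^0_{(\cA_{\qc})_c}(Q'_c) \cong \rH^0_{(\cA_{\qc})_c}(Q_c)$ that you spell out explicitly.
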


\begin{proof}
Let $G \in \cA_C$ be defined by the right exact sequence 
\begin{equation*}
E \to F \to G \to 0
\end{equation*} 
in $\cA_C$, so that $E \to F$ is surjective if and only if $G = 0$. 
But by Proposition~\ref{prop:Ctorsiontheory-opennessflatness}.\eqref{enum:Nakayama} we have $G = 0$ if and only if for every point $c \in C$ we have $\rH_{\cA_c}^0(G_c) = 0$, 
which by right t-exactness of the restriction functor 
$\cD \to \cD_{c}$ is equivalent to condition~\eqref{E-F-surjective-fiberwise}. 
\end{proof}

\section{Quot spaces} \label{sec:quotspaces}
In this section, we work in the following setup: 
\begin{itemize}
\item $g \colon X \to S$ is a flat, proper, finitely presented morphism of 
{schemes which are quasi-compact with affine diagonal, where 
$X$ is noetherian of finite Krull dimension.} 
\item $\cD \subset \Db(X)$ is an $S$-linear strong semiorthogonal component whose projection functor is of finite cohomological amplitude. 
\item $\utau$ is a fiberwise collection of t-structures on $\cD$ over $S$ which universally satisfies openness of flatness. 
\end{itemize} 
Our goal is to define Quot functors in this setting, and to show that under good conditions 
they are algebraic spaces that satisfy valuative criteria. 
We begin by introducing the moduli stack of flat objects, which is used in our proof of representability.

\subsection{Moduli of flat objects} 
We have seen in Section~\ref{subsection-moduli-admissible-subcat} that there 
is an algebraic stack $\cMpug(\cD/S)$ locally of finite presentation over $S$ which 
parametrizes relatively perfect, universally gluable objects of $\cD$. 
We can use $\utau$ to cut out a substack of flat objects. 

\begin{Def}
\label{definition-cDflat} 
We denote by 
\begin{equation*}
\cM_{\utau} \colon (\Sch/S)^{\op} \to \Gpds 
\end{equation*}
\index{Mtau@$\cM_{\utau}$, functor of of all objects $E \in \Dpug(X_T/T)$ which are $T$-flat with respect to $\utau$}
the functor whose value on $T \in (\Sch/S)$ consists of all objects $E \in \Dpug(X_T/T)$ which are $T$-flat with respect to $\utau$. 
\end{Def}

Note that above it would be equivalent to require $E \in \rD(X_T)$ is $T$-perfect and $T$-flat, 
since such an object is automatically universally gluable. 

\begin{Lem}
\label{lem-moduli-flat-objects}
The functor $\cM_{\utau}$ is an algebraic stack locally of finite presentation 
over $S$, and the canonical morphism $\cM_{\utau} \to \cMpug(\cD/S)$ is an open immersion. 
\end{Lem}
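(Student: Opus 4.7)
The plan is to establish that the morphism $\cM_{\utau} \to \cMpug(\cD/S)$ is representable by open immersions; the conclusion then follows formally, since $\cMpug(\cD/S)$ is algebraic and locally of finite presentation over $S$ by Proposition~\ref{proposition-cMpug(D/S)-algebraic}.

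First I would verify that $\cM_{\utau}$ is a well-defined subfunctor of $\cMpug(\cD/S)$. If $E \in \cM_{\utau}(T)$, then for every $t \in T$ the fiber $E_t$ lies in $(\cA_{\qc})_t \subset (\cD_{\qc})_t$, and $E_t \in \Db(X_t)$ by Lemma~\ref{lem-relations-S-perfect}.\eqref{S-perfect-db}; since $\cD_t = (\cD_{\qc})_t \cap \Db(X_t)$ by Theorem~\ref{theorem-bc-sod}, we indeed get $E_t \in \cD_t$, so $E \in \cMpug(\cD/S)(T)$. Compatibility of $T$-flatness with pullback along $T' \to T$ is immediate from the t-exactness of base change of $\hat\tau_t$ along field extensions $\kappa(t) \hookrightarrow \kappa(t')$ guaranteed by Theorem~\ref{thm-Dqc-bc}.\eqref{DT-f-flat}.

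The core step is representability by open immersions. Let $T \to \cMpug(\cD/S)$ be a morphism corresponding to an object $E \in \Dpug(X_T/T)$ with $E_t \in \cD_t$ for all $t$, and set
\[ U = \set{ t \in T \sth E_t \in (\cA_{\qc})_t }. \]
By the hypothesis that $\utau$ universally satisfies openness of flatness (applied to the $T$-perfect object $E$), $U$ is open in $T$. I claim that the open subscheme $U \hookrightarrow T$ represents the fiber product $\cM_{\utau} \times_{\cMpug(\cD/S)} T$, i.e., for any morphism $T' \to T$, the pullback $E_{T'}$ is $T'$-flat if and only if $T' \to T$ factors through $U$. For any $t' \in T'$ with image $t \in T$, the fiber $(E_{T'})_{t'}$ is the pullback of $E_t$ along the field extension $\kappa(t) \hookrightarrow \kappa(t')$, and this pullback functor on $(\cD_{\qc})_t$ is both t-exact (Theorem~\ref{thm-Dqc-bc}.\eqref{DT-f-flat}) and conservative, because the extension is faithfully flat and the inclusion $(\cD_{\qc})_t \hookrightarrow \Dqc(X_t)$ is fully faithful. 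Consequently $(E_{T'})_{t'} \in (\cA_{\qc})_{t'}$ if and only if $E_t \in (\cA_{\qc})_t$, and the claim follows.

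The hard part is essentially the invocation of universal openness of flatness, which supplies the openness of the locus $U$; everything else is bookkeeping of the t-exactness and conservativity of base change along field extensions. Once $\cM_{\utau} \to \cMpug(\cD/S)$ is shown to be representable by open immersions in this way, the fact that $\cM_{\utau}$ is an algebraic stack locally of finite presentation over $S$ is inherited directly from Proposition~\ref{proposition-cMpug(D/S)-algebraic}.
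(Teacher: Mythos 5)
Your proof is correct and takes essentially the same approach as the paper, which condenses the argument to one line: the morphism $\cM_{\utau} \to \cMpug(\cD/S)$ is representable by open immersions because $\utau$ universally satisfies openness of flatness, and the result then follows from Proposition~\ref{proposition-cMpug(D/S)-algebraic}. Your version simply spells out the verification that $\cM_{\utau}$ is a subfunctor and the t-exactness/conservativity bookkeeping that the paper leaves implicit.
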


\begin{proof}
The morphism $\cM_{\utau} \to \cMpug(\cD/S)$ is representable by open immersions since $\utau$ universally 
satisfies openness of flatness. 
Hence the result follows from Proposition~\ref{proposition-cMpug(D/S)-algebraic}. 
\end{proof}

\subsection{Quot spaces} 
\label{subsection-quot-definition} 

Let us start with following definition.

\begin{Def}
\label{definition-quot} 
Let $E \in \cD$ be an $S$-perfect object. 
We denote by \index{QuotSE@$\Quot_S(E)$, Quot functor}
\begin{equation*}
\Quot_S(E) \colon (\Sch/S)^{\op} \to \Sets 
\end{equation*} 
the functor whose value on $T \in (\Sch/S)$ is the set of 
all morphisms $\res{E}{T} \to Q$ in $\rD(X)$, where: 
\begin{enumerate}[{\rm (1)}] 
\item \label{quot-flatness}
$Q \in \rD(X_T)$ is $T$-perfect and $T$-flat with respect to $\utau$. 
\item \label{quot-quotient}
The morphism $\rH^0_{(\cA_{\qc})_t}(\res{E}{t}) \to \rH^0_{(\cA_{\qc})_t}(\res{Q}{t}) = \res{Q}{t}$ in $(\cA_{\qc})_t$ is surjective for all $t \in T$. 
\end{enumerate}
Given $T' \to T$ in $(\Sch/S)$, the corresponding map 
\begin{equation*} 
\Quot_S(E)(T) \to \Quot_S(E)(T')
\end{equation*}
takes 
$\res{E}{T} \to Q$ to its pullback 
$\res{E}{T'} \cong \res{(\res{E}{T})}{T'} \to \res{Q}{T'}$ along $T' \to T$. 
\end{Def} 

\begin{Rem} 
It is straightforward to verify that the morphism 
\mbox{$\res{E}{T'} \to \res{Q}{T'}$} obtained by pullback in the above definition is indeed in $\Quot_S(E)(T')$. 
\end{Rem}

\begin{Rem}
Below we focus on the case where $E$ is $S$-flat. 
Then $\Quot_S(E)(T)$ has a slightly simpler description, 
since $\rH^0_{(\cA_{\qc})_t}(\res{E}{t}) = E_t$ for any $t \in T$. 
\end{Rem}

\begin{Prop} 
\label{proposition-quot-algebraic} 
Let $E \in \cD$ be an $S$-perfect and $S$-flat object. 
Then $\Quot_S(E)$ is an algebraic space 
locally of finite presentation over $S$. 
\end{Prop}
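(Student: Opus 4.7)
The plan is to represent $\Quot_S(E)$ as a relatively schematic open subspace of the algebraic stack $\cM_{\utau}$ of $T$-flat universally gluable objects, and then argue that its stabilizers are trivial. First I would define the forgetful morphism $p\colon \Quot_S(E) \to \cM_{\utau}$ sending $(E_T \to Q) \mapsto Q$; this is well-defined because $Q$ is $T$-perfect, $T$-flat, and universally gluable (the last is automatic since fiberwise $Q_t \in (\cA_{\qc})_t$ lies in the heart).

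Next I would show $p$ is representable by open subschemes of affine schemes of finite presentation. After passing to a smooth cover, a morphism $T \to \cM_{\utau}$ corresponds to an object $Q \in \cM_{\utau}(T)$, and the fiber product $\Quot_S(E) \times_{\cM_{\utau}} T$ parametrizes morphisms $\alpha\colon E_T \to Q$ that are fiberwise surjective in $(\cA_{\qc})_t$. Forgetting the surjectivity, $\uHom_T(E_T, Q)$ is representable by an affine scheme $H$ of finite presentation over $T$ by Lemma~\ref{lemma-hom-representable}: the hypothesis $\Ext^{<0}(E_t, Q_t) = 0$ for every $t \in T$ is automatic since both $E_t$ and $Q_t$ lie in $(\cA_{\qc})_t$. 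Over $H$ there is a tautological morphism $(E_T)_H \to Q_H$ between $H$-flat $H$-perfect objects, so Lemma~\ref{lem-openness-surjectivity} cuts out an open subscheme $H^\circ \subset H$ representing the fiber of $p$. Combined with Lemma~\ref{lem-moduli-flat-objects}, which gives that $\cM_{\utau}$ is an algebraic stack locally of finite presentation over $S$, this shows $\Quot_S(E)$ is an algebraic stack locally of finite presentation over $S$.

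Finally, to promote this to an algebraic space, I would verify that stabilizers are trivial. An automorphism of a $T$-point $(f\colon E_T \to Q)$ is an automorphism $\phi \colon Q \to Q$ with $\phi \circ f = f$. Forming the exact triangle $K \to E_T \xrightarrow{f} Q$, fiberwise surjectivity of $f_t$ in $(\cA_{\qc})_t$ forces $K_t \cong \ker(f_t) \in (\cA_{\qc})_t$, so $K$ is $T$-perfect and fiberwise in the heart. In particular $\Ext^{<0}(K_t, Q_t) = 0$ for every $t \in T$. Applying $\Hom_{\cD_T}(-, Q)$ to the triangle gives an exact sequence
\[
\Hom_{\cD_T}(K, Q[-1]) \to \Hom_{\cD_T}(Q, Q) \to \Hom_{\cD_T}(E_T, Q),
\]
so it suffices to show the first term vanishes, locally on $T$. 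This follows from the same cohomology-and-base-change mechanism underlying Lemma~\ref{lemma-hom-representable}, applied to the pair $(K[1], Q)$: the fiberwise vanishing $\Ext^{-1}(K_t, Q_t) = 0$ propagates to the vanishing of $\Hom_{\cD_T}(K, Q[-1])$ after possibly shrinking $T$. Hence $\phi = \id$, stabilizers are trivial, and $\Quot_S(E)$ is an algebraic space.

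The main obstacle is the last step: passing from fiberwise $\Ext$ vanishing to a genuinely global $\Hom$ vanishing requires a careful application of cohomology and base change to relatively perfect complexes, paralleling the technical core of Lieblich's representability results (Theorem~\ref{theorem-lieblich-moduli} and Lemma~\ref{lemma-hom-representable}). The intermediate steps — representability of $p$ and openness of surjectivity — reduce cleanly to the preparatory lemmas already established.
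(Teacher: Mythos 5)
Your proof follows the paper's argument exactly through the representability of the forgetful morphism $p \colon \Quot_S(E) \to \cM_{\utau}$: you use the same factorization through the $\uHom$-scheme given by Lemma~\ref{lemma-hom-representable}, and the same openness of surjectivity from Lemma~\ref{lem-openness-surjectivity}. Up to this point the two proofs coincide.

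The divergence is in the final step, and it is worth being precise about why yours is superfluous. The paper concludes directly from \citestacks{05UM} and \citestacks{04SZ}: representability of $p$ plus algebraicity of $\cM_{\utau}$ (Lemma~\ref{lem-moduli-flat-objects}) gives that $\Quot_S(E)$ is an algebraic stack; and since $\Quot_S(E)$ is defined as a \emph{set-valued} functor and is a sheaf (Lemma~\ref{lemma-quot-sheaf}), it is automatically a stack in setoids, hence an algebraic space. There is no separate stabilizer check because a sheaf of sets has no automorphisms by definition. Your extra step --- verifying that $\phi \colon Q \to Q$ with $\phi f = f$ forces $\phi = \id$ --- is the fact that \emph{underlies} the sheaf property (the uniqueness of gluing in Lemma~\ref{lemma-quot-sheaf} relies on the absence of nontrivial compatible automorphisms of the $Q_i$), but once that lemma is in hand it does not need to be re-proved. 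So this step is not wrong, but it is redundant, and it somewhat obscures the fact that the heavy lifting was already done.

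One genuine inaccuracy in the extra step, which would matter if it were needed: your phrase ``after possibly shrinking $T$'' in the cohomology-and-base-change argument is not permissible (you must verify triviality of stabilizers over all of $T$, not on a dense open), and it is also not necessary. If the fiberwise vanishing $\Ext^{-1}(K_t, Q_t) = 0$ does propagate to $\cHom_T(K,Q) \in \Dqc^{\geq 0}(T)$, then $\Hom_{\cD_T}(K, Q[-1]) = H^{-1}\bigl(\rR\Gamma(T, \cHom_T(K,Q))\bigr)$ vanishes outright since $\rR\Gamma$ is left t-exact. However, establishing $\cHom_T(K,Q) \in \Dqc^{\geq 0}(T)$ for $K$ merely $T$-perfect (as opposed to perfect) is more delicate than your appeal to ``the same mechanism'' suggests; it requires the careful base-change results behind \citestacks{0DLC}, not just Lemma~\ref{lemma-hom-representable}, whose hypotheses only give $\Ext^{< 0}$ vanishing. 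None of this is fatal to the proposition, because the paper's route makes the step unnecessary.
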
 

The algebraicity of the usual Quot functor of a coherent sheaf can be shown using algebraicity of the stack of coherent sheaves, see \cite{lieblich:remarks-coherent-algebras}, \citestacks{09TQ}. 
We follow a similar strategy to prove Proposition~\ref{proposition-quot-algebraic}, where the role of the stack of coherent sheaves is replaced by the stack $\cM_{\utau}$ of flat objects. 
Towards this, we first prove the following. 

\begin{Lem} 
\label{lemma-quot-sheaf} 
Let $E \in \cD$ be an $S$-perfect and $S$-flat object. 
Then $\Quot_S(E)$ is a sheaf in the 
fppf topology on $(\Sch/S)$. 
\end{Lem}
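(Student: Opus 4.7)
The plan is to verify the two ingredients of an fppf sheaf --- separatedness and the gluing property --- for an fppf cover $\{T_i \to T\}$ of an $S$-scheme $T$, by separately descending the ``object part'' and the ``morphism part'' of an element $\phi\colon E_T \to Q$ of $\Quot_S(E)(T)$. For the object $Q$, I would invoke Lemma~\ref{lem-moduli-flat-objects}, which exhibits $\cM_{\utau}$ as an algebraic stack; in particular it is an fppf stack, so $T$-perfect, $T$-flat objects glue along fppf covers. For the morphism $\phi$, I would invoke Lemma~\ref{lemma-hom-representable} to represent $\uHom_S(E, Q)$ by a scheme affine and of finite presentation over $S$; being representable, it is an fppf sheaf, so compatible families of morphisms $E_{T_i} \to Q_{T_i}$ glue to a morphism $E_T \to Q$. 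Here the hypotheses of Lemma~\ref{lemma-hom-representable} are satisfied because $E$ is pseudo-coherent (it lies in $\cD \subset \Db(X)$), $Q$ is $T$-perfect by construction, and for every $s \in S$ we have $\Ext^{<0}(E_s, Q_s) = 0$ since both objects lie in the heart $(\cA_{\qc})_s$ of the induced t-structure (using that $E$ is $S$-flat and $Q$ is $T$-flat).

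Concretely, separatedness amounts to the following: given two quotients $\phi\colon E_T \to Q$ and $\phi'\colon E_T \to Q'$ in $\Quot_S(E)(T)$ whose pullbacks are identified on each $T_i$ compatibly on overlaps, the stack property of $\cM_{\utau}$ yields an isomorphism $\psi\colon Q \to Q'$ in $\cM_{\utau}(T)$, and the two morphisms $\psi \circ \phi, \phi' \colon E_T \to Q'$ agree after pullback to each $T_i$, hence agree by the fppf sheaf property for $\uHom_S(E, Q')$. The gluing direction is parallel: the $Q_i$ glue to $Q \in \cM_{\utau}(T)$, and then the $\phi_i$, viewed as $T$-points of $\uHom_S(E, Q)$ over the cover $\{T_i \to T\}$, glue to a morphism $\phi\colon E_T \to Q$ thanks to representability.

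It remains to verify that the glued pair $(Q,\phi)$ satisfies the conditions \eqref{quot-flatness} and \eqref{quot-quotient} of Definition~\ref{definition-quot}. Condition~\eqref{quot-flatness} is automatic from $Q \in \cM_{\utau}(T)$. For condition~\eqref{quot-quotient}, observe that each point $t \in T$ lifts to some point $t_i \in T_i$ (because fppf covers are surjective), and the induced map on fibers $\rH^0_{(\cA_{\qc})_t}(E_t) \to Q_t$ is identified, after the faithfully flat base change $\Spec \kappa(t_i) \to \Spec \kappa(t)$, with the surjection provided by the corresponding condition for $\phi_i$ at $t_i$; surjectivity in the heart is preserved and reflected by faithfully flat pullback by the t-exactness and conservativity statements in Theorem~\ref{thm-Dqc-bc}.\eqref{DT-f-flat}.

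The main subtlety I anticipate is the bookkeeping for the hypotheses of Lemma~\ref{lemma-hom-representable}: one must apply it not just with $E$ but with its pullbacks to the various $T_i, T_{ij}, T_{ijk}$, and must check the fiberwise $\Ext^{<0}$-vanishing using that $T$-flatness is defined via the base-changed hearts $(\cA_{\qc})_t$ (Definition~\ref{def-flat-family-utau}) rather than any global heart on $\cD$. Once one observes that objects in the heart of a t-structure satisfy the required orthogonality condition~\eqref{definition-sod-1} of Definition~\ref{def:tstructure}, this is not a serious obstacle, and the proof reduces to organizing the descent carefully.
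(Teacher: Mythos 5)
Your proposal is correct and takes essentially the same approach as the paper's: glue the objects via the algebraicity of the relevant moduli stack (the paper cites $\cMpug(X/S)$ via Theorem~\ref{theorem-lieblich-moduli}, you cite the open substack $\cM_{\utau}$ via Lemma~\ref{lem-moduli-flat-objects} --- interchangeable for this purpose), glue the morphisms via the representability of $\uHom_S$ in Lemma~\ref{lemma-hom-representable}, and then verify that conditions~\eqref{quot-flatness} and~\eqref{quot-quotient} for the glued pair can be checked fppf locally because base change along field extensions is t-exact and conservative. One cosmetic slip: the orthogonality you invoke at the end is condition~\eqref{def-t-structure-orthogonality} of Definition~\ref{def:tstructure}, not condition~\eqref{definition-sod-1} of Definition~\ref{def-sod}.
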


\begin{proof}
Let $T_i \to T$ be an fppf cover in $(\Sch/S)$, 
and let $q_i \colon \res{E}{T_i} \to Q_i$ be an object of $\Quot_S(E)(T_i)$ such that 
for every $i,j$ the restrictions of $q_i$ and $q_j$ to $T_i \times_T T_j$ agree. 
By Theorem~\ref{theorem-lieblich-moduli} and Lemma~\ref{lemma-hom-representable}, these morphisms glue uniquely to a morphism $q \colon \res{E}{T} \to Q$ 
in $\Dpug(X_T/T)$. 
To finish we must show $q \colon \res{E}{T} \to Q$ is in $\Quot_S(E)(T)$, 
i.e., for every $t \in T$ we have $\res{Q}{t} \in (\cA_{\qc})_t$ and the morphism 
$\res{E}{t} \to \res{Q}{t}$ in $(\cA_{\qc})_t$ is surjective. 
But base change along an extension of fields is t-exact by Theorem~\ref{thm-Dqc-bc}.\eqref{DT-f-flat} and conservative, so these 
conditions can be checked fppf locally, see Remark~\ref{rem:conservativeandexact}.
\end{proof}

\begin{proof}[Proof of Proposition~\ref{proposition-quot-algebraic}] 
There is a canonical morphism 
\begin{equation*}
\Quot_S(E) \to \cM_{\utau}
\end{equation*} 
which for $T \in (\Sch/S)$ sends $(\res{E}{T} \to Q) \in \Quot_S(E)(T)$ to $Q \in \cM_{\utau}(T)$. 
It suffices to show $\Quot_S(E) \to \cM_{\utau}$ is representable by algebraic spaces and locally of finite presentation. 
Indeed, then since $\cM_{\utau}$ is algebraic stack locally of finite presentation over $S$ by Lemma~\ref{lem-moduli-flat-objects}, we conclude that $\Quot_S(E)$ is an algebraic space locally of finite presentation over $S$ by \citestacks{05UM} and \citestacks{04SZ}. 

So let $T \in (\Sch/S)$, let $T \to \cM_{\utau}$ be a morphism, 
and let 
\begin{equation*}
Z = \Quot_S(E) \times_{\cM_{\utau}} T \colon (\Sch/T)^\op \to \Sets 
\end{equation*} 
be the fiber product; we must show $Z \to T$ is a locally finitely presented morphism 
of algebraic spaces. 
Let $Q \in \cM_{\utau}(T)$ be the object corresponding to $T \to \cM_{\utau}$. 
For $T' \in (\Sch/T)$ we have
\begin{equation*}
Z(T') = 
\set{
\begin{array}{c}
\textup{morphisms } \res{E}{T'} \to \res{Q}{T'} \textup{ in } \rD(X_{T'}) \textup{ such that } \\ 
\res{E}{t'} \to \res{Q}{t'} 
\textup{ is surjective in } (\cA_{\qc})_{t'} \textup{ for all } t' \in T'
\end{array} . 
}
\end{equation*}
Hence $Z \to T$ factors through the forgetful morphism $Z \to \uHom_T(\res{E}{T}, Q)$. 
Note that $\uHom_T(\res{E}{T}, Q)$ is an algebraic space of 
finite presentation by Lemma~\ref{lemma-hom-representable}, whose 
hypotheses are satisfied since for every $t \in T$ we have $\res{E}{t}, \res{Q}{t} \in (\cA_{\qc})_t$. 
Hence the following claim will finish the proof: $Z \to \uHom_T(\res{E}{T}, Q)$ is an 
open immersion. 

To prove the claim, let $T' \in (\Sch/T)$, let $T' \to \uHom_T(\res{E}{T}, Q)$
be a morphism, and let 
\begin{equation*}
Y = Z \times_{\uHom_T(\res{E}{T}, Q)} T' \to T' 
\end{equation*}
be the fiber product; we must show $Y \to T'$ is an open immersion. 
Since $\Quot_S(E)$ is a sheaf in the fppf topology by Lemma~\ref{lemma-quot-sheaf},
it follows that $Y$ is too. 
Hence we may reduce to the case where $T'$ is affine. 
Let $f \colon \res{E}{T'} \to \res{Q}{T'}$ be the morphism corresponding to 
$T' \to \uHom_T(\res{E}{T}, Q)$. 
By Lemma~\ref{lem-openness-surjectivity} the subset 
\begin{equation*}
U = \set{ 
t' \in T' \sth 
f_{t'} \colon \res{E}{t'} \to \res{Q}{t'} \text{ is surjective}} 
\end{equation*}
is open in $T'$. The morphism $U \to T'$ represents $Y \to T'$, 
so we are done. 
\end{proof}

\subsection{Valuative criteria for Quot spaces} 

In this subsection, we investigate the valuative criterion for Quot spaces. 
First we formulate what we mean by valuative criteria; for later use in the paper, we consider the setting of algebraic stacks. 

\begin{Def}
\label{Def-vc}
Let $f \colon X \to Y$ be a morphism of algebraic stacks. 
Let $R$ be a valuation ring with field of fractions $K$, and let $\Spec(R) \to Y$ be a morphism.
Then we say $f$ \emph{satisfies the strong existence part of the valuative criterion 
with respect to $\Spec(R) \to Y$} if given any commutative solid diagram 
\begin{equation}
\label{vc-diagram} 
\vcenter{
\xymatrix{
\Spec(K) \ar[r] \ar[d] & X \ar[d] \\
\Spec(R) \ar[r] \ar@{-->}[ru] & Y 
}
}
\end{equation}
there exists a dotted arrow making the diagram commute. 
We say $f$ \emph{satisfies the uniqueness part of the valuative criterion 
with respect to $\Spec(R) \to Y$} if for any solid diagram~\eqref{vc-diagram}, the category 
of dotted arrows (see \citestacks{0CLA}) 
is either empty or a setoid with exactly one isomorphism class. 
\end{Def} 

\begin{Rem}
The commutativity of the diagram in Definition~\ref{Def-vc} 
must be understood in the 2-categorical sense, see 
\citestacks{0CL9}. 
In case $X$ and $Y$ are algebraic spaces, this subtlety disappears. 
Moreover, in this case the uniqueness part of the valuative criterion just says that there exists at 
most one dotted arrow in~\eqref{vc-diagram}. 
\end{Rem}

\begin{Rem}
We use the adjective ``strong'' because the standard ``existence part of the valuative criterion'' for algebraic stacks (or spaces) 
only requires the existence of a dotted arrow after passing to a field extension of $K$, 
see \citestacks{0CLK}.
\end{Rem}

We will show that under a suitable integrability hypothesis, 
Quot spaces satisfy the valuative criteria formulated above. 

\begin{Prop}
\label{proposition-quot-valuative-criteria} 
Let $E \in \cD$ be an $S$-perfect and $S$-flat object.
Let $R$ be a discrete valuation ring and let $\Spec(R) \to S$ be a morphism. 
Assume that $\utau$ integrates over $\Spec(R)$ to a bounded 
$\Spec(R)$-local t-structure on $\cD_R$ whose heart $\cA_R$ 
has a $\Spec(R)$-torsion theory. 
Then the morphism $\Quot_S(E) \to S$ satisfies 
the strong existence and the uniqueness 
parts of the valuative criterion with respect to $\Spec(R) \to S$. 
\end{Prop}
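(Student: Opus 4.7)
The plan is to translate the valuative criterion into a statement about short exact sequences in the single heart $\cA_R$, and then argue directly there using the extension lemma and the $C$-torsion theory. Let $C = \Spec(R)$, with generic point $\Spec(K)$. Since $E$ is $S$-flat and $S$-perfect, the base change $E_R$ is $R$-flat and, by Lemma~\ref{lemma-S-perfect-bc}, $R$-perfect; hence $E_R \in \cA_R$ by Lemma~\ref{lem-AC-C-flat}, and similarly $E_K \in \cA_K$. For a prospective element $q_R \colon E_R \to Q_R$ of $\Quot_S(E)(R)$, the same lemma gives $Q_R \in \cA_R$; Lemma~\ref{lem:FlatIffTFreeCurve} reinterprets ``$Q_R$ is $R$-flat'' as ``$Q_R$ is $R$-torsion free''; and Lemma~\ref{lem-E-F-surjective} (applicable by integrability and the $C$-torsion theory hypothesis) identifies fiberwise surjectivity with surjectivity of $q_R$ in $\cA_R$. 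Thus I will identify $\Quot_S(E)(R)$ with the set of isomorphism classes of short exact sequences $0 \to K_R \to E_R \to Q_R \to 0$ in $\cA_R$ with $Q_R$ being $R$-torsion free, and analogously $\Quot_S(E)(K)$ with such sequences in $\cA_K$; these identifications are compatible via the t-exact restriction $\cD_R \to \cD_K$ from Theorem~\ref{Thm-D-bc}.\eqref{DbT-f-flat}.

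For strong existence, given a sequence $0 \to K_K \to E_K \to Q_K \to 0$ in $\cA_K$, I will lift the injection $K_K \into E_K$ to an injection $\tilde K \into E_R$ in $\cA_R$ using Lemma~\ref{lem-extend-from-localisation}.\eqref{enum:lem-extend-morphism-in-heart} (case \eqref{enum:localisation-extension}, since $\Spec(K) \to \Spec(R)$ is a localization of rings). The quotient $E_R/\tilde K \in \cA_R$ need not be $R$-torsion free, but by the $C$-torsion theory it contains a unique maximal $R$-torsion subobject $T$; letting $\tilde K' \subset E_R$ be its preimage and setting $Q_R := E_R / \tilde K'$ produces an $R$-torsion free (equivalently $R$-flat) quotient. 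The identification $(Q_R)_K = Q_K$ then follows because $T_K = 0$ forces $(\tilde K')_K = (\tilde K)_K = K_K$.

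For uniqueness, suppose $q_i \colon E_R \to Q_i$, $i = 1, 2$, are two elements whose restrictions to $K$ represent the same element of $\Quot_S(E)(K)$, and let $K_i = \ker(q_i) \subset E_R$ in $\cA_R$. Then $(K_1)_K = (K_2)_K$ as subobjects of $E_K$. Taking the sum $K_1 + K_2 \subset E_R$ in $\cA_R$, the quotient $(K_1+K_2)/K_1$ has zero restriction to $K$ and so is $R$-torsion, while simultaneously being a subobject of the $R$-flat (hence $R$-torsion free) object $Q_1$. Thus $(K_1+K_2)/K_1 = 0$, i.e.\ $K_2 \subset K_1$; by symmetry $K_1 = K_2$, so $Q_1 = E_R/K_1 = E_R/K_2 = Q_2$ compatibly with $q_1$ and $q_2$.

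I expect the main obstacle to be the initial translation of the Quot data into short exact sequences in $\cA_R$: this is the step that genuinely uses both the integrability of $\utau$ over $\Spec(R)$ (to even have a global heart $\cA_R$) and the existence of a $C$-torsion theory (both to replace fiberwise surjectivity by surjectivity in $\cA_R$ via Lemma~\ref{lem-E-F-surjective} and to produce the saturation $\tilde K'$ in the existence step). Once this reduction is in place, existence is a routine lift-and-saturate construction, and uniqueness is a formal argument in the abelian category $\cA_R$ exploiting that $R$-flat objects contain no nonzero $R$-torsion subobjects.
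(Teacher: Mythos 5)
Your proposal is correct, and the overall shape matches the paper: the reduction of $\Quot_S(E)(R)$ and $\Quot_S(E)(K)$ to short exact sequences in $\cA_R$ and $\cA_K$ is precisely Lemma~\ref{lemma-quot-integrable}, which the paper invokes directly, and your lift-and-saturate construction for existence is equivalent to theirs (lifting the kernel $K_K \into E_K$ via the injective clause of Lemma~\ref{lem-extend-from-localisation}.\eqref{enum:lem-extend-morphism-in-heart} and passing to the preimage of the maximal $R$-torsion subobject of $E_R/\tilde K$ yields the same object $\tilde Q_\Rtf$ that the paper gets by lifting the surjection and passing to its torsion-free quotient; your variant actually matches the stated form of the extension lemma more cleanly, since it lifts a morphism \emph{into} the fixed object $E_R$). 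Your uniqueness argument, though, is genuinely different: the paper compares two $R$-flat quotients $\tilde Q_1, \tilde Q_2$ by means of Lemma~\ref{lem:tensortrick}, producing an injection $\tilde Q_1 \into \tilde Q_2 \otimes g^*\cO_R(\pi^k)$ compatible with the two surjections up to a power of $\pi$ over $K$ and then using torsion-freeness of the target to conclude, whereas you compare the kernels $K_1, K_2 \subset E_R$ directly --- $(K_1 + K_2)/K_1$ restricts to zero over $K$ hence is $R$-torsion, yet embeds into the torsion-free quotient $Q_1$, so it vanishes, and symmetry gives $K_1 = K_2$. Your route is a piece of pure abelian-category bookkeeping, avoids the ample-line-bundle machinery of Lemma~\ref{lem:tensortrick} entirely, and is just as rigorous; in fact it only invokes the $C$-torsion theory through the identification of flatness with torsion-freeness (Lemma~\ref{lem:FlatIffTFreeCurve}) and the translation in Lemma~\ref{lemma-quot-integrable}.
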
 

Before proving the proposition, we give an alternate description 
of the Quot functor in the integrable case. 

\begin{Lem}
\label{lemma-quot-integrable}
Let $E \in \cD$ be an $S$-perfect and $S$-flat object.
Let $C \to S$ be a morphism from a Dedekind scheme. 
Assume that $\utau$ integrates over $C$ to a bounded $C$-local t-structure on $\cD_C$ whose heart $\cA_C$ has a $C$-torsion theory. 
Then there is an identification 
\begin{equation*}
\Quot_S(E)(C) = 
\set{
\begin{array}{c}
\textup{quotients } \res{E}{C} \to Q \textup{ in } \cA_C \\ 
\textup{such that } Q \textup{ is } C\textup{-flat}. 
\end{array} 
}
\end{equation*} 
\end{Lem}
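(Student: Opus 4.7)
The plan is to unwind Definition~\ref{definition-quot} in the integrable setting by applying Lemma~\ref{lem-AC-C-flat} (to identify the $C$-flat, $C$-perfect objects with objects in $\cA_C$ that are $C$-flat) and Lemma~\ref{lem-E-F-surjective} (to identify fiberwise surjectivity with surjectivity in $\cA_C$). The key observation underlying both steps is that the integrability of $\utau$ over $C$ means precisely that the fiberwise hearts $(\cA_{\qc})_c$ from Definition~\ref{def-flat-family-utau} coincide with the hearts obtained by base change from the integrated t-structure on $\cD_C$ via Theorem~\ref{thm-Dqc-bc}, so the two a priori different notions of ``$C$-flat'' (Definitions~\ref{def-flat-family-T} and~\ref{def-flat-family-utau}) agree.

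For the inclusion $\Quot_S(E)(C) \subset \{\text{quotients}\}$, I would start with $(\res{E}{C} \to Q) \in \Quot_S(E)(C)$. By hypothesis $Q$ is $C$-perfect and $C$-flat, so by Lemma~\ref{lem-AC-C-flat} we get $Q \in \cA_C$. The same argument applied to $\res{E}{C}$ (which is $C$-perfect by Lemma~\ref{lemma-S-perfect-bc} and $C$-flat by pullback) gives $\res{E}{C} \in \cA_C$, so the morphism $\res{E}{C} \to Q$ is a morphism in $\cA_C$. Moreover, since $E_c, Q_c \in \cA_c \subset (\cA_{\qc})_c$ for every $c \in C$, the cohomology operators $\rH^0_{(\cA_{\qc})_c}(-)$ act as the identity on $E_c$ and $Q_c$, so condition~\eqref{quot-quotient} of Definition~\ref{definition-quot} reduces to surjectivity of $E_c \to Q_c$ in $(\cA_{\qc})_c$ for every $c \in C$. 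Then Lemma~\ref{lem-E-F-surjective} (which requires precisely the $C$-torsion theory assumption) converts this to surjectivity of $\res{E}{C} \to Q$ in $\cA_C$.

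The reverse inclusion is proved by running the same equivalences in the other direction: given a surjection $\res{E}{C} \onto Q$ in $\cA_C$ with $Q$ being $C$-flat, Lemma~\ref{lem-AC-C-flat} gives that $Q$ is $C$-perfect, and the fiberwise condition in Definition~\ref{definition-quot} follows from Lemma~\ref{lem-E-F-surjective} together with the identification $\rH^0_{(\cA_{\qc})_c}(E_c) = E_c$ and $\rH^0_{(\cA_{\qc})_c}(Q_c) = Q_c$ explained above.

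Since both directions are immediate consequences of the prior lemmas once the definitions are aligned, there is no substantive obstacle; the only care needed is in verifying that the two notions of flatness agree and that the fiberwise truncation functors are trivial on objects of $\cA_c$, which follow directly from the integrability hypothesis.
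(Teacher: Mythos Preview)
Your proposal is correct and follows essentially the same approach as the paper's proof: both arguments reduce the claim to a direct application of Lemma~\ref{lem-AC-C-flat} (identifying $C$-perfect $C$-flat objects with $C$-flat objects in $\cA_C$) and Lemma~\ref{lem-E-F-surjective} (identifying fiberwise surjectivity with surjectivity in $\cA_C$). The paper's version is more terse, but your additional remarks on the triviality of $\rH^0_{(\cA_{\qc})_c}$ and the agreement of the two flatness notions simply make explicit what the paper leaves implicit.
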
 

\begin{proof}
Note that $\res{E}{C} \in \cA_C$ by Lemma~\ref{lem-AC-C-flat}, 
since $\res{E}{C}$ is $C$-perfect and $C$-flat. 
By definition, a $C$-point of $\Quot_S(E)$ is a morphism 
$E_C \to Q$ in $\rD(X_C)$ such that $Q$ is $C$-perfect and $C$-flat 
and $E_c \to Q_c$ is surjective in $(\cA_{\qc})_c$ for all $c \in C$. 
Equivalently, by Lemmas~\ref{lem-AC-C-flat} and~\ref{lem-E-F-surjective}, 
$Q$ is a $C$-flat object of $\cA_C$ and $E_C \to Q$ is a quotient in $\cA_C$. 
\end{proof}

\begin{proof}[Proof of Proposition~\ref{proposition-quot-valuative-criteria}]
Let $K$ be the fraction field of $R$. 
For the strong existence part of the valuative criterion, 
by Lemma~\ref{lemma-quot-integrable} we must show that given 
a surjection $f \colon \res{E}{K} \to Q$ in $\cA_K$ with $Q$ a $K$-flat object, 
there is a surjection $\tilde{f} \colon \res{E}{R} \to \tilde{Q}$ in $\cA_R$ with 
$\tilde{Q}$ an $R$-flat object, such that $\tilde{f}$ restricts to $f$. 
To this end, we first use Lemma~\ref{lem-extend-from-localisation}.\eqref{enum:lem-extend-morphism-in-heart} to obtain $\tilde Q$ and a surjective map
$\tilde f' \colon E_R \to \tilde Q$ extending $f$.
After replacing $\tilde{Q}$ with $\tilde{Q}_{\Rtf}$, we may further assume $\tilde{Q}$ is 
$R$-torsion free and hence $R$-flat by Lemma~\ref{lem:FlatIffTFreeCurve}, proving the claim.

To show uniqueness, assume we are given two $R$-flat extensions $\tilde{Q}_1, \tilde{Q}_2$ of $Q$ and surjections
$\tilde{f}_i \colon E_R \to \tilde{Q}_i$. By Lemma~\ref{lem:tensortrick}, there exists an injective map
$i \colon \tilde{Q}_1 \into \tilde{Q}_2\otimes g^*\cO_R(\pi^k)$ for some $k\geqslant 0$ such that after pulling back to $K$, we have $i \circ f_1 = \pi^k f_2$. 
Hence the image of $i \circ f_1 - \pi^k f_2$ is $R$-torsion; since $\tilde{Q}_2$ is $R$-torsion free by Lemma~\ref{lem:FlatIffTFreeCurve}, the same holds true of $\tilde{Q}_2\otimes g^*\cO_R(\pi^k)$ so that this image must zero. It follows that the two morphisms are equal already over $R$. Since $\pi^k$ acts injectively on $R$-torsion free objects, we obtain a chain of isomorphisms 
$\tilde{Q}_1 \cong\im i = \im \pi^k\cong\tilde{Q}_2$ of quotients of $E_R$.
\end{proof}

\subsection{Nagata valuative criteria} 
\label{subsection-nagata}
Sometimes, we will only know (via Theorem~\ref{Thm-D-bc}) that $\utau$ integrates to a bounded $\Spec(R)$-local  t-structure when $\Spec(R) \to S$ is essentially of finite type;
therefore, we will only assume as much in Definition \ref{def:familyfiberstabilities}.
In this section, we prove that  when $S$ is Nagata, it is enough to consider such morphisms in the valuative criteria for universal closedness, see Lemma~\ref{lemma-valuative-criterion-uc}.
This will allow us for example to show universal closedness of moduli spaces of semistable objects in the proof of Theorem~\ref{thm:modulispacesArtinstacks}.
Otherwise, this section is completely independent of the rest of the paper.

For the definition of the Nagata property see 
\citestacks{032E} (for rings), 
\citestacks{033R} (for schemes), and 
\citestacks{0BAT} (for algebraic spaces). 
This property holds in essentially all naturally occurring examples. 
For instance, any quasi-excellent ring is Nagata \citestacks{07QV}. 

Recall that a domain $A$ with field of fractions $K$ is called Japanese if for any finite extension of fields $K \subset L$, the integral closure of $A$ in $L$ is finite over $A$. 
A ring $A$ is then called universally Japanese if for any finite type ring map $A \to B$ with $B$ a domain, the ring $B$ is Japanese. 
The key properties we need about Nagata rings are the following. 

\begin{Lem}[{\citestacks{0334}}]
\label{lem-nagata-iff-ujn} 
A ring is Nagata if and only if it is universally Japanese and noetherian. 
\end{Lem}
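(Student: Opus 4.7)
The plan is to verify the two implications separately, with the backward direction being immediate from the definitions and the forward direction reducing to a classical theorem of Nagata.

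For the backward direction, suppose $A$ is noetherian and universally Japanese. To show $A$ is Nagata, I need to check that for every prime $\mathfrak{p} \subset A$ the quotient $A/\mathfrak{p}$ is Japanese. But $A/\mathfrak{p}$ is a finite type $A$-algebra which is a domain, so this is precisely the statement of universal Japanese-ness applied to $B = A/\mathfrak{p}$. Noetherianity is assumed, so $A$ is Nagata. This direction is essentially a tautology unpacking the two definitions.

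For the forward direction, suppose $A$ is Nagata. Then $A$ is noetherian by definition. To see $A$ is universally Japanese, let $A \to B$ be a finite type ring map with $B$ a domain. I need to show $B$ is Japanese, i.e., that for every finite extension $L$ of $\mathrm{Frac}(B)$ the integral closure of $B$ in $L$ is finite over $B$. The key input is the classical theorem (see \citestacks{032R}) that if $A$ is Nagata, then any finite type $A$-algebra is again Nagata; in particular $B$ is Nagata. Applying the Nagata property of $B$ at the zero prime $(0) \subset B$ (which makes sense because $B$ is a domain) then yields exactly that $B$ is Japanese.

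The main obstacle is the invoked stability of the Nagata property under passage to finite type algebras. This is a deep theorem and not a formal consequence of the definitions; it is typically proved by reducing to the case of a polynomial ring $A[x]$ and then analyzing finiteness of integral closures by passing through DVRs and using the Krull--Akizuki theorem (or by the approach via $N_1$ and $N_2$ rings, \citestacks{0325} through \citestacks{032R}). Once this theorem is granted, both directions of the equivalence follow quickly by the arguments sketched above, and no further substantial input is required.
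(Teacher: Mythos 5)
The paper proves nothing here: the Lemma is simply a citation of the Stacks Project [Tag 0334], so there is no argument of its own to compare against. That said, your sketch is correct and mirrors the standard proof from the literature. The backward direction is, as you observe, a definitional unwinding: noetherianity is assumed, and for each prime $\mathfrak{p}$ the quotient $A/\mathfrak{p}$ is a finite type $A$-algebra that is a domain, so universal Japanese-ness applies directly. The forward direction correctly isolates the genuinely hard input: Nagata's theorem that Nagata-ness is inherited by finite type (indeed, essentially of finite type) algebras, from which universal Japanese-ness falls out by applying the Nagata property of $B$ at the zero prime. Your observation that this is not a formal consequence of the definitions, and that it rests on the Krull--Akizuki-type arguments for polynomial extensions, is the right perspective; there is no circularity, since the finite-type-preservation theorem is established independently of the equivalence you are proving. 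One small slip: you cite [Tag 032R] for the finite type preservation, but that tag concerns the quasi-finite case; the general finite type (indeed essentially of finite type) statement you actually need is the one the paper itself uses as Lemma~\ref{lem-nagata-eft}, namely [Tag 032U] (Nagata's theorem). This is a mislabel rather than a gap in the argument.
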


Recall that a ring map $A\to B$ is \emph{essentially of finite type} if $B$ is the localization of an $A$-algebra of finite type. We recall the following result:

\begin{Lem}[{\citestacks{0334} and \citestacks{032U}}]
\label{lem-nagata-eft} 
If $A$ is a Nagata ring and $A \to B$ is a ring map essentially of finite type, 
then $B$ is Nagata. 
\end{Lem}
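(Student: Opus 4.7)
My proof plan is structured around the characterization given in Lemma~\ref{lem-nagata-iff-ujn}: a ring is Nagata if and only if it is noetherian and universally Japanese. Since any ring map $A \to B$ essentially of finite type factors as $A \to B' \to B$, where $A \to B'$ is of finite type and $B' \to B$ is a localization, I would reduce the claim to two separate statements: that the Nagata property is preserved under finite type ring maps, and that it is preserved under localizations.

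For the finite type step $A \to B'$, noetherianity of $B'$ follows from Hilbert's basis theorem. For universal Japanese-ness, the key observation is essentially tautological: any ring map $B' \to C$ of finite type is automatically a finite type $A$-algebra by composition, so if $C$ is a domain it is Japanese by the universal Japanese property of $A$. Hence $B'$ is universally Japanese, and therefore Nagata.

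For the localization step $B' \to B = S^{-1}B'$, noetherianity is preserved by localization. For universal Japanese-ness, suppose $B \to C$ is of finite type with $C$ a domain; I would write $C = B[x_1,\dots,x_n]/I$ and invoke the standard fact that $I$ is the extension of an ideal $I' \subset B'[x_1,\dots,x_n]$, giving a presentation $C = S^{-1}(B'[x_1,\dots,x_n]/I')$. Letting $\mathfrak{q}$ denote the kernel of the natural map $B'[x_1,\dots,x_n]/I' \to C$, the quotient $D := (B'[x_1,\dots,x_n]/I')/\mathfrak{q}$ is a finite type $B'$-algebra that is a domain (it injects into the domain $C$), and $C$ is canonically a localization of $D$ inside the common fraction field $\mathrm{Frac}(D) = \mathrm{Frac}(C)$. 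Since $B'$ is universally Japanese by the previous step, $D$ is Japanese; then I would conclude via the elementary fact that localization commutes with integral closure, so that a localization of a Japanese domain within its fraction field is again Japanese.

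The main obstacle I anticipate is the bookkeeping in the localization step, specifically the reduction from ``$C$ is of finite type over $S^{-1}B'$'' to ``$C$ is a localization of a finite type $B'$-algebra which happens to be a domain.'' Once this is in place, the preservation of the Japanese property under localization inside the common fraction field is routine, and assembling the two steps yields the desired result.
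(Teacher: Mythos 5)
Your proof is correct. The paper gives no argument of its own for this lemma but simply cites the Stacks project, and your decomposition into the finite-type step (where universal Japanese-ness is tautological from composition of finite-type maps) and the localization step (presenting $C$ as $S^{-1}D$ for a finite-type $B'$-domain $D$ with $\mathrm{Frac}(D)=\mathrm{Frac}(C)$, then using that integral closure commutes with localization) is precisely the standard argument underlying those citations.
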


\begin{Lem}
\label{lemma-nagata-dominate-dvr} 
Let $A$ be a Nagata local domain with fraction field $K$. 
Assume that $A$ is not a field. 
Let $K \subset L$ be a finitely generated field extension. 
Then there exists a DVR $R$ with fraction field $L$ which dominates $A$, such that $A \to R$ is essentially of finite type. 
\end{Lem}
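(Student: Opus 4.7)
The plan is to first reduce to the case $L = K$, and then construct the required DVR inside $K$ via a blowup-and-normalization argument, where the Nagata hypothesis is exactly what controls the normalization.

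For the reduction, I pick a transcendence basis $y_1, \ldots, y_d$ for $L/K$ together with elements $w_1, \ldots, w_m \in L$ generating $L$ over $K(y_1, \ldots, y_d)$; after clearing denominators, I may assume each $w_j$ is integral over $A[y_1, \ldots, y_d]$. Let $B = A[y_1, \ldots, y_d, w_1, \ldots, w_m]$, a finitely generated $A$-algebra with fraction field $L$. The ideal $\fq_0 = (\fm_A, y_1, \ldots, y_d) \subset A[y_1, \ldots, y_d]$ is maximal and lies over $\fm_A$, so going-up for the integral extension $A[y_1, \ldots, y_d] \subset B$ produces a prime $\fq \subset B$ with $\fq \cap A = \fm_A$. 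Then $A' := B_\fq$ is a local ring dominating $A$, has fraction field $L$, is essentially of finite type over $A$, and is Nagata by Lemma~\ref{lem-nagata-eft}; moreover $A'$ is not a field since $\fm_A \cdot A' \subset \fm_{A'}$ is nonzero. Replacing $A$ by $A'$, I reduce to the case $L = K$.

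Assume now $L = K$, and let $X = \Proj\bigl( \bigoplus_{n \geq 0} \fm_A^n \bigr)$ be the blowup of $\Spec A$ at $\fm_A$. Then $X$ is projective and birational over $\Spec A$, and its exceptional divisor $E \subset X$ is a nonempty Cartier divisor (on each affine chart $D_+(s) = \Spec A[\fm_A/s]$ the ideal $\fm_A \cO_X$ coincides with the principal ideal $(s)$, and $E$ is nonempty because $A$ is not a field). Let $\nu \colon X' \to X$ be the normalization; since $X$ is of finite type over the Nagata base $\Spec A$, $\nu$ is finite, so $X' \to \Spec A$ remains of finite type. Pick $\eta$ a generic point of an irreducible component of $\nu^{-1}(E)$, and set $R := \cO_{X', \eta}$. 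The preimage $\nu^{-1}(E)$ is a Cartier divisor on the integral scheme $X'$, so $\eta$ has codimension one; together with normality and noetherianness of $X'$ at $\eta$, this makes $R$ a DVR. It has fraction field $K$ since $X' \to \Spec A$ is birational, it dominates $A$ since $\eta$ maps to $\fm_A$, and it is essentially of finite type over $A$ since it is a localization of the finitely generated $A$-algebra $\cO_{X'}(U)$ for any affine open $U \ni \eta$.

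The main technical point is the finiteness of the normalization $\nu$: without it, the local rings $\cO_{X', \eta}$ would still be DVRs with the correct fraction field, but would in general fail to be essentially of finite type over $A$. The Nagata hypothesis is precisely what guarantees that the normalization of a finite-type scheme over $\Spec A$ remains of finite type, and hence that the resulting DVR has the desired essential-of-finite-type property.
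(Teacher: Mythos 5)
Your proof is correct and takes a genuinely different, though related, route from the paper's. The paper reduces to $L/K$ finite (by adjoining a transcendence basis and localizing at the obvious maximal ideal), then to $\dim A = 1$ (via Stacks tag 00P8, which is itself a blow-up-type construction), and finally takes the integral closure of the now one-dimensional $A$ inside $L$ --- finite because $A$ is Japanese --- and localizes at a prime over the maximal ideal. You instead reduce all the way to $L = K$ by adjoining both a transcendence basis and the (denominator-cleared) algebraic generators of $L$, finding the needed prime by lying-over for the resulting integral extension; you then build the DVR geometrically as the local ring at a generic point of the exceptional divisor of the normalized blow-up. Your reduction step is slightly heavier, but your construction step is cleaner, merging the paper's dimension reduction and integral-closure steps into a single normalization-of-a-blow-up. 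In both arguments the Nagata hypothesis plays the identical role: it makes the normalization (integral closure) finite, hence still of finite type over $A$, which is what forces the resulting DVR to be essentially of finite type. The supporting details in your argument --- nonemptiness of the exceptional divisor via properness plus birationality, codimension one of $\nu^{-1}(E)$ because it is an effective Cartier divisor on a normal integral noetherian scheme, and birationality giving fraction field $K$ --- are all in order.
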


\begin{proof}
The following argument is modeled on the proof of 
\citestacks{00PH}. 

If $L$ is not finite over $K$, choose a transcendence basis $x_1, \dots, x_r$ of $L$ over $K$. 
Let $\frm \subset A[x_1, \dots, x_r]$ be the maximal ideal generated by the maximal ideal of $A$ and $x_1, \dots, x_r$, 
and let $A' = A[x_1, \dots, x_r]_{\frm}$. 
Then $A'$ is a local domain which dominates $A$, the map $A \to A'$ 
is essentially of finite type, and $L$ is finite over the fraction field of $A'$. 
Moreover, $A'$ is Nagata by Lemma~\ref{lem-nagata-eft}. 
Replacing $A$ with $A'$ we may thus assume $L$ is finite over $K$. 

By \citestacks{00P8} we 
may find a noetherian local domain $A'$ of dimension $1$ with fraction field $K$ which dominates $A$ and such that $A \to A'$ is essentially of finite type. 
Again by Lemma~\ref{lem-nagata-eft}, $A'$ is Nagata. 
Replacing $A$ with $A'$ we may thus further assume $\dim(A) = 1$. 

Let $B \subset L$ be the integral closure of $A$ in $L$. 
By Lemma~\ref{lem-nagata-iff-ujn} the ring $A$ is Japanese, so $B$ is finite over $A$. 
Thus we may choose a prime $\fq \subset B$ lying over the maximal ideal of $A$. 
Set $R = B_{\fq}$. 
Then $R$ is a local normal domain of dimension $1$ which has fraction field $L$ and dominates $A$, and the map $A \to R$ is essentially of finite type. 
By Lemma~\ref{lem-nagata-iff-ujn} we see that $R$ is also noetherian, 
and hence a DVR. 
\end{proof}

The following notion will be used below. 
\begin{Def}\label{def:EssLocFT}
Let $Y$ be an algebraic space. 
For a scheme $Z$, we say a morphism $f\colon Z \to Y$ is \emph{essentially {(locally)} of finite type} if it is either (locally) of finite type or if $Z$ is affine and $f$ factors as 
\begin{equation*}
Z=\Spec(R[M^{-1}]) \to \Spec(R) \to Y
\end{equation*} 
where $\Spec(R) \to Y$ is a morphism {(locally)} of finite type and $M \subset R$ is a multiplicative system.
\end{Def}

\begin{Rem}
\label{rem-elft-eft}
If $Y$ is a locally noetherian scheme, then for any morphism $\Spec(R) \to Y$ locally of finite type, 
$\Spec(R)$ is noetherian and hence $\Spec(R) \to Y$ is of finite type. 
Thus if $Y$ is a locally noetherian scheme, a morphism $Z \to Y$ from an affine scheme $Z$ is essentially locally of finite type if and only if it is essentially of finite type. 
Since a Nagata scheme $Y$ is locally noetherian by \citestacks{033Z}, we will apply this observation in the results below whenever $Y$ is a Nagata scheme.
\end{Rem}

\begin{Rem}
We have chosen definitions that make our results as general as possible.  Note, however, that Definition~\ref{def:EssLocFT} is in a slightly 
different spirit than our definition of an essentially perfect morphism in Definition~\ref{def:EssPerfect}. One can show that for a noetherian scheme $Y$, any essentially locally of finite type morphism $Z \to Y$ that is also of finite Tor-dimension is essentially perfect, but the converse need not hold.  
\end{Rem}

\begin{Lem}
\label{lemma-specializations-lift}
Let $f \colon X \to Y$ be a locally of finite type morphism of schemes with $Y$ Nagata. 
Let $y \rightsquigarrow y'$, $y \neq y'$, be a specialization of points in $Y$, and let $x$ be a point of $X$ such that $f(x) = y$. 
Then there exists a commutative diagram 
\begin{equation*}
\xymatrix{
\Spec(K) \ar[r] \ar[d] & X \ar[d] \\
\Spec(R) \ar[r] & Y 
}
\end{equation*}
where $R$ is a DVR with field of fractions $K$ (which may be taken to be $\kappa(x)$), the bottom arrow is essentially of finite type and takes the generic point of $\Spec(R)$ to $y$ and the special point to $y'$, and the image point of the top arrow is $x$. 
\end{Lem}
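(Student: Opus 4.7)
My plan is to reduce to a purely ring-theoretic question via affineness, and then apply Lemma~\ref{lemma-nagata-dominate-dvr} to a suitable Nagata local domain obtained from $B$ by localization and quotient.

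First I would work affine-locally. Since the question is local on both $X$ and $Y$, I can shrink $Y$ to an affine open $\Spec(B)$ containing $y'$ (which still contains $y$ because $y \rightsquigarrow y'$), and shrink $X$ to an affine open $\Spec(A)$ containing $x$; the hypothesis that $f$ is locally of finite type makes $B \to A$ a finite-type map of rings. Let $\fp \subsetneq \fp'$ be the primes of $B$ corresponding to $y$ and $y'$, and let $\fq \subset A$ be the prime corresponding to $x$, so $\fq \cap B = \fp$ and $\kappa(x) = \mathrm{Frac}(A/\fq)$.

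Next I would build the target Nagata local domain. Set $A' := B_{\fp'}/\fp B_{\fp'}$. Since $B$ is Nagata, the localization $B_{\fp'}$ is essentially of finite type over $B$, hence Nagata by Lemma~\ref{lem-nagata-eft}; its quotient $A'$ is Nagata for the same reason. It is a local domain with fraction field $\kappa(y)$, residue field $\kappa(y')$, and is not a field since $\fp \subsetneq \fp'$. The inclusion $B/\fp \hookrightarrow A/\fq$ is of finite type (since $B \to A$ is), so $\kappa(y) \subset \kappa(x)$ is a finitely generated field extension. I would then invoke Lemma~\ref{lemma-nagata-dominate-dvr} to produce a DVR $R$ with fraction field $\kappa(x)$, dominating $A'$, with $A' \to R$ essentially of finite type.

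Finally I would assemble the diagram. The map $\Spec(R) \to Y = \Spec(B)$ is defined by the composition $B \to B_{\fp'} \to A' \hookrightarrow R$, which is essentially of finite type as a composition of such. Computing preimages of ideals shows that the generic point $(0) \subset R$ pulls back to $\fp$, i.e., maps to $y$, while the maximal ideal $\fm_R$ pulls back to $\fp'$ (using that $R$ dominates $A'$ and that the maximal ideal of $A'$ comes from $\fp' B_{\fp'}$), so it maps to $y'$. The map $\Spec(K) \to X$ is the canonical one with image $x$, where $K = \kappa(x) = \mathrm{Frac}(R)$. To verify commutativity, both compositions $B \to A \to \kappa(x)$ and $B \to R \to \kappa(x)$ factor through the same map $B/\fp \hookrightarrow \kappa(x)$ obtained from $B/\fp \hookrightarrow A/\fq$.

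I expect the main subtlety to be the verification that $A'$ is genuinely Nagata (carefully tracking that localization at a prime and quotient by an ideal both preserve the Nagata property in this setting) and, more delicately, arranging the commutativity of the square: one must take care to use the DVR $R$ inside $\kappa(x)$ in a way compatible with the embedding $\kappa(y) \hookrightarrow \kappa(x)$ induced by $B/\fp \hookrightarrow A/\fq$, rather than some abstract isomorphic copy. Once those points are handled, the essential content of the statement is exactly Lemma~\ref{lemma-nagata-dominate-dvr} applied to $A'$ with $L = \kappa(x)$.
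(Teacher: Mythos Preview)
Your proposal is correct and follows essentially the same approach as the paper: your local domain $A' = B_{\fp'}/\fp B_{\fp'}$ is precisely the image of $\cO_{Y,y'} \to \kappa(y)$ that the paper uses, and both proofs then apply Lemma~\ref{lemma-nagata-dominate-dvr} to this Nagata local domain with the finitely generated extension $\kappa(y) \subset \kappa(x)$. Your version is simply a more explicit, affine-coordinate rendering of the same argument, with the commutativity and essentially-of-finite-type checks spelled out in more detail.
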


\begin{proof}
We have a commutative diagram 
\begin{equation*}
\xymatrix{
\Spec(\kappa(x)) \ar[rr] \ar[d] & & X \ar[d] \\
\Spec(\kappa(y)) \ar[r] & \Spec(\cO_{Y,y'}) \ar[r]& Y 
}
\end{equation*} 
Let $A$ be image of the ring map $\cO_{Y,y'} \to \kappa(y)$. 
Then $A$ is a local domain with fraction field $\kappa(y)$, and $A$ is not a field since $y \neq y'$. 
By Lemma~\ref{lem-nagata-eft} the ring $A$ is Nagata because it is the quotient of the Nagata ring $\cO_{Y,y'}$. 
The field extension $\kappa(y) \subset \kappa(x)$ is finitely generated since 
$X \to Y$ is locally of finite type. 
Thus applying Lemma~\ref{lemma-nagata-dominate-dvr} we obtain a DVR $R$ with fraction 
field $\kappa(x)$ which dominates $A$, such that $A \to R$ is essentially of finite type. 
Thus we obtain a commutative diagram 
\begin{equation*}
\xymatrix{
\Spec(\kappa(x)) \ar[rrr] \ar[d] & & & X \ar[d] \\
\Spec(R) \ar[r] & \Spec(A) \ar[r] & \Spec(\cO_{Y,y'}) \ar[r]& Y 
}
\end{equation*} 
with all of the desired properties. 
\end{proof}

We will need the following lemma in our proof of the Nagata valuative 
criterion for universal closedness below. 

\begin{Lem}
\label{lem-uc-bc-fp}
Let $f \colon X \to Y$ be a quasi-compact morphism of algebraic spaces. 
Assume that for every locally of finite presentation morphism $Z \to Y$ with 
$Z$ affine, the base change $X_Z \to Z$ is closed. 
Then $f$ is universally closed. 
\end{Lem}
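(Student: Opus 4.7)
The plan is to reduce, via étale-local and cofiltered-limit techniques, to the hypothesis on base changes along locally finitely presented affine morphisms. Because closedness of a morphism of algebraic spaces is étale local on the target, and every algebraic space admits an étale cover by affine schemes, universal closedness will follow once I show that $X_Z \to Z$ is closed for every morphism $Z \to Y$ with $Z$ an affine scheme. The hypothesis covers the case when $Z \to Y$ is in addition locally of finite presentation, so the remaining task is to remove the finite-presentation restriction.

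To bridge this gap, I would write an arbitrary morphism $Z \to Y$ with $Z$ affine as a cofiltered limit $Z = \lim_i Z_i$ with each $Z_i$ affine, each $Z_i \to Y$ locally of finite presentation, and all transition maps affine; this is a standard application of the limit theory for algebraic spaces, analogous to Lemma~\ref{lemma-limit-affine-fp} and the results cited there from the Limits chapters of \cite{stacks-project}. Since affine base change commutes with cofiltered limits, we have $X_Z = \lim_i X_{Z_i}$ as algebraic spaces, and by hypothesis each morphism $f_i \colon X_{Z_i} \to Z_i$ is closed.

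The final step is to deduce closedness of $X_Z \to Z$ from closedness of each $f_i$. Given a closed subset $W \subset X_Z$, let $W_i \subset X_{Z_i}$ denote the closure of the image of $W$ under the projection; then each $f_i(W_i) \subset Z_i$ is closed. I would identify $f_Z(W) = \bigcap_i p_i^{-1}(f_i(W_i))$, where $p_i \colon Z \to Z_i$ is the projection. The inclusion from left to right is immediate. For the reverse, given a point $z$ of the intersection, the non-empty fibers $W_i \cap f_i^{-1}(z_i)$ form a cofiltered system of non-empty quasi-compact subsets of the quasi-compact fibers $f_i^{-1}(z_i)$; here quasi-compactness of the $f_i$ is inherited from the quasi-compactness of $f$. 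A Tychonoff-type argument then produces a point in the limit lying over $z$, i.e., a point of $W$ whose image is $z$.

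The main obstacle is this last paragraph: verifying that closedness descends under cofiltered limits with affine transition maps for quasi-compact morphisms. The heart of the matter is the standard but delicate topological fact that a cofiltered limit of non-empty quasi-compact (quasi-separated) spaces with continuous transition maps is non-empty; invoking it in the algebraic-space setting requires care to ensure that all the intermediate spaces involved are quasi-compact and that the limit descriptions of topological spaces and of closed subsets behave as expected—again a routine matter once one has the Stacks Project machinery for limits of algebraic spaces at hand.
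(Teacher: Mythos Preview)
Your approach is genuinely different from the paper's. The paper invokes \citestacks{0CM9}, which states that for a quasi-compact morphism of algebraic spaces, universal closedness can be checked after base change along \emph{all} locally finitely presented morphisms $Z \to Y$ (not only affine ones). Given such a $Z$, the paper chooses a surjective \'etale morphism $Z' \to Z$ with $Z'$ a disjoint union of affine schemes, applies the hypothesis on each piece to get $X_{Z'} \to Z'$ closed, and then descends closedness along $|Z'| \to |Z|$ using that $|Z|$ carries the quotient topology. So the paper's proof is an elementary \'etale-descent argument on top of the Stacks Project black box, whereas you are in effect reproving that black box by a direct limit argument.

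Your strategy is sound, but there is a gap in the last step that you do not quite isolate. Having produced a point in $\lim_i \bigl(W_i \cap f_i^{-1}(z_i)\bigr)$, you assert this is a point of $W$. But a priori it is only a point of $\bigcap_i q_i^{-1}(W_i)$, where $q_i \colon X_Z \to X_{Z_i}$ and $W_i = \overline{q_i(W)}$; you need to know that $|W| = \bigcap_i q_i^{-1}(W_i)$. This is true, but it requires an argument: give $W$ the reduced induced structure, let $W'_i \hookrightarrow X_{Z_i}$ be the scheme-theoretic image of $W$ (so $|W'_i| = W_i$ since $W \to X_{Z_i}$ is quasi-compact), and verify that $W = \lim_i W'_i$, for instance by reducing to the affine case and computing with ideals. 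Your last paragraph names the non-emptiness of the cofiltered limit as the delicate point, but the identification of that limit with a subset of $W$ is equally essential and is what your sketch omits. Once this is supplied your proof goes through; it is more self-contained than the paper's, at the cost of being longer.
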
 

\begin{proof} 
For any algebraic space $X$, we denote by $|X|$ the underlying topological space of $X$, 
see \citestacks{03BY}. 
A morphism of algebraic spaces is called closed if the induced map on 
topological spaces is closed. 

By \citestacks{0CM9}, to 
prove the lemma it suffices to show that for every locally of finite presentation morphism 
$Z \to Y$ of algebraic spaces, the base change $X_{Z} \to Z$ is closed. 
Choose a surjective \'{e}tale morphism $p \colon Z' \to Z$ where $Z'$ is a disjoint union of affine schemes $Z'_i$ 
\'{e}tale over $Z$. 
By assumption, the morphism $X_{Z'} \to Z'$ is closed after restriction to each affine $Z'_i$, 
and hence closed. 
Consider the diagram 
\begin{equation*}
\xymatrix{
|X_{Z'}| \ar[d]_{|f_{Z'}|} \ar[r]^{|p'|} & |X_Z| \ar[d]^{|f_Z|} \\ 
|Z'| \ar[r]^{|p|} & |Z|
}
\end{equation*} 
of continuous maps of topological spaces. 
We must show that for any closed subset $T \subset |X_Z|$, the 
set $|f_Z|(T) \subset |Z|$ is closed. 
Since $p \colon Z' \to Z$ is a surjective \'{e}tale morphism, 
the topology on $|Z|$ is the quotient topology for the surjection 
$|p| \colon |Z'| \to |Z|$, see \citestacks{03BX}. 
Hence we must show $|p|^{-1}|f_Z|(T) \subset |Z'|$ is closed. 
But since $|X_{Z'}| \to |Z'| \times_{|Z|} |X_{Z}|$ is surjective it follows 
that $|p|^{-1}|f_Z|(T) = |f_{Z'}||p'|^{-1}(T)$, which is closed by the observation above 
that $X_{Z'} \to Z'$ is closed. 
\end{proof}

\begin{Lem} 
\label{lemma-valuative-criterion-uc}
Let $f \colon X \to Y$ be a morphism 
of algebraic spaces such that: 
\begin{enumerate}[{\rm (1)}] 
\item \label{Y-Nagata-vc} 
$Y$ is Nagata. 
\item \label{f-ft-vc}
$f$ is of finite type. 
\item \label{f-epvc} 
$f$ satisfies the strong existence part of the valuative 
criterion with respect to any essentially locally of finite type morphism $\Spec(R) \to Y$ 
with $R$ a DVR. 
\end{enumerate} 
Then $f$ is universally closed. 
\end{Lem}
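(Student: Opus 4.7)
The plan is to combine Lemma~\ref{lem-uc-bc-fp} with the principle that a constructible subset of a locally noetherian topological space is closed if and only if it is stable under specialization. First, by Lemma~\ref{lem-uc-bc-fp}, it suffices to show that for every locally of finite presentation morphism $Z \to Y$ from an affine scheme $Z$, the base change $f_Z \colon X_Z \to Z$ is closed. Since $Y$ is Nagata and locally of finite type morphisms preserve the Nagata property (Lemma~\ref{lem-nagata-eft}, together with \citestacks{035B} to pass from rings to schemes over an algebraic space), $Z$ is a Nagata (and hence noetherian) affine scheme. The hypotheses on $f$ are inherited by $f_Z$: it remains of finite type, and any essentially locally of finite type morphism $\Spec(R) \to Z$ composes to one into $Y$, so the valuative criterion assumption~\eqref{f-epvc} still applies. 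Thus we may assume $Y$ itself is a Nagata affine scheme and $f \colon X \to Y$ is a finite-type morphism of algebraic spaces, and we need only show $f$ is closed.

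Let $T \subset |X|$ be closed. Since $Y$ is noetherian and $f$ is of finite type, $f$ is of finite presentation, so by the algebraic-space version of Chevalley's theorem (\citestacks{03I6}), the image $|f|(T) \subset |Y|$ is constructible. The topological space $|Y|$ is noetherian and sober, so it remains to verify specialization-stability: given $y \in |f|(T)$, $y' \in |Y|$ with $y \rightsquigarrow y'$ and $y \neq y'$, we must show $y' \in |f|(T)$. Choose $x \in T$ with $f(x) = y$, pick an étale presentation $U \to X$ with $U$ a scheme, and choose a point $u \in U$ lying above $x$. The composition $U \to Y$ is then a locally of finite type morphism of Nagata schemes, so Lemma~\ref{lemma-specializations-lift} supplies a DVR $R$ with fraction field $K = \kappa(u)$, an essentially of finite type morphism $\Spec(R) \to Y$ sending the generic point to $y$ and the closed point to $y'$, and a compatible morphism $\Spec(K) \to U$ with image $u$. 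Composing with $U \to X$ yields a commutative diagram of the form~\eqref{vc-diagram} whose top arrow maps to $x$.

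By hypothesis~\eqref{f-epvc}, there is a dotted arrow $\Spec(R) \to X$ making the diagram commute. The image of the generic point of $\Spec(R)$ in $|X|$ is $x$, and the image of the closed point is some $x' \in |X|$ with $x \rightsquigarrow x'$ and $f(x') = y'$. Since $T$ is closed and contains $x$, it contains $x'$, so $y' = f(x') \in |f|(T)$, as required. The main obstacle is the passage from a point $x \in |X|$ of an algebraic space to a morphism $\Spec(K) \to X$ with the correct field $K$ and a compatible DVR into $Y$; this is precisely what Lemma~\ref{lemma-specializations-lift} provides, after choosing an étale scheme presentation of $X$, and the essentially of finite type conclusion there is exactly what is needed to apply the strengthened valuative criterion~\eqref{f-epvc}.
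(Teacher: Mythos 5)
Your proof is correct and follows essentially the same strategy as the paper: reduce via Lemma~\ref{lem-uc-bc-fp} to the case of an affine Nagata base, check that the image of a closed set is stable under specialization, and obtain the needed specialization-lift from Lemma~\ref{lemma-specializations-lift} combined with hypothesis~\eqref{f-epvc}, applied through an \'etale scheme presentation. The one small divergence is in how you conclude that specialization-stability implies the image is closed: you invoke Chevalley's theorem (finite presentation, valid since the base has become noetherian) to get constructibility, then use the noetherian fact that constructible plus specialization-stable implies closed; the paper instead observes that $|f|(T)$ equals the image of a morphism of affine schemes $V \to Y$, where $V \subset U$ is the closed subscheme cut out by $T$, and applies \citestacks{00HY}, which needs no finiteness hypothesis and avoids Chevalley. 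Both routes are fine here since noetherianity is already in hand after the first reduction.
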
 

\begin{proof}
By Lemma~\ref{lem-uc-bc-fp} it suffices to show that for every locally of finite presentation morphism 
$Z \to Y$ with $Z$ affine, the morphism $X_Z \to Z$ is closed. 
We claim that $X_Z \to Z$ inherits properties \eqref{Y-Nagata-vc}-\eqref{f-epvc}. 
Indeed, \eqref{Y-Nagata-vc} holds by \citestacks{035A}, 
\eqref{f-ft-vc} is clear, and \eqref{f-epvc} follows formally from the fact that $Z \to Y$ is locally 
of finite presentation. 

Thus to prove the lemma, it suffices to assume that $Y$ is affine and show 
$f \colon X \to Y$ is closed. 
In this situation, $X$ is a quasi-compact algebraic space since $f$ is of finite type and 
$Y$ is quasi-compact. 
Thus by \citestacks{03H6}
we may choose an affine scheme $U$ and a surjective \'{e}tale morphism $p \colon U \to X$. 
We must show that if $T \subset |X|$ is a closed subset of the underlying topological space of $X$, 
then $|f|(T)$ is closed in $Y$. 
The preimage $|p|^{-1}(T)$ is closed in $U$ and hence the set of points of an 
affine closed subscheme $V \subset U$. 
Hence $|f|(T)$ coincides with the image of the morphism of affine schemes $V \to Y$, 
which by \citestacks{00HY} is closed if and only if it is stable under specialization. 
In other words, we must show that specializations of points in $Y$ lift along 
$f$; but this follows from Lemma~\ref{lemma-specializations-lift} and property \eqref{f-epvc}. 
\end{proof}

\newpage
\part{Harder--Narasimhan structures over a curve}
\label{part:HNStrCurve}

In this part of the paper, we develop a theory of Harder--Narasimhan structures over a curve; 
this will be the basis of our definition of a stability condition over a general base scheme 
in Part~\ref{part:higher-dimensional-bases}. 
With the exception of Sections~\ref{sec:StabCond} and~\ref{sec:WeakStabCond} where we review the 
absolute setting, we work in the following setup. 
\begin{Setup} 
\label{setup-HN} 
Assume: 
\begin{itemize}
 \item $g \colon \cX \to C$ is a flat morphism as in the \hyperref[MainSetup]{Main Setup} and in addition it is projective, where $C$ is a Dedekind scheme;
 \item $\cD\subset\Db(\cX)$ is a $C$-linear strong semiorthogonal component of finite cohomological amplitude.
\end{itemize}
\end{Setup}

\section{Stability conditions and base change} 
\label{sec:StabCond}
We briefly recall the definition of stability conditions \cite{Bridgeland:Stab,Kontsevich-Soibelman:stability}, including the support property, and Bridgeland's deformation result. Then, in Section~\ref{subsec:basechangestability}, we prove a base change result for 
numerical stability conditions under arbitrary (not necessarily finitely generated) field extensions, extending the results of \cite{Pawel:basechange}.

\subsection{Definitions}
Let $\cD$ be a triangulated category.

\begin{Def} \label{def:slicing}
A \emph{slicing} $\cP$ of $\cD$ consists of full
additive subcategories $\cP(\phi) \subset \cD$ for each $\phi\in \mathbb{R}$, satisfying:
\index{P@$\cP$, $\cP(\phi)$ for each $\phi\in\R$, slicing}
\begin{enumerate}[{\rm (1)}] 
\item for all $\phi\in\mathbb{R}$, $\cP(\phi+1)=\cP(\phi)[1]$;
\item if $\phi_1>\phi_2$ and $E_j\in \cP(\phi_j)$, then $\Hom_{\cD}(E_1,E_2)=0$;
\item (\emph{HN filtrations}) for every nonzero $E \in \cD$ there exists a finite sequence of morphisms
\[ 0 = E_0 \xrightarrow{s_1} E_1 \to \cdots \xrightarrow{s_m} E_m = E \]
such that the cone of $s_i$ is in $\cP(\phi_i)$ for some sequence
$\phi_1 > \phi_2 > \dots > \phi_m$ of real numbers.
\end{enumerate}
\end{Def}

We write $\phi^+(E) := \phi_1$ and $\phi^-(E) := \phi_m$; moreover, for an interval $I \subset \R$, we write 
\begin{equation*}
 \cP(I) := \set{E \colon \phi^+(E), \phi^-(E) \in I} = \langle \cP(\phi) \rangle_{\phi \in I} \subset \cD.
\end{equation*}
\index{phi-+@$\phi^+(E)$ ($\phi^-(E)$), maximal (minimal) phase of an object}
We also write $\cP(\leqslant a):=\cP((-\infty,a])$ or $\cP(>b):=\cP((b,+\infty))$.
\index{P(I)@$\cP(I)$, slicing for an interval $I$}
\begin{Def} \label{def:stability_condition_p}
Let $\Lambda$ be a finite rank free abelian group with a group homomorphism $v \colon K(\cD)\to \Lambda$.
\begin{enumerate}[{\rm (i)}]
\item A \emph{pre-stability condition on $\cD$ with respect to $\Lambda$} is a pair $\sigma = (Z, \cP)$ where $\cP$ is a slicing of $\cD$ and $Z\colon \Lambda \to \C$ is a group homomorphism, that satisfy the following condition: 
\[
\text{for all }0\neq E \in\cP(\phi),\text{ we have }Z(v(E)) \in\R_{>0}\cdot e^{\mathfrak{i}\pi\phi}.
\]
We will often abuse notation and write $Z(E)$ for $Z(v(E))$. The nonzero objects of $\cP(\phi)$ are called $\sigma$-semistable of \emph{phase} $\phi$.
\item A pre-stability condition $\sigma = (Z, \cP)$ with respect to $\Lambda$ satisfies the \emph{support property} if there exists a quadratic form $Q$ on the vector space $\Lambda_\R$ such that
\index{sigma@$\sigma = (Z, \cP)$!(pre-)stability condition}
\begin{itemize}
\item the kernel of $Z$ is negative definite with respect to $Q$, and
\item for any $\sigma$-semistable object $E \in \cD$, we have
\[ Q(v(E)) \geqslant 0. \]
\end{itemize}
\item A \emph{stability condition} (with respect to $\Lambda$) is a pre-stability condition with respect to $\Lambda$ satisfying the support property.
\end{enumerate}
\end{Def}

We will briefly review the relevance of the support property for deformations of stability conditions in Section~\ref{subsec:spaceofostability}.
For now we note that an easy linear algebra statement implies the following.

\begin{Rem} \label{rem:supportfinitelength}
Assume that $Q$ is negative definite on the kernel of $Z$. Then for any $C > 0$, there can only be finitely many classes with
$\abs{Z(v)} < C$ and $Q(v) \geqslant 0$. 

It follows that if $(Z, \cP)$ satisfies the support property, then $\cP(\phi)$ is a finite length category
for every $\phi \in \R$, as the image of objects in $\cP(\phi)$ in $\R_{>0}\cdot e^{\ii\pi\phi}$ is a discrete set.
In other words, the support property guarantees the existence of a Jordan--H\"older filtration of a semistable object $E$, which has \emph{stable} filtration quotients; we call them the Jordan--H\"older factors of $E$.
\end{Rem} 

\begin{Def}\label{def:stability_function_p}
A \emph{stability function $Z$ on an abelian category $\cA$} is a morphism 
of abelian groups $Z\colon K(\cA) \to \C$ such that for all $0 \neq E \in\cA$, the complex number $Z(E)$ is in the semi-closed upper half plane $\H\sqcup \R_{<0} := \set{z\in\C \sth\Im z > 0, \text{ or } \Im z = 0\text{ and }\Re z < 0}$.
\index{Z@$Z\colon K(\cA) \to \C$,!stability function on $\cA$}

For $0\neq E \in \cA$ we define its \emph{phase} by $\phi(E) := \frac{1}{\pi} \arg Z(E) \in (0, 1]$. 
\index{phi@$\phi(E)$, phase of an object}
An object $E \in \cA$ is called \emph{$Z$-semistable} if for all subobjects $0 \neq A \hookrightarrow E$, we have $\phi(A) \leqslant \phi(E)$.
\end{Def}

\begin{Def}\label{def:satisfies_HN}
We say that a stability function $Z$ on an abelian category $\cA$ \emph{satisfies the HN property} if every object $E \in\cA$ admits a Harder--Narasimhan (HN) filtration: a sequence 
\[0 = E_0\hookrightarrow E_1 \hookrightarrow 
E_2\hookrightarrow \ldots \hookrightarrow E_m = E\]
such that $E_i /E_{i-1}$ is $Z$-semistable for $i = 1,\ldots, m$, with
\begin{equation*} 
\phi (E_1 /E_0 ) > \phi (E_2 /E_1 ) > \cdots > \phi (E_m /E_{m-1}).
\end{equation*}
\end{Def}

If $\cP$ is a slicing, then $\cP((\phi, \phi+1])$ is the heart of a bounded t-structure for all $\phi \in \R$; moreover, the slicing induces HN filtrations on $\cA := \cP((0, 1])$ with respect to $Z$. The converse also holds:
\begin{Lem}[{\cite[Proposition~5.3]{Bridgeland:Stab}}]
\label{lem:Bridgeland-stabviaheart} To give a pre-stability condition on $\cD$ is equivalent to giving a heart $\cA\subset \cD$ of a bounded t-structure, and a stability function $Z$ on $\cA$ with the HN property.
\index{sigma@$\sigma = (\cA, Z)$,!(pre-)stability condition}
\end{Lem}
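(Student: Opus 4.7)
The plan is to give the two constructions explicitly and verify the required axioms on each side.

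For the forward direction, suppose $\sigma=(Z,\cP)$ is a pre-stability condition. I would set $\cA:=\cP((0,1])$ and first verify that $\cA$ is the heart of a bounded t-structure. Semiorthogonality $\Hom(\cA,\cA[k])=0$ for $k<0$ follows from axiom (2) of Definition~\ref{def:slicing}; boundedness and the existence of truncations come from the HN filtrations, by grouping the HN factors of $E\in\cD$ according to whether their phases lie in $(n,n+1]$ for each $n\in\Z$, which gives the decomposition of $E$ required by Proposition~\ref{prop:BridgelandCriterion}. The restriction of $Z$ to $\cA$ is a stability function: for $0\neq E\in\cA$, the HN factors lie in $\cP(\phi_i)$ with $\phi_i\in(0,1]$, so each $Z$-value lies in $\R_{>0}\cdot e^{\ii\pi\phi_i}\subset \H\cup\R_{<0}$, and this half-plane is additively closed. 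Finally the HN property for $\cA$ with respect to $Z$ is obtained by grouping the HN factors of $E\in\cA$ (all of whose phases lie in $(0,1]$) according to phase intervals and checking that the cones then lie in $\cA$ and are $Z$-semistable.

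For the reverse direction, suppose $(\cA,Z)$ is given with $Z$ satisfying the HN property. For $\phi\in(0,1]$ define $\cP(\phi)$ to be the full subcategory of $Z$-semistable objects in $\cA$ of phase $\phi$ (plus the zero object), and extend to all $\phi\in\R$ by the rule $\cP(\phi+1)=\cP(\phi)[1]$. I would then check the three axioms of Definition~\ref{def:slicing}. Axiom (1) holds by construction. For axiom (2), given $E_j\in\cP(\phi_j)$ with $\phi_1>\phi_2$, shifting we may assume $\phi_2\in(0,1]$ and $\phi_1\in(\phi_2,\phi_2+1]$; so $E_1=E_1'[n]$ with $E_1'\in\cA$ of phase $\phi_1-n\in(0,1]$ and $n\in\{0,1\}$; the vanishing $\Hom(E_1,E_2)=0$ then reduces to the standard slope-stability statement $\Hom_\cA(A,B)=0$ for $A,B\in\cA$ semistable with $\phi_\cA(A)>\phi_\cA(B)$ (case $n=0$), and to $\Hom(A[1],B)=\Ext^{-1}_\cA(A,B)=0$ (case $n=1$), using that $\cA$ is the heart of a t-structure.

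The main step is axiom (3), the existence of HN filtrations for arbitrary $E\in\cD$. Using boundedness of the t-structure, I would first take cohomology objects $H^i=\rH^i_\cA(E)$, which are zero for all but finitely many $i$; these assemble $E$ via a finite sequence of triangles whose cones are shifts $H^i[-i]$. Applying the HN filtration in $\cA$ provided by the hypothesis to each $H^i$ gives cones in $\cP(\phi)$ for various $\phi\in(i,i+1]$. Concatenating these filtrations in order of decreasing phase and applying the octahedral axiom repeatedly produces the desired global HN filtration; semiorthogonality (axiom (2), already established) guarantees that the factors with successively lower phases fit together into a single filtration of $E$. The main technical obstacle will be this concatenation step: one must carefully interleave the filtrations coming from different cohomology degrees and verify inductively (using the vanishing from axiom (2)) that each successive cone lies in the expected $\cP(\phi)$, analogously to the proof of~\cite[Proposition~5.3]{Bridgeland:Stab}.

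Finally I would verify that the two constructions are mutually inverse: starting from $(Z,\cP)$, passing to $(\cA,Z)$, and back recovers the original slicing because $\cP(\phi)$ for $\phi\in(0,1]$ is characterized intrinsically as the $Z$-semistable objects in $\cA=\cP((0,1])$ of phase $\phi$, and both are then determined on all of $\R$ by the shift rule.
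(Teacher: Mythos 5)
The paper does not prove this lemma itself but cites it to Bridgeland's Proposition~5.3, and your proposal reconstructs Bridgeland's argument essentially verbatim: $\cA := \cP((0,1])$ with $Z$ restricted in one direction, and in the other the semistable subcategories of phase $\phi$ extended by shift, with HN filtrations in $\cD$ obtained by concatenating the heart filtration of $E$ (from boundedness of the t-structure) with the $\cA$-internal HN filtrations of its cohomology objects. That is the same approach.

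One small slip in your verification of axiom~(2): after normalizing $\phi_2\in(0,1]$ you cannot in general arrange $\phi_1\in(\phi_2,\phi_2+1]$, since $\phi_1-\phi_2$ can exceed $1$; so $n$ ranges over all non-negative integers, not just $\{0,1\}$. The cases $n\ge 1$ are all handled uniformly by the t-structure vanishing $\Hom(\cD^{\le 0},\cD^{\ge n})=0$ (rather than singling out $\Ext^{-1}_\cA$), so the argument survives once you drop the artificial restriction. Also, the appeal to the octahedral axiom in the concatenation step is not really load-bearing: the cohomology filtration already produces factors in strictly decreasing phase bands $(k,k+1]$, so splicing in the HN refinements of each $H^{-k}_\cA(E)$ yields strictly decreasing phases directly without needing semiorthogonality to reorder anything.
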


The support property can be checked at the level of the abelian category, with the same definition.
Thus the above lemma can be rephrased in terms of stability conditions as well.

Let $X$ be a projective variety over a field $k$, and let $\cD \subset \Db(X)$ be a $k$-linear strong semiorthogonal component. Then the Euler characteristic 
\[
\chi_k(E, F) = \sum_i (-1)^i \dim_k\Hom(E, F[i])
\]
induces a pairing
\[
\chi_k \colon K(\cD_{\perf}) \times K(\cD) \to \Z.
\]
When the base field is clear from the context, we will always omit it from the notation.
We write $\Knum(\cD)$ for the quotient of $K(\cD)$ by the null space of $\chi$ on the right;
similarly we write, by some abuse of notation, $\Knum(\cD_{\perf})$ for the corresponding quotient on the left.
\index{Knum(D)@$\Knum(\cD)$, numerical $K$-group}
\index{Knum(D)@$\Knum(\cD_{\perf})$, numerical $K$-group}

\begin{Lem} \label{lem:Knumfinite}
Both $\Knum(\cD)$ and $\Knum(\cD_{\perf})$ are free and finitely generated abelian groups. 
\end{Lem}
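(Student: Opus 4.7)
My plan for the proof goes in four steps.

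First, I would observe that torsion-freeness is essentially free of charge: by construction $\Knum(\cD)$ is the quotient of $K(\cD)$ by the right radical of the integer-valued Euler pairing $\chi_k$, so the induced pairing on $\Knum(\cD)$ is non-degenerate over $\Z$, and in particular $\Knum(\cD)$ injects into $\Hom_{\Z}(\Knum(\cD_{\perf}), \Z)$; hence any torsion element must be zero. The same argument applies to $\Knum(\cD_{\perf})$.

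Second, I would use the semiorthogonal decomposition to reduce to the case $\cD = \Db(X)$. By hypothesis $\cD$ fits into a strong semiorthogonal decomposition $\Db(X) = \langle \cD_1, \dots, \cD_n\rangle$ with $\cD = \cD_j$ for some $j$, and by Lemma~\ref{lem-sod-Db-perf} there is a corresponding decomposition $\Dperf(X) = \langle \cD_{1,\perf}, \dots, \cD_{n,\perf}\rangle$. These induce $K(\Db(X)) = \bigoplus_i K(\cD_i)$ and $K(\Dperf(X)) = \bigoplus_i K(\cD_{i,\perf})$. The key point is that for $F \in \cD_{i,\perf}$ and $E \in \cD_j$ with $i \neq j$, all $\Hom$'s (and hence $\chi_k(F,E)$) vanish by semiorthogonality; thus the Euler pairing respects both decompositions. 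It follows that the null subgroup for the pairing on $K(\cD)$ equals the intersection of $K(\cD)$ with the null subgroup for the pairing on $K(\Db(X))$. Consequently $\Knum(\cD)$ embeds as a subgroup of $\Knum(\Db(X))$ (in fact as a direct summand), and similarly $\Knum(\cD_{\perf}) \hookrightarrow \Knum(\Dperf(X))$.

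Third, I would invoke the standard fact that $\Knum(\Db(X))$ and $\Knum(\Dperf(X))$ are finitely generated abelian groups when $X$ is projective over a field $k$. In the smooth case this is immediate from Hirzebruch--Riemann--Roch, since the Euler pairing factors through the Chern character with values in the (finite-rank) numerical Chow ring. In the general projective case one can argue via resolutions, or appeal directly to results on $K$-theory of projective schemes; in any event, this is a classical statement which the authors treat as known.

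Combining the three steps, $\Knum(\cD)$ and $\Knum(\cD_{\perf})$ are torsion-free subgroups of finitely generated abelian groups, hence free and finitely generated. The only subtle point is the last invocation: ensuring that the finite generation of $\Knum(\Db(X))$ does not require smoothness. I expect this to be the main thing one needs to pin down carefully (or pick a citation for), while steps one and two are formal consequences of the semiorthogonal decomposition.
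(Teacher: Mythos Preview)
Your outline is in the right spirit, and step one matches the paper exactly. But step two contains a genuine error.

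You claim that $\chi_k(F,E)=0$ for $F\in\cD_{i,\perf}$ and $E\in\cD_j$ whenever $i\neq j$. Semiorthogonality, however, is one-sided: it gives $\Hom(\cD_i,\cD_j[\ast])=0$ only for $i>j$, so the Euler form is block upper-triangular with respect to the decomposition, not block-diagonal. (Already on $\P^1$ with $\Db(\P^1)=\langle\cO(-1),\cO\rangle$ one has $\chi(\cO(-1),\cO)=2\neq 0$.) Consequently the right kernel of $\chi$ on $K(\cD_j)$ need not equal the intersection of $K(\cD_j)$ with the right kernel on $K(\Db(X))$; the latter is in general \emph{smaller}, since you are testing against more perfect objects. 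So there is no natural map $\Knum(\cD_j)\to\Knum(\Db(X))$, let alone an embedding. The fix is easy and is essentially what the paper does: because the containment of kernels goes in the direction just described, $\Knum(\cD_{\perf})$ is a \emph{quotient} of the image of $K(\cD_{\perf})$ in $\Knum(\Dperf(X))$. A quotient of a subgroup of a finitely generated abelian group is still finitely generated, so the reduction goes through.

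Step three is, as you anticipated, the real content, and for singular $X$ it is not a fact you can simply cite. The paper's argument runs as follows: take an alteration $\pi\colon\widetilde X\to X$ with $\widetilde X$ regular and projective (de Jong); use Riemann--Roch on $\widetilde X$ to embed $\Knum(\Dperf(\widetilde X))$ into $\CH^*_{\num}(\widetilde X)\otimes\Q$, which is finitely generated; then show that pullback induces an injection $\Knum(\Dperf(X))\hookrightarrow\Knum(\Dperf(\widetilde X))$. This last injectivity is established by a d\'evissage: using the adjunction $\chi(\pi^*E,F)=\chi(E,\pi_*F)$ it suffices to show that every class in $K(\Db(X))$ has a positive multiple in the image of $\pi_*$, which follows by noetherian induction on supports, lifting the generic point of each closed subset to a multisection in $\widetilde X$. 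Without supplying an argument along these lines you do not have a complete proof.
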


\begin{proof}
By definition $\Knum(\cD)$ and $\Knum(\cD_{\perf})$ are torsion free; we need to show they are finitely generated. 
The pairing $\chi_k$ induces an inclusion 
$\Knum(\cD) \hookrightarrow \Hom(\Knum(\cD_{\perf}), \Z)$, 
so it is enough to prove the statement for $\Knum(\cD_{\perf})$. 
We may also reduce to the case where $\cD = \Db(X)$, as $\Knum(\cD_{\perf})$ is a
quotient of the image of $K(\cD_{\perf})$ in $\Knum(\Db(X))$.

Let $\pi \colon \widetilde{X} \to X$ be an alteration with $\widetilde{X}$ regular and projective (see \cite[Theorem~4.1]{deJong:alterations}). 
If $\CH^*_{\num}(\widetilde{X})$ denotes the Chow ring modulo numerical equivalence, then Riemann--Roch shows that for some integer $N$, $\Knum(\Dperf(\widetilde{X}))$ embeds into $\frac{1}{N}\CH^*_{\num}(\widetilde{X}) \subset \CH^*_{\num}(\widetilde{X}) \otimes \Q$; 
in particular, since $\CH^*_{\num}(\widetilde{X})$ is finitely generated (in finite characteristic, see e.g.~\cite[Example~19.1.4]{Fulton:IntersectionTheory}), so is $\Knum(\Dperf(\widetilde{X}))$.
On the other hand, using that \begin{equation}
\label{chiEF}
\chi_k(\pi^*(E), F) = \chi_k(E, \pi_*(F)) 
\quad \text{for} \quad E \in \Dperf(X) \text{ and } F \in \Db(\widetilde{X}), 
\end{equation}
we see that pullback induces a map 
$\Knum(\Dperf(X)) \to \Knum(\Dperf(\widetilde{X}))$. 
To finish the proof, we show this map is injective. 
Using again the relation~\eqref{chiEF}, 
we reduce to proving the following claim: if $\alpha \in K(\Db(X))$, then there exists a class $\widetilde{\alpha} \in K(\Db(\widetilde{X}))$ and an integer $d$ such that $\pi_*(\widetilde{\alpha}) = d\alpha$. 
This holds by d\'{e}vissage. 
Namely, by noetherian induction it suffices to show that if $F$ is a coherent sheaf on $X$ supported on a closed subset $Z \subset X$, 
then some multiple of $[F] \in K(X)$ lifts to $K(\widetilde{X})$, modulo classes of sheaves supported on proper subsets of $Z$. 
But we can find such a lift by considering the class $[\cO_{\widetilde{Z}}] \in K(\Db(\widetilde{X}))$ where $\widetilde{Z} \subset \widetilde{X}$ is a multisection of $\widetilde{X} \to X$ over $Z$. 
\end{proof}

\begin{Def}\label{def:NumericalStabilityCondition}
We say that a stability condition on $\cD$ is \emph{numerical} if the central charge factors via $Z \colon K(\cD) \to \Knum(\cD) \to \C$.
\end{Def}

\begin{Rem}\label{rmk:concr}
Concretely, a stability condition is numerical if $Z$ can be written as
\[
Z(\blank) = \sum_i z_i \chi(F_i,\blank),
\]
for some $z_i \in \C$ and $F_i \in \cD_{\perf}$. 
\end{Rem}

\subsection{The space of stability conditions} \label{subsec:spaceofostability}
We continue to fix a free abelian group $\Lambda$ of finite rank with a group homomorphism $v \colon K(\cD) \to \Lambda$, and write $\Stab_\Lambda(\cD)$ for the set of stability conditions satisfying the support property with respect to $\Lambda$.
We will review the deformation result of \cite{Bridgeland:Stab}, made somewhat more effective in \cite[Appendix~A]{BMS:abelian3folds}, which shows that $\Stab_\Lambda(\cD)$ is a complex manifold, locally homeomorphic to $\Hom(\Lambda, \C)$.

We first recall the metric topology on the set of slicings: 

\begin{PropDef}[{\cite[Section~6]{Bridgeland:Stab}}] \label{propdef:slicingsmetric}
Consider two slicings $\cP, \cQ$ of $\cD$, and write $\phi^{\pm}_\cP(E)$ and $\phi^{\pm}_\cQ(E)$ for the maximal and minimal phase occurring in the HN filtration of the object $E$ with respect to $\cP$ and $\cQ$, respectively. Then
\begin{align*}
d(\cP, \cQ) & := \sup_{E \in \cD} \left\{\left| \phi^+_\cQ(E) - \phi^+_\cP(E) \right|, \left| \phi^-_\cQ(E) - \phi^-_\cP(E) \right|\right\} \in \R_{\geqslant 0} \cup \{+\infty\} \\
 & = \sup_{\phi \in \R, E \in \cP(\phi)} \left\{ \left| \phi^+_\cQ(E) - \phi \right|, \left| \phi^-_\cQ(E) - \phi \right| \right\} \\
 & = \inf_{\phi\in\R} \left\{\epsilon\in\R_{\geqslant 0} \sth \cP(\leqslant \phi) \subset \cQ(\leqslant \phi + \epsilon) \text{ and } \cP(>\phi) \subset \cQ(>\phi - \epsilon) \right\},
\end{align*}
and this quantity defines a generalized metric on the set of slicings, i.e., it satisfies the triangle inequality and $d(\cP, \cQ) = 0$ if and only if $\cP = \cQ$.
\end{PropDef}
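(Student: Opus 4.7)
The plan is to verify the three claims in turn: the equality of the three formulas for $d(\cP,\cQ)$, the triangle inequality, and the separation property.

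For the equality of the first two expressions, the inequality $\ge$ is immediate since semistable objects are a subfamily of all objects. For the reverse, take any nonzero $E \in \cD$ and consider its HN filtration $A_1,\dots,A_m$ with respect to $\cP$, so that $A_i\in\cP(\phi_i)$ with $\phi_1>\cdots>\phi_m$, $\phi^+_\cP(E)=\phi_1$, and $\phi^-_\cP(E)=\phi_m$. Using the ``second expression'' applied to each $A_i$, we get $A_i \in \cQ([\phi_i - d_{ss}, \phi_i + d_{ss}])$ where $d_{ss}$ denotes the second expression; since $E$ is an iterated extension of the $A_i$, this yields $E \in \cQ([\phi_m - d_{ss}, \phi_1 + d_{ss}])$, hence $\phi^+_\cQ(E) - \phi^+_\cP(E) \le d_{ss}$ and $\phi^-_\cP(E) - \phi^-_\cQ(E) \le d_{ss}$. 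Swapping the roles of $\cP$ and $\cQ$ (and using that $d_{ss}$ is manifestly symmetric in $\cP,\cQ$) gives the remaining two inequalities, so the first expression is bounded by $d_{ss}$.

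For the third expression, the key observation is that $\cP(\le\phi)\subset\cQ(\le\phi+\epsilon)$ for all $\phi\in\R$ is equivalent to the condition $\phi^+_\cQ(E)\le\phi^+_\cP(E)+\epsilon$ for all nonzero $E$, and similarly $\cP(>\phi)\subset\cQ(>\phi-\epsilon)$ for all $\phi$ is equivalent to $\phi^-_\cQ(E)\ge\phi^-_\cP(E)-\epsilon$ for all nonzero $E$. Moreover, by reindexing $\phi\mapsto\phi+\epsilon$, the condition $\cP(\le\phi)\subset\cQ(\le\phi+\epsilon)$ is equivalent to $\cQ(>\phi')\subset\cP(>\phi'-\epsilon)$, so the symmetric inequality $\phi^+_\cP(E)-\phi^+_\cQ(E)\le\epsilon$ also follows. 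Combining these, the infimum of such $\epsilon$ equals the first expression.

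Next, the triangle inequality $d(\cP,\cR)\le d(\cP,\cQ)+d(\cQ,\cR)$ is most easily proved from the third formulation: if $\cP(\le\phi)\subset\cQ(\le\phi+\epsilon_1)$ and $\cQ(\le\phi)\subset\cR(\le\phi+\epsilon_2)$ for all $\phi$, then applying the second containment to $\phi+\epsilon_1$ yields $\cP(\le\phi)\subset\cR(\le\phi+\epsilon_1+\epsilon_2)$, and similarly for the $\cP(>\phi)$ conditions. Finally, for the separation property, if $d(\cP,\cQ)=0$ then the third characterization gives $\cP(\le\phi)=\cQ(\le\phi)$ and $\cP(>\phi)=\cQ(>\phi)$ for all $\phi\in\R$; intersecting yields $\cP((\phi_1,\phi_2])=\cQ((\phi_1,\phi_2])$ for all $\phi_1<\phi_2$, and letting $\phi_1\to\phi_2$ (or using the shift $\cP(\phi+1)=\cP(\phi)[1]$) we conclude $\cP(\phi)=\cQ(\phi)$ for every $\phi$. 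The main delicate point is keeping track of the open/closed conventions in passing between the inequalities for $\phi^\pm$ and the containments of the subcategories $\cP(\le\phi),\cP(>\phi)$; this requires handling boundary cases by taking appropriate limits $\phi\to\phi^\pm_\cP(E)$.
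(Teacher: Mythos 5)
Your argument that the first expression $d_1$ equals the second expression $d_{ss}$ has a gap: you claim $d_{ss}$ is ``manifestly symmetric'' in $\cP$ and $\cQ$, but as written it is a supremum over $\cP$-semistable objects only, and nothing in the formula makes symmetry visible. Swapping $\cP$ and $\cQ$ in your extension argument therefore yields inequalities bounded by the supremum over $\cQ$-semistable objects, not by $d_{ss}$; you only obtain $d_1 \le \max$ of the two suprema, which together with the trivial $d_1 \ge d_{ss}$ does not give the claimed equality. Invoking the symmetry of $d_{ss}$ is moreover circular, since that symmetry is equivalent to the identity $d_1 = d_{ss}$ being proved (as $d_1$ is the one that is manifestly symmetric).

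The missing two inequalities $\phi^+_\cP(E) - \phi^+_\cQ(E) \le d_{ss}$ and $\phi^-_\cQ(E) - \phi^-_\cP(E) \le d_{ss}$ must instead be proved directly, using the Hom-vanishing axiom of a slicing, which your argument never actually invokes. Let $A_1 \in \cP(\phi^+_\cP(E))$ be the first HN factor of $E$ with respect to $\cP$; the truncation map $A_1 \to E$ is nonzero, and $\Hom(\cQ(>\phi), \cQ(\le \phi)) = 0$ forces $\phi^-_\cQ(A_1) \le \phi^+_\cQ(E)$. Since $A_1$ is $\cP$-semistable, the defining bound gives $\phi^-_\cQ(A_1) \ge \phi^+_\cP(E) - d_{ss}$, and combining yields the first inequality; the second follows dually from the last HN factor $A_m$ and the nonzero map $E \to A_m$. (The step you call ``reindexing'' in comparing the third expression is likewise really a passage to left orthogonals, i.e., an appeal to the same Hom-vanishing; it is correct, but ``reindexing'' undersells what is being used. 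Finally, note that the paper does not itself prove this statement but cites Bridgeland, so your attempt is being judged on its own terms.)
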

This induces a topology on $\Stab_\Lambda(\cD)$ as the coarsest topology such that the two forgetful maps to $\Hom(\Lambda, \C)$ and to the set of slicings on $\cD$ are continuous. 
\index{StabLambda(D)@$\Stab_\Lambda(\cD)$, space of stability conditions on $\cD$}

\begin{Thm}[{\cite{Bridgeland:Stab}, \cite[Proposition~A.5]{BMS:abelian3folds}}] \label{thm:deformstability}
The space $\Stab_\Lambda(\cD)$ of stability conditions on $\cD$ is a complex manifold, and the canonical
map 
\[ \cZ \colon \Stab_\Lambda(\cD) \to \Hom(\Lambda, \C),
\quad (Z, \cP) \mapsto Z \]
is a local isomorphism. 

More precisely, assume that $\sigma = (Z, \cP)$ satisfies the support property with respect to the quadratic form $Q$. Consider the open subset of $\Hom(\Lambda, \C)$ consisting of central charges whose kernel is negative definite with respect to $Q$, and write $P_Z$ for the connected component of this open subset that contains $Z$. If $U\subset \Stab_\Lambda(\cD)$ is the connected component of $\cZ^{-1}(P_Z)$ containing $\sigma$, then $\cZ|_U \colon U \to P_Z$ is a covering map.
\end{Thm}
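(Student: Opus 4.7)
The plan is to follow Bridgeland's original strategy from \cite{Bridgeland:Stab}, in the quantitative form of \cite[Appendix~A]{BMS:abelian3folds}. First, I would endow $\Stab_\Lambda(\cD)$ with the coarsest topology making both $\cZ$ and the forgetful map $(Z,\cP) \mapsto \cP$ to the generalized metric space of slicings from Proposition~\ref{propdef:slicingsmetric} continuous; the complex structure on $\Stab_\Lambda(\cD)$ is then pulled back from $\Hom(\Lambda,\C)$ via any local section of $\cZ$. All three assertions thus reduce to the refined covering statement for $\cZ|_U \colon U \to P_Z$. Local injectivity near $\sigma = (Z,\cP)$ is the easy part: if $\sigma' = (Z,\cP')$ is another point with $d(\cP,\cP') < \tfrac{1}{2}$, then for any $E \in \cP(\phi)$ the inequality $\phi^{\pm}_{\cP'}(E) \in (\phi - \tfrac{1}{2}, \phi + \tfrac{1}{2})$ combined with the fact that the shared central charge $Z$ determines the phase of every nonzero semistable object forces every HN factor of $E$ under $\cP'$ to have $Z$-phase exactly $\phi$; by the finite length of $\cP(\phi)$ recorded in Remark~\ref{rem:supportfinitelength}, this gives $E \in \cP'(\phi)$ and hence $\cP = \cP'$.

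The main and most delicate step is the construction of continuous local sections of $\cZ$. Given $W \in P_Z$ close to $Z$, the goal is to build a slicing $\cP_W$ such that $(W,\cP_W)$ again lies in $\Stab_\Lambda(\cD)$ with the same quadratic form $Q$. The key geometric input comes from $Q$: since $\ker Z$ and $\ker W$ are both $Q$-negative definite, both $Z$ and $W$ are bounded away from zero on the cone $\Lambda_Q^{\ge 0} := \{v \in \Lambda_\R \sth Q(v) \ge 0\}$ after normalization, and the phase function $v \mapsto \arg W(v)$ converges uniformly to $v \mapsto \arg Z(v)$ on $\Lambda_Q^{\ge 0}\setminus\{0\}$ as $W \to Z$. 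Quantitatively, for each small $\epsilon > 0$ there is an explicit open ball $B_\epsilon(Z) \subset P_Z$, defined in terms of $Q$, such that for $W \in B_\epsilon(Z)$ every $\sigma$-semistable object has $W$-phase differing from its $Z$-phase by at most $\epsilon$. For such $W$ I would construct $\cP_W$ by working within the overlapping quasi-abelian windows $\cP((\phi - \tfrac{1}{2}, \phi + \tfrac{1}{2}])$: the restriction of $W$ is a stability function on each such window, HN filtrations exist because the relevant classes lie in $\Lambda_Q^{\ge 0}$, which contains only finitely many classes in any bounded region by Remark~\ref{rem:supportfinitelength}, and the resulting HN factors are declared to be the $\cP_W$-semistables of the corresponding $W$-phase.

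The main obstacle is verifying that this construction produces a genuine slicing of all of $\cD$, not just of the subcategories of $\sigma$-semistables: the HN filtrations obtained in overlapping windows must glue to filtrations of arbitrary objects of $\cD$, and the pair $(W,\cP_W)$ must continue to satisfy the support property for $Q$. The gluing and existence of global HN filtrations is essentially the content of Bridgeland's \cite[Theorem~7.1]{Bridgeland:Stab}, which one applies once $W$ has been verified to give a compatible stability function on each window; the support property for $Q$ is then preserved by a direct quadratic computation in $\Lambda_\R$, using that $\cP_W$-semistables arise as iterated extensions of $\sigma$-semistables with $Z$-phases confined to a small arc, together with the negative definiteness of $\ker W$ on a complementary subspace. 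Once the local sections are in place, $\cZ$ is a local homeomorphism onto $P_Z$; to upgrade this to the covering map statement one lifts paths in $P_Z$ iteratively via the local sections available at each base point, and combines this with discreteness of the fibers $\cZ^{-1}(W) \cap U$ (which follows from local injectivity) to deduce the evenly covered property in the standard way.
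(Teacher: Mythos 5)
The paper does not prove this theorem: it is quoted directly from Bridgeland (\cite{Bridgeland:Stab}) in the quantitative form of \cite[Proposition~A.5]{BMS:abelian3folds}, so there is no paper-internal proof to compare against. Your sketch reconstructs the standard argument from those sources, and the overall architecture (topology from the slicing metric, construction of deformed slicings on overlapping windows, preservation of the support property by linear algebra in $\Lambda_\R$, upgrade from local homeomorphism to covering map via uniform-radius sections) is the right one.

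However, your local injectivity paragraph has a genuine gap. From $d(\cP,\cP')<\tfrac12$ you only conclude that the $\cP'$-HN factors $B_1,\dots,B_n$ of $E\in\cP(\phi)$ have $\cP'$-phases (equivalently, $Z$-phases) lying in the interval $(\phi-\tfrac12,\phi+\tfrac12)$; it does not follow that these phases all equal $\phi$. Knowing that the $Z(B_i)$ lie in a half-plane sector and sum to $Z(E)$ of phase $\phi$ only pins the endpoints $\psi_1\ge\phi\ge\psi_n$, not equality. Eliminating a hypothetical top factor of phase $\psi_1>\phi$ requires a further Hom-vanishing argument that compares the two slicings on that factor (this is the content of Bridgeland's Lemma~6.4, which in fact yields injectivity for $d(\cP,\cP')<1$, not just $\tfrac12$). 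Your appeal to Remark~\ref{rem:supportfinitelength} (finite length of $\cP(\phi)$) is also misplaced here: once one knows all factors have the same phase, the strict monotonicity of HN phases already forces $n=1$ with no finiteness needed, so the finite-length remark does no work. The surjectivity and covering-map parts of your sketch are fine as an outline, modulo the usual care in verifying \cite[Theorem~7.1]{Bridgeland:Stab} on the overlapping windows and the quadratic estimate of \cite[Lemma~A.6]{BMS:abelian3folds}, both of which you correctly identify as the ingredients.
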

In other words, a path in $P_Z$ starting at $Z$ lifts uniquely to a path in $\Stab_\Lambda(\cD)$ starting at $\sigma$.

We also note the following standard facts about the hearts of stability conditions, see the example at the end of \cite[Section~1]{Polishchuk:families-of-t-structures} and \cite[Proposition~5.0.1]{AP:t-structures}:

\begin{Lem} \label{lem:stabheartnoetherian}
Let $\sigma = (\cA, Z)$ be a stability condition with respect to $\Lambda$.
\begin{enumerate}[{\rm (1)}] 
\item The heart $\cA$ is tilted-noetherian.
\item If $Z$ is defined over $\Q[\ii]$, i.e., if its image is contained in $\Q \oplus i\Q$, then
$\cA$ is noetherian.
\end{enumerate}
\end{Lem}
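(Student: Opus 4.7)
The plan is to prove (2) first by combining the discreteness of $\Q[\ii]$-values with the finite-length property of the subcategory $\cP(1)$ coming from the support property, and then to deduce (1) by approximating $Z$ with a rational central charge and invoking Bridgeland's deformation theorem (Theorem~\ref{thm:deformstability}).

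For part (2), I would consider an ascending chain $E_1 \subset E_2 \subset \cdots \subset E$ of subobjects in $\cA$. Since each quotient $E/E_i$ lies in $\cA$, we have $0 \le \Im Z(E_i) \le \Im Z(E)$. The rationality hypothesis forces $\Im Z(E_i)$ to lie in $\frac{1}{N}\Z$ for some fixed $N$, so this bounded non-decreasing sequence eventually stabilizes at some value $c$ for $i \ge i_0$. Then for $i \ge i_0$ the quotient $E_i/E_{i_0} \in \cA$ satisfies $\Im Z = 0$, which forces all of its HN factors (with respect to $\sigma$) to have phase $1$; hence $E_i/E_{i_0} \in \cP(1)$. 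Because $(\cP(1), \cP((0,1)))$ is a torsion pair on $\cA$ (with the HN filtration providing the short exact sequences and semiorthogonality of the slicing providing the orthogonality), the object $E/E_{i_0}$ has a maximal $\cP(1)$-subobject $M$, and each $E_i/E_{i_0} \subset M$. By Remark~\ref{rem:supportfinitelength} (which uses the support property), $\cP(1)$ is a finite-length abelian category, so the ascending chain $\{E_i/E_{i_0}\}_{i \ge i_0}$ inside $M$ stabilizes, and hence so does $\{E_i\}$.

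For part (1), I would choose $Z' \in \Hom(\Lambda, \C)$ defined over $\Q[\ii]$ close enough to $Z$ that, by Theorem~\ref{thm:deformstability}, it lifts to a stability condition $\sigma' = (Z', \cP')$ in the same connected component of $\Stab_\Lambda(\cD)$, satisfying the support property with the same quadratic form, and with $d(\cP, \cP') < 1/2$. I then set $\cA_0 := \cP'((1/2, 3/2])$, which is the heart of a stability condition with central charge $-\ii Z'$ obtained from $\sigma'$ by the standard rotation by $-\pi/2$; since $-\ii Z'$ still has image in $\Q[\ii]$, part (2) applied to this rotated stability condition shows that $\cA_0$ is noetherian. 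Finally, for any $E \in \cA = \cP((0,1])$ the bound $d(\cP, \cP') < 1/2$ gives $\phi^+_{\cP'}(E) < 3/2$ and $\phi^-_{\cP'}(E) > -1/2$, so $E \in \cP'((-1/2, 3/2]) = \langle \cA_0[1], \cA_0 \rangle$; by the sandwich criterion \cite[Lemma~1.1.2]{Polishchuk:families-of-t-structures}, this implies that $\cA$ is a tilt of $\cA_0$, proving that $\cA$ is tilted-noetherian.

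The main subtle point is the choice of rotation in part (1): one cannot directly take $\cA_0 = \cP'((0,1])$, since an object $E \in \cA$ of $\cP$-phase close to $1$ may have $\cP'$-phase slightly above $1$, placing it outside $\langle \cP'((0,1])[1], \cP'((0,1])\rangle = \cP'((-1,1])$. The half-rotation to $\cP'((1/2, 3/2])$ symmetrically enlarges the allowable phase interval to $(-1/2, 3/2]$, and multiplication of the central charge by $-\ii$ preserves rationality so that part (2) still applies to the rotated stability condition.
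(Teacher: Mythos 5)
Your argument for part (2) is correct, and it is essentially the argument of Abramovich--Polishchuk that the paper cites: $\Im Z$ is bounded, non-decreasing, and takes values in a discrete subgroup of $\R$, so it eventually stabilizes; the subsequent chain lives in $\cP(1)$, which is a finite-length abelian category by the support property (Remark~\ref{rem:supportfinitelength}), and since $\cP(1)$ is closed under subobjects and quotients in $\cA$ (because $\Im Z$ is additive and non-negative on $\cA$), the chain terminates.

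Part (1), however, contains a genuine error in the direction of the rotation. With $d(\cP,\cP') < 1/2$ the tilt sandwich criterion $\cD_0^{\le -1} \subset \cD^{\le 0} \subset \cD_0^{\le 0}$, i.e.\ $\cP'((\phi_0+1,\infty)) \subset \cP((0,\infty)) \subset \cP'((\phi_0,\infty))$, forces $\phi_0 = -1/2$, so the correct choice is $\cA_0 = \cP'((-1/2, 1/2])$ with central charge $+\ii Z'$, not $\cP'((1/2, 3/2])$ with central charge $-\ii Z'$. Concretely: with your $\cA_0 = \cP'((1/2,3/2])$, the required inclusion $\cD^{\le 0} \subset \cD_0^{\le 0}$ becomes $\cP((0,\infty)) \subset \cP'((1/2,\infty))$, but an object $E$ with $\phi^-_\cP(E)$ close to $0$ only satisfies $\phi^-_{\cP'}(E) > -1/2$, so $E$ may lie outside $\cP'((1/2,\infty))$ and the sandwich fails. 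The source of the confusion is the shift convention: since $\cP(\phi)[1] = \cP(\phi+1)$ (Definition~\ref{def:slicing}(1)), tilting pushes the phase window \emph{up}, so $\langle \cA_0[1], \cA_0\rangle = \cP'((\phi_0, \phi_0+2])$, not $\cP'((\phi_0-1, \phi_0+1])$ as your last paragraph assumes. The ``subtle point'' you flag at the end is real, but it arises at the \emph{bottom} of the phase interval, not the top, and the fix is to rotate downward, taking $\cA_0 = \cP'((-1/2,1/2])$. With that correction, both inclusions follow from $d(\cP,\cP') < 1/2$ exactly as you intended, and the rest of the argument (rational approximation of $Z$ via Theorem~\ref{thm:deformstability}, application of part (2) to the rotated rational stability condition) goes through.
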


The support property combined with some simple linear algebra arguments imply local finiteness of wall-crossing:
\begin{Lem}[{\cite[Section~9]{Bridgeland:K3}, \cite[Proposition~2.8 and Lemma~2.9]{Toda:K3}, \cite[Proposition~3.3]{BM:localP2}}] \label{lem:wallcrossinglocallyfinite}
Fix a class $\vv \in \Lambda$. 
\index{vv@$\vv\in \Lambda$, Mukai vector}
Then there exists a locally finite set of walls, i.e., real codimension one submanifolds, $\cW_\vv \subset \Stab_\Lambda(\cD)$ with the following properties. 
\begin{enumerate}[{\rm (1)}]
\item \label{enum:finitewalls} Let $W$ be an intersection of finitely many walls in $\cW_\vv$ (the empty intersection, i.e., $W = \Stab_\Lambda(\cD)$, being allowed).
Let $\cC \subset W$ be a \emph{chamber}, namely a connected component of the complement of the union of intersections of $W$ with any other wall. 
Then the set of $\sigma$-semistable objects of class $\vv$ is constant for $\sigma \in \cC$; the same holds for the set of $\sigma$-stable objects. 
\item \label{enum:Qi-enough} Up to the normalisation\footnote{This can be achieved via the action of $\wGL2$, the universal cover of $\GL_2^+(\R)$; this group has an action on $\Stab_\Lambda(\C)$ covering the action of $\GL_2^+(\R)$ on $\Hom(\Lambda, \C) \cong \Hom(\Lambda, \R^2)$.} $Z(\vv) \in \R_{<0}$, every such wall in $\cW_\vv$ is locally described by a linear equation of the form $\Im Z(\uu)= 0$ for some $\uu \neq  0$, defined over $\Q$.
Every chamber $\cC$ contains stability conditions with $Z$ defined over $\Q[i]$.
\end{enumerate}
\end{Lem}
\begin{proof}
The case of $\sigma$-semistable objects is treated in \cite{Toda:K3}. From the proof, one can see that an object that is $\sigma$-semistable in a chamber $\cC$ is either $\sigma$-stable for all $\sigma \in \cC$ or strictly $\sigma$-semistable for all $\sigma \in \cC$.
\end{proof}

\subsection{Stability conditions under base change} \label{subsec:basechangestability}
Now consider an algebraic variety $X$ defined over a base field $k$ 
and a field extension $\ell$ of $k$.
The pullback $\cD \to \cD_\ell$, $E\mapsto E_\ell$ induces maps
\[
K(\cD) \to K(\cD_\ell)\quad\text{and}\quad
K(\cD_{\mathrm{perf}}) \to K(\cD_{\ell, \perf})
\]
that preserve the Euler characteristic pairing.

\begin{Lem} \label{lem:Knumpullback}
Pullback also induces a map $\eta_{\ell/k} \colon \Knum(\cD_{\perf}) \to \Knum(\cD_{\ell, \perf})$ on numerical $K$-groups.
\end{Lem}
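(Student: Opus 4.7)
The plan is to show that if $E \in \cD_{\perf}$ satisfies $\chi_k(E, F) = 0$ for all $F \in \cD$, then $\chi_\ell(E_\ell, G) = 0$ for all $G \in \cD_\ell$; this establishes that the composition $K(\cD_{\perf}) \to K(\cD_{\ell, \perf}) \to \Knum(\cD_{\ell, \perf})$ factors through the quotient $\Knum(\cD_{\perf})$.

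First I would reduce to the case where $k \subset \ell$ is finitely generated. Given $G \in \cD_\ell$, write $\ell = \colim \ell_\alpha$ as the filtered colimit of its finitely generated subextensions of $k$, so that $\cX_\ell = \lim \cX_{\ell_\alpha}$ with affine transition maps. Standard spreading out for pseudo-coherent complexes descends $G$ to some $G_\alpha \in \Db(\cX_{\ell_\alpha})$, and after projection along the $\ell_\alpha$-linear semiorthogonal decomposition of Theorem~\ref{theorem-bc-sod} (whose formation is compatible with pullback along the flat map $\ell_\alpha \into \ell$) I may assume $G_\alpha \in \cD_{\ell_\alpha}$. Flat base change then gives the equality $\chi_\ell(E_\ell, G) = \chi_{\ell_\alpha}(E_{\ell_\alpha}, G_\alpha)$, reducing us to the finitely generated case.

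Next, for $\ell$ finitely generated over $k$, I would write $\ell$ as the fraction field of some finitely generated $k$-domain $A$ and spread $G$ out to an object $G_A \in \cD_A$ (using Theorem~\ref{theorem-bc-sod} again for compatibility of $\cD$ with base change). After shrinking $\Spec A$, I may further assume that $A$ is regular (by generic smoothness) and that every cohomology sheaf of $G_A$ is $A$-flat (by generic flatness). Then $\pi_* \RcHom_{\cX_A}(E_A, G_A)$, where $\pi \colon \cX_A \to \Spec A$ denotes the projection, is a perfect complex on $\Spec A$ whose Euler characteristic at a point $a \in \Spec A$ computes $\chi_{\kappa(a)}(E_{\kappa(a)}, G_{\kappa(a)})$ by base change. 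Being locally constant on the connected scheme $\Spec A$, this Euler characteristic is constant, so $\chi_\ell(E_\ell, G)$ agrees with $\chi_{\kappa(\mathfrak{m})}(E_{\kappa(\mathfrak{m})}, G_{\kappa(\mathfrak{m})})$ for any closed point $\mathfrak{m}$, whose residue field is finite over $k$ by the Nullstellensatz. This reduces the problem to the case of a finite field extension $k'/k$.

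Finally, for $k \subset k'$ finite, the map $\pi \colon \cX_{k'} \to \cX$ is finite and flat. By Theorem~\ref{theorem-bc-sod} the pushforward $\pi_*$ carries $\cD_{k'}$ into $\cD$, and for any $H \in \cD_{k'}$ the adjunction $\pi^* \dashv \pi_*$ gives $\RHom_{\cX_{k'}}(E_{k'}, H) \simeq \RHom_{\cX}(E, \pi_* H)$. Taking $k$-dimensions of the cohomology groups on both sides yields the identity $[k':k] \cdot \chi_{k'}(E_{k'}, H) = \chi_k(E, \pi_* H) = 0$, whence $\chi_{k'}(E_{k'}, H) = 0$. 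The main technical obstacle is the middle step: one must confirm that, after shrinking $\Spec A$, generic flatness of the cohomology sheaves of $G_A$ combined with regularity of $A$ really does make $\pi_* \RcHom_{\cX_A}(E_A, G_A)$ perfect with the base change to each fiber computing the expected Euler characteristic. Regularity of $A$ guarantees that any bounded coherent complex on $\Spec A$ is perfect, while flatness of the cohomology sheaves of $G_A$ ensures that the spectral sequence computing the restriction of $\RcHom_{\cX_A}(E_A, G_A)$ to a fiber degenerates as required.
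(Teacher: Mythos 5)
Your proof is correct, and it takes a genuinely different route from the paper's at the key transcendental step. The paper reduces to the finitely generated case (same argument as yours), proves the finite case via adjunction/pushforward (same formula), and then handles the transcendence by \emph{induction}: it reduces to the single base case $\ell = k(x)$, lifts $G$ to $\widetilde G \in \cD_{R}$ for $R = k[x]$, and uses that $\cHom_R(E_R, \widetilde G)$ is a perfect complex on $\bA^1_k$ whose (locally constant) rank equals $\chi_\ell(E_\ell, G)$ at the generic point and $\chi_k(E, i_0^*\widetilde G) = 0$ at the origin. You instead spread out over an arbitrary regular model $\Spec A$ with $\mathrm{Frac}(A) = \ell$ all at once and reduce directly to closed points with finite residue fields. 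This avoids the induction and is a slightly more uniform argument; the price is a slightly heavier-duty fact about $\Spec A$ (see below), whereas the paper only needs that $k[x]$ is regular.

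Two small points worth flagging. First, the appeal to ``generic smoothness'' to make $A$ regular does not work as stated in positive characteristic; the correct justification is that a finitely generated $k$-algebra is excellent, so its regular locus is open, and it contains the generic point since $\ell$ is a field. Second, shrinking $\Spec A$ so that the cohomology sheaves of $G_A$ are $A$-flat is unnecessary: the base change isomorphism $\cHom_A(E_A, G_A) \otimes^L_A \kappa(\mathfrak{m}) \simeq \cHom_{\kappa(\mathfrak{m})}(E_{\kappa(\mathfrak{m})}, G_A \otimes^L_A \kappa(\mathfrak{m}))$ (Lemma~\ref{Lem-cHom-bc}) only uses perfectness of $E_A$ and flatness of $\cX_A \to \Spec A$, the derived restriction $G_A \otimes^L_A \kappa(\mathfrak{m})$ automatically lies in $\cD_{\kappa(\mathfrak{m})}$ by base change of the semiorthogonal decomposition, and the finite case applies to whatever object that is. Regularity of $A$ is the genuinely essential input --- it makes $\cHom_A(E_A, G_A)$ perfect and hence its Euler characteristic locally constant on the connected scheme $\Spec A$; the flatness of the cohomology sheaves buys nothing further.
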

\begin{proof}
We have to show that if $E \in \cD_{\perf}$ satisfies $\chi_k(E, F) = 0$ for all $F \in \cD$, then we also have
$\chi_\ell(E_\ell, G) = 0$ for all $G \in \cD_\ell$. Since $G$ is defined over some intermediate field extension $k\subset \ell'\subset\ell$ with $\ell'/k$ finitely generated (as in the proof of Proposition~\ref{Prop-D-bc-fields}) and since pullback along $\ell/\ell'$ preserves the Euler characteristic, it is sufficient to consider the case where $\ell/k$ is itself finitely generated.
When $\ell/k$ is finite, the claim follows from adjunction via
\[ \chi_\ell(E_\ell, G) = \chi_\ell(E, G^{(k)}) = \dim_k(\ell) \chi_k(E, G^{(k)}) = 0, \]
where $G^{(k)} \in \cD$ denotes the pushforward of $G \in \cD_\ell$.

By induction, it suffices to prove the claim for $\ell = k(x)$. Let $R = k[x]$. 
In that case, we can lift $G \in \cD_{k(x)}$ to an object $\widetilde G \in \cD_R$.
Then base change implies
\[
\chi_\ell(E_\ell, G) = \rk \lHom_R(E_R, \widetilde G) = \chi_k(E, i_0^*\widetilde G) = 0
\]
where $i_0$ is the inclusion of the fiber over $0 \in \bA^1_k$. 
\end{proof}

Dualizing induces a pushforward map\index{eta_lk@$\eta_{\ell/k}^\vee \colon \Knum(\cD_\ell)\to \Knum(\cD) \otimes \Q$, pushforward map}
\begin{equation} \label{eq:defetaellkvee}
\eta_{\ell/k}^\vee \colon \Knum(\cD_\ell) \into \Hom(\Knum(\cD_{\ell, \perf}), \Z) \to \Hom(\Knum(\cD_{\perf}), \Z) \to \Knum(\cD) \otimes \Q.
\end{equation}

\begin{PropDef} \label{propdef:KnumDell}
\index{Knum(D)l@$\Knum(\cD)_\ell$, field extension $k\subset \ell$,!base changed numerical $K$-group}
The image $\Knum(\cD)_\ell := \im \eta_{\ell/k}^\vee$
contains $\Knum(\cD)$ as a subgroup of finite index; conversely, it is contained in
$\Knum(\cD)_{\overline{k}}$.
\end{PropDef}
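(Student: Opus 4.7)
The plan is to treat the two assertions separately. For the first inclusion, I will show that the composition
\[
\Knum(\cD) \xrightarrow{\eta_{\ell/k}} \Knum(\cD_\ell) \xrightarrow{\eta_{\ell/k}^\vee} \Knum(\cD) \otimes \Q
\]
coincides with the natural inclusion of $\Knum(\cD)$ into $\Knum(\cD) \otimes \Q$. This is a direct computation: for $E \in \cD_{\perf}$ and $F \in \cD$, base change gives $\Hom(E_\ell, F_\ell) \cong \Hom(E, F) \otimes_k \ell$, whence $\chi_\ell(E_\ell, F_\ell) = \chi_k(E, F)$; by the very definition of $\eta_{\ell/k}^\vee$, the image of $[F]$ along the composition is the functional $[E] \mapsto \chi_k(E, F)$, which is tautologically $[F]$ itself. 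The map $\Knum(\cD) \hookrightarrow \Hom(\Knum(\cD_{\perf}), \Z)$ is an injection of finitely generated free abelian groups (Lemma~\ref{lem:Knumfinite}) that becomes an isomorphism after tensoring with $\Q$ by non-degeneracy of the Euler pairing, so it has finite cokernel of some order $N$. Consequently $\Knum(\cD)_\ell \subset \tfrac{1}{N} \Knum(\cD)$ is a finitely generated subgroup of $\Knum(\cD) \otimes \Q$ of the same rank as $\Knum(\cD)$, which combined with the inclusion $\Knum(\cD) \subset \Knum(\cD)_\ell$ already established yields finite index.

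For the second inclusion, the strategy is first to reduce to the case where $\ell/k$ is finitely generated, and then to specialize to a closed point of a suitable $k$-variety. Given $w \in \Knum(\cD_\ell)$, lift it to $\widetilde w = \sum n_i [G_i] \in K(\cD_\ell)$. By a standard limit argument (each $G_i \in \cD_\ell \subset \Db(X_\ell)$ is a bounded complex of coherent sheaves, hence defined over a finitely generated subfield of $\ell$; compare the beginning of the proof of Proposition~\ref{Prop-D-bc-fields}.\eqref{Prop-D-bc-fields-exact-bc}), there exist a finitely generated subfield $\ell' \subset \ell$ and objects $G_i' \in \cD_{\ell'}$ with $(G_i')_\ell = G_i$. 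For every $E \in \cD_{\perf}$, base change along $\ell'/\ell$ yields $\chi_\ell(E_\ell, G_i) = \chi_{\ell'}(E_{\ell'}, G_i')$, so setting $w' := \sum n_i [G_i'] \in K(\cD_{\ell'})$ one obtains $\eta_{\ell/k}^\vee(w) = \eta_{\ell'/k}^\vee(w')$ by inspection of the construction \eqref{eq:defetaellkvee} of these pushforward maps. This reduces the problem to showing $\Knum(\cD)_{\ell'} \subset \Knum(\cD)_{\overline{k}}$ for $\ell'/k$ finitely generated.

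Write $\ell' = \mathrm{Frac}(A)$ for some integral $k$-algebra $A$ of finite type; after replacing $A$ by a suitable localization I may further assume $A$ is regular, so that $V := \Spec A$ is a smooth connected affine $k$-variety. Each $G_i'$ then spreads out to an object $\widetilde G_i \in \cD_A$ with generic fiber $G_i'$: one first lifts the underlying bounded complex from $\Db(X_{\ell'})$ to $\Db(X_A)$ by a limit argument, and then projects into $\cD_A$, whose compatibility with restriction along $\Spec(\ell') \to \Spec(A)$ is provided by Theorem~\ref{theorem-bc-sod}. By Lemma~\ref{lem-relations-S-perfect}.\eqref{db-S-perfect} and the regularity of $V$, the object $\widetilde G_i$ is automatically $A$-perfect, so $\cHom_A(E_A, \widetilde G_i)$ is a perfect complex on the connected noetherian scheme $V$ for each $E \in \cD_{\perf}$; its Euler characteristic is therefore constant along $V$, and comparing values at the generic point and at a closed point $v \in V$ yields $\chi_{\ell'}(E_{\ell'}, G_i') = \chi_{k(v)}(E_{k(v)}, (\widetilde G_i)_v)$. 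Since $V$ is of finite type over the field $k$, the residue field $k(v)$ is a finite extension of $k$; fixing an embedding $k(v) \hookrightarrow \overline{k}$ and setting $w_{\overline{k}} := \sum n_i [((\widetilde G_i)_v)_{\overline{k}}] \in \Knum(\cD_{\overline{k}})$, the constancy identity translates into $\eta_{\ell'/k}^\vee(w') = \eta_{\overline{k}/k}^\vee(w_{\overline{k}})$, completing the argument. The main subtlety I expect is the spreading-out step, which requires carefully tracking that a bounded complex in $\cD_{\ell'}$ descends to an $A$-perfect object in $\cD_A$ with the correct generic fiber; this is handled by the base change machinery developed in Section~\ref{section-bc-t-structure}.
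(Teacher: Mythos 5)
Your proposal is correct and follows the same strategy as the paper: establish $\eta^\vee_{\ell/k}\circ\eta_{\ell/k}=\mathrm{id}$ via compatibility of the Euler pairing with pullback and invoke finite generation (Lemma~\ref{lem:Knumfinite}) for finite index, then for the second claim reduce to $\ell/k$ finitely generated, spread out over a regular finite-type $k$-algebra, and use local constancy of the Euler characteristic of a perfect complex to specialize. The only organizational difference is that the paper first reduces to $k$ algebraically closed so that $\Spec R$ has $k$-rational points, whereas you specialize directly to a closed point $v$ and exploit that $k(v)/k$ is finite to land in $\Knum(\cD)_{\overline{k}}$; the two variants are equivalent.
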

\begin{proof}
Since $\eta_{\ell/k}$ preserves the Euler characteristic pairing, we have $\eta_{\ell/k}^\vee([F_\ell]) = [F]$ for $F \in \cD$, so $\Knum(\cD)_\ell$ contains $\Knum(\cD)$. That it is a finite index subgroup immediately follows from the definitions and Lemma~\ref{lem:Knumfinite}. For the second claim, it is sufficient to show
that if $k$ is algebraically closed, and $\ell/k$ is finitely generated, then 
$\im \eta_{\ell/k}^\vee$ is contained in $\Knum(\cD)$. Let $R/k$ be a finitely generated ring with fraction field $\ell$; then the same argument as in the end of the proof of Lemma~\ref{lem:Knumpullback}, with
$0 \in \bA^1_k$ replaced by any $k$-rational point of $\Spec R$, proves this containment.
\end{proof}

If $Z$ is the central charge of a numerical stability condition on $\cD$, we write $Z_\ell$ for the composition of $Z \circ \eta_{\ell/k}^\vee$.
Concretely, if $Z(\blank) = \sum_i z_i \chi(F_i,\blank)$ is a central charge as in Remark~\ref{rmk:concr}, then
\[
Z_\ell(\blank) = \sum_i z_i \chi((F_i)_\ell,\blank).
\]
Now fix a Mukai homomorphism $v \colon \Knum(\cD) \to \Lambda$ as in Definition~\ref{def:stability_condition_p}.
We let \index{Lambdal@$\Lambda_\ell$, field extension $k\subset \ell$,!base changed lattice}
\[\Lambda_\ell \subset \Lambda \otimes \Q\]
be the subgroup generated by $\Lambda$ and the image of $v$ extended to $\Knum(\cD)_\ell$; again, it contains $\Lambda$ as a subgroup of finite index. 
We write \[v_\ell \colon \Knum(\cD_\ell) \to \Lambda_\ell\] for the induced map.
\index{vl@$v_\ell$, field extension $k\subset \ell$,!base changed Mukai homomorphism}
If $Z$ factors via a group homomorphism $\Knum(\cD) \to \Lambda$,
then $Z_\ell$ factors via the composition $\Knum(\cD_\ell) \to \Knum(\cD)_\ell \to \Lambda_\ell$:
\[
\xymatrix{K(\cD_\ell)\ar[d]\ar@/^3.5pc/[rrrd]^{Z_\ell}
\ar[rrd]|!{[r];[rd]}\hole_(.4){v_\ell}
&K(\cD)\ar[l]_(.3){(\blank)_{\ell}}\ar@/^1pc/[rrd]^(.35){Z}\ar[d]\ar[rd]^{v} \\
\Knum(\cD_\ell)\ar[r]^{\eta_{\ell/k}^\vee} & \Knum(\cD)_\ell 
\ar[r] & \Lambda_\ell\ar[r]&\C
}
\]
For an object $E \in \cD$ we have the equality 
\[v[E] = v_\ell[E_\ell] \in \Lambda, \text{ and so }Z_\ell(E_\ell)=Z(E);\] on the other hand, if $k \subset \ell$ is finite, $F \in \cD_\ell$, and $F^{(k)} \in \cD$ denotes its pushforward, then 
\[v[F^{(k)}] = \dim_k(\ell) \cdot v_\ell[F].\]

\begin{Rem}
As $\Lambda \otimes \Q = \Lambda_\ell \otimes \Q$, the difference between the two lattices is irrelevant when considering central charges. However, we will also use $\Lambda$ to keep track of classes of semistable objects; then the integral structure will become relevant.
\end{Rem}

Base change for central charges as above, combined with base change for t-structures as in Proposition~\ref{Prop-D-bc-fields}, induces base change for stability conditions for arbitrary field extensions:
\begin{Thm} \label{thm:base-change-stability-condition}
Let $\sigma:= (\cA, Z)$ be a numerical stability condition with respect to $\Lambda$ on the strong semiorthogonal component $\cD \subset \Db(X)$, where $X$ is a variety defined over a field $k$.
Let $k \subset \ell$ be a (not necessarily finitely generated) field extension. Then:
\begin{enumerate}[{\rm (1)}]
\item \label{enum:mainbasechangeclaim}
The pair $\sigma_\ell := (\cA_\ell,Z_\ell)$ defines a numerical stability condition on $\cD_\ell$.
\index{sigmal@$\sigma_\ell = (\cA_\ell,Z_\ell)$, field extension $k\subset \ell$,!base changed stability condition}
\item \label{enum:semistableclasses}
If there exists a $\sigma_\ell$-semistable object in $\cD_\ell$ of class $v \in \Lambda$, then there exists a $\sigma$-semistable object in $\cD$ of class $n \cdot v$ for some 
$n \in \Z_{>0}$. 
(In particular, $\sigma_\ell$ satisfies the support property with respect to the same quadratic form as $\sigma$.)
\item \label{enum:pullbackremainsstable}
An object $E \in \cD$ is $\sigma$-semistable if and only if the pullback $E_\ell \in \cD_\ell$ is $\sigma_\ell$-semistable.
\item \label{enum:geomstablepreserved}
Let $E$ be an object in $\cD$.
If $k \subset \overline{k}$ and $\ell \subset \overline{\ell}$ denote the algebraic closures, then $E_{\overline{k}}$ is $\sigma_{\overline{k}}$-stable if and only if
$E_{\overline{\ell}}$ is $\sigma_{\overline{\ell}}$-stable.
\end{enumerate}
\end{Thm}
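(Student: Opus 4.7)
The plan is to reduce to the case of finitely generated field extensions via Proposition~\ref{Prop-D-bc-fields}.\eqref{Prop-D-bc-fields-Efg}, and then to handle that case by combining finite extensions with simple transcendental extensions $k \subset k(x)$. If the theorem is known for every finitely generated subextension $k \subset \ell' \subset \ell$, then claims (1)--(3) for $\ell$ follow immediately: every object of $\cA_\ell$ descends to $\cA_{\ell'}$ for some such $\ell'$, and t-exactness of pullback from $\ell'$ to $\ell$ (Proposition~\ref{Prop-D-bc-fields}.\eqref{Prop-D-bc-fields-exact-bc}) propagates stability functions, HN filtrations, and the support property (whose quadratic form is unchanged after extending scalars). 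Claim (4) reduces to the algebraic closure case, where $\bar k \subset \bar\ell$ is a filtered colimit of finite extensions and the same descent applies. For the finitely generated case I write $k \subset k(x_1,\dots,x_d) \subset \ell$ as a composite of a purely transcendental extension and a finite one, and induct on $d$, reducing everything to the two base cases below.

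For a finite extension $\pi\colon \Spec\ell \to \Spec k$, the pushforward $\pi_*$ is t-exact (Theorem~\ref{Thm-D-bc}.\eqref{DbT-f-finite-pushforward}), conservative, and therefore restricts to an exact functor $\pi_*\colon \cA_\ell \to \cA$ compatible with central charges via $Z(\pi_*E) = [\ell:k]\cdot Z_\ell(E)$ and classes via $v(\pi_*E) = [\ell:k]\cdot v_\ell(E) \in \Lambda\otimes\Q$. This immediately gives that $Z_\ell$ is a stability function on $\cA_\ell$; the support property carries over with the same $Q$ because $Q(v_\ell(E)) = [\ell:k]^{-2} Q(v(\pi_*E)) \geq 0$; and HN filtrations of $E$ are obtained by descending the (uniquely determined) HN filtration of $\pi_*E$ via the unit $E \to \pi^*\pi_*E$, exploiting that $\pi^*\pi_*E$ has a canonical filtration whose graded pieces are Galois conjugates of $E$ (after replacing $\ell$ by its Galois closure and using uniqueness of HN to descend back). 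Claim (3) in the finite case then follows since $\pi_*$ preserves semistability and phases, and pullback along finite extensions is t-exact and conservative.

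The principal technical obstacle is the purely transcendental case $k \subset k(x)$. Given $E\in\cA_{k(x)}$, I lift to $\widetilde E \in \cA_U$ over an open $U = \Spec k[x]_f$ via Lemma~\ref{lem-extend-from-localisation}; $\cA_U$ is noetherian by the tilted-noetherian part of Theorem~\ref{Thm-D-bc}. For any closed point $p\in U$, the field $k(p)$ is a finite extension of $k$, the morphism $\Spec k(p)\to U$ is essentially perfect, and the fiber $\widetilde E_p$ lies in $\cA_{k(p)}$ by Corollary~\ref{cor:base-change-tstructure-point}. Local constancy of $\chi$ for perfect complexes in families identifies $v_{k(p)}(\widetilde E_p)$ with $v_\ell(E)$ in $\Lambda\otimes\Q$, so $Z_{k(p)}(\widetilde E_p)=Z_\ell(E)$, and the finite-extension case supplies the stability function property and an HN filtration on each fiber. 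To transport these to the generic fiber $E$, I will use noetherianity of $\cA_U$ combined with openness of stability (available after tilting) to argue that a destabilizing subobject at the generic point extends, up to shrinking $U$, to a $U$-flat subobject of $\widetilde E$ contradicting the fiberwise semistability. Symmetrically, a HN filtration over the generic point extends to a filtration by $U$-flat subobjects whose fiberwise phases agree with the generic ones; restricting back to $k(x)$ produces the required filtration of $E$.

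Claims (2), (3), and (4) then follow by running these reductions in reverse. For (2), the object descends to some finitely generated $\ell'$; in the finite direction pushforward yields a $\sigma$-semistable object of class $[\ell':k]\cdot v$, while in the transcendental direction specialization to a closed point does so. For (3), a destabilizing subobject of $E_\ell$ descends to a finitely generated $\ell'$, contradicting either the finite-extension or transcendental case already handled; the reverse implication is built into pullback t-exactness. Claim (4) reduces via Noether normalization and the finite case to algebraically closed base fields, where both $\bar k \subset \bar\ell$ and its subextensions are filtered colimits of finite extensions. Throughout, the most delicate point is the transfer of HN filtrations across the transcendental step: the combination of noetherianity of the heart $\cA_U$, essentially perfect base change, and openness of stability is what ultimately assembles fiberwise HN structures into the global one.
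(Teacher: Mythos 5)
Your overall skeleton — reduce to finitely generated extensions via Proposition~\ref{Prop-D-bc-fields}.\eqref{Prop-D-bc-fields-Efg}, split those into a finite extension composed with a transcendental one, and induct on transcendence degree with base cases finite and $k\subset k(x)$ — matches the paper's decomposition. The finite-extension base case is also in the same spirit (pushforward is t-exact and conservative, transports central charge and classes, forces termination of the filtration), though the paper phrases the termination via the a~priori infinite filtration \eqref{eq:infinitefiltration} rather than a Galois-closure argument. Up to here, the proposal is sound.

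There is, however, a genuine gap in the $k\subset k(x)$ step, and it is precisely the one the paper has to work around. You claim ``$\cA_U$ is noetherian by the tilted-noetherian part of Theorem~\ref{Thm-D-bc},'' but tilted-noetherian does \emph{not} imply noetherian, and the heart of a general Bridgeland stability condition is only tilted-noetherian (Lemma~\ref{lem:stabheartnoetherian}(1)). Without noetherianity you cannot freely pass to a torsion-free lift $\widetilde E$, nor do you control termination of the induced filtration over the generic point. The paper deals with this by \emph{first} restricting Step~\ref{step:k(x)andrational} to central charges defined over $\Q[\ii]$: then Lemma~\ref{lem:stabheartnoetherian}(2) gives genuine noetherianity of $\cA_R$, and discreteness of $\Im Z_\ell$ is what actually forces the filtration \eqref{eq:infinitefiltration} to terminate. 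Your proposal never introduces this restriction and instead invokes ``openness of stability (available after tilting),'' which is not something you can quote freely here — establishing openness of stability for relative hearts is one of the harder technical goals of the paper, not an available black box at this point.

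Because you never restrict to $\Q[\ii]$ central charges, you also have no analogue of the paper's Step~\ref{step:mainbasechange}, which is essential and nontrivial: once (1)--(2) are known for $\Q[\ii]$-rational charges, the general case is recovered by a continuity argument. Concretely, the paper takes a dense set $U_\Q$ of $\Q[\ii]$-rational central charges near $Z$, shows the induced map $U_\Q\to\Stab(\cD_\ell)$ is Lipschitz in the slicing metric, extends it to the whole open chart $U$ using the support property and Theorem~\ref{thm:deformstability} (covering map), and then verifies that the limiting slicing agrees with the one defined by formula \eqref{Pellequation}. That argument, together with Lemma~\ref{lem:wallcrossinglocallyfinite} (walls are defined over $\Q$) to propagate claim (2), cannot be dispensed with; your reduction to finitely generated extensions and then to finite/transcendental pieces does not by itself handle irrational central charges. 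I would suggest restructuring so that the transcendental base case is done first under the additional hypothesis that $Z$ is defined over $\Q[\ii]$, then adding the deformation/covering-map step to remove that hypothesis, and only then claiming (1)--(4) in full generality.
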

The resulting map $\Stab(\cD) \to \Stab(\cD_\ell)$ is a homeomorphism onto a union of connected components
of $\Stab(\cD_\ell)$; however, we omit the proof of this fact. Similar results were proved in a slightly different setup for finite separable or Galois field extensions in \cite{Pawel:basechange}.

\begin{proof}
We start with some general observations. For $\phi \in \R$, we write $\cA^\phi := \cP(\phi - 1, \phi] \subset \cD$ (with $\cA=\cA^1$). By base change via Proposition~\ref{Prop-D-bc-fields} (which applies by Lemma~\ref{lem:stabheartnoetherian}), this induces a heart $\cA^\phi_\ell \subset \cD_\ell^{}$ (and we consider $\cA_\ell:=\cA_\ell^1$). We use these hearts to define
\index{Pl@$\cP_\ell^{}(\phi)$, field extension $k\subset \ell$,!base changed slicing}
\begin{equation} \label{Pellequation}
\cP_\ell^{}(\phi) := \bigcap_{\phi'-1 < \phi \leqslant \phi'} \cA^{\phi'}_\ell.
\end{equation}
To prove that $\cP_\ell$ is a slicing, we only need to establish existence of HN filtrations, as the first two properties in Definition~\ref{def:slicing} are evident.

It follows from the corresponding statement in Proposition~\ref{Prop-D-bc-fields} that pullback $f^* \colon \cD \to \cD_\ell$ sends $\cP(\phi)$ to $\cP_\ell(\phi)$; in particular, statement \eqref{enum:pullbackremainsstable} will be automatic.

\begin{step}{1}\label{step:finte}
Claims \eqref{enum:mainbasechangeclaim} and \eqref{enum:semistableclasses} hold for finite field extensions $k \subset \ell$.
\end{step}
In this case, the hearts $\cA^\phi_\ell$ are already constructed in Theorem~\ref{Thm-D-bc}, and by Theorem~\ref{Thm-D-bc}.\eqref{DbT-f-finite-pushforward} the pushforward $f_* \colon \cD_\ell \to \cD$ satisfies
\begin{equation} \label{eq:pushforwardslicing}
f_* \left( \cA^\phi_\ell \right) \subset \cA^\phi \quad \text{and therefore} \quad
f_* \left( \cP_\ell(\phi) \right) \in \bigcap_{\phi'-1 < \phi \leqslant \phi'} \cA^{\phi'} = \cP(\phi).
\end{equation}
Since $Z_\ell(E) = \dim_k(\ell)\cdot Z(f_* E) $ this immediately implies the compatibility of $Z_\ell$ both with $\cA_\ell$, and with $\cP_\ell$. It also implies part \eqref{enum:semistableclasses}, with $n = \dim_k(\ell)$.

It remains to prove the existence of HN filtrations for $E \in \cA_\ell$. For each $0 < \phi \leqslant 1$, it follows from
\cite[Lemma~1.1.2]{Polishchuk:families-of-t-structures} that $\cA_\ell^\phi$ is obtained by tilting $\cA_\ell$ at a torsion pair 
$\left(\cT_\ell^{> \phi}, \cF_\ell^{\leqslant \phi}\right)$, analogous to the torsion pair $\left(\cT^{>\phi}, \cF^{\leqslant \phi}\right)$ in $\cA$. These torsion pairs induce an a priori infinite filtration
\begin{equation} \label{eq:infinitefiltration}
0 = T_\ell^{>1} \subset \dots \subset T_\ell^{> \phi} \subset \dots \subset T_\ell^{>0} = E
\end{equation}
of $E$ in $\cA_\ell$. Since 
\[ f_* \cT_\ell^{>\phi} = f_* \left(\cA_\ell^{} \cap \cA_\ell^\phi \right) \subset \cT^{>\phi} \quad \text{and} \quad 
f_* \cF_\ell^{\leqslant \phi} \subset \cF^{\leqslant \phi},
\]
the pushforward of this filtration is induced by the HN filtration of $f_* E \in \cA$, and thus finite; since $f_*$ is conservative, the original filtration in $\cA_\ell$ also has to be finite. The filtration quotients have to lie in $\cP_\ell(\phi) = \cF_\ell^{\leqslant \phi} \cap \bigcap_{\phi' < \phi} \cT_\ell^{>\phi'}$ for appropriate $\phi$, and thus this is the desired HN filtration.

\begin{step}{2}\label{step:k(x)andrational}
Claims \eqref{enum:mainbasechangeclaim} and \eqref{enum:semistableclasses} hold for $\ell = k(x)$ and $Z$ defined over $\Q[\ii]$.
\end{step}
Let $R = k[x]$; we write $\cD_R^{}$ and $\cA_R^{}, \cA_R^\phi$ for the triangulated category and hearts obtained from $\cD$ and $\cA$ via Theorem~\ref{theorem-bc-sod} and Theorem~\ref{Thm-D-bc}, respectively.
Any object in $\cA_\ell$ is of the form $E_\ell$ for some $E \in \cA_R$ (see Lemma~\ref{lem-open-restriction-es}). 
By Lemma~\ref{lem:stabheartnoetherian} and Theorem~\ref{Thm-D-bc}.\eqref{D-bc-noetherian}, $\cA_R$ is noetherian;
by Remark~\ref{rem:noetherianandtorsiontheory} we may assume that $E$ is $R$-torsion free, and thus $R$-flat by Lemma~\ref{lem:FlatIffTFreeCurve}. 
It follows that $E_p \in \cA_p$ for all closed points $p \in \Spec R$; 
since $[i_{p*}E_p]$ and $[E_\ell]$ define the same class in $\Lambda$ (via $\cD_R\to\cD_\ell$ and $v_\ell$), 
this shows that $Z_\ell$ is compatible with $\cA_\ell$. 
Applying the same argument to $\cA^\phi$ instead of $\cA$, we obtain by the compatibility of $Z_\ell$ with all $\cA^\phi_\ell$, and thus with $\cP_\ell$.

Now consider again the filtration in \eqref{eq:infinitefiltration}. Since $Z_\ell$ is compatible with $\cA_\ell$ and $\cA^\phi_\ell$, and since for all $0<\phi'<\phi<1$, $T_\ell^{>\phi'}/T_\ell^{>\phi} \in \cF_\ell^{\leqslant \phi}$, we have $\Im Z_\ell(T_\ell^{>\phi'}/T_\ell^{>\phi}) > 0$ whenever $T_\ell^{>\phi'}/T_\ell^{>\phi} \neq 0$. 
Since $\Im Z_\ell$ is discrete, the filtration has to be finite.

Finally, if $E_\ell \in \cP_\ell(\phi)$, we may choose a representative $E \in \cA^\phi_R$ that is $R$-torsion free as an object in $\cA^\phi_R$. Then $E_p \in \cA^\phi$ satisfies $Z(E_p) \in \R_{>0}\cdot e^{\ii\pi\phi}$, and thus it is $\sigma$-semistable.
The equality $v_\ell[E_\ell] = v_\ell[i_{p*}E_p]$ in $\Lambda$ proves claim \eqref{enum:semistableclasses}.

\begin{step}{3}\label{step:rational}
Claims \eqref{enum:mainbasechangeclaim} and \eqref{enum:semistableclasses} hold when $Z$ is defined over $\Q[\ii]$, and $\ell$ arbitrary.
\end{step}
By Steps~\ref{step:finte} and \ref{step:k(x)andrational}, the claim holds when $\ell$ is finitely generated over $k$; the general case follows with the same type of arguments as those used in Proposition~\ref{Prop-D-bc-fields}.
To prove the existence of HN filtrations for $E \in \cD_\ell$, we may assume by Proposition~\ref{Prop-D-bc-fields}.\eqref{Prop-D-bc-fields-Efg} that $E$ is obtained by base change from $E_{k'}$ for some finitely generated $k \subset k' \subset \ell$; by construction of $\cP_\ell(\phi)$ and $\cP_{k'}(\phi)$, the pullback of the HN filtration in $\cD_{k'}$ induces one in $\cD_\ell$.
The compatibility of the central charge $Z_\ell$ with $\cP_\ell$, and claim \eqref{enum:semistableclasses} of the Theorem follow similarly from their analogues for $\cD_{k'}$.

\begin{step}{4}\label{step:mainbasechange} Claims \eqref{enum:mainbasechangeclaim} and \eqref{enum:semistableclasses} hold in full generality.
\end{step} 
Consider a stability condition $\sigma = (Z, \cP)$ on $\cD$ that satisfies the support property with respect
to a given quadratic form $Q$.
By Theorem~\ref{thm:deformstability}, for any $\epsilon > 0$ there exists a neighborhood $Z \in U \subset \Hom(\Lambda, \C)$,
depending only on $Q$, such that $U$ embeds into $\Stab(\cD)$ with image containing $\sigma$, and such that any two stability conditions $\sigma_{1} = (Z_{1}, \cP_{1})$ and $\sigma_{2} = (Z_{2}, \cP_{2})$ in the image satisfy $d(\cP_1, \cP_2) < \epsilon$.
Let $U_\Q \subset U$ be the dense set of central charges defined over $\Q[\ii]$.
For any $Z' \in U_\Q$ and the corresponding stability condition $\sigma' = (Z', \cP')$ on $\cD$, we have
an induced a stability condition $\sigma'_\ell$ on $\cD_\ell$ via base change by the previous step.

We first claim that the resulting map $U_\Q \to \Stab(\cD_\ell)$ is continuous.
Indeed, if $\sigma' = (Z', \cP')$ and $\sigma'' = (Z'', \cP'')$ are two stability conditions with central charges in $U_\Q$, and with $d(\cP', \cP'') < \epsilon$, we recall from Proposition/Definition~\ref{propdef:slicingsmetric} that this is equivalent to $\cP'(\leqslant \phi) \subset \cP''(\leqslant \phi+\epsilon)$ and $\cP'(> \phi) \subset \cP''(> \phi-\epsilon)$.
Since the construction of the base change t-structure evidently preserves inclusions, this implies $\cP_\ell'(\leqslant \phi) \subset \cP_\ell''(\leqslant \phi+\epsilon)$ and $\cP_\ell'(> \phi) \subset \cP_\ell''(> \phi-\epsilon)$, 
and thus $d(\cP'_\ell, \cP''_\ell) < \epsilon$.

Since the stability conditions in the image of $U_\Q$ satisfy the support property with respect to $Q$, we can use Theorem~\ref{thm:deformstability} to extend this map to $U \to \Stab(\cD_\ell)$.
It remains to show that for $\sigma \in U \setminus U_\Q$, the stability condition $\sigma_\ell = (Z_\ell, \overline{\cP_\ell})$ obtained via this map satisfies the description in the Theorem;
as observed above, it is enough to show that $\overline{\cP_\ell} = \cP_\ell$, where the latter is defined by equation \eqref{Pellequation}.
Let $\sigma' = (Z', \cP')$ be a stability condition with $Z' \in U_\Q$ and $d(\cP, \cP') < \epsilon$.
Then 
\[ \cP_\ell(\leqslant \phi) \subset \cP'_\ell(\leqslant\phi + \epsilon) \subset \overline{\cP_\ell}(\leqslant\phi + 2\epsilon)
\]
for all $\phi \in \R$ by the same argument as above; since this holds for all $\epsilon > 0$, and since $\overline{\cP_\ell}$ is a slicing, this shows $\cP_\ell(\leqslant \phi) \subset \overline{\cP_\ell}(\leqslant\phi)$.
The dual argument shows $\cP_\ell(>\phi) \subset \overline{\cP_\ell}(> \phi)$.
Since both $\left(\cP_\ell(>\phi), \cP_\ell(\leqslant \phi)\right)$ and $\left(\overline{\cP_\ell}(>\phi), \overline{\cP_\ell}(\leqslant \phi)\right)$ define t-structures, this gives $\cP_\ell = \overline{\cP_\ell}$ as desired.

Finally, to extend claim~\eqref{enum:semistableclasses} from $U_\Q$ to $U$, let $\sigma \in U \setminus U_\Q$, and let $\sigma' \in \Stab(\cD_\ell)$ be its image.
Then $\sigma'$ is contained in a chamber $\cC$ in the sense of Lemma~\ref{lem:wallcrossinglocallyfinite}.\eqref{enum:Qi-enough}.
Let $E$ be a $\sigma'$-semistable object of class $v$. 
Then there is a dense set of stability conditions $\sigma'' \in \cC$ in the image of $U_\Q$; in particular $E$ is also $\sigma''$-semistable, and the proof of \eqref{enum:semistableclasses} in the previous cases gives an object $F \in \cD_k$ of class $n \cdot \vv$ that is semistable at the corresponding point of $U_\Q$.
Since the set where an object is semistable is closed,  $F$ is also semistable with respect to $\sigma$, which proves the claim.

\begin{step}{5} \label{step:geomstablepreserved}
Claim \eqref{enum:geomstablepreserved} holds.\end{step}
First we observe that one direction is clear.
Indeed, a non-trivial Jordan--H\"{o}lder filtration of $E_{\overline{k}}$ would pullback to a non-trivial Jordan--H\"{o}lder filtration of $E_{\overline{\ell}}$.
For the reverse direction, suppose that $E \in \cD$ is such that $E_{\overline{k}}$ is $\sigma_{\overline{k}}$-stable (and hence $E_{k'}$ is $\sigma_{k'}$ for any finite field extension $k \subset k'$.
Then, following the same overall logic as in steps~\ref{step:finte}--\ref{step:mainbasechange}, we need to show the following: if $Z$ is defined over $\Q[\ii]$, and if $k \subset k'$ and $k'(x) \subset \ell$ are finite field extensions, then $E_{\ell}$ is $\sigma_{\ell}$-stable.
This is shown similarly to Step~\ref{step:k(x)andrational}.
Under the assumptions, we can use the $\wGL2$-action on $\sigma$ to achieve that $E \in \cP_{k'}(1)$ with $Z$ still defined over $\Q[\ii]$; in particular, $\cA_{k'}$ is noetherian by Lemma~\ref{lem:stabheartnoetherian}.
We first choose a smooth curve $C$ defined over $k'$ with fraction field $\ell$ and infinitely many closed points.
If $E_{\ell}$ is strictly $\sigma_{\ell}$-semistable, we use Lemma~\ref{lem-extend-from-localisation}.\eqref{enum:lem-extend-filtration-from-localisation} to lift its Jordan--H\"older filtration to a filtration of $E_C$ in $\cA_C$.
Let $U \subset C$ be the open subset where every factor of this filtration is torsion free (which exists as $\cA_{k'}$ is noetherian, so that $\cA_C$ is noetherian by Theorem~\ref{Thm-D-bc}.\eqref{D-bc-noetherian}, and thus has a $C$-torsion theory by Remark~\ref{rem:noetherianandtorsiontheory}).
Then this induces a non-trivial filtration of $E_c \in \cA_c$ for every $c \in U$; since $\Im Z_c(E_c) = 0$, the same is true for each filtration factor.
Thus this is in fact a non-trivial filtration in $\cP_c(1)$. Since $k(c)/k$ is a finite field extension, pullback via an embedding $k(c) \into \overline{k}$ contradicts the stability of $E_{\overline{k}}$.
\end{proof}

We note that the property of being $\sigma$-\emph{stable} is not necessarily preserved by pullback, e.g.~for a stable object whose endorphism ring is given by a field extension of $k$.
Consequently, while base change preserves HN filtrations, it does not preserve Jordan--H\"older filtrations.
Therefore, we make the following definition:

\begin{Def} \label{def:geometricallystable}
In the setting of Theorem~\ref{thm:base-change-stability-condition}, we say that an object $E$ is \emph{geometrically $\sigma$-stable} if it is stable after base change to the algebraic closure $\overline{k}$ of $k$.
\end{Def}
By Theorem~\ref{thm:base-change-stability-condition}.\eqref{enum:geomstablepreserved}, this property is preserved by field extensions.

\section{Harder--Narasimhan structures over a curve}
\label{sec:defnHNoveraCurve}

The aim of this section is to introduce a notion of Harder--Narasimhan (HN) structures over a one-dimensional base.
It will include stability conditions on the fibers of $\cD$, but additionally provide HN filtrations for every object $E \in \cD$;
this strengthens the classical notion of relative HN filtrations.
As in the case of stability conditions, we will often need the auxiliary notion of \emph{weak Harder--Narasimhan structure}, which will be done in Section~\ref{sec:defnweakHNstructure}.
In this section, we omit any proofs, as they are essentially the same as for the case of weak HN structures;
instead, we introduce the definitions and state their basic properties along with some discussion.

\subsection{Definitions}
\label{subsec:mainDefcurve}

We work in Setup~\ref{setup-HN}.
We remind the reader of the notation $K$, $p \in C$, $W \subset C$, $\cD_K$, $\cD_p$, and $\cD_W$ introduced in Section~\ref{sec:setupnotation}.
Given a heart $\cA_C \subset \cD$ of a bounded t-structure local over $C$, we similarly write $\cA_K$, $\cA_p$, and $\cA_W$ for the corresponding hearts, given by Theorem~\ref{thm-t-structure-finite-map} and Corollary~\ref{cor:base-change-tstructure-point}, respectively.
We also refer to Section~\ref{subsection-torsion-objects} for the categories of $C$-torsion objects $\cD_{\Ctor}\subset \cD$ and 
$\cA_{\Ctor} = \cA \cap \cD_{\Ctor}$.
We also introduced there the notion of $C$-torsion free objects $\cA_{\Ctf} \subset \cA$, which are objects with no $C$-torsion subobjects.

The following gives a notion of central charge that is ``constant in families''.
\begin{Def} \label{def:familycentralcharges}
A \emph{central charge on $\cD$ over $C$} is a pair
$(\ZK,\Zc)$ where 
\index{ZK,ZCtor@$(\ZK,\Zc)$, central charge on $\cD$ over $C$}
\[ Z_K \colon K(\cD_K) \to \C \quad \text{and} \quad
\Zc \colon K(\cD_{\Ctor}) \to \C \]
are group homomorphisms with the following property: for all $E \in \cD$, and all proper closed subschemes $W \subset C$, we have
\begin{equation} \label{eq:ZKZP}
Z_K(E_K) = \frac 1{\len W}\Zc\left( i_{W*}^{}E_W^{}\right).
\end{equation}
\end{Def}
Since $\cD \to \cD_K$ is essentially surjective, $Z_K$ is determined by $\Zc$, and thus \eqref{eq:ZKZP} becomes a consistency condition for $\Zc$: the right-hand-side should be independent of $W$.
We think of this as requiring that $\Zc$ is constant in families of objects over $C$.

\begin{Ex} \label{ex:ZCtorviachiF}
Equation \eqref{eq:ZKZP} is satisfied whenever $\Zc$ can be written as a linear combination of functions of the form $\chi_F^{}$, for $F \in \Dperf(\cX)$, defined by
\[
\chi_F^{}(E) := \len_{\cO_C} g_* \left(E \otimes F\right)
= \sum_{i \in \Z} (-1)^i\, \len_{\cO_C} \rH^i\left(g_* (E \otimes F)\right).
\]
Note that this sum is well-defined: for $E \in \cD_{\Ctor}$, each cohomology sheaf of $g_* (E \otimes F)$ is a sheaf
with zero-dimensional support.
Moreover, since $E$ is bounded, $F$ is perfect, and $g$ is projective, we have $g_* (E \otimes F) \in \Db(C)$.
Then, the right-hand-side in \eqref{eq:ZKZP} is independent of $W$: indeed, by base change it is equal to the rank of the complex $g_* (E \otimes F)$ over $C$.
\end{Ex}

\begin{Def} \label{def:HNstructure_C}
A \emph{Harder--Narasimhan structure on $\cD$ over $C$} consists of a triple $\sigma_C = (\ZK,\Zc,\cP)$ where 
\index{sigmaC@$\sigma_C = (\ZK,\Zc,\cP)$, $C$ a Dedekind scheme!Harder--Narasimhan structure on $\cD$ over $C$!}
\begin{itemize}
\item $\cP$ is a slicing of $\cD$, and
\item $(\ZK,\Zc)$ is a central charge on $\cD$ over $C$,
\end{itemize}
satisfying the following two properties:
\begin{description}
\item[$C$-linearity] The slicing $\cP$ is \emph{local over $C$}, 
i.e., for every open $U \subset C$ there exists a slicing
$\cP_U$ of $\cD_U$ such that the pullback sends $\cP(\phi)$ to $\cP_U(\phi)$.
\item[Compatibility] For all $\phi \in \R$ and all $0 \neq E \in \cP(\phi)$, we have either
\begin{eqnarray*} E_K \neq 0 &\text{and} & \quad Z_K(E_K) \in \R_{> 0} \cdot e^{\ii\pi\phi}, \quad \text{or} \\
E \in \cD_{\Ctor} & \text{and} &\Zc(E) \in \R_{> 0} \cdot e^{\ii\pi\phi}.
\end{eqnarray*}
\end{description}
The nonzero objects of $\cP(\phi)$ are said to be \emph{$\sigma_C$-semistable} of \emph{phase} $\phi$, and the
simple objects of $\cP(\phi)$ are said to be \emph{$\sigma_C$-stable}.
\end{Def}

\begin{Rem} \label{rem:local-slicing-tensor-ample}
Just as in Theorem~\ref{thm:local-t-structure-tensor-ample} for the case of t-structures, $\cP$ is local over $C$ if and only if $\cP(\phi)$ is invariant under tensoring with $g^*L$ for all $\phi$ and all line bundles $L$ on $C$.
\end{Rem}

Before giving a concrete example of a HN structure over $C$, 
we show how to construct one from an appropriate t-structure; in other
words, we prove an analogue of \cite[Proposition~5.3]{Bridgeland:Stab} (see also Lemma~\ref{lem:Bridgeland-stabviaheart}).

\begin{Def}
Let $\cA_C \subset \cD$ be the heart of a bounded $C$-local t-structure.
A \emph{stability function on $\cA_C$ over $C$} is a central charge $(\ZK,\Zc)$ on $\cD$ over $C$ such that $Z_K$ is a stability function on $\cA_K$, and $\Zc$ is a stability function on $\cA_{\Ctor}$.
\end{Def}

\begin{Ex} \label{ex:slopestabilitycurves}
Let $\cX \to C$ be a family of curves, and let $\cO_\cX(1)$ be a relative polarization.
For $E \in (\Coh \cX)_{\Ctor}$, let $p_1^{}(E)$ and $p_0^{}(E)$ be the coefficients of the linear and constant terms, respectively, of the Hilbert polynomial of $E$, defined as in Example~\ref{ex:ZCtorviachiF} via the length of $g_* E(m)$ as $\cO_C$-modules.
Then
\[\Zc(E) := \mathfrak{i}p_1^{}(E) - p_0^{}(E) 
\]
defines a stability function on $\Coh \cX$ over $C$.
\end{Ex}

\begin{Def} \label{def:ZC}
Given a stability function on $\cA_C$ over $C$, we define the following \emph{central charge for objects $E \in \cA_C$}:
\index{ZC@$Z_C$, central charge for objects in $\cA_C$}
\[ Z_C(E) := \begin{cases} Z_K(E_K) & \text{if $Z_K(E_K) \neq 0$,} \\
\Zc(E) & \text{otherwise}.
\end{cases} \]
We then assign to $E \neq 0$ the \emph{slope} $\mu_C(E) \in \R \cup \{+\infty\}$ by
\index{muC@$\mu_C$, slope for objects in $\cA_C$}
\[ \mu_C(E) := 
\begin{cases}
+\infty & \text{if $\Im Z_C(E) = 0$} , \\
-\frac{\Re Z_C(E)}{\Im Z_C(E)} & \text{otherwise.}
\end{cases}
\]
Often we will use the phase $\phi(E) := \frac{1}{\pi} \arg Z_C(E) \in (0, 1]$ instead of the slope $\mu_C(E)$.
\index{phi@$\phi(E)$, phase of an object}
\end{Def}

This slope function satisfies the weak see-saw property (see Lemma~\ref{lem:seesawweak}).
We can easily construct examples where the strong see-saw property is not satisfied.

\begin{Ex}
\label{Ex:seesawstrong}
Let $E\in \cA_C$ with $E_K \neq 0$ and $p\in C$.
Assume that there exists a quotient $i_{p*}E_p\twoheadrightarrow Q\in \cA_\Ctor$ such that $\mu_C(i_{p*}E_p)\neq\mu_C(Q)$.
Then the composition $f\colon E \twoheadrightarrow Q$ satisfies $\mu_C(\ker f)=\mu_C(E)\neq\mu_C(Q)$.
\end{Ex}

\begin{Def}\label{def:semistable_C}
An object $E \in \cA_C$ is called \emph{$Z_C$-semistable} if for all proper subobjects $0 \neq A \hookrightarrow E$, we have $\phi(A) \leqslant \phi(E/A)$
(or equivalently, $\mu_C(A)\leqslant \mu_C(E/A)$).
\end{Def}

\begin{Def} \label{def:satisfies_HN_C}
We say that a stability function $(\ZK,\Zc)$ on $\cA_C$ over $C$ \emph{satisfies the HN property} if every object $E \in\cA_C$ admits a Harder--Narasimhan (HN) filtration: a sequence 
\[0 = E_0\hookrightarrow E_1 \hookrightarrow 
E_2\hookrightarrow \ldots \hookrightarrow E_m = E\]
such that $E_i /E_{i-1}$ is $Z_C$-semistable for $i = 1,\ldots, m$, with
\begin{equation*}
\phi (E_1 /E_0 ) > \phi (E_2 /E_1 ) > \cdots > \phi (E_m /E_{m-1}).
\end{equation*}
The objects $E_i/E_{i-1}$ are called the \emph{HN factors} of $E$.
\end{Def}

Since $i_{p*} \colon \cA_p \to \cA_C$ is exact, fully faithful, and since the image is closed under subobjects and quotients by Corollary~\ref{cor:schematicsupportsubsquotients}, $\Zc$ automatically
induces a compatible notion of semistability on $\cA_p$, and if $(\ZK,\Zc)$ satisfies the HN property in $\cA_C$, then so does $Z_p :=\Zc \circ i_{p*}$ on $\cA_p$.
Combined with Lemma~\ref{lem:Bridgeland-stabviaheart}, this yields:
\begin{Lem} \label{lem:inducedfiberprestability}
A HN structure $\sigma_C$ on $\cD$ over $C$ induces a pre-stability condition $\sigma_p = (\cA_p, Z_p)$ on $\cD_p$ for every $p \in C$.
\index{sigmac@$\sigma_c = (\cA_c, Z_c)$, $c\in C$ Dedekind scheme,!when $c=p$ is a closed point}
\end{Lem}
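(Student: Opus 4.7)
The plan is to recover the heart $\cA_p$ by base change of the heart $\cA_C := \cP((0,1])$ of $\cD$, and then to extract both the central charge $Z_p$ and the HN property from the subcategory of $\cA_C$ consisting of objects schematically supported at $p$.

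First, since $\cP$ is $C$-local, $\cA_C := \cP((0,1])$ is the heart of a bounded $C$-local t-structure on $\cD$. The inclusion $i_p \colon \Spec \kappa(p) \to C$ is finite and perfect (the latter because $\cO_{C,p}$ is a DVR, so $\kappa(p)$ has finite Tor-dimension over it), and it is faithful with respect to the flat morphism $g$. Hence Theorem~\ref{thm-t-structure-finite-map} induces a bounded t-structure on $\cD_p$ with heart $\cA_p$, while Theorem~\ref{thm-Dqc-bc}.\eqref{DT-f-affine} combined with Lemma~\ref{lem:cohpushpull} shows that $i_{p*} \colon \cA_p \to \cA_C$ is exact and fully faithful with image contained in $\cA_{\Ctor}$. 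I then define
\[
Z_p := Z_{\Ctor} \circ i_{p*} \colon K(\cD_p) \to \C.
\]

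Second, I would verify that $Z_p$ is a stability function on $\cA_p$. Given $0 \neq E_0 \in \cA_p$, consider the HN filtration of $i_{p*}E_0 \in \cA_C$ provided by $\cP$, with semistable factors $A_j \in \cP(\phi_j)$ and $1 \geq \phi_1 > \dots > \phi_m > 0$. Since $I_p$ annihilates $i_{p*}E_0$, Corollary~\ref{cor:schematicsupportsubsquotients} applied to $W = \{p\}$ shows that each $A_j$ is also schematically supported over $\{p\}$; in particular $(A_j)_K = 0$, so the compatibility axiom of Definition~\ref{def:HNstructure_C} forces $Z_{\Ctor}(A_j) \in \R_{>0} \cdot e^{\ii\pi\phi_j}$. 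Summing, $Z_p(E_0) = Z_{\Ctor}(i_{p*}E_0) \in \H \sqcup \R_{<0}$.

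Finally, for the HN property I transport the above filtration of $i_{p*}E_0$ back along $i_{p*}$: by Corollary~\ref{cor:schematicsupportsubsquotients} every term lies in the essential image of $i_{p*}$, so by full faithfulness it descends uniquely to a filtration of $E_0$ in $\cA_p$ with factors $A_j^0$ satisfying $i_{p*}A_j^0 = A_j$ and $Z_p$-phase equal to $\phi_j$, hence strictly decreasing. The main point requiring genuine verification — and the one I expect to be the principal obstacle — is the $Z_p$-semistability of each $A_j^0$ in $\cA_p$: any subobject $B \hookrightarrow A_j^0$ in $\cA_p$ gives, via exactness of $i_{p*}$, an inclusion $i_{p*}B \hookrightarrow A_j$ in $\cA_C$; but since $A_j \in \cP(\phi_j)$ is $Z_C$-semistable in $\cA_C$, one has $\phi(i_{p*}B) \leq \phi_j$, which by the definition of $Z_p$ translates to $\phi_p(B) \leq \phi_p(A_j^0)$. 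This completes the HN filtration in $\cA_p$ and finishes the construction of the pre-stability condition $\sigma_p = (\cA_p, Z_p)$.
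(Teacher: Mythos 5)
Your proof is correct and takes essentially the same approach as the paper, which justifies the lemma in the paragraph preceding it by observing that $i_{p*}\colon\cA_p\to\cA_C$ is exact, fully faithful, with image closed under subobjects and quotients, and that these properties automatically transfer the stability function, semistability, and HN filtrations from $\cA_{\Ctor}$ to $\cA_p$. One minor citation slip: the t-exactness of $i_{p*}$ on the bounded coherent categories should be attributed to Theorem~\ref{Thm-D-bc}.\eqref{DbT-f-finite-pushforward} (pushforward along a finite morphism) rather than to Theorem~\ref{thm-Dqc-bc}.\eqref{DT-f-affine}, which concerns the unbounded quasi-coherent setting.
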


By definition, a HN structure also gives a pre-stability condition $\sigma_K = (\cA_K, Z_K)$ on $\cD_K$; therefore, a HN structure $\sigma_C$ induces a pre-stability condition $\sigma_c$ on $\cD_c$ for every point $c \in C$.

The usual notion of relative slope-stability for a family of torsion free sheaves asks that the \emph{generic} fiber is slope-stable;
in contrast, $Z_C$-stability requires stability for \emph{all} fibers (see also Remark~\ref{rem:allfibersnotdestquot_plus}):

\begin{Lem} \label{lem:allfibersstable}
Let $E \in \cA_C$ be a $C$-torsion free object.
Then $E$ is $Z_C$-semistable if and only if $E_K$ is $\ZK$-semistable and $E_p \in \cA_p$ is $Z_p$-semistable for all closed points $p\in C$.
\end{Lem}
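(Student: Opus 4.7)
The plan hinges on a preliminary observation: for any nonzero subobject $\tilde A \hookrightarrow E$ in $\cA_C$, the $C$-torsion freeness of $E$ forces $\tilde A$ to be $C$-torsion free by Lemma~\ref{Lem-AS-subs}.\eqref{Astf-subs}, so $\tilde A_K \neq 0$ and $\phi(\tilde A) = \phi_K(\tilde A_K)$. Moreover, the compatibility condition \eqref{eq:ZKZP} with $W=\{p\}$ a reduced closed point (of length $1$) gives $Z_K(E_K) = Z_p(E_p)$, whence $\phi(E) = \phi_K(E_K) = \phi_p(E_p)$ for every closed $p \in C$.

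For the forward direction, assume $E$ is $Z_C$-semistable. Given $A_K \hookrightarrow E_K$ in $\cA_K$, I would lift it to an injection $\tilde A \hookrightarrow E$ in $\cA_C$ with $\tilde A_K = A_K$ via Lemma~\ref{lem-extend-from-localisation}.\eqref{enum:lem-extend-morphism-in-heart} applied to the flat localization $\Spec K \to C$ (factoring through an affine open); the preliminary observation together with $(E/\tilde A)_K = E_K/A_K$ translates $Z_C$-semistability of $E$ into $Z_K$-semistability of $E_K$ (the case $A_K = E_K$ being vacuous). For $Z_p$-semistability of $E_p$, given $A \hookrightarrow E_p$ in $\cA_p$, define $\tilde A$ as the kernel of the composed surjection $E \twoheadrightarrow i_{p*}E_p \twoheadrightarrow i_{p*}(E_p/A)$, where the first map is provided by $C$-flatness and Lemma~\ref{lem:fibersinheart}; a diagram chase yields the short exact sequence $I_p\cdot E \hookrightarrow \tilde A \twoheadrightarrow i_{p*}A$, so $\tilde A_K = E_K$ and $\phi(\tilde A) = \phi_p(E_p)$, while $E/\tilde A = i_{p*}(E_p/A)$ gives $\phi(E/\tilde A) = \phi_p(E_p/A)$. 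The inequality $\phi_p(E_p) \leq \phi_p(E_p/A)$ coming from $Z_C$-semistability, combined with the weak see-saw property in $\cA_p$, forces $\phi_p(A) \leq \phi_p(E_p/A)$.

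For the backward direction, let $\tilde A \hookrightarrow E$ be a nonzero proper subobject in $\cA_C$. If $(E/\tilde A)_K \neq 0$, then both $\phi(\tilde A) = \phi_K(\tilde A_K)$ and $\phi(E/\tilde A) = \phi_K(E_K/\tilde A_K)$, and the inequality $\phi(\tilde A)\leq\phi(E/\tilde A)$ follows directly from $Z_K$-semistability of $E_K$. The only remaining case is $\tilde A_K = E_K$, so that $Q := E/\tilde A \in \cA_\Ctor$ and $\phi(\tilde A) = \phi(E)$; it remains to show $\phi(Q) \geq \phi(E)$. Let $W$ be the schematic support of $Q$: since $I_W\cdot Q = 0$ by Lemma~\ref{lem:fibersinheart}, the composition $I_W\cdot E \to E \twoheadrightarrow Q$ vanishes, so $Q$ is a quotient of $E/I_W E = i_{W*}E_W$ (the latter identification again using $C$-flatness and Lemma~\ref{lem:fibersinheart}).

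The main obstacle is then to extract $\phi(Q) \geq \phi(E)$ from the fiberwise data. To this end, I would introduce the full subcategory $\cC_E \subset \cA_\Ctor$ consisting of objects whose every $\cA_\Ctor$-quotient has phase $\geq \phi(E)$. Because $Z_C$ restricted to $\cA_\Ctor$ coincides with the additive group homomorphism $Z_\Ctor$, and the closed half-plane $\{z \in \C \sth \arg z \geq \pi\phi(E)\}$ is closed under addition, the subcategory $\cC_E$ is closed under both quotients (by definition) and extensions. By $Z_p$-semistability of $E_p$ combined with Corollary~\ref{cor:schematicsupportsubsquotients} (every $\cA_C$-quotient of $i_{p*}E_p$ is schematically supported at $p$, hence of the form $i_{p*}F'$ for some quotient $F'$ of $E_p$ in $\cA_p$), we have $i_{p*}E_p \in \cC_E$ for every closed $p$. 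Decomposing $i_{W*}E_W = \bigoplus_j i_{W_j*}E_{W_j}$ over the connected components of $W$ (each $W_j$ set-theoretically a single point $p_j$) and applying Lemma~\ref{lem:filtrationatp} to each $i_{W_j*}E_{W_j}$ exhibits it as an iterated extension of quotients of $i_{p_j*}E_{p_j}$, all of which lie in $\cC_E$. Hence $i_{W*}E_W \in \cC_E$, and consequently $Q \in \cC_E$, yielding $\phi(Q) \geq \phi(E)$ and completing the proof.
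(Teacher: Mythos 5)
Your proposal is correct, and the forward direction together with the torsion-free half of the backward direction follow the same lines as the paper (whose proof is delegated to Lemma~\ref{lem:allfibersnotdestquot}): lift subobjects of $E_K$ via Lemma~\ref{lem-extend-from-localisation}, and read off the fiberwise inequality from a surjection $E \onto i_{p*}E_p \onto i_{p*}(E_p/A)$. Where you genuinely diverge is the remaining case of the backward direction, when the destabilizing quotient $Q=E/\tilde A$ is $C$-torsion supported on a fat point. The paper argues by \emph{descending induction on the length of the supporting subscheme}: if $Q$ destabilizes, then $I_p\cdot Q$ is a destabilizing quotient of $E/I_p^{m-1}E$ up to a line-bundle twist, and iterating contradicts the $m=1$ hypothesis. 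You instead introduce the subcategory $\cC_E \subset \cA_\Ctor$ of objects all of whose $\cA_\Ctor$-quotients have phase $\geq \phi(E)$ — closed under quotients by definition and under extensions because the sector $\{z : \arg z \in [\pi\phi(E),\pi]\}$ is a convex cone — and then feed $i_{W*}E_W$ into $\cC_E$ via the filtration of Lemma~\ref{lem:filtrationatp}. Both arguments encode the same filtration idea; yours is a cleaner abstraction and handles the multi-point case uniformly by direct-sum decomposition, whereas the paper's induction avoids the auxiliary category and, phrased in terms of destabilizing quotients, also applies verbatim to the weak case in Lemma~\ref{lem:allfibersnotdestquot}. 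One small correction: in the forward direction you invoke the \emph{weak} see-saw property in $\cA_p$ to pass from $\phi_p(E_p) \leq \phi_p(E_p/A)$ to $\phi_p(A) \leq \phi_p(E_p/A)$, but the weak see-saw alone does not give this (the equality case $\phi_p(E_p)=\phi_p(E_p/A)$ may fail); what is needed is the genuine see-saw for the stability function $Z_p$ on $\cA_p$, which of course holds here since we are in the non-weak setting.
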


\begin{proof} See Lemma~\ref{lem:allfibersnotdestquot}.
\end{proof}

\begin{Ex}
Consider slope-stability on a family of curves $g \colon \cX \to C$ as in Example~\ref{ex:slopestabilitycurves}.
Let $x \in \cX$ be a closed point, $p := g(x)$ and let $F \subset \cX$ be the fiber containing $x$.
By Lemma~\ref{lem:allfibersstable} the ideal sheaf $I_x$ is not semistable:
indeed, $i_{p*}(I_x)_p = \cO_F(-x) \oplus \cO_x$ is not semistable.
The HN filtration of $I_x$ is given by $I_F \into I_x$, since $I_F$ is semistable by Lemma~\ref{lem:allfibersstable}, and the quotient $I_x/I_F = \cO_F(-x)$ is the push-forward of a semistable sheaf from the fiber and therefore easily shown to be semistable with respect to $Z_{\Ctor}$ in $\cA_{\Ctor}$.

Note that $I_F \into I_x$ is also the simplest possible example of a semistable reduction of a flat family of sheaves;
in particular, the notion of HN filtrations with respect to $\mu_C$ will require semistable reduction as a necessary ingredient, see Proposition~\ref{prop:HNviaHN}.
\end{Ex}

\begin{Prop} \label{prop:stabviaheart}
To give a HN structure on $\cD$ over $C$ is equivalent to giving a heart $\cA_C \subset \cD$ of a bounded $C$-local t-structure, together with a stability function $(\ZK,\Zc)$ on $\cA_C$ over $C$ satisfying the HN property.
\index{sigmaC@$\sigma_C = (\cA_C,\ZK,\Zc)$, $C$ a Dedekind scheme!Harder--Narasimhan structure on $\cD$ over $C$}
\end{Prop}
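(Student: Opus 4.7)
The plan is to follow the standard pattern of Bridgeland's characterization (Lemma~\ref{lem:Bridgeland-stabviaheart}), adapted to the relative setting, and I would defer the more technical aspects to the weak case treated in Section~\ref{sec:defnweakHNstructure} since the proof is essentially identical.

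For the forward direction, starting from a HN structure $\sigma_C = (Z_K, Z_{\Ctor}, \cP)$, I would set $\cA_C := \cP((0,1])$. The general theory of slicings shows that $\cA_C$ is the heart of a bounded t-structure on $\cD$. To verify $C$-locality, I would use Remark~\ref{rem:local-slicing-tensor-ample}: since tensoring with $g^* L$ preserves each $\cP(\phi)$ for any line bundle $L$ on $C$, it preserves $\cA_C$, and then Theorem~\ref{thm:local-t-structure-tensor-ample} applied on each affine open of $C$ (which admits an ample line bundle) yields $C$-locality of the induced t-structure. The stability function property of $(Z_K, Z_{\Ctor})$ on $\cA_K$ and $\cA_{\Ctor}$ follows from the compatibility axiom, since any $0 \neq E \in \cA_C$ has a top HN factor in some $\cP(\phi)$ with $\phi \in (0,1]$, and the compatibility axiom forces $Z_K$ or $Z_{\Ctor}$ to lie in the semi-closed upper half-plane.

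For the HN property on $\cA_C$, I would take $E \in \cA_C$ and consider its slicing-HN filtration with factors $A_i \in \cP(\phi_i)$, $\phi_1 > \cdots > \phi_m$. Since $E \in \cA_C = \cP((0,1])$, all $\phi_i \in (0,1]$, so the filtration lives inside $\cA_C$ and is a sequence of subobjects there. Grouping consecutive factors of equal phase gives the HN filtration with respect to $Z_C$; compatibility with the definition of $\mu_C$ (including the case $\Im Z_C = 0$, where the phase is $1$) needs to be checked by hand using the two cases of the compatibility axiom.

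For the reverse direction, starting from $\cA_C$ and a stability function $(Z_K, Z_{\Ctor})$ with the HN property, I would define, for $\phi \in (0,1]$, the subcategory $\cP(\phi) \subset \cA_C$ to consist of $0$ and the $Z_C$-semistable objects of phase $\phi$, and extend to arbitrary $\phi \in \R$ by the rule $\cP(\phi+1) = \cP(\phi)[1]$. The verification that this defines a slicing is formal from the HN property in $\cA_C$ together with standard semistability vanishing ($\Hom(A,B) = 0$ for $A, B$ semistable of phases $\phi_A > \phi_B$), which follows from the see-saw property for $\mu_C$ (Lemma~\ref{lem:seesawweak} in the weak case, specializing to the strong setting). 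Locality of $\cP$ follows from $C$-locality of the t-structure via Lemma~\ref{lem-tensor-vb}: tensoring by $g^* L$ is exact on $\cA_C$ and preserves $Z_C$ up to scaling by a positive real number (in fact it fixes $Z_C$ on the image of the appropriate subgroup), so it preserves $\cP(\phi)$. The compatibility axiom is then a direct reformulation of the definition of the slope $\mu_C$.

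The main obstacle, as in the weak case, will be the careful bookkeeping in the HN-property step, specifically handling the interaction between $Z_K$ and $Z_{\Ctor}$ when $Z_K(E_K) = 0$ (so $E$ is $C$-torsion even if it has nonzero generic fibre in a weaker sense) versus when $E_K \neq 0$; the key point is that $Z_C$, though defined case-by-case, satisfies additivity along short exact sequences in $\cA_C$ up to the coherent sign conventions, which makes the see-saw argument work uniformly. Since all these verifications are identical to the weak HN setting treated later, the proof will be obtained by specializing the corresponding argument for weak HN structures, observing that a stability function is a weak stability function for which the auxiliary weak central charges all coincide with the genuine one.
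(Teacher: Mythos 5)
Your proposal follows essentially the same route as the paper: set $\cA_C := \cP((0,1])$, verify $C$-locality via tensoring with pullbacks of line bundles (Remark~\ref{rem:local-slicing-tensor-ample}), recover the HN filtration from the slicing filtration, construct the slicing from semistable objects in the converse direction, and defer all technical details to the weak case (Proposition~\ref{prop:stabviaheartweak}) — exactly what the paper does. One small imprecision: for the stability function property it does not suffice to invoke the compatibility axiom for the \emph{top} HN factor of $E$ alone; rather, one must sum over \emph{all} HN factors, distinguishing the case where all factors are $C$-torsion (so $\Zc(E) = \sum_i \Zc(E_i)$ lies in the semi-closed upper half plane) from the case where some factor has $Z_K \neq 0$ (so the sum $Z_K(E_K)$ does), which is precisely the one-line clarification the paper adds to the weak-case proof.
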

\begin{proof} See the proof of Proposition~\ref{prop:stabviaheartweak}.

We only stress that for $E \in \cA_C$, either all of its HN factors $E_i$ with respect to $\cP$ are in $\cD_{\Ctor}$, in which case $E \in \cD_{\Ctor}$ and $\Zc(E) = \sum_i\Zc(E_i)$ is in the semiclosed upper half plane $\H\sqcup \R_{< 0}$;
or, otherwise, some of its HN factors have $Z_K(E_i) \neq 0$, and we have $Z_K(E_K) \in \H\sqcup \R_{< 0}$.
Thus, $(\ZK,\Zc)$ is a stability function on $\cA_C$ over $C$.
\end{proof}

\begin{Rem} \label{rem:wGL2action}
Let $\wGL2$ be the universal cover of the group $\GL_2^+(\R)$ of real $2 \times 2$-matrices with positive determinant.
From Definition~\ref{def:HNstructure_C}, it is evident that $\wGL2$ acts on the set of pre-stability conditions on $\cD$ over $C$ in the same manner as it acts on the set of stability conditions on a triangulated category.
Indeed, let $\R \to S^1$ be the universal cover given by $\phi \mapsto e^{\ii\pi\phi}$;
then $\wGL2$ can be described as the set of pairs $(G, g)$ where $G \in \GL_2^+(\R)$, and $g \colon \R \to \R$ is a choice of a lift of the induced action of $G$ on $S^1$.
Then
\[ (G, g) (\ZK,\Zc,\cP) = (g^*\cP, G \circ Z_K, G \circ\Zc)
\]
where $g^* \cP(\phi) = \cP(g^{-1} \phi)$.
\end{Rem}

\subsection{Existence of HN filtrations}
\label{subsec:existenceHN}
Finally, we discuss the existence of HN filtrations.

\begin{Prop} \label{prop:HNviaHN}
Let $(\ZK,\Zc)$ be a stability function on $\cA_C$ over $C$.
If $(\ZK,\Zc)$ satisfies the HN property, then all of the following three conditions are satisfied:
\begin{enumerate}[{\rm (1)}] 
\item \label{enum:HNAK} 
The pair $(\cA_K, Z_K)$ satisfies the HN property.
\item \label{enum:HNAtor}
The pair $(\cA_{\Ctor},\Zc)$ satisfies the HN property.
\item \label{enum:ssred} (\emph{Semistable reduction}) 
For any $C$-torsion free object $E \in \cA_{\Ctf}$ such that $E_K \in \cA_K$ is $Z_K$-semistable, there is a $Z_C$-semistable subobject $F \subset E$ with $E/F \in \cA_{\Ctor}$.
\end{enumerate}
Moreover, if $\cA_C$ has a $C$-torsion theory $(\cA_{\Ctor}, \cA_{\Ctf})$, then the converse also holds true.
\end{Prop}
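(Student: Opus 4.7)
I treat the two directions separately.

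Forward direction. Assume $(\ZK, \Zc)$ satisfies the HN property on $\cA_C$. Part (2) is immediate: since $\cA_\Ctor$ is closed under subobjects and quotients in $\cA_C$ by Lemma~\ref{Lem-AS-subs}.\eqref{Astor-subs}, the HN filtration of $E \in \cA_\Ctor$ taken in $\cA_C$ automatically has all factors in $\cA_\Ctor$, and $Z_C$ restricts to $\Zc$ there. For part (1), I lift $E_K \in \cA_K$ to some $\tilde E \in \cA_C$ by combining essential surjectivity of $\cD \to \cD_K$ (Lemma~\ref{lem-open-restriction-es}) with t-exactness of base change to the generic point (Corollary~\ref{cor:base-change-tstructure-point}.\eqref{enum:base-change-generic}), concretely by taking $\rH^0_{\cA_C}$ of an arbitrary lift in $\cD$ and using that its higher cohomologies vanish upon pulling back to $K$. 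Restricting the HN filtration of $\tilde E$ to $K$, the $C$-torsion factors collapse to zero; for every non-torsion factor $F_i$ one has $\phi_C(F_i) = \phi_K((F_i)_K)$ by Definition~\ref{def:ZC}, so phases are preserved and strictly decreasing. The $Z_K$-semistability of each $(F_i)_K$ follows by lifting an arbitrary subobject via Lemma~\ref{lem-extend-from-localisation}.\eqref{enum:lem-extend-morphism-in-heart} and invoking the same phase identity. For part (3), the HN filtration $0 = E_0 \subset \cdots \subset E_m = E$ of $E \in \cA_\Ctf$ in $\cA_C$ has every $E_i$ in $\cA_\Ctf$ by Lemma~\ref{Lem-AS-subs}.\eqref{Astf-subs}, so $F_1 = E_1$ is $C$-torsion free and non-zero; if any later factor $F_i$, $i \geq 2$, were also non-torsion, restricting to $K$ would produce a non-trivial filtration of the $Z_K$-semistable $E_K$ with two distinct phases, a contradiction. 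Hence $F_2, \ldots, F_m$ are all $C$-torsion, $E/E_1 \in \cA_\Ctor$, and $F := E_1$ is the required subobject.

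Converse direction. Assume (1)--(3) together with the existence of a $C$-torsion theory. I construct the HN filtration of $E \in \cA_C$ by iteratively extracting a maximal $Z_C$-semistable subobject of maximal phase. Decomposing $E$ via the torsion pair as $E_\Ctor \into E \onto E_\Ctf$, the maximal phase $\phi^+$ among nonzero subobjects of $E$ is the larger of the top phase inside $E_\Ctor$ (accessible through the HN in $\cA_\Ctor$ given by (2)) and the phase $\phi(B_1)$ of the leading HN factor $B_1$ of $E_K$ in $\cA_K$ (furnished by (1)). When $\phi(B_1) \geq \phi^+(E_\Ctor)$, I lift $B_1 \into E_K$ to a saturated inclusion $\tilde B_1 \into E_\Ctf$ via Lemma~\ref{lem:tensortrick} and apply (3) to $\tilde B_1$, producing a $Z_C$-semistable subobject $F \subset \tilde B_1$ with $\tilde B_1/F$ $C$-torsion and $\phi_C(F) = \phi(B_1)$. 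In the case of equality $\phi(B_1) = \phi^+(E_\Ctor)$, I enlarge $F$ by the maximal $\Zc$-semistable subobject of $E_\Ctor$ of phase $\phi(B_1)$: the sum of two $Z_C$-semistable subobjects of a common maximal phase is again $Z_C$-semistable of that phase, so this yields the desired maximal first piece $F_1 \subset E$. When $\phi^+(E_\Ctor) > \phi(B_1)$, one instead takes $F_1$ to be the maximal $\Zc$-semistable subobject of $E_\Ctor$ of its top phase, which is already a subobject of $E$ since $\cA_\Ctor$ is stable under subobjects. One then recurses on $E/F_1$; termination in finitely many steps follows from the fact that each step strictly decreases either the length of the HN filtration of the generic fiber or that of the torsion part.

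The main obstacle is in the converse: one must verify that the constructed $F_1$ really is a $Z_C$-semistable subobject of $E$ of the maximal phase, i.e., rule out destabilizing subobjects (of $F_1$ or of strictly higher phase in $E$) which mix $C$-torsion and $C$-torsion-free contributions in a way not directly seen by (1) or (2) alone. Closing this argument requires a careful analysis of an arbitrary candidate subobject $A \subset E$ via its own torsion decomposition $A_\Ctor \into A \onto A_\Ctf$, relating the phases of $A_\Ctor$ and $A_\Ctf$ through Definition~\ref{def:ZC} to exclude \emph{phase mismatches}, and bounding each by the corresponding maximal phases obtained via (2) and (1) respectively. This is precisely where the hypothesis of a $C$-torsion theory is indispensable.
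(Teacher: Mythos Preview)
Your forward direction is correct and matches the paper's Remark~\ref{rem:HNnecessaryconditions}.

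The converse has a genuine gap that you flag in your final paragraph without closing. Your semistable $F$ lives in $E_\Ctf = E/E_\Ctor$, not in $E$, and you never explain how to turn it into a subobject of $E$. Taking the preimage $\hat F \subset E$ forces $E_\Ctor \subset \hat F$, so ``enlarging by the top piece of $E_\Ctor$'' becomes vacuous, and $\hat F$ is generally not $Z_C$-semistable. The claim that the sum of two $Z_C$-semistable subobjects of a common phase is again $Z_C$-semistable is not even well-posed here (the two pieces do not sit inside a common ambient object), and is in any case suspect: $Z_C$ is defined by cases according to whether an object is $C$-torsion and is therefore not additive, so only the weak see-saw holds (Example~\ref{Ex:seesawstrong}). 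A minor point: Lemma~\ref{lem:tensortrick} lifts generic isomorphisms, not inclusions; you want Lemma~\ref{lem-extend-from-localisation}.\eqref{enum:lem-extend-morphism-in-heart}.

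The paper's proof (carried out for the weak version in Proposition~\ref{prop:HNviaHNweak}) avoids the $E_\Ctf$-versus-$E$ mismatch by lifting the mds of $E_K$ to a \emph{saturated} subobject $N \subset E$, so that $E/N$ is torsion-free and hence $E_\Ctor = N_\Ctor \subset N$. All torsion of $E$ thus sits inside $N$ from the outset. One applies semistable reduction to $N$ itself (extended to non-torsion-free objects via Lemma~\ref{lem:semistsub}), obtaining a $Z_C$-semistable $F \subset N$ with $N/F \in \cA_\Ctor$, and then enlarges $F$ inside $N$ using the HN filtration of $N/F$ in $\cA_\Ctor$ to produce the candidate mds $M$. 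Verifying that $M$ is the mds then splits cleanly: a putative destabilizer $A \subset M$ is either torsion with $\mu_C(A) > \mu$ (ruled out by the case hypothesis $\mu_C^+(E_\Ctor) \leq \mu$ together with $A \subset N_\Ctor = E_\Ctor$), or has $M/A$ torsion with $\mu_C(M/A) < \mu$ (ruled out by a see-saw argument inside $\cA_\Ctor$ using the semistability of $F$ and the construction of $M$). The containment $E_\Ctor \subset N$ is precisely what makes this case split work, and it is the ingredient your decomposition $E_\Ctor \into E \onto E_\Ctf$ does not supply.
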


We show the necessity of the three conditions in the following remark, and refer to Proposition~\ref{prop:HNviaHNweak} for the proof of the converse.

\begin{Rem} \label{rem:HNnecessaryconditions}
We first comment on each of the conditions given in Proposition~\ref{prop:HNviaHN}.
\begin{enumerate}[{\rm (1)}]
\item If $E \in \cA_C$ is $Z_C$-semistable, then by Lemma~\ref{lem:allfibersstable} the object $E_K \in \cA_K$ is $Z_K$-semistable.
Moreover, given a HN filtration $0 = E_0 \into E_1 \into \dots \into E_m=E$, then for any 
$i < j$ we have $\mu_K\left((E_i/E_{i-1})_K\right) > \mu_K\left((E_j/E_{j-1})_K\right)$ if both quotients are non-zero.
Hence, the HN filtration of $E$ in $\cA_C$ induces a HN filtration of $E_K \in \cA_K$, i.e., condition \eqref{enum:HNAK} is necessary.
On the other hand, given the HN filtration of $E_K \in \cA_K$, we can attempt to construct a HN filtration of $E \in \cA_C$ as a refinement of a lift of the HN filtration of $E_K$ to $\cA_C$.
\item Since $\cA_{\Ctor}$, as a subcategory of $\cA_C$, is closed under subobjects and quotients, the inclusion $\cA_{\Ctor} \subset \cA_C$ identifies $\Zc$-semistable objects with objects in $\cA_{\Ctor}$ that are $Z_C$-semistable as objects in $\cA_C$.
In particular, condition \eqref{enum:HNAtor} is clearly necessary.
\item Consider the first step of the HN filtration of such an $E$.
It has to be a $Z_C$-semistable subobject $F \subset E$ with $\mu_C(F) = \mu_C(E)$.
\end{enumerate}

Also recall that when $\cA_C$ is noetherian, then the assumption that $(\cA_{\Ctor}, \cA_\Ctf)$ is a torsion pair is automatic, see Remark~\ref{rem:noetherianandtorsiontheory}.
However, it is easy to construct examples of hearts local over $C$ that do not satisfy this assumption:
for example, consider the heart $\cB_C$ obtained by tilting at the torsion pair $(\cA_{\Ctor}, \cA_{\Ctf})$.
 \end{Rem}

\section{Weak stability conditions, tilting, and base change} 
\label{sec:WeakStabCond}

In order to construct stability conditions on surfaces or higher-dimensional varieties, one may use the auxiliary notion of \emph{weak stability conditions}.\footnote{In \cite{BMS:abelian3folds,PT15:bridgeland_moduli_properties}, this notion is called a very weak stability condition.}
The procedure will be analogous for Harder--Narasimhan structures over a curve.
In this section, we recall the definition of weak stability conditions following \cite{BMS:abelian3folds,PT15:bridgeland_moduli_properties}.
Then we study analogues of operations on stability conditions that become more subtle for weak stability conditions, 
namely tilting (Section~\ref{subsec:tiltingweakstability}) and base change (Section~\ref{subsec:bcweakstability}).

\subsection{Definitions} 
\label{subsec:WeakStabCond}
We begin by recalling the definition of weak stability conditions by following \cite{BMS:abelian3folds,PT15:bridgeland_moduli_properties}, and review some of the analogues of basic properties of stability conditions that become more subtle for weak stability conditions.

\begin{Def} \label{def:weak stability condition}
Let $\cD$ a triangulated category.
A \emph{weak pre-stability condition on $\cD$} is a pair $\sigma = (Z, \cP)$ where $\cP$ is a slicing of $\cD$, and
$Z \colon K(\cD)\to \C$ is a group homomorphism, that satisfy the following condition:
\index{sigma@$\sigma = (Z, \cP)$!weak (pre-)stability condition!}
\[\text{For all }0\neq E \in\cP(\phi),\text{ we have }Z(E) \in\begin{cases}
\R_{>0} \cdot e^{\ii \pi \phi} & \text{if $\phi \notin \Z$} \\
\R_{\geqslant 0} \cdot e^{\ii \pi \phi} & \text{if $\phi \in \Z$.}
\end{cases}\]
\end{Def}
As in Definition~\ref{def:stability_condition_p}, we say that $\sigma$ is a \emph{weak pre-stability condition on $\cD$ with respect to $\Lambda$}, if $Z$ factors through a group homomorphism $v:K(\cD)\to \Lambda$.

\begin{Def} A \emph{weak stability function $Z$ on an abelian category $\cA$} is a morphism 
of abelian groups $Z \colon K(\cA) \to \C$ 
such that for all $0 \neq E \in\cA$, the complex number $Z(E)$ is in $\H\sqcup \R_{\leqslant 0} := \set{z\in\C \sth\Im z > 0, \text{ or } \Im z = 0\text{ and }\Re z \leqslant 0}$.
\index{Z@$Z\colon K(\cA) \to \C$,!weak stability function on $\cA$}

The function $Z$ allows one to define a \emph{slope} for any $0 \neq E \in \cA$ by setting 
\index{muZ@$\mu_Z$, slope function}
\[ \mu_{Z}(E) := \begin{cases} - \frac{\Re Z(E)}{\Im Z(E)} & \text{if }\Im Z(E) > 0 \\
+\infty & \text{otherwise}
\end{cases}
\]
and a notion of stability: An object $0 \not= E\in\AA$ is $Z$-\emph{semistable} if for every proper subobject $F$, we have $\mu_{Z}(F) \leqslant \mu_{Z}(E/F)$.
\end{Def}

\begin{Def} \label{def:A0}
Given a weak stability function $Z$ on an abelian category $\cA$, we define $\cA^0 \subset \cA$ as the subcategory of objects $E \in \cA$ with $Z(E) = 0$.
\index{A0@$\cA^0 \subset \cA$ subcategory of $\cA$ where $Z$ vanishes}
\end{Def}

HN filtrations and the HN property for weak stability functions are defined exactly as in Definition~\ref{def:satisfies_HN}.

\begin{Lem}[{\cite[Section~2.1]{PT15:bridgeland_moduli_properties}}]
To give a weak pre-stability condition on $\cD$ is equivalent to
giving a heart $\cA\subset \cD$ of a bounded t-structure, and a weak stability function $Z$ on $\cA$ satisfying the HN property.
\index{sigma@$\sigma = (\cA, Z)$,!weak (pre-)stability condition}
\end{Lem}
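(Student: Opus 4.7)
The plan is to mimic the proof of the corresponding statement for (genuine) pre-stability conditions, namely Lemma~\ref{lem:Bridgeland-stabviaheart}, i.e.~\cite[Proposition~5.3]{Bridgeland:Stab}, with careful attention paid to the subcategory $\cA^0$ of objects on which $Z$ vanishes. The only substantive change from Bridgeland's setup is that $\cA^0$ can be nonzero in the weak case, which forces certain positivity conditions to be relaxed to non-strict ones and enlarges $\cP(n)$ for $n \in \Z$.

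For the forward direction, given $(Z,\cP)$, I would set $\cA := \cP((0,1])$. Bridgeland's argument that this is the heart of a bounded t-structure uses only the three axioms of a slicing, so it carries over verbatim. Restricting $Z$ to $K(\cA)$, the target half-plane condition $Z(E) \in \H \sqcup \R_{\leq 0}$ follows because each nonzero $E \in \cA$ is an iterated extension of HN factors $E_i \in \cP(\phi_i)$ with $\phi_i \in (0,1]$ and $Z(E_i) \in \R_{\geq 0}\cdot e^{\ii\pi\phi_i}$, a region closed under addition. The filtration of $E$ inherited from $\cP$ is then already its HN filtration with respect to $\mu_Z$, since the strictly decreasing phases $\phi_i \in (0,1]$ translate to strictly decreasing slopes $\mu_Z = -\cot(\pi\phi_i) \in \R \cup \{+\infty\}$; factors with $Z=0$ and factors with $Z \in \R_{<0}$ all belong to $\cP(1)$ and thus appear in at most one (top) HN piece.

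For the backward direction, given $(\cA,Z)$ with the HN property, I would declare $\cP(\phi)$, for $\phi \in (0,1)$, to consist of $0$ together with the $Z$-semistable objects of slope $-\cot(\pi\phi)$, and place the entire ``slope $+\infty$'' locus --- comprising both $\cA^0$ and the $Z$-semistable objects with $Z \in \R_{<0}$ --- into $\cP(1)$, extending to $\phi \in \R$ via $\cP(\phi+1) = \cP(\phi)[1]$. The compatibility condition $Z(E) \in \R_{\geq 0}\cdot e^{\ii\pi\phi}$ is immediate from this recipe. Semi-orthogonality for $\phi_1 > \phi_2$ reduces via shifts to $\phi_1, \phi_2 \in (0,1]$ and follows from the standard image argument: a nonzero morphism $f\colon E_1 \to E_2$ would produce $\mu_Z(E_1) \leq \mu_Z(\im f) \leq \mu_Z(E_2)$, contradicting the strict slope inequality forced by $\phi_1 > \phi_2$ (the equality $\mu_Z(E_1) = \mu_Z(E_2) = +\infty$ could only occur at $\phi_1 = \phi_2 = 1$, which is excluded). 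HN filtrations in $\cD$ are then assembled in the usual way from the Postnikov filtration on $E$ coming from the bounded t-structure, together with the HN filtrations of each cohomology object $\rH^i_\cA(E)$ in $\cA$ provided by the hypothesis.

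The main obstacle is bookkeeping rather than substantive mathematics: I must verify that the enlarged $\cP(1)$, which amalgamates $\cA^0$ and the ordinary ``slope $+\infty$'' semistables, constitutes a well-defined additive subcategory that is semi-orthogonal to every $\cP(\phi)$ with $\phi < 1$. The key observation making this routine is that $\cA^0$ is automatically closed under subobjects, quotients, and extensions in $\cA$: if $Z(E)=0$ and $0 \to K \to E \to I \to 0$ is exact, then $Z(K), Z(I) \in \H \sqcup \R_{\leq 0}$ with $Z(K) + Z(I) = 0$ forces both to vanish, so $\cA^0$ behaves as a ``degenerate'' block inside $\cP(1)$ that is orthogonal to lower phases for free.
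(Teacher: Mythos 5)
Your proposal is correct and follows the same route as the cited reference \cite[Section~2.1]{PT15:bridgeland_moduli_properties}, which the paper invokes without proof: adapt Bridgeland's \cite[Proposition~5.3]{Bridgeland:Stab} by tracking the degenerate behaviour of $\cA^0$ and the ``slope~$+\infty$'' locus. One small point left implicit in your forward direction is the check that each $\cP$-filtration factor $E_i \in \cP(\phi_i)\cap\cA$ is in fact $Z$-semistable in $\cA$ --- this needs the standard comparison that any subobject of $E_i$ has $\cP$-phases $\leq\phi_i$ while any quotient has $\cP$-phases $\geq\phi_i$, and then a conversion from phase bounds to slope bounds (which requires a short case analysis at $\phi_i = 1$, where factors with $Z=0$ can appear) --- but this is exactly the corresponding step in Bridgeland's argument and does not affect the soundness of the approach.
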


\begin{Ex} \label{ex:slopestabilityasweakstability}
Let $X$ be a polarized variety over a field, and let $p_n, p_{n-1}$ be the leading coefficients of the Hilbert polynomial.
Then $(\ii p_n - p_{n-1},\Coh X)$ defines a weak stability condition, and $(\Coh X)^0$ is the category of sheaves supported in codimension at least two.
\end{Ex}

The analogue of Lemma~\ref{lem:stabheartnoetherian} is more involved.
\begin{Def} \label{def:noetheriantorsionsubcat}
Let $\cB \subset \cA$ be an abelian subcategory of an abelian category $\cA$.
We say that $\cB$ is a \emph{noetherian torsion subcategory} of $\cA$ if $\cB$ is a noetherian abelian category, and if there exists a torsion pair $(\cB, \cB^\perp)$ in $\cA$.
\end{Def}

\begin{Rem}\label{rem:alternatenoetheriantorsion}
An extension-closed abelian subcategory $\cB \subset \cA$ is a noetherian torsion subcategory if and only if for every object $E \in \cA$ any
increasing sequence $B_1 \subset B_2 \subset \dots \subset E$ of subobjects of $E$ with $B_i \in \cB$ terminates.
\end{Rem}

\begin{Lem}[{\cite[Lemma~2.17]{PT15:bridgeland_moduli_properties}}] \label{lem:discreteimpliesNoetherian}
Let $(\cA, Z)$ be a weak pre-stability condition, such that
$\cA^0 \subset \cA$ is a noetherian torsion subcategory, and $Z$ is defined over $\Q[\ii]$.
Then $\cA$ is noetherian.
\end{Lem}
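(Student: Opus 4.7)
The plan is to show that every ascending chain $E_1 \subset E_2 \subset \cdots$ of subobjects of a fixed object $E \in \cA$ must stabilize. First I would record two basic monotonicity observations. Each quotient $E_{i+1}/E_i$ lies in $\cA$, so its $\Im Z$ is non-negative; therefore $\Im Z(E_i)$ is non-decreasing and bounded above by $\Im Z(E)$. Similarly, along any subchain on which $\Im Z$ is constant, the values $\Re Z(E_{i+1}/E_i)$ are non-positive, so $\Re Z(E_i)$ becomes non-increasing and bounded below by $\Re Z(E) - \text{const}$.

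The heart of the argument is to show that the complex numbers $Z(E_i)$ eventually stabilize. For this I would invoke the HN filtration $0 = E^{(0)} \subset E^{(1)} \subset \cdots \subset E^{(m)} = E$ of $E$, with semistable factors $A_j = E^{(j)}/E^{(j-1)}$ of phases $\phi_1 > \cdots > \phi_m$. Each subobject $E_i$ inherits an induced filtration with $j$-th graded piece $E_i \cap E^{(j)}/E_i \cap E^{(j-1)}$ a subobject of $A_j$, and we have $Z(E_i) = \sum_j Z\bigl(E_i \cap E^{(j)}/E_i \cap E^{(j-1)}\bigr)$. The weak semistability of each $A_j$ forces the phase of each such graded piece to be bounded above by $\phi_j$, and combined with the constraint that each quotient $A_j/(E_i \cap E^{(j)}/E_i \cap E^{(j-1)})$ has phase bounded below by $\phi_j$, this confines $Z(E_i)$ to a bounded region of the complex plane. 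Combined with the assumption $Z(E_i) \in \Q[\mathfrak{i}]$ and the fact that the non-decreasing, bounded sequence $\Im Z(E_i)$ lies in a finitely generated subgroup of $\Q$ (cut out by the finitely many HN factors of $E$ together with the semistability inequalities), I would deduce that $\Im Z(E_i)$ must stabilize. Repeating the argument with $\Re Z(E_i)$ then gives that $Z(E_i)$ is eventually constant, so $Z(E_{i+1}/E_i) = 0$ for all large~$i$.

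Once $E_{i+1}/E_i \in \cA^0$ for $i \geq N$, I would pass to the chain of subobjects $E_i/E_N \subset E/E_N$ for $i \geq N$. Since $\cA^0$ is closed under extensions (a direct consequence of the weak stability inequalities), each $E_i/E_N$ lies in $\cA^0$. Applying the characterization of noetherian torsion subcategories recorded in Remark~\ref{rem:alternatenoetheriantorsion}, namely that any ascending chain of $\cA^0$-subobjects of a fixed ambient object of $\cA$ must terminate, we conclude that $E_i/E_N$ stabilizes, and hence so does $E_i$.

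The main obstacle will be justifying discreteness of $\Im Z(E_i)$ rigorously: bounded monotone sequences in $\Q$ do not automatically stabilize, so one must exploit the HN filtration and the semistability inequalities to confine the relevant values to a finitely generated subgroup of $\Q$, where bounded monotone sequences necessarily terminate. This is where the interplay between the HN property, the weak-stability phase inequalities, and the rationality hypothesis on $Z$ is essential; without any one of these the chain could fail to stabilize.
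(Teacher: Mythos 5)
Your overall architecture mirrors the paper's proof (which works with a descending chain of surjections rather than your ascending chain of subobjects, an inessential cosmetic difference): first force $Z(E_i)$ to become constant using discreteness and monotonicity of $\Im Z$ then $\Re Z$, then reduce to a chain in $\cA^0$ and invoke the noetherian torsion subcategory hypothesis via Remark~\ref{rem:alternatenoetheriantorsion}. The final reduction to $\cA^0$ is correct and clean.

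The genuine gap is in your justification of discreteness. You claim that the non-decreasing sequence $\Im Z(E_i)$ ``lies in a finitely generated subgroup of $\Q$ cut out by the finitely many HN factors of $E$ together with the semistability inequalities,'' but this is not so. For a fixed semistable HN factor $A_j$, the semistability inequalities confine $Z(B)$ for subobjects $B \subset A_j$ to a two-dimensional triangle with vertices at $0$, $Z(A_j)$, and a third corner determined by the phase $\phi_j$; they do not confine $\Im Z(B)$ to a lattice, and for an abstract weak pre-stability condition the values $\Im Z(B)$ can be dense in $[0, \Im Z(A_j)]$ even with $Z$ defined over $\Q[\ii]$. The source of discreteness in the paper is different and independent of the HN filtration: in the setup of the paper, the central charge factors through a finite rank free abelian group $\Lambda$, so $\Im Z(\Lambda)$ is a finitely generated subgroup of $\Q$, hence of the form $\frac{1}{N}\Z$ and in particular discrete in $\R$; this makes the bounded monotone sequence $\Im Z(E_i)$ (and subsequently $\Re Z(E_i)$) stabilize without any appeal to the HN structure of $E$. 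Where you \emph{do} legitimately use the HN filtration is for boundedness of $\Re Z(E_i)$ from below, which is needed so that the monotone sequence $\Re Z(E_i)$ terminates; the paper sidesteps this by working with the maximal subobject $M_i \subset E_i$ with $\Im Z(M_i) = 0$ (which exists by the HN property), for which $Z(M_i) = \Re Z(M_i) \in (-\infty, 0]$ is automatically bounded. Both routes work once discreteness is supplied from the lattice; your route cannot supply it from the HN factors alone.
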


\begin{proof}
Assume otherwise, and consider a sequence of non-trivial surjections $E_1 \onto E_2 \onto \cdots$.
Then $\Im Z(E_i)$ is discrete monotone decreasing non-negative function; hence we may assume it to be constant.

Let $M_i \into E_i$ be the maximal subobject with $\Im Z(M_i) = 0$, which exists by the existence of HN filtrations.
If $C_i$ is the kernel of $E_i \onto E_{i+1}$, then $\Im Z(C_i) = 0$, and so $C_i \subset M_i$.
Similarly, it follows that the composition
$M_i \into E_i \onto E_{i+1}$ factors via a surjection $M_i \onto M_{i+1}$ with kernel $C_i$.
Hence we get a sequence of non-trivial surjections
$M_1 \onto M_2 \onto \cdots$.

Arguing again by discreteness of the central charge, we may assume that $Z(M_i)$ is constant. Then the kernels
$D_i$ of the composition $M_1 \onto M_{i}$ form a strictly increasing sequence of subobjects of $M_1$ in $\cA^0$.
Since $\cA^0$ is assumed to be a noetherian torsion subcategory, this is a contradiction to Remark~\ref{rem:alternatenoetheriantorsion}.
\end{proof}

Finally, we define the support property for weak pre-stability conditions exactly as in Definition~\ref{def:stability_condition_p}, and call $\sigma$ a \emph{weak stability condition} if it satisfies the support property with respect to some $v, \Lambda$ and $Q$.
\index{sigma@$\sigma = (Z, \cP)$!weak (pre-)stability condition}
\begin{Rem} \label{rem:vA0=0}
	If $\sigma$ is a weak stability condition, then $v(\cA^0) = 0$.
	Indeed, every $E \in \cA^0$ is automatically semistable, and thus $v(E)$ is a vector in $\Ker Z$ with $Q(v(E)) \geqslant 0$.
\end{Rem}

\begin{Rem} \label{rem:JHfiltrationweakstability}
If $\sigma = (Z, \cP)$ is a weak stability condition, and $E$ is semistable of phase $\phi \in \R \setminus \Z$, then $E$ admits a Jordan--H\"older filtration.
\end{Rem}

\begin{Lem} \label{lem:boundedslopenoetherian}
Assume that $\sigma = (\cA, Z)$ is a weak stability condition, such that $\cA^0$ is a noetherian torsion subcategory.
Let $\mu \in \R \cup \set{+\infty}$, and let $E_1 \into E_2 \into E_3 \into \dots \subset E$ be an increasing sequence of subobjects of a fixed object with $\mu(E_i) \geqslant \mu$ for all $i$.
Then this sequence terminates.
\end{Lem}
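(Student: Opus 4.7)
The plan is to mimic the argument of Lemma~\ref{lem:discreteimpliesNoetherian}, with the support property replacing the hypothesis that $Z$ is defined over $\Q[\ii]$. The ultimate target is to show that $E_{i+1}/E_i \in \cA^0$ for all sufficiently large $i$, at which point the chain $E_i/E_{i_0} \subset E/E_{i_0}$ lies in $\cA^0$ by extension-closure, and terminates by Remark~\ref{rem:alternatenoetheriantorsion}.

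First, since each $E_{i+1}/E_i$ lies in $\cA$, the sequence $\Im Z(E_i)$ is monotone non-decreasing and bounded above by $\Im Z(E)$. The slope bound $\mu(E_i) \ge \mu$ forces $\Re Z(E_i) \le -\mu \Im Z(E_i)$ (vacuously when $\Im Z(E_i) = 0$), and a matching lower bound is obtained by comparing $E_i \subset E$ to the HN structure of $E$; together, these place the $Z(E_i)$ in a bounded region of $\C$. Invoking the support property on the HN factors of each $E_i$ and applying Remark~\ref{rem:supportfinitelength}, one deduces that the classes $v(E_i)\in \Lambda$ take only finitely many values, so (after passing to a tail) $v(E_i)$ is constant, i.e.~$v(E_{i+1}/E_i) = 0$ for all $i$.

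The key final observation is that $v(E_{i+1}/E_i) = 0$ forces $E_{i+1}/E_i \in \cA^0$: if $A_1, \ldots, A_n$ are its HN factors in descending phase, then $0 = Z(E_{i+1}/E_i) = \sum_k Z(A_k)$; taking imaginary parts gives $\sum_k \Im Z(A_k) = 0$ with each summand non-negative, so every $A_k$ has phase $1$, forcing $n = 1$ (phases in an HN filtration are strictly decreasing) and $v(A_1) = 0$, whence $A_1 \in \cA^0$ by Remark~\ref{rem:vA0=0}. Thus for $i \ge i_0$ sufficiently large, $E_i/E_{i_0}$ is an iterated extension of objects of $\cA^0$ and hence lies in $\cA^0$, and the increasing chain $(E_i/E_{i_0})_{i \ge i_0}$ in $\cA^0$ terminates by Remark~\ref{rem:alternatenoetheriantorsion}. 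The principal obstacle is the finiteness argument for $\{v(E_i)\}$, where the interplay between the slope condition, the HN filtration of $E_i$ (whose lowest HN factor may have arbitrarily negative slope), and the support property must be carefully orchestrated; here the slope bound $\mu(E_i) \ge \mu$ is essential, as it prevents the central charges $Z(E_i)$ from drifting to infinity and thereby enables the application of Remark~\ref{rem:supportfinitelength}.
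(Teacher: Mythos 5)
Your proof is correct and follows essentially the same route as the paper's: bound the central charges of the HN factors of the $E_i$ using $\mu^+(E_i)\le\mu^+(E)$ together with $\mu(E_i)\ge\mu$, invoke Remark~\ref{rem:supportfinitelength} and the support property to deduce that $v(E_i)$ eventually stabilizes, and then reduce to an increasing chain in $\cA^0$ that terminates by Remark~\ref{rem:alternatenoetheriantorsion}.

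Two small remarks on your final step. First, Remark~\ref{rem:vA0=0} establishes $v(\cA^0)=0$, which is the \emph{converse} of the implication you invoke; citing it to justify ``$v(A_1)=0 \Rightarrow A_1\in\cA^0$'' is a misattribution. Second, the whole HN-factor digression is unnecessary: since $Z$ factors through $v$, the vanishing $v(E_{i+1}/E_i)=0$ immediately yields $Z(E_{i+1}/E_i)=0$, which places $E_{i+1}/E_i$ in $\cA^0$ directly by Definition~\ref{def:A0}. The paper takes exactly this shortcut, applied at once to $E_i/E_{i_0}$ rather than to the successive quotients.
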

\begin{proof}
We have $\mu^+(E_i) \leqslant \mu^+(E)$ and $\mu(E_i) \geqslant \mu$.
From this one can deduce that the central charges of all HN factors of all $E_i$ are contained in a compact region.
If $\mu^+(E)<+\infty$ (and thus $\mu<+\infty$), this region is the parallelogram with two horizontal edges and two edges corresponding to slope $\mu^+(E)$, and with opposite vertices given by $0$ and the complex number $z$ determined by $\Im z = \Im Z(E)$ and $\mu(z) = \mu$.
In the case $\mu^+(E)=+\infty$ let $F \subset E$ be the first step of the HN filtration; then we can replace $0$ by $Z(F) \in \R_{>0}$ and $\mu^+(E)$ by $\mu^+(E/F)$ in the previous construction of the parallelogram.

It follows by Remark~\ref{rem:supportfinitelength} that there are only finitely many classes  $a_1, \dots, a_m \in \Lambda$ that can occur as the classes of HN factors of $E_i$.
Since $Z(E_i)$ is contained in the same compact region described in the previous paragraph, and since every $Z(E_i)$ is an integral non-negative linear combination of the same finite set of complex numbers $Z(a_k) \in \H\sqcup \R_{<0}, k = 1, \dots, m$, this leaves only finitely  possibilities for $Z(E_i)$. Therefore, $Z(E_i)$ has to become constant for $i \geqslant i_0$.
But then $E_i/E_{i_0}$ is a subobject of $E/E_{i_0} \in \cA^0$ for all $i \geqslant i_0$, and thus the sequence terminates by the assumption that $\cA^0$ is noetherian.
\end{proof}

\subsection{Tilting weak stability conditions} \label{subsec:tiltingweakstability}
Due to the special role played by objects with central charge zero, there is in general no analogue for weak stability conditions of the $\wGL2$-action on stability conditions explained in Remark~\ref{rem:wGL2action}.
In this subsection, we explore conditions under which a weak stability condition $(\cA, Z)$ can nevertheless be tilted.

Given $\beta\in\R$, we can define a pair $(\cT^\beta,\cF^\beta)$ of subcategories of $\cA$ given by 
\index{T,Fbeta@$(\cT^\beta,\cF^\beta)$, torsion pair at $\beta\in \R$}
\begin{equation}\label{eqn:torsion pair}
\begin{split}
\cT^\beta:=\langle E\in\cA\text{ $\mu$-semistable with }\mu(E)>\beta\rangle = \stv{E}{\mu^-(E) > \beta},\\
\cF^\beta:=\langle E\in\cA\text{ $\mu$-semistable with } \mu(E)\leqslant \beta\rangle = \stv{E}{\mu^+(E) \le \beta}.
\end{split}
\end{equation}
Existence of HN filtrations combined with the weak see-saw property ensure that $(\cT^\beta, \cF^\beta)$ is a torsion pair (see Definition~\ref{def:TorsionPair}); we write $\cA^{\sharp\beta} = \langle \cF^\beta[1], \cT^\beta \rangle$ for the corresponding tilted heart.
\index{Asharpbeta@$\cA^{\sharp\beta} = \langle \cF^\beta[1], \cT^\beta \rangle$, tilted heart at $\beta$}
\index{Cohbeta(X)@$\Coh^\beta X_s$, tilted heart of $\Coh X_s$ at slope $\beta$}

By \cite[Lemma~2.16]{BLMS} and the proof of \cite[Proposition~2.15]{BLMS}, the following property guarantees that a weak stability condition can be tilted, see Proposition~\ref{prop:RotateWeakStability} below; we will sketch a proof for completeness.

\begin{Def} \label{def:tiltingproperty}
A weak stability condition $\sigma = (\cA, Z)$ has the \emph{tilting property} if
\index{sigma@$\sigma = (Z, \cP)$!weak (pre-)stability condition!having the tilting property}
\begin{enumerate}[{\rm (1)}] 
 \item $\cA^0 \subset \cA$ is a noetherian torsion subcategory, and
 \item \label{enum:tiltingpropertyExt1} for every $F \in \cA$ with $\mu^+(F) < +\infty$, there exists a short exact sequence $F \into \tF \onto F^0$ with $F^0 \in \cA^0$ and
 $\Hom(\cA^0, \tF[1]) = 0$.
\end{enumerate}
\end{Def}

\begin{Ex} \label{ex:slopestabilityhastiltingproperty}
Slope stability of sheaves as in Example~\ref{ex:slopestabilityasweakstability} has the tilting property, with $F \into \tilde F$ given by the embedding of a torsion free sheaf into its double dual.
\end{Ex}

\begin{Rem} \label{rem:tiltingpropertyextensions}
The tilting property implies that for $F$ as in part~\eqref{enum:tiltingpropertyExt1} of Definition~\ref{def:tiltingproperty}, any sequence of inclusions $F = F_0 \into F_1 \into F_2 \dots$
with $F_i/F \in \cA^0$ terminates: since $\Ext^1(F_i/F, \widetilde F) = 0$, the inclusion $F \into \tF$ factors via $F_i$, and thus $F_i/F$ is an increasing sequence of subobjects of $\tF/F \in \cA^0$.
\end{Rem}

\begin{Rem} \label{rem:tiltingpropertytilt}
We will see in the proof of Proposition~\ref{prop:RotateWeakStability} that part \eqref{enum:tiltingpropertyExt1} is equivalent to the condition that 
$\cA^0$ is a torsion subcategory of the heart obtained by tilting $\cA$ at the torsion pair $(\cT^{\beta}, \cF^{\beta})$, for every $\beta \in \R$.
\end{Rem}

\begin{Prop} \label{prop:RotateWeakStability}
Let $\sigma = (\cA, Z)$ be a weak stability condition with the tilting property.
Then $\sigma^\beta := \left(\cA^{\sharp\beta}, \frac{Z}{\ii-\beta}\right)$ is again a weak stability condition, and $(\cA^{\sharp\beta})^0 \subset \cA^{\sharp\beta}$ is a noetherian torsion subcategory.
\index{sigmabeta@$\sigma^\beta = (\cA^{\sharp\beta}, Z^{\sharp\beta})$,!tilted weak stability condition at $\beta\in \R$}
\end{Prop}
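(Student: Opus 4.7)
The plan is to verify in turn that $\sigma^\beta = (\cA^{\sharp\beta}, Z^{\sharp\beta})$ with $Z^{\sharp\beta} = Z/(\ii - \beta)$ defines a weak stability function on $\cA^{\sharp\beta}$, satisfies the HN property, and satisfies the support property, and then to show that $(\cA^{\sharp\beta})^0$ is a noetherian torsion subcategory of $\cA^{\sharp\beta}$. The weak stability function property follows from a direct computation: any $\sigma$-semistable $E \in \cA$ with finite slope $\mu$ satisfies $Z(E) = \Im Z(E) \cdot (\ii - \mu)$ with $\Im Z(E) > 0$, and multiplication by $(\ii - \beta)^{-1}$ yields a complex number whose imaginary part has the sign of $\mu - \beta$. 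Combined with the contributions from $\cA^0 \subset \cT^\beta$ (where $Z = 0$) and from semistables with $\mu = +\infty$ (where $Z \in \R_{\le 0}$), this shows that $Z^{\sharp\beta}$ lands in $\H \sqcup \R_{\le 0}$ on $\cT^\beta$, and symmetrically on $\cF^\beta[1]$ after the sign change from the shift.

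For the HN property, I would first check that every $E \in \cT^\beta$ admits an HN filtration with respect to $\sigma^\beta$, built directly from its HN filtration with respect to $\sigma$ (all factors remain in $\cT^\beta$, and rotation is monotone on their $\sigma$-phases), and analogously for every $E \in \cF^\beta[1]$. For a general $E \in \cA^{\sharp\beta}$, the canonical short exact sequence $\rH^{-1}_{\cA}(E)[1] \into E \onto \rH^0_{\cA}(E)$ in $\cA^{\sharp\beta}$ has outer factors in $\cF^\beta[1]$ and $\cT^\beta$ respectively; since every $\sigma^\beta$-phase appearing in the HN of $\rH^{-1}_{\cA}(E)[1]$ strictly exceeds every $\sigma^\beta$-phase appearing in that of $\rH^0_{\cA}(E)$, concatenation yields the desired HN filtration of $E$.

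For the support property, I would reuse the quadratic form $Q$ associated with $\sigma$. By Remark~\ref{rem:JHfiltrationweakstability}, any $\sigma^\beta$-semistable object of non-integer phase admits a JH filtration into $\sigma^\beta$-stable summands, each of which, on analyzing its HN in $\cA$, must be of the form $F$ or $F[1]$ for a $\sigma$-semistable $F \in \cA$, so $Q \ge 0$ on its class follows from the support property for $\sigma$. At integer phase, objects in $\cP^{\sharp\beta}(1)$ are extensions involving $\cA^0$ (on which $v$ vanishes by Remark~\ref{rem:vA0=0}) and shifts $F[1]$ of $\sigma$-semistables $F \in \cF^\beta$ with $\mu(F) = \beta$; since $v(E) = -v(F)$ in $\Lambda$, the bound $Q(v(E)) \ge 0$ again reduces to that for $\sigma$.

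Finally, to identify $(\cA^{\sharp\beta})^0$ with $\cA^0$, note that for $E \in \cA^{\sharp\beta}$ with $Z^{\sharp\beta}(E) = 0$, both $\rH^{-1}_{\cA}(E) \in \cF^\beta$ and $\rH^0_{\cA}(E) \in \cT^\beta$ must have vanishing central charge; but every $\sigma$-semistable in $\cF^\beta$ has strictly positive imaginary part, forcing $\rH^{-1}_{\cA}(E) = 0$ and $\rH^0_{\cA}(E) \in \cA^0$. For noetherianity inside $\cA^{\sharp\beta}$, I would invoke Remark~\ref{rem:alternatenoetheriantorsion}: given any chain $B_1 \subset B_2 \subset \dots \subset E$ in $\cA^{\sharp\beta}$ with each $B_i \in \cA^0$, the long exact $\cA$-cohomology sequence combined with $\cT^\beta \cap \cF^\beta = 0$ shows that each $B_i \to \rH^0_{\cA}(E)$ is an injection in $\cA$, producing an ascending chain of $\cA^0$-subobjects of $\rH^0_{\cA}(E) \in \cA$ which terminates by the assumed noetherianity of $\cA^0$ in $\cA$. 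The existence of the torsion pair $(\cA^0, (\cA^0)^\perp)$ in $\cA^{\sharp\beta}$ is precisely the content of Definition~\ref{def:tiltingproperty}.\eqref{enum:tiltingpropertyExt1}, as indicated in Remark~\ref{rem:tiltingpropertytilt}. The main obstacle will be the support property at integer phase, where the role of $\cA^0$ must be handled carefully; the tilting property is exactly the hypothesis needed to ensure that the rotation introduces no new degenerate classes beyond $\cA^0$ itself.
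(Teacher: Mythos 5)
Your verification that $Z^{\sharp\beta}$ is a weak stability function on $\cA^{\sharp\beta}$, your identification $(\cA^{\sharp\beta})^0 = \cA^0$, and your treatment of the support property are essentially sound (the last via a slightly different route than the paper's, using Jordan--H\"older filtrations at non-integer phase rather than the observation that classes of semistable objects are unchanged). However, both the HN property and the torsion-subcategory argument contain genuine gaps. For the HN property: the claim that every $\sigma^\beta$-phase in the HN filtration of $\rH^{-1}_{\cA}(E)[1]$ strictly exceeds every $\sigma^\beta$-phase in that of $\rH^0_{\cA}(E)$, so that concatenation yields the HN filtration of $E$, is false, since the ranges overlap. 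Taking $\beta = 0$: a nonzero $N \in \cA^0 \subset \cT^0$ has $\sigma^\beta$-phase exactly $1$, while $M[1]$ with $M \in \cF^0$ of very negative slope has $\sigma^\beta$-phase only slightly above $1/2$. Worse, a $\sigma$-semistable object in $\cT^\beta$ need not be $\sigma^\beta$-semistable: as Lemma~\ref{lem:RotateWeakStabilitySemistableObjects} makes explicit, one also needs $\Hom(\cA^0, -) = 0$. So building the HN filtration requires shuttling pieces of $\cA^0$ between $\rH^{-1}$ and $\rH^0$, which is exactly why the paper reduces the entire proposition to the single statement that $(\cA^{\sharp\beta})^0$ is a noetherian torsion subcategory; the resulting induction is spelled out in the proof of Proposition~\ref{prop:tiltingweakHNstructures}.

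The torsion-subcategory argument fails in two places. First, the claim that a chain $B_1 \subset B_2 \subset \cdots \subset E$ in $\cA^{\sharp\beta}$ with $B_i \in \cA^0$ induces injections $B_i \into \rH^0_{\cA}(E)$ is wrong: from the long exact cohomology sequence, the kernel of $B_i \to \rH^0_{\cA}(E)$ is the image of $\rH^{-1}_{\cA}(E/B_i) \to B_i$, and that image is a subobject of $B_i \in \cA^0$ but only a \emph{quotient} of $\rH^{-1}_{\cA}(E/B_i) \in \cF^\beta$; the disjointness $\cT^\beta \cap \cF^\beta = 0$ says nothing about quotients of $\cF^\beta$ (e.g.\ $\cO \onto \cO_p$ for slope stability on a surface). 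Second, invoking Remark~\ref{rem:tiltingpropertytilt} to supply the torsion pair $(\cA^0, (\cA^0)^\perp)$ in $\cA^{\sharp\beta}$ is circular: that remark expressly defers to the proof of the present proposition. The substance of the proof is to apply Definition~\ref{def:tiltingproperty}.\eqref{enum:tiltingpropertyExt1} directly: given $F \in \cA^{\sharp\beta}$ with $\rH^{-1}_{\cA}(F) = M$ and $\rH^0_{\cA}(F) = N$, one applies the tilting property to $M \in \cF^\beta$ to obtain a short exact sequence $M^0 \into M[1] \onto \tM[1]$ in $\cA^{\sharp\beta}$ with $M^0 \in \cA^0$ and $\Hom(\cA^0, \tM[1]) = 0$; after replacing $F$ by $F/M^0$, every $\cA^0$-subobject of $F$ injects into $N$, where the noetherian torsion property of $\cA^0 \subset \cA$ furnishes a maximal one.
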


We first notice that $Z^{\sharp\beta}:=\frac{Z}{\ii-\beta}$ is a weak stability function on $\cA^{\sharp\beta}$.
Semistable objects can then be easily classified: we omit the proof (see, for example, \cite[Lemma~2.19]{PT15:bridgeland_moduli_properties}).

\begin{Lem}\label{lem:RotateWeakStabilitySemistableObjects}
Let $E\in\cA^{\sharp\beta}$ with $E \notin (\cA^{\sharp\beta})^0$.
Then $E$ is $Z^{\sharp\beta}$-semistable if and only if
\begin{enumerate}[{\rm (1)}] 
 \item\label{enum:RotatingWeakStability1} either $\Im Z(E) \geqslant 0$ and $E\in\cA$ is a $Z$-semistable object with $\Hom(\cA^0, E) = 0$,
 \item\label{enum:RotatingWeakStability2} or $\Im Z(E) < 0$ and $E$ is an extension
 \[
 U[1] \to E \to V
 \]
 where $U\in\cA$ is a $Z$-semistable object and $V\in\cA_0$; moreover, if either $\Im Z^{\sharp\beta}(E)>0$ or $E$ is $Z^{\sharp\beta}$-stable, then $\Hom(V',E)=0$, for all $V'\in\cA^0$.
\end{enumerate}
\end{Lem}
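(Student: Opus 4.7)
The plan is to reduce the statement to a dictionary between $\cA$ and $\cA^{\sharp\beta}$ obtained from the canonical short exact sequence $0 \to U[1] \to E \to V \to 0$ in $\cA^{\sharp\beta}$, with $U := \rH^{-1}_{\cA}(E) \in \cF^{\beta}$ and $V := \rH^0_{\cA}(E) \in \cT^{\beta}$. The key preparatory step is a phase computation: writing $Z^{\sharp\beta} = Z/(\ii - \beta)$ out in terms of real and imaginary parts, one checks that for $V \in \cT^{\beta}$ nonzero, either $V \in \cA^0$ (so that $Z^{\sharp\beta}(V)=0$ and $\phi^{\beta}(V) = 1$ by convention) or $\phi^{\beta}(V) \in (0, 1)$; and for $U \in \cF^{\beta}$ nonzero, $\phi^{\beta}(U[1]) \in (0, 1]$, with equality if and only if $U$ is $Z$-semistable of slope exactly $\beta$. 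Combined with the long exact cohomology sequence
\[
0 \to U_A \to U \to U_B \to V_A \to V \to V_B \to 0
\]
in $\cA$ associated to any short exact sequence $A \into E \onto B$ in $\cA^{\sharp\beta}$ (where the subscripts denote $\rH^{-1}_{\cA}, \rH^0_{\cA}$ of $A$ and $B$), this provides a dictionary between sub-/quotients in $\cA^{\sharp\beta}$ and subquotient data in $\cA$.

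For the reverse direction $(\Leftarrow)$, in case \eqref{enum:RotatingWeakStability1} the hypothesis $E \in \cT^{\beta}$ forces $U = 0$, so any subobject $A \into E$ in $\cA^{\sharp\beta}$ satisfies $U_A = 0$, hence $A = V_A \in \cT^{\beta}$, and $V_A/U_B \into E$ is a subobject in $\cA$; the $Z$-semistability of $E$ then translates directly to $\phi^{\beta}(A) \leq \phi^{\beta}(E)$ via the phase computation of the first paragraph, while the hypothesis $\Hom(\cA^0, E) = 0$ rules out the only destabilizers at the boundary phase $1$, namely subobjects sitting in $\cA^0$. In case \eqref{enum:RotatingWeakStability2}, the hypothesis $V \in \cA^0$ combined with the closure of $\cA^0$ under subobjects in $\cA$ (which follows by splitting $Z$ on any short exact sequence in $\cA$ landing in the upper half plane, in the spirit of Remark~\ref{rem:vA0=0}) forces $V_A \in \cA^0$, so $\phi^{\beta}(V_A) = 1 \geq \phi^{\beta}(E)$ trivially; the remaining bound on $\phi^{\beta}(U_A[1])$ is supplied by the $Z$-semistability of $U$.

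For the forward direction $(\Rightarrow)$ I split cases according to the sign of $\Im Z(E)$. If $\Im Z(E) \ge 0$, I first show $U = 0$: otherwise $U[1]$ is a non-zero subobject of $E$ in $\cA^{\sharp\beta}$ and a direct computation using the phase description above yields $\phi^{\beta}(U[1]) > \phi^{\beta}(E)$, contradicting semistability. Hence $E = V \in \cT^{\beta} \subset \cA$; the $Z$-semistability of $V$ in $\cA$ is then pulled back from that of $E$ in $\cA^{\sharp\beta}$ via the dictionary, and any non-zero map $V' \to E$ with $V' \in \cA^0$ would contribute a subobject of phase $1 > \phi^{\beta}(E)$, hence must vanish. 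If $\Im Z(E) < 0$, the canonical quotient $E \onto V$ must satisfy $\phi^{\beta}(V) \geq \phi^{\beta}(E)$, and the phase dictionary shows this forces $V \in \cA^0$; the $Z$-semistability of $U$ is then obtained by pulling back potential destabilizers via the six-term sequence above. The ``moreover'' clause, asserting $\Hom(V', E) = 0$ when either $\Im Z^{\sharp\beta}(E) > 0$ or $E$ is $Z^{\sharp\beta}$-stable, follows from the same phase-$1$ characterization of $\cA^0 = (\cA^{\sharp\beta})^0$: in the first subcase $\phi^{\beta}(E) < 1$, so any non-zero image of such a morphism is a subobject of $E$ that strictly destabilizes it; in the second, any such image would be a proper subobject of $E$ in $(\cA^{\sharp\beta})^0$ with phase equal to that of $E$, contradicting stability once $E \notin (\cA^{\sharp\beta})^0$ is ruled out.

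The main technical obstacle will be the careful phase bookkeeping in the first paragraph, including verifying the monotonicity statements that turn the six-term sequence in $\cA$ into actual phase inequalities in $\cA^{\sharp\beta}$; once the dictionary and the phase computation are firmly in hand, the implications in both directions reduce to routine manipulations with subquotients.
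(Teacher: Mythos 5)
The paper does not give its own proof of this lemma (it refers to \cite[Lemma~2.19]{PT15:bridgeland_moduli_properties}), so your argument must stand on its own. The overall plan—use the six-term $\cA$-cohomology sequence of a short exact sequence in $\cA^{\sharp\beta}$, and track phases under $z \mapsto z/(\ii-\beta)$—is reasonable and is the standard approach for results of this type.

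That said, there are genuine gaps. \textbf{(a)} Your phase bounds in the first paragraph are too weak to drive the later inequalities. Writing $\phi_\beta := \tfrac1\pi\arg(\ii-\beta) \in (0,1)$, what is true and what the argument needs is the \emph{separation into disjoint intervals}: $\phi^\beta(V) \in (0,\,1-\phi_\beta]$ for $V \in \cT^\beta \setminus \cA^0$, and $\phi^\beta(U[1]) \in (1-\phi_\beta,\,1]$ for $0\neq U\in\cF^\beta$; also $\Im Z(E)<0$ forces $\phi^\beta(E)>1-\phi_\beta$. The ranges you state, $(0,1)$ and $(0,1]$, overlap and in fact the second carries no information; in particular in ``$(\Leftarrow)$, case (2)'' you cannot conclude $\phi^\beta(V_A) < \phi^\beta(E)$ from them, nor in ``$(\Rightarrow)$'' can you see that a nonzero $U[1]\into E$ destabilizes when $\Im Z(E)\ge 0$. \textbf{(b)} In ``$(\Leftarrow)$, case (2)'' the assertion ``$V_A\in\cA^0$'' is unjustified: $V_A = \rH^0_\cA(A)$ is \emph{not} a subobject of $V = \rH^0_\cA(E)$, precisely because of the connecting map $U_B\to V_A$ in your six-term sequence—$\rH^0_\cA$ does not take monomorphisms of $\cA^{\sharp\beta}$ to monomorphisms of $\cA$. (The claim is also unnecessary once (a) is in place: $V_A\in\cT^\beta$ either lies in $\cA^0$ and contributes $0$ to $Z^{\sharp\beta}(A)$, or has $\phi^\beta(V_A)\le 1-\phi_\beta < \phi^\beta(E)$.) \textbf{(c)} In ``$(\Leftarrow)$, case (2)'' you never treat subobjects $A\into E$ with $A\in\cA^0$. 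When $\Im Z^{\sharp\beta}(E)>0$ such an $A$ has $\phi^\beta(A)=1 > \phi^\beta(E)=\phi^\beta(E/A)$ and destabilizes $E$, so the condition $\Hom(\cA^0,E)=0$ from the ``moreover'' clause must be invoked as a hypothesis here, exactly as you do in case (1). As written your case (2) would wrongly certify, say, $U[1]\oplus V'$ with $U$ semistable of slope $<\beta$ and $0\neq V'\in\cA^0$ as $Z^{\sharp\beta}$-semistable.
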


\begin{proof}[Proof of Proposition~\ref{prop:RotateWeakStability}.]
We only need to check that $(\cA^{\sharp\beta})^0$ is a noetherian torsion subcategory:
then, using Lemma~\ref{lem:RotateWeakStabilitySemistableObjects}, one can use the HN filtration of $\rH^0_{\cA}(E)$ and modify the HN filtration of $\rH^{-1}_{\cA}(E)$ via subobjects and quotients in $(\cA^{\sharp\beta})^0$ to construct the HN filtration of $E \in \cA^{\sharp\beta}$. Moreover,
the Lemma, combined with Remark~\ref{rem:vA0=0}, also shows that the set of classes of semistable objects in $\Lambda$ is unchanged, so that $\sigma^{\beta}$ satisfies the support property.
(We will spell out a similar argument in more detail in the proof of Proposition~\ref{prop:tiltingweakHNstructures}.)

By construction, $(\cA^{\sharp\beta})^0=\cA^0$; in particular, it is noetherian by assumption.
We need to show it is a torsion subcategory.
Let $F\in\cA^{\sharp\beta}$.
We can write it as an extension
\[
0 \to M[1]\to F\to N \to 0 
\]
in $\cA^{\sharp\beta}$ with $M\in\cF^\beta$ and $N\in\cT^\beta$.
By assumption, there exists a short exact sequence in $\cA^{\sharp\beta}$
\[
0\to M^0 \to M[1] \to \tM[1] \to 0
\]
with $M^0\in\cA^0$ and $\Hom((\cA^{\sharp\beta})^0,\tM[1])=0$.

By replacing $F$ with $F/M^0$, we can assume $M=\tM$.
Hence, any injective morphisms $F^0\into F$ in $\cA^{\sharp\beta}$, with $F^0\in(\cA^{\sharp\beta})^0$, induces an injective morphism $F^0\into N$.
Since $\cA^0\subset \cA$ is a noetherian torsion subcategory, there is a maximal such subobject $F^0\into F$, which proves what we wanted.
\end{proof}

\begin{Rem}
When $Z$ is defined over $\Q[\ii]$ and $\beta$ is also rational, then Lemma~\ref{lem:discreteimpliesNoetherian}
implies additionally that $\cA^\beta$ is noetherian.
\end{Rem}

\subsection{Base change for weak stability conditions}
\label{subsec:bcweakstability}
We now explain how to extend our base change result, Theorem~\ref{thm:base-change-stability-condition}, to the case of weak stability conditions.
Throughout this subsection we assume that $\cD \subset \Db(X)$ is a strong semiorthogonal component of the derived category of a variety defined over a field $k$, and let $\sigma = (\cA, Z)$ be a weak stability condition such that $\cA^0 \subset \cA$ is a noetherian torsion subcategory, and such that $Z$ is defined over $\Q[\ii]$.

We first recall the construction of the slicing from the proof of Theorem~\ref{thm:base-change-stability-condition}.
\begin{Def} \label{def:slicingbasechangeweak}
Consider a field extension $k \subset \ell$.
Let $\cA^\phi := \cP(\phi-1, \phi]$, and write $\cA_\ell^\phi$ for the heart in $\cD_\ell$ obtained from $\cA^\phi$ via base change as in Proposition~\ref{Prop-D-bc-fields}.
(Since $\cA$ is noetherian by Lemma~\ref{lem:discreteimpliesNoetherian}, $\cA^\phi$ is tilted-noetherian.)
We define
\index{Pl@$\cP_\ell^{}(\phi)$, field extension $k\subset \ell$,!base changed slicing}
\begin{equation} \label{Pellequation-weak}
\cP_\ell^{}(\phi) := \bigcap_{\phi'-1 < \phi \leqslant \phi'} \cA^{\phi'}_\ell.
\end{equation}
\end{Def}
\begin{Prop} \label{prop:basechangeweakstabilityviaopenness}
Let $\cD \subset \Db(X)$ be a strong semiorthogonal component of the derived category of a variety defined over a field $k$, and let $\ell/k$ be a field extension.
Let $\sigma = (\cA, Z)$ be a weak numerical stability condition on $\cD$ and assume the following:
\begin{enumerate}[{\rm (1)}] 
 \item 
 $\cA^0 \subset \cA$ is a noetherian torsion subcategory, and $Z$ is defined over $\Q[\ii]$.
 \label{enum:basechangeweakassumption1}
 \item \label{enum:semistablegenericopen} Either $\ell/k$ is algebraic, or
 if $C$ is an affine Dedekind scheme essentially of finite type over $k$,
 and if $E \in \cD_C$ satisfies $E_{K(C)} \in \cP_{K(C)}(\phi)$, then there exists an open subset $U \subset C$ such that $E_c \in \cP_c(\phi)$ for all $c \in U$.
\end{enumerate}
Then $\sigma_\ell = (\cA_\ell, Z_\ell)$ defines a weak stability condition on $\cD_\ell$, with slicing given by $\cP_\ell$ as in Definition~\ref{def:slicingbasechangeweak}.
The pullback 
$\cD_k \to \cD_\ell$ preserves the properties of being semistable, or geometrically stable, respectively.
\index{sigmal@$\sigma_\ell = (\cA_\ell,Z_\ell)$, field extension $k\subset \ell$,!base changed weak stability condition}
\end{Prop}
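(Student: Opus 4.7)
The plan is to adapt the strategy of the proof of Theorem~\ref{thm:base-change-stability-condition} to the weak setting. Hypothesis~\eqref{enum:basechangeweakassumption1} already makes $\cA$ noetherian via Lemma~\ref{lem:discreteimpliesNoetherian}, so the deformation step (Step~\ref{step:mainbasechange} of that proof) is not needed; the missing $\wGL2$-action is compensated for by hypothesis~\eqref{enum:semistablegenericopen} in the transcendental case. The new subtleties specific to the weak setting concern objects of central charge zero, which can sit in any integer-indexed slice $\cP_\ell(n)$ rather than contributing to the ``principal'' ray; however, since $\cA^0$ is assumed to be a noetherian torsion subcategory and $v$ vanishes on it by Remark~\ref{rem:vA0=0}, such objects do not affect the classes arising in HN decompositions nor the support property.

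For a finite extension $\ell/k$, t-exactness of the pushforward $f_*$ gives $f_*(\cA_\ell^\phi) \subset \cA^\phi$ and therefore $f_*(\cP_\ell(\phi)) \subset \cP(\phi)$; combined with $Z_\ell = \dim_k(\ell) \cdot Z \circ f_*$ this yields compatibility with the slicing $\cP_\ell$, while existence and finiteness of HN filtrations in $\cA_\ell$ follow from the corresponding properties in $\cA$ by conservativity of $f_*$, exactly as in Step~\ref{step:finte}. For $\ell = k(x)$, the heart $\cA_R$ (with $R = k[x]$) is noetherian by Theorem~\ref{Thm-D-bc}.\eqref{D-bc-noetherian} and thus admits an $R$-torsion theory by Remark~\ref{rem:noetherianandtorsiontheory}, so any $E \in \cA_\ell$ lifts to an $R$-flat $\widetilde E \in \cA_R$ with $\widetilde E_p \in \cA_p$ and $[\widetilde E_p] = [E] \in \Lambda_\ell$ for every closed point $p$; compatibility of $Z_\ell$ and finiteness of HN filtrations then follow by the discreteness argument of Step~\ref{step:k(x)andrational}, and the analogue of Theorem~\ref{thm:base-change-stability-condition}.\eqref{enum:semistableclasses} by specialization and pushforward along the finite extension $k \subset \kappa(p)$. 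Finitely generated extensions reduce to these two cases by induction on transcendence degree, and any algebraic $\ell/k$ reduces to the finite case via Proposition~\ref{Prop-D-bc-fields}.\eqref{Prop-D-bc-fields-Efg}.

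For a transcendental $\ell$, every $E \in \cA_\ell$ descends to a finitely generated subextension $k' \subset \ell$ by Proposition~\ref{Prop-D-bc-fields}.\eqref{Prop-D-bc-fields-Efg}, and its HN filtration in $\cA_{k'}$ pulls back to one in $\cA_\ell$; the slicing $\cP_\ell$ defined by~\eqref{Pellequation-weak} coincides with this pullback by t-exactness of base change (Theorem~\ref{thm-Dqc-bc}.\eqref{DT-f-flat}). The support property is where hypothesis~\eqref{enum:semistablegenericopen} plays its role: to check that any $\sigma_\ell$-semistable $E \in \cA_\ell$ of class $\vv$ gives rise to a $\sigma$-semistable object of class $n\vv$ in $\cD$, descend $E$ to an $R$-flat $\widetilde E \in \cA_R$ for a finitely generated $k$-subdomain $R \subset \ell$ with $K(R) = k'$, apply~\eqref{enum:semistablegenericopen} to find a closed point $c \in \Spec R$ where $\widetilde E_c$ is $\sigma_c$-semistable, and then push forward along the finite extension $k \subset \kappa(c)$. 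Preservation of semistability under pullback $\cD \to \cD_\ell$ is immediate from the construction of $\cP_\ell$; preservation of geometric stability proceeds as in Step~\ref{step:geomstablepreserved}, with~\eqref{enum:semistablegenericopen} used once more to propagate a hypothetical non-trivial Jordan--H\"older decomposition over $\bar\ell$ to a closed point of a Dedekind curve over $\bar k$. The main obstacle will be verifying that openness is applied compatibly with the weak setting: objects of central charge zero may a priori destabilize in families, so we must use that $\cA^0 \subset \cA$ being a noetherian torsion subcategory forces analogous behavior for the corresponding subcategory of $\cA_R$, which in turn relies on the inheritance of rationality of $Z$ and noetherianity across base change.
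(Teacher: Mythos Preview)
Your outline follows the same route as the paper's proof, and most of the reductions (finite case, Step~\ref{step:rational} carrying over, descent to finitely generated subfields, preservation of geometric stability via~\eqref{enum:semistablegenericopen}) match. There is, however, one point where your attribution is off and this matters in the weak setting.

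You write that for $\ell = k(x)$, ``compatibility of $Z_\ell$ and finiteness of HN filtrations then follow by the discreteness argument of Step~\ref{step:k(x)andrational}''. In the genuine stability case this is fine: compatibility of $Z_{k(x)}$ with $\cP_{k(x)}$ is obtained by running the torsion-free-lift-and-restrict argument in each rotated heart $\cA^{\phi'}$. In the weak case that argument does not go through cleanly, because the rotated hearts $\cA^{\phi'}$ need not be noetherian (we only know $\cA$ is, via Lemma~\ref{lem:discreteimpliesNoetherian}), so torsion-free lifts in $\cA^{\phi'}_R$ are not a priori available. The paper instead uses hypothesis~\eqref{enum:semistablegenericopen} \emph{already here}, not only for the support property: given $E \in \cP_{k(x)}(\phi)$, lift to a torsion-free $E_R \in \cA_R$ and apply~\eqref{enum:semistablegenericopen} directly to conclude $E_c \in \cP_c(\phi)$ for $c$ in an open subset of $\A^1_k$, whence $Z_{k(x)}(E) = Z_c(E_c)$ lies on the correct ray. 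The paper also records a converse at $\phi = 1$: if $E \in \cA_{k(x)}$ has $\Im Z_{k(x)}(E) = 0$, then every fiber $E_c$ lies in $\cP_c(1) = \bigcap_{\phi'} \cA^{\phi'}_c$ by Step~\ref{step:finte}, and Lemma~\ref{lem:flatinheart} applied to each $\cA^{\phi'}_R$ forces $E_R \in \bigcap_{\phi'} \cA^{\phi'}_R$, hence $E \in \cP_{k(x)}(1)$. This characterization of $\cP_{k(x)}(1)$ is what makes the discreteness argument for HN filtrations go through unchanged.

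So the correction is modest but real: hypothesis~\eqref{enum:semistablegenericopen} is the substitute for the missing $\wGL2$-action already at the level of compatibility with the slicing, not just for the support property. Once you make this adjustment, your outline coincides with the paper's.
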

We will later consider a variant of the openness condition in \eqref{enum:semistablegenericopen}, see Definition~\ref{def:GenericOpennessSemiStab}.

\begin{proof}
We follow the proof of Theorem~\ref{thm:base-change-stability-condition}. 
Note that by Lemma~\ref{lem:discreteimpliesNoetherian}, assumption \eqref{enum:basechangeweakassumption1} is equivalent to the condition that $\cA$ is noetherian and $Z$ is defined over $\Q[\ii]$, and thus by Proposition~\ref{Prop-D-bc-fields}\eqref{Prop-D-bc-fields-fg}, it is preserved by base change along finitely generated field extensions. 
Also by our definition of essentially of finite type for affine schemes in Definition \ref{def:EssLocFT}, assumption \eqref{enum:semistablegenericopen} is automatically preserved under finitely generated field extensions. 

Step~\ref{step:finte} carries over without any change.
Now consider Step~\ref{step:k(x)andrational}, the case of $\ell = k(x)$, and again set $R = k[x]$.
Since $\cA$ and thus $\cA_R$ is noetherian, any object $E_{k(x)} \in \cA_{k(x)}$ lifts to an $R$-torsion free object $E_R \in \cA_R$, see Remark~\ref{rem:noetherianandtorsiontheory}.
By Lemma~\ref{lem:FlatIffTFreeCurve}, we have $E_c \in \cA_c$ for all closed points $c \in \A^1_k$; since $Z_{k(x)}(E_{k(x)}) = Z_c(E_c)$ this proves the compatibility of $Z_{k(x)}$ with $\cA_{k(x)}$.

Our next observation is that if $E \in \cP_{k(x)}(1)$, then the same argument combined with condition \eqref{enum:semistablegenericopen} shows $Z_{k(x)}(E) = 0$.
Conversely, if $E_R$ is a torsion free lift of an object $E \in \cA_{k(x)}$ with $\Im Z_{k(x)}(E) = 0$, then $E_c \in \cP_c(1)$ for all closed points $c \in \A^1_k$ by Step~\ref{step:finte}, i.e., $E_c \in \bigcap_{0 \leqslant \phi < 1} \cA_c^\phi$.
By Lemma~\ref{lem:flatinheart}, this implies
$E_R \in \bigcap_{0 \leqslant \phi < 1} \cA_R^\phi $; since $\cD_R \to \cD_{k(x)}$ is t-exact by Theorem~\ref{Thm-D-bc}.\eqref{DbT-f-flat}, this implies 
$E \in \bigcap_{0 \leqslant \phi < 1}\cA_{k(x)}^\phi = \cP_{k(x)}(1)$.

The existence of the HN filtration now follows exactly as in the proof of Theorem~\ref{thm:base-change-stability-condition}, and the compatibility of $Z_{k(x)}$ with $\cP_{k(x)}$ follows again from assumption \eqref{enum:semistablegenericopen}; this concludes Step~\ref{step:k(x)andrational}.

Step~\ref{step:rational} carries over without change.
Finally, Step~\ref{step:geomstablepreserved} is trivial when $\ell/k$ is algebraic, and otherwise becomes easier under our assumption \eqref{enum:semistablegenericopen}: given a Dedekind domain $C$, a non-trivial Jordan--H\"older filtration over $K(C)$ induces a non-trivial filtration in $\cP_c(\phi)$ for the open subset where every Jordan--H\"older factor restricts to a semistable object in $\cP_c(\phi)$.
\end{proof}

\section{Weak Harder--Narasimhan structures over a curve}
\label{sec:defnweakHNstructure}

In this section, we introduce the notion of a \emph{weak Harder--Narasimhan structure over a curve}.
From this section until the end of Part~\ref{part:HNStrCurve}, we work in Setup~\ref{setup-HN}.

\subsection{Definitions}\label{subsec:mainWeakDefn}
The following is the weak version of Definition~\ref{def:HNstructure_C}.
\begin{Def} \label{def:wHNstructure_C}
A \emph{weak Harder--Narasimhan structure on $\cD$ over $C$} consists of a triple $\sigma_C = (\ZK,\Zc,\cP)$, where
\index{sigmaC@$\sigma_C = (\ZK,\Zc,\cP)$, $C$ a Dedekind scheme!weak HN structure on $\cD$ over $C$!}
\begin{itemize}
\item $\cP$ is a slicing of $\cD$, and
\item $(\ZK,\Zc)$ is a family of central charges over $C$
\end{itemize}
satisfying the following properties:
\begin{description}
\item[$C$-linearity] The slicing $\cP$ is local over $C$.
\item[Compatibility] For all $\phi \in \R$ and all 
$0 \neq E \in \cP(\phi)$, we have either
\begin{eqnarray*}
E_K \neq 0 & \text{and} & 
Z_K(E_K) \in \begin{cases}
\R_{>0} \cdot e^{\ii \pi \phi} & \text{if $\phi \notin \Z$} \\
\R_{\geqslant 0} \cdot e^{\ii \pi \phi} & \text{if $\phi \in \Z$}
\end{cases}, \quad \text{or} \\
E \in \cD_{\Ctor} & \text{and} &\Zc(E) \in 
\begin{cases}
\R_{>0} \cdot e^{\ii \pi \phi} & \text{if $\phi \notin \Z$} \\
\R_{\geqslant 0} \cdot e^{\ii \pi \phi} & \text{if $\phi \in \Z$}.
\end{cases}
\end{eqnarray*}
\end{description}
\end{Def}

We have to keep in mind that for $E \in \cA_C$, we no longer have $E_K = 0$ if and only if $Z_K(E_K) = 0$.
There is also the following notion of a weak stability function on a local heart; 
we will see in Proposition~\ref{prop:stabviaheartweak} that weak HN structures can be constructed via a local heart with an appropriate weak stability function.

\begin{Def}\label{def:weakstabfunction_C}
Let $\cA_C \subset \cD$ be the heart of a bounded $C$-local t-structure.
A \emph{weak stability function for $\cA_C$ over $C$} is a pair of central charges $(\ZK,\Zc)$ on $\cD$ over $C$ such that $Z_K$ is a weak stability function on $\cA_K$, and $\Zc$ is a weak stability function on $\cA_{\Ctor}$.
\end{Def}

\begin{Ex} \label{ex:slopestability}
Assume that $g \colon \cX \to C$ has relative dimension $n$, and let $\cO_\cX(1)$ be a relative polarization.
For $E \in \cA_{\Ctor}$, let $p_n, p_{n-1}$ be the two leading coefficients of the Hilbert polynomial of $E$, defined as in Example~\ref{ex:slopestabilitycurves}.
Then
\[\Zc(E) := \mathfrak{i} p_{n}(E) - p_{n-1}(E)
\]
defines a weak stability function for $\Coh \cX$ over $C$.
\end{Ex}

\begin{Def} \label{def:ZCweak}
Given a weak stability function for $\cA_C$ over $C$, we define
\index{ZC@$Z_C$, central charge for objects in $\cA_C$}
\[
Z_C(E) := \begin{cases}
Z_K(E_K) & \text{if $E_K \neq 0$} \\
\Zc(E) & \text{otherwise.}
\end{cases}
\]
Then the slope $\mu_C(E) \in \R \cup \{+\infty\}$ for $E \neq 0$ is defined as before in Definition~\ref{def:ZC}.
\index{muC@$\mu_C$, slope for objects in $\cA_C$}
\end{Def}

This slope function satisfies a weak version of the see-saw property:
\begin{Lem} \label{lem:seesawweak}
Given a non-trivial short exact sequence $A \into E \onto B$ in $\cA_C$, we have
\begin{equation}
\mu_C(A) \leqslant \mu_C(E) \leqslant \mu_C(B)\ \text{or}\ \mu_C(A) \geqslant \mu_C(E) \geqslant \mu_C(B).
\end{equation}
If $E \in \cA_{\Ctor}$ and $Z_C(A) \neq 0 \neq Z_C(B)$, we moreover have
$\mu_C(A) < \mu_C(B) \Leftrightarrow \mu_C(E) < \mu_C(B)$.
\end{Lem}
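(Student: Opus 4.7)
The plan is to reduce everything to the standard (weak) see-saw property for the two weak stability functions in play, namely $Z_K$ on $\cA_K$ and $\Zc$ on $\cA_{\Ctor}$. The first step is to pull back the short exact sequence to the generic fiber, producing an exact sequence $A_K \into E_K \onto B_K$ in $\cA_K$ (base change is t-exact by Corollary~\ref{cor:base-change-tstructure-point}.\eqref{enum:base-change-generic}), and to recall that $\cA_{\Ctor} \subset \cA_C$ is closed under subobjects and quotients by Lemma~\ref{Lem-AS-subs}.\eqref{Astor-subs}.

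Next I would split into cases according to which of $A_K, E_K, B_K$ vanish. If $E_K = 0$, then $A, E, B$ all lie in $\cA_{\Ctor}$ and $Z_C$ agrees with $\Zc$ on each, so the statement is immediate from the standard weak see-saw for the weak stability function $\Zc$. If $E_K \neq 0$ and both $A_K, B_K$ are non-zero, then $Z_C$ equals $Z_K \circ (\blank)_K$ on all three terms, and the standard weak see-saw for $Z_K$ on $\cA_K$ applies. In the mixed subcase where $E_K \neq 0$ and exactly one of $A_K, B_K$ is zero (say $A_K = 0$), the object $A$ lies in $\cA_{\Ctor}$ so $Z_C(A) = \Zc(A)$, while $Z_C(E) = Z_K(E_K) = Z_K(B_K) = Z_C(B)$; thus $\mu_C(E) = \mu_C(B)$ and the required chain of inequalities reduces to a trivial comparison of $\mu_C(A)$ with this common value, which holds in one direction or the other.

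For the strict second statement, the assumption $E \in \cA_{\Ctor}$ forces $A, B \in \cA_{\Ctor}$ by Lemma~\ref{Lem-AS-subs}.\eqref{Astor-subs}, so $Z_C$ coincides with $\Zc$ on all three objects. The hypothesis $Z_C(A) \neq 0 \neq Z_C(B)$ then places us in the setting of the classical strict see-saw for a weak stability function on an abelian category, and the desired equivalence follows from $\Zc(E) = \Zc(A) + \Zc(B)$ by a direct computation in the upper half plane. The only mildly delicate point is the mixed subcase in the first part, where one has to notice that the case distinction in the definition of $Z_C$ conspires to make $\mu_C(E) = \mu_C(B)$; beyond that, the argument is a routine unwinding of definitions.
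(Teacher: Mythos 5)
Your proof is correct and takes essentially the same approach as the paper's: split according to whether $E_K = 0$ or $E_K \neq 0$ (using t-exactness of pullback to $\Spec K$), and reduce to the standard weak see-saw on $(\cA_K, Z_K)$ or $(\cA_{\Ctor}, \Zc)$. You are somewhat more careful than the paper's rather terse proof, which does not explicitly spell out the mixed subcase where $E_K \neq 0$ but exactly one of $A_K, B_K$ vanishes (where, as you note, the chain is trivially satisfied because $\mu_C(E) = \mu_C(B)$ or $\mu_C(E) = \mu_C(A)$), nor does it comment on the second, strict statement, which you correctly reduce to the standard strict see-saw for $\Zc$ on $\cA_{\Ctor}$.
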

\begin{proof}
By Corollary~\ref{cor:base-change-tstructure-point}.\eqref{enum:base-change-generic}, pullback to $\Spec(K)$ is t-exact.
Hence, if $E_K\neq0$, the weak seesaw property follows from the corresponding property on $(\cA_K,Z_K)$.
Similarly, if $E_K=0$, then $E\in \cA_\Ctor$, and 
the weak seesaw property follows from the corresponding property on $(\cA_\Ctor,\Zc)$.
\end{proof}

The weak see-saw property still allows us to define $Z_C$-semistability for objects in $\cA_C$ exactly as in Definition~\ref{def:semistable_C}.
HN filtrations and the HN property for weak stability functions for $\cA_C$ are defined as in Definition~\ref{def:satisfies_HN_C}.
As in Lemma~\ref{lem:inducedfiberprestability}, $\Zc$ induces a weak stability function on $\cA_p$ for every $p \in C$, compatible with semistability and HN filtrations, and thus:
\begin{Lem} \label{lem:inducedfiberprestabilityweak}
A weak HN structure $\sigma_C $ on $\cD$ over $C$ induces a weak pre-stability condition $\sigma_p$ on $\cD_p$ for every closed point $p \in C$. The pushforward $i_{p*}$ preserves semistability, phases and HN filtrations. 
\index{sigmac@$\sigma_c = (\cA_c, Z_c)$, $c\in C$ Dedekind scheme,!when $c=p$ is a closed point}
\end{Lem}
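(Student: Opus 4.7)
The plan is to define $\sigma_p = (\cA_p, Z_p)$ through its heart and weak stability function, and then invoke the characterization of weak pre-stability conditions in the absolute case that was recalled in Section~\ref{sec:WeakStabCond}. Take $\cA_p \subset \cD_p$ to be the heart of the bounded t-structure induced from the $C$-local heart $\cA_C$ by base change via Theorem~\ref{thm-t-structure-finite-map}, so that $i_{p*} \colon \cD_p \to \cD$ is both fully faithful (Lemma~\ref{lem:cohpushpull}) and t-exact (Theorem~\ref{Thm-D-bc}.\eqref{DbT-f-finite-pushforward}). Set
\[
Z_p(F) \;:=\; \Zc(i_{p*} F), \qquad F \in K(\cD_p).
\]
Since $i_{p*}$ sends nonzero objects of $\cA_p$ to nonzero objects of $\cA_{\Ctor}$, the value $Z_p(F) = \Zc(i_{p*} F)$ lies in $\H \sqcup \R_{\le 0}$ by the weak stability function axiom for $\Zc$, so $Z_p$ is a weak stability function on $\cA_p$.

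Next I would verify that $i_{p*}$ preserves and reflects semistability and phases. The essential input is Corollary~\ref{cor:schematicsupportsubsquotients}: the essential image of $i_{p*} \colon \cA_p \to \cA_C$, which consists of objects schematically supported over $p$, is closed under subobjects and quotients in $\cA_C$. Consequently every short exact sequence $A \into i_{p*} F \onto B$ in $\cA_C$ arises uniquely, by applying $i_{p*}$, from a short exact sequence $A' \into F \onto B'$ in $\cA_p$; and one has $Z_C(A) = Z_p(A')$ and $Z_C(B) = Z_p(B')$. Hence the slope inequality defining $Z_C$-semistability of $i_{p*} F$ is equivalent to the slope inequality defining $Z_p$-semistability of $F$, and the two phase functions agree through $i_{p*}$. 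The HN property for $(\cA_p, Z_p)$ then follows by descent from $\cA_C$: the HN filtration of $i_{p*} F$ in $\cA_C$ consists of subobjects of $i_{p*}F$, which therefore all lie in the essential image of $i_{p*}$, and by the previous step its factors pull back to $Z_p$-semistable factors in $\cA_p$ with strictly decreasing phases. Applying the absolute equivalence between weak pre-stability conditions and hearts equipped with a weak stability function satisfying the HN property produces $\sigma_p$, with slicing $\cP_p(\phi) = i_{p*}^{-1}\cP(\phi)$.

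Finally, the preservation under $i_{p*}$ of semistability, phases, and HN filtrations is then built into the construction: by definition $i_{p*}$ carries $\cP_p(\phi)$ into $\cP(\phi)$, it identifies $Z_p$-slopes with $Z_C$-slopes by the computation above, and it sends the HN filtration of $F \in \cD_p$ to the HN filtration of $i_{p*}F$ because the latter is itself schematically supported over $p$. The only genuine subtlety in the whole argument is the closure of the essential image of $i_{p*}$ under subobjects in $\cA_C$, which is already supplied by Corollary~\ref{cor:schematicsupportsubsquotients}; with this in hand, everything else is essentially formal.
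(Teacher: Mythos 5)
Your proposal is correct and follows essentially the same route as the paper: define $\cA_p$ by base change, set $Z_p = \Zc \circ i_{p*}$, use the full faithfulness and exactness of $i_{p*}$ together with the closure of its essential image under subobjects and quotients (Corollary~\ref{cor:schematicsupportsubsquotients}) to transfer semistability and the HN property, and then apply the absolute characterization of weak pre-stability conditions. The paper packages exactly these observations in the paragraph preceding the lemma, referring back to the analogous discussion for (non-weak) HN structures.
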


For a stability function on $\cA_C$ over $C$, $Z_C$-semistability of a $C$-torsion free 
object amounts to semistability of $E_p$ on \emph{all} fibers $p$, see Lemma~\ref{lem:allfibersstable}.
In the weak case, however, we can only detect the existence of destabilizing quotients $E_p \onto Q$ with $\mu(E_p) > \mu(Q)$; non-existence of such a quotient is not equivalent to $E_p$ being $Z_p$-semistable, in case $E_p$ has a subobject in $\cA_p^0$.

\begin{Lem} \label{lem:allfibersnotdestquot}
Let $E \in \cA_C$ be a $C$-torsion free object.
Then $E$ is $Z_C$-semistable if and only if $E_K$ is $\ZK$-semistable and $i_{W*}E_W \in \cA_{\Ctor}$ does not have a destabilizing quotient for all zero-dimensional subschemes $W \subset C$ or, equivalently, if and only if $E_K$ is $\ZK$-semistable and 
$E_p \in \cA_p$ does not have a destabilizing quotient for all closed points $p\in C$.
\end{Lem}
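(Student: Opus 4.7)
The plan is to handle the forward direction as an immediate consequence of the defining identities plus a lifting result, and the reverse direction via a case analysis grounded in the weak see-saw property (Lemma~\ref{lem:seesawweak}); the equivalence of the two fiberwise reformulations will then follow from an induction on length.

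For the forward direction, assume $E$ is $Z_C$-semistable. I would first rule out destabilizing subobjects of $E_K$: any such $A_K \into E_K$ lifts to an injection $\tilde A \into E$ in $\cA_C$ by Lemma~\ref{lem-extend-from-localisation}.\eqref{enum:lem-extend-morphism-in-heart} applied to the localization $\Spec K \to C$, and since $\tilde A_K = A_K \neq 0$ the identity $\mu_C(\tilde A) = \mu_K(A_K)$ promotes the destabilization over $K$ to one over $C$. Next I would use the same composition trick: any destabilizing quotient $i_{W*}E_W \onto Q$ composed with the canonical surjection $E \onto i_{W*}E_W$ of Lemma~\ref{lem:fibersinheart} is destabilizing for $E$, because the compatibility equation~\eqref{eq:ZKZP} forces $\mu_C(i_{W*}E_W) = \mu_C(E)$.

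For the reverse direction, suppose $E$ is not $Z_C$-semistable, so there exists $A \into E$ with $\mu_C(A) > \mu_C(E/A)$. By the weak see-saw we have $\mu_C(A) \ge \mu_C(E) \ge \mu_C(E/A)$ with at least one strict inequality. In the case $\mu_C(A) > \mu_C(E)$, the subobject $A$ is $C$-torsion free (Lemma~\ref{Lem-AS-subs}.\eqref{Astf-subs}), so $A_K \neq 0$ and we obtain a destabilizing subobject of $E_K$, contradicting $Z_K$-semistability. In the case $\mu_C(E/A) < \mu_C(E)$, set $B = E/A$: if $B_K \neq 0$ then $E_K \onto B_K$ is a destabilizing quotient of $E_K$; otherwise $B \in \cA_\Ctor$ is schematically supported on some zero-dimensional $W$, so $E \onto B$ factors through the surjection $E \onto i_{W*}E_W$ of Lemma~\ref{lem:fibersinheart}, producing a destabilizing quotient of $i_{W*}E_W$.

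For the equivalence of the two fiberwise conditions, I would reduce to $W$ supported at a single closed point $p$ (using the product decomposition of $i_{W*}E_W$ when $W$ is disjoint) and induct on the length $n$ of $W$. The base case $n = 1$ follows because quotients of $i_{p*}E_p$ in $\cA_\Ctor$ are schematically supported on $p$ (Corollary~\ref{cor:schematicsupportsubsquotients}) and thus correspond via the fully faithful, t-exact functor $i_{p*}$ to quotients of $E_p$ in $\cA_p$. For the inductive step I would use the short exact sequence $0 \to i_{p*}E_p \to i_{W*}E_W \to i_{W'*}E_{W'} \to 0$ (with $W'$ of length $n-1$), obtained from the filtration $\pi^{n-1}\cO_W \subset \cO_W$ together with the $C$-flatness of $E$ from Lemma~\ref{lem:FlatIffTFreeCurve}; restricting a destabilizing quotient $i_{W*}E_W \onto Q$ gives a quotient $Q_0$ of $i_{p*}E_p$ (with $\mu_C(Q_0) \ge \mu_C(E)$ when $Q_0 \neq 0$, by the base case) and an induced quotient $Q/Q_0$ of $i_{W'*}E_{W'}$ (with $\mu_C(Q/Q_0) \ge \mu_C(E)$ by induction, when nonzero), and a final application of the weak see-saw to $Q_0 \into Q \onto Q/Q_0$ then forces $\mu_C(Q) \ge \mu_C(E)$, contradicting that $Q$ is destabilizing.

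The main subtlety is the absence of a $C$-torsion theory on $\cA_C$ in the weak setting, which is precisely the reason the statement is asymmetric in mentioning only destabilizing quotients on the fibers: destabilizing subobjects are automatically controlled by the $Z_K$-semistability of $E_K$ alone, because subobjects of $C$-torsion free objects remain $C$-torsion free.
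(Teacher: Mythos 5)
Your proof is correct and follows essentially the same strategy as the paper's: lift destabilizing objects via Lemma~\ref{lem-extend-from-localisation} and the factorization of Lemma~\ref{lem:fibersinheart} for the forward direction, use the weak see-saw and the $C$-torsion free/$C$-torsion dichotomy of the destabilizing quotient for the reverse, and an induction on the length of $W$ for the equivalence of the two fiberwise conditions. The only minor variation is in the inductive step: the paper filters the destabilizing quotient $B$ by $I_p\cdot B$ and descends to a shorter destabilizing quotient, whereas you take the image $Q_0$ of $i_{p*}E_p$ inside $Q$ and combine the base case and inductive hypothesis on the two pieces via the see-saw --- both read off the same short exact sequence $0\to I_p^{n-1}E/I_p^nE \to E/I_p^nE \to E/I_p^{n-1}E\to 0$ and reach the same contradiction.
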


Let $W \subset C$ be a zero-dimensional subscheme, and consider Lemma~
\ref{lem:fibersinheart} for $E$ above: since $E$ is torsion free, it shows that $i_{W*}E_W = E/I_W \cdot E \in \cA_\Ctor$.

\begin{proof}
Assume first that $E$ is $Z_C$-semistable.
If $E_K$ is not $Z_K$-semistable,
then we can apply Lemma~\ref{lem-extend-from-localisation}.\eqref{enum:lem-extend-morphism-in-heart} to a destabilizing subobject $A_K\into E_K$ and lift it to a destabilizing subobject $ A \into E$.
We conclude that $E_K$ is $Z_K$-semistable.
Moreover, for any quotient $i_{W*}E_W \onto Q$, the composite map
$E \onto i_{W*}E_W \onto Q$
gives $\mu_C(Q)\geqslant\mu_C(E)=\mu_C(i_{W*}E_W)$.
Hence $i_{W*}E_W$ does not have a destabilizing quotient.

Conversely, suppose that we have a destabilizing sequence
\[
0 \to A \to E \to B \to 0,
\]
with $\mu_C(A)>\mu_C(B)$.
Since $E$ is $C$-torsion free, $A$ is also $C$-torsion free.
If $B_K\neq 0$, then $\mu_K(A_K)=\mu_C(A)> \mu_C(B)=\mu_K(B_K)$ shows that $E_K$ is not $\ZK$-semistable.
If $B\in \cA_\Ctor$, then let $W\subset C$ be such that $B$ is supported on $W$.
By Lemma~\ref{lem:fibersinheart}, we have
$i_{W*}E_W = E/I_W \cdot E$ since $E$ is torsion free, and the surjection
$E \onto B$ factors via $E \onto i_{W*}E_W = E/I_W \cdot E \onto B$.
Since $\mu_C(i_{W*}E_W) = \mu_C(E) > \mu_C(B)$, this gives a destabilizing quotient of $i_{W*}E_W$.

Finally, assume that $E_p \in \cA_p$, and therefore, $i_{p*}E_p \in \cA_{\Ctor}$ does not have a destabilizing quotient for any $p \in C$, but $B \in \cA_{\Ctor}$ is a destabilizing quotient
of $E$ in $\cA_C$.
We may assume that the subscheme $W$ supporting $B$ contains a single closed point $p \in W$; so we have
$I_W = I_p^m$ for some $m > 0$.
Then $B/I_p\cdot B$ is a quotient of $i_{p*}E_p$; by the see-saw property and the assumptions, it follows that $\mu_C(I_p\cdot B) < \mu_C(E)$.
However, $I_p \cdot B$ is naturally a quotient of $I_p \otimes E/\left(I_p^{m-1} \cdot I_p \otimes E\right) \cong E/I_p^{m-1}E$ supported on a subscheme of smaller length;
proceeding by induction, we get a contradiction.
\end{proof}

\begin{Rem}\label{rem:allfibersnotdestquot_plus}
In the context of Lemma~\ref{lem:allfibersnotdestquot}, assume further that $\cA_C$ has a $C$-torsion theory.
If $i_{W*}E_W \in \cA_{\Ctor}$ does not have a destabilizing quotient for all zero-dimensional subschemes $W \subset C$, then
$E_K$ is automatically $\ZK$-semistable.

Otherwise, we can lift the destabilizing surjection $E_K \onto B_K$ to a surjection $E \onto B$.
Let $W \subset C$ be any non-trivial closed subset.
Since $i_W^*$ is right t-exact, we have a sequence of surjections 
\[ E \onto i_{W*}E_W \onto i_{W*} \rH^0_{\cA_W}(B_W) \onto
i_{W*} \rH^0_{\cA_W}\left((B_\Ctf)_W\right) = i_{W*}\left((B_\Ctf)_W\right)
\]
where the last equality follows from Lemma~\ref{lem:FlatIffTFreeCurve}.
The composition is destabilizing since $Z\left((B_\Ctf)_W\right) = Z_K(B_K)$.
\end{Rem}

We now state and prove the equivalent of Proposition~\ref{prop:stabviaheart}.

\begin{Prop} \label{prop:stabviaheartweak}
To give a weak HN structure on $\cD$ over $C$ is equivalent to giving a heart
$\cA_C$ of a bounded $C$-local t-structure, together with a weak stability function $(\ZK,\Zc)$ on $\cA_C$ over $C$ satisfying the HN property.
\index{sigmaC@$\sigma_C = (\cA_C,\ZK,\Zc)$, $C$ a Dedekind scheme!weak HN structure on $\cD$ over $C$}
\end{Prop}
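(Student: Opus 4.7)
The approach follows the standard Bridgeland equivalence (cf.~Lemma~\ref{lem:Bridgeland-stabviaheart}) transported to the relative setting, paralleling the proof of Proposition~\ref{prop:stabviaheart}. In one direction, given a weak HN structure $(\ZK,\Zc,\cP)$, take $\cA_C := \cP((0,1])$ together with the same pair $(\ZK,\Zc)$; conversely, given $(\cA_C,\ZK,\Zc)$ with the HN property, define $\cP(\phi)$ for $\phi \in (0,1]$ as the full subcategory of $Z_C$-semistable objects of phase $\phi$ (with the convention that those with $Z_C = 0$ are placed at phase $1$), and extend by $\cP(\phi+1) := \cP(\phi)[1]$.

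For the forward direction, I would first argue that $\cA_C := \cP((0,1])$ is the heart of a bounded t-structure by the usual extension-closure and HN-filtration argument (Proposition~\ref{prop:BridgelandCriterion}). The $C$-locality of this t-structure is inherited from the $C$-locality of $\cP$: the slicing $\cP_U$ on $\cD_U$ supplied by Definition~\ref{def:wHNstructure_C} provides a compatible heart $\cA_U := \cP_U((0,1])$, and pullback along $U \to C$ preserves the slicing pieces by construction. To check that $(\ZK,\Zc)$ is a weak stability function on $\cA_C$, decompose any nonzero $E \in \cA_C$ (respectively $E \in \cA_K$ or $E \in \cA_\Ctor$) into its HN factors $E_i \in \cP(\phi_i)$ with $\phi_i \in (0,1]$; each $Z_K(E_i)$ or $\Zc(E_i)$ lies in $\H \cup \R_{\le 0}$ by the compatibility axiom of Definition~\ref{def:wHNstructure_C}, and summation then yields the required half-plane property. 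The same HN filtration in $\cP$ descends to a HN filtration of $E$ in $\cA_C$ in the sense of Definition~\ref{def:satisfies_HN_C}: each $\cP(\phi) \cap \cA_C$ consists of $Z_C$-semistable objects by semiorthogonality and the weak see-saw property of Lemma~\ref{lem:seesawweak}, and the phases strictly decrease by assumption.

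For the converse direction, the semiorthogonality axiom in Definition~\ref{def:slicing} is a formal consequence of $Z_C$-semistability and see-saw, while axiom~(3) follows by combining the t-structure truncation of $E \in \cD$ with respect to $\cA_C$ with the HN filtrations inside each shifted heart. Compatibility of $(\ZK,\Zc)$ with the phases of $\cP$ is built into the definition of $Z_C$. The one genuinely subtle point, and the main obstacle I anticipate, is the $C$-linearity of $\cP$. By Remark~\ref{rem:local-slicing-tensor-ample}, it suffices to show that each $\cP(\phi)$ is preserved under tensoring by $g^*L$ for every line bundle $L$ on $C$. Since $\cA_C$ is $C$-local, the functor $\blank \otimes g^*L$ preserves $\cA_C$ by Lemma~\ref{lem-tensor-vb}; I would then verify that it also preserves $Z_C$. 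For $E \in \cA_C$ with $E_K \neq 0$, one has $(E \otimes g^*L)_K \cong E_K \otimes L_K$ with $L_K$ a one-dimensional $K$-vector space, so $Z_K$ is unchanged; for $E \in \cA_\Ctor$ set-theoretically supported on a zero-dimensional subscheme $W \subset C$, the restriction $L|_W$ is a free $\cO_W$-module of rank one, giving a (non-canonical) isomorphism $E \otimes g^*L \cong E$ in $\cD$ and preserving $\Zc$. Applying the same invariance to arbitrary subobjects and quotients in $\cA_C$ shows that $Z_C$-semistability is preserved under the twist, so $\cP(\phi) \otimes g^*L = \cP(\phi)$, completing the proof of $C$-linearity and thus the equivalence.
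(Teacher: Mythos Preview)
Your proposal is correct and follows essentially the same approach as the paper. The paper is terser---in the forward direction it verifies that objects of $\cP(\phi)$ are $Z_C$-semistable by examining the first HN factor of a putative destabilizing subobject, and in the converse it simply defers the slicing axioms to Bridgeland's original argument without explicitly addressing $C$-linearity---whereas you spell out the invariance of $Z_C$ under twisting by $g^*L$ via Remark~\ref{rem:local-slicing-tensor-ample}, which is a correct detail the paper leaves implicit.
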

\begin{proof}
Suppose we are given a weak HN structure $\sigma_C = (\ZK,\Zc,\cP)$.
Since the slicing $\cP$ is $C$-local, the same follows for the heart $\cA_C := \cP(0,1]$.
Moreover, for $E \in \cA_C$, $\Zc(E)$ and $Z_K(E_K)$ are clearly in the semiclosed upper half plane $\H\sqcup \R_{\leqslant 0}$; thus, $(\ZK,\Zc)$ is a weak stability function on $\cA_C$ over $C$.

To show that $(\ZK,\Zc)$ satisfies the HN property, it suffices to show that $E\in\cP(\phi)$ is $Z_C$-semistable; then the HN filtration with respect to $\cP$ gives the HN filtration with respect to $(\ZK,\Zc)$.
So given $E \in \cP(\phi)$, we first show that $E$ has no subobject with $\mu_C(A) > \mu_C(E)$ using the HN filtration of $A$ with respect to $\cP$: indeed, if $A_1$ is the first step of that filtration, then $A_1 \into A \into E$ would be a chain of inclusions (in particular non-zero), and
thus $\phi(A_1) \leqslant \phi$.
This leads to a contradiction both when $E_K \neq 0$ and when $E \in \cA_{\Ctor}$.
Then one can easily show that $E$ has no quotients with $\mu_C(Q)<\mu_C(E)$ by an analogous argument; thus $E$ is $Z_C$-semistable, as required.

Conversely, for all $\phi \in (0, 1]$ we define $\cP(\phi)$ to be the category of $Z_C$-semistable objects $E \in \cA_C$ with $Z_C(E) \in \R_{>0} \cdot e^{\ii\pi\phi}$ for $\phi\not\in\Z$ and $Z_C(E) \in \R_{\geqslant 0} \cdot e^{\ii\pi\phi}$ for $\phi\in\Z$.
One verifies that $\cP$ is a slicing with the exact same arguments as in the case of weak stability conditions (or stability conditions as in \cite[Lemma~5.3]{Bridgeland:Stab}).
\end{proof}

\subsection{Existence of HN filtrations}\label{subsec:HNweak}

Our main tool in proving that a given weak stability function on $\cA_C$ over $C$ satisfies the HN property is the following result.

\begin{Prop} \label{prop:HNviaHNweak} Let
$(\ZK,\Zc)$ be a weak stability function on $\cA_C$ over $C$.
If it satisfies the HN property, then all of the following three conditions are satisfied:
\begin{enumerate}[{\rm (1)}] 
\item \label{enum:HNAKweak} The pair $(\cA_K, Z_K)$ satisfies the HN property.
\item \label{enum:HNAtorweak}
The pair $(\cA_{\Ctor},\Zc)$ satisfies the HN property.
\item \label{enum:ssredweak} (\emph{Semistable reduction}) For any $C$-torsion free object $E \in \cA_{\Ctf}$ such that $E_K \in \cA_K$ is $Z_K$-semistable, there is a $Z_C$-semistable subobject $F \subset E$ with $E/F \in \cA_{\Ctor}$.
\end{enumerate}
Moreover, if $\cA_C$ has a $C$-torsion theory $(\cA_{\Ctor}, \cA_{\Ctf})$, then the converse also holds true.
\end{Prop}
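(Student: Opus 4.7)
The three conditions follow from arguments extending Remark~\ref{rem:HNnecessaryconditions} to the weak setting. For (1): pullback to $\cA_K$ is t-exact by Corollary~\ref{cor:base-change-tstructure-point}.\eqref{enum:base-change-generic}, and semistability of a $C$-torsion-free object implies semistability of its generic fiber by one direction of Lemma~\ref{lem:allfibersnotdestquot}, so the HN filtration of $E\in\cA_C$ restricts, after discarding trivial steps, to the HN filtration of $E_K$. For (2): $\cA_\Ctor$ is closed under subobjects, quotients, and extensions in $\cA_C$ by Lemma~\ref{Lem-AS-subs}, hence HN filtrations in either category coincide. For (3): the first HN factor $F$ of a $C$-torsion-free $E$ with $Z_K$-semistable generic fiber must achieve the maximal slope of any subobject, which is $\mu_K(E_K)$; since further HN factors must have strictly lower slope, the quotient $E/F$ has vanishing generic fiber and therefore lies in $\cA_{\Ctor}$.

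\textbf{Sufficiency: construction from the three pieces of data.}
Assuming (1), (2), (3) and the existence of a $C$-torsion theory, I plan to construct the HN filtration of an arbitrary $Y\in\cA_C$ by assembling three inputs. First, using~(1) together with Lemma~\ref{lem-extend-from-localisation} and the $C$-torsion theory, I lift the HN filtration of $Y_K = (G^K_i)_{i=0}^n$ to a saturated filtration $G_0=Y_\Ctor\subset G_1\subset\cdots\subset G_n=Y$, defining $G_i:=\ker(Y\to (Y/\tilde G_i)_\Ctf)$ for any lift $\tilde G_i$; this is independent of the lift and is the maximal subobject with generic fiber $G^K_i$, so each $G_i/G_{i-1}$ ($i\geq 1$) is $C$-torsion free with $Z_K$-semistable generic fiber of slope $\mu_i$. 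Second, applying~(3) to each $G_i/G_{i-1}$ yields a $Z_C$-semistable subobject $B_i$ with torsion quotient $T_i$, whose HN filtration exists by~(2); similarly (2) applied to $G_0=Y_\Ctor$ gives its HN filtration. Within each block, the HN filtration of $G_i/G_{i-1}$ is then produced by the ``$n=1$ mechanism'': letting $H^i_1\subset G_i/G_{i-1}$ be the preimage of the sum of HN factors of $T_i$ of slope $\ge\mu_i$, a direct slope calculation using $Z_K$-semistability of the generic fiber and the weak see-saw property (Lemma~\ref{lem:seesawweak}) shows that $H^i_1$ is $Z_C$-semistable of slope $\mu_i$ and contains every semistable subobject of $G_i/G_{i-1}$ of that slope; the subsequent HN factors are the remaining HN factors of $T_i$, each of slope strictly less than $\mu_i$.

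\textbf{Main obstacle: global assembly.}
The critical step is to assemble these block-wise filtrations, together with the HN filtration of $Y_\Ctor$, into a single filtration of $Y$ with strictly decreasing slopes. Naive concatenation fails: a torsion slope appearing at the tail of HN of $T_i$ may be smaller than $\mu_{i+1}$, the leading slope of the next block, and slopes from HN of $Y_\Ctor$ may interleave arbitrarily with slopes from different blocks. I plan to resolve this by iteratively extracting the maximal $Z_C$-semistable subobject of $Y$ of globally maximal slope. Two observations make this feasible without noetherianity of $\cA_C$: (a) the set of candidate slopes, consisting of $\{\mu_i\}$ together with the HN slopes of $Y_\Ctor$ and the HN slopes of the $T_i$, is finite by~(1) and~(2); and (b) for any fixed slope $\lambda$, semistable subobjects of $Y$ of slope $\lambda$ are stable under finite sums (by a slope argument parallel to the one establishing maximality of $H^i_1$), so the finiteness in~(a) is enough to guarantee the existence of a unique maximal such subobject. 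Once the first HN factor $H_1$ is extracted, the quotient $Y/H_1$ inherits the same structure---a $C$-torsion theory via Corollary~\ref{cor:schematicsupportsubsquotients} applied to the new torsion part, a generic HN filtration with at most as many slopes, and strictly fewer total HN slopes among all sources---so induction on the total number of slopes terminates.
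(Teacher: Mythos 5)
Your necessity argument is correct and matches the paper's Remark~\ref{rem:HNnecessaryconditions}; the block-wise part of your sufficiency argument also closely parallels the paper's construction (case B of the proof constructs essentially your $H^i_1$: it lifts the mds of $E_K$ to a saturated $N$, applies semistable reduction to obtain $F$, and takes the preimage of the high-slope HN part of $N/F$). But the final assembly step contains a genuine gap in two places.

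First, the existence of a maximal $Z_C$-semistable subobject of $Y$ of globally maximal slope is asserted but not established. Your observation (b) --- that semistable subobjects of a fixed slope $\lambda$ are closed under finite sums --- gives at most \emph{uniqueness} of a maximal such subobject; it does not give \emph{existence}. Without noetherianity of $\cA_C$, an increasing chain of semistable subobjects of slope $\lambda$ need not stabilize, and finiteness of the candidate slope set (a) does nothing to bound the length of such a chain. Indeed, constructing this maximal subobject is precisely the content of the mds construction that the paper carries out explicitly. Second, the induction on ``total number of HN slopes among all sources'' is not justified to terminate: when you pass to $Y/H_1$, the torsion part of $Y/H_1$ can differ from $Y_\Ctor/(H_1 \cap Y_\Ctor)$, and the new semistable-reduction quotients $T_i'$ are produced by applying condition (3) afresh and need not have HN slopes contained in your original finite candidate set. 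So it is not clear the invariant strictly decreases.

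The paper circumvents both issues by constructing the mds $M \into E$ directly in two cases, distinguished by whether $\mu^+_C(E_\Ctor)$ exceeds or is at most $\mu := \mu^+_K(E_K)$, and verifying the mds property against \emph{all} subobjects of $E$ --- not just subobjects living in a single block $G_i/G_{i-1}$, which is where your $H^i_1$ computation stops (for instance, when $\mu^+_C(Y_\Ctor) = \mu_1$, the mds of $Y$ need not equal $H^1_1$). Termination is then immediate from the strict decrease of the HN filtration length of the explicit objects $E_\Ctor$ (case A) or $E_K$ (case B). If you want to rescue your merge-sort strategy, the cleanest fix is to prove the existence of the mds of $Y$ directly along the paper's lines and only afterward observe that the full HN filtration follows by iterating; the block decomposition is then a useful tool for verifying the mds property, not a substitute for constructing it.
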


The explanations in Remark~\ref{rem:HNnecessaryconditions} show that these conditions are necessary.
In particular, $(\cA_K, \ZK)$ and
$(\cA_\Ctor,\Zc)$ are weak pre-stability conditions on $\cD_K$ and $\cD_\Ctor$, respectively.
We start the proof of the converse with the following observation:

\begin{Lem} \label{lem:semistsub}
Let $(\ZK,\Zc)$ be a weak stability function on $\cA_C$ over $C$, such that $\cA_C$ has a $C$-torsion theory $(\cA_{\Ctor}, \cA_{\Ctf})$ and 
condition \eqref{enum:ssred} of Proposition~\ref{prop:HNviaHNweak} holds.
Then the same condition holds for all objects $E$, not necessarily $C$-torsion free, with $E_K \in \cA_K$ being $Z_K$-semistable.
\end{Lem}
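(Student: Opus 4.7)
The plan is to reduce the general case to the hypothesized torsion-free case by applying condition \eqref{enum:ssred} to $E_\Ctf$ and then lifting/modifying the resulting subobject.

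First, I would apply the hypothesis to the $C$-torsion free quotient. Since $(\cA_\Ctor,\cA_\Ctf)$ is a torsion pair, there is a canonical short exact sequence $E_\Ctor \into E \onto E_\Ctf$, and the identification $(E_\Ctf)_K \cong E_K$ (flat base change to the generic point is t-exact by Corollary~\ref{cor:base-change-tstructure-point}.\eqref{enum:base-change-generic}) shows $(E_\Ctf)_K$ is $Z_K$-semistable. Condition \eqref{enum:ssred} then gives a $Z_C$-semistable subobject $F' \subset E_\Ctf$ with $E_\Ctf/F' \in \cA_\Ctor$. Let $\tilde F \subset E$ denote the preimage of $F'$ under $E \onto E_\Ctf$, so that $E_\Ctor \subset \tilde F$, $E/\tilde F \cong E_\Ctf/F' \in \cA_\Ctor$, and $\tilde F_K \cong E_K$ is $Z_K$-semistable.

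The next step is to analyze potential destabilizing subobjects of $\tilde F$. For any $A \subset \tilde F$, if $A_K \neq 0$ then $A_K \subset \tilde F_K = E_K$ forces $\mu_C(A) = \mu_K(A_K) \le \mu_K(E_K) = \mu_C(\tilde F)$, so $A$ cannot destabilize; if $A_K = 0$ then $A \in \cA_\Ctor$ and $A \subset \tilde F_\Ctor$. Now $\tilde F_\Ctor = E_\Ctor$: one containment is immediate from $E_\Ctor \subset \tilde F$, while the other follows from $\tilde F_\Ctor \subset E$ being $C$-torsion. Hence, if every subobject of $E_\Ctor$ has slope $\le \mu_K(E_K)$, then $F := \tilde F$ is $Z_C$-semistable and we are done.

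The main obstacle is the remaining case where $E_\Ctor$ admits subobjects of slope $> \mu_K(E_K)$. Here I would use the general parametrization of subobjects $F \subset E$ with $E/F \in \cA_\Ctor$: any such $F$ fits in an exact sequence $M \into F \onto F''$ with $M = F \cap E_\Ctor \subset E_\Ctor$ and $F'' = (F + E_\Ctor)/E_\Ctor \subset E_\Ctf$ with $E_\Ctf/F'' \in \cA_\Ctor$; and the same see-saw analysis shows $F$ is $Z_C$-semistable precisely when $F''$ is $Z_C$-semistable and every subobject of $M$ has slope $\le \mu_K(E_K)$. Taking $F'' := F'$, the existence of the required $F$ becomes equivalent to finding $M \subset E_\Ctor$ with all subobjects of slope $\le \mu_K(E_K)$ such that the restriction $[\tilde F] \in \Ext^1(F', E_\Ctor)$ of the class of $E$ lies in the image of $\Ext^1(F', M)$, equivalently, maps to zero in $\Ext^1(F', E_\Ctor/M)$. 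I would construct such an $M$ by considering the maximal subobject of $E_\Ctor$ through which $[\tilde F]$ factors (the ``minimal support'' of the extension class), and exploit the long exact Ext-sequence associated to $M \into E_\Ctor \onto E_\Ctor/M$, together with the orthogonality $\Hom(\cA_\Ctor, \cA_\Ctf) = 0$ afforded by the torsion pair, to rule out destabilizing contributions to $M$. This cohomological step, paralleling the Ext-theoretic modification underlying classical Langton-type semistable reduction, is what I expect to be the main technical difficulty of the proof.
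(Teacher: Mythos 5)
Your proposal takes a genuinely different route from the paper's, and the route you chose runs into a gap at the step you yourself flag as the main technical difficulty.

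The paper's proof is essentially one line: apply condition~\eqref{enum:ssred} to $E_{\Ctf}$ to obtain a $Z_C$-semistable $F \subset E_{\Ctf}$ with $E_{\Ctf}/F \in \cA_{\Ctor}$, then apply Lemma~\ref{lem:tensortrick} to the isomorphism $(E_{\Ctf})_K \cong E_K$ to get an \emph{injection} $E_{\Ctf} \otimes g^*L^{-k} \into E$ for some $k$, and finally compose to get $F \otimes g^*L^{-k} \into E_{\Ctf} \otimes g^*L^{-k} \into E$. Twisting by $g^*L^{-k}$ preserves semistability and the $C$-torsion property (the central charges $Z_K$, $\Zc$ are invariant under tensoring with pullbacks of line bundles on $C$, and the quotient $E/(F \otimes g^*L^{-k})$ has vanishing generic fiber), so this is the desired subobject. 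The key point is that instead of lifting $F$ along the surjection $E \onto E_{\Ctf}$ — which forces you to confront the extension $E_{\Ctor} \into E \onto E_{\Ctf}$ — one embeds a suitable twist of $E_{\Ctf}$ \emph{into} $E$, which makes the lifting problem disappear.

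The gap in your argument is in the final cohomological step, and it is the direct consequence of fixing $F''=F'$. You propose to take $M \subset E_{\Ctor}$ minimal with the property that the class $[\tilde F]\in \Ext^1(F',E_{\Ctor})$ maps to zero in $\Ext^1(F',E_{\Ctor}/M)$, and then argue that such $M$ has only low-slope subobjects. But there is no reason for the ``support'' of this extension class to avoid the high-slope part of $E_{\Ctor}$: the extension $E$ can be chosen so that the class genuinely uses a high-slope piece of $E_{\Ctor}$, in which case the minimal $M$ contains a high-slope subobject and your candidate $F$ is unstable. What the tensor trick buys is precisely the freedom to change $F''$: the image of $F \otimes g^*L^{-k}$ in $E_{\Ctf}$ is a \emph{different} subobject from $F'$, and by tensoring down far enough one forces $F \otimes g^*L^{-k} \into E_{\Ctf}\otimes g^*L^{-k}$ to land entirely inside $E$, i.e.\ to avoid the obstruction you are trying to kill by hand. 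If you insist on staying with your $\Ext$-theoretic framework, you would at minimum need to let $F''$ range over all the twists $F' \otimes g^*L^{-k}$ (or some comparable family) and show the obstruction vanishes for one of them — at which point you are essentially reproving Lemma~\ref{lem:tensortrick}, so it is cleaner to invoke it directly.
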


\begin{proof}
Let $F \into E_{\Ctf}$ be a semistable subobject as given by assumption \eqref{enum:ssred}.
We then apply Lemma~\ref{lem:tensortrick} to the isomorphism $(E_{\Ctf})_K \cong E_K$ and obtain an inclusion
\[ F \otimes g^* L^{-k} \into E_{\Ctf} \otimes g^* L^{-k} \into E.\qedhere\]
\end{proof}

Let $(\ZK,\Zc)$ be a (weak) stability function on $\cA_C$ over $C$.
We begin with more definitions:
\begin{Def}
We write
\index{muC+,muc-@$\mu_C^+(E)$ ($\mu_C^-(E)$), maximal (minimal) slope for objects in $\cA_C$}
\begin{eqnarray*}
\mu_C^+(E) &:=& \sup \set{\mu_C(F) \sth 0 \neq F \subseteq E}, \\
\mu_C^-(E) &:=& \inf \set{\mu_C(Q) \sth F \onto Q \neq 0}.
\end{eqnarray*}
\end{Def}

\begin{Def} \label{Def:MDS} Assume $E \in \cA_C$ is not $Z_C$-semistable.
A \emph{maximal destabilizing subobject} (mds) of $E \in \cA_C$ is a $Z_C$-semistable subobject $M \into E$
such that for all $F \into E$ we have $\mu_C(F) \leqslant \mu_C(M) $, and $\mu_C\bigl(F/(F \cap M)\bigr) < \mu_C(M)$ whenever $F$ is not a subobject of $M$.
\end{Def}
This is equivalent to $M$ being $Z_C$-semistable with $\mu_C(M) > \mu_C(F')$ for all $F' \into E/M$; in other words, $M$ is the first step of the HN filtration if it exists.

\begin{proof}[Proof of Proposition~\ref{prop:HNviaHNweak}] 
Assume conditions \eqref{enum:HNAKweak}-\eqref{enum:ssredweak} hold and $\cA_C$ has a $C$-torsion theory. 
We want to construct a mds $M \into E$. 
By assumption \eqref{enum:HNAtor}, we may assume that $E_K \in \cA_K$ is non-zero. By assumption \eqref{enum:HNAK}, it has a mds with respect to $Z_K$. Arguing as in the proof of Lemma~\ref{lem:allfibersnotdestquot},
we may assume it is of the form $N_K \subset E_K$ for some 
subobject $N \subset E$ in $\cA_C$.

Since $\cA_C$ has a $C$-torsion theory, we may assume that $N \into E$ is saturated, i.e., that $E/N$ is $C$-torsion free; in particular \[E_{\Ctor} = N_{\Ctor}.\] Write $\mu:= \mu_C(N) = \mu_K^+(E_K)$. 
 We distinguish two cases:
\begin{enumerate}[{\rm (A)}]
\item $E_{\Ctor} \neq 0$, and $\mu^+_{C}(E_{\Ctor}) > \mu$.

In this case, we apply assumption \eqref{enum:HNAtor} and let $M \subset E_{\Ctor}$ be the mds in
$\cA_{\Ctor}$; in particular $\mu_C(M)=\mu^+_{C}(E_{\Ctor})$.
We claim that $M$ is an mds of $E$ in $\cA_C$. 
Indeed, given any subobject $F \into E$, either $\mu_C(F) \leqslant \mu < \mu_C(M)$ or $\mu_C(F) > \mu$. 
In the second case, since $\mu = \mu_K^+(E_K)$ it follows that $F$ is $C$-torsion and factors through $E_{\Ctor} \subset E$, and hence $\mu_C(F) \leqslant \mu_C(M)$ by the choice of $M$. 
Thus $\mu_C(F) \leqslant \mu_C(M)$ for any subobject $F \into E$. 
Now assume $F$ is not a subobject of $M$. 
If $F$ is $C$-torsion, then $F \subset E_{\Ctor}$, so $\mu_C(F/(F \cap M)) < \mu_C(M)$ follows by the choice of $M$. 
If $F$ is not $C$-torsion, then $F_K = (F/(F \cap M))_K$ and again we find $\mu_C(F/(F \cap M)) = \mu_K(F_K) \leqslant \mu < \mu_C(M)$.

\item $E_{\Ctor}= 0$, or $E_{\Ctor} \neq 0$ with $\mu^+_{C}(E_{\Ctor}) \leqslant \mu$.

Let $F \subset N$ be the $Z_C$-semistable subobject given by assumption \eqref{enum:ssred} or Lemma~\ref{lem:semistsub}, respectively.
By the existence of HN filtrations in $\cA_{\Ctor}$, and the resulting torsion pair as in \eqref{eqn:torsion pair}, there exists a subobject $F \subseteq M \subseteq N$ such that $\mu^-_C(M/F) \geqslant \mu > \mu^+_C(N/M)$. We claim that $M$ is an mds for $E$.

Note that $M_K = N_K$ is $Z_K$-semistable. Thus, if $M$ is not semistable, and if
$A \subset M$ is a subobject with $\mu_C(A) > \mu_C(M/A)$, then either $A$ is $C$-torsion with
$\mu_C(A) > \mu$, or $M/A$ is $C$-torsion with $\mu > \mu_C(M/A)$. The former case is
impossible, as $A$ would be a subobject of $N_{\Ctor} = E_{\Ctor}$ and contradict the assumption
on $\mu_C^+(E_{\Ctor})$. In the latter case, consider the short exact sequence
\[ F/(F \cap A) \into M/A \onto M/(F + A). \]
We have $\mu_C(F/(F\cap A)) \geqslant \mu$ by semistability of $F$, and $\mu_C(M/(F+A)) \geqslant \mu_C^-(M/F) \geqslant \mu$ by construction; thus we get a contradiction to the see-saw property.

Since $E/N$ is $C$-torsion free by assumption, any subobject $F \into E/N$ satisfies 
$\mu_C(F) = \mu_K(F_K) \leqslant \mu_K^+\left((E/N)_K\right) < \mu$; so $\mu_C^+(E/N) < \mu$. Combined with the inequality 
$\mu^+_C(N/M) < \mu$, this shows $\mu_C^+(E/M) < \mu$. 
\end{enumerate}
We have thus produced a mds for $E$. To conclude, we need to show that if we replace
$E$ by $E/M$ and repeat the above procedure, the process terminates.
Indeed, in case (A) we preserve $E_K$ and reduce the length of the HN filtration of $E_{\Ctor}$,
and in case (B) we reduce the length of the HN filtration of $E_K$.
\end{proof}

We will often need $\cA_{\Ctor}^0$ and $\cA_K^0$ to be noetherian torsion subcategories, see Definition~\ref{def:noetheriantorsionsubcat}. 
For example, the following observation extends Lemma~\ref{lem:stabheartnoetherian}. 

\begin{Prop} \label{prop:discreteimpliesNoetherian}
Assume that the heart $\cA_C$ has a $C$-torsion theory, and that
$\cA_{\Ctor}^0 \subset \cA_{\Ctor}$ and $\cA_K^0 \subset \cA_K$ are noetherian torsion subcategories.
If there exists a weak HN structure
$\sigma_C = (\cA_C,\ZK,\Zc)$ over $C$ with heart
$\cA_C$ such that $\Zc$ has discrete image, then $\cA_C$ is noetherian.
\end{Prop}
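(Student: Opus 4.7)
The plan is to proceed in three steps: transfer discreteness from $\Zc$ to $Z_K$, deduce noetherianity of both $\cA_{\Ctor}$ and $\cA_K$ by invoking Lemma~\ref{lem:discreteimpliesNoetherian}, and finally combine these using the $C$-torsion theory.

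The first step uses the compatibility condition \eqref{eq:ZKZP} in Definition~\ref{def:familycentralcharges}. Applied to $W = \{p\}$ a closed point (which has $\len_{\cO_C}\kappa(p)=1$), it yields $Z_K(E_K) = \Zc(i_{p*}E_p)$ for every $E \in \cD$. Since $\cD \to \cD_K$ is essentially surjective by Lemma~\ref{lem-open-restriction-es}, the image of $Z_K$ on $K(\cD_K)$ is generated by such values, so $\im Z_K \subset \im \Zc$ is again discrete. For the second step, Proposition~\ref{prop:HNviaHNweak} guarantees that $(\cA_K, Z_K)$ and $(\cA_{\Ctor}, \Zc)$ are weak pre-stability conditions (existence of HN filtrations follows from the HN property of the weak stability function $(\ZK,\Zc)$ on $\cA_C$ over $C$). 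The proof of Lemma~\ref{lem:discreteimpliesNoetherian} uses only the discreteness of the image of the central charge on objects (which, for a finite-rank $\Q[\ii]$-valued $Z$, is equivalent to the stated hypothesis). Combined with the hypotheses that $\cA_K^0$ and $\cA_{\Ctor}^0$ are noetherian torsion subcategories, this argument shows that both $\cA_K$ and $\cA_{\Ctor}$ are noetherian.

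For the third step, I would use that $\cA_{\Ctor} \subset \cA_C$ is closed under subobjects, quotients, and extensions by Lemma~\ref{Lem-AS-subs}, hence is a Serre subcategory; a snake-lemma diagram chase then shows the torsion functor $(-)_\Ctor$ is exact. Given an ascending chain $K_1 \subset K_2 \subset \cdots \subset E$ in $\cA_C$, the induced chain $(K_i)_\Ctor \subset E_\Ctor$ in $\cA_\Ctor$ stabilizes at some $i_1$; after quotienting $E$ by $(K_{i_1})_\Ctor$, exactness of $(-)_\Ctor$ gives $K_i \in \cA_\Ctf$ for $i \geq i_1$. Pulling back to $K$, the chain $(K_i)_K \subset E_K$ in $\cA_K$ stabilizes at some $i_2 \geq i_1$, so $(K_i/K_{i_2})_K = 0$ and hence $K_i/K_{i_2} \in \cA_\Ctor$ for $i \geq i_2$. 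As a torsion subobject of $E/K_{i_2}$, each $K_i/K_{i_2}$ sits inside $(E/K_{i_2})_\Ctor$, and the resulting ascending chain in $\cA_\Ctor$ stabilizes by the noetherianity established in the second step.

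The main substantive point is Step~1: recognizing that the compatibility axiom for central charges over $C$ forces $\im Z_K \subset \im \Zc$. The remainder of the argument is essentially a careful bookkeeping exercise with the torsion pair. A minor technical issue to check is that the proof of Lemma~\ref{lem:discreteimpliesNoetherian} indeed only makes use of discreteness of the image of $Z$ on objects, rather than the literal $\Q[\ii]$-rationality in its statement.
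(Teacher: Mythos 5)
Your proof is correct and follows essentially the same route as the paper's: apply Lemma~\ref{lem:discreteimpliesNoetherian} to $(\cA_K, Z_K)$ and to $(\cA_\Ctor, \Zc)$, then combine via the $C$-torsion pair. Your Step~1 is a genuine clarification that the paper leaves implicit, and your remark that the proof of Lemma~\ref{lem:discreteimpliesNoetherian} only uses discreteness of the image (rather than literal $\Q[\ii]$-rationality of $Z$) is exactly the right thing to verify before invoking it.

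One correction to the justification in Step~3: the torsion functor $(-)_\Ctor$ for the torsion pair $(\cA_\Ctor, \cA_\Ctf)$ is left exact (since $\cA_\Ctor$ is closed under subobjects) but \emph{not} right exact --- for instance, in $\Coh \cX$ the surjection $\cO_\cX \onto i_{p*}\cO_{\cX_p}$ induces $0 \to i_{p*}\cO_{\cX_p}$ on torsion parts --- so a ``snake-lemma diagram chase'' does not give exactness. Fortunately you do not need it: once $(K_i)_\Ctor = (K_{i_1})_\Ctor$ for $i \ge i_1$, the quotient $K_i/(K_{i_1})_\Ctor = K_i/(K_i)_\Ctor = (K_i)_\Ctf$ is torsion free simply by definition of the torsion pair, which is all your argument uses. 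As a side note, the preliminary stabilization of the torsion parts $(K_i)_\Ctor$ is superfluous: the paper's version first stabilizes over $K$, observes that the successive kernels are thereafter $C$-torsion, and closes with Remark~\ref{rem:alternatenoetheriantorsion}. Your extra step does no harm, but the shorter path is available.
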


\begin{proof}
Let $E_1 \onto E_2 \cdots $ be an infinite sequence
of surjections in $\cA_C$. Applying Lemma~\ref{lem:discreteimpliesNoetherian} to $(\cA_K, Z_K)$, we see that the induced sequence of surjections $(E_1)_K \onto (E_2)_K \onto \dots$ stabilizes, in other words we may assume that the kernel 
of every surjection $E_i \onto E_{i+1}$ is $C$-torsion. However, by assumption
and Lemma~\ref{lem:discreteimpliesNoetherian} applied to $(\cA_\Ctor,\Zc)$ we know that $\cA_\Ctor \subset \cA_C$ is a noetherian torsion subcategory; therefore, see Remark~\ref{rem:alternatenoetheriantorsion}, this sequence terminates.
\end{proof}

\section{Semistable reduction}\label{sec:SemistableReduction}

The aim of this section is to study in detail condition \eqref{enum:ssred} in Proposition~\ref{prop:HNviaHN} or \ref{prop:HNviaHNweak}.
We first show that Langton and Maruyama's semistable reduction also works in our context.
Then condition \eqref{enum:ssred} follows if semistability satisfies generic openness (see Definition~\ref{def:GenericOpennessSemiStab}).
This gives us the first example of an HN structure, namely coherent sheaves.

\subsection{The Langton-Maruyama Theorem}\label{subsec:Langton}

We now state the main result of this section.
It is the analogue of \cite[Theorem~2.B.1]{HL:Moduli} (see \cite[Lemma~3.4]{Yoshioka:EllipticSurfaces} or \cite[Lemma~2.5]{HMS:Orientation}); the proof follows the same lines.
The main difference is that we use the existence of a locally finite type Quot space to circumvent the use of completed local rings; 
indeed, our results in 
Part~\ref{part:sod-t-structure-families} do not prove the existence of a heart 
on base changes to such rings.

\begin{Thm}[Langton-Maruyama]\label{thm:Langton}
Let $(\ZK,\Zc)$ be a (weak) stability function on $\cA_C$ over $C$ such that: 
\begin{enumerate}[{\rm (1)}] \setcounter{enumi}{-1}
\item \label{enum:finitecoverZconstant}
Given a Dedekind scheme $D$, a dominant map $f \colon D \to C$ essentially of finite type, a closed point $q \in D$ and
$F \in \cD_D$, we have
\[
\frac 1{\dim_{\kappa(f(q))}\kappa(q)} Z_{f(q)}(f_* F_q) = \frac 1{\dim_{K}K(D)} Z_K\left(f_* F_{K(D)}\right).
\]
\item\label{enum:Langtonopennessflatness} $\cA_C$ universally satisfies openness of flatness. 
\item\label{enum:LangtonHNsupport} $(\cA_\Ctor,\Zc)$ defines a (weak) stability condition on $\cD_\Ctor$. 
\item\label{enum:Langtonweak} If $(\ZK,\Zc)$ is only a weak stability function, we also assume that
$(\cA_{\Ctor},\Zc)$ has the tilting property.
\end{enumerate}
Let $E\in\cA_C$ be a $C$-flat object whose restriction $E_K\in\cA_K$ to the generic fiber is $Z_K$-semistable. 
If there is a closed point $p\in C$ and a quotient $i_{p*}E_p\onto Q$ with 
$\mu_C(Q)<\mu_C(i_{p*}E_p)$, then there exists a subobject $E'\subset E$ such that $E/E' \in \cA_{\Ctor}$ is supported over $p$,
$ \mu_C(E')>\mu_C(E/E')$, 
and $\mu_C(Q')\geqslant\mu_C(i_{p*}E'_p)$ for all quotients $i_{p*}E'_p\onto Q'$.
\end{Thm}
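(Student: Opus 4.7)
The plan is to follow the classical Langton--Maruyama strategy, replacing the use of completed local rings by the representability and properness of Quot spaces established in Section~\ref{sec:quotspaces}.

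First, I would construct the Langton sequence: set $E_0 := E$, and iteratively, whenever $i_{p*}(E_n)_p$ admits a destabilizing quotient in $\cA_p$ (where $(\cA_p, Z_p)$ is the (weak) pre-stability condition on $\cD_p$ induced by $\Zc$; existence of HN filtrations in $\cA_p$ follows from condition~(2) and, in the weak case, from condition~(3) via Proposition~\ref{prop:RotateWeakStability}), let $Q_n$ be the minimal destabilizing quotient (the last HN factor) and set $E_{n+1} := \ker\bigl(E_n \to i_{p*}(E_n)_p \onto i_{p*}Q_n\bigr)$. Using Lemma~\ref{Lem-AS-subs}.\eqref{Astf-subs} and Lemma~\ref{lem:FlatIffTFreeCurve}, each $E_n$ is $C$-flat, and $(E_n)_K = E_K$ so $\mu_C(E_n) = \mu_K(E_K) = \mu_C(E)$; by condition~(0) applied to the identity $C\to C$ we also have $\mu_C(i_{p*}(E_n)_p) = \mu_C(E)$, and thus $\mu_C(Q_n) < \mu_C(E)$ for every $n$. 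If the procedure terminates at step $n$, then $E' := E_n$ satisfies all three required properties: $E/E' \in \cA_{\Ctor}$ is supported at $p$ by construction; $\mu_C(E') = \mu_C(E) > \mu_C(E/E')$ follows from the see-saw property together with the inductive estimate $\mu_C(Q_i) < \mu_C(E)$; and absence of destabilizing quotients of $i_{p*} E'_p$ is the stopping criterion.

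Second, I would derive the exchange sequence in $\cA_p$
\begin{equation*}
0 \to i_p^*Q_n \to (E_{n+1})_p \to K_n \to 0, \qquad K_n := \ker\bigl(i_{p*}(E_n)_p \onto i_{p*}Q_n\bigr),
\end{equation*}
by chasing kernels and cokernels of the natural maps between $E_{n+1} \subset E_n$ and their fibers, using Lemma~\ref{lem:fibersinheart} together with the $C$-flatness of $E_n$ and $E_{n+1}$; this is the analog of \cite[Proposition~2.B.2]{HL:Moduli}. The exchange sequence implies the monotonicity $\mu_C(Q_{n+1}) \geq \mu_C(Q_n)$: if $r\colon (E_{n+1})_p \onto Q_{n+1}$ restricts to the zero map on $i_p^*Q_n$, then $r$ factors through $K_n$ and $\mu_C(Q_{n+1}) \geq \mu_C^-(K_n) > \mu_C(Q_n)$ since $Q_n$ is the \emph{last} HN factor; otherwise semistability of $i_p^*Q_n$ (as its pushforward $Q_n$ is semistable) forces $\mu_C(Q_{n+1}) \geq \mu_C(Q_n)$.

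Finally, to show termination, I would argue by contradiction. Suppose the procedure does not stop; then the bounded non-decreasing sequence $\mu_C(Q_n)$ stabilizes at some $\mu_\infty < \mu_C(E)$, and $\Im \Zc(Q_n) \le \Im \Zc(i_{p*}(E_n)_p) = \Im Z_K(E_K)$ is uniformly bounded. The support property of $\cA_\Ctor$ from condition~(2), combined with Remark~\ref{rem:supportfinitelength} applied to the induced stability condition on $\cA_p$, leaves only finitely many classes in $\Lambda$ available for the $[Q_n]$; passing to a subsequence we may assume $[Q_n] = \alpha$ is constant. At this point I would invoke the Quot machinery: the inclusions $E_n|_{\cO_{C,p}} \subset E|_{\cO_{C,p}}$ over the DVR $R = \cO_{C,p}$ (which is essentially of finite type over $C$, so that $\cA_R$ is defined by Theorem~\ref{Thm-D-bc} and universally satisfies openness of flatness by condition~(1)) produce, via a suitable twisting/shift, a sequence of points in the algebraic space $\Quot_C(E)$ of Proposition~\ref{proposition-quot-algebraic}; the valuative criteria of Proposition~\ref{proposition-quot-valuative-criteria} together with the Nakayama lemma in $\cA_R$ (Proposition~\ref{prop:Ctorsiontheory-opennessflatness}.\eqref{enum:Nakayama}) force the sequence $E_n + I_p^k E$ to stabilize inside $E$ for each $k$; combined with constancy of $[Q_n]$ this forces $E_n = E_{n+1}$ for $n \gg 0$, contradicting $Q_n \neq 0$.

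The main obstacle is this last step: rigorously implementing the Quot-based substitute for the classical argument, which computes $\bigcap_n \hat E_n$ inside the completion $\hat\cO_{C,p}$-linearly and bounds the length of $\hat E / \bigcap \hat E_n$. The hypotheses (1) (universal openness of flatness, needed for the Quot space to exist and be algebraic), (2) (support property, needed both for finiteness of classes and to get HN filtrations on the fibers), and (3) (the tilting property in the weak case, needed so that $(\cA_\Ctor, \Zc)$ actually behaves like a stability condition on which the iteration makes sense) are precisely what is required to make this substitution go through.
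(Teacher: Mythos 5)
Your setup — the Langton iteration with $E_{n+1} = \ker(E_n \onto i_{p*}Q_n)$ where $Q_n$ is the last HN factor, the exchange sequence $Q_n \into (E_{n+1})_p \onto K_n$, and the monotonicity of $\mu_C(Q_n)$ via the composition $\phi^n\colon Q_n \to (E_{n+1})_p \to Q_{n+1}$ — agrees with the paper's proof, and your use of conditions (1), (2), (3) in those steps is correct. The problem is the final termination argument, which is both vaguer than and structurally different from what is actually needed, and which you yourself flag as the main obstacle without resolving it.

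Concretely, the claim that the constancy of $[Q_n]$ plus Nakayama plus the valuative criterion for $\Quot_C(E)$ ``forces $E_n = E_{n+1}$ for $n \gg 0$'' does not go through. Since $\pi^n E \subset E_n$, the filtration $E_n$ never stabilizes in the naive sense even in the classical sheaf case; the classical argument instead passes to the completion $\hat{\cO}_{C,p}$, computes $\bigcap_n \hat E_n$, and produces a contradiction with \emph{generic} semistability, not with $Q_n \neq 0$. The paper's substitute for this completion argument requires three more steps you don't have: (a) after stabilization of $Z_C(Q_n)$ one shows $\phi^n$ becomes an isomorphism, so $i_{p*}(E_{n+1})_p \cong F^n \oplus Q^n$ splits and $F^n, Q^n$ are eventually constant; this is where condition (3) (the tilting property, via Remark~\ref{rem:tiltingpropertyextensions}) is actually used, to terminate the chain $Q^1 \subset Q^2 \subset \cdots$ with $Q^{n+1}/Q^n \in \cA_\Ctor^0$; (b) one shows $G^n := E/E_n$ is \emph{flat} over $R/(\pi^n)$ using Lemma~\ref{lem:flatonthickening} (checking that $\pi^i G^n/\pi^{i+1} G^n \cong Q$), which gives a compatible system of $R/(\pi^n)$-points of $\Quot_{\Spec R}(E)$ and hence, by algebraicity and local finite type of the Quot space, a point over a Dedekind scheme $D$ essentially of finite type dominating $R$ whose closed fiber is a base change of $E_p \onto Q$; and (c) one invokes condition (0) to transport this destabilizing quotient at the closed point of $D$ to a destabilizing quotient of $E_K$ at the generic point of $D$, contradicting $Z_K$-semistability. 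Your proof never establishes (a) or (b), and mentions condition (0) only in the harmless observation $\mu_C(i_{p*}(E_n)_p) = \mu_C(E)$ rather than in the decisive contradiction step (c). Without (c) there is no contradiction, and the proposed alternative contradiction $E_n = E_{n+1}$ cannot be reached.
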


Assumption \eqref{enum:finitecoverZconstant} is a weak version of \emph{universal local constancy of central charges}, see Definition~\ref{definition-uogs}.\eqref{ulccc}, which we study later in the context of families of stability conditions. 
Openness of flatness in assumption \eqref{enum:Langtonopennessflatness} refers to Definition~\ref{def-open-flat-utau} for the collection of fiberwise t-structures induced by $\cA_C$. 
We remind the reader that under this assumption, Quot spaces are locally of finite type over $C$ by Proposition~\ref{proposition-quot-algebraic}.
We will explore the support property for $(\cA_\Ctor,\Zc)$, part of assumption \eqref{enum:LangtonHNsupport}, in more detail in Section~\ref{subsec:support-for-HNstr}.

We denote the local ring at $p$ by $R$, and write $\pi$ for a generator of its maximal ideal.

\begin{proof}
If the claim of the theorem were not true, we could define a descending filtration
\begin{equation}\label{eqn:LangtonFiltration}
\dots\subset E_{n+1} \subset E_n \subset \dots \subset E_0:=E,
\end{equation}
with $E_n/E_{n+1}$ a $C$-torsion object supported over $p$ and $\mu_C(E_{n+1})>\mu_C(E_n/E_{n+1})$, as follows.
Let $Q^n$ be the maximal destabilizing quotient for $i_{p*}(E_n)_p$, which exists by condition \eqref{enum:LangtonHNsupport};
this means that we can write the last step of the HN filtration as $F^n \into i_{p*}(E_n)_p$ where
$Q^n = i_{p*}(E_n)_p/F$ is the associated semistable quotient, and $\mu_C^-(F^n) > \mu_C(Q^n)$.
Then $E_{n+1}$ is defined as the kernel of the composition $E_{n}\onto i_{p*}(E_n)_p \onto Q^n$; namely, $E_n/E_{n+1}=Q^n$.

Applying $i_{p*}i_p^*$ to $E_{n+1} \into E_n \onto Q^n$, taking cohomology, and using Lemma~\ref{lem:cohpushpull} for $Q^n$ gives an exact sequence
\begin{equation}\label{eqn:Langton}
0 \to Q^n \to i_{p*}(E_{n+1})_p \to F^n \to 0.
\end{equation}

Consider the sequence of maps $\phi^n\colon Q^n\into i_{p*}(E_{n+1})_p \onto Q^{n+1}$, which fit into a sequence of commutative diagrams with exact rows and columns
\[
\xymatrix{
0\ar[r]&Q^n\ar[r]\ar[d]\ar^{\phi^n}[dr]& i_{p*}(E_{n+1})_p\ar[r]\ar[d]& F^n\ar[r]\ar[d]&0\\
0\ar[r]&\im(\phi^n)\ar[r]\ar[d]& Q^{n+1}\ar[r]\ar[d]& \cok(\phi^n)\ar[r]\ar[d]&0.\\
&0&0&0
}
\] 
Suppose that $\phi^n=0$. 
Then $F^n\twoheadrightarrow Q^{n+1}$, 
but by definition $\mu_C^-(F^n)>\mu_C(Q^{n+1})$, so as a consequence of the see-saw property, $\Hom(F^n,Q^{n+1})=0$. 
Hence $\phi^n\neq 0$. 

Then as $Q^n$ and $Q^{n+1}$ are both $Z_C$-semistable, we must have 
\[\mu_C(Q^n)\leqslant\mu_C(\im (\phi^n))\leqslant\mu_C(Q^{n+1}).\]
By construction, we have $\mu_C(Q^n)<\mu_C(E_{n+1})=\mu_C(E)$, so $$\mu_C(Q^1)\leqslant\mu_C(Q^n)<\mu_C(E)$$ for all $n\geqslant 1$. 
Similarly, we have $0<\Im Z_C(Q^n)\leqslant \Im Z_C(E)$ for all $n\geqslant 1$, so the central charges $\{Z_C(Q^n)\}$ lie in a bounded subset of $\C$. 
As the $Q^n$ are all $Z_C$-semistable, it follows from Condition \eqref{enum:LangtonHNsupport} (support property on $\cD_\Ctor$) and Remark~\ref{rem:supportfinitelength} that there are only finitely many values for $Z_C(Q^n)$. 
In particular, the increasing chain $\mu_C(Q^n)$ must stabilize, so we may assume that $\mu_C(Q^n)=\mu_C(Q^{n+1})$ from the outset. 

We would like to show that the values $Z_C(Q^n)$ stabilize as well. 
To that end, assume that $\Im\Zc(\im(\phi^n))<\Im\Zc(Q^{n+1})$, so that $\Im\Zc(\cok(\phi^n))>0$. 
Then as $$\mu_C(Q^n)=\mu_C(\im(\phi^n))=\mu_C(Q^{n+1})$$ we must have $\mu_C(\cok(\phi^n))$ equal to the same number by the see-saw property in $\cA_\Ctor$, see Lemma~\ref{lem:seesawweak}.
Again, by definition $\mu_C^-(F^n)>\mu_C(Q^{n+1})=\mu_C(\cok(\phi^n))$, so as a consequence of the see-saw property, $\Hom(F^n,\cok(\phi^n))=0$. 
This contradicts the fact that we have a surjection $F^n \twoheadrightarrow \cok(\phi^n)$ by construction, where $\cok(\phi^n) \neq 0$ by our assumption $\Im\Zc(\im(\phi^n))<\Im\Zc(Q^{n+1})$. 
Thus we must have \[\Im\Zc(Q^n)\geqslant \Im\Zc(\im(\phi^n))=\Im\Zc(Q^{n+1})>0.\]
But as the $Z_C(Q^n)$ take only finitely many values, we see that for $n\gg0$, $Z_C(Q^n)$ must indeed stabilize. 

It follows that $\Ker(\phi^n)$ must satisfy $Z_C(\Ker(\phi^n))=0$ for $n \gg 0$. But then $\Ker(\phi^n)=0$ for $n\gg0$ by the $Z_C$-semistability of $Q^n$.
So we may assume from the beginning that we have an ascending chain of $Z_C$-semistable $C$-torsion objects
\[Q^1\subset Q^2\subset \cdots\subset Q^n\subset Q^{n+1}\subset\cdots\] 
with $Z_C(Q^n)=Z_C(Q^{n+1})$ for all $n\geqslant 1$. 

Now we have $Q^{n+1}/Q^n\in\cA_\Ctor^0$ for all $n\geqslant 1$. Since $(\cA_\Ctor,\Zc)$ has the tilting property by Condition \eqref{enum:Langtonweak}, this sequence stabilizes, see Remark~\ref{rem:tiltingpropertyextensions}.

We can conclude now that $\phi^n$ is an isomorphism for all $n \gg 0$, and so the exact sequence \eqref{eqn:Langton} splits, i.e., $i_{p*}(E_{n+1})_p\cong F^n \oplus Q^n$.
In particular, the objects $F^n$ are also constant.
Let us set $F:=F^n$ and $Q:=Q^n$, for all $n\gg0$.
Up to replacing $E$ with $E_n$, for $n\gg0$, we can assume that our filtration \eqref{eqn:LangtonFiltration} has the property that $i_{p*}(E_n)_p\cong F \oplus Q$ for all $n$.

We now restrict to the local ring $R = \cO_{C, p}$. 
Note that by Theorem~\ref{Thm-D-bc} there is an induced heart on the base change $\cD_{R}$, such that the restriction functor $\cD \to \cD_{R}$ is t-exact. 
We abuse notation by still using $E$ and $Q$ to denote the restrictions of these objects to $\cD_{R}$. We also set $G^n:=E/E_n$.
As $E_{n-1}/E_{n}=Q$, there exists an exact sequence
\begin{equation} \label{eq:GnGn-1}
 0 \to Q \to G^{n}\to G^{n-1} \to 0.
\end{equation}
Also, by construction, we have $\pi E_{n-1} \subset E_n$, so by induction it follows that $\pi^n E\subset E_n$. Therefore, the quotient $E\onto G^n$ factors through $E \onto E/\pi^n E\onto G^n$ so that $\pi^n$ acts as zero on 
$G^n$. By Lemma~\ref{lem:fibersinheart} it follows that $G^n$ is the pushforward of an object in $\cA_{R/(\pi^{n})}$, which we also denote by $G^n$. 

Moreover, it is not difficult to see from the construction that $E_{n-1}$ is the span of $E_n$ and $\pi E_{n-2}$ (as subobjects of $E$).
By induction, this shows that $E_{n-1}$ is the span of $E_n$ and $\pi^{n-1}E$; hence $\pi^{i}G^n/\pi^{i+1}G^n = E_i/E_{i+1} = Q$, with isomorphisms of the filtration steps induced by multiplication by $\pi$. 
By Lemma~\ref{lem:flatonthickening} below, this shows that $G^n$ is a flat object in $\cA_{R/(\pi^n)}$. 

By Condition~\eqref{enum:Langtonopennessflatness} and Proposition~\ref{proposition-quot-algebraic}, 
the Quot space $\Quot_{\Spec(R)}(E)$ is an algebraic space locally of finite type over $\Spec(R)$. 
The quotients $E/\pi^nE \onto G^n$ give a compatible system of $R/(\pi^n)$ points of $\Quot_{\Spec(R)}(E)$. 
It follows that the Quot space admits a point finite over $K$ that has the point $E_p \onto Q$ as a specialization.
Therefore, there exists the spectrum $D$ of a DVR over $R$, an element $E_D \onto F$ of $\Quot_{\Spec R}(E)(D)$ that on the special fiber $q \in D$ is given by a base change of $E_p \onto Q$, and such that the composition $D \to \Spec R \to C$ is dominant and essentially of finite type. As $E_q \onto F_q$ is destabilizing, the same holds by condition~\eqref{enum:finitecoverZconstant} for $E_K^{\oplus \dim_K K(D)} \onto f_* F_K$. This contradicts the $Z_K$-semistability of $E_K$.
\end{proof}

In the proof, we needed the following standard flatness criterion for modules in our context:
\begin{Lem} \label{lem:flatonthickening}
Let $p \in C$, let $R$ be its local ring, and write $R_n = R/(\pi^n)$ for the ring defining the $(n-1)$-th infinitesimal neighborhood of $p$. Then an object $B \in \cA_{R_n}$ is flat if and only if multiplication by $\pi$ induces isomorphisms
\[ B/\pi B \cong \pi B/\pi^2 B \cong \dots \cong \pi^{n-1}B. \]
\end{Lem}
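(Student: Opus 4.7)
The plan is to categorify the classical criterion for flatness of a module over an Artinian local ring with principal maximal ideal. Since $\Spec R_n$ has the unique point $p$, flatness of $B \in \cA_{R_n}$ amounts to the single condition $B_p := B \otimes^L_{R_n} \kappa(p) \in \cA_p$. The first step is to compute $B_p$ explicitly, using the minimal $2$-periodic free resolution
\[
\dotsb \xrightarrow{\pi^{n-1}} R_n \xrightarrow{\pi} R_n \xrightarrow{\pi^{n-1}} R_n \xrightarrow{\pi} R_n \to \kappa(p) \to 0
\]
of $\kappa(p)$ over $R_n$, together with the $\Dperf(R_n)$-linear structure on $\cD_{R_n}$: I identify $B_p$ with the complex obtained by replacing each $R_n$ with $B$ and each structure morphism with the corresponding multiplication morphism on $B$ in $\cA_{R_n}$. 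Taking cohomology in $\cA_p$ (and using the t-exactness and full faithfulness of the pushforward from $\cA_p$ into $\cA_{R_n}$ to stay within a single heart) then yields
\[
\rH^0_{\cA_p}(B_p) = B/\pi B, \quad \rH^{-1}_{\cA_p}(B_p) = \Ann(\pi;B)/\pi^{n-1}B, \quad \rH^{-2}_{\cA_p}(B_p) = \Ann(\pi^{n-1};B)/\pi B,
\]
with further cohomology $2$-periodic. Consequently, $B$ is $R_n$-flat if and only if $\Ann(\pi;B) = \pi^{n-1}B$ and $\Ann(\pi^{n-1};B) = \pi B$.

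The second step is to show that these two annihilator identities are equivalent to the isomorphisms of the lemma. Each multiplication-by-$\pi$ map $\pi^i B/\pi^{i+1}B \to \pi^{i+1}B/\pi^{i+2}B$ is tautologically surjective, so being an isomorphism reduces to injectivity, which in turn is equivalent to the inclusion $\Ann(\pi;B) \cap \pi^i B \subset \pi^{i+1}B$ for $i = 0,\dotsc,n-2$. Assuming the flatness criterion $\Ann(\pi;B) = \pi^{n-1}B$, these inclusions are all immediate. Conversely, the injectivities give $\Ann(\pi;B) \cap \pi^i B = \Ann(\pi;B) \cap \pi^{i+1}B$ for all such $i$, and iterating from $i=0$ up to $i=n-2$ yields $\Ann(\pi;B) \subset \pi^{n-1}B$ (with the reverse containment trivial from $\pi^n B = 0$). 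Moreover, the isomorphisms compose to an isomorphism $\pi^{n-1}\cdot \colon B/\pi B \xrightarrow{\sim} \pi^{n-1}B$, which via the factorization $B \twoheadrightarrow B/\pi B \xrightarrow{\sim} \pi^{n-1}B$ identifies $\Ann(\pi^{n-1};B)$ with $\pi B$.

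The principal technical obstacle will be the first step: rigorously performing the derived-restriction computation in the categorical setting so that the cohomology objects $\rH^{\bullet}_{\cA_p}(B_p)$ are meaningfully identified with explicit subquotients of $B$ in $\cA_{R_n}$. This calls for the $\Dperf(R_n)$-linear structure on $\cD_{R_n}$ supplied by Part~\ref{part:sod-t-structure-families}, and, in spirit, an iterated application of Lemma~\ref{lem:fibersinheart} to the cones of multiplication by $\pi$ and by $\pi^{n-1}$. Once this identification is in place, the algebraic manipulations of the second step are purely formal and take place entirely inside the abelian category $\cA_{R_n}$.
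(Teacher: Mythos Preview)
Your proposal is correct and follows essentially the same route as the paper: both compute $j_*j^*B$ via the $2$-periodic resolution $\dots \xrightarrow{\pi^{n-1}} R_n \xrightarrow{\pi} R_n$ of $\kappa(p)$ over $R_n$, arrive at the criterion that $B/\pi B \xrightarrow{\pi^{n-1}} \Ann(\pi;B)$ be an isomorphism (your two annihilator equalities are exactly its surjectivity and injectivity), and then check this is equivalent to the chain of isomorphisms in the statement. The one packaging difference is that the paper handles the unbounded-complex issue you flag as the ``principal technical obstacle'' by naively truncating the resolution at degree $\le -2$, which yields an exact triangle $j_*j^*B[1] \to \bigl(B \xrightarrow{\pi} B\bigr) \to j_*j^*B$ encoding the $2$-periodicity directly; this lets one read off $\rH^{-1}$ and $\rH^{-2}$ from a two-term complex rather than from the full periodic resolution.
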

\begin{proof}
Let $\cX_{R_n}$ be the base change of $\cX$ to $\Spec(R_n)$, and write $j$ for the inclusion $\cX_p \into \cX_{R_n}$.
Since $j_*$ is exact, it is enough to test whether $j_* j^*B = B \otimes R_n/(\pi) \in \cA_{R_n}$.
Using the (2-periodic) minimal resolution $R_n/\pi = \dots R_n \xrightarrow{\cdot \pi^{n-1}} R_n \xrightarrow{\cdot \pi} R_n$ and its naive truncation at $\leqslant -2$, we obtain an exact triangle
\[
j_*j^* B[1] \to \bigl( B\xrightarrow{\pi} B \bigr) \to j_* j^* B.
\]
Since $j^*$ is right-exact, it follows that $\rH^{-1}_{\cA_{R_n}}(j_* j^* B) = 0 $ if and only if the map $B/\pi \xrightarrow{\cdot \pi^{n-1}} \Ker \pi$ is surjective, and $\rH^{-2}_{\cA_{R_n}}(j_* j^* B) = 0 $ if and only if it is injective. This is easily seen to be equivalent to the assumption of the lemma.
\end{proof}

In our examples, condition \eqref{enum:Langtonweak} in Theorem~\ref{thm:Langton} will follow from a compatibility of the weak stability condition with the duality functor.
Then the existence of HN filtrations will be implied by the following generic openness property. 

\begin{Def}\label{def:GenericOpennessSemiStab}
A (weak) stability function $(\ZK,\Zc)$ on $\cA_C$ over $C$ satisfies {\em generic openness of semistability} if the following condition holds:
given $E\in\cA_C$ a $C$-torsion free object such that $E_K$ is $Z_K$-semistable, there exists a nonempty open subset $U\subset C$ such that for all $p\in U$ and for all quotients $i_{p*}E_p\onto Q$, we have $\mu_C(Q)\geqslant \mu_C(E)$.
\end{Def}

\begin{Rem}
\label{remark-HN-generic-openness} 
If $(\ZK,\Zc)$ is a (weak) stability function on $\cA_C$ over $C$ which satisfies the HN property, then generic openness of semistability holds. 
Indeed, if $E \in \cA_C$ is $C$-torsion free such that $E_K$ is $Z_K$-semistable, then by Proposition~\ref{prop:HNviaHNweak} there is a $Z_C$-semistable subobject $F \subset E$ with $F_K = E_K$. Then there exists a nonempty open subset $U \subset C$ such that $F_U = E_U$, and the claim follows from Lemma~\ref{lem:allfibersnotdestquot} applied to $F$. 
\end{Rem}

\begin{Cor}\label{cor:HNgeneral}
With the same assumptions as in Theorem~\ref{thm:Langton}, assume further that $\cA_C$ has a $C$-torsion theory, that $(\cA_K, Z_K)$ has the HN property, and that generic openness of semistability holds. Then the (weak) stability function $(Z_K,Z_{C\text{-}\mathrm{tor}})$ on $\cA_C$ over $C$ satisfies the HN property.
\end{Cor}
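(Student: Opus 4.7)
The plan is to verify the three conditions of Proposition~\ref{prop:HNviaHNweak}, which under the $C$-torsion theory hypothesis characterize when $(\ZK,\Zc)$ satisfies the HN property. Condition~\eqref{enum:HNAKweak} is given as hypothesis. Condition~\eqref{enum:HNAtorweak} follows immediately from assumption~\eqref{enum:LangtonHNsupport} of Theorem~\ref{thm:Langton}: since $(\cA_\Ctor,\Zc)$ is a (weak) stability condition on $\cD_\Ctor$, it in particular has the HN property. So the real work is condition~\eqref{enum:ssredweak}, semistable reduction.

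To prove semistable reduction, let $E \in \cA_\Ctf$ with $E_K$ being $Z_K$-semistable. The key input is generic openness of semistability: there exists a nonempty open $U \subset C$ such that for every $p \in U$, every quotient $i_{p*}E_p \onto Q$ satisfies $\mu_C(Q) \geq \mu_C(E)$. Since $C$ is a Dedekind scheme, the complement $C \setminus U$ is a finite set of closed points $\{p_1,\dots,p_n\}$. I will induct on $n$.

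If $n = 0$, then Lemma~\ref{lem:allfibersnotdestquot} (noting $E$ is $C$-torsion free with $E_K$ being $Z_K$-semistable) shows $E$ itself is $Z_C$-semistable and we may take $F = E$. If $n > 0$, then Theorem~\ref{thm:Langton} applies at $p_1$: all of its hypotheses~\eqref{enum:finitecoverZconstant}--\eqref{enum:Langtonweak} are inherited, and the existence of a destabilizing quotient $i_{p_1*}E_{p_1} \onto Q$ is the remaining trigger. This yields a subobject $E' \subset E$ with $E/E' \in \cA_\Ctor$ supported at $p_1$, $\mu_C(E') > \mu_C(E/E')$, and such that no quotient of $i_{p_1*}E'_{p_1}$ is destabilizing. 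I then need to verify three points: (a) $E' \in \cA_\Ctf$, which holds because $\cA_\Ctf$ is closed under subobjects by Lemma~\ref{Lem-AS-subs}; (b) $E'_K \cong E_K$ is still $Z_K$-semistable, because $E/E' \in \cA_\Ctor$ has vanishing generic fiber; and (c) $E'_q \cong E_q$ for every closed point $q \neq p_1$, which follows from the exact triangle $E'_q \to E_q \to (E/E')_q$ and the fact that $(E/E')_q = 0$ since $E/E'$ is set-theoretically supported on $\{p_1\}$.

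From (b) and (c), the bad set for $E'$ is contained in $\{p_2,\dots,p_n\}$ together with possibly $p_1$; but Theorem~\ref{thm:Langton} explicitly guarantees no destabilizing quotient at $p_1$, so the bad set strictly shrinks. By the inductive hypothesis applied to $E'$, there is a $Z_C$-semistable $F \subset E'$ with $E'/F \in \cA_\Ctor$; extending by $E/E' \in \cA_\Ctor$ and using that $\cA_\Ctor$ is closed under extensions (Lemma~\ref{Lem-AS-subs}), we conclude $E/F \in \cA_\Ctor$, producing the required semistable reduction. The main potential pitfall is the compatibility check (c); everything else is bookkeeping once one trusts Theorem~\ref{thm:Langton} and generic openness to control the destabilizing locus.
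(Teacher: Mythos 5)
Your proof is correct and follows essentially the same path as the paper: both reduce to verifying the semistable-reduction condition of Proposition~\ref{prop:HNviaHNweak}, invoke generic openness to isolate a finite bad set, and then apply Theorem~\ref{thm:Langton} pointwise. Your contribution is to phrase the repeated application of Theorem~\ref{thm:Langton} as an explicit induction and to verify point (c)—that replacing $E$ by $E'$ does not enlarge the bad set, because $E/E'\in\cA_\Ctor$ is supported over $p_1$ and hence $E'_q\cong E_q$ for $q\ne p_1$—which the paper leaves implicit but which is exactly what makes the induction terminate. The only small wrinkle in your write-up is the phrase "the existence of a destabilizing quotient $\ldots$ is the remaining trigger": if $p_1$ in fact has no destabilizing quotient, you do not apply Theorem~\ref{thm:Langton} there but simply drop $p_1$ from the bad set; your induction handles this case silently, but it is worth stating, since the complement $C\setminus U$ from generic openness is only an a priori bound on the bad locus rather than the exact locus of destabilizing quotients.
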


Theorem~\ref{thm:Ctorsiontheoryautomatic} below shows the existence of a $C$-torsion theory under the above hypotheses.

\begin{proof}
We use Proposition~\ref{prop:HNviaHNweak}; we only need to verify assumption \eqref{enum:ssredweak}, semistable reduction.

Let $E\in\cA_C$ be a $C$-torsion free object such that $E_K$ is $Z_K$-semistable.
By generic openness of semistability, there exists a non-empty open subset $U\subset C$ such that $i_{p*}E_p$ does not have a destabilizing quotient for all closed points $p\in U$.
Hence, we are left with finitely many points in $C$, to which we apply Theorem~\ref{thm:Langton}.
We thus obtain a subobject $F\into E$ such that $E/F\in\cA_{C\text{-}\mathrm{tor}}$ and $i_{p*}F_p$ does not have a destabilizing quotient for all closed points $p\in C$.
By Lemma~\ref{lem:allfibersnotdestquot}, $F$ is $Z_C$-semistable, as we wanted.
\end{proof}

\subsection{An example: coherent sheaves}\label{subsec:CoherentSheaves}

Let us consider the stability function $(Z_K,Z_{C\text{-}\mathrm{tor}})$ on $\Coh \cX$ given by slope-stability on the fibers as in Example~\ref{ex:slopestability}. That is, given a flat morphism $g \colon \cX \to C$ of relative dimension $n$ and 
$\cO_\cX(1)$ a relative polarization, we define 
\[\Zc(E) := \mathfrak{i} p_{n}(E) - p_{n-1}(E),
\]
where $p_n$ and $p_{n-1}$ are the two leading coefficients of the Hilbert polynomial of $E$, defined as in Example~\ref{ex:slopestabilitycurves}.
We define $\ZK$ similarly over the generic fiber. As $\cO_\cX(1)$ is globally defined, $(\ZK,\Zc)$ defines a weak stability function for $\Coh \cX$ over $C$.
Here the subcategory $(\Coh \cX)_\Ctor$ consists of sheaves supported on fibers of $g$ and $(\Coh \cX)_\Ctf$ consists of flat families of sheaves on the fibers of $g$.
Moreover, it is clear that a $Z_C$-semistable sheaf $E\in(\Coh \cX)_\Ctor$ is just a sheaf supported on fibers of $g$ that is $\mu$-semistable in the classical sense \cite[Definition~1.2.12]{HL:Moduli}.
It follows that $E\in(\Coh \cX)_\Ctf$ is $Z_C$-semistable if and only if $E$ is a family of $\mu$-semistable sheaves over $C$ in the classical sense. 

\begin{Prop}\label{prop:thingsworkforsheaves}
If the fibers of $g\colon \cX\to C$ are normal, integral, noetherian schemes, then the weak stability function $(\ZK,\Zc)$ on $\Coh \cX$ satisfies the HN property.
\end{Prop}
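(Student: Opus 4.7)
The plan is to apply Corollary~\ref{cor:HNgeneral}, which requires checking that $(\ZK, \Zc)$ satisfies the hypotheses of Theorem~\ref{thm:Langton}, together with the existence of a $C$-torsion theory, the HN property on $(\cA_K, Z_K)$, and generic openness of semistability. Since $\cX$ is noetherian, $\Coh \cX$ is noetherian, so a $C$-torsion theory exists automatically by Remark~\ref{rem:noetherianandtorsiontheory}. The pair $(\Coh \cX_K, Z_K)$ is classical slope-stability on the integral normal projective variety $\cX_K$, which is known to satisfy the HN property. Universal local constancy of central charges, i.e., assumption~\eqref{enum:finitecoverZconstant} of Theorem~\ref{thm:Langton}, follows by writing $\Zc$ as a $\Z$-linear combination of the functionals $\chi_{\cO_\cX(i)}$ for $i = 0, \dots, n$ (via Hilbert polynomial coefficients) and applying Example~\ref{ex:ZCtorviachiF}; the base-change behavior amounts to the classical identity that the Hilbert polynomial of a pushforward along a finite morphism $f$ is $\dim_{\kappa(f(q))} \kappa(q)$ times the Hilbert polynomial of the original sheaf. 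Universal openness of flatness for $\Coh \cX$ is the classical openness of the flat locus.

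Next I would verify the support property and tilting property for $(\cA_\Ctor, \Zc)$. For the support property, take $\Lambda$ generated by the two leading Hilbert polynomial coefficients $(p_n, p_{n-1})$ of sheaves supported on fibers; the Bogomolov--Gieseker inequality on each normal projective fiber $\cX_p$ provides a quadratic form $Q$ (a multiple of the discriminant) whose positivity is controlled by the slope, and for which $\Ker Z$ is negative definite. For the tilting property, note $\cA_\Ctor^0 \subset \cA_\Ctor$ consists of sheaves on fibers supported in codimension at least $2$, which is noetherian and forms a torsion subcategory. To construct $F \into \tF \onto F^0$ with $F^0 \in \cA_\Ctor^0$ for any $F \in \cA_\Ctor$ with $\mu^+(F) < +\infty$, I would use the filtration of Lemma~\ref{lem:filtrationatp} to reduce to the case $F$ is supported on a single closed fiber $\cX_p$, and then set $\tF := F^{\vee\vee}$, with the double dual computed on $\cX_p$; normality and integrality of $\cX_p$ guarantee that this has the correct properties, as in Example~\ref{ex:slopestabilityhastiltingproperty}.

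Finally, generic openness of semistability (Definition~\ref{def:GenericOpennessSemiStab}) for a $C$-torsion free coherent sheaf $E$ whose generic fiber $E_K$ is $\mu$-semistable is a classical result of Maruyama (see \cite[Proposition~2.3.1]{HL:Moduli}): there is a nonempty open $U \subset C$ such that $E_p$ is $\mu$-semistable for $p \in U$, which by Lemma~\ref{lem:allfibersnotdestquot} gives exactly the desired inequality on quotients. Combining all of the above with Corollary~\ref{cor:HNgeneral} yields the HN property for $(\ZK, \Zc)$ on $\Coh \cX$ over $C$.

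The main obstacles in this plan will be the careful verification of the tilting property away from the locus where the torsion sheaf is supported on a single reduced fiber (requiring devissage via Lemma~\ref{lem:filtrationatp} to reduce to the classical double-dual construction on normal integral fibers) and, to a lesser extent, the bookkeeping for the support property, where one needs uniformity of the Bogomolov-type inequality across all fibers and a lattice structure $\Lambda$ that captures $(p_n, p_{n-1})$ for torsion sheaves without losing positivity of $Q$. Everything else reduces either to a classical statement about coherent sheaves on a polarized projective variety, or to the general framework already built up in Part~\ref{part:HNStrCurve}.
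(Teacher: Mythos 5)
Your plan matches the paper's proof almost step for step: verify the hypotheses of Theorem~\ref{thm:Langton}, invoke noetherianity of $\Coh \cX$ for the $C$-torsion theory, use the classical HN and generic-openness statements for slope stability on the fibers and generic fiber, reduce the tilting property to the case of a sheaf supported on a reduced fiber $\cX_p$, and use the double dual on the normal integral variety $\cX_p$. The explicit devissage via Lemma~\ref{lem:filtrationatp} to reduce to reduced fibers is slightly more spelled out than the paper's ``without loss of generality $E = i_{p*}E'$'', but it is the same idea, used elsewhere in the paper (e.g.\ Step~3 of the proof of Theorem~\ref{thm:HNstructureviafibers}).

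The one place where your argument goes wrong is the support property step. You set $\Lambda$ to be generated by the two leading Hilbert polynomial coefficients $(p_n, p_{n-1})$, so $\Lambda$ has rank $2$, and then propose $Q$ to be a multiple of the discriminant. That is internally inconsistent: the discriminant involves $\ch_2$, which is not a function on this rank-$2$ lattice, so there is no discriminant quadratic form on $\Lambda_\R$. Moreover, appealing to the Bogomolov--Gieseker inequality would impose a characteristic-zero (or liftability) restriction that the proposition does not have. None of this is needed: on your rank-$2$ lattice the central charge $\Zc(E) = \ii\, p_n(E) - p_{n-1}(E)$ is injective, so $\Ker Z = 0$ inside $\Lambda$, and the support property holds with the trivial quadratic form $Q = 0$. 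This is exactly what the paper does, citing \cite[Section~3.2]{PT15:bridgeland_moduli_properties} for the choice of $\Lambda$ (compare also Example~\ref{ex:SlopeStabilityAsProperFamilyofWeakStability}). With this fix your proof is correct and follows the paper's route.
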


\begin{proof}
We use Corollary~\ref{cor:HNgeneral}, for which we must first verify the assumptions of Theorem~\ref{thm:Langton}. 
Assumptions~\eqref{enum:finitecoverZconstant} and \eqref{enum:Langtonopennessflatness} are standard. 
As $\Coh \cX$ is noetherian, it admits a $C$-torsion theory by Remark~\ref{rem:noetherianandtorsiontheory}, and of course the subcategory $(\Coh \cX)_\Ctor^0$, which consists of sheaves supported in codimension at least 2 in fibers, is also noetherian.
The existence of HN filtrations for objects in $(\Coh \cX)_\Ctor$ or $\Coh \cX_K$ follows exactly as in the construction of HN filtrations for classical slope-stability, see \cite[Theorem~1.6.7]{HL:Moduli}.
For the support property, we may simply choose $Q=0$ and $\Lambda$ as in \cite[Section~3.2]{PT15:bridgeland_moduli_properties}, which verifies assumption \eqref{enum:LangtonHNsupport}. 

Next we verify assumption \eqref{enum:Langtonweak}.
Take $E\in(\Coh \cX)_\Ctor$ with $\mu_C^+(E)<+\infty$, and let us assume without loss of generality that $E=i_{p*}E'$ for $E'\in\Coh \cX_p$.
The condition $\mu_C^+(E)<\infty$ means that $E$ is a torsion free sheaf on $\cX_p$, so $E\into E^{\vee\vee}$ by the integrality of $\cX_p$, where $E^{\vee\vee}:=\lHom(\lHom(E,\cO_{\cX_p}),\cO_{\cX_p})$.
By the normality of $\cX_p$, we get that $E^{\vee\vee}$ is reflexive and the quotient $E^{\vee\vee}/E$ is supported on $\cX_p$ in codimension at least 2.
Thus $E^{\vee\vee}/E\in(\Coh \cX)_{\Ctor}^0$. The reflexivity of $E^{\vee\vee}$ gives the vanishing of $\Hom((\Coh \cX)_\Ctor^0,E^{\vee\vee}[1])$, finishing the verification of assumption \eqref{enum:Langtonweak}.

Finally, as classical slope-stability satisfies generic openness of semistability by \cite[Proposition~2.3.1]{HL:Moduli}, the proposition follows from Corollary~\ref{cor:HNgeneral}.
\end{proof}

\section{Torsion theories and Harder--Narasimhan structures}\label{sec:HNstructuresTorsiontheory}

\subsection{A \texorpdfstring{$C$}{C}-torsion theory via semistable reduction} 
The goal of this section is to show that the existence of a $C$-torsion theory is automatic in our setting; our proof is similar to that of semistable reduction, Theorem~\ref{thm:Langton}.
This gives a partial converse to Proposition~\ref{prop:Ctorsiontheory-opennessflatness}, which shows that the existence of a $C$-torsion theory implies openness of flatness. 

\begin{Thm} \label{thm:Ctorsiontheoryautomatic}
Let $\cA_C$ be heart of a $C$-local t-structure on 
$\cD$, with the following assumptions:
\begin{enumerate}[{\rm (1)}] 
 \item $\cA_C$ universally satisfies openness of flatness.
 \item For every closed point $p \in C$, there exists
 a (weak) stability condition $\sigma_p = (\cA_p, Z_p)$; in the weak case, we also assume that $\cA_p^0 \subset \cA_p$ is a noetherian torsion subcategory.
\end{enumerate}
Then $\cA_C$ admits a $C$-torsion theory.
\end{Thm}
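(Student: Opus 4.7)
Strategy and reduction: By Lemma~\ref{lem:maxtorsionses} and Remark~\ref{rem:noetherianandtorsiontheory}, it suffices to show that every $E \in \cA_C$ admits a maximal $C$-torsion subobject. If $E_K = 0$ then $E \in \cA_\Ctor$ is itself maximal, so assume $E_K \neq 0$. Openness of flatness (Assumption~1), combined with the t-exactness of pullback to the generic point (Corollary~\ref{cor:base-change-tstructure-point}), shows that $U := \{c \in C : E_c \in \cA_c\}$ is a nonempty open with finite complement $\{p_1, \dots, p_k\}$. For any $c \in U$, the vanishing of $\rH^{-1}_{\cA_c}(E_c)$ together with Lemma~\ref{lem:fibersinheart} gives $\Ann(I_c; E) = 0$, and an easy induction on $n$ (observing that multiplication by $\pi^{n-1}$ factors $\Ann(I_c^n; E) \to \Ann(I_c; E) = 0$) then yields $\Ann(I_c^n; E) = 0$ for all $n$; hence any $C$-torsion subobject of $E$ is supported in $\{p_1, \dots, p_k\}$. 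It thus suffices to construct, for each $i$, the maximal $C$-torsion subobject $T_{p_i} \subseteq E$ supported at $p_i$, and to form the internal sum $\sum_i T_{p_i} \subseteq E$.

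Local construction at a single $p = p_i$: I pass to the local ring $R = \cO_{C,p}$ with uniformizer $\pi$ and work with the induced heart $\cA_R$ on $\cD_R$ provided by Theorem~\ref{Thm-D-bc}. The natural candidate for $T_{p}$ is the ascending chain
\[ T^n := \Ann(\pi^n; E) \cong i_{W_n*} \rH^{-1}_{\cA_{W_n}}(E_{W_n}), \qquad n \geq 1,\]
of $C$-torsion subobjects of $E$, where $W_n \subset \Spec R$ is the length-$n$ subscheme supported at $p$ and the isomorphism is Lemma~\ref{lem:fibersinheart}. The successive quotients $F^n := T^n/T^{n-1}$ are each annihilated by $\pi$, hence in the essential image of $i_{p*}\colon \cA_p \to \cA_R$; moreover, multiplication by $\pi$ on $E$ induces injective morphisms $F^{n+1} \hookrightarrow F^n$, realizing $\{F^n\}_{n \ge 1}$ as a decreasing chain of subobjects of $F^1 \in \cA_p$. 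The main task is to show that this chain stabilizes; from this, $T^n$ itself stabilizes and yields $T_{p}$.

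Stabilization argument and main obstacle: In the strong stability case (Assumption~2 with $\sigma_p$ a stability condition), the support property forces every object of $\cA_p$ to have finite Jordan--H\"older length (Remark~\ref{rem:supportfinitelength}), so the decreasing chain of subobjects of $F^1$ stabilizes automatically. In the weak stability case, I would mimic the concluding steps of the Langton--Maruyama proof (Theorem~\ref{thm:Langton}): the non-increasing sequence $\Im Z_p(F^n) \geq 0$ combined with the support property forces only finitely many classes $v_p(F^n) \in \Lambda$ to occur, so after passing to a subsequence we may assume $v_p(F^n)$ is constant, whence $F^n/F^{n+1} \in \cA_p^0$; one then completes the argument using the noetherianity of $\cA_p^0$ together with a tilting-style analysis as in Proposition~\ref{prop:RotateWeakStability}. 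The main obstacle lies precisely in this weak-stability step, where the descending nature of $\{F^n\}$ conflicts with the availability of only an ascending chain condition in $\cA_p^0$; I anticipate resolving it via a compactness argument using the finite-type algebraic space $\Quot_{\Spec R}(E)$ (Proposition~\ref{proposition-quot-algebraic}), applied to the compatible family of $R/\pi^n$-flat quotients associated to the chain together with Lemma~\ref{lem:flatonthickening}, in the same spirit as the final step of the proof of Theorem~\ref{thm:Langton}.
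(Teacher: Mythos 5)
Your reduction to constructing maximal $C$-torsion subobjects, the localization to a single closed point $p$, and the identification of the descending chain $\{F^n\}$ of subobjects of $\Ann(\pi; E) = \rH^{-1}_{\cA_p}(E_p)$ all match the paper's Lemma~\ref{lem:Ipntorsionfree} (with $F^n$ corresponding to the paper's $\Ann_\pi(I_p^{n-1}\cdot E)$). However, the proposal contains a genuine logical gap and two missing ideas.

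The critical error is the claim that ``the main task is to show that this chain stabilizes; from this, $T^n$ itself stabilizes.'' This is false: stabilization of the descending chain $F^n = T^n/T^{n-1}$ at a \emph{nonzero} value $F^\infty$ means precisely that $T^n$ grows by a copy of $F^\infty$ forever. One must separately prove that the stable value is zero, and that is exactly the paper's second step: after replacing $E$ so that the $F^n$ are constant, Lemma~\ref{lem:flatonthickening} shows $F_n := \Ann(\pi^n; E)$ is flat over $R/\pi^n$; one then uses $\Quot_{\Spec R}(E)$, Artin approximation, and the $\cHom_D(F, E_D)$ computation to derive a contradiction if the chain is nonzero. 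Your proposed Quot-space argument gestures at this but is misplaced — you cannot invoke Lemma~\ref{lem:flatonthickening} or form the relevant flat family \emph{before} stabilization, so Quot-space compactness cannot be used to establish stabilization itself.

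The second gap is in the stabilization step. Your ``strong case'' shortcut via finite Jordan--H\"older length is unfounded: Remark~\ref{rem:supportfinitelength} only gives that each slice $\cP(\phi)$ has finite length, not that $\cA_p$ does (it need not, e.g.\ for K3 surfaces), so a decreasing chain in $\cA_p$ does not automatically terminate. Your ``weak case'' slope/boundedness sketch is also incomplete: the $F^n$ are a \emph{descending} chain of subobjects of $F^1$, whose slopes are bounded \emph{above} but not below, so the support property alone does not bound their classes. The paper's key idea, which your proposal misses, is to pass to the quotients $Q_i = \cok(\Ann_\pi(I_p^{i+1}E) \to \Ann_\pi(E))$ — whose slopes \emph{are} bounded below by $\mu_p^{-}(\rH^{-1}_{\cA_p}(E_p))$ — and then to show, via the four-term exact sequence \eqref{eq:4termIpEsequence}, that $Q_i$ has the same $K$-theory class as the kernel $K_i$ of the composite surjection $E/\pi E \onto \pi^{i+1}E/\pi^{i+2}E$. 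The $K_i$ form an \emph{increasing} chain of subobjects of $E/\pi E$ with slopes bounded below, which is exactly the setting in which Lemma~\ref{lem:boundedslopenoetherian} (via the support property and noetherianity of $\cA_p^0$) gives termination. This $K_i$/$Q_i$ trick is essential, and without it your stabilization argument does not close.
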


The crucial ingredient will be the following claim:

\begin{Lem} \label{lem:Ipntorsionfree}
With the same assumptions as in Theorem~\ref{thm:Ctorsiontheoryautomatic}, 
if $p \in C$ and $E \in \cA_C$, then there exists $n >0$ such that $I_p^n \cdot E$ has no torsion subobject supported over $p$.
\end{Lem}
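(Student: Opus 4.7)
I would first reduce to the case $E_p \notin \cA_p$. If $E_p \in \cA_p$, then by universal openness of flatness (Condition~(1)) there is an open subset $U \subset C$ containing $p$ on which $E$ is $C$-flat; Lemma~\ref{lem:FlatIffTFreeCurve} applied to the Dedekind scheme $U$ gives that $E|_U$ is $U$-torsion free. Any torsion subobject of $E$ supported at $p$ is necessarily supported on an infinitesimal neighborhood of $p$, hence on a closed subscheme of $U$, and must thus vanish upon restriction to $U$ and therefore vanish itself. So the lemma holds trivially with $n = 0$, and I may assume $A := \rH^{-1}_{\cA_p}(E_p) \neq 0$. By the support property for $\sigma_p$ and Remark~\ref{rem:supportfinitelength} (and, in the weak case, the noetherianity of $\cA_p^0$), $A$ has finite length in $\cA_p$.

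Next, applying $i_p^{*}$ to the short exact sequence
\[ 0 \to I_p^n \cdot E \to E \to i_{W*}\rH^{0}_{\cA_W}(E_W) \to 0 \]
in $\cA_C$, where $W$ is the $(n-1)$-st infinitesimal neighborhood of $p$, and invoking the Koszul computation of Lemma~\ref{lem:cohpushpull}, which shows that $i_p^* i_{W*}\rH^{0}_{\cA_W}(E_W)$ has cohomology in $\cA_p$ concentrated in degrees $-1$ and $0$, the resulting long exact sequence yields injections
\[ K_n := \rH^{-1}_{\cA_p}\bigl((I_p^n \cdot E)_p\bigr) \hookrightarrow A \]
compatible with the inclusions $I_p^{n+1} \cdot E \subset I_p^n \cdot E$. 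These form a nested chain $K_1 \supseteq K_2 \supseteq \cdots$ of subobjects of the finite-length object $A$, which therefore stabilizes at some $K_\infty \subseteq A$. Lemma~\ref{lem:filtrationatp} reduces the conclusion to the vanishing of $I_p$-annihilated torsion subobjects of $I_p^n \cdot E$, and Lemma~\ref{lem:fibersinheart} identifies such subobjects with subobjects of $K_n$ in $\cA_p$ via $i_{p*}(\blank) \otimes g^* I_p^{-1}$. Hence the lemma is equivalent to $K_\infty = 0$.

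The main obstacle is to show $K_\infty = 0$. Suppose for contradiction $K_\infty \neq 0$. Tracking the compatibility of $K_n \hookrightarrow A$ under the inclusions $I_p^{n+1} \cdot E \subset I_p^n \cdot E$, one obtains a canonical nonzero subobject $\tau := i_{p*} K_\infty \otimes g^* I_p^{-1} \subset E$ contained in $I_p^n \cdot E$ for every $n$ sufficiently large. I would complete the proof by a Quot-space argument in the spirit of the endgame of Theorem~\ref{thm:Langton}: working over the DVR $R := \cO_{C,p}$, the compatible system of quotients $E_R/\tau_R \onto E_R/I_p^n \cdot E_R$, all with kernel supported on infinitesimal neighborhoods of the closed point, defines $R/\pi^n$-points of an appropriate Quot functor whose limit would force $\tau_R = 0$ and yield the contradiction. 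The delicate point is that Proposition~\ref{proposition-quot-algebraic} requires a $C$-flat source, while $E/\tau$ need not be $C$-flat; I would achieve flatness by an iterative procedure, quotienting at each step by a simple $\sigma_p$-stable subobject of $\tau$ and using the finite length of $A$ in $\cA_p$ to show that after finitely many steps the quotient becomes $C$-flat near $p$ by openness of flatness, at which point the Quot-space argument applies.
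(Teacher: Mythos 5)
Your setup (the reduction to $E_p \notin \cA_p$ via openness of flatness, the injections $K_n := \rH^{-1}_{\cA_p}((I_p^n\cdot E)_p) \hookrightarrow A$, and the reduction of the lemma to showing $K_\infty = 0$) is sound, and it is also correct that the nested chain $K_1 \supseteq K_2 \supseteq \cdots$ eventually stabilizes. But the justification you give for stabilization is not valid, and the endgame does not work as described.

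The assertion that $A = \rH^{-1}_{\cA_p}(E_p)$ has finite length in $\cA_p$ is unsupported. Remark~\ref{rem:supportfinitelength} only gives that each $\cP(\phi)$ is a finite-length \emph{abelian} category (with respect to subobjects of the same phase), and noetherianity of $\cA_p^0$ gives ACC-type statements, not DCC. A general object of $\cA_p$ — say a torsion-free sheaf of positive rank on a surface fiber — has infinite descending chains of subobjects. What is true is that $A$ has a finite HN filtration and that Lemma~\ref{lem:boundedslopenoetherian} gives ACC for subobjects with a fixed lower slope bound, but that is exactly the kind of one-sided finiteness the paper has to work with, and it never gives artinian. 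The paper gets around this precisely by \emph{not} arguing about a decreasing chain at all: it introduces the increasing chain $K_i = \Ker\bigl(E/I_p\cdot E \to I_p^{i+1}\cdot E/I_p^{i+2}\cdot E\bigr)$ coming from the surjections in \eqref{eq:Emodpsurjections}, shows by induction through the four-term sequences \eqref{eq:4termIpEsequence} that $v(K_i) = v(Q_i)$ and that $\mu_p(Q_i) \geq \mu_p^-(\rH^{-1}_{\cA_p}(E_p))$, and then applies Lemma~\ref{lem:boundedslopenoetherian}. The stabilization of your decreasing chain is then a \emph{consequence}, deduced through the isomorphisms $\alpha_i$ once the $\gamma_i$ become isomorphisms; it cannot be used as the starting point.

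The Quot-space endgame is also confused. The kernel of $E_R/\tau_R \onto E_R/I_p^n\cdot E_R$ is $I_p^n\cdot E_R/\tau_R$, which is \emph{not} supported on an infinitesimal neighborhood of $p$ — it is essentially all of $E_R$ up to torsion — so the claimed $R/\pi^n$-points of $\Quot$ do not exist. Moreover $E_R/I_p^n\cdot E_R = i_{W*}\rH^0_{\cA_W}(E_W)$ is in general not flat over $R_n$ (already for $E = \cO_C \oplus \cO_p$ in the sheaf case), so it is not a valid point of $\Quot$ in any case. The paper instead works with the \emph{subobject} $F_n := \Ann_{\pi^n}(E)$: once the chain is stable, Lemma~\ref{lem:flatonthickening} shows $F_n$ is flat over $R_n$, giving a compatible system of points of the moduli stack $\cM_{\utau}$ of flat objects (not a Quot functor); Artin approximation then produces a Dedekind scheme $D \to \cM_{\utau}$ étale at a point over $p$, and the contradiction is extracted from a semicontinuity computation with $\cHom_D(F, E_D)$. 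Your proposed patch — peeling off simple $\sigma_p$-stable subobjects of $\tau$ and invoking finite length of $A$ — is circular, since it again assumes precisely the finiteness statement that needs to be established.
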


\begin{proof}[Proof of Theorem~\ref{thm:Ctorsiontheoryautomatic}]
Consider $E \in \cA_C$.
Since $\cA_C$ satisfies openness of flatness, and since $E_K \in \cA_K$, there exists a finite set of closed points $p_1, \dots, p_m$ such that $E$ is flat on the complement
$C \setminus \{p_1, \dots, p_m\}$. For each $i$, let 
$n_i$ be such that $I_{p_i}^{n_i} \cdot E$ has no torsion supported over $p_i$ as in Lemma~\ref{lem:Ipntorsionfree}. Since $I_{p_i}^{n_i} \cdot E$ and $E$ are isomorphic on the complement of $p_i$, one sees easily by induction that $\left(I_{p_1}^{n_1} \cdot \dots \cdot I_{p_m}^{n_m} \cdot E\right)\otimes I_{p_1}^{-n_1}\otimes\cdots\otimes I_{p_m}^{-n_m}$ is a $C$-torsion free quotient of $E$ whose kernel is $C$-torsion.
\end{proof}

We now turn to the proof of Lemma~\ref{lem:Ipntorsionfree}.
Let $\pi$ be a local generator of $I_p$ around $p$; it acts on any torsion object supported over $p$.
Consider the exact sequence 
\begin{equation*} 
I_p^{i+1} \cdot E \into I_p^i\cdot E \onto I_p^i \cdot E/I_p^{i+1} \cdot E
\end{equation*}
for $i \geqslant 0$. Applying $i_{p*}i_p^*$, taking cohomology with respect to $\cA_C$, and using Lemmas~\ref{lem:cohpushpull} and \ref{lem:fibersinheart} gives a four term exact sequence
\begin{equation} \label{eq:4termIpEsequence}
0 \to \Ann_\pi \left(I_p^{i+1} \cdot E\right) \xrightarrow{\alpha_i}
\Ann_\pi(I_p^i \cdot E) \xrightarrow{\beta_i} I_p^i \cdot E/I_p^{i+1} \cdot E \xrightarrow{\gamma_i}
I_p^{i+1}\cdot E/I_p^{i+2} \cdot E \to 0
\end{equation}
where we used the abbreviation $\Ann_\pi(F) = \Ann(I_p ; F)$.
In particular, there is a sequence of surjections
\begin{equation} \label{eq:Emodpsurjections}
 E/I_p \cdot E \xrightarrowdbl{\gamma_0} I_p\cdot E/I_p^2 \cdot E \xrightarrowdbl{\gamma_1} I_p^2 \cdot E/I_p^3 \cdot E \xrightarrowdbl{\gamma_2} \cdots
\end{equation}
Our key claim is the following:

\begin{Lem}
The sequence \eqref{eq:Emodpsurjections} terminates.
\end{Lem}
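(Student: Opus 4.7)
By the four-term exact sequence~\eqref{eq:4termIpEsequence}, $\gamma_i$ is an isomorphism if and only if $\alpha_i$ is, so termination of \eqref{eq:Emodpsurjections} is equivalent to the descending chain of subobjects
\[ \Ann_\pi(E) \supset \Ann_\pi(I_p E) \supset \Ann_\pi(I_p^2 E) \supset \cdots \qquad \text{in } \cA_p \]
eventually becoming constant. Equivalently, setting $K_i := \ker(F \twoheadrightarrow F_i) \subset F$ in $\cA_p$, with $K_{i+1}/K_i \cong \ker \gamma_i \cong \Ann_\pi(I_p^iE)/\Ann_\pi(I_p^{i+1}E)$ by \eqref{eq:4termIpEsequence}, the claim is equivalent to stabilization of the ascending chain $K_0 \subset K_1 \subset \cdots \subset F$.

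The plan is to reduce to the noetherianity of $\cA_p^0$ (which is automatic in the strong case, since then $\cA_p^0 = \set{0}$, and assumed in the weak case). I would argue in two steps.

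\textbf{Step 1: the classes $v(K_i) \in \Lambda$ stabilize.} The sequence $v(K_i)$ is increasing in the sense that the increments $v(K_{i+1}) - v(K_i) = v(\ker \gamma_i)$ are classes of objects in $\cA_p$, and bounded above by $v(F)$ since $v(F) - v(K_i) = v(F_i)$ is also such a class. Using the support property for $\sigma_p$, I would argue that only finitely many classes can occur as $v(K_i)$. Concretely, each HN factor $H$ of $K_i$ is semistable, satisfies $Q(v(H)) \ge 0$, and has $\Im Z_p(v(H)) \in [0, \Im Z_p(v(F))]$; the Bridgeland--Bayer--Macr\`i form of the support property then forces the classes of non-phase-$1$ HN factors to lie in a finite set, and bounds their number via $\sum \Im Z_p(v(H)) \le \Im Z_p(v(F))$. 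The phase-$1$ contribution is controlled using that $\cA_p^0 \subset \cA_p$ is a noetherian torsion subcategory, which forces the maximal $\cA_p^0$-subobject $K_i^0 \subset K_i$ to stabilize, and hence bounds the phase-$1$ part of $v(K_i)$.

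\textbf{Step 2: conclude via noetherianity of $\cA_p^0$.} Once $v(K_i)$ has stabilized for $i \ge N$, we have $v(\ker \gamma_i) = 0$, so each semistable HN factor $H$ of $\ker \gamma_i$ satisfies $v(H) \in \ker Z_p$ with $Q(v(H)) \ge 0$; by negative-definiteness of $Q$ on $\ker Z_p$ (support property), $v(H) = 0$, and then $H = 0$ in the strong case, or $H \in \cA_p^0$ in the weak case. Hence $\ker \gamma_i \in \cA_p^0$ for $i \ge N$. For such $i$, $K_i/K_N$ is an iterated extension of objects in $\cA_p^0$, so lies in $\cA_p^0$, and the ascending chain $\set{K_i/K_N}_{i \ge N}$ of subobjects of $(F/K_N)^0 \in \cA_p^0$ stabilizes by noetherianity of $\cA_p^0$. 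Therefore $K_i$ stabilizes and $\ker \gamma_i = 0$ for all $i$ sufficiently large.

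The main obstacle is Step~1, and specifically the precise bookkeeping showing that the phase-$1$ contribution to $v(K_i)$ does not grow indefinitely. The key ingredient there is to combine the ascending-chain noetherianity of $\cA_p^0$ (to handle the phase-$1$ part $K_i^0$) with the support property inequality (to handle the remaining HN factors, which live in $(\cA_p^0)^\perp$ and have phases strictly less than~$1$).
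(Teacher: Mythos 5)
Your high-level plan is sound and mirrors the proof of Lemma~\ref{lem:boundedslopenoetherian} (finiteness of classes via support property, then noetherianity of $\cA_p^0$), but Step~1 has a genuine gap. Having $Q(v(H)) \ge 0$ and $\Im Z_p(v(H)) \in [0, \Im Z_p(v(F))]$ for the HN factors $H$ of $K_i$ does \emph{not} force the classes into a finite set: by Remark~\ref{rem:supportfinitelength} one needs the full magnitude $|Z_p(v(H))|$ to be bounded, and $\Re Z_p(v(H))$ can blow up if the slopes $\mu_p^-(K_i)$ drift to $-\infty$ as $i$ grows. Nothing in what you have written rules that out; you identify the phase-$1$ contribution as the concern, but the actual risk is HN factors of small (not $1$) phase with large $|Z_p|$.

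The paper supplies exactly the missing lower bound on slope, and this is the whole content of the lemma's proof. Alongside $K_i = \Ker(E/I_p\cdot E \to I_p^{i+1}E/I_p^{i+2}E)$, it introduces $Q_i = \Cok\bigl(\Ann_\pi(I_p^{i+1}E) \to \Ann_\pi(E)\bigr)$ and $F_i = \im\beta_i$, then observes from the four-term sequence~\eqref{eq:4termIpEsequence} that $K_{i-1} \into K_i \onto F_i$ and $F_i \into Q_i \onto Q_{i-1}$, so by induction $[K_i] = [Q_i]$ in $K(\cD_p)$. Since each $Q_i$ is a quotient of $\Ann_\pi(E) = i_{p*}\rH^{-1}_{\cA_p}(E_p)$ (Lemma~\ref{lem:fibersinheart}), its slope is bounded below by $\mu_p^-\bigl(\rH^{-1}_{\cA_p}(E_p)\bigr)$, hence so is $\mu_p(K_i)$, and now Lemma~\ref{lem:boundedslopenoetherian} applies directly to the increasing chain $K_0 \subset K_1 \subset \cdots \subset E/I_p\cdot E$. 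Without the $Q_i$ comparison — or some other device giving a uniform lower bound on $\mu_p(K_i)$ — your Step~1 cannot be completed, and Step~2, which is correct, never gets off the ground.
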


\begin{proof}
For $i \geqslant 0$ we define 
\begin{align*}
F_i & = \im{(\Ann_\pi(I_p^i \cdot E) \xrightarrow{\ \beta_i \ } I_p^i \cdot E/I_p^{i+1} \cdot E)} , \\ 
Q_i & = \cok{(\Ann_\pi(I_p^{i+1} \cdot E) \xrightarrow{\ \alpha_0 \circ \dots \circ \alpha_i \ } \Ann_\pi(E))} , \\ 
K_i & = \Ker{(E/I_p \cdot E \xrightarrow{\gamma_i \circ \dots \circ \gamma_0} I_p^{i+1} \cdot E/I_p^{i+2} \cdot E)}. 
\end{align*}
Note that by construction, all of these objects are scheme-theoretically supported over $p$, and hence may be regarded as objects of $\cA_p$. 
Since $\Ann_\pi(E) = i_{p*} \rH_{\cA_p}^{-1}(E_p)$ by Lemma~\ref{lem:fibersinheart}, 
the slope of $Q_i$ is bounded below by 
$\mu_p(Q_i) \geqslant \mu_p^{-}(\rH_{\cA_p}^{-1}(E_p))$. On the other hand, we have $Q_0 = F_0 = K_0$ 
and short exact sequences
\[ K_{i-1} \into K_i \onto F_i \quad \text{and} \quad
F_i \into Q_i \onto Q_{i-1}.
\]
By induction it follows that $K_i$ and $Q_i$ have the same class $K(\cD_p)$, and therefore the slope $\mu_p(K_i) = \mu_p(Q_i)$ satisfies the same bound.
Thus our claim follows from Lemma~\ref{lem:boundedslopenoetherian}.
\end{proof}

\begin{proof}[Proof of Lemma~\ref{lem:Ipntorsionfree}]
Let $R_n = \cO_C/I_p^n$ be the $n$-th infinitesimal neighborhood of $p$.
By the previous lemma, after replacing $E$ if necessary, we may assume that the sequence \eqref{eq:Emodpsurjections} is a sequence of isomorphisms; therefore, we have
$\Ann_\pi(I_p^i\cdot E) = \Ann_\pi(E)$ for all $i$. By Lemma~\ref{lem:flatonthickening} this means that
$F_n := \Ann_{\pi^n}(E) \in \cA_{R_n}$ is a flat object over $R_n$, which we assume to be non-zero for contradiction.

Therefore, we get a sequence of compatible morphisms
$\Spec(R_n) \to \cM_{\utau}$, where $\cM_{\utau}$ is the functor of flat objects with respect to the fiberwise collection of t-structures induced by $\cA_C$.
By Lemma~\ref{lem-moduli-flat-objects}, $\cM_{\utau}$ is an algebraic stack which is locally of finite type over $C$.
By Artin approximation, there exists a Dedekind domain $h \colon D \to \cM_{\utau}$ of finite type over $C$, together with a point $q \in D$, such that $D \to C$ is dominant, maps $q$ to $p$ and is \'etale at $q$, and such that $h$ is induced by $F_n$ in the $n$-th infinitesimal neighborhood of $q$, which we can identify with $\Spec R_n$.
Let $F$ be the object corresponding to $h$. 
We consider the object $\cHom_D(F, E_D) \in \Db(D)$ given by the relative derived sheaf Hom. 
Then we have 
\[ \rH^{-1} \left( K(D) \otimes \cHom_D(F, E_D) \right) 
= \Hom(F_{K(D)}, E_{K(D)}[-1]) = 0. \]
On the other hand, we have 
\begin{align*} 
\rH^{-1} \left(R_n \otimes \cHom_D(F, E_D)\right) & = 
\Hom(F_n, E_{R_n}[-1]) \\ 
& = \Hom(F_n, \rH^{-1}_{\cA_{R_n}}(E_{R_n})) \\ 
&= \Hom(F_n, F_n)
\end{align*}
where the second two equalities follow from Lemma~\ref{lem:fibersinheart}; 
in particular, this cohomology sheaf contains $R_n$ as a subsheaf for all $n > 0$. 
This contradicts the above vanishing.
\end{proof}

\subsection{Harder--Narasimhan structures and torsion theories}

In this section, we show that the existence of a $C$-torsion theory on the heart descends to the slices of a Harder--Narasimhan structure, and vice versa.

\begin{Def}
	We say that a (weak) HN structure $\sigma_C = (\ZK,\Zc,\cP)$ has a $C$-torsion theory if for every $\phi \in \R$ the category $\cP(\phi)$
	admits a torsion pair $\left(\cP(\phi)_\Ctor, \cP(\phi)_\Ctf\right)$ where
	$\cP(\phi)_\Ctor = \cP(\phi) \cap \cD_\Ctor$ and 
	$\cP(\phi)_\Ctf = \cP(\phi)_\Ctor^\perp$.
\end{Def}
Note that while $\cP(\phi)$ is only a quasi-abelian category, the notion of a torsion pair makes sense: we ask that every $E \in \cP(\phi)$ fits into a \emph{strict} short exact sequence $E_{\Ctor} \into E \onto E_{\Ctf}$, which is just an exact triangle in $\cD$ with $E_{\Ctor} \in \cP(\phi)_\Ctor$ and
 $E_{\Ctf} \in \cP(\phi)_\Ctf$.

\begin{Lem} \label{lem:torsionincPphi}
 Let $\sigma_C = (\ZK,\Zc,\cP)$ be a (weak) HN structure with associated heart $\cA_C$. 
	Assume that $E \in \cP(\phi)$ for $\phi \in (0,1]$, and let $W \subset C$ be a closed subset. Then the objects 
	$\Ann(I_W; E)$ and $I_W \cdot E$ in $\cA_C$ 
	are also semistable of phase $\phi$. 
\end{Lem}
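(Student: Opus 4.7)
The plan is to use that $C$ is Dedekind, so the closed subscheme $W \subset C$ is necessarily an effective Cartier divisor, and $I_W$ is a line bundle. Hence, tensoring with $g^*I_W$ preserves the slicing: by the $C$-linearity axiom in Definition~\ref{def:wHNstructure_C}, together with Remark~\ref{rem:local-slicing-tensor-ample}, we have $I_W \otimes E \in \cP(\phi)$. We will then extract the result from the two short exact sequences in $\cA_C$ provided by Lemma~\ref{lem:fibersinheart},
\[
\Ann(I_W;E) \hookrightarrow I_W \otimes E \twoheadrightarrow I_W \cdot E
\quad\text{and}\quad
I_W \cdot E \hookrightarrow E \twoheadrightarrow i_{W*}\rH^0_{\cA_W}(E_W).
\]

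The first step is to handle $I_W \cdot E$ by a sandwich argument between the two sequences. Since $I_W \cdot E$ is a subobject of $E \in \cP(\phi)$ in $\cA_C$, every HN factor of $I_W \cdot E$ in $\cA_C$ maps nontrivially into $E$, so by the $\Hom$-vanishing axiom of the slicing we get $\phi^+(I_W \cdot E) \le \phi$. Dually, $I_W \cdot E$ is a quotient of $I_W \otimes E \in \cP(\phi)$, and applying the same argument to the minimal HN factor yields $\phi^-(I_W \cdot E) \ge \phi$. Therefore $I_W \cdot E \in \cP(\phi)$.

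For $\Ann(I_W;E)$, we now have a short exact sequence $\Ann(I_W;E) \hookrightarrow I_W \otimes E \twoheadrightarrow I_W \cdot E$ in $\cA_C$ whose middle and right terms both lie in $\cP(\phi)$. Being a subobject of $I_W \otimes E$ gives $\phi^+(\Ann(I_W;E)) \le \phi$, so all HN factors in $\cA_C$ have phase in $(0,\phi]$. To pin them all to $\phi$ we compare central charges: $Z_C(\Ann(I_W;E)) = Z_C(I_W \otimes E) - Z_C(I_W \cdot E)$ lies on the ray $\R \cdot e^{\ii\pi\phi}$, and a short case analysis (using that $Z_C$ takes values in $\H \sqcup \R_{\le 0}$ on $\cA_C$) shows $Z_C(\Ann(I_W;E)) \in \R_{\ge 0}\cdot e^{\ii\pi\phi}$. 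If the value is nonzero, then comparing to the HN filtration of $\Ann(I_W;E)$ forces all of its HN factor phases to coincide with $\phi$. In the weak-stability setting the value could vanish, in which case $\Im Z_C$ vanishes on every HN factor and hence, since $\cA_C \subset \cP((0,1])$, every HN factor has phase $1$; but this can only happen when $\phi=1$, so again $\Ann(I_W;E) \in \cP(\phi)$.

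The only real subtlety is the weak case, where $Z_C$ may vanish on nonzero objects of $\cA_C$: the argument above already anticipates this by splitting into the cases $Z_C(\Ann(I_W;E)) \neq 0$ and $= 0$. A minor thing worth verifying carefully is that the HN filtration in $\cA_C$ in the sense of Definition~\ref{def:satisfies_HN_C} really coincides with that coming from the slicing $\cP$ (this is the content of Proposition~\ref{prop:stabviaheartweak}), so that the bounds $\phi^\pm$ computed via the two descriptions agree.
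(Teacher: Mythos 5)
Your treatment of $I_W \cdot E$ is correct and is exactly what the paper does: sandwich $I_W\cdot E$ between the semistable objects $I_W\otimes E$ (above) and $E$ (below).

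For $\Ann(I_W;E)$, however, there is a genuine gap: you write
\[
Z_C(\Ann(I_W;E)) = Z_C(I_W\otimes E) - Z_C(I_W\cdot E),
\]
treating $Z_C$ as additive along the short exact sequence. But $Z_C$ (Definition~\ref{def:ZC}/\ref{def:ZCweak}) is a case-distinction between $Z_K(\blank_K)$ and $\Zc$ and is \emph{not} a group homomorphism on $K(\cA_C)$. In fact, when $E_K\neq 0$ and $Z_K(E_K)\neq 0$, both $I_W\otimes E$ and $I_W\cdot E$ have the same generic fiber class, so $Z_C(I_W\otimes E)=Z_K(E_K)=Z_C(I_W\cdot E)$ and your right-hand side is identically $0$ --- yet $\Ann(I_W;E)$ is a nonzero $C$-torsion object whenever $E$ fails to be $C$-flat (e.g.~when $E$ is a direct sum of a semistable torsion object and a semistable flat object of the same phase), and then $Z_C(\Ann(I_W;E))=\Zc(\Ann(I_W;E))\neq 0$. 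So the displayed equality is false, and the ``case analysis'' built on it concludes incorrectly that $\phi=1$ whenever $E_K\neq 0$.

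The correct central-charge computation uses the \emph{other} short exact sequence together with the defining consistency relation~\eqref{eq:ZKZP} of a central charge over $C$. By Lemma~\ref{lem:fibersinheart} one has $[i_{W*}E_W]=[E/I_W\cdot E]-[\Ann(I_W;E)]$ in $K(\cD_{\Ctor})$, hence
\[
\Zc(E/I_W\cdot E) - \Zc(\Ann(I_W;E)) = \Zc(i_{W*}E_W) = \len(W)\cdot Z_K(E_K),
\]
which lies on $\R_{>0}e^{\ii\pi\phi}$. Combined with the bounds $\phi^+(\Ann(I_W;E))\le\phi$ (subobject of $I_W\otimes E$) and $\phi^-(E/I_W\cdot E)\ge\phi$ (quotient of $E$), this forces both central-charge phases to equal $\phi$, and then the sub-object argument you gave shows $\Ann(I_W;E)\in\cP(\phi)$. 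With this substitution your strategy recovers the paper's proof; the flaw is only in the claimed additivity of $Z_C$ and the resulting identity.
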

\begin{proof}
	The claim is automatic for $I_W \cdot E$, as it is both a quotient of $I_W \otimes E$ and a subobject of $E$, which are semistable objects of the same phase. 
	
	Now consider $A:= \Ann(I_W; E) = i_{W*}\rH^{-1}_{\cA_W}(E_W)$; by semistability of $I_W \otimes E$ we know $\phi(A) \leqslant \phi$. Semistability of $E$ implies that $\phi(E/I_W \cdot E) \geqslant \phi$. On the other hand, we have
\[\Zc(E/I_W \cdot E) -\Zc(A) =\Zc(E_W) = \len(W) \cdot Z_C(E) \in \R_{>0}\cdot e^{\ii\pi\phi}.
\]
This is only possible if both inequalities are equalities.
So $A \subset I_W \otimes E$ is an inclusion of objects of the same phase, with the latter being semistable; therefore, $A$ is also semistable.
\end{proof}

\begin{Prop} \label{prop:Ctorsiontheoryviaheart}
 Let $\sigma_C = (\ZK,\Zc,\cP)$ be a (weak) HN structure with associated $C$-local heart $\cA_C$. 
	Then $\sigma_C $ has a $C$-torsion theory if and only if $\cA_C$ has a $C$-torsion theory.
\end{Prop}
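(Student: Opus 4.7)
The plan is to exploit the interplay between semistability in the slicing $\cP$ and subobjects in the heart $\cA_C$, handling the two implications separately.

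For the implication that $\cA_C$ having a $C$-torsion theory implies the same for $\sigma_C$, fix $\phi \in \R$ and $E \in \cP(\phi)$; after shifting, assume $\phi \in (0,1]$ so that $E \in \cA_C$. Let $E_{\Ctor} \subset E$ be its maximal $C$-torsion subobject in $\cA_C$. The crucial observation is that semistability of $E$ forces $\phi^+(E_{\Ctor}) \leq \phi$, since any HN factor of $E_{\Ctor}$ embeds into $E$ in $\cA_C$. Define $F \subset E_{\Ctor}$ to be the first HN piece when $\phi^+(E_{\Ctor}) = \phi$ and $F := 0$ otherwise. The slicing axiom then ensures $F$ is the largest subobject of $E_{\Ctor}$ lying in $\cP(\phi)$, hence the maximal $C$-torsion subobject of $E$ in $\cP(\phi)$. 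It remains to check that the quotient $G := E/F$ (formed in $\cA_C$) lies in $\cP(\phi) \cap \cP(\phi)_{\Ctf}$. Membership in $\cP(\phi)$ follows by bounding $\phi^+(G) \leq \phi$: for any subobject $H \subset G$ in $\cA_C$, its preimage $\tilde H \subset E$ satisfies $\phi^+(\tilde H) \leq \phi$ by semistability, and combined with $F \in \cP(\phi)$ one deduces $\phi(H) \leq \phi$; the dual bound $\phi^-(G) \geq \phi$ follows from the slicing triangle estimates applied to $F \to E \to G$. Finally, any nonzero $T \in \cP(\phi)_{\Ctor}$ with $T \subset G$ would pull back to a $C$-torsion subobject of $E$ in $\cP(\phi)$ strictly containing $F$, contradicting the maximality of $F$.

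For the converse, given $E \in \cA_C$, consider its HN filtration $0 = E_0 \subset E_1 \subset \cdots \subset E_k = E$ with factors $A_i := E_i/E_{i-1} \in \cP(\phi_i)$, $\phi_1 > \cdots > \phi_k$ in $(0,1]$. Applying Lemma~\ref{lem:maxtorsionses} inductively along the short exact sequences $E_{i-1} \into E_i \onto A_i$, the problem reduces to constructing a maximal $C$-torsion subobject in $\cA_C$ for each $A \in \cP(\phi)$. The torsion pair on $\cP(\phi)$ yields a strict exact sequence $A^t \to A \to A^{tf}$ with $A^t \in \cP(\phi)_{\Ctor}$ already $C$-torsion, and another application of Lemma~\ref{lem:maxtorsionses} further reduces to showing $B := A^{tf} \in \cP(\phi)_{\Ctf}$ admits a maximal $C$-torsion subobject in $\cA_C$. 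Semistability of $B$ together with $B \in \cP(\phi)_{\Ctor}^\perp$ implies that any nonzero $C$-torsion $T \subset B$ in $\cA_C$ has $\phi^+(T) < \phi$ strictly: otherwise, the first HN piece $T_1$ would lie in $\cP(\phi)_{\Ctor}$ and produce a nonzero morphism into $B$, contradicting $B \in \cP(\phi)_{\Ctor}^\perp$. A descending induction on the maximal phase using the torsion pairs on all lower-phase slices $\cP(\psi)$ together with Lemma~\ref{lem:maxtorsionses} applied to the HN filtration of any such $T$ then produces the desired maximum.

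The main obstacle is the descent step in the converse direction: controlling the $C$-torsion subobjects of $B = A^{tf}$ requires combining the torsion pair hypothesis on \emph{every} slice $\cP(\psi)$ with the strict phase bound $\phi^+(T) < \phi$ to ensure the induction terminates. This is the point at which the full strength of the hypothesis that $\sigma_C$ has a torsion theory, rather than merely on the single slice $\cP(\phi)$, becomes essential.
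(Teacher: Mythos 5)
Your proof takes a genuinely different route from the paper in one direction, and has a genuine gap in the other.

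\textbf{Direction $\cA_C \Rightarrow \sigma_C$.} Your strategy (take only the first HN piece $F$ of $E_{\Ctor}$ and argue $E/F \in \cP(\phi)_{\Ctf}$ by phase comparison) is consistent but more roundabout than the paper's. The paper combines Lemma~\ref{Lem-ECtor-H-1}, which identifies $E_{\Ctor} = I_W^{-1} \otimes \Ann(I_W; E)$, with Lemma~\ref{lem:torsionincPphi}, which shows $\Ann(I_W; E)$ is semistable of phase $\phi$; together these imply that the \emph{entire} object $E_{\Ctor}$ is already in $\cP(\phi)$, so its HN filtration is trivial and the torsion pair decomposition in $\cA_C$ is automatically one in $\cP(\phi)$. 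You never invoke either lemma and so must work harder. Your justification of the initial bound $\phi^+(E_{\Ctor}) \leq \phi$ is also mildly incorrect as stated — you claim ``any HN factor of $E_{\Ctor}$ embeds into $E$,'' but only the first HN piece is a subobject, the rest are subquotients; the conclusion holds for that reason. Moreover, in the weak HN case your see-saw argument for $\phi^+(G) \leq \phi$ requires extra care around objects with $Z_C = 0$ (the weak see-saw, Lemma~\ref{lem:seesawweak}, leaves $\phi(H)$ unconstrained when $\phi(F) = \phi(\tilde H)$), which you do not address.

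\textbf{Direction $\sigma_C \Rightarrow \cA_C$.} Here the gap is real. You correctly reduce, via Lemma~\ref{lem:maxtorsionses}, to producing a maximal $C$-torsion subobject for $B \in \cP(\phi)_{\Ctf}$, and you correctly observe that any $C$-torsion $T \subset B$ has $\phi^+(T) < \phi$ strictly. But ``a descending induction on the maximal phase'' does not produce a maximum: the set of phases $\phi^+(T)$ ranging over torsion subobjects $T \subset B$ has no discreteness property in this setting (Proposition~\ref{prop:Ctorsiontheoryviaheart} does not assume the support property, so there is nothing to make the induction well-founded). The paper's key insight, which you have missed, is that $B \in \cP(\phi)_{\Ctf}$ is \emph{already torsion free} in $\cA_C$. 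This is again Lemma~\ref{lem:torsionincPphi}: if $T \subset B$ were a nonzero $C$-torsion subobject with schematic support $W$, then $I_W \otimes T \subset \Ann(I_W; B)$ is nonzero, and $\Ann(I_W; B)$ lies in $\cP(\phi)_{\Ctor}$ while injecting into $I_W \otimes B \in \cP(\phi)_{\Ctf}$ (twisting by $g^*I_W$ preserves both sides of the torsion pair), giving a contradiction. Without this, your reduction chain does not close, because you have only a phase bound rather than vanishing.

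In short: the heart of the paper's proof is Lemma~\ref{lem:torsionincPphi}, which shows that the operators $\Ann(I_W;-)$ and $I_W \cdot (-)$ preserve $\cP(\phi)$. This is what makes the $C$-torsion theory on $\cA_C$ compatible with the slicing, and what you are missing. Your phase-comparison arguments circle around this fact in the first direction and fail to supply a substitute for it in the second.
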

\begin{proof}
	Assume that $\sigma_C$ has a $C$-torsion theory. We first claim that any object $E \in \cP(\phi)_\Ctf$ is also torsion free as an object in $\cA_C$.
	Indeed, if $W$ is the schematic support of a torsion subobject of $E$, then $\Ann(I_W; E)$ would be a subobject	in $\cP(\phi)_\Ctor$ by Lemma~\ref{lem:torsionincPphi}.
	Combined with Lemma~\ref{lem:maxtorsionses} and the existence of HN filtrations, this shows that every object in $\cA_C$ has a maximal $C$-torsion subobject.
	
	Conversely, assume that $\cA_C$ has a $C$-torsion theory. 
	If $E \in \cP(\phi)$ for $\phi \in (0,1]$, then $E_{\Ctor} = \Ann(I_W; E)$ for $W$ the schematic support of $E_{\Ctor}$ by Lemma~\ref{Lem-ECtor-H-1}.
	Thus $E_{\Ctor}$ and $E_{\Ctf}$ are also objects of $\cP(\phi)$ by Lemma~\ref{lem:torsionincPphi}. 
	This verifies the condition defining the existence of a $C$-torsion theory for $\sigma_C$ for $\phi \in (0,1]$, which clearly implies the condition holds for all $\phi \in \R$. 
\end{proof}

\section{Harder--Narasimhan structures via stability conditions on fibers}\label{sec:HNstructuresfibers}

\subsection{Support property for HN structures} \label{subsec:support-for-HNstr}
Condition \eqref{enum:LangtonHNsupport} of Theorem~\ref{thm:Langton} included in particular the assumption that $(\cA_{\Ctor},\Zc)$ satisfies the support property.
In this subsection, we briefly explore the appropriate lattices adapted to the support property on $\cD_{\Ctor}$. 

\begin{Def} 
A \emph{Mukai homomorphism} on $\cD$ over $C$ with respect to $\Lambda$ is a pair $(v_K,v_\Ctor)$ where 
\index{vK,vCtor@$(v_K,v_\Ctor)$,!Mukai homomorphism on $\cD$ over $C$!}
\[
v_K\colon K(\cD_K)\to \Lambda \quad \text{and} \quad
v_{\Ctor} \colon K(\cD_{\Ctor}) \to \Lambda
\]
are group homomorphisms with the following property: for all $E \in \cD$, and all proper closed subschemes $W \subset C$, we have
\begin{equation} \label{eq:vKvCtor}
v_K(E_K) = \frac 1{\len W} v_{\Ctor}\left( 
i_{W*}E_W\right).
\end{equation}
\end{Def}

Notice that \eqref{eq:vKvCtor} can be equivalently stated as
$v_K(E_K)=v_{\Ctor}\left(i_{p*}^{}E_p^{}\right)$, for all closed point $p\in C$.

\begin{Rem}
\label{remark-mukai-hom-Z} 
Given a Mukai homomorphism on $\cD$ over $C$ with respect to $\Lambda$ and a group homomorphism $Z \colon \Lambda \to \C$, we obtain a central charge 
on $\cD$ over $C$ by setting $Z_K = Z \circ v_K$ and $Z_{\Ctor} = Z \circ v_{\Ctor}$. 
\end{Rem}

We denote by $v_p^{}:=v_{\Ctor}\circ {i_{p*}\colon K(\cD_p)}\to \Lambda$; thus we have defined $v_c \colon K(\cD_c) \to \Lambda$ for every (closed or non-closed) point $c \in C$.
The following observation is immediate from the fact that $K(\cD_{\Ctor})$ is the direct sum of $i_{p*}(K(\cD_p))$ over all closed points $p \in C$.

\begin{Lem}\label{lem:MukaiCtorviafibers}
To give a Mukai homomorphism is equivalent to giving a collection of homomorphisms $v_c \colon K(\cD_c) \to \Lambda$ such that for $E \in \cD$, the vector $v_c(E_c)$ is independent of $c \in C$.
\end{Lem}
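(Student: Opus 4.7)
The strategy is to exploit the decomposition $K(\cD_{\Ctor}) = \bigoplus_{p} i_{p*}(K(\cD_p))$ stated just before the lemma, which converts the data of a single homomorphism $v_{\Ctor}$ into the data of a compatible system of homomorphisms on the fibers at closed points. On the generic-point side, $v_K$ already is such a homomorphism, so the two formulations should match once the compatibility condition \eqref{eq:vKvCtor} is translated into the required independence.

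For the forward direction, given a Mukai homomorphism $(v_K, v_{\Ctor})$, I would define $v_c := v_{\Ctor} \circ i_{c*}$ at every closed point $c = p$, and $v_c := v_K$ at the generic point $c = \eta$. Applying \eqref{eq:vKvCtor} to $W = \{p\}$ of length one yields $v_p(E_p) = v_{\Ctor}(i_{p*}E_p) = v_K(E_K)$ for every $E \in \cD$, which is exactly the asserted independence of $v_c(E_c)$ in $c$.

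For the reverse direction, I would start with a collection $(v_c)_{c\in C}$ satisfying independence, and set $v_K := v_\eta$. Using the direct sum decomposition, every class in $K(\cD_{\Ctor})$ is uniquely of the form $\sum_i i_{p_i*}[G_i]$ with $[G_i] \in K(\cD_{p_i})$, and I would define $v_{\Ctor}\bigl(\sum_i i_{p_i*}[G_i]\bigr) := \sum_i v_{p_i}([G_i])$; this is a well-defined group homomorphism by the direct-sum decomposition alone. It remains to verify \eqref{eq:vKvCtor}. Writing $W = \bigsqcup_i W_i$ with $W_i$ set-theoretically supported at a single closed point $p_i$ with multiplicity $n_i = \len W_i$, we have $i_{W*}E_W \cong \bigoplus_i i_{W_i*}E_{W_i}$. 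The key dévissage is that $[i_{W_i*}E_{W_i}] = n_i\,[i_{p_i*}E_{p_i}]$ in $K(\cD_{\Ctor})$, which follows by filtering $\cO_{W_i}$ by powers of the maximal ideal $\frm_{p_i}$ and observing that each successive quotient $\frm_{p_i}^k/\frm_{p_i}^{k+1} \cong \kappa(p_i)$; derived-tensoring with $E$ sends each layer to $i_{p_i*}E_{p_i}$. Applying $v_{\Ctor}$ and using independence gives
\[
v_{\Ctor}(i_{W*}E_W) = \sum_i n_i\, v_{p_i}(E_{p_i}) = \Bigl(\sum_i n_i\Bigr) v_K(E_K) = (\len W)\,v_K(E_K),
\]
which is precisely \eqref{eq:vKvCtor}.

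The two constructions are visibly inverse to each other at the level of the data $v_c$, so the correspondence is bijective. The main (minor) obstacle is simply the dévissage computation identifying $[i_{W_i*}E_{W_i}]$ with $n_i[i_{p_i*}E_{p_i}]$ in $K(\cD_{\Ctor})$; everything else is formal, and the result is indeed ``immediate'' once the direct sum decomposition of $K(\cD_{\Ctor})$ is accepted as given.
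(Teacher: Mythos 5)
Your proof is correct and follows the same route the paper takes: the paper simply asserts the lemma is immediate from the direct sum decomposition $K(\cD_{\Ctor}) = \bigoplus_p i_{p*}(K(\cD_p))$, and your write-up fills in precisely the details that make this work, in particular the d\'evissage $[i_{W*}E_W] = (\len W)[i_{p*}E_p]$ (which holds because $\cO_W$ admits a filtration with $\len W$ graded pieces isomorphic to $\kappa(p)$, and since $C$ is a Dedekind scheme each filtration step is perfect and stays inside the $C$-linear subcategory $\cD$). No substantive difference from the paper's intended argument.
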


\begin{Def}
We say that a Mukai homomorphism is \emph{numerical on fibers} if $v_c$ factors via $K(\cD_c) \to \Knum(\cD_c)$ for all $c \in C$.
\index{vK,vCtor@$(v_K,v_\Ctor)$,!Mukai homomorphism on $\cD$ over $C$!numerical on fibers}
\end{Def}

We will discuss Mukai homomorphisms more systematically in Section~\ref{subsec:SupportProperty}.
 
\begin{Def} Fix a Mukai homomorphism $(v_K, v_\Ctor)$ with lattice $\Lambda$. 
A (weak) HN structure satisfies the support property with respect to a quadratic form $Q$ on $\Lambda_\R$ if $(\cA_\Ctor,\Zc)$ satisfies the support property with respect to $v_\Ctor$ and $Q$. 
\index{sigmaC@$\sigma_C = (\ZK,\Zc,\cP)$, $C$ a Dedekind scheme!Harder--Narasimhan structure on $\cD$ over $C$!satisfying the support property}
\index{sigmaC@$\sigma_C = (\ZK,\Zc,\cP)$, $C$ a Dedekind scheme!weak HN structure on $\cD$ over $C$!satisfying the support property}
\end{Def}

Recall Lemmas~\ref{lem:inducedfiberprestability} and \ref{lem:inducedfiberprestabilityweak}, which associate to a (weak) HN structure $\sigma_C $ a (weak) pre-stability condition $\sigma_c$ on $\cD_c$ for all $c \in C$.
The support property for $\sigma_C $ is equivalent to the collection $(\sigma_c)_{c \in C}$ satisfying the support property with respect to a uniform quadratic form: 

\begin{Lem} \label{lem:supportpropertyviafibers}
	Let $\sigma_C$ be a (weak) HN structure with a $C$-torsion theory.
	Then $\sigma_C$ satisfies the support property with respect to a quadratic form $Q$ if and only if for every $c \in C$, or equivalently for every closed point $c \in C$, the induced (weak) pre-stability condition $\sigma_c$ satisfies the support property with respect to $Q$ and $v_c$.
\end{Lem}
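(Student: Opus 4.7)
The plan is to verify the three equivalent support conditions by separating the kernel part from the positivity part of the support property, and then handling forward and backward implications for the positivity. The kernel condition ``$\ker Z \subset \Lambda_\R$ is negative definite with respect to $Q$'' is the same in all three cases, since $\Zc$, $Z_c$ (for every $c \in C$), and $Z_K$ all factor through the same map $Z \colon \Lambda \to \C$ by Lemma~\ref{lem:MukaiCtorviafibers} and Remark~\ref{remark-mukai-hom-Z}; only the positivity on classes of semistable objects needs comparison.

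For the forward implication (support for $\sigma_C$ implies support for every $\sigma_c$), the closed-point case $c = p$ is immediate from Lemma~\ref{lem:inducedfiberprestabilityweak}: the pushforward $i_{p*}$ sends a $\sigma_p$-semistable $F \in \cD_p$ to a $\sigma_C$-semistable object in $\cA_\Ctor$ with $v_\Ctor(i_{p*}F) = v_p(F)$, so $Q(v_p(F)) \geq 0$. For the generic point $c = K$, given a $\sigma_K$-semistable $F \in \cA_K$, we use essential surjectivity $\cD \to \cD_K$ (Lemma~\ref{lem-open-restriction-es}) and the assumed $C$-torsion theory to produce a $C$-torsion free lift $E \in \cA_C$ with $E_K = F$, and then invoke semistable reduction (Proposition~\ref{prop:HNviaHNweak}.\eqref{enum:ssredweak}) to obtain a $Z_C$-semistable subobject $E' \subset E$ still satisfying $E'_K = F$. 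By Lemma~\ref{lem:allfibersnotdestquot}, each fiber $E'_p \in \cA_p$ is then $\sigma_p$-semistable, and the Mukai identity $v_K(F) = v_p(E'_p)$ reduces the claim to the closed-point case.

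For the converse (assuming the support property for all closed $\sigma_p$), let $E \in \cA_\Ctor$ be $\sigma_C$-semistable of phase $\phi$. Decomposing $E$ according to its set-theoretic support (which reduces to a direct sum of objects each supported at a single closed point, with the summands inheriting semistability of phase $\phi$), we may assume $E$ is supported at a single closed point $p$ and proceed by induction on the minimal $m \geq 1$ with $I_p^m \cdot E = 0$. The case $m = 1$ is immediate since then $E = i_{p*}F$ with $F \in \cA_p$ forced to be $\sigma_p$-semistable of phase $\phi$ by Lemma~\ref{lem:inducedfiberprestabilityweak}, giving $Q(v_\Ctor(E)) = Q(v_p(F)) \geq 0$. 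For $m \geq 2$, Lemma~\ref{lem:fibersinheart} provides the short exact sequence
\[
I_p \cdot E \into E \onto i_{p*} G, \qquad G = \rH^0_{\cA_p}(E_p),
\]
where $I_p \cdot E \in \cP(\phi)$ by Lemma~\ref{lem:torsionincPphi} and is annihilated by $I_p^{m-1}$, so the inductive hypothesis applies. The main technical point is to show that $G$ is itself $\sigma_p$-semistable of phase $\phi$: the bound $\phi^-_{\cA_p}(G) \geq \phi$ is immediate from $i_{p*}G$ being a quotient of the semistable $E$, while to establish $\phi^+_{\cA_p}(G) \leq \phi$, we lift any hypothetical destabilizing subobject $G' \subset G$ via Corollary~\ref{cor:schematicsupportsubsquotients} to $i_{p*}G' \subset i_{p*}G$ in $\cA_C$, and consider its preimage $X \subset E$ fitting into $I_p \cdot E \into X \onto i_{p*}G'$; then $Z_C(X)$ is the sum of a vector on the ray of phase $\phi$ and a nonzero vector of phase $> \phi$, forcing $\phi(Z_C(X)) > \phi$, which contradicts $X \subset E$ and the $\sigma_C$-semistability of $E$. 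Concatenating the inductive filtration of $I_p \cdot E$ with the extension above, $E$ is expressed as an iterated extension in $\cA_C$ of objects $i_{p*}H$ with $H \in \cA_p$ $\sigma_p$-semistable of phase $\phi$.

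Each such filtration piece satisfies $Q(v_\Ctor(i_{p*}H)) = Q(v_p(H)) \geq 0$ by the assumed support property for $\sigma_p$, and all their classes lie on the ray $\R_{\geq 0} \cdot e^{\ii\pi\phi}$. The standard fact that, when $\ker Z$ is negative definite with respect to $Q$, the set $\{v \in \Lambda_\R : Z(v) \in \R_{\geq 0}\cdot e^{\ii\pi\phi},\, Q(v) \geq 0\}$ is a convex cone (see e.g.~\cite[Lemma~A.6]{BMS:abelian3folds}) then yields $Q(v_\Ctor(E)) \geq 0$. The main obstacle is the structural decomposition of an arbitrary $\sigma_C$-semistable torsion object into extensions of pushforwards of matching-phase $\sigma_p$-semistables (in particular, the semistability of $G$); once this is in hand the convex-cone property finishes the argument.
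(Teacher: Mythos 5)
Your overall strategy matches the paper's: for the forward direction, use pushforward along closed points and lift-plus-semistable-reduction for the generic point; for the converse, reduce to a single closed point, produce a filtration by the $\pi$-action whose factors live in $\cA_p$, and conclude with the convex-cone linear algebra from \cite[Lemma~A.6]{BMS:abelian3folds}. In the non-weak (HN structure) case your proof is correct, and your inductive construction of the filtration via $I_p\cdot E \into E \onto i_{p*}G$ reproduces the $\pi$-filtration that the paper obtains in one step from Lemma~\ref{lem:filtrationatp}, so the argument is a bit longer than necessary but the content is the same.

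There is, however, a genuine gap in the weak case. In the forward direction you assert that ``by Lemma~\ref{lem:allfibersnotdestquot}, each fiber $E'_p$ is $\sigma_p$-semistable,'' but that lemma only says $E'_p$ has \emph{no destabilizing quotient}; in the weak case its HN filtration can be $E^0_p \into E'_p \onto E'_p/E^0_p$ with a nonzero piece $E^0_p \in \cA_p^0$, so $E'_p$ need not be semistable. Similarly, in the converse direction your argument that $G = \rH^0_{\cA_p}(E_p)$ is $\sigma_p$-semistable breaks down precisely when a destabilizing subobject $G'\subset G$ lies in $\cA_p^0$: then $Z_C(i_{p*}G')=0$, so $Z_C(X)=Z_C(I_p\cdot E)$ still has phase $\phi$, and the hoped-for phase bound $\phi(Z_C(X))>\phi$ does not arise; there is no contradiction, and indeed $G$ can fail to be semistable. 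The remedy in both places is the one the paper uses: by Remark~\ref{rem:vA0=0}, the $\cA^0$-piece has vanishing Mukai vector, so $v_p(E'_p)=v_p(E'_p/E^0_p)$ and $v_p(G)=v_p(G/G^0)$, and the \emph{conclusion} $Q(v_p(\,\cdot\,))\geq 0$ still holds because the semistable quotients carry the full Mukai vector. Your intermediate claims of semistability are what fail, not the final inequality; you should replace them by ``no destabilizing quotient, hence $Q\geq 0$ after modding out the $\cA^0$-piece.''
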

\begin{proof}
	Assume $\sigma_C$ satisfies the support property. 
	The claim is automatic for closed points $p$ from the construction, as the inclusion	$i_{p*} \colon \cA_p \to \cA_{\Ctor}$ preserves semistable objects. 
	As for the generic point $K$, by the existence of a $C$-torsion theory, every semistable object $E_K \in \cA_K$ lifts to a torsion free object $E \in \cA_C$. 
	By Proposition~\ref{prop:HNviaHNweak}, we may assume $E$ is $Z_C$-semistable. 
	If $\sigma_C$ is a HN structure, then $i_{p*}E_p$ is semistable by Lemma~\ref{lem:allfibersstable}, and thus	$Q(v_K(E_K)) = Q(v_p(E_p)) \geqslant 0$.
	In the weak case, we deduce from Lemma~\ref{lem:allfibersnotdestquot} that the HN filtration of $E_p$ is of the form $E_p^0 \into E_p \onto E_p/E_p^0$ with $E_p^0 \in \cA_p^0$. 
	By Remark~\ref{rem:vA0=0}, it follows that $Q(v_K(E_K)) = Q(v_p(E_p)) = Q(v_p(E_p/E_p^0)) \geqslant 0$.
	
	 Conversely, assume that the $\sigma_p$ uniformly satisfy the support property for all closed points $p \in C$.
	 Every indecomposable semistable object $E \in \cA_{\Ctor}$ is set-theoretically supported over a point $p \in C$; we consider the filtration $0 = \pi^{n+1} \cdot E \subset \pi^n \cdot E \subset \dots \subset \pi \cdot E \subset E$ given by Lemma~\ref{lem:filtrationatp}. Since all filtration quotients $\pi^i\cdot E/\pi^{i+1}\cdot E \in \cA_p$ are also quotients of $E/\pi\cdot E$ of the same the phase, they do not have destabilizing quotients.
	 Hence $Q(v_{\Ctor}(E)) \geqslant 0$ follows from the simple linear algebra argument in \cite[Lemma~A.6]{BMS:abelian3folds}.
	 The same linear algebra argument gives the result for the general case of a decomposable $E$.
\end{proof}

\subsection{HN structures via stability conditions on fibers}

To conclude our investigation of HN structures, we show that under appropriate assumptions, they can be constructed via stability conditions on fibers.
\begin{Thm} \label{thm:HNstructureviafibers}
Let $\cA_C$ be a $C$-local heart in $\cD$ that universally satisfies openness of flatness. Fix a group homomorphism $Z\colon\Lambda\to\C$, a quadratic form $Q$ on $\Lambda_\R$, and a Mukai homomorphism $(v_K, v_\Ctor)$. 
Assume that the induced pair $(Z_K, Z_{\Ctor})$ satisfies condition~\eqref{enum:finitecoverZconstant} of Theorem~\ref{thm:Langton}. 
Then the triple $\sigma_C = (\cA_C,\ZK,\Zc)=(\cA_C,Z\circ v_K,Z\circ v_\Ctor)$ is a HN structure with a $C$-torsion theory and satisfying the support property with respect to $Q$ if and only if
	\begin{enumerate}[{\rm (1)}] 
		\item \label{enum:stabfibers}
		$\sigma_c = (\cA_c, Z \circ v_c)$ gives a pre-stability condition on $\cD_c$ for all (closed or non-closed) $c \in C$; 
		\index{sigmac@$\sigma_c = (\cA_c, Z_c)$, $c\in C$ Dedekind scheme,!induced stability condition on $\cD_c$}
		\item \label{enum:uniformQ} all the $\sigma_c$ satisfy the support property with respect to $Q$; and
		\item \label{enum:genericopenness} generic openness of semistability holds (Definition~\ref{def:GenericOpennessSemiStab}).
	\end{enumerate}
	Similarly, if $\sigma_C$ is a weak HN structure, then \eqref{enum:stabfibers}-\eqref{enum:genericopenness} hold.
	Conversely, if, in addition to \eqref{enum:stabfibers}-\eqref{enum:genericopenness}, we also have 
	\begin{enumerate}[{\rm (1)}] \setcounter{enumi}{3}
	\item \label{enum:fiberstiltingproperty} $\sigma_c$ has the tilting property (Definition~\ref{def:tiltingproperty}), 
	\end{enumerate} 
	then $\sigma_C$ is a weak HN structure with a $C$-torsion theory and satisfying the support property with respect to $Q$. Moreover, $\sigma_C$ induces the stability conditions $\sigma_c$.
\end{Thm}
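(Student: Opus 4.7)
The plan is to split the proof into the forward direction, where we extract fiberwise data from a (weak) HN structure, and the (harder) backward direction.

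\textbf{Forward direction.} Suppose $\sigma_C$ is a (weak) HN structure with $C$-torsion theory and support property. Condition~(1) at the generic point is Proposition~\ref{prop:stabviaheart} (resp.~\ref{prop:stabviaheartweak}), and at a closed point it is Lemma~\ref{lem:inducedfiberprestability} (resp.~\ref{lem:inducedfiberprestabilityweak}). Condition~(2) is Lemma~\ref{lem:supportpropertyviafibers}. Condition~(3) follows from Remark~\ref{remark-HN-generic-openness} applied to the HN filtrations provided by $\sigma_C$.

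\textbf{Backward direction.} Assume conditions (1)--(3), and in the weak case also (4). The strategy is to verify the hypotheses of Corollary~\ref{cor:HNgeneral} so that the stability function $(\ZK, \Zc)$ satisfies the HN property on $\cA_C$; Proposition~\ref{prop:stabviaheart}/\ref{prop:stabviaheartweak} then yields the (weak) HN structure $\sigma_C$. First, Theorem~\ref{thm:Ctorsiontheoryautomatic} applied to universal openness of flatness together with (1) (and, in the weak case, the noetherian-torsion-subcategory condition contained in (4)) produces a $C$-torsion theory on $\cA_C$. It remains to verify the hypotheses of Theorem~\ref{thm:Langton}: assumption~(0) is part of the Theorem's hypotheses, (1) is given, and the rest reduces to showing that $(\cA_\Ctor, \Zc)$ defines a (weak) stability condition on $\cD_\Ctor$, additionally satisfying the tilting property in the weak case.

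The key work is the construction of HN filtrations in $\cA_\Ctor$. Any object $E \in \cA_\Ctor$ is scheme-theoretically supported on a zero-dimensional subscheme $W = \bigsqcup W_i \subset C$ with each $W_i$ concentrated at a single closed point $p_i$, so $C$-locality of the t-structure yields a decomposition $\cA_\Ctor = \bigoplus_p \cA_{(p)}$ into subcategories of objects set-theoretically supported at a single closed point $p$. For $E \in \cA_{(p)}$ we proceed by induction on the smallest $m$ with $\pi^m \cdot E = 0$, where $\pi$ is a local generator of $I_p$; the base case $m=1$ reduces to HN in $\cA_p$, provided by (1). For the inductive step we combine HN filtrations of $\Ann(I_p; E) \in \cA_p$ and of $E/\Ann(I_p; E) \in \cA_{(p)}$ (of $\pi$-length $m-1$), using the short exact sequence of Lemma~\ref{lem:fibersinheart} and a rearrangement of the concatenated filtration that is forced to terminate thanks to the uniform support property and the finite-length property in Remark~\ref{rem:supportfinitelength}. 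The support property for $(\cA_\Ctor, \Zc)$ then follows from Lemma~\ref{lem:supportpropertyviafibers}, and in the weak case the tilting property is derived similarly from its fiber-wise counterpart in~(4).

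Once $(\cA_\Ctor, \Zc)$ is known to be a (weak) stability condition with (in the weak case) the tilting property, Corollary~\ref{cor:HNgeneral} applies: its remaining hypotheses are met by the $C$-torsion theory above, HN on $(\cA_K, Z_K)$ via (1) for $c = K$, and generic openness from (3). This yields the HN property for $(\ZK, \Zc)$ on $\cA_C$, hence a (weak) HN structure $\sigma_C$ by Proposition~\ref{prop:stabviaheart}/\ref{prop:stabviaheartweak}; the induced fiberwise stability conditions agree with the given $\sigma_c$ by construction, since $Z_c = Z \circ v_c$ and the slicing $\cP$ was extracted from $(\ZK, \Zc)$. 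The main obstacle throughout is the inductive construction of HN filtrations in $\cA_{(p)}$: the naive concatenation of fiberwise HN filtrations across $\pi$-powers may fail to have strictly decreasing phases, and the required rearrangement is what makes the uniform support property in (2) crucial.
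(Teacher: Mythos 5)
Your argument follows the paper's proof essentially step for step: the forward direction is disposed of via Lemmas~\ref{lem:inducedfiberprestability}/\ref{lem:inducedfiberprestabilityweak}, Lemma~\ref{lem:supportpropertyviafibers}, and Remark~\ref{remark-HN-generic-openness}, and the backward direction is reduced to verifying the hypotheses of Corollary~\ref{cor:HNgeneral} and Theorem~\ref{thm:Langton}, with the bulk of the work being the inductive construction of HN filtrations on $\cA_{\Ctor}$ and its termination via Remark~\ref{rem:supportfinitelength}. The only cosmetic difference is that your inductive step runs along the short exact sequence with subobject $\Ann(I_p;E)$ and quotient isomorphic to $\pi\cdot E$, whereas the paper works with the two quotients $E/\pi\cdot E$ and $(\pi\cdot E)\otimes I_p^{-1}$ and builds a minimal-phase semistable quotient explicitly---both routes work, and both rely on the uniform support property for termination exactly as you indicate.
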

\begin{proof}
The proof is divided in three steps.

	\begin{step}{1} (Weak) HN structure with support property and a $C$-torsion theory $\Rightarrow$ (weak) stability conditions on the fibers satisfying \eqref{enum:stabfibers}-\eqref{enum:genericopenness}.
	\end{step}
 We have already seen, in Lemmas~\ref{lem:inducedfiberprestability} (resp. \ref{lem:inducedfiberprestabilityweak}) and \ref{lem:supportpropertyviafibers} that a HN structure (resp. a weak HN structure) $\sigma_C$ induces on the fibers stability conditions (resp. weak stability conditions) $\sigma_c$ that satisfy the support property with respect to $Q$.
 This gives conditions \eqref{enum:stabfibers} and \eqref{enum:uniformQ}. Condition \eqref{enum:genericopenness}, generic openness of semistability, holds by Remark~\ref{remark-HN-generic-openness}.
 
 \begin{step}{2}
 Stability conditions on the fibers satisfying \eqref{enum:stabfibers}-\eqref{enum:genericopenness} $\Rightarrow$ HN structure with a $C$-torsion theory and satisfying the support property 
 \end{step}
 Now assume that the $\sigma_c=(\cA_c,Z\circ v_c)$ are stability conditions on the fibers, satisfying the support property with respect to a uniform quadratic form $Q$, such that generic openness of semistability holds.
 By Theorem~\ref{thm:Ctorsiontheoryautomatic}, the heart $\cA_C$ has a $C$-torsion theory, and we will use Corollary~\ref{cor:HNgeneral} to prove that $\sigma_C$ is a HN structure.
 Then it follows that $\sigma_C$ has a $C$-torsion theory by Proposition~\ref{prop:Ctorsiontheoryviaheart} and that $\sigma_C$ satisfies the support property with respect to $Q$ by Lemma~\ref{lem:supportpropertyviafibers}, as required.
 
 We first observe that $(\ZK,\Zc)$ is a stability function on $\cA_C$ over $C$.
 Indeed, $\ZK$ is a stability function on $\cA_K$ by assumption, and by decomposing $E\in\cA_{\Ctor}$ according to its set-theoretic support and then using the filtration of Lemma~\ref{lem:filtrationatp}, it follows that $\Zc$ is a stability function on $\cA_{\Ctor}$ from the fact that $Z_p$ is a stability function on $\cA_p$.
 Moreover, by assumption $\ZK$ has the HN property on $\cA_K$. 
 
 Our next claim is that $(\cA_{\Ctor},\Zc)$ has the HN property so that assumption \eqref{enum:LangtonHNsupport} of Theorem~\ref{thm:Langton} is satisfied by Lemma~\ref{lem:Bridgeland-stabviaheart}.
 Given an object $E \in \cA_{\Ctor}$, it is not difficult to see that by considering the decomposition of $E$ according to its set-theoretic support, which is a direct sum of objects set-theoretically supported over distinct closed points, it suffices to show that HN filtrations exist for objects set-theoretically supported over a single closed point $p \in C$.
 Given such an $E$, we proceed by induction on the length of its schematic support in $C$.
 Let $\pi$ be a local generator of $I_p$. Since both $(\pi \cdot E)\otimes I_p^{-1}$ and $E/\pi \cdot E$ have a HN filtration by the induction assumption, and since both are quotients of $E$, one can construct a $\Zc$-semistable quotient $E \onto Q^0$ of phase $\phi(Q^0) = \min\{\phi^-\left(E/\pi\cdot E\right), \phi^-\left(\pi\cdot E\right)\}$.
 It is not difficult to show from the see-saw property, Lemma~\ref{lem:seesawweak}, that $Q^0$ is of minimal phase, i.e., every other quotient $E \onto Q'$ satisfies $\phi(Q') \geqslant \phi(Q^0)$.
 Repeating this argument for the kernel $E^1$, we obtain a $\Zc$-semistable quotient $E^1\onto Q^1$ of minimal phase with kernel $E^2$. As $E/E^2$ is itself a quotient of $E$, so that $\phi(E/E_2)\geqslant\phi(Q^0)$, the see-saw property gives $\phi(E^1/E^2)\geqslant\phi(E/E^1)$.
 Continuing in this way gives a decreasing filtration $E= E^0 \supset E^1 \supset E^2 \supset \dots$ such that the filtration quotients are $\Zc$-semistable with $\phi(E^0/E^1) \leqslant \phi(E^1/E^2) \leqslant \dots$.
 
 We need to show that this process terminates.
 Every central charge $\Zc(E^i/E^{i+1})$ of any of the semistable filtration quotients is contained in the parallelogram with adjacent edges of angle $\pi \phi^-(E)$ and $\pi$, and with $0$ and $\Zc(E)$ as opposite vertices.
 Since each $E^i/E^{i+1}$ satisfies the support property inequality of $\sigma_p$ (by the second part of the proof of Lemma~\ref{lem:supportpropertyviafibers}), it follows, see Remark~\ref{rem:supportfinitelength}, that there are only finitely many possible classes $v_{\Ctor}(E^i/E^{i+1})$.
 Therefore there is an $i$ with $\Im\Zc (E^i/E^{i+1}) = 0$ for all $i$, which is only possible if $E^i$ itself is semistable with $\Im\Zc (E^i) = 0$, so the process terminates.
 Grouping the consecutive $i$ with filtration quotients of equal phase gives the HN filtration of $E$. Therefore, $(\cA_\Ctor,\Zc)$ has the HN property. 
 
 The remaining assumptions of Theorem~\ref{thm:Langton} and Corollary~\ref{cor:HNgeneral} are part of our hypotheses, so $\sigma_C$ is indeed a HN structure as claimed.
 
 \begin{step}{3}
 	Weak stability conditions on the fibers satisfying \eqref{enum:stabfibers}-\eqref{enum:fiberstiltingproperty} $\Rightarrow$ weak HN structure with a $C$-torsion theory and satisfying the support property
 \end{step}
The structure of the arguments carries over exactly, with assumption \eqref{enum:fiberstiltingproperty} needed to ensure that all assumptions of Theorems~\ref{thm:Langton} and~\ref{thm:Ctorsiontheoryautomatic} are satisfied. 
The only additional argument needed is to show that if $\sigma_p = (\cA_p, Z_p)$ has the tilting property for all closed points $p \in C$, then the same holds for $(\cA_\Ctor,\Zc)$. 

For both conditions in Definition~\ref{def:tiltingproperty}, it suffices to consider objects set-theoretically supported over a single closed point $p \in C$; we let $\pi$ be a local generator of its maximal ideal. 
We first show noetherianity of $\cA_{\Ctor}^0$. 
Given $E \in \cA_{\Ctor}^0$ set-theoretically supported over $p$, consider the filtration induced by $\pi$, as in Lemma~\ref{lem:filtrationatp}. Then any surjection $E \onto Q$ induces surjections $\pi^i\cdot E /\pi^{i+1} \cdot E \onto \pi^i \cdot Q /\pi^{i+1} \cdot Q$.
Since these are surjections in the noetherian category $\cA_p^0$, we easily conclude that any sequence of surjections $E \onto E_1 \onto E_2 \cdots $ terminates, i.e., that $\cA_{\Ctor}^0$ is noetherian.
A similar inductive argument, using the same filtration, shows that $\cA_\Ctor^0$ is a torsion subcategory. 

Next we verify condition~\eqref{enum:tiltingpropertyExt1} of Definition~\ref{def:tiltingproperty} for $(\cA_\Ctor,\Zc)$ for $F \in \cA_\Ctor$, $\mu^+(F) < +\infty$, set-theoretically supported over $p$. 
As explained in Remark~\ref{rem:tiltingpropertytilt}, we must show $F[1]$ has a maximal subobject in $\cA_{\Ctor}^0$ with respect to the heart $\cA_{\Ctor}^{\sharp\beta}$ for $\beta \geqslant \mu^+(F)$. 
It follows that using the short exact sequence $\pi \cdot F \into F \onto F/\pi \cdot F$ and induction on the length of the support of $F$, we can reduce to the case where $F$ is scheme-theoretically supported over $p$.
In this case, $F = i_{p*}E$ and by assumption there exists a short exact sequence $E \into \widetilde{E} \onto E^0$ in $\cA_p$ with $E^0 \in \cA_p^0$ and $\Hom(\cA_p^0, \widetilde{E}[1]) = 0$.
We claim that the pushforward by $i_{p*}$ of this sequence gives the desired exact sequence for $F$.
It suffices to show $\Hom(\cA_\Ctor^0, i_{p*}\widetilde{E}[1]) = 0$. 
This reduces to showing $\Hom(i_{p*}T, i_{p*}\widetilde{E}[1]) = 0$ for every $T \in \cA^0_p$. 
Using adjunction and Lemma~\ref{lem:cohpushpull}, this reduces to showing $\Hom(T, \widetilde{E}) = 0 =\Hom(T, \widetilde{E}[1]) = 0$. 
The first equality holds since $\mu^+(\widetilde{E}) < +\infty$, and the second holds by our choice of $\widetilde{E}$.
\end{proof}

\section{Tilting weak Harder--Narasimhan structures over a curve}
\label{sec:tilt1}

Recall from Remark~\ref{rem:wGL2action} that the universal cover $\wGL2$ of $\GL_2^+(\R)$ acts
on the set of Harder--Narasimhan structures on $\cD$ over $C$. In terms of hearts and a family
of stability functions as in Proposition~\ref{prop:stabviaheart}, this corresponds to tilting
the corresponding heart $\cA_C$.

As we already saw in the case of weak stability conditions in Section~\ref{subsec:tiltingweakstability}, this procedure is much more subtle for weak HN structures, and may not exist in general, due to the special role of objects with central charge 0, which are arbitrarily defined to have phase 1 if they are in the heart.
In particular, HN filtrations are \emph{not} preserved under tilting. 
In this section, we give a criterion to ensure that a weak HN structure can be tilted, extending Proposition~\ref{prop:RotateWeakStability}.

Consider a weak HN structure $\sigma_C=(\cA_C,\ZK,\Zc)$ on $\cD$ over $C$.
We write $(\cT_C^\beta,\cF_C^\beta)$ for the torsion pair defined as in \eqref{eqn:torsion pair}, with $\mu$ replaced by $\mu_C$.
The tilted heart $\cA_C^{\sharp\beta}:=\langle\cF_C^\beta[1],\cT_C^\beta\rangle$ is $C$-local, since $Z_C$-semistability is invariant under tensoring with the pullback of a line bundle from $C$.

We first need a relative analogue of the tilting property defined in Definition~\ref{def:tiltingproperty}:
\begin{Def} \label{def:tiltingpropertyoverC}
Given a weak HN structure $\sigma_C = (\cA_C,\ZK,\Zc)$, we write $\cA_C^0 \subset \cA_C$ for the subcategory objects $E$ with $Z_C(F) = 0$ for every subquotient $F$ of $E$. We say that $\sigma_C$ has the \emph{tilting property} if 
\index{sigmaC@$\sigma_C = (\ZK,\Zc,\cP)$, $C$ a Dedekind scheme!weak HN structure on $\cD$ over $C$!having the tilting property}
\begin{enumerate}[{\rm (1)}] 
 \item $\cA_C^0 \subset \cA_C$ is a noetherian torsion subcategory, and
 \item \label{enum:tiltingpropcurvesExt1}
 for every $F \in \cA_C$ with $\mu_C^+(F) < +\infty$, there exists a short exact sequence $F \into \widetilde F \onto F^0$ with $F^0 \in \cA_C^0$ and
 $\Hom(\cA_C^0, \widetilde F[1]) = 0$.
\end{enumerate}
\end{Def}

\begin{Rem}
Let $E$ be an object with $E_K \neq 0$ but $Z_K(E_K) = 0$, and $F \in \cA_{p}$ for some closed point $p \in C$. Then 
$Z_C(E \oplus i_{p*}F) = 0$, but
$E \oplus i_{p*}F \in \cA_C^0$ only holds if $Z_p(F) = 0$.
\end{Rem}

\begin{Rem} \label{rem:tiltingpropertycurvesses}
Assume that $\cA^0_C$ is a noetherian torsion subcategory, and consider a short exact sequence $0 \to A \to E \to B \to 0$ of objects in $\cA_C$ with no morphisms from $\cA^0_C$.
Then if $A, B$ satisfy the condition in \eqref{enum:tiltingpropcurvesExt1}, the same holds for $E$.
This is seen most easily by considering the tilt of $\cA_C$ at the torsion pair $(\cA_C^0, (\cA_C^0)^\perp)$:
the titled heart contains the objects $A[1], E[1], B[1]$, and we have to show that each of them has a maximal subobject in $\cA_C^0$.
\end{Rem}

\begin{Ex} \label{ex:slopestabilitycurveshastiltingproperty}
Consider relative slope stability $\sigma_C$ for a smooth family $\cX \to C$ of higher-dimensional varieties as a weak HN structure as in Example~\ref{ex:slopestability}.
Then $\cA_C^0$ consists of sheaves whose support has codimension $\geqslant 2$ in every fiber, and $\sigma_C$ has the tilting property.
Indeed, by Remark~\ref{rem:tiltingpropertycurvesses}, it is enough to consider the case where $E \in \Coh \cX$ is either torsion free, or the pushforward $E = i_{p*} F$ of a torsionfree sheaf $F \in \Coh \cX_p$.
In the latter case, we use the double dual of $F$ as in Example~\ref{ex:slopestabilityhastiltingproperty}. In the former case, let $E^{\vee \vee}$ be the double dual of $E$ in $\Coh \cX$, and let $G \subset E^{\vee \vee}/E$ be the maximal subsheaf whose support has codimension $\geqslant 2$ in every fiber.
The preimage $\tilde E \subset E^{\vee \vee}$ of $G$ has the desired property.
\end{Ex}

\begin{Prop} \label{prop:tiltingweakHNstructures}
Let $\cA_C$ be the heart of a $C$-local t-structure on $\cD$ that satisfies universal openness of flatness.
Let $\sigma_C = (\cA_C,\ZK,\Zc)$ be a weak HN structure on $\cD$ over $C$. Assume that:
\begin{enumerate}[{\rm (1)}]
 \item $\sigma_C$ has the tilting property, and 
 \item the induced weak stability condition $\sigma_c$ has the tilting property for every $c \in C$.
\end{enumerate}
Then $(\cA^{\sharp\beta}_C)^0 \subset \cA^{\sharp\beta}_C$ is a noetherian torsion subcategory, and 
$\sigma_C^{\sharp\beta} = (\cA_C^{\sharp\beta}, \frac{\ZK}{\ii - \beta}, \frac{\Zc}{\ii-\beta})$ is a weak HN structure.
 \index{sigmaCsharpb@$\sigma_C^{\sharp\beta}$, tilted weak HN structure at slope $\beta$}
\end{Prop}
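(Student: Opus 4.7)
The plan is to invoke Proposition~\ref{prop:stabviaheartweak}, which reduces the statement (together with the auxiliary assertion about $(\cA_C^{\sharp\beta})^0$) to three tasks: (i) show that $(\frac{Z_K}{\ii-\beta}, \frac{Z_\Ctor}{\ii-\beta})$ is a weak stability function on the $C$-local heart $\cA_C^{\sharp\beta}$; (ii) verify that $(\cA_C^{\sharp\beta})^0 \subset \cA_C^{\sharp\beta}$ is a noetherian torsion subcategory; and (iii) verify the HN property via Proposition~\ref{prop:HNviaHNweak}. For (i), one checks the weak stability function axioms on the two pieces separately. That $\frac{Z_K}{\ii-\beta}$ is a weak stability function on $\cA_K^{\sharp\beta}$ is Proposition~\ref{prop:RotateWeakStability} applied to $\sigma_K$, which has the tilting property by hypothesis. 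For the $\Ctor$ part, one first observes that, since $\cA_\Ctor \subset \cA_C$ is closed under subobjects and quotients by Lemma~\ref{Lem-AS-subs}, the torsion pair $(\cT_C^\beta, \cF_C^\beta)$ restricts to a torsion pair $(\cT_\Ctor^\beta, \cF_\Ctor^\beta)$ on $\cA_\Ctor$, giving a tilt $\cA_\Ctor^{\sharp\beta}$ that coincides with the $C$-torsion objects of $\cA_C^{\sharp\beta}$; decomposing along closed points $p\in C$ and applying Proposition~\ref{prop:RotateWeakStability} to $\sigma_p$ then yields that $\frac{Z_\Ctor}{\ii-\beta}$ is a weak stability function on $\cA_\Ctor^{\sharp\beta}$.

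For (ii), the identification $(\cA_C^{\sharp\beta})^0 = \cA_C^0$ follows by noting that an object lies in either kernel iff $Z_C$ vanishes on it and on all its subquotients in the ambient heart; noetherianity of $\cA_C^0$ is part of the tilting property of $\sigma_C$. The torsion subcategory claim is proved by adapting the argument of Proposition~\ref{prop:RotateWeakStability} verbatim: given $F \in \cA_C^{\sharp\beta}$, decompose via the triangle $M[1] \to F \to N$ with $M \in \cF_C^\beta$ and $N \in \cT_C^\beta$; apply the tilting property of $\sigma_C$ to $M$ to obtain a short exact sequence $M \into \widetilde M \onto M^0$ in $\cA_C$ with $M^0 \in \cA_C^0$ and $\Hom(\cA_C^0, \widetilde M[1]) = 0$; after quotienting $F$ by $M^0$, any inclusion $F^0 \into F$ with $F^0 \in \cA_C^0$ factors through $N \in \cT_C^\beta \subset \cA_C$, and a maximal such subobject exists because $\cA_C^0$ is noetherian inside $\cA_C$.

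For (iii), the first step is to produce a $C$-torsion theory on $\cA_C^{\sharp\beta}$. The existing $C$-torsion theory on $\cA_C$ (which exists by Theorem~\ref{thm:Ctorsiontheoryautomatic}) together with the identification of $C$-torsion objects in $\cA_C^{\sharp\beta}$ with $\cA_\Ctor^{\sharp\beta}$ (verified by $t$-exactness of pullback to $K$) gives the torsion theory via Proposition~\ref{prop:Ctorsiontheoryviaheart}. The first two conditions of Proposition~\ref{prop:HNviaHNweak} are then easy: the HN property for $(\cA_K^{\sharp\beta}, \frac{Z_K}{\ii-\beta})$ is once again Proposition~\ref{prop:RotateWeakStability} for $\sigma_K$, while the HN property for $(\cA_\Ctor^{\sharp\beta}, \frac{Z_\Ctor}{\ii-\beta})$ reduces, by decomposition along set-theoretic support and the filtration of Lemma~\ref{lem:filtrationatp}, to Proposition~\ref{prop:RotateWeakStability} applied to each $\sigma_p$.

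The hard part will be the semistable reduction condition. Given a $C$-torsion free $E \in \cA_C^{\sharp\beta}$ with $E_K$ a semistable object of $\cA_K^{\sharp\beta}$, the plan is to first prove a relative analogue of Lemma~\ref{lem:RotateWeakStabilitySemistableObjects} classifying the $Z_C^{\sharp\beta}$-semistable objects outside $(\cA_C^{\sharp\beta})^0$, namely as either $Z_C$-semistable objects of $\cA_C$ with no nonzero map from $\cA_C^0$, or extensions of an $\cA_C^0$-object by the shift of such a $Z_C$-semistable. Applying semistable reduction for the untilted structure $\sigma_C$ to the piece $N \in \cT_C^\beta$ of the canonical decomposition $M[1] \to E \to N$, then absorbing the resulting $\cA_C^0$-contributions at the finitely many non-flat points using the tilting property of $\sigma_C$ globally and of each $\sigma_p$ fiberwise, should produce the required subobject $F \subset E$ with $E/F \in \cA_\Ctor$. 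The delicate point is that the Langton-type saturation needed to promote semistability on the generic fiber to semistability of $F$ interacts in a nontrivial way with the tilt at each special fiber, and it is precisely the fiberwise tilting property assumption that makes this interaction controllable.
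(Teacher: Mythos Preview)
Your steps (i) and (ii) are correct and match the paper: the noetherian torsion subcategory claim for $(\cA_C^{\sharp\beta})^0$ follows exactly as in Proposition~\ref{prop:RotateWeakStability}, and the paper says so in one line.

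Where you diverge is step (iii). You attempt to verify the HN property by going through Proposition~\ref{prop:HNviaHNweak}, which forces you to establish a $C$-torsion theory on $\cA_C^{\sharp\beta}$ and to prove semistable reduction for the tilted structure. Both of these are problematic. Your argument for the $C$-torsion theory is circular: you invoke Proposition~\ref{prop:Ctorsiontheoryviaheart}, but that proposition presupposes that $\sigma_C^{\sharp\beta}$ is already a weak HN structure, which is exactly what you are trying to prove. And your semistable reduction sketch is, as you yourself acknowledge, the hard part, and it is not actually carried out; the Langton-type argument would require inputs (support property, openness of flatness for $\cA_C^{\sharp\beta}$) that are not available at this stage.

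The paper avoids all of this by a much more direct route, essentially the same idea as in the proof of Proposition~\ref{prop:RotateWeakStability}. The key point you missed is that $\sigma_C$ is \emph{already} a weak HN structure by hypothesis, so HN filtrations in $\cA_C$ exist. After classifying $Z_C^{\sharp\beta}$-semistable objects (the relative analogue of Lemma~\ref{lem:RotateWeakStabilitySemistableObjects}), the paper observes that for objects in $\cT_C^\beta$ the $\sigma_C$-HN filtration is already the $\sigma_C^{\sharp\beta}$-HN filtration. For a general $E \in \cA_C^{\sharp\beta}$ with $\rH^0_{\cA_C}(E) \in \cA_C^0$, one inducts on the length of the $\sigma_C$-HN filtration of $F = \rH^{-1}_{\cA_C}(E)$: if $G \into F \onto Q$ is the last step with $Q$ being $Z_C$-semistable, then $E/G[1]$ has $\rH^{-1}$ semistable and $\rH^0 \in \cA_C^0$; quotienting by its maximal $\cA_C^0$-subobject (which exists by (ii)) gives a semistable quotient of $E$ in $\cA_C^{\sharp\beta}$, and the kernel has strictly shorter $\rH^{-1}$-HN filtration. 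This terminates, giving the HN filtration directly, with no need for semistable reduction or a $C$-torsion theory on the tilted heart.
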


\begin{proof}
The first claim follows just as in the proof of Proposition~\ref{prop:RotateWeakStability}.

We need to prove that the rotated central charge, which we denote by $Z_C^{\sharp\beta}$, satisfies the HN property on $\cA_C^{\sharp\beta}$.
We first observe, in analogy with Lemma~\ref{lem:RotateWeakStabilitySemistableObjects}, that $Z_C^{\sharp\beta}$-semistable objects in $\cA_C^{\sharp\beta}$ are either
\begin{enumerate}[{\rm (1)}] 
 \item $Z_C$-semistable objects of $\cT_C^{\sharp\beta}$, or
 \item objects $E$ such that $\rH^{-1}_{\cA_C}(E)$ is a $Z_C$-semistable object of $\cF_C^{\sharp\beta}$ and $\rH^0_{\cA_C}(E) \in \cA_C^0$; in addition, we require $\Im Z_C(E) = 0 $ or 
 $\Hom(\cA_C^0, E) = 0$.
\end{enumerate}

The HN filtrations of objects in $\cT^{\sharp\beta}$ remain unchanged, and thus it remains to consider objects in $\cF_C^{\sharp\beta}[1]$. To be able to proceed by induction, we consider slightly more generally an object $E \in \cA_C^{\sharp\beta}$ with $\rH^{0}_{\cA_C}(E) \in \cA_C^0$. Let $F = \rH^{-1}_{\cA_C}(E)$, and let
$G \into F \onto Q$ be the last step of the HN filtration of $F$ in $\cA_C$, with $Q$ being $Z_C$-semistable. 
Then $E'=E/G[1]$ is an object where $\rH^{-1}_{\cA_C}(E')$ is $Z_C$-semistable and $\rH^{0}_{\cA_C}(E')=\rH^{0}_{\cA_C}(E) \in \cA_C^0$.
Replacing $E'$ with the quotient $Q'$ by its maximal subobject in $\cA_C^0$, we obtain a semistable quotient of $E$ in $\cA_C^{\sharp\beta}$ of the same slope as $Q[1]$.
Moreover, the kernel $G'$ of $E \onto Q'$ will again be an object with $\rH^{0}_{\cA_C}(G') \in \cA_C^0$, and the length of the HN filtration of $\rH^{-1}_{\cA_C}(G')$ is smaller than that of $F$.
Therefore, this procedure terminates. 
\end{proof}

\begin{Rem}
We can now explain why the definition of $Z_C$-semistability for a $C$-torsion free object $E \in \cA_C$ requires \emph{all} fibers $E_p$ to be semistable, rather than just the general fiber, as e.g.~in the definition of \emph{relative slope stability}.
Let $E \in \cA_C$ be a torsion free object with $\mu_C(E) = \beta$, such that $E_K$ is semistable. Let $p \in C$ be a closed point such that $i_{p*}E_p$ admits a destabilizing short exact sequence $A \into i_{p*}E_p \onto Q$, with $A, Q$ semistable and $\mu_C(A) > \mu_C(i_{p*}E_p) = \mu_C(E) > \mu_C(Q)$.
If $E$ was defined to be $Z_C$-semistable, then $E[1] \in \cA_C^{\sharp\beta}$. On the other hand, $A \in \cT_C^\beta \subset \cA_C^{\sharp\beta}$;
if $F$ denotes the kernel of the surjection $E\otimes I_p^{-1} \onto i_{p*}E_p \onto Q$ in $\cA_C$, then we would obtain a short exact sequence $A \into E[1] \onto F[1]$ in $\cA_C^{\sharp\beta}$.
In other words, despite $E$ being semistable and torsion free in $\cA_C$, the shift $E[1]$ would not be torsion free in $\cA_C^{\sharp\beta}$.
Moreover, the existence of a maximal torsion subsheaf of $E[1]$ is essentially equivalent to semistable reduction for $E$;
thus we have instead built semistable reduction into our basic setup of $Z_C$-stability.
\end{Rem}

\newpage
\part{Stability conditions over a higher-dimensional base}
\label{part:higher-dimensional-bases}

In this part of the paper, we introduce a notion of stability conditions for a category $\cD$ over a higher-dimensional base $S$;
its key property will be that it comes equipped with relative moduli spaces. 
Throughout, we work in the following setup. 

\begin{Setup}
\label{setup-rel-stability}
Assume: 
\begin{itemize}
 \item $g \colon \cX \to S$ is a flat morphism as in the \hyperref[MainSetup]{Main Setup} and in addition it is projective;
 \item $\cD\subset\Db(\cX)$ is an $S$-linear strong semiorthogonal component of finite cohomological amplitude.
\end{itemize}
\end{Setup}

\section{Flat families of fiberwise (weak) stability conditions}
\label{sec:FlatFamilyFiberwiseStability}

In this section we introduce the notion of a flat family of fiberwise (weak) stability conditions, and prove some basic results in this context. 
In Section~\ref{sec:StabCondOverS} we will define the notion of a (weak) stability condition over a base by further imposing a suitable support property. 

\subsection{Definitions} 

Our goal is to provide a notion of a stability condition over $S$ 
that is strong enough to yield relative moduli spaces of stable objects, and flexible enough to allow for deformation results, and constructions via Bogomolov--Gieseker type inequalities.
To do so, we will consider various compatibility conditions on fiberwise collections of stability conditions. 

\begin{Def}
\label{definition-uogs}
Let $\us = \left(\sigma_s = (Z_s, \cP_s)\right)_{s \in S}$ be a collection of (weak) numerical stability conditions on $\cD_s$ 
for every (closed or non-closed) point $s \in S$. 
In the weak case, we assume that $\sigma_s$ satisfies the assumptions of Proposition~\ref{prop:basechangeweakstabilityviaopenness} for 
all $s \in S$. 
\begin{enumerate}[{\rm (1)}]
\item \label{ulccc}
$\us$ \emph{universally has locally constant central charges} if for every morphism $T \to S$ and every $T$-perfect object $E \in \rD(\cX_T)$ such that $E_t \in \cD_t$ for all $t \in T$, the function $T \to \C$ given by $t \mapsto Z_t(E_t)$ is locally constant.
In this situation, if $T$ is connected we often write $Z(E)$, $\phi(E)$, and $\mu(E)$ for the constant values $Z_t(E_t)$, $\phi_t(E_t)$, and $\mu_t(E_t)$. 
\item $\us$ \emph{universally satisfies openness of geometric stability} if for every morphism $T \to S$ and every $T$-perfect object 
$E \in \rD(\cX_T)$, the set 
\begin{equation*}
\set{ t \in T \sth E_t \in \cD_t \ \text{and is geometrically} \ \sigma_t\text{-stable} }
\end{equation*} 
is open. 
\item $\us$ \emph{universally satisfies openness of lying in $\cP(I)$} for an interval $I \subset \R$ 
if for every morphism $T \to S$ and every $T$-perfect object 
$E \in \rD(\cX_T)$, the set 
\begin{equation*}
\set{ t \in T \sth E_t \in \cP_{t}(I) }
\end{equation*} 
is open. 
\end{enumerate}
\end{Def} 

The (weak) stability conditions $\sigma_t$ appearing in Definition~\ref{definition-uogs} 
are given by the base change results Theorem~\ref{thm:base-change-stability-condition} and Proposition~\ref{prop:basechangeweakstabilityviaopenness}, 
and geometric stability is meant in the sense of Definition~\ref{def:geometricallystable}. 
Further, we note that if $T$ is quasi-compact with affine diagonal, then by Lemma~\ref{lemma-D-Ds} the condition on $E \in \rD(X_T)$ in~\eqref{ulccc} is equivalent to $E \in \cD_T$.

\begin{Rem}
\label{remark-uolp-uoflatness} 
In the situation of Definition~\ref{definition-uogs}, for any $\phi \in \R$ we obtain a 
fiberwise collection of t-structures given by 
\begin{equation*}
\cD_s^{\leqslant 0} = \cP_s(>\phi), \quad \cD_s^{\geqslant 0} = \cP_s(\leqslant \phi+1).
\end{equation*} 
For $\phi = 0$, we call this the \emph{fiberwise collection of t-structures underlying $\us$}. 
Note that for $I = (0, 1]$, universal openness of lying in $\cP(I)$ is precisely the 
condition of universal openness of flatness (in the sense of Definition~\ref{def-open-flat-utau}) 
of this fiberwise collection of t-structures. 
\end{Rem}

\begin{Lem} 
\label{Lem-uogs-ftbc}
Let $\us = \left(\sigma_s = (Z_s, \cP_s)\right)_{s \in S}$ be a collection of (weak) numerical stability conditions as in Definition~\ref{definition-uogs}. 
Then the properties of universal local constancy of central charges, universal openness of geometric stability, and universal openness of lying in $\cP(I)$ for an interval $I \subset \R$ 
can be checked on morphisms $T \to S$ of finite type from an affine scheme. 
\end{Lem}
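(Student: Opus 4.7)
The plan is to mimic the strategy of Lemma~\ref{lem:universalopennessfromfiniteopenness}: first reduce to $T \to S$ affine of finite type by working locally and then via a limit argument. Each of the three properties in Definition~\ref{definition-uogs} is evidently local on the source $T$, so I would first show that it suffices to treat the case where $T$ is affine (for openness of geometric stability and openness of lying in $\cP(I)$, an open subset of $T$ is open if and only if its intersection with every affine open is open; for local constancy of central charges, the statement is already pointwise local on $T$).

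Next, for $T \to S$ an arbitrary morphism from an affine scheme, apply Lemma~\ref{lemma-limit-affine-fp} to write $T = \lim T_i$ as a cofiltered limit of affine schemes $T_i$ of finite presentation over $S$; under the \hyperref[MainSetup]{Main Setup} $S$ is noetherian, so ``finite presentation'' agrees with ``finite type''. A given $T$-perfect object $E \in \rD(\cX_T)$ descends to a $T_0$-perfect object $E_0 \in \rD(\cX_{T_0})$ for some index $0$ by \citestacks{0DI8}; by Lemma~\ref{lem-relations-S-perfect}.\eqref{S-perfect-db}, $E_0 \in \Db(\cX_{T_0})$, and projecting $E_0$ into $\cD_{T_0} \subset \Db(\cX_{T_0})$ (which by Theorem~\ref{theorem-bc-sod} is compatible with base change along $T \to T_0$) yields an object whose restriction to $T$ is the projection of $E$ into $\cD_T$. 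In particular, in the setting of universal local constancy of central charges, where we already assume $E_t \in \cD_t$ for all $t$ and therefore $E \in \cD_T$ by Lemma~\ref{lemma-D-Ds}, we may take $E_0 \in \cD_{T_0}$.

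The final step is to transfer each property from $T_0$ to $T$. For any point $t \in T$ with image $t_0 \in T_0$, the field extension $\kappa(t_0) \subset \kappa(t)$ together with the base change results for stability conditions (Theorem~\ref{thm:base-change-stability-condition} and Proposition~\ref{prop:basechangeweakstabilityviaopenness}) give $Z_t(E_t) = Z_{t_0}((E_0)_{t_0})$ (compatibility of the numerical Mukai homomorphism with field extensions), $E_t$ is geometrically $\sigma_t$-stable if and only if $(E_0)_{t_0}$ is geometrically $\sigma_{t_0}$-stable (Theorem~\ref{thm:base-change-stability-condition}.\eqref{enum:geomstablepreserved}), and $E_t \in \cP_t(I)$ if and only if $(E_0)_{t_0} \in \cP_{t_0}(I)$ (by construction of the base-changed slicing in \eqref{Pellequation} and \eqref{Pellequation-weak}). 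Thus the ``good locus'' in $T$ is the preimage of the corresponding locus in $T_0$, which is open (resp. on which the central charge is locally constant) by assumption; openness (resp. local constancy) pulls back under the continuous map $T \to T_0$, concluding the proof.

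The main obstacle I expect is bookkeeping around the pullback of stability structures along a non-finitely generated field extension, needed to identify $\sigma_t$ with the base change of $\sigma_{t_0}$; however, this was precisely the content of Theorem~\ref{thm:base-change-stability-condition} and Proposition~\ref{prop:basechangeweakstabilityviaopenness} (the latter requiring openness of lying in $\cP(\phi)$ over Dedekind domains essentially of finite type, which is a consequence of the very property we are verifying on such bases). The remaining steps are formal consequences of the descent and compatibility results already established.
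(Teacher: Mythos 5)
Your proposal is correct and is precisely the approach taken in the paper: the paper's proof consists of a single sentence referring the reader to the argument of Lemma~\ref{lem:universalopennessfromfiniteopenness}, together with parts~\eqref{enum:pullbackremainsstable} and~\eqref{enum:geomstablepreserved} of Theorem~\ref{thm:base-change-stability-condition} and the corresponding statements in Proposition~\ref{prop:basechangeweakstabilityviaopenness}. You have simply spelled out the reduction (locality on $T$, cofiltered limit via Lemma~\ref{lemma-limit-affine-fp}, descent of $E$ to some $T_0$ via \citestacks{0DI8}, then transfer of each of the three properties along the field extension $\kappa(t_0) \subset \kappa(t)$) that the paper leaves implicit, and the identifications you invoke to pass from $T_0$ to $T$ are exactly the ones the paper cites.
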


\begin{proof}
Using parts \eqref{enum:pullbackremainsstable} and \eqref{enum:geomstablepreserved} of Theorem~\ref{thm:base-change-stability-condition} and the analogous statements in Proposition~\ref{prop:basechangeweakstabilityviaopenness}, this follows by the argument in the proof of Lemma~\ref{lem:universalopennessfromfiniteopenness}. 
\end{proof}

We next show that for collections of stability conditions, universal openness of geometric stability implies universal generic openness of semistability in a suitable sense:

\begin{Lem}\label{lem:SemiStUnivGenOpenn}
Let $\us = \left(\sigma_s = (Z_s, \cP_s)\right)_{s \in S}$ be a collection of numerical stability conditions which universally has locally constant central charges and universally satisfies openness of geometric stability. 
If $T \to S$ is a morphism from an irreducible, quasi-compact scheme with affine diagonal, and if $E \in \cD_T$ is a $T$-perfect object whose generic fiber 
$E_{K(T)}$ is $\sigma_{K(T)}$-semistable, then there exists a nonempty open subset $U \subset T$ such that $E_t$ is $\sigma_t$-semistable for all $t \in U$. 
\end{Lem}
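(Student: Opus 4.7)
The approach is to spread out the Jordan--H\"older filtration of $E$ on the geometric generic fiber to an open subset of a suitable finite cover of $T$, and then descend back to $T$.

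First I would pass to the algebraic closure. Let $\phi := \phi(E_{K(T)})$; by Theorem~\ref{thm:base-change-stability-condition}.\eqref{enum:pullbackremainsstable}, the object $E_{\overline{K(T)}}$ is $\sigma_{\overline{K(T)}}$-semistable of phase $\phi$. The support property (Remark~\ref{rem:supportfinitelength}) yields a Jordan--H\"older filtration
\[
0 = G_0 \subset G_1 \subset \dots \subset G_m = E_{\overline{K(T)}}
\]
whose factors $F_i := G_i/G_{i-1}$ are stable in $\cD_{\overline{K(T)}}$; since $\overline{K(T)}$ is algebraically closed, each $F_i$ is in fact geometrically stable of phase $\phi$. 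Applying Proposition~\ref{Prop-D-bc-fields}.\eqref{Prop-D-bc-fields-Efg} finitely many times to the objects $G_i$ and to the connecting morphisms (using that Homs between descended objects over $\overline{K(T)}$ are filtered colimits of Homs over finite intermediate extensions), I obtain a finite subextension $K(T) \subset L \subset \overline{K(T)}$ over which the entire filtration is defined. Shrinking $T$ to an affine open if necessary, I realize $L = K(T')$ for an irreducible, finite surjective morphism $T' \to T$ (for instance by adjoining a primitive element of $L/K(T)$ as a root of a monic polynomial with coefficients in $\cO_T$ and restricting to the open where the discriminant is nonzero).

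Next I would spread the filtration out. Pulling $E$ back to $T'$, repeated applications of Lemma~\ref{lem-open-restriction-es} together with the morphism extension statement of Lemma~\ref{lem-extend-from-localisation}.\eqref{enum:lem-extend-morphism-from-localisation} allow me to extend the filtration on the generic fiber inductively (object by object, morphism by morphism, cone by cone) to a filtration
\[
0 = \cG_0 \subset \cG_1 \subset \dots \subset \cG_m = E_{U'}
\]
of $U'$-perfect objects in $\cD_{U'}$ for some nonempty open $U' \subset T'$, with factors $\cF_i$ whose generic fibers are $F_i$. Shrinking $U'$ further using universal openness of geometric stability (applied to each of the finitely many $\cF_i$), I may assume every $(\cF_i)_{t'}$ for $t' \in U'$ is geometrically $\sigma_{t'}$-stable. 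Universal local constancy of central charges gives $Z_{t'}((\cF_i)_{t'}) = Z(F_i) \in \R_{>0} \cdot e^{\ii\pi\phi}$, so each $(\cF_i)_{t'}$ is $\sigma_{t'}$-semistable of phase $\phi$, and therefore $(E_{T'})_{t'}$, being an iterated extension of such factors, is $\sigma_{t'}$-semistable.

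Finally I would descend back to $T$. Since $T' \to T$ is finite dominant to the irreducible $T$, it is proper and surjective, hence closed; so $U := T \setminus \mathrm{image}(T' \setminus U')$ is a nonempty open subset of $T$. For any $t \in U$ I pick $t' \in U'$ above $t$; the base change $(E_{T'})_{t'} = E_t \otimes_{\kappa(t)} \kappa(t')$ is $\sigma_{t'}$-semistable, and Theorem~\ref{thm:base-change-stability-condition}.\eqref{enum:pullbackremainsstable} applied to the field extension $\kappa(t) \subset \kappa(t')$ then yields that $E_t$ is $\sigma_t$-semistable. The main obstacle is the inductive spreading-out in the second step: one must simultaneously extend finitely many objects, morphisms, and the resulting distinguished triangles to a common open $U' \subset T'$, which requires a careful iteration of the extension lemmas but is otherwise essentially standard.
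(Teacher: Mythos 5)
Your proof is correct and follows essentially the same approach as the paper's: descend the Jordan--H\"older filtration of $E_{\overline{K(T)}}$ to a subextension of $K(T)$ via Proposition~\ref{Prop-D-bc-fields}.\eqref{Prop-D-bc-fields-Efg} (as you note, automatically a \emph{finite} extension since $\overline{K(T)}/K(T)$ is algebraic), spread it out over an open of a cover $T'\to T$ using Lemma~\ref{lem-extend-from-localisation}, apply universal openness of geometric stability to the factors, and descend --- you via a finite, hence closed, cover, the paper via Chevalley's theorem on the image of an arbitrary dominant finite-type $T'\to T$. One remark shared by both arguments: concluding that $E_{t'}$ is semistable from geometric stability of the factors uses that their phases stay equal to $\phi$ along $T'$ (you invoke universal local constancy of central charges explicitly), which is not literally part of the lemma's stated hypothesis, though it holds everywhere the lemma is applied in the paper.
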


\begin{proof}
Let $K(T) \subset \overline{K}$ be the algebraic closure, and consider the Jordan--H\"older filtration for $E_{\overline{K}}$.
By Proposition~\ref{Prop-D-bc-fields}.\eqref{Prop-D-bc-fields-Efg}, this filtration is defined over a finitely generated field extension $K(T) \subset L$; let $E^i_L$ be the filtration factors, which are by definition geometrically $\sigma_L$-stable.
Let $f \colon T' \to T$ be a morphism of finite type such that $L = K(T')$ as field extensions of $K(T)$.
By Lemma~\ref{lem-extend-from-localisation}.\eqref{enum:lem-extend-filtration-from-localisation}, we can lift the JH filtration of $E_L$ to a sequence of morphisms
\[ 0 = F^0_{T'} \xrightarrow{f_1} F^1_{T'} \xrightarrow{f_2} \dots \to F^m_{T'} = E_{T'}
\] 
in $\cD_{T'}$ such that $E^i_{T'} := \cone(f_i)$ is a lift of $E^i_L$ to $\cD_{T'}$. Moreover, in light of \cite[Proposition~2.2.1]{Lieblich:mother-of-all}, 
by shrinking $T'$ we may assume that all $F^i_{T'}$ are 
$T'$-perfect. 
By our assumption on universal openness of geometric stability, by further shrinking $T'$ we may assume $E^i_{t'}$ is geometrically $\sigma_{t'}$-stable for all ${t'} \in T'$ and for all $i$.
Since the central charges are universally locally constant, the objects $E^i_{t'}$ also have the same phase.
It follows that $E_{t'}$ is $\sigma_{t'}$-semistable, and hence $E_t$ is $\sigma_t$-semistable for all $t \in f(T')$ by Theorem~\ref{thm:base-change-stability-condition}.\eqref{enum:pullbackremainsstable}.
Since $f(T')$ contains an open set, this concludes the proof of the lemma.
\end{proof}

The following, combined with Definition~\ref{def:fiberwisesupport} later on, is the main definition of this paper.

\begin{Def} \label{def:familyfiberstabilities}
A \emph{flat family of fiberwise stability conditions on $\cD$ over $S$} is a collection of numerical stability conditions
$\us = \left(\sigma_s = (Z_s, \cP_s)\right)_{s \in S}$ on $\cD_s$ for every (closed or non-closed) point $s \in S$ 
such that: 
\index{sigmau@$\us = (\sigma_s = (Z_s, \cP_s))_{s \in S}$,!flat family of fiberwise (weak) stability conditions on $\cD$}
\begin{enumerate}[{\rm (1)}]
\item \label{enum:Zsindependentbasechange} 
$\us$ universally has locally constant central charges. 
\item \label{enum:stabilityopenbasechange} $\us$ universally satisfies openness of geometric stability. 
\item \label{enum:curveassumption} 
$\us$ integrates to a HN structure over any Dedekind scheme $C \to S$ essentially of finite type (see Definition~\ref{def:EssLocFT}) over $S$, 
i.e., the stability conditions $\sigma_c$ for $c \in C$ are induced, in the sense of Lemma~\ref{lem:inducedfiberprestability}, by a HN structure $\sigma_C$ on $\cD_C$ over $C$.
\end{enumerate}
We define a \emph{flat family of fiberwise weak stability conditions} analogously via weak stability conditions on the fibers, but with some additional assumptions: 
\begin{enumerate}[{\rm (1)}] \setcounter{enumi}{-1}

\item \label{enum:A0noetheriantorsionfibers}
For each $s \in S$, the central charge $Z_s$ is defined over $\Q[\ii]$, and $\cA_s^0 \subset \cA_s$ (see Definition~\ref{def:A0}) is a noetherian torsion subcategory (see Definition~\ref{def:noetheriantorsionsubcat}).

\item[(\mylabel{enum:genOpSstweak}{2'})] 
In addition to universal openness of geometric stability, $\us$ satisfies the following property: 
For any morphism $T \to S$ essentially of finite type with $T$ irreducible and any $T$-perfect $E \in \cD_T$ whose generic fiber $E_{K(T)}$ is $\sigma_{K(T)}$-semistable, there exists a nonempty open subset $U \subset T$ such that $E_t$ is $\sigma_t$-semistable for all $t \in U$.

\item[(\mylabel{enum:curveassumptionweak}{3'})] 
For any Dedekind scheme $C \to S$ essentially of finite type over $S$, 
the weak stability conditions $\sigma_c$ for $c \in C$ are induced, in the sense of Lemma~\ref{lem:inducedfiberprestabilityweak}, by a weak HN structure $\sigma_C$ on $\cD_C$ over $C$ with the additional property that $\cA_C^0$ (see Definition~\ref{def:tiltingpropertyoverC}) is a noetherian torsion subcategory. 
\end{enumerate}
\end{Def}

\begin{Rem}\label{rem:mainDef}
Let us elaborate on the conditions appearing in Definition~\ref{def:familyfiberstabilities}:
\begin{enumerate}[{\rm (1)}] 
\item Universal openness of geometric stability in \eqref{enum:stabilityopenbasechange} is the main consistency condition 
relating the slicings for different fibers and it is necessary for the existence of moduli spaces of semistable objects, see Theorem~\ref{thm:modulispacesArtinstacks}.
In Proposition~\ref{prop:yeswehaveopennessofflatness} below, we show that a flat family of fiberwise (weak) stability conditions 
automatically satisfies the other compatibility condition introduced above --- universal openness of lying in $\cP(I)$ for any $I \subset \R$. 
In the case of weak stability conditions, we initially interpret condition \eqref{enum:genOpSstweak} in the sense of Definition~\ref{def:slicingbasechangeweak}; this in combination with condition \eqref{enum:A0noetheriantorsionfibers} means that the assumptions of Proposition~\ref{prop:basechangeweakstabilityviaopenness} are satisfied, 
and thus the notions of universal openness of geometric stability and lying in $\cP(I)$ are indeed well-defined. 

\item By Lemma~\ref{lem:SemiStUnivGenOpenn} condition~\eqref{enum:genOpSstweak} automatically holds for a flat family of fiberwise 
stability conditions, but in the weak case we need to include generic openness of semistability as an extra assumption. 

\item \label{enum:Jacobson}
If $S$ is of finite type over a field, or more generally a \emph{Jacobson scheme}, since the closed points of $S$ are dense in every closed subset of $S$, $\us$ is determined by $\sigma_s$ for all closed points $s$:
indeed this follows from universal openness of geometric stability and universal generic openness, once we invoke \cite[Proposition~2.2.1]{Lieblich:mother-of-all} to lift objects to relatively perfect ones.\footnote{We have learned a related statement for complete DVRs from Fabian Haiden.}
In particular, from \eqref{enum:Zsindependentbasechange} it follows that the central charge at a point $s$ is determined by the central charge at any specialization of $s$.
The existence of central charges on non-closed points is a consistency condition for central charges on closed points, generalizing Definition~\ref{def:familycentralcharges}.
The existence of slicings for non-closed points axiomatizes the classical notion of generic stability and generic HN filtrations.
This notion is significantly strengthened by condition \eqref{enum:curveassumption} when the base is a Dedekind scheme.
\item Clearly, a flat family of fiberwise stability conditions over $S$ induces one over $T$ for a base change $T \to S$ essentially of finite type.

\item In case $S=\Spec(k)$ is a point, a ``flat family of fiberwise stability conditions on $\cD$'' (and, similarly, ``a stability condition on $\cD$ over $k$'' in Definition~\ref{def:fiberwisesupport} below) is stronger than the notion of a stability condition on $\cD$ from Section~\ref{sec:StabCond}.
Indeed, Definition~\ref{def:familyfiberstabilities} includes openness of stability as a requirement (while Definition~\ref{def:fiberwisesupport} will include boundedness).
\end{enumerate}
\end{Rem}

We single out the following remark for emphasis. 

\begin{Rem} \label{rem:cond3viahearts} 
Consider a Dedekind scheme $C \to S$. The key requirement in condition \eqref{enum:curveassumption} of Definition~\ref{def:familyfiberstabilities} is the existence of a t-structure on $\cD_C$ integrating the t-structures on the fibers $\cD_c$ induced by the stability conditions $\sigma_c$ and universally satisfying openness of flatness. 
Indeed, we will see that, given $\us$, the heart $\cA_C$ universally satisfies openness of flatness (Proposition~\ref{prop:yeswehaveopennessofflatness}) and has a $C$-torsion theory (Corollary~\ref{cor:Ctorsionautomatic}).
Conversely, if a heart $\cA_C$ exists and universally satisfies openness of flatness, then by Theorem~\ref{thm:Ctorsiontheoryautomatic} it does admit a $C$-torsion theory; once we impose a support property which is uniform across all fibers, see Definition~\ref{def:fiberwisesupport}, the existence of a HN structure then follows from Theorem~\ref{thm:HNstructureviafibers} and Lemma~\ref{lem:SemiStUnivGenOpenn}. 

In the case of weak stability conditions, the same logic holds if in addition, every weak stability condition $\sigma_s$ has the tilting property, this is preserved under base change, and $\cA_C^0$ is a noetherian torsion subcategory. 

Conversely, a (weak) HN structure on $S = C$ satisfying support property on fibers induces a flat family of fiberwise (weak) stability conditions only if we additionally require various base change compatibilities;
for example, universal openness of geometric stability (and universal generic openness, in the weak stability case) rather than just generic openness for objects that are semistable over $K(C)$ (see Definition~\ref{def:GenericOpennessSemiStab}).
\end{Rem}

\subsection{Universal openness of flatness}

In order to take advantage of the results of Part~\ref{part:ModuliSpaces}, in particular the existence of Quot spaces shown in Section~\ref{sec:quotspaces}, 
we have to show that the fiberwise collection of t-structures underlying a flat family of (weak) stability conditions universally satisfies openness of flatness. 
More generally (see Remark~\ref{remark-uolp-uoflatness}), we show the following. 

\begin{Prop} \label{prop:yeswehaveopennessofflatness}
Let $\us$ be a flat family of fiberwise (weak) stability conditions on $\cD$ over $S$. 
Then $\us$ universally satisfies openness of lying in $\cP(I)$ for any interval $I \subset \R$. 
\end{Prop}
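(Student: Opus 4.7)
The plan is to reduce, via Lemma~\ref{Lem-uogs-ftbc} extended to the property of lying in $\cP(I)$, to morphisms $T \to S$ of finite type with $T$ affine; by Lemma~\ref{lemma-D-Ds}.\eqref{Ds-locus-open} we may also assume $E \in \cD_T$. I would then show the set $U_I := \{t \in T : E_t \in \cP_t(I)\}$ is open via the standard topological criterion that it is constructible and stable under generization.

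For constructibility I would argue by Noetherian induction, reducing to a generic openness statement: if $E_\eta \in \cP_\eta(I)$ at the generic point of an irreducible closed subset, then $E_t \in \cP_t(I)$ on a nonempty open. I would model the argument on the proof of Lemma~\ref{lem:SemiStUnivGenOpenn}: using Proposition~\ref{Prop-D-bc-fields}.\eqref{Prop-D-bc-fields-Efg}, descend the Jordan--H\"older refinement of the HN filtration of $E_\eta$ to a finitely generated field extension; lift each geometrically stable factor to a relatively perfect object over a finite type $T' \to T$ via \cite[Proposition~2.2.1]{Lieblich:mother-of-all}; apply universal openness of geometric stability (Definition~\ref{def:familyfiberstabilities}.\eqref{enum:stabilityopenbasechange}) to find an open in $T'$ on which all factors remain geometrically stable; and use universal local constancy of central charges to conclude their phases remain in $I$. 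Since $\cP(I)$ is closed under extensions, $E$ lies in $\cP(I)$ on this open, and Chevalley's theorem combined with Theorem~\ref{thm:base-change-stability-condition}.\eqref{enum:pullbackremainsstable} carries the conclusion back to $T$.

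For stability under generization, given $t \rightsquigarrow s$ with $E_s \in \cP_s(I)$, I would exploit the Nagata property of $T$ (Lemma~\ref{lem-nagata-eft}) together with Lemma~\ref{lemma-specializations-lift} to produce a DVR $R$ with fraction field $\kappa(t)$ and an essentially of finite type morphism $C := \Spec R \to T$ sending the generic point to $t$ and the closed point $q$ to $s$. Condition~\eqref{enum:curveassumption} of Definition~\ref{def:familyfiberstabilities} then furnishes a HN structure $\sigma_C$ on $\cD_C$, and the claim reduces to: $E_q \in \cP_q(I)$ implies $E_{K(C)} \in \cP_{K(C)}(I)$. Considering the HN filtration of $E_C$ with semistable cones $F_i \in \cP_C(\phi_i)$, the HN factors of $E_{K(C)}$ are the non-zero restrictions $(F_i)_{K(C)}$ of $C$-torsion free $F_i$, retaining phase $\phi_i$; by Lemma~\ref{lem:allfibersstable} these factors also restrict to $(F_i)_q \in \cP_q(\phi_i)$, while Lemma~\ref{lem:cohpushpull} shows that $C$-torsion factors supported at $q$ contribute to $E_q$ at phases $\phi_i$ and $\phi_i + 1$. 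Thus $E_q \in \cP_q(I)$ forces $\phi_i \in I$ for every torsion-free factor, yielding $E_{K(C)} \in \cP_{K(C)}(I)$.

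The hard part will be HN factors of $E_C$ with mixed $C$-torsion content, where separating torsion from torsion-free parts via Lemma~\ref{lem:torsionincPphi} requires $\sigma_C$ to admit a $C$-torsion theory; since Theorem~\ref{thm:Ctorsiontheoryautomatic} presupposes the openness of flatness that the present proposition establishes, this torsion theory must be argued directly from the HN property rather than by invoking that theorem. For the weak case, Lemma~\ref{lem:allfibersstable} is replaced by Lemma~\ref{lem:allfibersnotdestquot}, and objects of central charge zero are tracked using the tilting machinery of Section~\ref{sec:tilt1}. Finally, general intervals $I$ are reduced to length-one intervals by writing $\cP(I)$ as an intersection of aisles of rotated t-structures from the $\wGL2$-action, each still the slicing of a flat family of fiberwise stability conditions by the invariance of Definition~\ref{def:familyfiberstabilities} under this action.
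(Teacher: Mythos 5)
Your proposal follows the same two-part strategy as the paper --- constructibility plus stability under generization --- which the paper packages as semicontinuity of the phase functions $\phi^{\pm}_E$ in Lemma~\ref{lem:phipmsemicontinuous}. Your constructibility step (lift the HN/Jordan--H\"older filtration from the generic point, invoke universal openness of geometric stability, conclude by Noetherian induction) is essentially the paper's Step~1, and your reduction of stability under generization to a DVR $R$ essentially of finite type over $S$ via Lemma~\ref{lemma-specializations-lift} is also the paper's move.

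The gap is in the DVR step, and you have half-diagnosed it yourself. You want to split each HN factor $F_i$ of $E_R$ into a $C$-torsion and a $C$-torsion-free part so that Lemma~\ref{lem:allfibersstable} and Lemma~\ref{lem:fibersinheart} control $(F_i)_q$ precisely, but that requires a $C$-torsion theory on $\cA_R$, and as you note, Theorem~\ref{thm:Ctorsiontheoryautomatic} furnishes one only under openness of flatness --- the case $I = (0,1]$ of exactly what you are proving. Your proposed fix, to ``argue the torsion theory directly from the HN property,'' does not go through: having a HN structure over $C$ does not by itself force a $C$-torsion theory (which is precisely why Corollary~\ref{cor:Ctorsionautomatic} is deduced \emph{from} the present proposition rather than used to prove it). Mixed HN factors therefore remain genuinely unresolved in your sketch. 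The paper avoids the decomposition entirely: from the t-exactness of $i_{k*}$ one gets the coarse bound $E_k \in \cP_k[\phi^-(E_R), \phi^+(E_R)+1]$, which is then improved at each endpoint by a direct argument --- for $\phi^+$, the first HN step $E_1 \to E_R \to G$ produces a nonzero map from the first HN factor of $(E_1)_k$ into $E_k$ (because $\Hom$ into $G_k[-1]$ vanishes for phase reasons), yielding $\phi^+(E_k) \ge \phi^+(E_R)$; for $\phi^-$, the argument reduces to the claim that a $\sigma_R$-semistable $E$ satisfies $\phi(E) = \phi^-(E_k)$, proved by cases. The case $Z_K(E_K) = 0$ --- the real content in the weak situation --- is an ad hoc Nakayama-type argument using the noetherianity of $\cA_R^0$ from condition~\eqref{enum:curveassumptionweak} together with Remark~\ref{Rem-Nak}; pointing at ``the tilting machinery of Section~\ref{sec:tilt1}'' does not address it. This case analysis is the crux of the proof and is missing from your proposal. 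Finally, the paper handles all intervals $I$ at once via semicontinuity of $\phi^{\pm}_E$, so your reduction to length-one intervals by a $\wGL2$-rotation, while harmless, is unnecessary.
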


Given a base change $T \to S$ of finite type and a $T$-perfect object $E \in \cD_T$, we obtain two functions
\index{phi+E@$\phi^+_E$ ($\phi^-_E$), maximal (minimal) phase functions}
\begin{equation}
\label{phipmE}
\phi^+_E \colon T \to \R \cup \{-\infty\} \quad \text{and} \quad \phi^-_E \colon T \to \R \cup \{+\infty\}
\end{equation}
that assign to $t \in T$ the maximal and minimal phase $\phi^{\pm}(E_t)$ of the HN filtration of $E_t$ with respect to the slicing $\cP_t$; here we set $\phi^\pm$ of the zero object to be $\mp \infty$ for convenience.
The key observation underlying Proposition~\ref{prop:yeswehaveopennessofflatness} is the semicontinuity of these functions.
\begin{Lem} \label{lem:phipmsemicontinuous}
The functions $\phi^+_E$ and $\phi^-_E$ are, respectively, upper and lower semicontinuous constructible functions on $T$.
\end{Lem}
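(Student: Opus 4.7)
The arguments for $\phi^+_E$ and $\phi^-_E$ are parallel (upper vs.\ lower semicontinuity, with HN filtrations read from the highest-phase end vs.\ the lowest), so I concentrate on $\phi^+_E$. Since a constructible function is upper semicontinuous iff it does not decrease under specialization, I plan to establish constructibility and specialization-monotonicity separately.

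\emph{Constructibility.} By noetherian induction, it suffices to show $\phi^+_E$ is constant on a dense open of each irreducible component of $T$; let $\eta$ be its generic point. By Proposition~\ref{Prop-D-bc-fields}.\eqref{Prop-D-bc-fields-Efg}, the HN filtration of $E_\eta$ is defined over a finitely generated field extension $L/K(T)$; pick a dominant morphism $f\colon T'\to T$ of finite type with $K(T')=L$. Lemma~\ref{lem-extend-from-localisation}.\eqref{enum:lem-extend-filtration-from-localisation} then lifts the HN filtration to $\cD_{U'}$ over a dense open $U'\subset T'$, with the filtration cones $U'$-perfect after shrinking by \cite[Proposition~2.2.1]{Lieblich:mother-of-all}. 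Shrinking $U'$ further using universal openness of geometric stability combined with Lemma~\ref{lem:SemiStUnivGenOpenn} (or condition~\eqref{enum:genOpSstweak} in the weak case), every cone becomes fiberwise semistable; universal local constancy of central charges then makes the phases constant. This produces the HN filtration of $E_{u'}$ for each $u'\in U'$, so $\phi^+_E$ is constant on $f(U')$ by Theorem~\ref{thm:base-change-stability-condition}.\eqref{enum:pullbackremainsstable}, and $f(U')$ contains a dense open of $T$ by Chevalley's theorem.

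\emph{Reduction of semicontinuity to a DVR.} By Lemma~\ref{lemma-specializations-lift}, every specialization $t\rightsquigarrow t'$ in $T$ is realized by an essentially of finite type morphism $C=\Spec R\to T$ from the spectrum of a DVR $R$, sending $\eta\mapsto t$ and $p\mapsto t'$. By condition~\eqref{enum:curveassumption} (respectively~\eqref{enum:curveassumptionweak}), $\us$ integrates over $C$ to a (weak) HN structure $\sigma_C$, and by Proposition~\ref{prop:Ctorsiontheoryviaheart} the heart $\cA_C$ admits a $C$-torsion theory. Via the $\wGL2$-action (Remark~\ref{rem:wGL2action}) we normalize $\phi^+(E_\eta)=1$, so the claim becomes $\phi^+(E_p)\geq 1$.

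\emph{The DVR case.} Let $0=F^0\to\cdots\to F^m=E_C$ be the HN filtration of $E_C$ under $\sigma_C$, with cones $G^i\in\cP_C(\phi_i)$ and $\phi_1>\cdots>\phi_m$. By $t$-exactness of restriction to $\eta$ (Corollary~\ref{cor:base-change-tstructure-point}.\eqref{enum:base-change-generic}), the nonzero $G^i_\eta$ form the HN filtration of $E_\eta$; let $j_0$ be minimal with $G^{j_0}_\eta\neq 0$, so that $\phi_{j_0}=1$. The subobject $F^{j_0}_C\subset E_C$ (via the filtration) is an iterated extension of the $G^i_C$ for $i\leq j_0$: those with $i<j_0$ are $C$-torsion of $\sigma_C$-phase $\phi_i>1$, while $G^{j_0}_C$ decomposes via the $C$-torsion theory on $\cP_C(1)$ (Lemma~\ref{lem:torsionincPphi}) into $C$-flat and $C$-torsion pieces of phase $1$. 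For each $C$-torsion factor supported at $p$, Lemma~\ref{lem:cohpushpull} shows its restriction to $p$ has cohomology concentrated in degrees $0$ and $-1$ with $\sigma_p$-phases in $\{\phi_i,\phi_i+1\}\subset[1,\infty)$; for the $C$-flat piece of $G^{j_0}_C$, Lemma~\ref{lem:allfibersstable} (respectively Lemma~\ref{lem:allfibersnotdestquot} in the weak case) gives $\phi^-\geq 1$ after restriction to $p$. Combining these, every semistable factor of $F^{j_0}_p$ has $\sigma_p$-phase $\geq 1$, so $\phi^-(F^{j_0}_p)\geq 1$. The filtration map $F^{j_0}\to E_C$ is nonzero in $\Hom_C(F^{j_0},E_C)$, a finitely generated $R$-module by perfectness of $F^{j_0}$; dividing by the maximal power of the uniformizer $\pi\in R$ yields a primitive representative whose reduction mod $\pi$ is a nonzero morphism $F^{j_0}_p\to E_p$ in $\cD_p$. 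The standard slicing-theoretic argument then forces $\phi^+(E_p)\geq\phi^-(F^{j_0}_p)\geq 1=\phi^+(E_\eta)$. The main obstacle is precisely this last nonvanishing step: transferring the phase bound from $F^{j_0}_C$ to $E_p$ requires both the finite generation of $\Hom_C$ over $R$ and the $\pi$-divisibility argument, together enabling the HN structure on $C$ to bridge the generic and special fibers.
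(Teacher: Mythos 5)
Your constructibility argument is correct, though the detour through the cover $T'\to T$ is unnecessary: the HN filtration of $E_\eta$ with respect to $\sigma_\eta$ lives in $\cD_\eta$, which is $\kappa(\eta)$-linear, so it is already defined over $K(T)$ and can be lifted directly to an affine open of $T$ via Lemma~\ref{lem-extend-from-localisation}.\eqref{enum:lem-extend-morphism-from-localisation}, as the paper does. Your $\phi^+$-argument on a DVR is also essentially correct but over-engineered: the inequality $\phi^-(F^{j_0}_p)\ge 1$ follows immediately from the containment $F^{j_0}_p\in\cP_p[\phi_{j_0},\phi_1+1]$, which is a formal consequence of $i_{p*}$ being t-exact (so $i_p^*$ is right t-exact and $i_p^!=i_p^*[-1]$ is left t-exact), making your case-by-case analysis via Lemma~\ref{lem:cohpushpull} superfluous. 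Your $\pi$-divisibility trick does produce the required nonzero morphism $F^{j_0}_p\to E_p$, whereas the paper instead shows directly that the composition $A\to(E_1)_p\to E_p$ is nonzero for $A$ the first HN factor of $(E_1)_p$, using the vanishing $\Hom(A,G_p[-1])=0$ for $G=E_C/E_1$; this is cleaner and yields the sharper bound $\phi^+(E_p)\ge\phi^+(E_C)$.

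The genuine gap is the opening claim that ``the arguments for $\phi^+_E$ and $\phi^-_E$ are parallel.'' They are not, and the asymmetry is precisely where the hard content of the lemma lives. In your $\phi^+$ analysis, the containment $(G^i_C)_p\in\cP_p[\phi_i,\phi_i+1]$ for a $C$-torsion factor of phase $\phi_i$ is used only for its \emph{lower} bound, so the $+1$ shift on the right does no harm. If you dualize --- take $Q=E_C/F^{j_1-1}$ with $j_1$ maximal such that $G^{j_1}_\eta\neq 0$, normalize $\phi^-(E_\eta)=1$, and try to bound $\phi^+(Q_p)\le 1$ --- you now need the \emph{upper} bound, and for $C$-torsion factors of phase $\phi_i<1$ the right endpoint $\phi_i+1$ can be anywhere up to $2$, not $\le 1$. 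Equivalently, the tautological containment $E_p\in\cP_p[\phi^-(E_C),\phi^+(E_C)+1]$ coming from t-exactness of $i_{p*}$ hands you $\phi^+(E_p)\ge\phi^+(E_C)$ essentially for free but only gives $\phi^-(E_p)\ge\phi^-(E_C)$, which is the \emph{wrong} direction for $\phi^-$-semicontinuity. The paper handles this by reducing to the claim that for a $\sigma_R$-semistable object $E\in\cA_R$ of phase $\phi$, one has the \emph{equality} $\phi^-(E_p)=\phi$. When $Z_K(E_K)\neq 0$ this follows from a central-charge argument: $Z_p(E_p)=Z_K(E_K)$ lies on the ray of phase $\phi$, while $E_p\in\cP_p[\phi,\phi+1]$, and these are compatible only if $\phi^-(E_p)=\phi$. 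When $Z_K(E_K)=0$, relevant in the weak case, this argument fails and one must show $E/\pi E\neq 0$, which the paper does by a Nakayama-type argument exploiting the noetherianity of $\cA_R^0$ guaranteed by Definition~\ref{def:familyfiberstabilities}.\eqref{enum:curveassumptionweak}. None of this follows by dualizing your $\phi^+$ argument, so the proposal is incomplete.
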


\begin{proof}[Proof of Proposition~\ref{prop:yeswehaveopennessofflatness}]
By Lemma~\ref{Lem-uogs-ftbc}, it is enough to prove openness of lying in $\cP(I)$ for a finite type base change 
$T \to S$ and $E \in \cD_T$ as above. 
Suppose for concreteness that the interval $I$ is of the form $I = (a,b]$; the argument is the same for other types of intervals. 
Then the set 
\begin{equation*}
\set{ t \in T \sth E_t \in \cP_{t}(I) } = \left(\phi^-_E\right)^{-1}(a,+\infty) \cap \left(\phi^+_E\right)^{-1}(-\infty, b]
\end{equation*} 
is open by Lemma~\ref{lem:phipmsemicontinuous}.
\end{proof}

\begin{proof}[Proof of Lemma~\ref{lem:phipmsemicontinuous}]
We first note that $\phi^\pm_E$ are preserved by arbitrary base change, as the pullback under base change for field extensions in Theorem~\ref{thm:base-change-stability-condition} or Proposition~\ref{prop:basechangeweakstabilityviaopenness} preserves the HN filtration.

\begin{step}{1}
$\phi^{+}_E$ and $\phi^{-}_E$ are constructible functions.
\end{step}
Due to the compatibility with base change, and since $T$ is noetherian, it is enough to prove that if $T$ is irreducible, then there exists an open set 
$U \subset T$ on which $\phi^\pm_E$ is constant. 
To prove this claim, let $0 \to E_1 \to E_2 \to \dots \to E_m = E$ be an arbitrary lift to $\cD_T$ of the HN filtration of $E_\eta$, where $\eta \in T$ is the generic point, given by Lemma~\ref{lem-extend-from-localisation}.\eqref{enum:lem-extend-morphism-from-localisation}.

Let $F_i$ be the cone of the map $E_{i-1} \to E_i$; by construction, the objects $(F_i)_\eta$ are the HN factors of $E_\eta$, and thus semistable.
By the generic openness of semistability (see Lemma~\ref{lem:SemiStUnivGenOpenn}), there exists a nonempty open set $U \subset T$ such that every $(F_i)_t$ is semistable for all points $t \in U$. It follows that the $(E_i)_t$ induce the HN filtration of $E_t$ for all $t \in U$, and in particular that $\phi^\pm_E$ are constant on $U$.

\begin{step}{2}
$\phi^+_E$ and $\phi^{-}_E$ are, respectively, monotone increasing and decreasing under specialization.
\end{step}
As $\phi^\pm_E$ are preserved by base change, and since we assume $S$ (and thus $T$) to be Nagata, it suffices by Lemma~\ref{lemma-specializations-lift} to consider a DVR $R$ essentially of finite type over $T$:
if $k, K$ are the special and generic point of $\Spec R$, respectively, we have to show $\phi^+(E_k) \geqslant \phi^+(E_K)$ and $\phi^-(E_k) \leqslant \phi^-(E_K)$. We may assume $E_K \neq 0$, otherwise the claim is trivial. 

By Definition~\ref{def:familyfiberstabilities} assumption \eqref{enum:curveassumption}, the (weak) stability conditions $\sigma_k$ and $\sigma_K$ are induced by a (weak) Harder--Narasimhan structure $\sigma_R$ over $\Spec R$.
Let 
\begin{equation*}
 0 = E_0 \to E_1 \to \dots \to E_m = E_R
\end{equation*}
be the HN filtration of $E_R$ with respect to $\sigma_R$.
Its base change to the fraction field induces the HN filtration of $E_K$ (in the sense that some filtration quotients might be $R$-torsion, inducing isomorphisms $E_i \to E_{i+1}$ after base change to $K$); 
therefore $\phi^+(E_K) \leqslant \phi(E_1) = \phi^+(E_R)$, and likewise $\phi^-(E_K) \geqslant \phi^-(E_R)$. 

It remains to prove $\phi^-(E_R) \geqslant \phi^-(E_k)$ and $\phi^+(E_R) \leqslant \phi^+(E_k)$. Since $i_{k*}$ is t-exact with 
respect to the t-structures obtained from $\cP(> \phi)$ for
any $\phi \in \R$, see Lemma~\ref{lem:inducedfiberprestabilityweak}, it follows that $i_k^*$ is right t-exact, and
$i_k^! = i_k^*[-1]$ is left t-exact; in other words, 
\begin{equation} \label{eq:immediatebound}
E_k \in \cP_k[\phi^-(E_R), \phi^+(E_R) + 1].
\end{equation} 

This immediately implies the desired claim for $\phi^+$. Indeed, consider the triangle $E_1 \to E_R \to G$ obtained from the HN filtration, and let 
$A$ be the first HN factor of $(E_1)_k$. Then
\[ \phi(A) \geqslant \phi(E_1)= \phi^+(E_R) > \phi^+(G) \geqslant \phi^+(G_k[-1]).
\]
Hence the composition $A \to (E_1)_k \to E_k$ is non-zero, and so $\phi^+(E_k) \geqslant \phi(A) \geqslant \phi^+(E_R)$.

An analogous argument reduces the case of $\phi^-$ to the following claim: if $E \in \cA_R$ is $\sigma_R$-semistable, then $\phi(E) = \phi^-(E_k)$.
To prove this claim, first suppose $Z_K(E_K) \neq 0$. Then
$Z_k(E_k) = Z_K(E_K) \in \R_{>0}\cdot e^{\ii\pi\phi}$. Together with $E_k \in \cP_k[\phi, \phi+1]$, this is only possible if $\phi^-(E_k) = \phi$ as claimed. 

Now suppose $Z_K(E_K) = 0$. 
Then $\phi = 1$, so $E/\pi E\in \cP_k(1)$ by our analysis above; to conclude, it suffices to prove $E/\pi E\neq 0$.
Assume otherwise.
It follows immediately that
$E_k = \rH^{-1}_{\cA_k}(E_k)[1]$ with $Z_k\left(\rH^{-1}_{\cA_k}(E_k)\right) = 0$; the same holds for all quotients of $E$ in $\cA_R$.

Recall from Definition~\ref{def:familyfiberstabilities}.\eqref{enum:curveassumptionweak} that $\cA_R^0$ is noetherian, so if $E$ were in $\cA_R^0$, then it would have a maximal $R$-torsion subobject which would contradict $E/\pi E= 0$ by Remark~\ref{Rem-Nak}.
Thus $E\notin\cA_R^0$, so there exist quotients $E \onto Q_1 \onto Q_2$ with a short exact sequence
$A \into Q_1 \onto Q_2$ and $Z_R(A) \neq 0$.
Since $Z_K((Q_i)_K) = 0$, this is only possible if $A$ is $R$-torsion. On the other hand, the surjection $\rH^{-1}_{\cA_k}((Q_2)_k) \onto A/\pi A$ in $\cA_k$ shows $Z_k(A/\pi A) = 0$; via Lemma~\ref{lem:filtrationatp} this gives a contradiction.
\end{proof}

Finally, as consequence of Proposition~\ref{prop:yeswehaveopennessofflatness} and Theorem~\ref{thm:Ctorsiontheoryautomatic}, we have:
\begin{Cor} \label{cor:Ctorsionautomatic}
Let $\us$ be a flat family of fiberwise (weak) stability conditions on $\cD$ over $S$.
Let $C \to S$ be essentially of finite type, with $C$ Dedekind, and let $\cA_C$ be the heart of the (weak) HN structure on $\cD_C$.
Then $\cA_C$ has a $C$-torsion theory.
\end{Cor}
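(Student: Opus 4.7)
The plan is to apply Theorem~\ref{thm:Ctorsiontheoryautomatic} to the heart $\cA_C$ of the (weak) HN structure $\sigma_C$ on $\cD_C$ provided by Definition~\ref{def:familyfiberstabilities}.\eqref{enum:curveassumption} (or \eqref{enum:curveassumptionweak} in the weak case). By Proposition~\ref{prop:stabviaheart} (respectively Proposition~\ref{prop:stabviaheartweak}), this heart sits inside a bounded $C$-local t-structure on $\cD_C$, so the hypotheses of Theorem~\ref{thm:Ctorsiontheoryautomatic} on the ambient heart are in place.

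The two substantive conditions to verify are that $\cA_C$ universally satisfies openness of flatness, and that for each closed point $p\in C$ there is a (weak) stability condition $\sigma_p=(\cA_p,Z_p)$ with $\cA_p^0\subset\cA_p$ a noetherian torsion subcategory in the weak case. The second condition is immediate from the data of $\us$: the base-changed stability conditions $\sigma_p$ on the fibers are part of the collection, and the noetherianity of $\cA_p^0$ in the weak case is built into Definition~\ref{def:familyfiberstabilities}.\eqref{enum:A0noetheriantorsionfibers}.

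For the openness of flatness, I would apply Proposition~\ref{prop:yeswehaveopennessofflatness} with the interval $I=(0,1]$. Given any $T \to C$ (which composes with the essentially of finite type morphism $C \to S$ to a morphism $T \to S$ to which the proposition applies) and any $T$-perfect $E\in\rD(X_T)$, that proposition yields that the locus $\{t\in T : E_t\in \cP_t(0,1]\}$ is open in $T$. The key matching to make is that $\cP_t(0,1]=\cA_t$ coincides with the heart $(\cA_{\qc})_t$ of the base change of the $C$-local t-structure from $\cA_C$ along $\Spec\kappa(t)\to C$: this is precisely the content of the HN structure $\sigma_C$ integrating the fiberwise stability conditions, and $E_t\in (\cA_{\qc})_t$ if and only if $E_t\in\cA_t$ by t-exactness of the inclusion $\cD_t\hookrightarrow(\cD_{\qc})_t$ from Lemma~\ref{lem-induced-t-Dqc}.\eqref{cD-cDqc-exact}. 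Thus $\cA_C$ universally satisfies openness of flatness.

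With both hypotheses verified, Theorem~\ref{thm:Ctorsiontheoryautomatic} directly yields the existence of a $C$-torsion theory on $\cA_C$. There is no real obstacle in this argument; the only point requiring care is the notational identification between the fiberwise heart $\cA_t=\cP_t(0,1]$ coming from the slicing and the heart $(\cA_{\qc})_t$ coming from base change of the $C$-local t-structure, which is guaranteed by the integrability assumption in Definition~\ref{def:familyfiberstabilities}.
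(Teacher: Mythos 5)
Your proposal is correct and follows exactly the route the paper indicates for this corollary: invoke Proposition~\ref{prop:yeswehaveopennessofflatness} (via the identification of openness of lying in $\cP(0,1]$ with openness of flatness, Remark~\ref{remark-uolp-uoflatness}) and then feed the result into Theorem~\ref{thm:Ctorsiontheoryautomatic}. The additional verification you provide of the second hypothesis of Theorem~\ref{thm:Ctorsiontheoryautomatic} (that $\sigma_p$ exists on each closed fiber with $\cA_p^0$ a noetherian torsion subcategory in the weak case, by Definition~\ref{def:familyfiberstabilities}.\eqref{enum:A0noetheriantorsionfibers}) is a natural and correct piece of bookkeeping that the paper leaves implicit.
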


\subsection{Product stability conditions}
In this subsection, we assume that $X_0$ and $S$ are of finite type over a field $k$ with $X_0$ projective, and let $X = X_0 \times_{\Spec k} S$ be the product.
Further, assume that $\cD_{0} \subset \Db(X_0)$ is a $k$-linear strong semiorthogonal component of finite cohomological amplitude, and $\cD = (\cD_{0})_S \subset \Db(X)$ is its base change to $S$. 

By Theorem~\ref{thm:base-change-stability-condition}, any stability condition $\sigma_0$ on $\cD_0$ induces a collection $\us$ of fiberwise stability conditions on $\cD$ over $S$; 
we call such a $\us$ a \emph{product stability condition}. 
If $\sigma_0$ universally satisfies openness of geometric stability and the heart of $\sigma_0$ universally satisfies openness of flatness, then $\us$ is a flat family of fiberwise stability conditions by Theorems~\ref{thm:HNstructureviafibers} and \ref{Thm-D-bc}. As a partial converse, we have the following result, strengthening \cite[Proposition~2.6]{LiPertusiZhao:TwistedCubics}.

\begin{Prop}\label{prop:ProductStabilityCondition}
If $S$ as above is connected and has a $k$-rational point, then the only flat families of fiberwise stability conditions on $\cD$ over $S$ are product stability conditions.
\end{Prop}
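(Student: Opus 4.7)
Set $\sigma_0 := \sigma_{s_0}$ and let $\us' = (\sigma'_s := (\sigma_0)_{k(s)})_{s \in S}$ denote the product family obtained from $\sigma_0$ via Theorem~\ref{thm:base-change-stability-condition}; the goal is to prove $\us = \us'$. I would first show the central charges coincide: for any $E_0 \in \cD_0$, the pullback $E := p^*E_0$ along the first projection $p \colon X \to X_0$ is relatively perfect over $S$ with $E_s = (E_0)_{k(s)}$, so universal local constancy (Definition~\ref{def:familyfiberstabilities}\eqref{enum:Zsindependentbasechange}) combined with connectedness of $S$ yields $Z_s(E_s) = Z_{s_0}(E_0) = (Z_0)_{k(s)}(E_s)$. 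Since such pullback classes generate a finite-index subgroup of the relevant image lattice (Proposition/Definition~\ref{propdef:KnumDell}), and both central charges factor through the common $\Lambda$, this forces $Z_s = (Z_0)_{k(s)}$ for every $s \in S$.

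To match slicings, I would prove the key assertion that for every geometrically $\sigma_0$-stable $E_0 \in \cD_0$ and every $s \in S$, the pullback $E_s = (E_0)_{k(s)}$ is geometrically $\sigma_s$-stable; by the description of $\cP'_s$ in equation~\eqref{Pellequation}, Jordan--H\"{o}lder decompositions, and the equality of central charges, this suffices to conclude $\cP_s = \cP'_s$. Fix such an $E_0$ and consider $T_{E_0} := \{s \in S : E_s \text{ is geometrically } \sigma_s\text{-stable}\}$. By universal openness of geometric stability (Definition~\ref{def:familyfiberstabilities}\eqref{enum:stabilityopenbasechange}), $T_{E_0}$ is open in $S$ and contains $s_0$; connectedness reduces the problem to showing $T_{E_0}$ is closed, and by the Nagata property of $S$ together with Lemma~\ref{lemma-specializations-lift} it suffices to treat the case of a DVR $R$ essentially of finite type over $S$ whose generic point maps into $T_{E_0}$ and whose closed point is $s_1$, establishing $s_1 \in T_{E_0}$.

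Setting $C := \Spec R$, condition~\eqref{enum:curveassumption} of Definition~\ref{def:familyfiberstabilities} provides a HN structure $\sigma_C$ on $\cD_C$ whose heart $\cA_C$ admits a $C$-torsion theory (Corollary~\ref{cor:Ctorsionautomatic}). After a $\wGL2$-rotation I may assume $E_0 \in \cA_0$. Since $C$ has only two points, any open subset containing the generic point is already all of $C$, so universal openness of lying in $\cP((0,1])$ (Proposition~\ref{prop:yeswehaveopennessofflatness}) combined with $E_{K(C)} \in \cA_{K(C)}$ (which is geometrically stable of phase in $(0,1]$ after the rotation) forces $E_c \in \cA_c$ for every $c \in C$; by Lemma~\ref{lem-AC-C-flat}, $E_C := p^*E_0|_C$ lies in $\cA_C$ and is $C$-torsion free by Lemma~\ref{lem:FlatIffTFreeCurve}. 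Lemma~\ref{lem:allfibersstable} then yields $\sigma_{s_1}$-semistability of $E_{s_1}$. To upgrade this to geometric stability, suppose $(E_{s_1})_{\overline{k(s_1)}}$ admitted a nontrivial Jordan--H\"{o}lder filtration; by Proposition~\ref{Prop-D-bc-fields}\eqref{Prop-D-bc-fields-Efg} it would descend to some finite extension $\ell/k(s_1)$, and via Lemma~\ref{lem-extend-from-localisation}\eqref{enum:lem-extend-filtration-from-localisation} I would lift it to a nontrivial filtration of $E_{R'}$ for an extension DVR $R'/R$ with residue field $\ell$ and fraction field $K(R') \supseteq K(C)$; restricting to $K(R')$ would then contradict the geometric stability of $E_{K(C)}$, preserved under field extensions by Theorem~\ref{thm:base-change-stability-condition}\eqref{enum:geomstablepreserved}.

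The main obstacle is the final step: upgrading fiberwise semistability of $E_{s_1}$ to \emph{geometric} stability by ruling out a Jordan--H\"{o}lder decomposition of $(E_{s_1})_{\overline{k(s_1)}}$ invisible from the single generic fiber of $C$. The resolution requires the simultaneous use of base change for hearts, universal openness of lying in $\cP(I)$, and the HN structure along an auxiliary DVR extension $R'/R$ realizing the field of definition of the decomposition; this is the technical heart of the argument and is where the full package provided by Definition~\ref{def:familyfiberstabilities} is deployed.
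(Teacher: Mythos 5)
Your proof takes a genuinely different route from the paper's: you set up an open--closed argument for the locus $T_{E_0}$ where the constant family $p^*E_0$ stays geometrically stable, whereas the paper works with two $k$-rational points and irreducibility. Your first paragraph (matching central charges via universal local constancy) is sound. The second paragraph, however, has a gap at the step ``Lemma~\ref{lem:allfibersstable} then yields $\sigma_{s_1}$-semistability of $E_{s_1}$.'' That lemma is an equivalence: a $C$-torsion free $E\in\cA_C$ is $Z_C$-semistable \emph{if and only if} $E_K$ \emph{and} all the $E_p$ are semistable. You have $E_C\in\cA_C$ torsion free with $E_{K(C)}$ semistable, but to extract anything about $E_{s_1}$ you would first need to know $E_C$ itself is $Z_C$-semistable --- and that is precisely what is in question. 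Semistable reduction (available from condition~\eqref{enum:curveassumption} of Definition~\ref{def:familyfiberstabilities} via Proposition~\ref{prop:HNviaHN}) only produces a $Z_C$-semistable subobject $F\subset E_C$ with $C$-torsion quotient, and nothing in your argument rules out that this quotient is supported over $s_1$, i.e.\ that $E_{s_1}$ is destabilized. Turning the constancy of $E_C=p^*E_0|_C$ into closedness of the stable locus is a Langton--Maruyama-type statement; but Theorem~\ref{thm:Langton} needs the support property on $\cD_\Ctor$ with a \emph{uniform} quadratic form (its assumption~\eqref{enum:LangtonHNsupport}), which belongs to Definition~\ref{def:fiberwisesupport}.\eqref{enum:fiberwisesupport}, not to the weaker hypotheses of Definition~\ref{def:familyfiberstabilities} under which the proposition is stated. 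A smaller but related misstep appears in your third paragraph: Lemma~\ref{lem-extend-from-localisation}.\eqref{enum:lem-extend-filtration-from-localisation} lifts filtrations from a localization --- i.e.\ from the \emph{generic} fiber --- whereas you want to propagate a Jordan--H\"older filtration from the \emph{closed} point $s_1$, which is again a Quot-space/semistable-reduction lift, not covered by that lemma.

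The paper's proof sidesteps all of this. In its Step~1, for $S$ irreducible with two $k$-rational points $s,t$, both $F\in\cP_s(\phi)$ and the maximal destabilizing subobject $F_1$ of $F$ with respect to $\sigma_t$ are objects of $\cD_0$ (since $\cD_s=\cD_t=\cD_0$), so each can be pulled back to a constant family over $S$. Openness of geometric stability applied to these two constant families supplies two non-empty open sets, which meet by irreducibility; on the overlap, the nonzero morphism $F_1\to F$ together with $\phi(F_1)>\phi(F)$ and the locally constant central charge gives a contradiction. The $k$-rationality of \emph{both} points is what lets the would-be destabilizer $F_1$ be spread out over all of $S$; in your DVR argument the destabilizing quotient of $E_{s_1}$ lives in $\cD_{s_1}$ and cannot be so propagated without the missing properness inputs. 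The paper then reduces the general case (Steps~2--3) to Step~1 by base change to $S_\ell$ along a field extension $\ell$ making the relevant points $\ell$-rational, plus induction on irreducible components.
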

\begin{proof}
Let $\us=(\sigma_s=(Z_s,\cP_s))_{s\in S}$ be a flat family of fiberwise stability conditions on $\cD_S$ over $S$. 
By Remark~\ref{rem:mainDef}.\eqref{enum:Jacobson}, $\us$ is determined by $\sigma_s$ for all closed points $s\in S$.

\begin{step}{1} \label{step:2kratPoints}
Assume that $S$ is irreducible, and that $s, t \in S$ are two $k$-rational points. Then $\sigma_s = \sigma_t$.
\end{step}
Let $F\in\cP_s(\phi)\subset \cD_s$.
We claim that $F\in \cP_t(\phi)$ and $Z_s(F)=Z_t(F)$.

Indeed, if we consider $E:=p^*F$, where $p\colon X\to X_0$ is the natural projection, then $E$ is $S$-perfect and $E_t=E_s= F$, so by Definition~\ref{def:familyfiberstabilities}.\eqref{enum:Zsindependentbasechange} $Z_s(F)=Z_t(F)$.
Now, suppose that $E_t=F$ is not $\sigma_t$-semistable. 
Then, if we consider the mds $F_1$ of $F$ with respect to $\sigma_t$, we have $\phi(F_1)>\phi(F)$.
By Definition~\ref{def:familyfiberstabilities}.\eqref{enum:stabilityopenbasechange}, there exist open sets $U\ni s$ and $V\ni t$, such that $F$ is semistable for all points in $U$ and $F_1$ is semistable for all points in $V$.
Since $U\cap V\neq \emptyset$, the inequality $\phi(F_1)>\phi(F)$ contradicts the existence of a non-trivial morphism $F_1\to F$.

\begin{step}{2} \label{step:prod2}
Assume that $S$ is irreducible, and that it contains a point $s \in S$ such that $\sigma_s$ is obtained by base change from a stability condition $\sigma_0$ on $\cD_0$. Then $\us$ is a product stability condition.
\end{step}
Let $t \in S$; we need to show that $\sigma_t$ is equal to the stability condition $(\sigma_0)_{k(t)}$ obtained by base change from $\sigma_0$ via the field extension $k \subset k(t)$. Let $\ell$ be a field extension of $k$ containing both $k(t)$ and $k(s)$. Consider the base change $S_\ell := S \times_{\Spec k} \Spec \ell \to S$. Then every irreducible component of $S_\ell$ contains $\ell$-rational points $s_\ell, t_\ell$ mapping to $s, t$, respectively. By Step~\ref{step:2kratPoints}, we have
\[ 
(\sigma_t)_\ell = (\sigma_s)_\ell = (\sigma_0)_\ell = \left((\sigma_0)_{k(t)}\right)_\ell.
\]
By Theorem~\ref{thm:base-change-stability-condition}, two stability conditions that become equal after base change are equal. This proves the claim.

\begin{step}{3}
The general case.
\end{step}

If $S$ is any connected scheme, then by induction on the number of irreducible components and the previous steps, every irreducible component contains a point $t$ such that $\sigma_t$ is obtained by base change from a stability condition $\sigma_0$ on $\cD_0$. By Step~\ref{step:prod2}, the result follows.
\end{proof}

\section{Stability conditions over \texorpdfstring{$S$}{S}}\label{sec:StabCondOverS}

In this section we give the definition of a (weak) stability condition over a given base $S$. To this end we need to define a suitable \emph{support property} for a flat family of fiberwise (weak) stability conditions. This consists of two properties: the existence of a uniform quadratic form controlling the central charges, and boundedness of geometrically stable objects. 
We show that a stability condition over $S$ has well-behaved moduli spaces of semistable objects. 

\subsection{Support property}\label{subsec:SupportProperty}
So far, we have assumed the support property on each fiber; to make this notion useful, we will
have to assume the existence of a uniform quadratic form controlling the central charges and classes of semistable objects
on all fibers. 

Recall the definition of the numerical $K$-groups $\Knum(\cD_s)$ underlying Definition~\ref{def:NumericalStabilityCondition}, the existence of a pushforward map
$\eta_{t/s}^\vee \colon \Knum(\cD_t) \onto \Knum(\cD_s)_t \subset \Knum(\cD_s) \otimes \Q$ for every point
$t$ over $s$, see \eqref{eq:defetaellkvee}, and the property $\Knum(\cD_s)_t \subset
\Knum(\cD_s)_{\overline{s}}$ shown in Proposition and Definition~\ref{propdef:KnumDell}.
Condition \eqref{enum:Zsindependentbasechange} on universal local constancy of central charges 
in Definition~\ref{def:familyfiberstabilities} naturally leads to the following. 

\begin{Def} \label{def:gluefiberwiseKnum}
We define the \emph{relative numerical Grothendieck group} $\Knum(\cD/S)$ as the quotient of
$\bigoplus_{s \in S} \Knum(\cD_s)_{\overline{s}}$
by the saturation of the subgroup generated by elements of the form
\index{Knum(D/S)@$\Knum(\cD/S)$, relative numerical $K$-group}
\begin{equation} \label{eq:KnumDSrelation}
\eta_{t_1/f(t_1)}^\vee [ E_{t_1}] - \eta_{t_2/f(t_2)}^\vee [ E_{t_2}]
\end{equation}
for all tuples $(f, E, t_1, t_2)$ where $f \colon T \to S$ is a morphism from a connected scheme $T$, $E \in \rD(X_T)$ is a $T$-perfect object such that 
$E_t \in \cD_t$ for all $t \in T$, and
$t_1, t_2 \in T$.
\end{Def}

\begin{Rem}
Analogous to Lemma~\ref{Lem-uogs-ftbc}, we would obtain the same group $\Knum(\cD/S)$ if we only considered morphisms $f \colon T \to S$ of finite type from a connected affine scheme in the definition. 
\end{Rem}

Given $f \colon T \to S$ and $E$ as in Definition~\ref{def:gluefiberwiseKnum}, we write $[E]$ for the element of $\Knum(\cD/S)$ given by the image of $[E_t] \in \Knum(\cD_t)$ under the composition
\begin{equation*}
\Knum(\cD_t) \to \Knum(\cD_{f(t)})_t \into \Knum(\cD_{f(t)})_{\overline{f(t)}} \to \Knum(\cD/S)
\end{equation*}
for any $t \in T$, which is independent of $t \in T$ by the definition of $\Knum(\cD/S)$. 

By the evident universal property of $\Knum(\cD/S)$, for any flat family of fiberwise (weak) stability conditions there
exists a central charge $Z \colon \Knum(\cD/S) \to \C$ such that for all $s \in S$, the central charge $Z_s$ factors as
$Z_s \colon \Knum(\cD_s) \to \Knum(\cD/S) \xrightarrow{Z} \C$. 

\begin{Def}
\label{def-relative-mukai}
Let $\Lambda$ be a finite rank free abelian group. 
A \emph{relative Mukai homomorphism} for $\cD$ over $S$ with respect to $\Lambda$ 
is a group homomorphism 
\index{v@$v \colon \Knum(\cD/S) \to \Lambda$, relative Mukai homomorphism}
$v \colon \Knum(\cD/S) \to \Lambda$. 
\end{Def}

\begin{Rem}
\label{rem-mukai-ulc}
By definition, a relative Mukai homomorphism $v \colon \Knum(\cD/S) \to \Lambda$ is \emph{universally locally constant} 
in the sense that for every morphism $T \to S$ and every $T$-perfect object $E \in \rD(X_T)$ such that $E_t \in \cD_t$ for all $t \in T$, 
the function $T \to \Lambda$ given by $t \mapsto v([E_t])$ is locally constant. 
\end{Rem}

We will always restrict our attention to flat families of fiberwise (weak) stability conditions 
where $Z \colon \Knum(\cD/S) \to \C$ factors via a relative Mukai homomorphism $v \colon \Knum(\cD/S) \to \Lambda$. 
We will frequently use the following choice.

\begin{PropDef}\label{propdef:lattice}
There is an Euler characteristic pairing
\[
\chi\colon K(\cD_{\perf}) \times \Knum(\cD/S) \to \Z
\quad \text{given by} \quad 
\chi([F], [E]) = \chi(F_t, E)
\]
for some point $t$ over $S$ and objects $E \in \cD_t$ and $F \in \cD_{\perf}$. 
We write $\cN(\cD/S)$ for the quotient 
\[\cN(\cD/S) := \Knum(\cD/S)/\Ker \chi,\] 
and call it the \emph{uniformly numerical relative Grothendieck group} of $\cD$ over $S$.
\index{N(D/S)@$\cN(\cD/S)$, uniformly numerical relative Grothendieck group}
If $S$ is quasi-projective over a field, then $\cN(\cD/S)$ is a free abelian group of finite rank.
\end{PropDef}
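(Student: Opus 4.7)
The plan proceeds in three parts: well-definedness of the Euler pairing, torsion-freeness of $\cN(\cD/S)$, and finite generation in the quasi-projective case.

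For well-definedness, I start from the fiberwise pairing $\chi_s \colon K(\cD_{s,\perf}) \times \Knum(\cD_s) \to \Z$ coming from the definition of $\Knum$ preceding Lemma~\ref{lem:Knumfinite}. By the construction of $\eta^\vee_{\overline s/s}$ in~\eqref{eq:defetaellkvee}, this pairing extends uniquely to integer values on the finite-index overgroup $\Knum(\cD_s)_{\overline s}$. Pullback $F \mapsto F_s$ assembles these over $s$ into a pairing $K(\cD_\perf) \times \bigoplus_s \Knum(\cD_s)_{\overline s} \to \Z$. To see it descends to $\Knum(\cD/S)$, I verify it annihilates the defining relations~\eqref{eq:KnumDSrelation}: given $f \colon T \to S$ with $T$ connected and $E \in \rD(\cX_T)$ a $T$-perfect object satisfying $E_t \in \cD_t$, the tensor product $F_T \otimes E$ is $T$-perfect, so by properness and flatness of $g_T$ its pushforward $g_{T,*}(F_T \otimes E)$ lies in $\Dperf(T)$; the fibral Euler characteristic $t \mapsto \chi_{\kappa(t)}(F_t, E_t)$ is thus locally constant on $T$ and equal on all of $T$ by connectedness.

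Torsion-freeness of $\cN(\cD/S)$ is immediate from the definition $\cN(\cD/S) = \Knum(\cD/S)/\Ker\chi$, which embeds into $\Hom(K(\cD_\perf),\Z)$.

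For finite generation when $S$ is quasi-projective over a field $k$, the plan is to embed $\cN(\cD/S)$ into a finite direct sum of fiberwise numerical groups. Let $\eta_1, \ldots, \eta_m$ be the generic points of the irreducible components of $S_{\mathrm{red}}$. For each $i$, I construct a restriction map $r_{\eta_i} \colon \cN(\cD/S) \to \cN(\cD_{\eta_i})_{\overline{\eta_i}}$, where $\cN(\cD_{\eta_i})_{\overline{\eta_i}} \subset \cN(\cD_{\eta_i}) \otimes \Q$ is the finite-index overgroup analogous to Proposition and Definition~\ref{propdef:KnumDell}, as follows: a class represented by $[E_t] \in \Knum(\cD_t)_{\overline t}$ with $t \in \overline{\{\eta_i\}}$ maps to the restriction to $\eta_i$, computed by first lifting $E_t$ to a $U$-perfect family on a dense open $U \subset \overline{\{t\}}$ containing $\eta_i$ via \cite[Proposition~2.2.1]{Lieblich:mother-of-all} and then restricting; for $t \notin \overline{\{\eta_i\}}$, the image is set to zero. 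The deformation invariance of Euler characteristic established above ensures the identity $\chi(F, \alpha) = \sum_i \chi_{\eta_i}(F_{\eta_i}, r_{\eta_i}(\alpha))$ for every $F \in \cD_\perf$. The resulting map $\cN(\cD/S) \to \bigoplus_i \cN(\cD_{\eta_i})_{\overline{\eta_i}}$ is then injective: if it vanishes, each $r_{\eta_i}(\alpha)$ pairs trivially with all $F_{\eta_i}$, so $\chi(F, \alpha) = 0$ for all $F$ and thus $\alpha = 0$. Since each fiber $\cX_{\eta_i}$ is projective over $\kappa(\eta_i)$, Lemma~\ref{lem:Knumfinite} applied to $\Db(\cX_{\eta_i})$ forces $\cN(\cD_{\eta_i})$ and its finite-index overgroup $\cN(\cD_{\eta_i})_{\overline{\eta_i}}$ to be finitely generated free; $\cN(\cD/S)$ inherits this property.

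The main obstacle is establishing well-definedness of the restriction maps $r_{\eta_i}$ on $\cN(\cD/S)$: this requires tracking how Lieblich's extension procedure interacts with the defining relations~\eqref{eq:KnumDSrelation}, especially for families over connected $T$ whose image under $f$ straddles multiple irreducible components of $S$, so that the ``component decomposition'' implicit in the identity $\chi(F, \alpha) = \sum_i \chi_{\eta_i}(F_{\eta_i}, r_{\eta_i}(\alpha))$ is well-defined modulo the kernel of $\chi$.
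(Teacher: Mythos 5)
Your argument for well-definedness of the Euler pairing is correct and runs essentially parallel to the paper's (which observes that $\chi(F_{t_1},E_{t_1})$ and $\chi(F_{t_2},E_{t_2})$ both compute the rank of the perfect complex $\cHom_T(F_T,E) \in \Db(T)$), and torsion-freeness of $\cN(\cD/S)$ is immediate as you say.

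For finite generation, however, there is a genuine gap, and the paper proceeds quite differently. Your plan rests on constructing the restriction maps $r_{\eta_i}$, but the recipe as written does not produce a value on most generators. For $t$ a closed point of $\overline{\{\eta_i\}}$ one has $\overline{\{t\}}=\{t\}$, so there is no dense open $U \subset \overline{\{t\}}$ containing $\eta_i$; under the charitable reading ($U \subset \overline{\{\eta_i\}}$ a dense open through $t$), you would need a $U$-perfect lift of the given object $E_t \in \cD_t$, which is a deformation-theoretic question that Lieblich's Proposition~2.2.1 does \emph{not} supply: that result shrinks the base of an already-given family to make it relatively perfect, it does not extend an object off a closed fiber, and such an extension generally does not exist. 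So the obstacle is more fundamental than ``tracking the interaction with the defining relations'' --- $r_{\eta_i}$ is undefined on the bulk of $\Knum(\cD/S)$ before well-definedness modulo relations even comes up. (There is also the secondary issue you would have to deal with that a closed point lying on several components would contribute to several $r_{\eta_i}$, so your identity $\chi(F,\alpha)=\sum_i\chi_{\eta_i}(F_{\eta_i},r_{\eta_i}(\alpha))$ overcounts unless you partition the generators by component.)

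The paper goes in the opposite direction: it first observes that $\Knum(\cD/S)$ is generated by classes over \emph{closed} points $s \in S$ (using that one may restrict to finite-type $T$, whose closed points are dense and map to closed points of the Jacobson scheme $S$), so one can map $[E_s] \mapsto [i_{s*}E_s]$ into the numerical Grothendieck group of compactly supported objects studied in \cite[Section~5.1]{BCZ:nef}. Because this map is compatible with the Euler pairing it is injective on $\cN(\cD/S)$, and finite generation of the target is supplied by \cite[Lemma~5.1.1]{BCZ:nef} combined with Lemma~\ref{lem:Knumfinite}. Pushing forward from closed fibers, rather than restricting to generic ones, is exactly what sidesteps the lifting problem your approach runs into.
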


\begin{proof}
We need to show that $\chi$ satisfies the relation given in \eqref{eq:KnumDSrelation}. This follows since $\chi(F_{t_1}, E_{t_1})$ and $\chi(F_{t_2}, E_{t_2})$ both compute the rank of the object $\cHom_T(F_T, E)$ in $\Db(T)$. 

To prove the claim on being a free abelian group of finite rank, we first observe that $\Knum(\cD/S)$ is generated by objects defined over closed points.
Therefore, $\cN(\cD/S)$ is a subgroup of the numerical Grothendieck group for compactly supported objects in $\Db(\cX)$ considered in \cite[Section~5.1]{BCZ:nef}.
Since $\cX$ is quasi-projective over a field, we can compactify it and then use directly the argument in the proof of \cite[Lemma~5.1.1]{BCZ:nef}, by using Lemma~\ref{lem:Knumfinite} to avoid both the normality and the characteristic zero assumptions in \cite[Lemma~5.1.1]{BCZ:nef}.
\end{proof}

\begin{Ex}\label{ex:KnumFamilySurfaces}
Let $g \colon \cX \to S$ be a smooth family of polarized surfaces with $S$ connected and whose geometric generic fiber has Picard rank one, and let $\cO_{\cX}(1)$ denote a relatively ample line bundle.
Then the coefficients of the relative Hilbert polynomial with respect to $\cO_{\cX}(1)$ as in Example~\ref{ex:slopestability} (or, equivalently, the degrees of the Chern character on the fibers with respect to $\cO_{\cX}(1)$) induce an isomorphism $\cN(\Db(\cX)/S) \cong \Z^3$. 
\end{Ex}

\begin{Ex}[Yoshioka's trick] \label{ex:modification} Consider the previous example in the case where the base $S = C$ is a curve.
We can modify this construction at a single closed point $c \in C$ whose fiber $\cX_c$ has higher Picard rank as follows. 
Let $L \subset \Knum(\Db(\cX_c))$ be the saturated subgroup generated by (the restrictions of) $\cO_\cX$, $\cO_\cX(1)$ and the class of a point. 
Then the quotient map to $\Knum(\Db(\cX_c))/L$ extends to 
a map $\Knum(\Db(\cX)/S) \to \Knum(\Db(\cX_c))/L $ by setting it identically to zero for objects supported over any point $c' \in C, c \neq c'$. The choice of
\[
\Knum(\Db(\cX)/S) \to \cN(\Db(\cX)/S) \oplus \Knum(\Db(\cX_c))/L
\]
will allow us to deform the central charge specifically for objects supported over $c$. 

This can be generalized to an arbitrary family over $C$ by letting
$L$ be the subgroup of $\Knum(\cD_c)$ generated by 
all classes of the form $\eta_{t/c}^\vee [E_t]$ for objects $E$ defined over a scheme $f \colon T \to C$ with $f$ dominant, $t \in T$ and $f(t) = c$. 
\end{Ex}

\begin{Ex}\label{ex:LambdaFamilyChernClasses}
Generalizing Example~\ref{ex:KnumFamilySurfaces}, let $g\colon \cX\to S$ be a flat projective morphism of relative dimension $n$, with normal and integral fibers.
The coefficients $p_n, \dots, p_0$ of the Hilbert polynomial with respect to $\cO_X(1)$ define, up to normalization, a morphism $\cN(\Db(\cX)/S) \to \Z^{n+1}$.

The assignment $\us=\big(\sigma_s=(\mathfrak{i}p_n - p_{n-1},\Coh \cX_s)\big)_{s\in S}$ is a flat family of weak fiberwise stability conditions on $\Db(\cX)$: condition \eqref{enum:stabilityopenbasechange} follows from \cite[Proposition~2.3.1]{HL:Moduli}, while condition \eqref{enum:curveassumptionweak} follows from Section~\ref{subsec:CoherentSheaves}.
 \index{sigmaucurve@$\us:=(\sigma_s=(\mathfrak{i}\ch_{X/S,0}-\ch_{X/S,1},\Coh X_s))$!(weak) stability condition on $\Db(\cX)$ over $S$ given by slope-stability on each fiber}
Moreover, for each $s$ the subcategory $\cA^0_s$ is the category of torsion sheaves supported in codimension $\geqslant 2$, which satisfies condition \eqref{enum:A0noetheriantorsionfibers} because $\Coh \cX_s$ is noetherian.
Similarly, to verify condition \eqref{enum:curveassumptionweak}, we note that $\cA^0_C$ consists of sheaves whose support intersects every fiber in codimension $\geqslant 2$. 

This can be generalized to modules $\Coh(\cX,\cB)$ over a sheaf of coherent algebras $\cB$ on $\cX$ (or, more generally, a ``sheaf of differential operators'' in the sense of \cite[Section~2]{simpson:mod1}); the Hilbert polynomial being the one of the underlying sheaf.
In the example of cubic fourfolds (see Part~\ref{part:CubicFourfolds}), we will use the case when $\cB$ is a sheaf of the even parts of a Clifford algebra.
\end{Ex}

From now on, we fix $\Lambda$ and a relative Mukai homomorphism $v \colon \Knum(\cD/S) \to \Lambda$.

\begin{Def}\label{def:Lambda0}
Given a flat family of fiberwise (weak) stability conditions $\us$, for which $Z$ factors via $v$, we let $\Lambda_0$ be the saturated subgroup of $\Lambda$ generated by $v([E_t])$ for all $E_t\in \cA_t^0$ (see Definition~\ref{def:A0}) and all points $t$ over $S$.
Let $\oLambda$ be the free abelian group $\Lambda/\Lambda_0$, and $\ov$ the composition of $v$ with the quotient map.
\index{Lambda0@$\Lambda_0$, saturated subgroup of $\Lambda$ generated by Mukai vectors of elements in $\cA_s^0$}
\index{Lambdabar@$\oLambda$, free abelian group $\Lambda/\Lambda_0$}
\end{Def}

\begin{Rem}
If each $\sigma_s$ is a stability condition, then $\Lambda_0=\set{0}$ and $\oLambda=\Lambda$.
\end{Rem} 

In our definition of the support property for a flat family of fiberwise (weak) stability conditions, we will need to impose a boundedness assumption on moduli spaces. 
The following definition summarizes the relevant moduli spaces we can 
consider in our context. 

\begin{Def}\label{def:Mus(v)andMusst(v)} 
Let $\us$ be a flat family of fiberwise (weak) stability conditions on $\cD$ over $S$. 
Fix a vector $\vv \in \Lambda$ and $\phi\in \R$ such that $Z(\vv)\in \R_{>0} e^{\mathfrak{i}\pi \phi}$. 
\index{vv@$\vv\in \Lambda$, Mukai vector}
\begin{enumerate}[{\rm (1)}]
\item 
We denote by 
\index{Msigmastv@$\fM^{\st}_{\us}(\vv)$, stack parameterizing geometrically $\us$-stable objects of class $\vv$ and phase $\phi$}
\begin{equation*}
\cM^{\st}_{\us}(\vv) \colon (\Sch/S)^{\op} \to \Gpds 
\end{equation*}
the functor whose value on $T \in (\Sch/S)$ consists of all 
$T$-perfect objects $E \in \rD(\cX_T)$ such that for all $t \in T$ we have $E_t \in \cD_t$, $E_t$ is geometrically $\sigma_t$-stable of phase $\phi$, and $v(\left[ E_t \right]) = \vv$. 
An object $E \in \cM^{\st}_{\us}(\vv)(T)$ is called a \emph{family of geometrically $\us$-stable objects of class $\vv$} over $T$. 

\item We denote by 
\index{Msigmav@$\fM_{\us}(\vv)$, stack parameterizing $\us$-semistable objects of class $\vv$ and phase $\phi$}
\begin{equation*}
\fM_{\us}(\vv) \colon (\Sch/S)^{\op} \to \Gpds 
\end{equation*}
the functor whose value on $T \in (\Sch/S)$ consists of all 
$T$-perfect objects $E \in \rD(\cX_T)$ such that for all $t \in T$ we have $E_t \in \cD_t$, $E_t$ is $\sigma_t$-semistable of phase $\phi$, and $v(\left[ E_t \right]) = \vv$. 
An object $E \in \fM_{\us}(\vv)(T)$ is called a \emph{family of $\us$-semistable objects of class $\vv$} over $T$. 

\item For an interval $I \subset \R$, we denote by 
\index{P(I;vv)@$\cP_{\us}(I; \vv)$, stack parameterizing objects in $\cP(I)$ of class $\vv$}
\begin{equation*}
\cP_{\us}(I; \vv) \colon (\Sch/S)^{\op} \to \Gpds 
\end{equation*}
the functor whose value on $T \in (\Sch/S)$ consists of all 
$T$-perfect objects $E \in \rD(\cX_T)$ such that for all $t \in T$ we have $E_t \in \cP_t(I) \subset \cD_t$ and $v(\left[ E_t \right]) = \vv$. 
\end{enumerate}
\end{Def}

We will always omit the phase $\phi$ from the notation. 
Note that by Theorem~\ref{thm:base-change-stability-condition} and Proposition~\ref{prop:basechangeweakstabilityviaopenness} the above prescriptions indeed define functors, i.e., the conditions considered on $E \in \rD(X_T)$ are stable under base change. 
Also note that $\cM_{\us}(\vv) = \cP_{\us}([\phi, \phi];\vv)$.

Recall from Section~\ref{sec-moduli-objects-D} the moduli stack 
$\cMpug(\cD/S)$ of objects in $\cD$. 
\begin{Lem}
\label{lem-cM-cP-substacks}
Let $\us$ be a flat family of fiberwise (weak) stability conditions on $\cD$ over $S$, and fix a vector $\vv \in \Lambda$ and an interval $I \subset \R$. 
Then $\cM^{\st}_{\us}(\vv)$, $\cM_{\us}(\vv)$, and $\cP_{\us}(I; \vv)$ are open substacks of $\cMpug(\cD/S)$. 
\end{Lem}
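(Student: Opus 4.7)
The plan is to first reduce the three statements to showing that certain pointwise conditions define open subsets of the base, and then to invoke the universal openness results already available. By Proposition~\ref{proposition-cMpug(D/S)-algebraic} we know $\cMpug(\cD/S)\subset \cMpug(X/S)$ is an open substack, so it is enough to check in each case that the claimed functor is an open substack of $\cMpug(\cD/S)$. Concretely, given $T\in(\Sch/S)$ and a $T$-point of $\cMpug(\cD/S)$ represented by $E\in\Dpug(X_T/T)$ with $E_t\in\cD_t$ for all $t\in T$, we need two things: first, that the defining condition is stable under arbitrary base change $T'\to T$, so that the three prescriptions really are subfunctors of $\cMpug(\cD/S)$; and second, that the locus in $T$ where the condition holds is open.

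The stability-under-pullback step is immediate from the behaviour of stability conditions under field extensions. For geometric stability this is part of Theorem~\ref{thm:base-change-stability-condition}.\eqref{enum:geomstablepreserved} (with the analogue for weak stability conditions in Proposition~\ref{prop:basechangeweakstabilityviaopenness}), for semistability and membership in $\cP(I)$ it is Theorem~\ref{thm:base-change-stability-condition}.\eqref{enum:pullbackremainsstable} and the construction of the slicing $\cP_\ell$ in Definition~\ref{def:slicingbasechangeweak}, and for the condition $v([E_t])=\vv$ it follows from the fact that $v\colon\Knum(\cD/S)\to\Lambda$ is universally locally constant, see Remark~\ref{rem-mukai-ulc}.

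The openness step is now handled in each case by an immediate appeal to the structural results already proved. For $\cP_{\us}(I;\vv)$, the locus $\{t\in T\sth E_t\in\cP_t(I)\}$ is open by Proposition~\ref{prop:yeswehaveopennessofflatness} (universal openness of lying in $\cP(I)$), while the locus where $v([E_t])=\vv$ is open (in fact clopen) by Remark~\ref{rem-mukai-ulc}; intersecting these two open subsets gives the desired openness. For $\cM_{\us}(\vv)$, one uses the identification $\cM_{\us}(\vv)=\cP_{\us}([\phi,\phi];\vv)$ recorded right after Definition~\ref{def:Mus(v)andMusst(v)}, and the preceding case applies verbatim. For $\cM^{\st}_{\us}(\vv)$, the locus $\{t\in T\sth E_t\text{ is geometrically }\sigma_t\text{-stable}\}$ is open by Definition~\ref{def:familyfiberstabilities}.\eqref{enum:stabilityopenbasechange} (universal openness of geometric stability), and again we intersect with the open Mukai vector locus.

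There is no real obstacle here: every piece of content has already been installed in the theory. The only thing to be slightly careful about is the bookkeeping between the three functors and the two universality properties of $\us$, and in particular the observation that semistability of fixed phase $\phi$ is literally the $\cP(I)$-membership condition for $I=\{\phi\}$, which is what allows $\cM_{\us}(\vv)$ to be reduced to $\cP_{\us}(I;\vv)$ rather than requiring a separate argument.
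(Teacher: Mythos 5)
Your proof is correct and takes essentially the same route as the paper: universal local constancy of the Mukai morphism (Remark~\ref{rem-mukai-ulc}), universal openness of geometric stability (Definition~\ref{def:familyfiberstabilities}.\eqref{enum:stabilityopenbasechange}), universal openness of lying in $\cP(I)$ (Proposition~\ref{prop:yeswehaveopennessofflatness}), and the identification $\cM_{\us}(\vv)=\cP_{\us}([\phi,\phi];\vv)$. The only cosmetic difference is that you re-derive the fact that the three prescriptions are subfunctors inside the proof, whereas the paper disposes of that point in the remarks immediately following Definition~\ref{def:Mus(v)andMusst(v)}.
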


\begin{proof}
Since the relative Mukai homomorphism $v$ is universally locally constant (by Remark~\ref{rem-mukai-ulc}) and $\us$ 
universally satisfies openness of geometric stability, 
$\cM^{\st}_{\us}(\vv)$ is an open substack of $\cMpug(\cD/S)$. 
Similarly, since $\us$ also universally satisfies openness of lying in $\cP(I)$ by Proposition~\ref{prop:yeswehaveopennessofflatness}, 
$\cP_{\us}(I; \vv)$ (and thus $\cM_{\us}(\vv)$ as a special case) is an open substack of $\cMpug(\cD/S)$. 
\end{proof}

\begin{Def}\label{def:boundedness}
Let $\us$ be a flat family of fiberwise (weak) stability conditions on $\cD$ over $S$. 
We say that $\us$ \emph{satisfies boundedness} if $\fM^{\st}_{\us}(\vv)$ is bounded in the sense of Definition~\ref{def-bounded} for every $\vv \in \Lambda$.
\end{Def}

\begin{Rem}
In Section~\ref{subsec:GrothendieckLemma} we show that in the case of stability conditions, we also obtain boundedness of $\cP_{\us}(I, \vv)$ for $I$ of length less than $1$, and of certain Quot spaces. 
\end{Rem}

Now we can finally define the support property, completing our main definition. 
We use the notation of Definition~\ref{def:Lambda0}, and write the central charge as the composition of $Z \colon \Lambda \to \C$ with the fixed relative Mukai homomorphism $v$.

\begin{Def} \label{def:fiberwisesupport}
We say that a flat family of fiberwise (weak) stability conditions $\us$ satisfies the \emph{support property with respect to $\Lambda$} if: 
\begin{enumerate}[{\rm (1)}] \setcounter{enumi}{3}
\item\label{enum:fiberwisesupport} There exists a quadratic form 
$Q$ on $\oLambda_\R = \oLambda \otimes \R$ such that
\begin{enumerate}[(a)]
\item \label{enum:fiberwisesupporta} 
the kernel $(\ker Z)/\Lambda_0\subset\oLambda$ is negative definite with respect to $Q$, and 
\item \label{enum:fiberwisesupportb} for every $s \in S$ and for every $\sigma_s$-semistable object
$E \in \cD_s$, we have $Q(v(E)) \geqslant 0$. 
\end{enumerate}
\item \label{enum:bounded} $\us$ satisfies boundedness.
\end{enumerate}
In this case, we call $\us$ a \emph{(weak) stability condition on $\cD$ over $S$ with respect to $\Lambda$}.
\index{sigmau@$\us = (\sigma_s = (Z_s, \cP_s))_{s \in S}$,!(weak) stability condition on $\cD$ over $S$ with respect to $\Lambda$}
\end{Def}

\begin{Rem} \label{rem:suppC}
If the flat family $\us$ of fiberwise (weak) stability conditions satisfies the support property, then every $\sigma_s$ satisfies the support property as a stability condition on $\cD_s$ with respect to $Q$ and the composition
$\Knum(\cD_s) \to \Knum(\cD/S) \to \Lambda$.
Similarly, consider a
 Dedekind scheme $C$ and a morphism $C \to S$ essentially of finite type. Our relative Mukai homomorphism induces a Mukai homomorphism $v \colon \Knum(\cD_{\Ctor}) \to \Lambda$ by Lemma~\ref{lem:MukaiCtorviafibers}, and the induced (weak) stability condition on $\cD_{\Ctor}$ satisfies the support property by Lemma~\ref{lem:supportpropertyviafibers}.
 \end{Rem}

\begin{Rem}
Let us explain the role of assumption \eqref{enum:bounded} in the support property.
In the absolute case, the support property is the key behind the deformation result for stability conditions in Theorem~\ref{thm:deformstability}. 
It implies that under a small deformation of a central charge, there are only finitely many classes of objects that could destabilize a given object $E$. 
Now consider an object $E \in \cD_C$ defined over a curve $C$.
To ensure that openness of stability is preserved, we need to show that unless the generic fiber of $E$ gets destabilized, each such class can only destabilize $E_c$ for finitely many closed points $c \in C$.
This can only be ensured by showing that the set of potentially destabilizing quotients is bounded; see Section~\ref{sec:deformfamiliystability} for the full proof.
\end{Rem}

\begin{Ex}\label{ex:SlopeStabilityAsProperFamilyofWeakStability}
In the setting of Example~\ref{ex:LambdaFamilyChernClasses}, $\us=\big(\sigma_s=(Z_s:=\mathfrak{i}p_n - p_{n-1},\Coh \cX_s)\big)_{s\in S}$ gives a weak stability condition on $\Db(\cX)$ over $S$.
 \index{sigmaucurve@$\us:=(\sigma_s=(\mathfrak{i}\ch_{\cX/S,0}-\ch_{\cX/S,1},\Coh X_s))$!(weak) stability condition on $\Db(\cX)$ over $S$ given by slope-stability on each fiber}
Indeed, $\Lambda_0=\ker Z$ is the saturated subgroup generated by $(p_{n-2},\ldots,p_{0})$ and so $\oLambda\cong \Z^2$.
We can therefore choose any non-negative quadratic form $Q$ on $\oLambda_\R$ to satisfy \eqref{enum:fiberwisesupporta} and \eqref{enum:fiberwisesupportb}.
Boundedness is \cite[Theorem~4.2]{Langer:positive}.

As before, this generalizes to the case $\Coh(\cX,\cB)$; as remarked in \cite[Section~4]{Langer:positive}, boundedness can be proved over an arbitrary base $S$ with the same argument as in \cite[Proposition~3.5]{simpson:mod1}.
\end{Ex}

\begin{Rem} \label{rem:fiberwiseGL2action}
The action of $\wGL2$ on stability conditions on the fibers, and on HN structures over curves (see Remark~\ref{rem:wGL2action}), preserves all properties in Definitions~\ref{def:familyfiberstabilities} and \ref{def:fiberwisesupport}, and thus acts on the set of stability conditions over $S$.
\end{Rem}

\subsection{Boundedness results}
\label{subsec:GrothendieckLemma}

Now we show that the boundedness of a flat family of fiberwise stability conditions leads to boundedness of other moduli functors. 

\begin{Lem} \label{lem:CPIvbounded}
Let $\us$ be a stability condition on $\cD$ over $S$. 
Let $I \subset \R$ be an interval of length less than $1$, and $\vv \in \Lambda$. 
Then $\cP_{\us}(I; \vv)$ is a bounded (in the sense of Definition~\ref{def-bounded}) algebraic stack 
of finite type over $S$; in particular, so is $\fM_{\us}(\vv)$. 
\end{Lem}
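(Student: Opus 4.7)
The plan is to reduce boundedness to finiteness of HN- and JH-filtration data, then construct a finite-type witness family by iterated extensions. First, Lemma~\ref{lem-cM-cP-substacks} identifies $\cP_{\us}(I;\vv)$ as an open substack of $\cMpug(\cD/S)$, so by Lemma~\ref{lem-open-bounded} it suffices to establish boundedness in the sense of Definition~\ref{def-bounded}; the claim for $\fM_{\us}(\vv)$ will then follow from the case $I=\{\phi\}$.

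Next, for any geometric point $E \in \cP_{\us}(I;\vv)(\kappa(\bar s))$, refining the HN filtration of $E$ by JH filtrations of the semistable factors (which exist by Remark~\ref{rem:supportfinitelength}, applied fiberwise via Remark~\ref{rem:suppC}) exhibits $E$ as an iterated extension of geometrically $\sigma_{\bar s}$-stable objects $E_1,\dots,E_\ell\in\cD_{\bar s}$ of classes $\vv_1,\dots,\vv_\ell\in\Lambda$ with $\sum_i\vv_i=\vv$, each satisfying $Q(\vv_i)\ge 0$ and $Z(\vv_i)\in\R_{>0}\cdot e^{\ii\pi\phi_i}$ for some $\phi_i\in I$. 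Because $|I|<1$, the values $Z(\vv_i)$ lie in a strictly convex cone of $\C$; projecting the identity $Z(\vv)=\sum_i Z(\vv_i)$ onto a direction transverse to this cone yields a uniform upper bound on $\sum_i|Z(\vv_i)|$ in terms of $|Z(\vv)|$. Invoking Remark~\ref{rem:supportfinitelength} then produces a finite subset $V\subset\Lambda$ containing every $\vv_i$. Because $\us$ is a stability condition (so that $\Lambda_0=0$) and $Q$ is negative definite on $\ker Z$, each nonzero $\vv'\in V$ has $Z(\vv')\ne 0$; hence $\ell$ is bounded by $|Z(\vv)|$ divided by a positive constant depending only on $V$. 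Consequently, only finitely many ordered tuples $(\vv_1,\dots,\vv_\ell)$ can arise.

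For each $\vv' \in V$, Definition~\ref{def:boundedness} provides a finite-type witness $(B^{\vv'},\cE^{\vv'})$ for $\cM^{\st}_{\us}(\vv')$. For each admissible tuple $(\vv_1,\dots,\vv_\ell)$, I will construct by induction on $k\le\ell$ a finite-type witness $(B^{(\vv_1,\dots,\vv_k)},\cE^{(\vv_1,\dots,\vv_k)})$ parametrizing iterated extensions of geometrically stable objects of these classes in the prescribed order, following the construction in the proof of Lemma~\ref{lem:ExtensionOfBounded}: at step $k$, use Lemma~\ref{lemma-hom-representable} (applicable because geometric stability of the $E_i$ forces $\Ext^{<0}=0$ on fibers) to represent $\uHom(\cE^{\vv_k},\cE^{(\vv_1,\dots,\vv_{k-1})}[1])$ by an affine scheme of finite type over $B^{(\vv_1,\dots,\vv_{k-1})}\times_S B^{\vv_k}$, and then pass to the universal extension. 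Restricting to the open locus where the universal object is universally gluable, lies in $\cD$, and has fibers in $\cP_{\us}(I;\vv)$ yields a witness for that tuple, and the disjoint union over the finitely many tuples is the sought-after witness of finite type for $\cP_{\us}(I;\vv)$.

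The main obstacle is the finiteness step in the middle paragraph. The hypothesis $|I|<1$ is essential: otherwise cancellation among the $Z(\vv_i)$ would defeat the uniform bound on $|Z(\vv_i)|$. The argument bounding $\ell$ uses crucially that $\us$ is a genuine stability condition, since in the weak setting the classes in $\ker Z$ forming $\Lambda_0$ could appear as factor classes with arbitrary multiplicity.
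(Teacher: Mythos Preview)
Your proof is correct and follows essentially the same strategy as the paper: both use the constraint $|I|<1$ to confine the central charges of stable factors to a bounded region (you phrase this as a convex-cone projection, the paper as a parallelogram), invoke Remark~\ref{rem:supportfinitelength} to get finitely many possible classes, and then conclude via the extension construction underlying Lemma~\ref{lem:ExtensionOfBounded}. You spell out the iteration of Lemma~\ref{lem:ExtensionOfBounded} and the bound on the filtration length $\ell$ more explicitly than the paper does, but these are details the paper leaves implicit rather than genuine differences in approach.
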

\begin{proof}
Let $\phi_0, \phi_1$ be the endpoints of (the closure of) $I$, and consider an object $E \in \cP_t(I)$ of class $\vv$ for a point $t$ over $S$; let $\overline{t}$ denote the algebraic closure of $t$.
Then every \emph{stable factor} (i.e., a Jordan--H\"older factor of one of its HN factors) of $E_{\overline{t}}$ with respect to $\sigma_{\overline{t}}$ has central charge in the parallelogram with angles $\pi \phi_0$ and $\pi \phi_1$ and with $0$ and $Z(\vv)$ as opposite vertices.
By Remark~\ref{rem:supportfinitelength}, this means that as $t$ ranges over all points over $S$, there are only finitely many possible classes occurring as stable factors of $E_{\overline{t}}$. 
Combining this with the boundedness of $\us$ for stable objects---condition~\eqref{enum:bounded} in Definition~\ref{def:fiberwisesupport}---, and the fact that $\cP_{\us}(I; \vv) \subset \cMpug(\cD/S)$ is open by Lemma~\ref{lem-cM-cP-substacks}, we conclude by Lemma~\ref{lem:ExtensionOfBounded} that $\cP_{\us}(I; \vv)$ is bounded. 
Thus by Lemma~\ref{lem-open-bounded}, $\cP_{\us}(I; \vv)$ is an algebraic stack of finite type over $S$. 
\end{proof}

Recall from Section~\ref{sec:quotspaces} that given a fiberwise collection of t-structures $\utau$
universally satisfying openness of flatness, we have defined a moduli stack $\cM_{\utau}$ of flat objects in $\cD$, as well as a Quot space $\Quot_S(E)$ for any object $E \in \cM_{\utau}(S)$. 
Note that if $\us$ is a flat family of fiberwise (weak) stability conditions on $\cD$ over $S$, then by Proposition~\ref{prop:yeswehaveopennessofflatness} the fiberwise collection of t-structures underlying $\us$ universally satisfies openness of flatness. 

\begin{Lem}\label{lem:GrothendieckLemma} 
Let $\us$ be a stability condition on $\cD$ over $S$. 
Let $E \in \cM_{\utau}(S)$ where $\utau$ is the fiberwise collection of t-structures underlying $\us$. 
For $\phi \in (0,1)$, let $\Quot_S^{\leqslant \phi}(E)$ be the subfunctor of $\Quot_S(E)$ which assigns to $T \in (\Sch/S)$ the set of $(E_T \to Q) \in \Quot_S(E)(T)$ satisfying $\phi(Q_t) \leqslant \phi$ for all $t \in T$.
\index{QuotSEphi@$\Quot_S^{\leqslant \phi}(E)$, subfunctor parametrizing quotients with phase $\leqslant \phi$}
Then $\Quot_S^{\leqslant \phi}(E)$ is an algebraic space of finite type over $S$, and the morphism $\Quot_S^{\leqslant \phi}(E) \to S$ is universally closed.
\end{Lem}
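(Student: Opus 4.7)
The plan is to establish three properties of $\Quot_S^{\leq \phi}(E)$: openness in $\Quot_S(E)$, finite type over $S$, and universal closedness of the structure morphism to $S$. Openness is immediate: by Proposition~\ref{prop:yeswehaveopennessofflatness} applied to $I = (0, \phi]$, the condition $Q_t \in \cP_t((0, \phi])$ is open in families, and combined with Proposition~\ref{proposition-quot-algebraic} this yields that $\Quot_S^{\leq \phi}(E)$ is an algebraic space locally of finite presentation.

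For finite type, I would establish boundedness and then invoke Lemma~\ref{lem-open-bounded}. The key point is that if $E_t \onto Q_t$ is a surjection in $\cA_t$ with $Q_t \in \cP_t((0, \phi])$, then the phases of HN factors of $Q_t$ lie in the interval $[\phi^-(E_t), \phi]$: any quotient of $Q_t$ is a quotient of $E_t$, so its phase is at least $\phi^-(E_t)$. Since $\phi^-_E$ is constructible on $S$ by Lemma~\ref{lem:phipmsemicontinuous} and $S$ is noetherian, there is a uniform bound $\phi_0 > 0$ with HN factors of $Q_t$ always of phase in $[\phi_0, \phi]$. Combined with the bound $\Im Z(Q_t) \le \Im Z(E)$, the central charges of HN factors lie in a bounded region of $\C$, and the support property (Remark~\ref{rem:supportfinitelength}) leaves only finitely many possible classes of HN factors, hence finitely many possible classes $\vv_Q$. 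For each such $\vv_Q$, Lemma~\ref{lem:CPIvbounded} bounds the moduli of objects $Q$, and Lemma~\ref{lemma-hom-representable} bounds the morphisms $E \to Q$.

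The main obstacle is universal closedness. By the Nagata valuative criterion (Lemma~\ref{lemma-valuative-criterion-uc}), it suffices to verify the strong existence part for DVRs $R$ essentially of finite type over $S$. For such $R$, Definition~\ref{def:familyfiberstabilities}.\eqref{enum:curveassumption} integrates $\us$ over $\Spec R$ to a HN structure on $\cA_R$, which admits an $R$-torsion theory by Corollary~\ref{cor:Ctorsionautomatic}. Given $E_K \onto Q_K$ with $Q_K \in \cP_K((0, \phi])$, Proposition~\ref{proposition-quot-valuative-criteria} provides an initial $R$-flat extension $E_R \onto Q_R^0$ in $\cA_R$, and its uniqueness transfers to ours. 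The remaining task is to modify $Q_R^0$ so that $(Q_R)_k \in \cP_k((0, \phi])$.

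For this modification, I would lift the HN filtration of $Q_K$ in $\cA_K$ to a filtration $0 = \tilde{V}_0 \subset \tilde{V}_1 \subset \dots \subset \tilde{V}_m = Q_R^0$ by $R$-torsion free subobjects via Lemma~\ref{lem-extend-from-localisation} and saturation with respect to the $R$-torsion theory. Each graded piece $\tilde{V}_i/\tilde{V}_{i-1}$ has $\sigma_K$-semistable generic fiber of phase $\phi_i \le \phi$. Applying Theorem~\ref{thm:Langton} inductively, one replaces each graded piece with a $\sigma_R$-semistable object of the same generic fiber, so that the total $Q_R$ becomes an iterated extension of $\sigma_R$-semistable objects of phases $\phi_i \in (0, \phi]$. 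Termination of each Langton-Maruyama iteration is guaranteed by the support property and boundedness of fiberwise moduli (Lemma~\ref{lem:CPIvbounded} transported to $\Spec R$ via Remark~\ref{rem:suppC}). By Lemma~\ref{lem:allfibersstable}, the resulting graded pieces have all fibers $\sigma_c$-semistable of phase $\phi_i$, so $(Q_R)_k \in \cP_k((0, \phi])$ as required; the most delicate point will be to carry out these modifications while preserving the surjection from $E_R$ and ensuring that the filtration on the modified $Q_R$ restricts compatibly on all fibers.
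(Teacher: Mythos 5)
There is a genuine gap rooted in a misreading of the statement. The condition $\phi(Q_t) \le \phi$ refers to the phase of the central charge $Z(Q_t)$ — cf.\ Definition~\ref{definition-uogs}.\eqref{ulccc}, where $\phi(E)$ is declared to be the locally constant value $\phi_t(E_t)$ — and \emph{not} to the condition $Q_t \in \cP_t((0,\phi])$, which would be the far stronger requirement that every HN factor of $Q_t$ has phase $\le \phi$. You have silently replaced the actual hypothesis by the stronger one throughout, and this matters at both steps.

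For the finite-type argument, under the actual (weaker) hypothesis the phases of the HN factors of $Q_t$ lie only in $[\phi_0, 1]$, not $[\phi_0, \phi]$; HN factors of large phase can coexist with $\phi(Q_t) \le \phi$. Your assertion that $\Im Z(Q_t) \le \Im Z(E)$ together with a phase lower bound yields a bounded region is then false as stated: the half-strip $\{\Im \le \Im Z(E),\ \arg \ge \pi\phi_0\}$ is unbounded in the negative real direction. The paper's argument instead combines the phase lower bound $\phi_0$ with the constraint $\phi(Z(Q)) \le \phi$ (which controls $\Re Z(Q)$ from below) to trap every stable factor inside a parallelogram with $0$ and $z$ as opposite vertices, where $z$ has phase $\phi$ and $\Im z = \Im Z(E)$; this does give finiteness by Remark~\ref{rem:supportfinitelength}.

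For universal closedness, the misreading has led you to an overly elaborate and unnecessary argument. Since the class $v(Q_t)$ is locally constant in families (universal local constancy of central charges) and $Z(Q_t) \ne 0$ for $Q_t$ in the heart, the phase $\phi(Q_t)$ is locally constant on any connected $T$. Hence the condition $\phi(Q_t) \le \phi$ is automatically preserved under extension over $\Spec R$: once Proposition~\ref{proposition-quot-valuative-criteria} (together with Definition~\ref{def:familyfiberstabilities}.\eqref{enum:curveassumption} and Corollary~\ref{cor:Ctorsionautomatic}) produces an $R$-flat quotient $E_R \onto Q_R$ extending $E_K \onto Q_K$, one has $\phi((Q_R)_k) = \phi(Q_K) \le \phi$ for free. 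No Langton--Maruyama modification of the special fiber is needed — indeed, the target you are modifying towards (getting $(Q_R)_k \in \cP_k((0,\phi])$) is not what the Lemma requires. Your attempted iterative modification also has the unresolved difficulty you acknowledge (preserving the surjection from $E_R$ while replacing graded pieces), but the essential issue is that the whole machinery is aimed at a stronger statement than the one at hand.
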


\begin{proof}
The canonical morphism $\Quot_S^{\leqslant \phi}(E) \to \Quot_S(E)$ is representable by open immersions because $\us$ universally has locally constant central charges. Since $\Quot_S(E)$ is an algebraic space locally of finite type over $S$ by Proposition~\ref{proposition-quot-algebraic}, it follows that 
$\Quot_S^{\leqslant \phi}(E)$ is too. 

Next we prove that $\Quot_S^{\leqslant \phi}(E)$ is in fact of finite type over $S$. 
Recall the function $\phi^-_{E}$ defined in~\eqref{phipmE}. 
By Lemma~\ref{lem:phipmsemicontinuous}, $\phi^-_E$ is a constructible function on the noetherian topological space $S$, and hence has a minimum $\phi_0 > 0$. In particular, given $(E_T \to Q) \in \Quot_S^{\leqslant \phi}(E)(T)$ and a point $t \in T$, every stable factor of the base change $Q_{\overline{t}}$ to the algebraic closure has phase $\geqslant \phi_0$. 
Since we also have $\Im Z(Q) \leqslant \Im Z(E)$ and $\phi(Q) \leqslant \phi$, it follows that every stable factor of $Q_{\overline{t}}$ has central charge in the parallelogram with angles $\pi \phi_0$ and $\pi$ and with $0$ and $z$ as opposite vertices, where 
$z \in \C$ is the complex number of phase $\phi$ such that $\Im z = \Im Z(E)$. 
By Remark~\ref{rem:supportfinitelength} it follows that there is a finite set $\Gamma \subset \Lambda$ of classes occurring as stable factors of $Q_{\overline{t}}$; 
moreover, this set $\Gamma$ depends only on $E$, $\phi$, and $Z$, and hence works uniformly for any $(E_T \to Q) \in \Quot^{\leqslant \phi}_S(E)(T)$ and $t \in T$. As the central charge of $Q$ lies in the triangle with vertices $0$, $z$, and $w$, where $w$ is complex number of phase $\phi_0$ such that $\Im w=\Im Z(E)$, the set of sums of classes in $\Gamma$ whose central charge lie in this triangle is another uniform finite set $\Gamma'$, which contains all of the possible classes of $Q$. Therefore, the morphism $\Quot^{\leqslant \phi}_S(E) \to \cM_{\utau}$ sending $E_T \to Q$ to $Q$ factors through the canonical morphism 
$\coprod_{\vv \in \Gamma'} \cP_{\us}([\phi_0, 1]; \vv) \to \cM_{\utau}$. 
The proof of Proposition~\ref{proposition-quot-algebraic} shows that the morphism 
$\Quot^{\leqslant \phi}_S(E) \to \coprod_{\vv \in \Gamma'} \cP_{\us}([\phi_0, 1]; \vv)$ is of finite type. 
The target of this morphism is of finite type over $S$ by Lemma~\ref{lem:CPIvbounded}, hence so is $\Quot^{\leqslant \phi}_S(E)$. 

By combining condition~\eqref{enum:curveassumption} of Definition~\ref{def:familyfiberstabilities}, Corollary~\ref{cor:Ctorsionautomatic}, and Proposition~\ref{proposition-quot-valuative-criteria}, we find that the morphism $\Quot_S(E) \to S$ satisfies the strong existence part of the valuative criterion with respect to any essentially of finite type morphism $\Spec(R) \to S$ with $R$ a DVR. 
By universal local constancy of the central charges of $\us$, the same holds for $\Quot_S^{\leqslant \phi}(E) \to S$. 
Thus Lemma~\ref{lemma-valuative-criterion-uc} shows the morphism $\Quot_S^{\leqslant \phi}(E) \to S$ is universally closed. 
(Note that $\Spec(R) \to S$ is essentially of finite type if and only if it is essentially locally of finite type, see Remark~\ref{rem-elft-eft}.)
\end{proof}

\subsection{Relative moduli spaces}
\label{subsec:PropernessRelativeModuli}

In this section, we show that in our setting, relative moduli spaces of semistable objects are well-behaved. 
First we prove that if $\us$ is a stability condition on $\cD$ over $S$, then the moduli stack $\fM_{\us}(\vv)$ is \emph{quasi-proper} over $S$, that is, it satisfies the strong existence part of the valuative criterion. The proof is essentially the same as \cite[Theorem~4.1.1]{AP:t-structures}, and is the reason we require the existence of HN structures after base change to Dedekind schemes.
In characteristic zero, we further use \cite{AHLH:good_moduli} to show that it admits a good moduli space $M_{\us}(\vv)$ (in the sense of Alper) which is proper over $S$. 

\begin{Lem}\label{lem:ModuliSpaceValuativeCriteria}
Let $\us$ be a stability condition on $\cD$ over $S$. Then for every $\vv \in \Lambda$, the morphism $\fM_{\us}(\vv) \to S$ satisfies the strong existence part of the valuative criterion for any DVR essentially of finite type over $S$.
\end{Lem}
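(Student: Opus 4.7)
The plan is to construct the required extension by applying semistable reduction to a lift of $E_K$ in the heart of the HN structure on $\cD_R$ that the definition of a stability condition over a base supplies. Let $R$ be a DVR with fraction field $K$, essentially of finite type over $S$, and let $E_K \in \cD_K$ be the $\sigma_K$-semistable object of class $\vv$ and some phase $\phi$ corresponding to the given morphism $\Spec K \to \fM_{\us}(\vv)$. Since the $\wGL2$-action (Remark~\ref{rem:fiberwiseGL2action}) on $\us$ rotates phases without changing the underlying moduli problem, after acting I may assume $\phi = 1$, so that $E_K$ lies in the heart $\cA_K \subset \cD_K$.

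By Definition~\ref{def:familyfiberstabilities}.\eqref{enum:curveassumption}, the fiberwise stability conditions $(\sigma_c)_{c \in \Spec R}$ are induced by a HN structure $\sigma_R = (Z_K, Z_{\Rtor}, \cP_R)$ on $\cD_R$ with heart $\cA_R := \cP_R(0,1]$, and by Corollary~\ref{cor:Ctorsionautomatic} this heart admits an $R$-torsion theory. I would first lift $E_K$ to an object $\tilde E \in \cA_R$ using the essential surjectivity of Lemma~\ref{lem-open-restriction-es}.\eqref{es-localization}, and then replace $\tilde E$ by its $R$-torsion free quotient $E := \tilde E / \tilde E_{\Rtor}$. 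By Lemma~\ref{lem:FlatIffTFreeCurve}, this object $E \in \cA_R$ is $R$-flat, with $E_K$ unchanged by construction.

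The key step is then semistable reduction, i.e., the necessary direction of Proposition~\ref{prop:HNviaHN}.\eqref{enum:ssred} applied to $E$: it produces a $Z_R$-semistable subobject $F \subset E$ with $E/F \in \cA_{\Rtor}$, so that $F_K = E_K$. The object $F$ remains $R$-torsion free by Lemma~\ref{Lem-AS-subs}.\eqref{Astf-subs}, hence $R$-flat by Lemma~\ref{lem:FlatIffTFreeCurve} and thus $R$-perfect by Lemma~\ref{lem-AC-C-flat}. To verify that $F$ defines a morphism $\Spec R \to \fM_{\us}(\vv)$, I would invoke Lemma~\ref{lem:allfibersstable} to conclude that $F_p$ is $\sigma_p$-semistable at the closed point $p$; the class $v(F_p) = v(F_K) = \vv$ is fixed by universal local constancy of $v$ (Remark~\ref{rem-mukai-ulc}), and the phase computation $Z_p(F_p) = Z_K(F_K) = Z_K(E_K) \in \R_{>0}\cdot e^{\ii\pi}$ pins down the phase. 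Since $F_K = E_K$ by construction, the morphism supplied by $F$ fills in the valuative diagram.

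All the hard work has been done in Parts~\ref{part:ModuliSpaces} and \ref{part:HNStrCurve}: the serious input is the HN structure on $\cD_R$ (which is built into the definition of a stability condition over $S$) together with the Langton--Maruyama style semistable reduction encoded in Proposition~\ref{prop:HNviaHN}. The main conceptual point, and the only real obstacle, is recognizing that the strong existence part of the valuative criterion for $\fM_{\us}(\vv) \to S$ over DVRs essentially of finite type translates verbatim into semistable reduction for the induced HN structure --- this is why the axioms of Definition~\ref{def:familyfiberstabilities} were chosen in the way they were. No boundedness or support property input is needed at this stage; those become relevant only when one wants to combine this lemma with universal closedness of the moduli stack.
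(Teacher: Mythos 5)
Your proof is correct and takes essentially the same approach as the paper's proof: both lift $E_K$ to $\cD_R$, use the HN structure supplied by Definition~\ref{def:familyfiberstabilities}.\eqref{enum:curveassumption} and the $R$-torsion theory supplied by Corollary~\ref{cor:Ctorsionautomatic} to produce an $R$-flat $\sigma_R$-semistable extension, and conclude with Lemma~\ref{lem:allfibersstable}. The only difference is a minor reordering: the paper passes first to the semistable HN factor of $E_R$ and then to its torsion-free quotient via the slice torsion theory of Proposition~\ref{prop:Ctorsiontheoryviaheart}, whereas you take the torsion-free quotient first and then invoke semistable reduction via Proposition~\ref{prop:HNviaHN}.\eqref{enum:ssred}.
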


\begin{proof}
Let $\Spec(R) \to S$ be a morphism from a DVR that is essentially of finite type, let $K$ be its field of fractions, $k$ its residue field, and
assume we are given a lift $\Spec(K) \to \fM_{\us}(\vv)$, corresponding to a $\sigma_K$-semistable object in $\cD_K$ of class
$\vv$. We may assume that it is the base change $E_K$ of $E_R \in \cD_R$.

By assumption, we have a HN structure $\sigma_R$ on $\cD_R$ over $\Spec R$ satisfying the support property. Since the HN filtration of $E_R$ induces the one of $E_K$, we may assume that $E_R$ is $\sigma_R$-semistable. By Corollary~\ref{cor:Ctorsionautomatic} and Proposition~\ref{prop:Ctorsiontheoryviaheart} we can assume that it is $R$-torsion free. By Lemma~\ref{lem:allfibersstable}, its special fiber $E_k$ is $\sigma_k$-semistable.
\end{proof}

We recall the notion of a good moduli space from \cite{Alper:GoodModuli}.

\begin{Def}\label{def:AlperGoodModuliSpace}
Let $\cY$ be an algebraic stack over $S$.
We say that $\cY$ admits a \emph{good moduli space} if there exist an algebraic space $Y$ over $S$ and an $S$-morphism $\pi\colon\cY\to Y$ such that:
\begin{enumerate}[{\rm (1)}]
 \item $\pi$ is quasi-compact and the functor $\pi_*\colon \QCoh \cY \to \QCoh Y$ is exact; and 
 \item the natural map $\cO_Y \to \pi_*\cO_{\cY}$ is an isomorphism.
\end{enumerate}
\end{Def}

\begin{Thm}\label{thm:modulispacesArtinstacks}
Let $\us$ be a (weak) stability condition on $\cD$ over $S$, and let $\vv \in \Lambda$.
\begin{enumerate}[{\rm (1)}]
 \item \label{enum:MstAlgStackFT} $\fM_{\us}^{\st}(\vv)$ is an algebraic stack of finite type over $S$. 
 \item \label{enum:MAlgStackFT} If $\us$ is a stability condition, then $\fM_{\us}(\vv)$ is an algebraic stack of finite type over $S$. Moreover, if $\fM_{\us}(\vv)=\fM_{\us}^{\st}(\vv)$, then it is a $\G_m$-gerbe over its coarse moduli space $M_{\us}(\vv)$, which is an algebraic space proper over $S$.
 \item \label{enum:GoodModuliChar0} Suppose further that $S$ has characteristic $0$. If $\us$ is a stability condition, then $\fM_{\us}(\vv)$ admits a good moduli space $M_{\us}(\vv)$ which is an algebraic space proper over $S$.
\end{enumerate}
\index{Msigmav@$\fM_{\us}^{}(\vv)$ ($\fM^{\st}_{\us}(\vv))$,!substacks of $\cM_{\utau}$, parameterizing $\us$-semistable (geometrically $\us$-stable) objects of class $\vv$ and phase $\phi$}
\index{Msigmav@$M_{\us}(\vv)$, coarse moduli space of $\fM_{\us}(\vv)$}
\end{Thm}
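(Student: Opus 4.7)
The plan is to assemble the theorem from the structural results on moduli established in Part~\ref{part:ModuliSpaces} together with the boundedness and valuative criteria provided by the stability-condition axioms.

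For part~\eqref{enum:MstAlgStackFT}, I would simply combine Lemma~\ref{lem-cM-cP-substacks}, which identifies $\fM^{\st}_{\us}(\vv)$ as an open substack of $\cMpug(\cD/S)$, with the boundedness condition built into Definition~\ref{def:fiberwisesupport}.\eqref{enum:bounded}. Then Lemma~\ref{lem-open-bounded} immediately yields finite type over $S$. Part~\eqref{enum:MAlgStackFT} similarly reduces to Lemma~\ref{lem:CPIvbounded} (applied with $I = [\phi, \phi]$), which uses the support property and the finite-length parallelogram argument of Remark~\ref{rem:supportfinitelength} to bound semistable classes and then invokes Lemma~\ref{lem:ExtensionOfBounded}. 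For the $\G_m$-gerbe statement, when $\fM_{\us}(\vv)=\fM^{\st}_{\us}(\vv)$ every geometric fiber is geometrically stable, hence simple in the sense of Definition~\ref{def:simple} (stability over the algebraic closure implies $\End = \kappa(\bar t)$); Lemma~\ref{Lem:SimpleAlgebraicSpace} then produces the $\G_m$-gerbe structure over an algebraic space $M_{\us}(\vv)$ of finite type over $S$.

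To establish properness of $M_{\us}(\vv)$ in \eqref{enum:MAlgStackFT}, I would verify the valuative criterion. The strong existence part for DVRs essentially of finite type over $S$ is exactly Lemma~\ref{lem:ModuliSpaceValuativeCriteria}. Since $S$ is Nagata, the Nagata valuative criterion for universal closedness, Lemma~\ref{lemma-valuative-criterion-uc}, upgrades this to universal closedness of $M_{\us}(\vv)\to S$. Separatedness comes from uniqueness of the limit: any two families in $\fM^{\st}_{\us}(\vv)$ over a DVR with isomorphic generic fiber admit, after applying the $C$-torsion-free argument of Lemma~\ref{lem:tensortrick} inside the heart $\cA_R$ (which exists by condition~\eqref{enum:curveassumption} of Definition~\ref{def:familyfiberstabilities} together with Corollary~\ref{cor:Ctorsionautomatic}), a unique extension; passing to the induced rigidification gives the required uniqueness on $M_{\us}(\vv)$.

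For part~\eqref{enum:GoodModuliChar0}, my plan is to invoke the main theorem of \cite{AHLH:good_moduli}, whose hypotheses over a characteristic-zero base require (i) $\fM_{\us}(\vv)$ to be an algebraic stack of finite type with affine diagonal, (ii) $\Theta$-reductivity, and (iii) S-completeness. Condition (i) comes from part~\eqref{enum:MAlgStackFT} and Theorem~\ref{theorem-lieblich-moduli}. To verify $\Theta$-reductivity, I would show that any filtration of a semistable family over the generic fiber of a DVR by a subobject of the same slope extends to a filtration over the DVR, which reduces to the existence and semistable-reduction properties of the HN structure over $\Spec R$ guaranteed by Definition~\ref{def:familyfiberstabilities}.\eqref{enum:curveassumption} combined with the Langton-style Theorem~\ref{thm:Langton}; S-completeness is verified analogously, reducing to uniqueness of semistable reduction inside the local heart $\cA_R$ (again via Lemma~\ref{lem:tensortrick} and the $R$-torsion theory of Corollary~\ref{cor:Ctorsionautomatic}). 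Once the good moduli space $M_{\us}(\vv)$ exists, properness follows by the same valuative-criterion argument as in part~\eqref{enum:MAlgStackFT}, since Lemma~\ref{lem:ModuliSpaceValuativeCriteria} and Lemma~\ref{lemma-valuative-criterion-uc} apply equally well at the level of the good moduli space.

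The main technical obstacle is the verification of $\Theta$-reductivity and S-completeness for $\fM_{\us}(\vv)$: unlike the finite-type and valuative properties, these are sensitive to the nuances of Jordan--Hölder filtrations in the $C$-local heart, and require combining semistable reduction (Theorem~\ref{thm:Langton}) with the existence of a $C$-torsion theory and the fine control over extensions of filtrations afforded by Lemma~\ref{lem-extend-from-localisation}. Everything else is either essentially formal from Part~\ref{part:ModuliSpaces} or a direct citation of Part~\ref{part:HNStrCurve}.
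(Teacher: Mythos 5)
Your proposal is correct in substance and follows the same overall architecture as the paper's proof (boundedness from Lemma~\ref{lem:CPIvbounded}, openness from Proposition~\ref{prop:yeswehaveopennessofflatness}, the $\G_m$-gerbe from Lemma~\ref{Lem:SimpleAlgebraicSpace}, the Nagata valuative criterion Lemma~\ref{lemma-valuative-criterion-uc} combined with Lemma~\ref{lem:ModuliSpaceValuativeCriteria} for universal closedness), but in two places you take a genuinely different route. For separatedness in part~\eqref{enum:MAlgStackFT} you propose the uniqueness part of the valuative criterion together with the $R$-torsion-free extension argument from Lemma~\ref{lem:tensortrick} and a rigidification step; the paper instead argues directly that the diagonal of $M_{\us}(\vv)$ is a closed immersion, by pulling back to an \'etale cover $\tM$ carrying a universal family and identifying the diagonal with the locus where the two pullbacks of $\cE$ admit a non-zero morphism between fibers (a closed condition, by stability of same phase and upper semicontinuity of $\hom$). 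Your route can be made to work, but note that Lemma~\ref{lemma-valuative-criterion-uc} as stated only covers universal closedness, not uniqueness; you would need a separate Nagata-style reduction to justify testing only DVRs essentially of finite type, whereas the paper's diagonal argument sidesteps valuative criteria entirely. For part~\eqref{enum:GoodModuliChar0} you plan to verify $\Theta$-reductivity and S-completeness from first principles via Theorem~\ref{thm:Langton}, Corollary~\ref{cor:Ctorsionautomatic}, and Lemma~\ref{lem-extend-from-localisation}; the paper instead reduces to $S$ affine via \cite[Proposition~7.9]{Alper:GoodModuli} and then cites the argument of \cite[Theorem~7.25]{AHLH:good_moduli} as a black box that carries out precisely those verifications, and obtains properness directly from \cite[Proposition~3.43(3)]{AHLH:good_moduli} plus Lemma~\ref{lem:ModuliSpaceValuativeCriteria}. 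Your more explicit plan is sound and is essentially what the cited AHLH theorem does internally; either route buys you the same conclusion, with the paper's being shorter by virtue of delegating to an existing reference.
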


\begin{proof}
Claim \eqref{enum:MstAlgStackFT} follows immediately from Lemma~\ref{lem-open-bounded} and the definitions. Now let us assume that $\us$ is a stability condition.
We have already seen in Lemma~\ref{lem:CPIvbounded} and Proposition~\ref{prop:yeswehaveopennessofflatness} that the functor $\fM_{\us}(\vv)$ is bounded and an open substack of $\cMpug(\cD/S)$, and hence an algebraic stack of finite type over $S$ again by Lemma~\ref{lem-open-bounded}.

If $\fM_{\us}(\vv)=\fM_{\us}^{\st}(\vv)$, then every object $E \in \fM_{\us}(\vv)$ is simple (see Definition~\ref{def:simple}), as every stable object over an algebraically closed field has only trivial endomorphisms. 
The $\G_m$-structure over its coarse moduli space follows from
Lemma~\ref{Lem:SimpleAlgebraicSpace}. 

Lemma~\ref{lemma-valuative-criterion-uc} and Lemma~\ref{lem:ModuliSpaceValuativeCriteria} show that $M_{\us}(\vv)$ is universally closed over $S$.\footnote{Recall that we assume in the \hyperref[MainSetup]{Main Setup} that $S$ is Nagata, so checking the valuative criterion on DVRs essentially of finite type over $S$ is sufficient.}

Now consider an \'etale covering $\tM \to M_{\us}(\vv)$ from a scheme $\tM$ that admits a universal family. The pullback of the diagonal in $M_{\us}(\vv) \times_S M_{\us}(\vv)$ to $\tM \times_S \tM$ is the locus where the two pullbacks $\cE_1, \cE_2$ of the universal family have isomorphic fibers. Since the fibers are stable of the same phase, this is also the locus where there exists a non-trivial morphism between the fibers of $\cE_1, \cE_2$, which is represented by a closed immersion. Since the property of being a closed immersion is \'etale local on the base, this shows that $M_{\us}(\vv)$ is separated over $S$.

Finally, to prove \eqref{enum:GoodModuliChar0} we use the recent groundbreaking result \cite{AHLH:good_moduli}. 
By \cite[Proposition~7.9]{Alper:GoodModuli}, we can reduce to the case where $S$ is affine.
In this case, we can argue exactly as in the proof of \cite[Theorem~7.25]{AHLH:good_moduli} to obtain that $\fM_{\us}(\vv)$ admits a separated good moduli space $M_{\us}(\vv)$.
To show properness, we can directly use \cite[Proposition~3.43(3)]{AHLH:good_moduli} together with Lemma~\ref{lem:ModuliSpaceValuativeCriteria}.
\end{proof}

We finish this section by extending the \emph{Positivity Lemma}, \cite[Theorem~4.1]{BM:proj}, which gives a numerically positive divisor class on every fiber of $M_{\us}(\vv)$, in the case where semistability and stability coincide. We show that it is induced
by a divisor class on $M_{\us}(\vv)$ when the central charge factors via the uniformly numerical relative Grothendieck group $\cN(\cD/S)$ of $\cD$ over $S$ given by Proposition and Definition~\ref{propdef:lattice}, and that it descends to good moduli spaces.

The group $N^1(\cM_{\us}(\vv)/S)$ of relative real numerical Cartier divisors on $\cM_{\us}(\vv)$ over $S$ is the quotient of
the group of real Cartier divisors on $\cM_{\us}(\vv)$ modulo those that have degree zero on every curve $C \to \cM_{\us}(\vv)$ contracted
to a point in $S$. A class in $N^1(\cM_{\us}(\vv)/S)$ has a well-defined degree on every such curve; hence we can talk about
\emph{relatively nef} or \emph{relatively strictly nef} classes that pair non-negatively, or positively, with every such
contracted curve, respectively.

\begin{Thm}\label{thm:PositivityLemmaFamily}
In the setting of Theorem~\ref{thm:modulispacesArtinstacks}, assume that the Mukai morphism to $\Lambda$ factors via $\cN(\cD/S)$. 
Let $M_{\us}(\vv)$ be either the coarse moduli space (when semistability and stability coincide) or the good moduli space (in characteristic 0). Then there is a relative real numerical Cartier divisor class $\ell_{\us} \in N^1(\cM_{\us(\vv)}/S)$, naturally associated to $\us$, that is relatively nef. Moreover, we have
$\ell_{\us}.C = 0$ if and only if $C$ is a curve of S-equivalent objects (i.e., if $C$ is contracted in the morphism 
$\cM_{\us}(\vv) \to M_{\us}(\vv)$). It descends to a relative numerical Cartier divisor class $l_{\us} \in N^1(M_{\us}(\vv)/S)$
that is relatively strictly nef.
\end{Thm}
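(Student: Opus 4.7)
The plan is to extend the construction from the absolute case \cite[Section~3]{BM:proj} by first defining the class étale locally via the universal family and then invoking descent. Concretely, choose an étale cover $p \colon \widetilde M \to M_{\us}(\vv)$ that admits a universal family $\cE \in \cM_{\us}(\vv)(\widetilde M)$; such a cover exists because $\cM_{\us}(\vv)$ is a $\G_m$-gerbe over $M_{\us}(\vv)$ (in the stable case) or by choosing a smooth atlas (in the good moduli case). Write $q \colon \cX_{\widetilde M} \to \cX$ and $\pi \colon \cX_{\widetilde M} \to \widetilde M$ for the two projections. For any integral curve $C \to \widetilde M$ contracted to a point $s \in S$, the fact that $v$ factors through $\cN(\cD/S)$ allows us to define
\[
\ell_{\us}([C]) := \Im \left( -\frac{Z\bigl([q_*(\pi^* \cO_C \otimes \cE)]\bigr)}{Z(\vv)} \right),
\]
where $[q_*(\pi^* \cO_C \otimes \cE)] \in \cN(\cD/S)$ is well-defined because the class is controlled by the Euler pairing against $\cD_{\perf}$ of Proposition and Definition~\ref{propdef:lattice}. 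The first task is to show that this pairing depends only on the numerical class of $C$ relative to $S$ and thereby yields a class $\widetilde \ell_{\us} \in N^1(\widetilde M/S)_{\R}$; this is a direct adaptation of the argument in \cite[Section~3]{BM:proj}, using that $\cE$ is $\widetilde M$-perfect so that $q_*(\pi^*(\blank) \otimes \cE)$ is an exact functor compatible with numerical equivalence on fibers.

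The second step is to descend $\widetilde \ell_{\us}$ to a class on $\cM_{\us}(\vv)$. The two pullbacks of $\widetilde \ell_{\us}$ to $\widetilde M \times_{M_{\us}(\vv)} \widetilde M$ are computed from families $\cE_1, \cE_2$ which differ by a line bundle on the fiber product (by the $\G_m$-gerbe structure, or, in the good moduli case, via the automorphism groups which all lie in $\G_m$ times a reductive group acting trivially on the Mukai vector); since tensoring $\cE$ by a line bundle pulled back from $\widetilde M$ does not change the formula for $\ell_{\us}$, the class descends to a relative class $\ell_{\us} \in N^1(\cM_{\us}(\vv)/S)_{\R}$.

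For the relative nefness and the strict positivity statement, let $C \to \cM_{\us}(\vv)$ be an integral curve contracted to a point $s \in S$. Pulling back the universal family yields a family of $\sigma_s$-semistable objects of class $\vv$ in $\cD_s$ parametrized by $C$ (after base change along the étale cover). Since $\sigma_s$ is an honest stability condition on $\cD_s$ satisfying the support property (Remark~\ref{rem:suppC}), the absolute Positivity Lemma \cite[Theorem~4.1]{BM:proj} applies and gives $\ell_{\us} \cdot C \ge 0$, with equality if and only if $C$ parametrizes a family whose generic members are S-equivalent. The only subtlety here is to relate our construction to the one in \cite{BM:proj}: by base changing along $C \hookrightarrow \cM_{\us}(\vv) \to \cM_{\us}(\vv) \times_S \{s\}$ and noting that $Z \circ v = Z_s$ on $\cN(\cD_s)$, the two formulas agree on the nose.

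Finally, descent to $M_{\us}(\vv)$ is the cleanest step. Since the value of $\ell_{\us}$ on any curve of S-equivalent objects is zero by the previous paragraph, $\ell_{\us}$ is numerically trivial on the fibers of the morphism $\cM_{\us}(\vv) \to M_{\us}(\vv)$. In the coarse moduli case this morphism is a $\G_m$-gerbe and descent of $\R$-Cartier classes is immediate. In the good moduli case (characteristic zero) we invoke \cite[Theorem~10.3]{Alper:GoodModuli}: classes that are trivial on the fibers of the good moduli morphism descend uniquely to classes on the good moduli space. The resulting class $l_{\us} \in N^1(M_{\us}(\vv)/S)_{\R}$ is relatively strictly nef because the vanishing locus upstairs was exactly the set of S-equivalence classes, which are collapsed to points downstairs. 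The main obstacle I anticipate is the compatibility of the relative construction with the absolute one at each point $s$: one must verify that the relative Fourier--Mukai kernel pushforward $q_*(\pi^*(\blank) \otimes \cE)$, evaluated at $\cO_C$ for $C$ contained in a fiber, recovers the pushforward along the inclusion $\cD_s \into \cD$ of the absolute Fourier--Mukai computation --- this is a base change identity that ultimately reduces to Theorem~\ref{theorem-bc-sod}.
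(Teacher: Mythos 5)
Your overall plan — extend the Bayer--Macr\`i positivity lemma to families and then descend to the (coarse or good) moduli space — is the right one, but there are two serious gaps, and the route is also more circuitous than necessary.

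\textbf{You never exhibit a line bundle.} You define $\ell_{\us}$ purely by its intersection numbers with curves, i.e., as a linear functional on $1$-cycles. But $N^1(\cM_{\us}(\vv)/S)$ is defined as a quotient of $\R$-Cartier divisors, so to conclude you must realize this functional by an actual $\R$-line bundle on the stack. The key move in the paper, which you have skipped, is to exploit the hypothesis that $v$ factors through $\cN(\cD/S)$: after normalizing $Z(\vv) = -1$ (and reducing to $Z$ defined over $\Q[\ii]$), one writes $\Im Z(\blank) = \chi([F],\blank)$ for some fixed $F \in \cD_{\perf}$. Because $\chi(F_s, E) = \Im Z_s(E) = 0$ for every $E$ of class $\vv$, the complex $\lHom_{\cM_{\us}(\vv)}(\widetilde F, \cE)$ has rank zero, so its determinant $\cL_F$ is a genuine line bundle on the \emph{stack} $\cM_{\us}(\vv)$ (where a universal family $\cE$ always exists), independent of the gerbe ambiguity. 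Setting $\ell_{\us} = [\cL_F]$ produces the class directly on the stack with no \'etale covers. Your detour through a cover $\widetilde M \to M_{\us}(\vv)$ admitting a universal family is moreover not available in the good moduli case: $\cM_{\us}(\vv) \to M_{\us}(\vv)$ is not a gerbe there, and no \'etale cover of $M_{\us}(\vv)$ carries a section of the stack.

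\textbf{Your descent step misquotes Alper.} You invoke \cite[Theorem~10.3]{Alper:GoodModuli} as saying that classes "trivial on the fibers of the good moduli morphism" descend; that is not what the theorem says, and numerical triviality on fibers is far from sufficient. The actual criterion is that, for every geometric point with closed image (i.e., every polystable object $E = \oplus_i E_i^{\oplus m_i}$), the stabilizer $\prod_i \GL(m_i)$ must act trivially on the fiber of $\cL_F$. This requires a concrete computation, and it is where the $\cN(\cD/S)$ hypothesis enters again: each $\GL(m_i)$ acts on $\cL_F|_E$ through $\det^{\chi(F_{\overline s}, E_i)}$, and $\chi(F_{\overline s}, E_i) = \Im Z_{\overline s}(E_i) = 0$ because the $E_i$ have the same phase as $\vv$. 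Your argument that "the vanishing locus upstairs is collapsed to points downstairs" proves at most strict nefness of a hypothetical descended class; it does not prove the class descends. Without the stabilizer computation the good-moduli-space descent is unjustified.
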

\begin{proof}
In order to simplify notation, we first apply the action of $\C \subset \widetilde{\GL_2^+(\R)}$, the universal cover
of $\C^* \subset \GL_2^+(\R)$, so that we may assume that $Z(\vv) = -1$. We assume for simplicity that $Z$ is defined over $\Q[\ii]$; once we prove Theorem~\ref{thm:deformfamiliystability}, the general case can be reduced to that one as every wall of $\Stab(\cD/S)$ is defined over $\Q$. From the definition of
$\cN(\cD/S)$ and our assumption on the Mukai morphism, there exists $F \in \cD_{\perf}$ and $a \in \Q$ such that $\Im Z(\blank) = a\chi([F], \blank)$; by linearity it is enough to consider the case $a = 1$.

Now recall that the numerical Cartier divisor class $\ell_{\sigma_s}$ is determined by 
\[
\ell_{\sigma_s}.C = \Im Z_s(v_s(p_{\cX_s*} \cE_C)),
\]
where $C \to \cM_{\sigma_s}(\vv)$ is a curve in the moduli stack,
$\cE_C \in \cD_C$ is the associated family of $\sigma_s$-semistable objects, and $p_{\cX_s*} \colon \cD_C \to \cD_s$ is the pushforward.
In the notation above, we have $\ell_{\sigma_s}.C=\chi(F_s,p_{\cX_s*} \cE_C)$.

Noting that $\chi(F_s, E) = \Im Z_s(E) = 0$ for any object $E \in \cD_s$ with $v_s(E) = \vv$, it follows that the (dual of the) determinant line bundle construction can be applied to $F$: namely, if $\widetilde F$ denotes the pullback of $F$ to 
$\cM_{\us}(\vv) \times_S \cX$, and $\cE$ is the universal family, then
$\lHom_{\cM_{\us}(\vv)}(\widetilde{F}, \cE)$ is a complex of rank zero, and
its determinant
\[ \cL_F := \det(\lHom_{\cM_{\us}(\vv)}(\widetilde{F}, \cE))
\]
is a line bundle whose degree on a curve $C$ as above agrees with
$\ell_{\sigma_s}.C$ by adjunction (see also \cite[Proposition~4.4]{BM:proj} for an analogous argument).
We can thus define $\ell_{\us}$ by \[\ell_{\us}:=[\cL_F]\in N^1(\cM_{\us}(\vv)/S).\] As $\ell_{\us}|_{\cM_{\sigma_s}}=\ell_{\sigma_s}$ by construction, its claimed positivity properties are purely a statement on fibers, given by \cite[Positivity Lemma~3.3]{BM:proj}.

It remains to show that it descends to a class on the coarse or good moduli space; we will prove the latter case. By \cite[Theorem~10.3]{Alper:GoodModuli}, it is enough to show that stabilizer groups of geometric points of $\cM_{\us}(\vv)$ with closed image act trivially on $\cL_F$. Such a point corresponds to a polystable object
$E = \oplus_i E_i^{\oplus m_i}$ where $E_i \in \cD_{\overline{s}}$ are distinct stable objects of the same phase, for some
geometric point $\overline{s}$ over $S$; in particular, $\Im Z_{\overline s}(E_i) = 0$. Its stabilizer group is $ \prod_i \GL(m_i, \kappa(\overline{s}))$; each factor acts on $\cL_F|_{E}$ via 
$\det^{\chi(F_{\overline{s}}, E_i)} = \det^{\Im Z_{\overline s}(E_i)} = 1$.
\end{proof}

\section{Deforming stability conditions over \texorpdfstring{$S$}{S}} \label{sec:deformfamiliystability}

The goal of this section is an analogue of Bridgeland's deformation result for stability conditions, Theorem~\ref{thm:deformstability}, for the case of stability conditions over $S$. 

We continue to fix a group homomorphism $v \colon \Knum(\cD/S) \to \Lambda$, and let $\Stab_\Lambda(\cD/S)$ denote the set of stability conditions on $\cD$ over $S$ with respect to $\Lambda$.

\begin{Def}
We define the topology on $\Stab_\Lambda(\cD/S)$ as the coarsest topology such that the canonical map
\[
\Stab_\Lambda(\cD/S) \to \Stab_\Lambda(\cD_s), \quad \us \mapsto \sigma_s
\]
is continuous for every $s \in S$.
\index{StabLambda(D/S)@$\Stab_\Lambda(\cD/S)$, space of stability conditions on $\cD$ over $S$}
\end{Def}

\begin{Thm}\label{thm:deformfamiliystability} 
The space $\Stab_\Lambda(\cD/S)$ of stability conditions on $\cD$ over $S$ is a complex manifold, and the forgetful map 
\[
\cZ \colon \Stab_\Lambda(\cD/S) \to \Hom(\Lambda, \C),
\]
is a local isomorphism. 

More precisely, assume that $\us$ satisfies the support property with respect to the quadratic form $Q$, and write $P_Z \subset \Hom(\Lambda, \C)$ for the connected component containing $Z$ of the set of central charges whose kernel is negative definite with respect to $Q$.
Then there is an open neighborhood $\us \in U \subset \Stab_\Lambda(\cD)$ such that $\cZ|_U \colon U \to P_Z$ is a covering.
\end{Thm}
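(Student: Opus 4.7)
The strategy is to follow Bridgeland's absolute deformation argument (Theorem~\ref{thm:deformstability}) fiberwise, using that the uniform quadratic form $Q$ controls all fibers simultaneously via Remark~\ref{rem:suppC}, and then to verify that the five conditions in Definitions~\ref{def:familyfiberstabilities} and~\ref{def:fiberwisesupport} are inherited by the deformed family. For $\us = (\sigma_s)_{s \in S}$ satisfying the support property with respect to $Q$, each $\sigma_s$ satisfies the support property with respect to $Q$ and $v_s$. Hence Theorem~\ref{thm:deformstability} (together with the effective version in \cite[Appendix~A]{BMS:abelian3folds}) provides a neighborhood $Z \in U \subset P_Z$, whose size depends only on $Q$, such that each $Z' \in U$ lifts uniquely to a stability condition $\sigma'_s = (Z'_s, \cP'_s)$ on $\cD_s$ close to $\sigma_s$, with $d(\cP_s, \cP'_s)$ uniformly small. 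This defines a candidate collection $\us' = (\sigma'_s)_{s \in S}$; the claim is that $\us' \in \Stab_\Lambda(\cD/S)$ and that the assignment $Z' \mapsto \us'$ is continuous and inverse to $\cZ$ near $\us$.

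The easy verifications are the following. Universal local constancy of central charges is immediate, since each $Z'_s$ factors as $Z' \circ v_s$. The support property of $\us'$ with respect to the same quadratic form $Q$ holds because the set of classes of $\sigma'_s$-semistable objects coincides, for $Z'$ sufficiently close to $Z$, with a subset of the corresponding set for $\sigma_s$ (see the proof of \cite[Proposition~A.5]{BMS:abelian3folds}). For boundedness of $\fM^{\st}_{\us'}(\vv)$, Remark~\ref{rem:supportfinitelength} applied to the central charges lying in the parallelogram with vertices $0$ and $Z'(\vv)$ shows that there is a finite set $\Gamma \subset \Lambda$ such that every geometrically $\us'$-stable object of class $\vv$ is an iterated extension of $\us$-semistable objects with classes in $\Gamma$. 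Applying Lemma~\ref{lem:CPIvbounded} to bound each $\fM_{\us}(\vv')$ for $\vv' \in \Gamma$ and iterating Lemma~\ref{lem:ExtensionOfBounded} then yields boundedness.

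The first genuinely hard step is universal openness of geometric stability for $\us'$. By Lemma~\ref{Lem-uogs-ftbc}, we may fix a finite-type morphism $T \to S$ from an affine scheme and a $T$-perfect $E \in \cD_T$, and show that the locus where $E_t$ is geometrically $\sigma'_t$-stable is open at any $t_0$ in this locus. The bounded set of destabilizing classes $\vv'$, obtained as above from $Q$ and $Z'$, combined with boundedness of the corresponding moduli $\fM_{\us}(\vv')$ and the representability of relative $\uHom$-functors (Lemma~\ref{lemma-hom-representable}), produces a scheme of finite type over $T$ parametrizing potential $\us'$-destabilizing quotients of the fibers of $E$; the unstable locus is the image of this scheme and hence is constructible. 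To prove it is closed, one argues via Lemma~\ref{lemma-specializations-lift} that specializations lift along essentially-of-finite-type DVRs $\Spec R \to T$, and then uses the HN structure of the original $\us$ over $\Spec R$ (together with Corollary~\ref{cor:Ctorsionautomatic}, Proposition~\ref{prop:Ctorsiontheoryviaheart}, and semistable reduction as in Theorem~\ref{thm:Langton}) to propagate $\us'$-destabilizations from the generic to the special fiber. This closedness under specialization is the main technical obstacle, since constructibility alone is insufficient; it is precisely here that condition~\eqref{enum:curveassumption} of Definition~\ref{def:familyfiberstabilities} plays its essential role.

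The second nontrivial step is producing, for any Dedekind scheme $C \to S$ essentially of finite type, a HN structure $\sigma'_C$ integrating $\us'$ over $C$. One takes the HN structure $\sigma_C$ given by $\us$ and refines each of its $\us$-semistable HN factors using the nearby slicings $\cP'_c$. Concretely, working with the hearts obtained by rotation (as in Section~\ref{sec:tilt1}) and applying Theorem~\ref{thm:HNstructureviafibers}, one uses the uniform bound $d(\cP_c, \cP'_c) < \epsilon$ to tilt $\cA_C$ into a new heart $\cA'_C$ whose fibers are the hearts of $\sigma'_c$. The $C$-torsion theory of $\cA_C$ (Corollary~\ref{cor:Ctorsionautomatic}) transfers to $\cA'_C$ via Lemma~\ref{lem:induceCtorsion_Dedekind}, universal openness of flatness for $\cA'_C$ follows from universal openness of lying in $\cP'(I)$ for $I = (0,1]$ (Proposition~\ref{prop:yeswehaveopennessofflatness} applied to $\us'$, once the fiberwise data is known to be consistent), and the weak stability function $(Z'_K, Z'_{\Ctor})$ derived from $Z'$ satisfies the HN property by Corollary~\ref{cor:HNgeneral}, since semistable reduction holds by Step~3 and generic openness of semistability holds by Lemma~\ref{lem:SemiStUnivGenOpenn}. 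Together these steps show $\us' \in \Stab_\Lambda(\cD/S)$; the continuity of $Z' \mapsto \us'$, together with the continuity of $\cZ$, produces the desired local inverse and completes the proof.
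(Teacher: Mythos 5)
Your high-level strategy matches the paper's: deform each $\sigma_s$ fiberwise by Bridgeland's theorem using the uniform quadratic form $Q$, then check that conditions \eqref{enum:Zsindependentbasechange}--\eqref{enum:bounded} are inherited. But you miss the one reduction that makes the paper's proof go through cleanly, and as a result your final step has a genuine circularity.

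The paper first applies the $\wGL2$-action (Remark~\ref{rem:fiberwiseGL2action}) to reduce to a purely \emph{real} variation $W = Z + u \circ p$ of the central charge, following \cite{Arend:shortproof}. This has a decisive consequence: the hearts $\cA_s$ on every fiber, \emph{and} the local hearts $\cA_C$ over every Dedekind scheme $C \to S$, remain completely unchanged under the deformation. Condition~\eqref{enum:curveassumption} is then verified with no new construction at all: the heart $\cA_C$ already exists, already universally satisfies openness of flatness by Proposition~\ref{prop:yeswehaveopennessofflatness} applied to the \emph{original} $\us$, and one simply feeds the same heart with the new central charge into Theorem~\ref{thm:HNstructureviafibers}. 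Your proposal never invokes the $\wGL2$-reduction, so you instead try to build a tilted heart $\cA'_C$ whose fibers realize the new slicings $\cP'_c$ --- and to establish openness of flatness for this new heart you apply Proposition~\ref{prop:yeswehaveopennessofflatness} to $\us'$. That is circular: the proposition requires $\us'$ to already be a flat family of fiberwise stability conditions, i.e.\ to satisfy condition~\eqref{enum:curveassumption}, which is precisely what you are trying to prove. The same issue surfaces more mildly in your openness-of-stability step: the paper's argument works because Lemma~\ref{lem:boundQsPs} bounds the new slicing $\cQ_s(\phi)$ inside $\cP_s[\phi-\epsilon,\phi+\epsilon]$ for the \emph{old} slicing, and then all the machinery (universal openness of lying in $\cP(I)$, boundedness of $\cP_{\us}(I;\vv)$, and crucially the universal closedness of the Quot space $\Quot_T^{\le \phi+\epsilon}(E)$ from Lemma~\ref{lem:GrothendieckLemma}) is invoked only for $\us$, never for $\uvs$. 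Your sketch of closedness-under-specialization via DVRs and semistable reduction is morally the content of the Quot-space closedness lemma, but as written it reconstitutes arguments that the paper has already packaged, without making clear that one only uses properties of the original family.

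The fix to your proposal is short: insert the $\wGL2$-reduction at the start, then note that hearts are unchanged, at which point your Step~4 collapses to a single application of Theorem~\ref{thm:HNstructureviafibers} and the circularity disappears. The rest of your outline (immediacy of~\eqref{enum:Zsindependentbasechange}, support property preserved, boundedness from the parallelogram argument, openness via bounded destabilizers and valuative closedness) is essentially correct and matches the paper's arguments.
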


We follow the proof strategy in \cite{Arend:shortproof}.
As pointed out in Remark~\ref{rem:fiberwiseGL2action}, $\wGL2$ acts on $\Stab_\Lambda(\cD/S)$, lifting the action of $\GL_2(\R)$ on $\Hom(\Lambda, \C) \cong \Hom(\Lambda, \R^2)$.
Therefore, we can use the same simplification as in \cite{Arend:shortproof} and only treat the case of a purely \emph{real} variation of the central charge.
This implies that the hearts $\cA_s$ on the fibers, as well as the local hearts $\cA_C$ for Dedekind schemes $C \to S$ in condition \eqref{enum:curveassumption}, remain unchanged.

More precisely, we can assume by \cite[Section~7]{Arend:shortproof} that $Q$ has signature $(2, \rk \Lambda -2)$, that $Z$ and $Q$ satisfy the normalization of \cite[Lemma~4.2]{Arend:shortproof}, and consider a deformation of the form $W = Z + u \circ p$ where
$p$ is the orthogonal projection $\Lambda_\R \to \Ker Z$ (with respect to $Q$), and $u \colon \Ker Z \to \R$ is a linear map with 
operator norm (with respect to the standard norm on $\R$, and the norm $\norm{\cdot}$ induced by $-Q$ on $\Ker Z$) satisfying $\norm{u} < 1$; since it is sufficient to prove the theorem for small deformations of $Z$, we may later choose a smaller bound
$\norm{u} < \delta$. We also note that by the normalization in \cite[Lemma~4.2]{Arend:shortproof} $\abs{W(v) - Z(v)} \leqslant \norm{u} \abs{Z(v)}$ for all $v$ with $Q(v) \geqslant 0$.

Let $\uvs$ denote the collection of stability conditions on the fibers with central charge $W$, where each $\varsigma_s = (W, \cQ_s)$ is induced
from $\sigma_s = (Z, \cP_s)$ via Theorem~\ref{thm:deformstability}.
To show that $\uvs$ is a stability condition on $\cD$, we only need to show that properties \eqref{enum:stabilityopenbasechange}--\eqref{enum:curveassumption} of Definition~\ref{def:familyfiberstabilities} and
\eqref{enum:fiberwisesupport}--\eqref{enum:bounded}  Definition~\ref{def:fiberwisesupport} are also satisfied for $\uvs$.

We begin with a standard argument comparing the slicings at each point.
For $\phi \in (0, 1)$,
let $\epsilon \in [0, \frac 12]$ be such that $\frac{\sin \pi \epsilon}{\sin \pi \phi} = \norm{u}$; then $0 < \phi - \epsilon < \phi + \epsilon < 1$.
\begin{Lem} \label{lem:boundQsPs}
For all $s \in S$ we have
$\cQ_s(\phi) \subset \cP_s[\phi - \epsilon, \phi + \epsilon]$. 
\end{Lem}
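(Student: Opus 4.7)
The statement is fiberwise in nature: for each $s \in S$, the stability condition $\sigma_s = (Z, \cP_s)$ on $\cD_s$ satisfies the support property with respect to $Q$ (Remark~\ref{rem:suppC}), and $\varsigma_s = (W, \cQ_s)$ is obtained from $\sigma_s$ via Bridgeland's Deformation Theorem (Theorem~\ref{thm:deformstability}); in particular $\varsigma_s$ also satisfies the support property with respect to $Q$. The key quantitative input is the normalization of \cite[Lemma~4.2]{Arend:shortproof}: for every class $v \in \Lambda$ with $Q(v) \ge 0$ we have $|W(v) - Z(v)| \le \norm{u}\,|Z(v)|$, which in particular applies to the class of any $\sigma_s$- or $\varsigma_s$-semistable object.

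The plan is to take $E \in \cQ_s(\phi)$ and analyze its Harder--Narasimhan filtration with respect to $\sigma_s$, with semistable factors $A_1, \ldots, A_n$ of phases $\phi_1 > \cdots > \phi_n$. Exploiting crucially that $W - Z = u \circ p$ is real-valued, so that $T := W(E) - Z(E) \in \R$, equating imaginary parts of $W(E) = Z(E) + T = |W(E)|e^{\ii\pi\phi}$ and solving for $T$ yields the trigonometric identity
\begin{equation*}
\sum_{i=1}^n |Z(A_i)|\, \sin\pi(\phi_i - \phi) \;=\; T \, \sin \pi \phi.
\end{equation*}
The support property applied to each $A_i$ combined with the normalization gives $|T| \le \norm{u}\sum_i |Z(A_i)|$, and substituting $\sin \pi \epsilon = \norm{u} \sin \pi \phi$ produces the averaged bound
\begin{equation*}
\Bigl| \sum_{i=1}^n |Z(A_i)|\, \sin\pi(\phi_i-\phi) \Bigr| \;\le\; \sin\pi\epsilon \cdot \sum_i |Z(A_i)|.
\end{equation*}
Applying the same computation to every subobject $E_k$ of the HN filtration of $E$ and to every quotient $E/E_k$ would convert this averaged statement into a pointwise bound on each individual $\phi_i$, provided none of the sine contributions conspire to cancel; this yields $\phi_i \in [\phi - \epsilon, \phi + \epsilon]$.

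The main obstacle is precisely to rule out such cancellation: a priori, a factor with $\phi_1 > \phi + \epsilon$ could be balanced against a factor with $\phi_n < \phi - \epsilon$. This is controlled by a standard ``sandwich'' step, which is the heart of the argument. Assuming for contradiction that $\phi_1 > \phi + \epsilon$, form the $\sigma_s$-subobject $F \hookrightarrow E$ consisting of those HN factors of phase $> \phi + \epsilon$, and apply the same identity to $F$; combining it with the support property for $\varsigma_s$ (so that each $\varsigma_s$-HN factor of $F$ also obeys the normalization) and the bound $\phi^-_{\sigma_s}(F) > \phi + \epsilon$, one concludes $\phi^-_{\varsigma_s}(F) > \phi$, contradicting the $\varsigma_s$-semistability of $E$ at phase $\phi$ via the nonzero morphism $F \to E$. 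The symmetric argument applied to the quotient $E/F'$ by the bottom HN factors of phase $< \phi - \epsilon$ gives the lower bound $\phi_n \ge \phi - \epsilon$. The specific threshold $\sin\pi\epsilon = \norm{u}\sin\pi\phi$ is dictated exactly by the trigonometric identity above; this planar-geometry computation is standard in the proof of Bridgeland's Deformation Theorem and transports to our fiberwise setting without essential modification.
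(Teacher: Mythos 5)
Your plan begins on the right foot, and the trigonometric identity $\sum_i \abs{Z(A_i)}\sin\pi(\phi_i-\phi) = T\sin\pi\phi$ is correct, but as you acknowledge it only controls a weighted average of the $\phi_i$. The entire content of the lemma is then delegated to the ``sandwich'' step, and that step has a genuine gap: the inference ``$\phi^-_{\sigma_s}(F) > \phi + \epsilon$ implies $\phi^-_{\varsigma_s}(F) > \phi$'' is not established by what you write. The normalization $\abs{W(v)-Z(v)}\le\norm{u}\abs{Z(v)}$ applied to the $\varsigma_s$-HN factors of $F$ bounds the discrepancy between their $W$-phase and their \emph{own} $Z$-phase, but says nothing a priori about those $Z$-phases; the hypothesis $\phi^-_{\sigma_s}(F) > \phi+\epsilon$ constrains the $\sigma_s$-HN factors of $F$, not the $\sigma_s$-phases of its $\varsigma_s$-HN factors, and these are different objects. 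Closing this hole forces you to pass to the $\sigma_s$-HN factors of each $\varsigma_s$-HN factor of $F$ and make exactly the kind of single-semistable-object phase comparison that the paper's proof uses directly, so the averaging detour buys nothing.

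The paper's proof is shorter and avoids summation entirely. Take $A$ to be the first $\sigma_s$-HN factor of $E\in\cQ_s(\phi)$. Because $A$ is $\sigma_s$-semistable, $Q(v(A))\ge 0$ holds by the support property; since $W-Z=u\circ p$ is real-valued, this gives $W(A)=Z(A)+t$ for a real $t$ with $\abs{t}\le\norm{u}\abs{Z(A)}$, hence $\phi(W(A))\ge\phi\bigl(Z(A)+\norm{u}\abs{Z(A)}\bigr)$. The inclusion $A\hookrightarrow E$ gives $\phi(W(A))\le\phi$. The law of sines in the triangle with vertices $0$, $Z(A)$, $Z(A)+\norm{u}\abs{Z(A)}$ shows $\phi\bigl(Z(A)+\norm{u}\abs{Z(A)}\bigr)=\phi$ precisely when $\phi(Z(A))=\phi+\epsilon$, and this quantity is increasing in $\phi(Z(A))$; together these force $\phi^+_{\sigma_s}(E)=\phi(Z(A))\le\phi+\epsilon$. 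The lower bound is the symmetric argument applied to the last HN factor. If you run your law-of-sines computation on the single factor $A$ rather than on the full sum over the HN filtration, you land exactly on the paper's argument, and there is no cancellation to rule out because there is only one term.
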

\begin{proof}
Let $E \in \cQ_s(\phi)$ and let $A \subset E$ be the first step of its HN filtration with respect to $Z$. Then as $Q(A)\geqslant 0$,
\[
\phi \geqslant \phi(W(A)) \geqslant \phi\bigl( Z(A) + \norm{u} \abs{Z(A)} \bigr).
\]
The last term depends only on $\phi(Z(A))$, and the law of sines in the triangle
$0, Z(A)$, $Z(A) + \norm{u} \abs{Z(A)} $ shows that it equals $\phi$ for $\phi(Z(A)) = \phi + \epsilon$. Therefore,
$\phi^+(E) = \phi(Z(A)) \leqslant \phi + \epsilon$. An analogous argument with the maximal destabilizing quotient of $E$ shows
$\phi^-(E) \geqslant \phi - \epsilon$.
\end{proof}

\begin{proof}[Proof of Theorem~\ref{thm:deformfamiliystability}]
Since each $\varsigma_s$ satisfies the support property with respect to the same quadratic form as $\sigma_s$, property \eqref{enum:fiberwisesupport} will be automatic. Also, \eqref{enum:Zsindependentbasechange} holds by construction of $\Knum(\cD/S)$.

By Proposition~\ref{prop:yeswehaveopennessofflatness}, $\cP[\phi-\epsilon, \phi+\epsilon]$ is an open substack of $\cMpug(\cD/S)$; 
to prove that $\uvs$ satisfies \eqref{enum:stabilityopenbasechange}, it thus remains to show that $\fM_{\uvs}^\st(\vv) \subset \cP[\phi - \epsilon, \phi + \epsilon]$ is an open substack. 
So 
consider an object $E \in \cD_T$ with
$E_t \in \cP_t[\phi - \epsilon, \phi + \epsilon]$ for all $t \in T$.
Applying Lemma~\ref{lem:boundQsPs} to a quotient $E_{\overline{t}} \onto Q$ after base change to the algebraic closure $\cA_{\overline{t}}$, we see that any such quotient that is $W$-semistable with $\phi(W(Q)) \leqslant \phi$ satisfies $\phi(Z(Q)) \leqslant \phi + \epsilon$. 
In particular, any such quotient occurs in the Quot scheme $\Quot_T^{\leqslant \phi + \epsilon}(E)$ featured in our Grothendieck Lemma~\ref{lem:GrothendieckLemma}.

Since the class of objects in $\Knum(\cD/S)$ is locally constant in families, the condition $\phi(W(Q)) \leqslant \phi$ picks out a union of finitely many connected components of $\Quot_T^{\leqslant \phi + \epsilon}(E)$. 
The union of their images in $T$ is the locus where $E$ is not geometrically $W$-stable. 
The image of each component is closed by universal closedness of the Quot scheme in Lemma~\ref{lem:GrothendieckLemma}.
Therefore, openness of geometric stability holds for $\uvs$, verifying \eqref{enum:stabilityopenbasechange}.

Now fix $\vv \in \Lambda$. 
By Lemma~\ref{lem:boundQsPs}, we have $\fM_{\uvs}(\vv) \subset \cP([\phi - \epsilon, \phi + \epsilon] ; \vv)$. 
The latter is bounded by Lemma~\ref{lem:CPIvbounded}, hence also the former, establishing condition \eqref{enum:bounded}.

Finally, given a Dedekind scheme $C \to S$ essentially of finite type over $S$, the fiberwise collection of t-structures on $\cD_c$ for $c \in C$ induced by $\us$ integrates to a $C$-local heart $\cA_C$, in the sense of Definition~\ref{def:integrateststructure20210217}; it universally satisfies openness of flatness by Proposition~\ref{prop:yeswehaveopennessofflatness}. Since the hearts $\cA_c$ on the fibers are the same for $\sigma_c$ and $\varsigma_c$, the same holds for $\uvs$. 
We conclude with Theorem~\ref{thm:HNstructureviafibers}.
\end{proof}

\section{Inducing stability conditions over \texorpdfstring{$S$}{S}}\label{sec:inducing}

Let $\cD \subset \Db(\cX)$ be 
an $S$-linear strong semiorthogonal component of finite cohomological amplitude, with a relative exceptional collection $E_1, \dots, E_{m}$ (see Definition~\ref{def:relativeexceptional}). We write
$\cD = \langle \cD_1, \cD_2 \rangle$ for the $S$-linear semiorthogonal decomposition given by Lemma~\ref{lem-E1Em-sod} with $\cD_2 = \langle \alpha_{E_1}(\Db(S)), \dots, \alpha_{E_m}(\Db(S)) \rangle$.
The goal of this section is a criterion to induce a stability condition on $\cD_1$ over $S$ from a weak stability condition on $\cD$ over $S$;
this generalizes the absolute case treated in \cite[Proposition~5.1]{BLMS}. Recall that we already studied how to induce local t-structures on $\cD_1$ in Section~\ref{sec:inducingtstructures}.

We consider the saturated subgroup generated by the image of $v$
\begin{equation}\label{eqn:DefLambda1}
\Lambda_1:=\langle v(K_{\num}(\cD_1/S)) \rangle \subset \Lambda.
\end{equation}
Also recall the subgroup $\Lambda_0 \subset \Lambda$ introduced in Definition~\ref{def:Lambda0}, generated by classes of semistable objects with vanishing central charge.

\begin{Thm}\label{thm:InducingStabilityOverBase}
In Setup~\ref{setup-rel-stability} 
we further assume that $g$ is smooth.
Let $\us = \left(\sigma_s = (Z_s, \cA_s)\right)_{s \in S}$ be a weak stability condition on $\cD$ over $S$.
Assume the following conditions hold: 
\begin{enumerate}[{\rm (1)}] 
 \item $(E_i)_s \in \cA_s$ for all $i$ and $s \in S$. 
 \item $\rS_{\cD_s} ((E_i)_s) \in \cA_s[1]$ for all $i$ and $s \in S$, where $\rS_{\cD_s}$ denote the Serre functor of $\cD_s$. 
 \item \label{enum:ExcnotinA0} $v(E_i) \notin \Lambda_0$ for all $i$. 
 \item $\Lambda_0 \cap \Lambda_1 = 0$. 
 \item \label{enum:boundednessforinducing} For all $\vv \in \Lambda$, the set
 \[
 \left\{F \in M_{\us}(\vv') \colon \vv' \in \vv + \Lambda_0, \chi(E_i, F) \geqslant 0\, \text{ for all $i = 1, \dots, m$} \right\}
 \]
 is bounded.
\end{enumerate}
For each $s \in S$, let $\cA_{s,1}$ be the heart in $(\cD_1)_s$ given by Corollary~\ref{cor:induce-relative-t-fiberwise}, and let $Z_{s,1}$ be the central charge given by the restriction of $Z_s$ along $K((\cD_1)_s) \to K(\cD_s)$. 
Then the collection 
\[\us_1 = \left((\sigma_{s})_1 = (Z_{s, 1}, \cA_{s, 1})\right)_{s \in S}\] 
is a stability condition on $\cD_1$ over $S$ with respect to $\Lambda_1$.
\end{Thm}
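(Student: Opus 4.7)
The plan is to verify the requirements of Definition~\ref{def:familyfiberstabilities} together with the support property of Definition~\ref{def:fiberwisesupport} for $\us_1$, ultimately appealing to Theorem~\ref{thm:HNstructureviafibers} to produce a HN structure after base change to any Dedekind scheme $C \to S$ essentially of finite type.

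First I would carry out the fiberwise analysis. Applying Corollary~\ref{cor:induce-relative-t-fiberwise} pointwise produces $\cA_{s,1} = \cA_s \cap (\cD_1)_s$ as the heart of a bounded $t$-structure on $(\cD_1)_s$ with $t$-exact inclusion into $\cD_s$. Running the argument of \cite[Proposition~5.1]{BLMS} then shows that $(\sigma_{s})_1 = (Z_{s,1}, \cA_{s,1})$ is a stability condition on $(\cD_1)_s$: hypothesis \eqref{enum:ExcnotinA0} that $v(E_i) \notin \Lambda_0$ guarantees that $Z_{s,1}$ never vanishes on nonzero objects of $\cA_{s,1}$ (so is a genuine stability function, not a weak one), and HN filtrations for $(\sigma_{s})_1$ are built from those for $\sigma_s$ using that each $(\alpha_{E_i}(F))_s$ lies in $\cA_s$. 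The assumption $\Lambda_0 \cap \Lambda_1 = 0$ ensures that $Q_1 := Q|_{\Lambda_1}$ is negative definite on $\ker Z_{s,1} \subset \Lambda_1$, and any $(\sigma_{s})_1$-semistable object is $\sigma_s$-semistable in $\cD_s$ by construction, so the uniform support property for $\us$ transfers to $\us_1$ with quadratic form $Q_1$, taking care of Definition~\ref{def:fiberwisesupport}.\eqref{enum:fiberwisesupport}. Universal local constancy of central charges for $\us_1$ is inherited from $\us$ via the restriction map $\Knum(\cD_1/S) \to \Knum(\cD/S)$.

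To integrate to a Dedekind base $C \to S$, Definition~\ref{def:familyfiberstabilities}.\eqref{enum:curveassumptionweak} applied to $\us$ provides a weak HN structure $\sigma_C$ on $\cD_C$ with heart $\cA_C$, and applying Corollary~\ref{cor:induce-relative-t-fiberwise} with $C$ in place of $S$ yields a bounded $C$-local heart $\cA_{C,1} = \cA_C \cap \cD_{1,C}$ whose inclusion into $\cA_C$ is $t$-exact. Remark~\ref{rem-inducing-commute-fibers} identifies $(\cA_{C,1})_c = \cA_{c,1}$ on fibers, so these hearts integrate the induced fiberwise stability conditions. Universal openness of flatness for $\cA_{C,1}$ follows from that of $\cA_C$ (Proposition~\ref{prop:yeswehaveopennessofflatness}) via $t$-exactness of the inclusion and the fiberwise compatibility above, and Lemma~\ref{lem:induceCtorsion_Dedekind} supplies a $C$-torsion theory. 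Once universal openness of geometric stability for $\us_1$ is in hand, Lemma~\ref{lem:SemiStUnivGenOpenn} yields generic openness of semistability, so Theorem~\ref{thm:HNstructureviafibers} applies to $\cA_{C,1}$ together with the fiberwise stability conditions $(\sigma_c)_1$ and the uniform quadratic form $Q_1$, producing the required HN structure on $\cD_{1,C}$ over $C$.

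The hard part will be universal openness of geometric stability for $\us_1$, where hypothesis \eqref{enum:boundednessforinducing} plays the decisive role. Given a $T$-perfect $E \in \cD_{1,T}$ that is geometrically $(\sigma_{s_0})_1$-stable at some $s_0 \in T$, any destabilizing quotient $E_s \twoheadrightarrow Q$ in $\cA_{s,1}$ is also a quotient in $\cA_s$: its Mukai vector lies in $v(E) + \Lambda_0$, and it satisfies $\chi(E_i, Q) \ge 0$ for every $i$ because $Q \in (\cD_1)_s$ is right-orthogonal to the $\alpha_{E_i}(\Db(\kappa(s)))$. Hypothesis \eqref{enum:boundednessforinducing} therefore produces a bounded family of potential destabilizing quotients (via their $\sigma_s$-stable factors), and combining this boundedness with the Quot-space argument of Lemma~\ref{lem:GrothendieckLemma} and the HN structure on curves constructed above---which is what supplies the semistable reduction needed for closedness under specialization---yields openness of the geometrically stable locus, along the lines of the proof of Theorem~\ref{thm:deformfamiliystability}. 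The remaining condition, boundedness of $\fM_{\us_1}^{\st}(\vv)$ for Definition~\ref{def:fiberwisesupport}.\eqref{enum:bounded}, is then immediate from hypothesis \eqref{enum:boundednessforinducing}, since any geometrically $(\sigma_{s})_1$-stable object of class $\vv$ is in particular a $\sigma_s$-semistable object of the same Mukai vector.
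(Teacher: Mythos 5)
Your overall plan mirrors the paper's: verify the conditions of Definitions~\ref{def:familyfiberstabilities} and \ref{def:fiberwisesupport} fiber by fiber, integrate over Dedekind schemes via Corollary~\ref{cor:induce-relative-t-fiberwise}, and use hypothesis~\eqref{enum:boundednessforinducing} to power a Quot-space argument for openness of geometric stability. However, the justification of the support property contains a genuine error: you assert that ``any $(\sigma_s)_1$-semistable object is $\sigma_s$-semistable in $\cD_s$ by construction,'' which is false. A $(\sigma_s)_1$-semistable object of $\cA_{s,1} = \cA_s \cap (\cD_1)_s$ need not be $\sigma_s$-semistable, because a $\sigma_s$-destabilizing subobject in $\cA_s$ will generally not lie in $(\cD_1)_s$. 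In fact \cite[Remark~5.12]{BLMS} only guarantees that such an object lies in $\cP_{\sigma_s}[\phi^-, 1]$ for a $\phi^-$ determined by the slopes of the exceptional objects---so it may have several $\sigma_s$-HN factors. Consequently, $Q(v(E)) \ge 0$ cannot be concluded directly from the uniform support property for $\us$, and taking $Q_1 = Q|_{\Lambda_1}$ does not work; the paper instead invokes the specific quadratic form constructed in the proof of \cite[Proposition~5.1]{BLMS}, which is tailored using the exceptional collection precisely to accommodate this discrepancy.

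A second, smaller slip occurs in the boundedness argument: a destabilizing quotient $E_s \onto Q$ in $\cA_{s,1}$ has no reason to satisfy $v(Q) \in v(E) + \Lambda_0$; the quotient's class is essentially arbitrary. The way hypothesis~\eqref{enum:boundednessforinducing} actually enters (Step~4 of the paper's proof) is more indirect: one takes an object $F \in \cP_{\us_1}([\phi_0,\phi_1];\vv)$, looks at its $\sigma_{\bar s}$-HN filtration steps $A \into F \onto B$ inside the \emph{ambient} heart $\cA_{\bar s}$, observes that the support property for $\us$ bounds the image of $v(B)$ in $\Lambda/\Lambda_0$ to a finite set (so $v(B)$ lives in finitely many cosets $\vv_i + \Lambda_0$), verifies the sign constraints $\chi(E_i, B) \ge 0$ via the Ext-vanishings arising from the heart conditions, and only then applies hypothesis~\eqref{enum:boundednessforinducing} to each coset. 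Reconstructing $F$ by iterated extensions and invoking Lemma~\ref{lem:ExtensionOfBounded} gives the desired boundedness. Without this rerouting through the ambient HN filtration, the boundedness hypothesis does not directly constrain the destabilizing quotients you consider. Your use of Lemma~\ref{lem:induceCtorsion_Dedekind} in place of Theorem~\ref{thm:Ctorsiontheoryautomatic} for the $C$-torsion theory is a legitimate alternative, since Corollary~\ref{cor:Ctorsionautomatic} supplies the needed $C$-torsion theory on $\cA_C$.
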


\begin{proof}
By \cite[Proposition~5.1]{BLMS},
$\us_1 = \left((\sigma_{s})_1 = (Z_{s, 1}, \cA_{s, 1})\right)_{s \in S}$ is a collection of numerical stability conditions.
We need to check that $\us_1$ satisfies the conditions \eqref{enum:Zsindependentbasechange}-\eqref{enum:bounded} 
in Definitions~\ref{def:familyfiberstabilities} and \ref{def:fiberwisesupport}. 
To simplify notation, we assume throughout the proof that there is only one exceptional object $E$; the general case holds by similar arguments. 

\begin{step}{1}
$\us_1$ universally has locally constant central charges, i.e., condition \eqref{enum:Zsindependentbasechange} holds. 
\end{step}
This is automatic because the central charges of $\us_1$ are the restrictions of those of $\us$. 

\begin{step}{2}
\label{step-2-inducing}
The fiberwise collection of t-structures $\ut_1$ underlying $\us_1$ universally satisfies openness of flatness. 
Moreover, for any Dedekind scheme $C \to S$ essentially of finite type over $S$, 
$\ut_1$ integrates over $C$ to a bounded $C$-local t-structure 
whose heart has a $C$-torsion theory. 
\end{step}

By Proposition~\ref{prop:yeswehaveopennessofflatness} the fiberwise collection of t-structures underlying $\us$ 
universally satisfies openness of flatness, so by construction the same follows for $\us_1$. 
Let $\cA_C$ be the heart of the HN structure on $\cD_C$ that integrates $\us$ over $C$. 
Since by assumption $E_c \in \cA_c$ for all $c \in C$, Lemma~\ref{lem:flatinheart} shows $E_C \in \cA_C$. 
Now Corollary~\ref{cor:induce-relative-t-fiberwise} applies to show $(\cA_C)_1$ is the heart of a bounded $C$-local t-structure on $\cD_1$, which 
by Remark~\ref{rem-inducing-commute-fibers} integrates $\ut_1$ over $C$. 
Finally, $(\cA_C)_1$ has a $C$-torsion theory by Theorem~\ref{thm:Ctorsiontheoryautomatic}. 

\begin{step}{3} \label{step-inducing-qform}
$\us_1$ satisfies condition~\eqref{enum:fiberwisesupport} in the support property. 
\end{step}

The existence of the uniform quadratic form $Q$ on $(\Lambda_1)_\R$ with properties \eqref{enum:fiberwisesupporta} and \eqref{enum:fiberwisesupportb} follows directly from the proof of \cite[Proposition~5.1]{BLMS}, as the construction of the quadratic form on each fiber given there depends only on the slopes of the exceptional objects.

\begin{step}{4} \label{step-inducing-boundedness}
$\us_1$ satisfies the following boundedness property: given $\vv \in \Lambda_1$ and $0 < \phi_0 < \phi_1 \leqslant 1$, there exists a scheme $B$ of finite type over $S$ and a $\ut_1$-flat object $\widetilde{F} \in (\cD_{1})_B$, such that every geometric point of $\cP_{\us_1}([\phi_0, \phi_1], \vv)$ is of the form $\widetilde{F}_{\bar b}$ for some geometric point $\bar b$ of $B$.
\end{step} 

Indeed, under the simplifying assumption that we have only one exceptional object $E$, let 
\[
\phi^- := \min\left(\phi_0, \phi^-(E)\right),
\]
let $\vv \in \Lambda_1$, let $\bar{s}$ be a geometric point of $S$, and let $F\in \cP_{\us_1}([\phi_0, \phi_1],\vv)(\bar{s})$. 
By \cite[Remark~5.12]{BLMS}, we have $F \in \cP_{\us}([\phi^-, 1])$.
Now consider any short exact sequence $A \into F \onto A'$ in $\cA$ corresponding to a Harder-Narasimhan filtration step of $F$ with respect to $\sigma_{\bar s}$.
The class of $A'$ in $\Lambda/\Lambda_0$ is bounded by the support property. Moreover, as in the proof of Corollary~\ref{cor:induce-relative-t-fiberwise} we see that
$\Ext^k(E, A') = 0$ for $k \geqslant 2$, and similarly
$\Ext^1(E, A') = \Ext^2(E, A) = 0$. Therefore, $\chi(E,A') \geqslant 0$, and the set of such $A'$ is bounded by assumption~\eqref{enum:boundednessforinducing}. This in turn implies that the set of classes $v(A) \in \Lambda$ of $A$, and hence also the set of $A$ itself is bounded. Using Lemma~\ref{lem:ExtensionOfBounded} one concludes the proof of Step~\ref{step-inducing-boundedness}.

\begin{step}{5}
\label{step-5-inducing} 
$\us_1$ universally satisfies openness of geometric stability and satisfies boundedness, i.e., conditions \eqref{enum:stabilityopenbasechange} and \eqref{enum:bounded} hold. 
\end{step}

By Lemma~\ref{Lem-uogs-ftbc}, to show \eqref{enum:stabilityopenbasechange}
it suffices to show that if $T \to S$ is finite type morphism 
from a connected affine scheme and $F \in \rD(\cX_T)$ is a $T$-perfect object, then 
the set 
\begin{equation*}
U = \set{ t \in T \sth F_t \in (\cD_1)_t \text{ and is geometrically} \ \sigma_{1,t} \text{-stable} }
\end{equation*} 
is open. 
By Step~\ref{step-2-inducing} the fiberwise collection of t-structures $\utau_{1}$ 
underlying $\us_1$ universally satisfies openness of flatness, so we 
may assume $F_t \in \cA_{t,1}$ for all $t \in T$, i.e., $F \in \cM_{\ut_1}(T)$. 
By \cite[Remark~5.12]{BLMS}, if $F_t$ is geometrically $\sigma_{1,t}$-stable, then we have 
\begin{equation*}
 \phi_{\sigma_t}^-(F_t) \geqslant \phi_0 := \min(\phi_{\sigma_{1,t}}(F_t), \phi^-(E_T)) , 
\end{equation*} 
and thus 
$F_t \in \cP_{\sigma_t}(\phi_0, 1]$. 
By Proposition~\ref{prop:yeswehaveopennessofflatness}, $\us$ universally satisfies openness of lying in $\cP(\phi_0, 1]$, so 
we may therefore assume $F_t \in \cP_{\sigma_t}(\phi_0, 1]$ for all $t \in T$. 

Let $\Quot_{T, \ut_1}(F) \to T$ denote the Quot space of $F$ with respect to the 
fiberwise collection of t-structures $\ut_1$. 
As in Lemma~\ref{lem:GrothendieckLemma}, 
for $\phi \in (0,1)$ we let $\Quot_{T, \ut_1}^{\le\phi}(F)$ be the subfunctor 
of $\Quot_{T, \ut_1}(F)$ which assigns to 
$T' \in (\Sch/T)$ the set of $(F_{T'} \to Q) \in \Quot_{T, \ut_1}(F)(T')$ satisfying 
$\phi(Q_{t'}) \leqslant \phi$ for all $t' \in T'$. 
We claim that $\Quot_{T, \ut_1}^{\le\phi}(F)$ is an algebraic space of finite type over $T$, 
and the morphism $\Quot_{T, \ut_1}^{\le\phi}(F) \to T$ is universally closed. 

Indeed, using the same arguments as in the proof of Proposition~\ref{proposition-quot-algebraic} it is enough to know that the quotients occurring in $\Quot_{T, \ut_1}^{\le\phi}(F)$ belong to a bounded $\ut_1$-flat family of objects; since each such quotient lies in $\cP_{\us_1}(\phi_0, 1](\vv)$ for some $\vv \in \Lambda_1$, and since the set of possible $\vv$ is finite by the same arguments as in Lemma~\ref{lem:GrothendieckLemma}, this follows from the previous steps.

The non-geometrically stable locus of $F$ is the union of the images of all connected components of $\Quot^{\le\phi}(F)$ except those where the quotient $Q$ satisfies $v(Q) = v(F)$; hence it is closed by Proposition~\ref{proposition-quot-valuative-criteria}, and thus being geometrically stable is open in $T$.

To conclude Step~\ref{step-5-inducing}, we note that universal openness of geometric stability, combined with the weaker boundedness statement in Step~\ref{step-inducing-boundedness}, immediately
implies boundedness of $\us_1$.

\begin{step}{6}
$\us_1$ integrates to a HN structure along any Dedekind scheme $C \to S$ essentially of finite type over $S$, i.e., condition~\eqref{enum:curveassumption} holds. 
\end{step}

This follows from the previous steps, as explained in Remark~\ref{rem:cond3viahearts}.
\end{proof}

\newpage
\part{Construction}
\label{part:Tilting}

\section{Main construction statements}\label{sec:MainConstr}

The main goal of Part~\ref{part:Tilting} is to construct stability conditions for families of surfaces, or of threefolds that individually admit stability conditions.
The first step is to show that we can \emph{tilt} the weak stability condition given by slope-stability on the fibers in order to obtain a relative version of \emph{tilt-stability} (see Section~\ref{sec:tiltingslope}); for threefolds, we have to tilt a second time in order to produce a stability condition over the base (see Section~\ref{sect:tiltingtilt}).

In more detail, we formulate slope stability as a weak stability condition with heart $\Coh X$, and central charge depending on $\ch_0, \ch_1$ only.
We use this to obtain a new heart $\Coh^\beta X$ via tilting.
Using the classical Bogomolov--Gieseker inequality\footnote{The classical Bogomolov--Gieseker inequality \cite{Reid:Bog,Bogomolov:Ineq,Gieseker:Bog} holds in characteristic zero.
In positive characteristic it holds e.g.~for varieties that can be lifted to characteristic zero, see \cite[Theorem~1]{Langer:Higgs}; the weaker version of \cite{Langer:positive} is not enough to define tilt-stability.}, one then constructs a weak stability condition, called \emph{tilt-stability}, with heart $\Coh^\beta X$ and with central charge depending on $\ch_0, \ch_1, \ch_2$.
\index{Cohbeta(X)@$\Coh^\beta X_s$, tilted heart of $\Coh X_s$ at slope $\beta$}
Tilting again produces a heart $\cA^{\alpha,\beta}$. 
Assuming a conjectural Bogomolov--Gieseker type inequality for tilt-stable objects in $\Coh^\beta X$, proposed in \cite{BMT:3folds-BG}, one can then produce an actual stability condition with heart
$\cA^{\alpha,\beta}$.

Generalizing each of these steps to families of threefolds will lead to the following result; the existence of moduli spaces generalizes \cite[Theorem~1.5]{PT15:bridgeland_moduli_properties} to the relative setting.

\begin{Thm} \label{mainthm:construction}
Let $g \colon \cX \to S$ be a polarized flat family of smooth projective varieties.
\begin{enumerate}[{\rm (1)}]
\item\label{enum:construction1} If the fibers of $g$ are either one-dimensional, or two-dimensional satisfying the classical Bogomolov--Gieseker inequality, then the standard construction of stability conditions on curves or surfaces produces a stability condition $\us$ on $\Db(\cX)$ over $S$.
\item\label{enum:construction2} If the fibers of $g$ are three-dimensional and additionally satisfy the conjectural Bogomolov--Gieseker inequality of \cite{BMT:3folds-BG, BMS:abelian3folds}, then the construction of stability conditions
proposed in [ibid.] produces a stability condition $\us$ on
$\Db(\cX)$ over $S$.
\end{enumerate} 
In each of these situations, given a vector $\vv\in\Lambda$ for the corresponding choice of $\Lambda$, the relative moduli space $\fM_{\us}(\vv)$ exists as an algebraic stack of finite type over $S$, and the map $\fM_{\us}(\vv) \to S$ satisfies the strong valuative criterion of universal closedness. 
If $\fM_{\us}(\vv)= \fM_{\us}^\st(\vv)$, then $\fM_{\us}(\vv)$ has a coarse moduli space $M_{\us}(\vv)$, proper over $S$.
Finally, in characteristic $0$, $\fM_{\us}(\vv)$ always admits a good moduli space $M_{\us}(\vv) \to S$ which is proper over $S$.
\index{Msigmav@$\fM_{\us}^{}(\vv)$ ($\fM^{\st}_{\us}(\vv))$,!substacks of $\cM_{\utau}$, parameterizing $\us$-semistable (geometrically $\us$-stable) objects of class $\vv$ and phase $\phi$}
\index{Msigmav@$M_{\us}(\vv)$, coarse moduli space of $\fM_{\us}(\vv)=\fM_{\us}^{\st}(\vv)$}
\end{Thm}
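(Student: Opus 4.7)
The plan is to construct $\us$ in each case as an iterated tilt of the weak stability condition $\us_{\mathrm{slope}}$ on $\Db(\cX)$ over $S$ given by relative slope-stability as in Example~\ref{ex:SlopeStabilityAsProperFamilyofWeakStability}. The main technical tools are Proposition~\ref{prop:tiltingweakHNstructures} together with its fiberwise companion Proposition~\ref{prop:RotateWeakStability}, which ensure that tilting remains compatible both with the HN structures over Dedekind bases required by Definition~\ref{def:familyfiberstabilities}.\eqref{enum:curveassumption} and with the weak stability conditions on fibers. Once $\us$ is constructed, the statements about moduli spaces follow immediately from Theorem~\ref{thm:modulispacesArtinstacks}.

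In the one-dimensional case of part~\eqref{enum:construction1}, relative slope-stability is already a genuine stability condition: the central charge has trivial kernel on the rank-two fiberwise numerical lattice, so $\Lambda_0 = 0$ and the quadratic form can be taken to be zero. The conditions of Definition~\ref{def:familyfiberstabilities} then follow from Proposition~\ref{prop:thingsworkforsheaves} (which gives the HN property over any Dedekind base $C \to S$ essentially of finite type), classical openness of semistability on fibers combined with the semicontinuity of the phase functions in Lemma~\ref{lem:phipmsemicontinuous}, and Langer's boundedness theorem. For surfaces and threefolds, I would rotate $\us_{\mathrm{slope}}$ at a rational parameter $\beta \in \Q$: the fiberwise tilting property is Example~\ref{ex:slopestabilityhastiltingproperty}, while the relative tilting property over Dedekind bases is Example~\ref{ex:slopestabilitycurveshastiltingproperty}, so Propositions~\ref{prop:RotateWeakStability} and~\ref{prop:tiltingweakHNstructures} produce a tilted heart $\Coh^\beta \cX_s$ on each fiber together with a compatible heart on $\cD_C$ over every Dedekind $C \to S$. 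This gives a flat family of fiberwise weak stability conditions with central charge depending on $\ch_0, \ch_1, \ch_2$. For surfaces, the classical Bogomolov--Gieseker inequality applied uniformly on fibers then supplies the quadratic form $Q$ witnessing the support property, upgrading the tilt to a genuine stability condition. In the threefold case, I would tilt once more at a parameter $(\alpha, \beta) \in \R_{>0} \times \Q$, using the conjectural BG inequality of \cite{BMT:3folds-BG, BMS:abelian3folds} applied uniformly on fibers as the source of a quadratic form on $\oLambda_{\R}$, yielding a stability condition with central charge depending on $\ch_0, \dots, \ch_3$.

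The hardest part will be verifying the support property~\eqref{enum:fiberwisesupport} together with boundedness~\eqref{enum:bounded} uniformly across $S$. The fiberwise support property is already known from the absolute case; the key new observation is that the quadratic form $Q$ can be taken constant along $S$ because universal local constancy of central charges --- which is built into Definition~\ref{def:familyfiberstabilities} and inherited by each tilt --- controls the relevant Chern character components uniformly, and the absolute BG inequalities depend only on these components. For boundedness after the first tilt, I would follow the argument of \cite[Section~6]{BMS:abelian3folds} and combine Langer's boundedness for families of slope-semistable sheaves with bounds on the HN factors in $\Coh \cX$; after the second tilt in the threefold case, boundedness follows by iterating this scheme, using the tilt-stability boundedness just established together with the conjectural BG. Throughout, universal openness of geometric stability will be obtained by combining universal openness of flatness (Proposition~\ref{prop:yeswehaveopennessofflatness}), semicontinuity of the phase functions $\phi^\pm$ of Lemma~\ref{lem:phipmsemicontinuous}, and universal closedness of the relevant Quot spaces in the style of Lemma~\ref{lem:GrothendieckLemma}, while noetherianity of $\cA_s^0$ and $\cA_C^0$ required in Definition~\ref{def:familyfiberstabilities}.\eqref{enum:A0noetheriantorsionfibers} and~\eqref{enum:curveassumptionweak} follows from the discreteness of the tilted central charges via Proposition~\ref{prop:discreteimpliesNoetherian}.

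Given a class $\vv \in \Lambda$, Theorem~\ref{thm:modulispacesArtinstacks} then yields directly that $\fM_{\us}(\vv)$ is an algebraic stack of finite type over $S$, that $\fM_{\us}(\vv) \to S$ satisfies the strong valuative criterion of universal closedness, and that in characteristic zero $\fM_{\us}(\vv)$ admits a good moduli space $M_{\us}(\vv)$ proper over $S$; when moreover $\fM_{\us}(\vv) = \fM_{\us}^{\mathrm{st}}(\vv)$, the same theorem gives a coarse moduli space $M_{\us}(\vv)$ proper over $S$ in arbitrary characteristic.
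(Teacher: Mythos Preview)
Your overall architecture matches the paper's, and the final paragraph about moduli spaces is exactly right. But there is a structural gap in the construction itself that would cause trouble if you tried to carry it out.

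You write that rotating slope-stability at $\beta$ via Propositions~\ref{prop:RotateWeakStability} and~\ref{prop:tiltingweakHNstructures} ``gives a flat family of fiberwise weak stability conditions with central charge depending on $\ch_0, \ch_1, \ch_2$.'' This is not what those propositions do: rotation replaces $Z$ by $Z/(\ii-\beta)$, which is still a function of $\ch_0,\ch_1$ only, factoring through the same two-dimensional quotient $\oLambda$. To pass from the rotated weak stability condition $\us^{\sharp\beta}$ to tilt-stability $\us^{\alpha,\beta}$ (and likewise, in the threefold case, from the second rotation to the Bridgeland stability condition) one must \emph{deform the central charge} to include $\ch_2$ (respectively $\ch_3$) while keeping the heart fixed. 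This deformation step is separate from tilting and is not covered by the propositions you cite; the paper handles it by an argument parallel to Theorem~\ref{thm:deformfamiliystability}, whose key input is that for a $\sigma^{\alpha,\beta}$-semistable object the classes in $\Lambda$ of its HN factors with respect to $\sigma^{\infty,\beta}=\sigma^{\sharp\beta}$ form a finite set (Claims~1 and~2 in the proof of Proposition~\ref{prop:tiltstabfamily}). This finiteness is what drives boundedness, openness of stability via the Quot-space argument, and preservation of the support property under the deformation. Your plan jumps over this, and without it the verification of Definition~\ref{def:familyfiberstabilities} for the new central charge is incomplete.

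Two smaller points. First, your reference to Proposition~\ref{prop:discreteimpliesNoetherian} for the noetherianity of $\cA_s^0$ and $\cA_C^0$ is off: that proposition concerns noetherianity of the full heart $\cA$, not of $\cA^0$. The noetherian torsion subcategory property of $\cA^0$ after tilting is instead the content of Proposition~\ref{prop:RotateWeakStability} (fiberwise) and Proposition~\ref{prop:tiltingweakHNstructures} (over $C$). Second, the boundedness argument in the paper relies crucially on the derived dual functor $\mathbb{D}_s=\cHom(-,\cO_{\cX_s})[1]$ (and $\mathbb{D}_{2,s}$ at the next stage) to reduce objects of negative rank to torsion-free slope-stable sheaves where Langer's theorem applies; this is also the source of the tilting property over $C$ (see the end of the proof of Proposition~\ref{prop:tiltstabfamily}). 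You gesture at \cite{BMS:abelian3folds} for this, which is fine in the absolute case, but the relative version needs the observation that $\mathbb{D}$ exists in families and commutes with base change.
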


We in fact prove a slightly stronger version of \eqref{enum:construction1} in Proposition~\ref{prop:tiltstabfamily}, and a weak version of \eqref{enum:construction2} but for arbitrary dimension, and without assuming the conjectural inequality, in Proposition~\ref{prop:RotatingTiltStability}. The latter will be crucial for us in the case of cubic fourfolds in Part~\ref{part:CubicFourfolds}.

\begin{Rem}\label{rem:BomegaNot||}
The construction of stability conditions via tilting depends on two globally defined $\Q$-divisors $\omega$ and $B$ on $\cX$, with $\omega$ relatively ample.
As in \cite{BMS:abelian3folds}, we assume that $\omega$ and $B$ are parallel, namely 
\[
\omega=\alpha H\qquad \text{and}\qquad B=\beta H,
\]
where $H=c_1(\cO_{\cX}(1))$ is the polarization.
Using \cite[Section~3]{PT15:bridgeland_moduli_properties} one can extend our arguments to $B$ and $\omega$ not necessarily parallel.
\end{Rem}

\section{Tilting slope-stability in families}\label{sec:tiltingslope}

In this section we show that the notion of tilt-stability from \cite{BMT:3folds-BG} works in families.
We will divide the construction into two steps.
We first rotate slope-stability and show that this gives a family of weak stability conditions as well.
Then we use deformation of families of weak stability conditions to prove the analogue statement for tilt-stability.
When specialized to families of curves or surfaces, this will prove part \eqref{enum:construction1} of Theorem~\ref{mainthm:construction}.

We continue to work in Setup~\ref{setup-rel-stability}, 
with the additional assumptions that $g\colon \cX \to S$ is a smooth projective morphism of relative dimension $n\leqslant 3$.
We fix a relatively ample divisor $\cO_{\cX}(1)$.

Since $g\colon \cX \to S$ is smooth, instead of working with the relative Hilbert polynomial as in Example~\ref{ex:LambdaFamilyChernClasses} we can work with Chern characters. 
To make this precise, observe that the Chern characters on the fibers, when paired with the appropriate power of the relative ample class $H = c_1\left(\cO_{\cX}(1)\right)$, yield maps, for all $s\in S$,
\index{chXs@$\ch_{\cX_s}\colon \Knum(\Db(\cX_s))\to\Q^{n+1}$, Chern characters on the fibers}
\[
\ch_{\cX_s}\colon \Knum(\Db(\cX_s))\to\Q^{n+1}.
\]
These maps are locally constant in families, and thus they factor through a map
\index{chXS@$\ch_{\cX/S}\colon \Knum(\Db(\cX)/S)\to\Q^{n+1}$}
\[
\ch_{\cX/S}=\oplus_{i=0}^{n} \ch_{\cX/S,i} \colon \Knum(\Db(\cX)/S)\to\Q^{n+1}.
\]
By the Hirzebruch-Riemann-Roch theorem, the image of $\ch_{\cX/S}$ coincides with the image of the Hilbert polynomial; we denote it by $\Lambda$.

Consider the weak stability condition 
\index{sigmauSlope@$\us:=(\sigma_s=(\mathfrak{i}\ch_{\cX/S,0}-\ch_{\cX/S,1},\Coh \cX_s))$!(weak) stability condition on $\Db(\cX)$ over $S$ given by slope-stability on each fiber}
\[
\us:=\left(\sigma_s=\left(\mathfrak{i}\ch_{\cX/S,0}-\ch_{\cX/S,1},\Coh \cX_s\right)\right)
\]
on $\Db(\cX)$ over $S$ given by slope-stability on each fiber; its properties can be verified as in Example~\ref{ex:SlopeStabilityAsProperFamilyofWeakStability}.
The weak stability conditions $\sigma_s$ have the tilting property, and, for every Dedekind domain $C$, the HN structure $\sigma_C$ has the tilting property as well, see Examples~\ref{ex:slopestabilityhastiltingproperty} and \ref{ex:slopestabilitycurveshastiltingproperty}.

If $n\geqslant2$, we let $\Lambda_0\subset\Lambda$ be the subgroup generated by the image of $\ch_{\cX/S,2}\oplus\ch_{\cX/S,3}$ and, if $n=3$, we let $\Lambda_0^\sharp\subset\Lambda_0$ be the one generated by the image of $\ch_{\cX/S,3}$.
We write $\eeta$ for the class of a point, which generates $\Lambda_0^\sharp$.
Finally, we denote by $\oLambda:=\Lambda/\Lambda_0$ and by $\oLambda^\sharp:=\Lambda/\Lambda_0^\sharp$.

\subsection{Rotating slope-stability in families}\label{subsec:rotating1}

Let $\beta\in\Q$.
To simplify the notation, we write
\index{chXSbeta@$\ch_{\cX/S}^\beta$, twisted Chern characters}
\[
\ch_{\cX/S}^\beta (\blank) =
\ch_{\cX/S}\left(e^{-\beta H}\cdot \blank\right) \in\Lambda_\Q.
\]

For every $s\in S$ we define
\[
\sigma_s^{\sharp \beta} := \left( Z_s=\mathfrak{i}\ch_{\cX_s,1}^\beta+\ch_{\cX_s,0}^\beta,\Coh^\beta \cX_s\right),
\]
where $\Coh^\beta \cX_s$ is the tilt of $\Coh \cX_s$ at the slope $\beta$, defined as in Section~\ref{subsec:tiltingweakstability}.
\index{Cohbeta(X)@$\Coh^\beta \cX_s$, tilted heart of $\Coh \cX_s$ at slope $\beta$}

\index{sigmauSlopeRot@$\us^{\sharp\beta} := (\sigma_s^{\sharp \beta}=( \mathfrak{i}\ch_{\cX_s,1}^\beta+\ch_{\cX_s,0}^\beta,\Coh^\beta \cX_s))$!(weak) stability condition on $\Db(\cX)$ over $S$ given by rotating slope-stability on each fiber}
\begin{Prop}\label{prop:RotatingSlopeStability}
The collection $\us^{\sharp\beta}:=(\sigma_s^{\sharp \beta})$ is a (weak) stability condition on $\Db(\cX)$ over $S$ with respect to $\oLambda$. 
Moreover, if $n=3$ and we fix $\vv\in\Lambda$, then $\fM_{\us^{\sharp\beta}}^\st(\vv+b\eeta) = \emptyset$ for $b \gg 0$.
\end{Prop}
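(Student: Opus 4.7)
The plan is to verify conditions \eqref{enum:A0noetheriantorsionfibers}--\eqref{enum:curveassumptionweak} of Definition~\ref{def:familyfiberstabilities} together with the support and boundedness conditions \eqref{enum:fiberwisesupport}--\eqref{enum:bounded} of Definition~\ref{def:fiberwisesupport}. On each fiber, the pair $\sigma_s^{\sharp\beta}$ is a weak stability condition via Proposition~\ref{prop:RotateWeakStability} applied to the slope weak stability condition $\sigma_s$, which has the tilting property by Example~\ref{ex:slopestabilityhastiltingproperty}, up to the $\wGL2$-action of Remark~\ref{rem:wGL2action}. Universal local constancy \eqref{enum:Zsindependentbasechange} holds because $\mathfrak{i}\ch^\beta_{\cX/S,1}+\ch^\beta_{\cX/S,0}$ factors through $\ch_{\cX/S}$, and condition \eqref{enum:A0noetheriantorsionfibers} is preserved by tilting (the subcategory $\cA_s^0$ remains the noetherian torsion subcategory of sheaves supported in codimension at least two). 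For \eqref{enum:curveassumptionweak}, given a Dedekind scheme $C \to S$ essentially of finite type, I apply Proposition~\ref{prop:tiltingweakHNstructures} to the relative slope weak HN structure $\sigma_C$ of Proposition~\ref{prop:thingsworkforsheaves}; the required tilting properties of $\sigma_C$ and of each $\sigma_c$ hold by Examples~\ref{ex:slopestabilitycurveshastiltingproperty} and \ref{ex:slopestabilityhastiltingproperty}, and noetherianity of $\cA_C^0$ reduces to that of $\Coh \cX_C$. The support property \eqref{enum:fiberwisesupport} then holds trivially with $Q = 0$, since $(\ker Z)/\Lambda_0$ is zero in $\oLambda$ by the very definition of $\Lambda_0$.

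Universal openness of geometric stability \eqref{enum:stabilityopenbasechange} and universal generic openness \eqref{enum:genOpSstweak} will follow from a Quot-space argument in the spirit of the proof of Theorem~\ref{thm:deformfamiliystability}, using the curvewise HN structures just constructed together with boundedness of potential destabilizing quotients; the latter boundedness relies on the classical Bogomolov--Gieseker inequality and on \cite[Theorem~4.2]{Langer:positive}.

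The main obstacle is boundedness \eqref{enum:bounded} together with the emptiness assertion for $n = 3$ and $b \gg 0$. The key input is Lemma~\ref{lem:RotateWeakStabilitySemistableObjects}: any $\sigma_s^{\sharp\beta}$-stable object $E$ with $\Im Z_s(E) \neq 0$ is, after a possible shift, a slope-semistable sheaf in $\Coh \cX_s$ with the further property $\Hom(\cA_s^0, E) = 0$. Since $\cA_s^0$ is the category of sheaves supported in codimension at least two, this property precisely forbids the unbounded ``decoration'' of $E$ by zero-dimensional subsheaves that would otherwise make $\ch_3(E)$ arbitrary. Combined with classical Bogomolov--Gieseker and Langer's theorem, the set of $\sigma_s^{\sharp\beta}$-stable objects with fixed $\vv \in \Lambda$ forms a bounded family, establishing \eqref{enum:bounded}. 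The same argument bounds $\ch_3(E)$ in terms of $\ch_{\le 2}(E)$ alone, hence for $n = 3$, since $\ch_{\le 2}(\vv + b\eeta) = \ch_{\le 2}(\vv)$ is fixed while $\ch_3(\vv + b\eeta) = \ch_3(\vv) + b$ grows without bound, no $\sigma^{\sharp\beta}$-stable object of class $\vv + b\eeta$ can exist for $b \gg 0$. The boundary case $\Im Z_s(\vv + b\eeta) = 0$ (i.e., $\vv \in \Lambda_0$) is handled directly by inspection of simple objects in $\cA_s^0$.
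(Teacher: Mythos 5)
Your outline matches the paper for the formal verifications: conditions \eqref{enum:A0noetheriantorsionfibers}, \eqref{enum:Zsindependentbasechange}, and \eqref{enum:curveassumptionweak} via Propositions~\ref{prop:RotateWeakStability} and \ref{prop:tiltingweakHNstructures}, and the trivial support property. For openness, the paper argues directly via a flattening stratification together with showing the unstable locus is closed under specialization (lifting a destabilizing quotient along a DVR), rather than via a Quot-space covering, but that is not the main issue.

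The genuine gap is in the boundedness and emptiness step, and it is an error rather than an omission. Your description of a $\sigma_s^{\sharp\beta}$-stable object $E$ with $\Im Z_s(E)\neq 0$ as ``after a possible shift, a slope-semistable sheaf'' with $\Hom(\cA_s^0, E)=0$ forbidding unbounded decoration is wrong when $\ch_{\cX_s,0}(E) < 0$. By Lemma~\ref{lem:RotateWeakStabilitySemistableObjects}, such an $E$ is a genuine two-term complex fitting in a triangle $U[1]\to E\to V$ with $U$ torsion-free slope-semistable (and reflexive, thanks to the Hom-vanishing) and $V$ a torsion sheaf supported in codimension $\geq 2$ appearing as a \emph{quotient} of $E$ in $\cA_s^{\sharp\beta}$, not as a subobject. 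Since $Z_s^{\sharp\beta}(V)=0$ and $\Hom(\cA_s^0,E)=0$ constrains only subobjects from $\cA_s^0$, nothing you have said bounds $V$: enlarging $V$ leaves semistability and $\ch_{\leq 1}(E)$ unchanged while making $\ch_2(E)$ and $\ch_3(E)$ arbitrary. So your claimed bound of $\ch_3(E)$ by $\ch_{\leq 2}(E)$ fails, and the emptiness of $\fM^{\st}_{\us^{\sharp\beta}}(\vv+b\eeta)$ for $b\gg 0$ does not follow. The missing ingredient is the derived dual functor: the paper applies $\mathbb{D}_s=\cHom(-,\cO_{\cX_s})[1]$ together with \cite[Lemma~2.19]{BLMS} to get a triangle $E^\sharp\to\mathbb{D}_s(E)\to W[-1]$ with $E^\sharp$ torsion-free slope-stable and $W$ in codimension $3$. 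Duality converts the uncontrolled quotient $V$ into the controlled term $W$, yielding $\ch_{\leq 2}(E^\sharp)=\pm\ch_{\leq 2}(E)$ and $\ch_3(E^\sharp)\geq\ch_3(E)$; Langer's boundedness (\cite[Theorem~4.4]{Langer:positive}, not 4.2) then bounds $E^\sharp$, hence $W$, hence $\mathbb{D}_s(E)$, and finally $E$, using that duality works in families because $g$ is smooth. Finally, note the classical Bogomolov--Gieseker inequality is not a hypothesis of Proposition~\ref{prop:RotatingSlopeStability}; it only enters in Proposition~\ref{prop:tiltstabfamily}.
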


\begin{Rem} \label{rem:trivialboundedness}
In the case of slope-stable torsion free sheaves, the analogue of the last claim follows from boundedness: for every slope-stable sheaf $E$ of class $\vv + b\eeta$, the kernel of any surjection $E \onto T$ for $T$ a sheaf of length $b$ will be stable of class $\vv$; thus the dimension of $M_\sigma^\st(\vv)$ is at least $3b$.
\end{Rem}

\begin{proof}
For $n=1$ this was already observed in Remark~\ref{rem:fiberwiseGL2action}, so we assume $n\geqslant2$.

As shown in Proposition~\ref{prop:RotateWeakStability}, $\sigma_s^{\sharp \beta}$ is a weak stability condition on $\Db(\cX_s)$ for all $s\in S$.
There is nothing to prove for Definition~\ref{def:fiberwisesupport}.\eqref{enum:fiberwisesupport}. 
Conditions \eqref{enum:A0noetheriantorsionfibers} and \eqref{enum:curveassumptionweak} in Definition~\ref{def:familyfiberstabilities} follow from Propositions~\ref{prop:RotateWeakStability} and \ref{prop:tiltingweakHNstructures}, respectively, and condition \eqref{enum:Zsindependentbasechange} is immediate. 

Recall from Lemma~\ref{lem:RotateWeakStabilitySemistableObjects} the classification of $\sigma_s^{\sharp \beta}$-semistable objects $E \in \Coh^\beta \cX_s$:
\begin{enumerate}[{\rm (1)}] 
\item\label{enum:RotatingSlopeStability1} if $\ch_{\cX_s,0}(E)\geqslant 0$, then $E$ is a torsion free slope-semistable sheaf, or a torsion sheaf with support either pure of codimension one, or of codimension $\geqslant 2$;
\item\label{enum:RotatingSlopeStability2} if $\ch_{\cX_s,0}(E)< 0$, then $E$ is an extension
\begin{equation} \label{eq:UEV-extension}
 U[1] \to E \to V
\end{equation}
where $U$ is a torsion free slope-semistable sheaf and $V$ is a torsion sheaf supported in codimension $\geqslant 2$.
Moreover, if either $\ch_{\cX_s,1}^\beta(E)>0$ or $E$ is $\sigma_s^{\sharp \beta}$-stable, then $\Hom(V',E)=0$, for all sheaves $V' \in \Coh \cX_s$ supported in codimension $\geqslant 2$; in particular, $U$ is reflexive.
\end{enumerate}

We use this to show openness in the sense of 
conditions \eqref{enum:stabilityopenbasechange} and \eqref{enum:genOpSstweak} for geometrically stable/semistable objects of the form \eqref{eq:UEV-extension}. Standard arguments show openness of the condition that $\rH^{-1}$ is torsion-free, and that $\rH^0$ is supported codimension two. Using a flattening stratification for $\rH^{i}(E)$ shows that the semistable locus is constructible, since being reflexive or semistable is open in flat families of sheaves. It remains to show that the unstable locus is closed under specialization. This is easy to show for both the Hom-vanishing from sheaves supported in codimension two and the slope-semistability of $\rH^{-1}$: in both cases, we first lift the destabilizing sheaf from the generic point to the appropriate DVR, and then extend the morphism to one that specializes to a non-zero morphism.

To finish the proof, we will simultaneously prove the second claim of the proposition and boundedness in the sense of Definition~\ref{def:fiberwisesupport}.\eqref{enum:bounded} by proving that $\coprod_{b \geqslant 0} \fM_{\us^{\sharp\beta}}^{\st}(\vv + b\eeta)$ is bounded. First we consider the case in which $\ch_{\cX_s, 0}(E) \geqslant 0$. Torsion sheaves can never be strictly $\sigma^{\sharp\beta}$-stable; thus the claim follows directly from boundedness of slope-stable sheaves, \cite[Theorem~4.2]{Langer:positive}, and Remark~\ref{rem:trivialboundedness}.

Thus we are left to consider the case $\ch_{\cX_s, 0}(E) < 0$. After modifying $\beta$ slightly if necessary (which will not affect stability of objects of class $\vv + b\eeta$), we may assume $\ch_{\cX_s, 1}^\beta(E) > 0$. Let $\mathbb{D}_s = \cHom (-,\cO_{\cX_s}) [1]$. By \cite[Lemma~2.19]{BLMS}, for such $E$ there is an exact triangle
\[
E^\sharp \to \mathbb{D}_s(E) \to W[-1]
\]
where $E^\sharp$ is a torsion free slope-stable sheaf and $W$ is supported in codimension $3$.
Moreover, for all $i=0,\ldots,3$, $\ch_{\cX_s,i}(\mathbb{D}_s(E))=(-1)^{i+1}\ch_{\cX_s,i}(E)$.
Hence
\begin{equation*}
 \begin{split}
 &\ch_{\cX_s,i}(E^\sharp) = (-1)^{i+1}\ch_{\cX_s,i}(E), \text{ for all $i=0,1,2$, and}\\
 &\ch_{\cX_s,3}(E^\sharp)=\ch_{\cX_s,3}(E)+\ch_{\cX_s,3}(W)\geqslant\ch_{\cX_s,3}(E).
 \end{split}
\end{equation*}

Since $E^\sharp$ is a torsion free slope-stable sheaf, the first case in \cite[Theorem~4.4]{Langer:positive} shows that it belongs to a bounded family. This also implies that $\ch_{\cX_s,3}(W)$ can only take finitely many values, and so $W$ also belongs to a bounded family.
By Lemma~\ref{lem:ExtensionOfBounded}, the same holds for $\D_s(E)$.
Since the morphism $g$ is smooth, the duality functor exists in families and commutes with base change; therefore, $E$ also belongs to a bounded family, as we wanted.
\end{proof}

\subsection{Tilt-stability in families}\label{subsect:tiltstabfamility}

Let $\alpha,\beta\in\Q$, $\alpha>0$.
We now deform the weak stability condition $\us^{\sharp\beta}$ of Section~\ref{subsec:rotating1} with respect to $\alpha$.

For every $s\in S$ we define
\index{Ztilt@$Z^{\alpha,\beta}_s:=\mathfrak{i}\ch_{\cX_s,1}^\beta+\frac{\alpha^2}{2}\ch_{\cX_s,0}^\beta-\ch_{\cX_s,2}^\beta$,!central charge for tilt-stability}
\[
\sigma_s^{\alpha,\beta} := \left( Z^{\alpha,\beta}_s=\mathfrak{i}\ch_{\cX_s,1}^\beta+\frac{\alpha^2}{2}\ch_{\cX_s,0}^\beta-\ch_{\cX_s,2}^\beta,\Coh^\beta \cX_s \right).
\]

We can now prove the following. 

\begin{Prop}\label{prop:tiltstabfamily}
Assume that the Bogomolov--Gieseker inequality holds for slope-stable sheaves on the fibers of $g\colon \cX \to S$, namely for all $s\in S$ and for all $\sigma_s$-stable sheaves $E$
\index{Deltas@$\Delta_s:=\ch_{\cX_s,1}^2-2\ch_{\cX_s,0}\ch_{\cX_s,2}$, discriminant}
\[
\Delta_s(E):=\ch_{\cX_s,1}(E)^2-2\ch_{\cX_s,0}(E)\ch_{\cX_s,2}(E)\geqslant 0.
\]
Then the collection $\us^{\alpha,\beta}:=(\sigma_s^{\alpha,\beta})$ is a (weak) stability condition on $\Db(\cX)$ over $S$ with respect to $\oLambda^\sharp$. Moreover $\sigma_s^{\alpha,\beta}$ has the tilting property, for all $s\in S$, and for any base change $C\to S$ from a Dedekind scheme $C$, the HN structure $\sigma^{\alpha,\beta}_C$ has the tilting property as well.
Finally, if $n=3$ and we fix $\vv\in\Lambda$, then $\fM_{\us^{\alpha,\beta}}^\st(\vv+b\eeta) = \emptyset$ for $b \gg 0$.
\index{sigmauTilt@$\us^{\alpha,\beta}:=(\sigma_s^{\alpha,\beta}=(Z^{\alpha,\beta}_s,\Coh^\beta \cX_s))$!(weak) stability condition on $\Db(\cX)$ over $S$ given by tilt-stability on each fiber}
\end{Prop}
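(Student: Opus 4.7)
The plan is to verify, in turn, each of the conditions in Definitions~\ref{def:familyfiberstabilities} and~\ref{def:fiberwisesupport}. Conditions~\eqref{enum:Zsindependentbasechange} and the factorization through $\oLambda^\sharp$ are automatic, since $Z^{\alpha,\beta}$ is a $\Q$-linear combination of the universally locally constant characters $\ch_{\cX/S,0}, \ch_{\cX/S,1}, \ch_{\cX/S,2}$. Condition~\eqref{enum:A0noetheriantorsionfibers} is immediate for rational $\alpha,\beta$: the subcategory $(\Coh^\beta\cX_s)^0$ of objects with vanishing central charge consists of sheaves supported in relative codimension $\ge 3$, which is a noetherian torsion subcategory inside the noetherian heart $\Coh^\beta\cX_s$. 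The fiberwise assertion that each $\sigma_s^{\alpha,\beta}$ is a weak stability condition, together with the uniform support property~\eqref{enum:fiberwisesupport} with respect to the classical discriminant $\Delta_s$ on $\oLambda^\sharp_\R$, is the standard tilt-Bogomolov-Gieseker inequality, which follows from the assumed slope-Bogomolov-Gieseker inequality on fibers by the arguments of \cite[Corollary~7.3.2]{BMT:3folds-BG} and \cite[Theorem~3.5]{BMS:abelian3folds}.

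The tilting property for each $\sigma_s^{\alpha,\beta}$ comes from the double-dual construction in $\Coh^\beta\cX_s$ as in \cite[Proposition~4.1]{BMS:abelian3folds}, and its relative version for $\sigma_C^{\alpha,\beta}$ is obtained by running the same construction over $C$, using that $\Coh^\beta\cX_C$ admits a $C$-torsion theory by Corollary~\ref{cor:Ctorsionautomatic} applied to $\us^{\sharp\beta}$ together with Proposition~\ref{prop:Ctorsiontheoryviaheart}. For boundedness~\eqref{enum:bounded}, I would mirror the case analysis of Proposition~\ref{prop:RotatingSlopeStability}: $\sigma^{\alpha,\beta}$-semistable objects with $\ch_0 \ge 0$ are bounded by Langer's relative boundedness \cite[Theorem~4.2]{Langer:positive}, while the case $\ch_0 < 0$ reduces to the previous one by applying the dualization $\D_s = \cHom(-,\cO_{\cX_s})[1]$ fiberwise and using \cite[Lemma~2.19]{BLMS}; smoothness of $g$ makes this construction functorial in families.

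The key step will be universal openness of geometric stability and generic openness of semistability (conditions~\eqref{enum:stabilityopenbasechange} and~\eqref{enum:genOpSstweak}), for which I would adapt the argument in the proof of Theorem~\ref{thm:deformfamiliystability}. Universal openness of flatness of the heart $\Coh^\beta$ is already known by Proposition~\ref{prop:yeswehaveopennessofflatness} applied to $\us^{\sharp\beta}$ of Proposition~\ref{prop:RotatingSlopeStability}, which shares the same fiberwise heart. Given a $T$-perfect $E\in\cD_T$ with $E_t\in\Coh^\beta\cX_t$, the uniform support property $\Delta_s\ge 0$ together with the fixed imaginary part $\Im Z^{\alpha,\beta}=\ch_{\cX/S,1}^\beta$ forces any potential destabilizing quotient $Q$ to have class in a finite subset of $\oLambda^\sharp$ depending only on $\vv$; hence, by boundedness and Lemma~\ref{lem:ExtensionOfBounded}, such quotients form a bounded family. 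A relative Quot-space argument in the spirit of Lemma~\ref{lem:GrothendieckLemma}, restricted to these bounded classes, then produces a universally closed sub-Quot-space whose image is the destabilization locus. Condition~\eqref{enum:curveassumptionweak} then follows from Theorem~\ref{thm:HNstructureviafibers} applied to the local heart $\Coh^\beta\cX_C$, itself obtained as the tilt of $\sigma_C^{\sharp\beta}$ via Proposition~\ref{prop:tiltingweakHNstructures}.

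Finally, the vanishing $\fM_{\us^{\alpha,\beta}}^\st(\vv+b\eeta)=\emptyset$ for $b\gg 0$ follows the template of the last part of Proposition~\ref{prop:RotatingSlopeStability}: adding $b\eeta$ affects only $\ch_3$ and leaves $Z^{\alpha,\beta}$ and $\Delta$ unaffected, so the replacement $E\mapsto E^\sharp$ of \cite[Lemma~2.19]{BLMS} produces a tilt-stable object whose lower Chern characters are determined by $\vv$ and with $\ch_3(E^\sharp)\ge \ch_3(\vv)+b$; boundedness of the family $\{E^\sharp\}$ then caps $b$. The hardest part of the proof will be the adaptation of Lemma~\ref{lem:GrothendieckLemma}, originally formulated for genuine stability conditions, to the present weak-stability setting: destabilizing quotients lying in $(\Coh^\beta\cX_s)^0$ need to be treated separately, but the noetherianity of $(\Coh^\beta\cX_s)^0$ combined with the tilting property of $\sigma_s^{\alpha,\beta}$ and the semistable-reduction technique of Theorem~\ref{thm:Langton} will reduce the control of such quotients to routine arguments.
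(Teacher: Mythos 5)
Your overall structure is correct and you identify all the right conditions to verify, but the boundedness step contains a genuine gap. You propose to mirror the case analysis of Proposition~\ref{prop:RotatingSlopeStability}, splitting on the sign of $\ch_0$ and handling $\ch_0 \geq 0$ by Langer's theorem and $\ch_0 < 0$ by the dualization $\D_s = \cHom(-,\cO_{\cX_s})[1]$ from \cite[Lemma~2.19]{BLMS}. This analysis was available there precisely because $\sigma^{\sharp\beta}$-semistable objects are classified by Lemma~\ref{lem:RotateWeakStabilitySemistableObjects}: they are either slope-semistable sheaves (or torsion sheaves) or two-term complexes $U[1] \to E \to V$ with $U$ reflexive slope-semistable and $V$ of codimension $\geq 2$, for which \cite[Lemma~2.19]{BLMS} supplies a useful dual. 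No such classification holds for $\sigma^{\alpha,\beta}$-semistable objects of $\Coh^\beta\cX_s$: a tilt-stable object with $\ch_0 > 0$ can have nontrivial $\cH^{-1}$, so it is not a sheaf and Langer's \cite[Theorem~4.2]{Langer:positive} does not apply to it directly, and $\D_s[1]$ is not compatible with $\sigma^{\alpha,\beta}$-stability.

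The paper closes this gap with what is really the crux of the argument, a ``finiteness of wall-crossing as $\alpha \to \infty$'' statement (in the form of \cite[Theorem~7.3.1]{BMT:3folds-BG}, \cite[Theorem~3.5]{BMS:abelian3folds}, or \cite[Lemma~2.26]{PT15:bridgeland_moduli_properties}): for a fixed class $\vv$, the $\sigma^{\sharp\beta}$-HN factors of any $\sigma^{\alpha,\beta}$-semistable object of class $\vv$ take only finitely many classes in $\Lambda$. This reduces both boundedness and the Grothendieck-type Quot-space argument to the $\sigma^{\sharp\beta}$ situation, where Proposition~\ref{prop:RotatingSlopeStability} and its strong boundedness statement already apply, and where Lemma~\ref{lem:boundQsPs} provides the necessary phase control in the form $E_t \in \cP_t^{\sharp\beta}[\phi-\epsilon, \phi+\epsilon]$ before one invokes the analogue of Lemma~\ref{lem:GrothendieckLemma}. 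The same reduction also yields the preservation of the support property (via \cite[Lemma~A.6]{BMS:abelian3folds} at each wall-crossing), which you instead cite directly. Your acknowledgement at the end that adapting Lemma~\ref{lem:GrothendieckLemma} to the weak setting is ``the hardest part'' is well-taken, but the hard part is handled not by a separate analysis of quotients in $(\Coh^\beta\cX_s)^0$ combined with Theorem~\ref{thm:Langton}: it is handled by this wall-crossing reduction to $\sigma^{\sharp\beta}$, which you should make the organizing principle of the whole proof. The remaining items you list (factorization through $\oLambda^\sharp$, noetherianity of $(\Coh^\beta\cX_s)^0$, the tilting property over fibers and over $C$, and the condition \eqref{enum:curveassumptionweak} via Theorem~\ref{thm:HNstructureviafibers}) are essentially correct and match the paper.
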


Combined with Theorem~\ref{thm:deformfamiliystability} in case $n=1,2$, this in particular gives Theorem~\ref{mainthm:construction}.\eqref{enum:construction1}.

\begin{proof}
The proof is a relative version of \cite[Section~4.5]{PT15:bridgeland_moduli_properties}. The extension can be done analogously as in the proof of Theorem~\ref{thm:deformfamiliystability}; the difference is that at
$\sigma^{\infty, \beta} := \sigma^{\sharp\beta}$ we start with the weak stability condition given in Proposition~\ref{prop:RotatingSlopeStability}. The key observation is the following: 

\begin{claim}{1}
Given
$\overline{\vv} \in \overline{\Lambda}^\sharp$, then for $\sigma^{\alpha, \beta}$-semistable objects
$E \in \Coh^\beta \cX_s$ of class $\overline{\vv}$, there are only finitely many possible classes in $\overline{\Lambda}^\sharp$ of HN factors of $E$ with respect to $\sigma^{\infty, \beta}$.
\end{claim}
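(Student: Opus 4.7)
The plan is to bound the twisted Chern characters $(\ch_0^\beta, \ch_1^\beta, \ch_2^\beta)$ of each HN factor $F_i = E_i/E_{i-1}$ of $E$ with respect to $\sigma^{\infty,\beta}$, using three ingredients: the classification of $\sigma^{\infty,\beta}$-semistable objects, the Bogomolov--Gieseker inequality, and the $\sigma^{\alpha,\beta}$-semistability of $E$.

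First, by the description of $\sigma^{\infty,\beta}$-semistable objects given in Lemma~\ref{lem:RotateWeakStabilitySemistableObjects} and specialized in the proof of Proposition~\ref{prop:RotatingSlopeStability}, each $F_i$ is either a slope-semistable torsion free sheaf, a torsion sheaf, or an extension $U[1] \to F_i \to V$ with $U$ slope-semistable and $V$ supported in codimension $\geq 2$. In every case, the Bogomolov--Gieseker hypothesis of the Proposition (extended from slope-stable to slope-semistable sheaves by the standard Jordan--H\"older argument) yields $\Delta_s(F_i) \geq 0$. Second, the central charges $Z^{\infty,\beta}(F_i)$ sum to $Z^{\infty,\beta}(E)$; each has non-negative imaginary part $\ch_1^\beta(F_i) \cdot H^{n-1}$, and the $F_i$ have strictly decreasing phases. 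Since $\ch_1^\beta$ takes values in a discrete subgroup of $\Q$, this bounds both the number of HN factors and each $\ch_1^\beta(F_i)$ to lie in a finite set.

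Third, I will use $\sigma^{\alpha,\beta}$-semistability of $E$ to bound $\ch_2^\beta(F_i)$. Each $E_i$ is a subobject of $E$ in $\Coh^\beta \cX_s$ and each $E/E_i$ a quotient, so $\mu_{\alpha,\beta}(E_i) \leq \mu_{\alpha,\beta}(E) \leq \mu_{\alpha,\beta}(E/E_i)$. Substituting the formula $\mu_{\alpha,\beta} = (\ch_2^\beta - \tfrac{\alpha^2}{2}\ch_0^\beta)/\ch_1^\beta$ and using the bounds from the previous step translates these inequalities into upper and lower bounds on the quantity $\ch_2^\beta(E_i) - \tfrac{\alpha^2}{2} \ch_0^\beta(E_i)$, and thence, by taking successive differences, on $\ch_2^\beta(F_i) - \tfrac{\alpha^2}{2} \ch_0^\beta(F_i)$. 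Combined with the Bogomolov--Gieseker inequality $\ch_1^\beta(F_i)^2 \geq 2 \ch_0^\beta(F_i) \ch_2^\beta(F_i)$ and the already established bound on $\ch_1^\beta(F_i)$, an elementary argument in the $(\ch_0^\beta, \ch_2^\beta)$-plane will show that both coordinates remain bounded, and therefore take values in a finite set of lattice points of $\oLambda^\sharp$.

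The main obstacle will be the degenerate case $\ch_1^\beta(F_i) = 0$, corresponding to the (unique) HN factor of phase one; here $\mu_{\alpha,\beta}(F_i) = +\infty$ and the slope inequalities for $E_i$ and $E/E_i$ become trivial. For such an exceptional factor I will instead use the total sum constraint $\sum_j \ch_\ell^\beta(F_j) = \ch_\ell^\beta(E)$ for $\ell = 0, 2$: once the classes of all the other $F_j$ have been pinned down to finitely many possibilities by the bounded argument above, the class of this last factor is determined up to those finitely many choices.
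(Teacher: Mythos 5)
The paper does not prove this claim directly: it is deferred to the proofs of \cite[Theorem~7.3.1]{BMT:3folds-BG}, \cite[Theorem~3.5]{BMS:abelian3folds}, and \cite[Lemma~2.26]{PT15:bridgeland_moduli_properties}. Your attempt to supply a self-contained argument is in the right spirit, and your first two steps (the discreteness argument bounding the number of factors and each $\ch_1^\beta(F_i)$, and invoking $\Delta_s(F_i)\ge 0$) are correct. However, the third step contains a genuine gap.

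You claim that the pair of inequalities $\mu_{\alpha,\beta}(E_i)\le\mu_{\alpha,\beta}(E)\le\mu_{\alpha,\beta}(E/E_i)$ yields \emph{both upper and lower} bounds on $q_i:=\ch_2^\beta(E_i)-\tfrac{\alpha^2}{2}\ch_0^\beta(E_i)$, from which you take differences to bound $q(F_i)$. But the two inequalities are in fact equivalent, and each gives only the single one-sided bound $q_i\le\mu_{\alpha,\beta}(E)\,\ch_1^\beta(E_i)$. Indeed, since $Z^{\alpha,\beta}(E_i)+Z^{\alpha,\beta}(E/E_i)=Z^{\alpha,\beta}(E)$ and all three lie in the upper half plane, the condition $\phi(E_i)\le\phi(E)$ and the condition $\phi(E/E_i)\ge\phi(E)$ are one and the same linear constraint (namely that $Z^{\alpha,\beta}(E_i)$ lies on the clockwise side of the ray through $Z^{\alpha,\beta}(E)$); rewriting the second inequality as $q(E)-q_i\ge\mu_{\alpha,\beta}(E)\bigl(\ch_1^\beta(E)-\ch_1^\beta(E_i)\bigr)$ and using $\mu_{\alpha,\beta}(E)=q(E)/\ch_1^\beta(E)$ recovers exactly $q_i\le\mu_{\alpha,\beta}(E)\,\ch_1^\beta(E_i)$. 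Since the $q_i$ are only bounded above, the successive differences $q(F_i)=q_i-q_{i-1}$ are a priori unbounded in both directions, and the elementary $(\ch_0^\beta,\ch_2^\beta)$-plane argument you intend to run cannot get started. Concretely, there is no a priori obstruction from these inequalities alone to (say) an ideal sheaf of a curve of very large degree appearing as one of the $F_i$, compensated by a very negative contribution elsewhere.

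A correct argument needs a different route to a two-sided bound. One way (closest to \cite{BMS:abelian3folds}): first bound $\ch_0^\beta(F_i)$, by observing that $\mu^{\alpha,\beta}(F_1)\le\mu^{\alpha,\beta}(E)$ together with $\Delta_s(F_1)\ge 0$ forces $\mu^{\infty,\beta}(F_1)=-\ch_0^\beta(F_1)/\ch_1^\beta(F_1)$ to be bounded above (when $\ch_0^\beta(F_1)<0$ a short computation with the quadratic in $\ch_0^\beta(F_1)$ gives this, while when $\ch_0^\beta(F_1)\ge 0$ it is automatic), and dually for $F_m$; since the $\mu^{\infty,\beta}(F_i)$ are monotone this bounds every $\ch_0^\beta(F_i)$. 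Only then does $\Delta_s(F_i)\ge 0$ give a one-sided bound on $\ch_2^\beta(F_i)$ depending on the sign of $\ch_0^\beta(F_i)$; because the $i$ with $\ch_0^\beta(F_i)<0$ form an initial segment and those with $\ch_0^\beta(F_i)>0$ a final segment, one can combine these one-sided bounds with the bound on $q_i$ at the (at most one) crossover index, plus the global sum constraint, to bound each $\ch_2^\beta(F_i)$.
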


This claim is purely a statement on fibers, and is shown for example in the proof \cite[Theorem~7.3.1]{BMT:3folds-BG} or \cite[Theorem~3.5]{BMS:abelian3folds}; see also \cite[Lemma~2.26]{PT15:bridgeland_moduli_properties}. The implied finiteness of wall-crossing as $\alpha \to \infty$ also shows that the Bogomolov--Gieseker inequality is preserved: for each wall-crossing, it follows by simple linear algebra, see \cite[Lemma~A.6]{BMS:abelian3folds}. 

Applying the strong boundedness statement of Proposition~\ref{prop:RotatingSlopeStability} to each HN factor of $E$, in combination with the previous claim then immediately gives the following refinement.

\begin{claim}{2}\label{claim2} Given
$\vv \in \Lambda$, then for $\sigma^{\alpha, \beta}$-semistable objects
$E \in \Coh^\beta \cX_s$ of class $\vv$, there are only finitely many possible classes in $\Lambda$ of HN factors of $E$ with respect to $\sigma^{\infty, \beta}$.
\end{claim}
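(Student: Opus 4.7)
The plan is to reduce Claim 2 to the strong boundedness statement of Proposition~\ref{prop:RotatingSlopeStability} via Jordan--H\"older decompositions of the HN factors, combined with the sum constraint $\sum_i \ch_{\cX_s,3}(F_i) = \ch_{\cX_s,3}(\vv)$.

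By Claim 1, the $\sigma^{\infty, \beta}$-HN factors $F_1, \ldots, F_m$ of $E$ have classes $[F_i] \in \overline{\Lambda}^\sharp$ lying in a fixed finite set; in particular $(\ch_0, \ch_1, \ch_2)$ of each $F_i$ is determined up to finitely many choices. Since the HN phases are strictly decreasing and the $\sigma^{\sharp\beta}$-phase depends only on $(\ch_0^\beta, \ch_1^\beta)$, distinct HN factors have distinct classes in $\overline{\Lambda}$, so the number $m$ of HN factors is uniformly bounded. Fixing lifts $\vv_1, \ldots, \vv_N \in \Lambda$ of these classes, the problem reduces to uniformly bounding the integer $b_i$ in the decomposition $[F_i] = \vv_{k(i)} + b_i \eeta$.

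For each HN factor $F_i$ of $\sigma^{\sharp\beta}$-phase not in $\Z$, I would invoke Remark~\ref{rem:JHfiltrationweakstability} to obtain a Jordan--H\"older filtration with $\sigma^{\sharp\beta}$-stable factors $G_{i,k}$ of the same phase. The support property of $\sigma^{\sharp\beta}$ on $\overline{\Lambda}$, together with the Bogomolov--Gieseker inequality and the classification in Lemma~\ref{lem:RotateWeakStabilitySemistableObjects}, bounds both the number $l_i$ of JH factors and the classes $[G_{i,k}] \in \overline{\Lambda}^\sharp$ within a finite set. Then Proposition~\ref{prop:RotatingSlopeStability}, applied to each of the finitely many possible lifts of $[G_{i,k}]$ to $\Lambda$, gives an upper bound on $\ch_{\cX_s,3}(G_{i,k})$; summing over the bounded number of JH factors gives an upper bound on $\ch_{\cX_s,3}(F_i)$.

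Since the phases are strictly decreasing, at most one HN factor, $F_m$, has $\sigma^{\sharp\beta}$-phase $1$, and such a factor does not admit a JH filtration in the usual sense. For it I would adapt the duality and reflexive-hull argument in the proof of Proposition~\ref{prop:RotatingSlopeStability}: after factoring out the maximal $\cA^0$-subobject (which exists because $\cA^0$ is a noetherian torsion subcategory of $\Coh^\beta \cX_s$ by the tilting property), the residual piece has no $\cA^0$-subobject, so the duality argument applies verbatim; boundedness of the $\cA^0$-piece follows from Langer applied to classes bounded via Claim 1. Having obtained an upper bound on each $\ch_{\cX_s,3}(F_i)$, the identity $\sum_i \ch_{\cX_s,3}(F_i) = \ch_{\cX_s,3}(\vv)$ combined with the uniform bound on $m$ yields matching lower bounds, completing the proof. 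The main obstacle is the phase-$1$ HN factor, where the JH decomposition fails and one must combine the structural classification of Lemma~\ref{lem:RotateWeakStabilitySemistableObjects} with the duality arguments from Proposition~\ref{prop:RotatingSlopeStability} and the sum constraint to close the argument.
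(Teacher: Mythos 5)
Your proposal takes the same essential approach as the paper's one-line argument, namely: Claim~1 pins down the classes of the $\sigma^{\sharp\beta}$-HN factors in $\overline{\Lambda}^\sharp$, and the strong boundedness statement of Proposition~\ref{prop:RotatingSlopeStability} then bounds $\ch_{\cX_s,3}$ of each factor. You correctly identify that the HN factors are only semistable while the strong boundedness is stated for stable objects, and your use of the sum constraint $\sum_i\ch_{\cX_s,3}(F_i)=\ch_{\cX_s,3}(\vv)$ to convert upper bounds into two-sided bounds is exactly the right bookkeeping. This is a genuine expansion of the paper's very terse ``applying the strong boundedness statement to each HN factor.''

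A few details could be streamlined or sharpened, however. First, the JH decomposition for phase $\neq 1$ is not actually needed: Langer's boundedness (the input to Proposition~\ref{prop:RotatingSlopeStability}) already applies to $\mu$-semistable sheaves, and in the $\ch_0<0$ case the duality argument in the proof of Proposition~\ref{prop:RotatingSlopeStability} only needs $\Hom(\cA^0,F_i)=0$, which by Lemma~\ref{lem:RotateWeakStabilitySemistableObjects}.\eqref{enum:RotatingWeakStability2} is automatic for a semistable $F_i$ with $\ch_{\cX_s,1}^\beta(F_i)>0$, i.e.\ for every HN factor of phase $\neq 1$. So for these factors you can invoke the boundedness argument directly, without passing to stable JH pieces. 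Second, and more seriously, your direct upper bound on $\ch_{\cX_s,3}(F_m)$ for the phase-$1$ factor has a gap: after stripping off the maximal $\cA^0$-subobject $T\subset F_m$, Claim~1 plus the discriminant inequality do control $\ch_{\cX_s,2}(T)$, but Langer's theorem only bounds the pure $1$-dimensional component of $T$ — it says nothing about the length of the $0$-dimensional part, so ``boundedness of the $\cA^0$-piece follows from Langer'' does not close the argument. The fix is to \emph{not} bound $F_m$ directly: once $\ch_{\cX_s,3}(F_i)$ is bounded (above and below, which the bounded-family statement from Langer/duality does give) for all $i<m$, the sum constraint determines $\ch_{\cX_s,3}(F_m)$ to lie in a finite set, with no separate argument needed for the phase-$1$ factor. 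With that modification your argument is complete and matches the paper's intent.
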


Boundedness as in Definition~\ref{def:fiberwisesupport}.\eqref{enum:bounded}, as well as the stronger boundedness claimed in the last statement of the Proposition, follow immediately;
see also \cite[Corollary~4.18]{PT15:bridgeland_moduli_properties}.

The condition on \eqref{enum:A0noetheriantorsionfibers} and the second part of \eqref{enum:curveassumptionweak} in Definition~\ref{def:familyfiberstabilities} follow trivially from the corresponding properties of $\us^{\sharp\beta}$, as
$Z^{\alpha, \beta}(\blank) = 0$ for objects of $\Coh^\beta \cX_s$ or
$\Coh^\beta \cX_C$ is a stronger condition than $Z^{\sharp\beta}(\blank) = 0$, and as the property of being a noetherian torsion subcategory is clearly preserved by passing to a smaller subcategory. 

We now want to show universal openness of geometric stability; we can restrict to phases $\phi$ with $0 < \phi < 1$ (as $\sigma^{\alpha, \beta}$-semistable objects of phase 1 are the same as $\sigma^{\sharp\beta}$-semistable objects). Lemma~\ref{lem:boundQsPs} applies literally in our situation, which means we can restrict to 
the situation where $E \in \Db(\cX_T)$ is contained in $\cP^{\sharp\beta}[\phi-\epsilon, \phi + \epsilon]$. After possibly replacing $T$ be an open subset, we can further assume $E_t\in\cP^{\sharp\beta}_t[\phi-\epsilon,\phi+\epsilon]$, for all $t\in T$.

To show openness of geometric stability, note that we have already verified assumption \eqref{enum:basechangeweakassumption1} of Proposition~\ref{prop:basechangeweakstabilityviaopenness} above; therefore, we can base change to the algebraic closure
$\overline{t}$
 and argue as in the proof of Theorem~\ref{thm:deformfamiliystability} to deduce it from boundedness of the Quot space $\Quot_T^{\leqslant \phi+\epsilon}(E)$.
This is defined analogously as in Grothendieck Lemma~\ref{lem:GrothendieckLemma} as the subfunctor of the Quot space such that $\Quot_T^{\leqslant \phi+\epsilon}(E)(T')$ parametrizes quotients $E_{T'} \to Q$ that satisfy $\phi(Q_t) \leqslant \phi+\epsilon$ for all $t \in T'$, where the phase is calculated with respect to the weak stability condition $\us^{\sharp\beta}$.
By Claim~\ref{claim2}, the classes of possible quotients are finite in $\Lambda$. Hence, the Quot space is bounded, and we can conclude the proof of \eqref{enum:stabilityopenbasechange} in Definition~\ref{def:familyfiberstabilities}.

Property \eqref{enum:Zsindependentbasechange} in Definition~\ref{def:familyfiberstabilities} is immediate.
The proof of universal generic openness, namely property \eqref{enum:genOpSstweak} in Definition~\ref{def:familyfiberstabilities}, now follows as in the proof of Lemma~\ref{lem:SemiStUnivGenOpenn} when the phase is in $(0,1)$ (and so JH filtrations exist), since as we observed $\sigma_s^{\alpha,\beta}$ can be base changed over any field extension and, for phase $1$, it follows from the corresponding property of $\sigma_s^{\sharp\beta}$.

To prove part \eqref{enum:curveassumption}, or the remaining part of \eqref{enum:curveassumptionweak} in Definition \ref{def:familyfiberstabilities}, we use semistable reduction and Theorem~\ref{thm:HNstructureviafibers}. Let $C$ be a Dedekind scheme.
Then, by Proposition~\ref{prop:RotatingSlopeStability} and Proposition~\ref{prop:yeswehaveopennessofflatness}, the tilted category $\Coh^\beta \cX_C$ universally satisfies openness of flatness. 
As we observed before, $\sigma_c^{\alpha,\beta}$ has the tilting property and so the assumptions of Theorem~\ref{thm:HNstructureviafibers} are met, thus giving a HN structure $\sigma_C^{\alpha,\beta}$.
Finally, as in the sheaf case discussed in Example~\ref{ex:slopestabilitycurveshastiltingproperty}, by using the dual functor $\mathbb{D}_C$ on $\Db(\cX_C)$ and Remark~\ref{rem:tiltingpropertycurvesses}, it is not hard to check that $\sigma_C^{\alpha,\beta}$ also has the tilting property.
\end{proof}

\section{Tilting tilt-stability in families of threefolds}\label{sect:tiltingtilt}

In this section we consider the case of families of threefolds and we show that the double-tilt construction from \cite{BMT:3folds-BG} works in families. 
As before, we first rotate tilt-stability, and show that this provides a family of weak stability conditions. 
Then we deform to complete the proof of part \eqref{enum:construction2} of Theorem~\ref{mainthm:construction}.

We keep the notation and setup of Section~\ref{sec:tiltingslope}, to which we add the following assumptions: the morphism $g\colon \cX \to S$ has relative dimension $n=3$ and the Bogomolov--Gieseker inequality holds for slope-stable sheaves on its fibers.

\subsection{Rotating tilt-stability in families of threefolds}\label{subsect:rotating2}

Let $\alpha,\beta,\gamma\in\Q$ with $\alpha>0$. Let $u_\gamma\in\C$ be the unit vector in the upper half plane such that $\gamma=-\frac{\Re u_\gamma}{\Im u_\gamma}$.

For every $s\in S$ we define
\[
\sigma_s^{\alpha,\beta\sharp\gamma} := \left( Z^{\alpha,\beta\sharp\gamma}_s=\frac{1}{u_\gamma}\cdot Z^{\alpha,\beta}_s,\cA^{\gamma}_{\alpha,\beta,s} \right),
\]
where $\cA^{\gamma}_{\alpha,\beta,s}$ is the tilted category of $\Coh^\beta \cX_s$ at tilt-slope $\gamma$.
\index{Aalphabetagamma@$\cA^{\gamma}_{\alpha,\beta,s}$, tilted heart of $\Coh^\beta \cX_s$ at tilt-slope $\gamma$}

\begin{Prop}\label{prop:RotatingTiltStability}
The collection $\us^{\alpha,\beta\sharp\gamma}:=(\sigma_s^{\alpha,\beta\sharp\gamma})$ is a weak stability condition on $\Db(\cX)$ over $S$ with respect to $\oLambda^\sharp$.
\index{sigmauTiltRot@$\us^{\alpha,\beta\sharp\gamma}:=(\sigma_s^{\alpha,\beta\sharp\gamma} = ( \tfrac{1}{u_\gamma}\cdot Z^{\alpha,\beta}_s,\cA^{\gamma}_{\alpha,\beta,s} ))$,!(weak) stability condition on $\Db(\cX)$ over $S$ given by rotating tilt-stability on each fiber}
\end{Prop}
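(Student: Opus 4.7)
The plan is to run the same type of tilting argument as in Proposition~\ref{prop:tiltstabfamily}, but this time starting from the weak stability condition $\us^{\alpha,\beta}$ rather than from slope-stability. First I would check the fibral statement: for each $s \in S$, Proposition~\ref{prop:tiltstabfamily} gives that $\sigma_s^{\alpha,\beta}$ has the tilting property, so by Proposition~\ref{prop:RotateWeakStability} the pair $\sigma_s^{\alpha,\beta\sharp\gamma} = (Z^{\alpha,\beta\sharp\gamma}_s, \cA^{\gamma}_{\alpha,\beta,s})$ is a weak stability condition, and $(\cA^{\gamma}_{\alpha,\beta,s})^0 \subset \cA^{\gamma}_{\alpha,\beta,s}$ is a noetherian torsion subcategory, yielding condition~\eqref{enum:A0noetheriantorsionfibers} of Definition~\ref{def:familyfiberstabilities}. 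Condition~\eqref{enum:Zsindependentbasechange} (universal local constancy of central charges) is immediate from the corresponding property of $\us^{\alpha,\beta}$, and the support property~\eqref{enum:fiberwisesupport} passes through unchanged since Lemma~\ref{lem:RotateWeakStabilitySemistableObjects} shows that the classes in $\oLambda^\sharp$ of semistable objects are preserved by the tilting operation (up to the $[1]$-shift, which leaves $v$ invariant up to a sign); in particular $\Lambda_0^\sharp$ does not change.

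Next, I would verify condition~\eqref{enum:curveassumptionweak}: for any essentially-of-finite-type $C \to S$ with $C$ Dedekind, Proposition~\ref{prop:tiltstabfamily} provides a HN structure $\sigma^{\alpha,\beta}_C$ with the tilting property whose heart $\Coh^\beta \cX_C$ universally satisfies openness of flatness (by Proposition~\ref{prop:yeswehaveopennessofflatness}). Applying Proposition~\ref{prop:tiltingweakHNstructures} then produces a tilted weak HN structure $\sigma^{\alpha,\beta\sharp\gamma}_C$ on $\cD_C$ whose heart $(\cA^\gamma_{\alpha,\beta,C})^0$ is a noetherian torsion subcategory, and by construction it induces the fiberwise weak stability conditions $\sigma_c^{\alpha,\beta\sharp\gamma}$ for every $c \in C$.

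For universal openness of geometric stability~\eqref{enum:stabilityopenbasechange} and universal generic openness~\eqref{enum:genOpSstweak}, I would follow the pattern of the last part of the proof of Proposition~\ref{prop:tiltstabfamily} (itself modeled on Theorem~\ref{thm:deformfamiliystability}). By Lemma~\ref{Lem-uogs-ftbc} we may restrict to a finite-type base $T \to S$ and a $T$-perfect object $E \in \Db(\cX_T)$. A direct analogue of Lemma~\ref{lem:boundQsPs} shows that if $E_t \in \cP_t^{\alpha,\beta\sharp\gamma}[\phi_0,\phi_1]$ then $E_t \in \cP_t^{\alpha,\beta}[\phi_0',\phi_1']$ for some enlarged interval, so after shrinking $T$ we may assume $E$ lies in a $\cP^{\alpha,\beta}$-bounded interval, where we can apply openness of lying in $\cP(I)$ from Proposition~\ref{prop:yeswehaveopennessofflatness} for $\us^{\alpha,\beta}$. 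Destabilizing quotients in $\cA^\gamma_{\alpha,\beta,t}$ are then controlled by a Quot space in the fiberwise $t$-structures underlying $\us^{\alpha,\beta\sharp\gamma}$, whose class in $\oLambda^\sharp$ is constrained to a finite set by the support property combined with the bounded phase window, exactly as in the proof of Lemma~\ref{lem:GrothendieckLemma}. The boundedness condition~\eqref{enum:bounded} follows from the same classification: $\sigma^{\alpha,\beta\sharp\gamma}$-semistable objects of a fixed class $\vv \in \oLambda^\sharp$ are either $\sigma^{\alpha,\beta}$-semistable in the torsion part $\cT^{\gamma}_{\alpha,\beta,s}$, or extensions $U[1] \to E \to V$ with $U$ a $\sigma^{\alpha,\beta}$-semistable object and $V \in (\cA^{\alpha,\beta}_s)^0$; in both cases the classes of the constituents lie in a finite set of $\oLambda^\sharp$ (the last claim of Proposition~\ref{prop:tiltstabfamily} gives finiteness on the $(\cA^{\alpha,\beta}_s)^0$ side), and Lemma~\ref{lem:ExtensionOfBounded} assembles the boundedness from the constituents.

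The main obstacle I expect is the openness-of-geometric-stability step, because after tilting the semistable objects can be complexes of the form $U[1] \to E \to V$ rather than honest sheaves, and one has to verify that the destabilizing Quot space is cut out correctly, that its image in $T$ is closed under specialization, and that the arising classes in $\oLambda^\sharp$ remain bounded uniformly in $s$. The latter reduces to the linear-algebra bound of \cite[Lemma~A.6]{BMS:abelian3folds} applied to the quadratic form $Q$ guaranteed by the support property of $\us^{\alpha,\beta}$, so this should go through just as in the proofs of Proposition~\ref{prop:tiltstabfamily} and Theorem~\ref{thm:deformfamiliystability}.
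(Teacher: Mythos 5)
Your outline follows the same overall skeleton as the paper — reduce to Lemma~\ref{lem:RotateWeakStabilitySemistableObjects}, reuse the openness and integrability machinery from $\us^{\alpha,\beta}$ and Propositions~\ref{prop:yeswehaveopennessofflatness}, \ref{prop:tiltingweakHNstructures} — and those parts are fine. However, the boundedness step~\eqref{enum:bounded} has a genuine gap, and it is precisely the step the paper handles differently.

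You propose to decompose a $\sigma^{\alpha,\beta\sharp\gamma}$-stable object of phase in $(0,1)$ as an extension $U[1] \to E \to V$ with $U$ $\sigma^{\alpha,\beta}$-semistable and $V \in (\cA^{\alpha,\beta}_s)^0$, then invoke Lemma~\ref{lem:ExtensionOfBounded}. But Lemma~\ref{lem:ExtensionOfBounded} requires \emph{bounded} moduli functors $\cM_1, \cM_2$, which in particular means the constituents' classes must lie in a finite subset of $\Lambda$, not merely of $\oLambda^\sharp$. Here $[V] = c\eeta$ with $c = \ch_3(V) \geq 0$ unbounded \emph{a priori}, and correspondingly $\ch_3(U)$ is unbounded; you offer no argument that $c$ is bounded above. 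The last claim of Proposition~\ref{prop:tiltstabfamily} ($\fM^{\st}_{\us^{\alpha,\beta}}(\vv + b\eeta) = \emptyset$ for $b \gg 0$) is about geometrically \emph{stable} tilt-semistable objects, whereas your $U$ is only semistable, so it does not directly apply; and $V$ itself is never tilt-stable, so that claim says nothing about $V$. Moreover, to apply Lemma~\ref{lem:ExtensionOfBounded} you need boundedness of each constituent family for each \emph{fixed} class in $\Lambda$, which you do not establish.

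The paper's proof closes this gap via the derived dual $\mathbb{D}_{2,s} := \cHom(-, \cO_{\cX_s})[2]$: the key point (stated in the paper, and the analogue of \cite[Lemma~2.19]{BLMS} used in the proof of Proposition~\ref{prop:RotatingSlopeStability}) is that $\mathbb{D}_{2,s}(E_s)$ is \emph{directly} a $\sigma_s^{\alpha,\beta\sharp\gamma}$-stable object in the tilted heart $\cA^\gamma_{\alpha,\beta,s}$ — here, unlike in Proposition~\ref{prop:RotatingSlopeStability}, there is no residual codimension-three error term, because the objects in case~\eqref{enum:RotatingSlopeStability2} are already torsion sheaves supported on points. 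Since $\mathbb{D}_{2,s}$ flips the sign of $\Im Z^{\alpha,\beta}$ and of $\ch_3$, it carries the problematic negative-imaginary-part case into the positive one, where boundedness is already known from Proposition~\ref{prop:tiltstabfamily}; and since $g$ is smooth the duality commutes with base change, so boundedness pulls back. You should replace your extension-decomposition argument with this dual trick (for both the boundedness condition and, if needed, the finiteness input into the Quot-space argument for openness).
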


\begin{proof}
The argument is very similar to Proposition~\ref{prop:RotatingSlopeStability}.

By \cite[Proposition~2.15]{BLMS}, for all $s\in S$, $\sigma_s^{\alpha,\beta\sharp\gamma}$ is a weak stability condition on $\Db(\cX_s)$.
The classification of $\sigma_s^{\alpha,\beta\sharp\gamma}$-stable objects is identical to the one in the proof of Proposition~\ref{prop:RotatingSlopeStability}, where the role of $\ch_{\cX_s,0}^\beta$ and $\ch_{\cX_s,1}^\beta$ is replaced by the real and imaginary parts of $Z^{\alpha,\beta\sharp\gamma}_s$, respectively. 
Moreover, the objects $V_s$ in case \eqref{enum:RotatingSlopeStability2} (in the proof of Proposition~\ref{prop:RotatingSlopeStability}) are torsion sheaves supported on points.

Now, the proof works exactly in the same way as in Proposition~\ref{prop:RotatingSlopeStability}.
Here we use the derived dual functor $\mathbb{D}_{2,s}:=\cHom (-,\cO_{\cX_s}) [2]$ and the fact that $\mathbb{D}_{2,s}(E_s)$ is directly a $\sigma_s^{\alpha,\beta\sharp\gamma}$-stable object in $\cA^{\gamma}_{\alpha,\beta,s}$ in the proof of property \eqref{enum:bounded} in Definition~\ref{def:fiberwisesupport}.
\end{proof}

Let $X$ be a Fano threefold of Picard rank $1$ over an algebraically closed field $k$.
Let us denote by $\Ku(X)$ its Kuznetsov component as defined in \cite{Kuz:Fano3folds} and \cite[Section~6]{BLMS}.
By Lemma~\ref{lem-E1Em-sod}, the definition behaves nicely for smooth families.

By \cite[Theorem~1.1]{BLMS}, if $X$ is not a complete intersection of a quadric and a cubic in $\P^5$, then Bridgeland stability conditions exist on $\Ku(X)$. The construction is done by rotating tilt-stability and by inducing stability. By Proposition~\ref{prop:RotatingTiltStability} and Theorem~\ref{thm:InducingStabilityOverBase}, these two steps work in families as well; see also Section~\ref{subsec:famstabcondKuz} where a similar (more involved) argument is used in the cubic fourfold case. Hence, we obtain the following result.

\begin{Cor}\label{cor:Fano3foldsPic1}
Let $\cX \to S$ be a smooth family of Fano threefolds of Picard rank $1$ which are not complete intersections of a quadric and a cubic in $\P^5$. Let $\Ku(\cX/S)$ denote the relative Kuznetsov component.
Then the space of numerical stability conditions on $\Ku(\cX/S)$ over $S$ is non-empty.
\end{Cor}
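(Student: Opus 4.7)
The plan is to mimic the two-step construction of \cite[Theorem~1.1]{BLMS} in families: first produce a weak stability condition on $\Db(\cX)$ over $S$ via the double-tilt procedure, and then induce a stability condition on the semiorthogonal component $\Ku(\cX/S)$ via Theorem~\ref{thm:InducingStabilityOverBase}.

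The first step is essentially already done. Recall from \cite{BLMS} that for each such Fano threefold $X$ one has a semiorthogonal decomposition of the form $\Db(X) = \langle \Ku(X), E_1, \dots, E_m \rangle$ with $E_1, \dots, E_m$ an exceptional collection of vector bundles on $X$ (line bundles and, in some cases, twists of the tautological or spinor bundles attached to the index and genus of $X$). These constructions are functorial in $X$ and extend to a relative exceptional collection on $\cX$ in the sense of Definition~\ref{def:relativeexceptional}; by Lemma~\ref{lem-E1Em-sod} this yields an $S$-linear strong semiorthogonal decomposition of finite cohomological amplitude
\[
\Db(\cX) = \langle \Ku(\cX/S), \alpha_{E_1}(\Db(S)), \dots, \alpha_{E_m}(\Db(S)) \rangle,
\]
defining the relative Kuznetsov component $\Ku(\cX/S)$. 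Since the Bogomolov--Gieseker inequality holds on every Fano threefold (by standard results in characteristic zero), Proposition~\ref{prop:tiltstabfamily} gives the tilt-stability family $\us^{\alpha,\beta}$ on $\Db(\cX)$ over $S$, and Proposition~\ref{prop:RotatingTiltStability} then produces, for suitable $\alpha,\beta,\gamma$, a weak stability condition $\us^{\alpha,\beta\sharp\gamma}$ on $\Db(\cX)$ over $S$.

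The second step is to verify the hypotheses (1)--(5) of Theorem~\ref{thm:InducingStabilityOverBase} for the pair $(\us^{\alpha,\beta\sharp\gamma}, E_1,\dots, E_m)$. Conditions (1) and (2) --- that $(E_i)_s \in \cA_s^{\alpha,\beta\sharp\gamma}$ and that $\rS_{\cD_s}((E_i)_s)\in \cA_s^{\alpha,\beta\sharp\gamma}[1]$ for every $s\in S$ --- are precisely the fiberwise conditions established in \cite[Section~6]{BLMS} for an appropriate choice of the real parameters $\alpha,\beta,\gamma$, and these choices can be made uniformly across $S$ because the lattice $\oLambda^\sharp$ and the Chern character data $\ch_{\cX/S,\le 2}$ used in the construction are locally constant in the family. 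Conditions (3) and (4), concerning the position of $v(E_i)$ in $\Lambda$ and the intersection $\Lambda_0\cap \Lambda_1 = 0$, are purely numerical statements about Chern characters and reduce to their fiberwise counterparts.

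The main obstacle, and the step that requires genuine work, is condition (5): the boundedness of the set of semistable objects $F \in M_{\us^{\alpha,\beta\sharp\gamma}}(\vv')$ with $\vv' \in \vv + \Lambda_0$ and $\chi(E_i, F) \ge 0$ for all $i$. For this we invoke the strong boundedness already built into the double tilt: the last statements of Propositions~\ref{prop:RotatingSlopeStability}, \ref{prop:tiltstabfamily}, and \ref{prop:RotatingTiltStability} show that after fixing a class in $\oLambda^\sharp$, there are only finitely many classes in $\Lambda$ along the $\Lambda_0^\sharp = \Z\eeta$-direction that support $\us^{\alpha,\beta\sharp\gamma}$-semistable objects when combined with the Euler characteristic inequalities $\chi(E_i,F)\ge 0$; combined with the boundedness of $\fM_{\us^{\alpha,\beta\sharp\gamma}}(\vv')$ from Definition~\ref{def:fiberwisesupport}.\eqref{enum:bounded} and Lemma~\ref{lem:ExtensionOfBounded}, this gives the required uniform boundedness. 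Theorem~\ref{thm:InducingStabilityOverBase} then produces a numerical stability condition on $\Ku(\cX/S)$ over $S$, proving the corollary.
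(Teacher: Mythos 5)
Your proposal is correct and follows essentially the same route as the paper: the paper's proof is a two-sentence sketch citing exactly Proposition~\ref{prop:RotatingTiltStability} (for the relative weak stability condition obtained by rotating tilt-stability) and Theorem~\ref{thm:InducingStabilityOverBase} (for inducing on $\Ku(\cX/S)$), with a pointer to the cubic fourfold argument of Section~\ref{subsec:famstabcondKuz} for the details, and you have simply spelled out what that sketch compresses. One small imprecision: Proposition~\ref{prop:RotatingTiltStability} as stated does not literally contain the ``$\fM^{\st}(\vv+b\eeta)=\emptyset$ for $b\gg0$'' statement you attribute to it (only Propositions~\ref{prop:RotatingSlopeStability} and~\ref{prop:tiltstabfamily} state it explicitly; it holds for the rotated weak stability condition because its proof runs ``exactly in the same way as in Proposition~\ref{prop:RotatingSlopeStability}''), so you should either cite the analogous statement or derive it from Proposition~\ref{prop:tiltstabfamily} directly, as the paper does in its treatment of the cubic fourfold case.
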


\subsection{Bridgeland stability in families of threefolds}\label{subsect:Bridgstabfamility}

Let $\alpha,\beta,a,b\in\Q$ such that $\alpha>0$ and
\[
a>\frac{1}{6}\alpha^2+\frac{1}{2}|b|\alpha.
\]
We keep the notation in the previous section and fix $\gamma=0$. For $s\in S$, we set
\[
\cA_{\alpha,\beta,s}:=\cA_{\alpha,\beta,s}^{\gamma=0},
\]
\[
Z_{\alpha,\beta,s}^{a,b}:=\mathfrak{i}\left(\ch_{\cX_s,2}^\beta-\frac{\alpha^2}{2}\ch_{\cX_s,0}^\beta\right)+a\ch_{\cX_s,1}^\beta+b\ch_{\cX_s,2}^\beta-\ch_{\cX_s,3}^\beta,
\]
and
\[
\us_{\alpha,\beta}^{a,b}:=\left( \sigma_{\alpha,\beta,s}^{a,b} = \left( Z_{\alpha,\beta,s}^{a,b},\cA_{\alpha,\beta,s} \right) \right).
\]

We can now prove the following which is the family version of \cite[Theorem~8.2]{BMS:abelian3folds}. 

\begin{Prop}\label{prop:Bridgstabfamily}
Assume that the generalized Bogomolov--Gieseker inequality holds for tilt-stable objects on the fibers of $g$, namely for all $s\in S$ and for all $\sigma_{\alpha,\beta,s}$-stable objects $E$
\begin{equation}\label{eqn:genBG}
\nabla_{\beta,s}(E):=4\ch_{\cX_s,2}^\beta(E)^2-6\ch_{\cX_s,1}^\beta(E)\ch_{\cX_s,3}^\beta(E)- \alpha^2\Delta_s(E)\geqslant 0.
\end{equation}
Then the collection $\us_{\alpha,\beta}^{a,b}$ is a stability condition on $\Db(\cX)$ over $S$ with respect to $\Lambda$.
\index{Nablas@$\nabla_{\beta,s}:=4(\ch^\beta_{\cX_s,2})^2-6\ch_{\cX_s,1}^\beta\ch_{\cX_s,3}^\beta- \alpha^2\Delta_s$}
\end{Prop}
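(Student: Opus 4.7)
The proof will follow the strategy of Proposition~\ref{prop:tiltstabfamily}, now building on the rotated tilt-stability $\us^{\alpha,\beta\sharp 0}$ provided by Proposition~\ref{prop:RotatingTiltStability}. The fiberwise content is \cite[Theorem~8.2]{BMS:abelian3folds}: for each $s \in S$, the assumed generalized Bogomolov--Gieseker inequality~\eqref{eqn:genBG} together with the classical one (which holds on the fibers by the assumption at the beginning of Section~\ref{sect:tiltingtilt}) implies that $\sigma^{a,b}_{\alpha,\beta,s}$ is a numerical stability condition on $\Db(\cX_s)$ satisfying the support property with respect to a quadratic form $Q$ that depends only on $(\alpha,\beta,a,b)$ and not on $s$. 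Since $Z^{a,b}_{\alpha,\beta,s}$ is defined via $\ch_{\cX/S}$, which factors through $\Knum(\cD/S)$, this immediately yields universal local constancy of central charges and the fiberwise support property, namely Definition~\ref{def:familyfiberstabilities}.\eqref{enum:Zsindependentbasechange} and Definition~\ref{def:fiberwisesupport}.\eqref{enum:fiberwisesupport}.

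The key observation for the remaining axioms is that $\sigma^{a,b}_{\alpha,\beta,s}$ and $\sigma^{\alpha,\beta\sharp 0}_s$ share the same heart $\cA_{\alpha,\beta,s}$, with identical imaginary parts of their central charges (up to the scalar $a$) and real parts differing only by $b\ch^\beta_{\cX/S,2} - \ch^\beta_{\cX/S,3}$. A wall-crossing argument analogous to Claim~\ref{claim2} in the proof of Proposition~\ref{prop:tiltstabfamily} shows that, fixing $\vv \in \Lambda$, any $\sigma^{a,b}_{\alpha,\beta,s}$-semistable object $E$ of class $\vv$ has HN factors with respect to $\sigma^{\alpha,\beta\sharp 0}_s$ whose classes in $\oLambda^\sharp$ lie in a finite set; combined with the strong boundedness of Proposition~\ref{prop:RotatingTiltStability} --- which controls the extra $\ch^\beta_{\cX/S,3}$-parameter via the vanishing $\fM^\st_{\us^{\alpha,\beta\sharp 0}}(\vv+b\eeta)=\emptyset$ for $b\gg 0$ --- and Lemma~\ref{lem:ExtensionOfBounded}, this yields boundedness of $\fM^\st_{\us^{a,b}_{\alpha,\beta}}(\vv)$, establishing Definition~\ref{def:fiberwisesupport}.\eqref{enum:bounded}. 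Universal openness of geometric stability, Definition~\ref{def:familyfiberstabilities}.\eqref{enum:stabilityopenbasechange}, then follows via the argument of Theorem~\ref{thm:deformfamiliystability}: destabilizing quotients of a geometrically stable object lie in a bounded union of connected components of the Quot space from Lemma~\ref{lem:GrothendieckLemma} applied to $\us^{\alpha,\beta\sharp 0}$, which is of finite type and universally closed over $S$, so the unstable locus is closed.

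Finally, condition~\eqref{enum:curveassumption} of Definition~\ref{def:familyfiberstabilities} follows from Theorem~\ref{thm:HNstructureviafibers}: for any Dedekind scheme $C \to S$ essentially of finite type, the $C$-local heart $\cA_{\alpha,\beta,C}$ is provided by Proposition~\ref{prop:RotatingTiltStability} and universally satisfies openness of flatness by Proposition~\ref{prop:yeswehaveopennessofflatness}, while generic openness of semistability is automatic from Lemma~\ref{lem:SemiStUnivGenOpenn} once the previous step establishes universal openness of geometric stability. The main difficulty throughout is that Theorem~\ref{thm:deformfamiliystability} does not apply directly, since $\us^{\alpha,\beta\sharp 0}$ is only a weak stability condition: passing to the true stability condition $\us^{a,b}_{\alpha,\beta}$ assigns nonzero central charge to objects that were previously ``hidden'' at phase one (in particular, torsion sheaves supported on points), and bounding such objects uniformly across fibers is precisely what the refined boundedness of Proposition~\ref{prop:RotatingTiltStability} achieves.
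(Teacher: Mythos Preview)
Your proposal is correct and follows essentially the same approach as the paper: the paper's proof simply states that the argument is the relative version of \cite[Section~4.6]{PT15:bridgeland_moduli_properties}, carried out analogously to Proposition~\ref{prop:tiltstabfamily} with the limit case being Proposition~\ref{prop:RotatingTiltStability}, and you have spelled out precisely those steps. One small attribution point: the strong boundedness statement $\fM^{\st}(\vv+b\eeta)=\emptyset$ for $b\gg 0$ is not explicitly stated in Proposition~\ref{prop:RotatingTiltStability} (though it follows from the same proof), whereas the analogous statement \emph{is} explicit in Proposition~\ref{prop:tiltstabfamily}; either reference suffices for your argument.
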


\begin{proof}
The proof is the relative version of \cite[Section~4.6]{PT15:bridgeland_moduli_properties}. The extension can be done analogously as in the proof of Proposition~\ref{prop:tiltstabfamily}, the limit case being Proposition~\ref{prop:RotatingTiltStability}.
\end{proof}

By Theorem~\ref{thm:deformfamiliystability} this completes the proof of Theorem~\ref{mainthm:construction}.\eqref{enum:construction2}; 
in particular, we can take $\alpha,\beta,a,b\in\R$.
 
\begin{Rem}\label{rmk:generalizedBGexamplesholds}
The generalized Bogomolov--Gieseker inequality \eqref{eqn:genBG} was first proven for $\P^3$ in \cite{Macri:P3} and, soon after, for the smooth quadric hypersurface in $\P^4$ in \cite{Schmidt:Quadric}. 
The case of Fano threefolds of Picard rank one was treated in \cite{Li:FanoPic1}. 
The case of abelian threefolds is covered independently by \cite{MacPiy:ab3folds} and \cite{BMS:abelian3folds} (the full support property is now also known, see \cite{OPT:DTabelian}).
Recently, the case of the quintic threefolds has finally been settled in \cite{Li:Quintic3fold}.
For other cases with higher Picard rank, 
we refer to \cite{BMSZ:Fano,Dulip:Fano,koseki:1,koseki:2}.
Unfortunately, the inequality does not hold in general, at least for higher Picard rank, see \cite{Schmidt:counterexample}, as well as \cite[Appendix~A]{koseki:1} and \cite{MartinezSchmidt:counterexample}.
\end{Rem}

\section{Stability conditions from degeneration}\label{subsec:degenerations}

As an application of Proposition~\ref{prop:tiltstabfamily}, one can use degeneration to prove the generalized Bogomolov--Gieseker inequality.
The following is a variation of \cite[Proposition~3.2]{koseki:2}.

\begin{Prop}\label{prop:Koseki}
Let $g\colon \cX\to C$ be a smooth family of polarized threefolds over a Dedekind scheme $C$ of characteristic zero, and fix a point $0\in C$.
Consider an arbitrary $\Q$-divisor $B$ on $\cX$.
Let $B_0$ (resp. $B_\eta$) be the restriction of $B$ to the special fiber $\cX_0:=g^{-1}(0)$ (resp. the general fiber $\cX_\eta$).
If the generalized Bogomolov--Gieseker inequality holds for tilt-stable objects on $\cX_0$ with respect to $B_0$, i.e., for all $\sigma_{\alpha,B_0}$-stable objects $E$
\begin{equation*}
\nabla_{B_0}(E):=4\ch_{\cX_0,2}^{B_0}(E)^2-6\ch_{\cX_0,1}^{B_0}(E)\ch_{\cX_0,3}^{B_0}(E)- \alpha^2\Delta_0(E)\geqslant 0,
\end{equation*}
then it also holds for tilt-stable objects on $\cX_\eta$ with respect to $H_\eta$, $B_\eta$.
\end{Prop}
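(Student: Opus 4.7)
The plan is to use the relative tilt-stability from Proposition~\ref{prop:tiltstabfamily}, together with semistable reduction, to transport a given $\sigma_{\alpha,B_\eta}$-stable object on $\cX_\eta$ to a tilt-semistable object on $\cX_0$ with the same twisted Chern character, and then invoke the assumed inequality. Since $\mathrm{char}\,C = 0$, the classical Bogomolov--Gieseker inequality holds on each fiber of $g$, so Proposition~\ref{prop:tiltstabfamily} (extended to arbitrary $B$ via \cite[Section~3]{PT15:bridgeland_moduli_properties}) produces a weak stability condition $\us^{\alpha,B}$ on $\Db(\cX)$ over $C$ together with a weak HN structure $\sigma_C^{\alpha,B}$ whose heart is $\Coh^{B}\cX$.

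Given a $\sigma_{\alpha,B_\eta}$-stable $E_\eta$, I would first shrink $C$ to a neighborhood of $0$ and lift $E_\eta$ to an object $\tilde E \in \Coh^{B}\cX$, using Lemma~\ref{lem-open-restriction-es} to produce a lift in $\cD_C$ and then truncating in the $C$-local t-structure. Semistable reduction (Theorem~\ref{thm:Langton}, whose hypotheses are granted by Proposition~\ref{prop:tiltstabfamily}) next yields a $\sigma_C^{\alpha,B}$-semistable subobject $F \subset \tilde E$ with $F_\eta = E_\eta$; by Corollary~\ref{cor:Ctorsionautomatic} I can assume $F$ is $C$-torsion free, hence $C$-flat by Lemma~\ref{lem:FlatIffTFreeCurve}. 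Flatness forces the twisted Chern character $\ch_{\cX_s}^{B_s}(F_s)$ to be locally constant in $s \in C$, so in particular $\nabla_{B_0}(F_0) = \nabla_{B_\eta}(E_\eta)$. By Lemma~\ref{lem:allfibersnotdestquot} combined with Lemma~\ref{lem:inducedfiberprestabilityweak}, the restriction $F_0 \in \Coh^{B_0}\cX_0$ admits no $\sigma_{\alpha,B_0}$-destabilizing quotient, so every factor of its HN filtration with respect to $\sigma_{\alpha,B_0}$ is either $\sigma_{\alpha,B_0}$-semistable of the same phase as $F_0$, or lies in the subcategory $(\Coh^{B_0}\cX_0)^0$; by the classical BG inequality on $\cX_0$, the latter consists of torsion sheaves supported on points.

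The assumed inequality gives $\nabla_{B_0} \geq 0$ on the ``main-phase'' semistable HN factors, while $\nabla_{B_0}$ vanishes identically on the $\cA^0$-factors, and it remains to combine these into $\nabla_{B_0}(F_0) \geq 0$. The main obstacle is precisely this combination step: unlike the standard support-property argument of \cite[Lemma~A.6]{BMS:abelian3folds}, the potential $\cA^0$-part $A$ of the HN filtration of $F_0$ contributes a cross-term of the form $-6\,\ch_1^{B_0}(F_0)\,\ch_3(A)$ of the wrong sign, since $\ch_1^{B_0}(F_0) \geq 0$ by membership in $\Coh^{B_0}$ and $\ch_3(A) \geq 0$. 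The way around this should be to refine the choice of $F$ so that $F_0$ is genuinely $\sigma_{\alpha,B_0}$-semistable on $\cX_0$, for instance by iterating semistable reduction while modifying $F$ at $0$ through the noetherian torsion subcategory $\cA_C^0 \subset \cA_C$, or by an argument using the relative derived dual of $F$.
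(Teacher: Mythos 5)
Your proposal retraces the paper's degeneration strategy: set up the relative tilt-stability $\us^{\alpha,B}$ via the non-parallel extension of Proposition~\ref{prop:tiltstabfamily}, specialize a $C$-flat family over a Dedekind scheme to the fiber $\cX_0$, and invoke the assumed inequality there. The paper phrases this as a single appeal to properness of the relative moduli space $M_{\us^{\alpha,B}}(\vv)$, while you unpack the semistable-reduction step that underlies it, but the underlying mechanism is the same.

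The difficulty you flag at the end is genuine, and it in fact cuts against the paper's own one-paragraph proof as well as against your proposal. The relative tilt-stability $\us^{\alpha,B}$ is only a \emph{weak} stability condition on $\Db(\cX)$ over $C$, whereas Theorem~\ref{mainthm:construction} and Theorem~\ref{thm:modulispacesArtinstacks}.\eqref{enum:MAlgStackFT}, which the paper cites for properness of $M_{\us^{\alpha,B}}(\vv)\to C$, are proved only for honest stability conditions: the valuative criterion in Lemma~\ref{lem:ModuliSpaceValuativeCriteria} rests on Lemma~\ref{lem:allfibersstable}, which for a weak HN structure must be replaced by Lemma~\ref{lem:allfibersnotdestquot}. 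As you observe, that weaker statement only yields an $F_0\in\Coh^{B_0}\cX_0$ with no destabilizing quotient, not a tilt-semistable $F_0$; concretely, $F_0$ may admit a nonzero subobject $A$ in $(\Coh^{B_0}\cX_0)^0$, a sheaf supported on points. Your computation of the problematic cross-term is correct: writing $c=\ch_3(A)\ge 0$ one finds $\nabla_{B_0}(F_0)=\nabla_{B_0}(F_0/A)-6\,\ch_1^{B_0}(F_0)\,c \le \nabla_{B_0}(F_0/A)$, so the inequality for the semistable quotient $F_0/A$ does not propagate to $F_0$; and the usual combination argument \cite[Lemma~A.6]{BMS:abelian3folds} does not apply, because $\nabla_{B_0}$ is not the quadratic form witnessing the support property of the weak $\us^{\alpha,B}$, which is taken on $\oLambda^\sharp$ where $\ch_3$ has been quotiented out. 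An additional step is therefore needed to eliminate the possible $\cA^0$-subobject of $F_0$. Of your two suggested remedies, iterating modifications through $\cA_C^0$ is the more plausible; the derived-dual trick does not obviously help on its own, since $\nabla$ is invariant under the duality functor and the same cross-term reappears. The original statement and argument in \cite[Proposition~3.2]{koseki:2} should be consulted for how this point is treated; the conclusion of the proposition is not in doubt, but both the present proof and your sketch leave this step open.
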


\begin{proof}
Assume that there exists a tilt-stable object $E_\eta$ on $\cX_\eta$ violating the inequality.
We consider the relative tilt stability condition $\us^{\alpha, B}$ coming from the analogue of Proposition~\ref{prop:tiltstabfamily} for non parallel classes $\alpha H$ and $B$ (see Remark~\ref{rem:BomegaNot||}) and the relative moduli space $M_{\us^{\alpha, B}}(\vv)$ over $C$ with Chern character $\vv$ as the Chern character of a $C$-flat lift $E$ of $E_\eta$ to $\cX$.

Since $M_{\us^{\alpha, B}}(\vv)$ is proper over $C$ (by Theorem~\ref{mainthm:construction}, based on Theorem~\ref{thm:modulispacesArtinstacks}.\eqref{enum:MAlgStackFT}) and non-empty at the generic point, it is non-empty on the special fiber $\cX_0$, which is a contradiction.
\end{proof}
\begin{Rem}
The proof of \cite[Proposition~3.2]{koseki:2} gives the following variants. If $C = \bA^1_k$, and if all fibers
$\cX_b$ for $b \neq 0$ are isomorphic (as in the case of a toric
degeneration), then the generalized Bogomolov-Gieseker inequality for $\cX_0$ implies the same inequality for \emph{all} $\cX_b$. Without this assumption, we obtain the result for \emph{very general} $b \in C$, as any counterexample lives in a moduli space $M_{\us}(\vv)$ proper over $C$, and there are countably many choices for $\vv$.
\end{Rem}

\section{Donaldson--Thomas invariants}\label{sec:DT}

As pointed out in \cite{PT15:bridgeland_moduli_properties}, an immediate application of properness of the relative moduli space is that counting invariants of Donaldson--Thomas type arising from moduli spaces of stable objects in the derived category are actually deformation-invariant.

Let $X$ be a smooth projective Calabi--Yau threefold with $H^1(X,\cO_X)=0$ over the complex numbers.
We assume that the generalized Bogomolov--Gieseker inequality holds for tilt-stable objects in $\Db(X)$;
for example, by \cite{Li:Quintic3fold}, this holds for the quintic threefold.
We consider the open subset $\Stab^\dagger(\Db(X))$ of the space of stability conditions on $\Db(X)$ constructed in \cite{BMT:3folds-BG,BMS:abelian3folds} via the generalized Bogomolov--Gieseker inequality (or a variant of it).

For a stability condition $\sigma\in\Stab^\dagger(\Db(X))$ and a numerical class $\vv\in \Knum(\Db(X))$, we consider the moduli stack $\fM_\sigma(\vv)$. In the case $\fM_\sigma(\vv)=\fM^{\st}_\sigma(\vv)$ the results of \cite{HuybrechtsThomas:defo} show that the coarse moduli space $M_{\sigma}(\vv)$ has a symmetric perfect obstruction theory, and so a zero-dimensional virtual class $[M_{\sigma}(\vv)]^{\mathrm{vir}}$ and a
\emph{Donaldson--Thomas invariant} 
\index{DTsigma@$\mathrm{DT}_{\sigma}(\vv)$, Donaldson--Thomas invariant}
\[
\mathrm{DT}_{\sigma}(\vv) := \int_{[M_{\sigma}(\vv)]^{\mathrm{vir}}} 1 \in\Z.
\]
We can use Theorem~\ref{thm:modulispacesArtinstacks}.\eqref{enum:MAlgStackFT} and Theorem~\ref{mainthm:construction} together with \cite[Remark~5.4]{PT15:bridgeland_moduli_properties} (which is based on \cite{BF:NormalCone} and \cite[Corollary~4.3]{HuybrechtsThomas:defo}) to deduce the invariance of $\mathrm{DT}_{\sigma}(\vv)$ under complex deformations of $X$.
In particular, by \cite{Li:Quintic3fold}, we get the following result:

\begin{Thm}\label{thm:DTquintic} 
The Donaldson--Thomas invariants $\mathrm{DT}_{\sigma}(\vv)$ counting stable objects on a smooth quintic threefold $X$, with respect to $\sigma \in \Stab^\dag(X)$, are deformation-invariant.
\end{Thm}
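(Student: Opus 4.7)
The plan is to assemble a proper relative moduli space from the family stability machinery and then appeal to the standard deformation invariance of virtual degrees. Given a complex deformation of the quintic $X$ parameterised by a connected base $S$, I would form the corresponding universal family $g \colon \cX \to S$ of smooth quintic threefolds and invoke \cite{Li:Quintic3fold} to conclude that the generalised Bogomolov--Gieseker inequality of \cite{BMT:3folds-BG, BMS:abelian3folds} holds on every fibre. Proposition~\ref{prop:Bridgstabfamily} (i.e.\ Theorem~\ref{mainthm:construction}.\eqref{enum:construction2}) then produces a stability condition $\us$ on $\Db(\cX)$ over $S$, whose fibres $\sigma_s$ are the double-tilt stability conditions on $\cX_s$ that belong to $\Stab^\dagger(\Db(\cX_s))$.

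Next, for the fixed class $\vv \in \Knum(\Db(X))$, Theorem~\ref{thm:modulispacesArtinstacks}.\eqref{enum:MAlgStackFT} combined with Theorem~\ref{mainthm:construction} provides the relative moduli stack $\fM_{\us}(\vv) \to S$ as an algebraic stack of finite type. Under the hypothesis $\fM_{\sigma_0}(\vv) = \fM_{\sigma_0}^{\st}(\vv)$ on one fibre, openness of geometric stability (condition~\eqref{enum:stabilityopenbasechange} of Definition~\ref{def:familyfiberstabilities}) and constructibility of the properly semistable locus ensure that this equality propagates to an open neighbourhood in $S$. Restricting $S$ if needed, the moduli space $M_{\us}(\vv)$ is a $\G_m$-gerbe over an algebraic space proper over $S$.

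With properness in hand, I would then equip $M_{\us}(\vv) \to S$ with a relative symmetric perfect obstruction theory, which is the family version of the construction of \cite[Corollary~4.3]{HuybrechtsThomas:defo}: the smoothness of $g$ together with the triviality of the relative canonical bundle yields relative Serre duality of amplitude $3$ on $\cX/S$, which is precisely what is needed for the self-duality of the truncated Atiyah class on the derived $\lExt$-sheaves of a universal family (étale-locally on $M_{\us}(\vv)$). The degree of the resulting zero-dimensional virtual fundamental class is then locally constant on $S$ by \cite{BF:NormalCone}, exactly as laid out in \cite[Remark~5.4]{PT15:bridgeland_moduli_properties}. This gives deformation invariance for the particular fibrewise stability condition $\sigma_s$ coming from $\us$; for an arbitrary $\sigma \in \Stab^\dagger(\Db(X))$ at a fixed $X$, one reduces to this case via wall-crossing within the connected component $\Stab^\dagger(X)$, along which $\mathrm{DT}_{\sigma}(\vv)$ is constant in the chamber where $\fM_{\sigma}(\vv) = \fM_{\sigma}^{\st}(\vv)$.

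The main obstacle I expect is the careful construction and verification of the relative symmetric perfect obstruction theory over a possibly singular base $S$, and in particular the compatibility of its fibrewise restrictions with the absolute obstruction theories of \cite{HuybrechtsThomas:defo}. This is essentially bookkeeping once one has a universal family étale-locally on $M_{\us}(\vv)$, but it is the nontrivial technical step; everything else is an assembly of the results already proved earlier in the paper.
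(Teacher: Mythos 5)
Your proposal matches the paper's proof: both rest on Li's result for the Bogomolov--Gieseker inequality on quintics, the double-tilt construction of a stability condition over the base (Theorem~\ref{mainthm:construction}.\eqref{enum:construction2}), the relative moduli space from Theorem~\ref{thm:modulispacesArtinstacks}, and the relative symmetric obstruction theory / invariance of virtual degree from \cite[Remark~5.4]{PT15:bridgeland_moduli_properties}, \cite{BF:NormalCone}, \cite{HuybrechtsThomas:defo}. The paper gives this argument in a single condensed paragraph, citing the same chain of results you identify, so your expanded write-up is essentially the same proof rather than a different route.
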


\newpage
\part{Moduli spaces for Kuznetsov components of cubic fourfolds}\label{part:CubicFourfolds}

\section{Main applications to cubic fourfolds}\label{sec:MainResultsCubics}

Let $X \subset \mathbb{P}^5$ be a smooth cubic fourfold. Its \emph{Kuznetsov component} is the admissible subcategory defined by 
\index{Ku(X)@$\Ku(X)$, Kuznetsov component of cubic fourfolds}
\[
\Ku(X) := \mathcal{O}_X^\perp \cap \mathcal{O}_X(H)^\perp \cap \mathcal{O}_X(2H)^\perp \subset \Db(X),
\]
where $H$ denotes the hyperplane class. 
The goal of this final part of the paper is to describe the structure of moduli spaces of stable objects in $\Ku(X)$. 
We will work over the complex numbers throughout. 
Many of our arguments can be adapted to positive characteristic (and have interesting applications in that setting that will be discussed elsewhere), but the strongest results can be proved over $\C$. 

The category $\Ku(X)$ shares many properties with the derived category of K3 surfaces. 
Its foundations were developed in \cite{Kuz:fourfold, AT:CubicFourfolds, Huy:cubics}; see \cite{Huy:survey,MS:survey} for surveys of those results. 
In particular, we recall: 
\begin{enumerate}[{\rm (1)}] 
\item $\Ku(X)$ is a $2$-Calabi--Yau category: $\Hom(E, F) = \Hom(F, E[2])^\vee$.
\item The topological $K$-theory of $\Ku(X)$, along with the faithful functor $\Ku(X) \to \Db(X)$ and the Hodge structure on $H^4(X,\Z)$ equips $\Ku(X)$ with an extended \emph{Mukai lattice}, which 
we denote by $\tH(\Ku(X), \Z)$: 
\index{Htilde(Ku(X),Z)@$\tH(\Ku(X), \Z)$, extended Mukai lattice}
as a lattice, it is isomorphic to $H^*(S,\Z)$, for any K3 surface $S$; it carries a weight two Hodge structure with $h^{2, 0} = 1$;
and it admits a Mukai vector $v \colon K(\Ku(X)) \to \tH(\Ku(X), \Z)$ satisfying $(v(E), v(F)) = - \chi(E,F)$.
\end{enumerate}

The Mukai lattice embeds into $K_{\mathrm{top}}(X) \subset H^*(X, \Q)$ as the right orthogonal complement of the classes of $\cO_X, \cO_X(H), \cO_X(2H)$.
We denote the sublattice of integral $(1,1)$-classes by $\tH_{\Hdg}(\Ku(X), \Z)$ and the image of the Mukai vector by $\tH_{\mathrm{alg}}(\Ku(X), \Z)$;
the latter is isomorphic to $K_{\mathrm{num}}(\Ku(X))$.
The rational Hodge conjecture for cubic fourfolds (proved in \cite{zucker:HodgeConjecture}; see also \cite{ConteMurre:RationalHodge} for a short proof) shows that $\tH_{\Hdg}(\Ku(X), \mathbb{Q})$ is isomorphic to $\tH_{\mathrm{alg}}(\Ku(X), \mathbb{Q})$.
By the integral Hodge conjecture \cite{Voisin:Hodgeaspects}, the two groups are actually isomorphic over $\Z$.
While the rational Hodge conjecture will be used later in the proof of Lemma~\ref{lem:MukaiHomomorphismClosedPoints}, the integral version is not needed in our argument, and in fact it will also follow from our results (see Corollary~\ref{cor:integralHdg}).
\index{HtildeHdg(Ku(X),Z)@$\tH_{\Hdg}(\Ku(X), \Z)$, sublattice of integral $(1,1)$-classes}
\index{Htildealg(Ku(X),Z)@$\tH_{\mathrm{alg}}(\Ku(X), \Z)\cong K_{\mathrm{num}}(\Ku(X))$, sublattice of algebraic classes}

By \cite[Theorem~1.2 and Remark~9.11]{BLMS}, one can explicitly describe a non-empty connected open subset $\Stab^\dagger(\Ku(X))$\index{StabDagger(KuX)@$\Stab^\dagger(\Ku(X))$, connected component of $\Stab(\Ku(X))$ containing geometric stability conditions} in the space of numerical Bridgeland stability conditions on $\Ku(X)$ (with respect to the lattice $\tH_{\Hdg}(\Ku(X),\Z)$ and the Mukai vector);
it is the covering of a certain period domain, which is defined analogously to the case of K3 surfaces, treated in \cite{Bridgeland:K3}.
We can then extend \cite[Theorem~1.1]{Bridgeland:K3} as follows.

\begin{Thm}\label{thm:ConnectedComponentStab}
The open subset $\Stab^\dagger(\Ku(X))$ is a connected component in $\Stab(\Ku(X))$.
\end{Thm}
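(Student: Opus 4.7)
The plan is to follow Bridgeland's strategy for K3 surfaces in \cite[Theorem~1.1]{Bridgeland:K3}, with Theorem~\ref{thm:YoshiokaMain} providing the crucial non-emptiness input that replaces Yoshioka's theorem.

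Recall from \cite[Theorem~1.2 and Remark~9.11]{BLMS} that $\Stab^\dagger(\Ku(X))$ is a connected open subset of $\Stab(\Ku(X))$, on which the forgetful map factors through a covering
\[
\cZ \colon \Stab^\dagger(\Ku(X)) \to P^+_0(\Ku(X)) \subset \Hom(\tH_{\Hdg}(\Ku(X),\Z),\C),
\]
where $P^+_0(\Ku(X))$ is the open subset of those $Z$ whose real and imaginary parts span a positive-definite $2$-plane in $\tH_{\Hdg}(\Ku(X),\R)$ with respect to the Mukai pairing, minus the countable union of spherical walls $\{Z : Z(v)=0,~v\in\tH_{\Hdg}(\Ku(X),\Z),~v^2=-2\}$. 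Since $\Stab^\dagger(\Ku(X))$ is connected and open, to conclude that it is a connected component of $\Stab(\Ku(X))$ it suffices to show that it is closed.

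Assume $\sigma_n \in \Stab^\dagger(\Ku(X))$ converges to $\sigma \in \Stab(\Ku(X))$. The Deformation Theorem~\ref{thm:deformstability} produces an open neighborhood $V$ of $\sigma$ and a quadratic form $Q$ such that $\cZ|_V$ is a homeomorphism onto its image and every $\sigma'\in V$ satisfies the support property with respect to the same $Q$; shrinking $V$, the support estimate can be made uniform (cf.\ \cite[Proposition~A.3]{BMS:abelian3folds}), giving a constant $C>0$ with $|Z(\sigma')(v)|\geq C\|v\|$ for every $\sigma'\in V$ and every $\sigma'$-semistable class $v$. Since $\sigma_n \in V$ for $n\gg 0$, the limit $Z_\sigma := \cZ(\sigma) = \lim_n Z(\sigma_n)$ lies in $\overline{P^+_0}$. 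The positivity of the associated $2$-plane is preserved in the limit, because $Q$ can be chosen comparable to the Mukai form for $\sigma_n \in \Stab^\dagger$, a property that persists throughout $V$. Hence the only remaining way for $Z_\sigma\notin P^+_0$ would be that $Z_\sigma$ sits on a spherical wall, i.e., $Z_\sigma(v)=0$ for some $v\in\tH_{\Hdg}(\Ku(X),\Z)$ with $v^2=-2$.

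Suppose, for contradiction, that such $v$ exists. By the local finiteness of $v$-walls (Lemma~\ref{lem:wallcrossinglocallyfinite}), generic stability conditions with respect to $v$ are dense in any neighborhood of $\sigma$ in $\Stab^\dagger$, so we may assume that each $\sigma_n$ is generic for $v$. The $v^2=-2$ case of Theorem~\ref{thm:YoshiokaMain} then guarantees that $M_{\sigma_n}(\Ku(X),v)\neq\emptyset$, so we can pick a $\sigma_n$-stable object $E_n\in\Ku(X)$ of class $v$. The uniform support estimate yields $|Z(\sigma_n)(v)|\geq C\|v\| > 0$ for all large $n$, contradicting $Z(\sigma_n)(v)\to Z_\sigma(v)=0$. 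Hence $Z_\sigma\in P^+_0$, and $V\cap\cZ^{-1}(P^+_0)$ is a connected open neighborhood of $\sigma$ in $\Stab(\Ku(X))$ containing $\sigma_n\in\Stab^\dagger$ for $n\gg 0$; since $\Stab^\dagger(\Ku(X))$ is by definition a connected component of $\cZ^{-1}(P^+_0)$, this entire neighborhood is contained in $\Stab^\dagger$, and in particular $\sigma\in\Stab^\dagger(\Ku(X))$. The main technical obstacle is precisely this spherical-wall analysis, which is unlocked by Theorem~\ref{thm:YoshiokaMain} and hence ultimately by the relative framework developed in the preceding parts of the paper.
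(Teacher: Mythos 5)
Your overall strategy (show $\Stab^\dagger(\Ku(X))$ is closed, use the covering map to $\fP^+_0$, invoke Theorem~\ref{thm:YoshiokaMain} to rule out landing on the boundary of the period domain) matches the paper, but there is a genuine gap at the step where you assert that ``the positivity of the associated $2$-plane is preserved in the limit, because $Q$ can be chosen comparable to the Mukai form.'' This claim is unjustified and, more importantly, sidesteps exactly the hard case. The support property gives you a quadratic form $Q$ that is negative definite on $\ker Z_\sigma$, but $Q$ need not agree with the Mukai pairing, and there is no a priori reason the Mukai pairing remains negative definite on $\ker Z_\sigma$ as $\sigma$ moves to the boundary. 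The boundary of $\fP^+_0$ has two strata: either $\ker Z_\sigma$ contains a spherical class $\delta$ with $\delta^2=-2$, or $\ker Z_\sigma$ contains a \emph{real} class $\ww$ with $\ww^2 \ge 0$ (this is precisely the failure of positive-definiteness of the $2$-plane). You only treat the spherical case. The paper handles the second case as well, and it again requires Theorem~\ref{thm:YoshiokaMain}: one approximates $\ww$ by rational classes $\ww_i \in \tH_\Hdg(\Ku(X),\Q)$ with $\ww_i^2 \ge 0$; non-emptiness of $M_\sigma(\Ku(X), n\ww_i)$ for suitable $n$ (using that semistability for a given object is a closed condition in $\Stab$, so the non-emptiness persists at the limit point $\sigma$) forces $Q(\ww_i)\ge 0$, and passing to the limit gives $Q(\ww)\ge 0$, contradicting the negative definiteness of $Q$ on $\ker Z_\sigma$. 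Without this argument, your proof is incomplete: the spherical-wall analysis you carry out is correct, but it assumes precisely the conclusion (that $Z_\sigma$ lies in $\fP^+$) that the omitted case is needed to establish.
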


The connected component $\Stab^\dagger(\Ku(X))$ is realized as a covering $\eta\colon\Stab^\dagger(\Ku(X))\to\fP_0^+$, where $\fP_0^+$ is a period domain defined as follows. 
We take $\fP\subset\tH_{\Hdg}(\Ku(X),\mathbb{C})$ as the open subset consisting of those vectors whose real and imaginary parts span positive-definite two-planes. 
\index{P@$\fP$, generalized period domain}
With $\Delta:=\{\delta\in\tH_\Hdg(\Ku(X), \Z)\colon (\delta,\delta)=-2\}$,
we set
\[
\fP_0 := \fP \setminus \bigcup_{\delta\in\Delta} \delta^\perp,
\]
which has two connected components; we let $\fP_0^+$ be the one containing the image under $\eta$ of the examples of stability conditions constructed in \cite[Theorem~1.2]{BLMS}.

Let $\vv \in \tH_{\Hdg}(\Ku(X), \Z)$ be a non-zero primitive class, and let $\sigma \in\mathrm{Stab}^\dagger(\Ku(X))$.
Theorem~\ref{thm:ConnectedComponentStab} relies on our second main result, which concerns the existence and non-emptiness of the moduli space $M_\sigma(\Ku(X), \vv)$ of $\sigma$-stable objects in $\Ku(X)$ with Mukai vector $\vv$. It is the analogue of a long series of results \cite{Beauville:HK,Mukai:Symplectic,Mukai:BundlesK3,O'Grady:weight2,Huybrechts:BirationaSymplectic,Yoshioka:Abelian} on moduli spaces of sheaves on K3 surfaces, the last one being \cite[Theorems 0.1 and 8.1]{Yoshioka:Abelian}.

\begin{Thm}\label{thm:YoshiokaMain}
Let $X$ be a cubic fourfold.
Then
\index{HtildeHdg(Ku(X),Z)@$\tH_{\Hdg}(\Ku(X), \Z)$, sublattice of integral $(1,1)$-classes}\index{Htildealg(Ku(X),Z)@$\tH_{\mathrm{alg}}(\Ku(X), \Z)\cong K_{\mathrm{num}}(\Ku(X))$, sublattice of algebraic classes}
\[
\tH_{\Hdg}(\Ku(X),\Z)=\tH_{\mathrm{alg}}(\Ku(X),\Z).
\]
Moreover, assume that $\vv\in \tH_{\Hdg}(\Ku(X),\Z)$ is a non-zero primitive vector and let $\sigma\in\Stab^\dagger(\Ku(X))$ be a stability condition on $\Ku(X)$ that is generic with respect to $\vv$.
Then
\begin{enumerate}[{\rm (1)}]
\item \label{enum:YoshiokaMain1} $M_{\sigma}(\Ku(X),\vv)$ is nonempty if and only if $\vv^2\geqslant-2$. 
Moreover, in this case, it is a smooth projective irreducible holomorphic symplectic variety of dimension $\vv^2 + 2$, deformation-equivalent to a Hilbert scheme of points on a K3 surface.
\item \label{enum:YoshiokaMain2} If $\vv^2\geqslant 0$, then there exists a natural Hodge isometry \[\theta\colon H^2(M_\sigma(\Ku(X),\vv),\Z)\xrightarrow{\quad\sim\quad}\begin{cases}\vv^\perp & \text{if }\vv^2>0\\ \vv^\perp/\Z\vv & \text{if } \vv^2=0,\end{cases}\]
where the orthogonal is taken in $\tH(\Ku(X),\Z)$. 
\end{enumerate}
\end{Thm}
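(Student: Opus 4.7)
The plan is to reduce the structural statements about $M_\sigma(\Ku(X),\vv)$ to the corresponding results for Bridgeland moduli spaces on (possibly twisted) K3 surfaces, by deforming in a family of cubic fourfolds that connects the given $X$ with a cubic fourfold carrying an associated K3 surface; the relative moduli spaces constructed via Theorems~\ref{MainThm:IntroConstruction} and~\ref{MainThm:IntroProper} will then transport all of the desired conclusions.

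I would first dispose of the integral statement $\tH_{\Hdg}(\Ku(X),\Z)=\tH_{\mathrm{alg}}(\Ku(X),\Z)$, which is logically prior since it is needed even to speak about $M_\sigma(\Ku(X),\vv)$ for a Hodge class $\vv$. Voisin's integral Hodge conjecture for cubic fourfolds \cite{Voisin:Hodgeaspects} implies that every integral class in $H^{2,2}(X,\Z)$ lies in the image of the Mukai vector $K(\Db(X))\to H^{\mathrm{ev}}(X,\Z)$. The projection functor $\Db(X)\to\Ku(X)$ induces the orthogonal projection on Mukai vectors onto $\langle v(\cO_X),v(\cO_X(H)),v(\cO_X(2H))\rangle^\perp=\tH(\Ku(X),\Z)$, so it fixes every class already in that orthogonal complement. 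Hence every Hodge class in $\tH(\Ku(X),\Z)$ is the Mukai vector of an object of $\Ku(X)$, giving the desired identification.

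Next, I would fix the primitive $\vv\in\tH_{\Hdg}(\Ku(X),\Z)$ and a generic $\sigma\in\Stab^\dagger(\Ku(X))$, and construct a connected smooth quasi-projective base $B$, a family of cubic fourfolds $g\colon\cX\to B$, and a flat section $\vv_B$ of the local system of Mukai lattices lifting $\vv$ and remaining of Hodge type over every point of $B$, such that $B$ contains both the given $X$ and a special cubic $X_0$ admitting an equivalence $\Ku(X_0)\simeq\Db(S_0,\alpha_0)$ with $(S_0,\alpha_0)$ a projective (possibly twisted) K3 surface, and with $(\vv_B)_{X_0}$ a primitive Mukai vector on $(S_0,\alpha_0)$. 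The existence of such a base is a Hodge-theoretic statement about the density of the relevant Hassett divisors inside each connected component of the Noether--Lefschetz locus $\cC_\vv$, and will be extracted from the lattice-theoretic analysis of $\tH(\Ku(X),\Z)$ worked out elsewhere in Part~\ref{part:CubicFourfolds}. Applying the version of Theorem~\ref{MainThm:IntroConstruction} for the relative Kuznetsov component proved in Part~\ref{part:CubicFourfolds}, I obtain a stability condition $\us$ on $\Ku(\cX/B)$ over $B$ that specializes over $X_0$ to an element of $\Stab^\dagger(\Db(S_0,\alpha_0))$. By Theorem~\ref{MainThm:IntroProper}, the relative moduli space $M_{\us}(\vv_B)\to B$ is then a proper algebraic space of finite type. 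Over $X_0$, the classical results \cite{Mukai:Symplectic,Mukai:BundlesK3,O'Grady:weight2,Huybrechts:BirationaSymplectic,Yoshioka:Abelian,Toda:K3,BM:proj} provide non-emptiness iff $\vv^2\geq-2$, smoothness, projectivity, deformation-equivalence to a Hilbert scheme of points on a K3, and the Mukai Hodge isometry. Properness and upper-semicontinuity of fiber dimension, together with Ehresmann's fibration theorem applied to the locus where the fibers are smooth of dimension $\vv^2+2$, then spread non-emptiness, smoothness, dimension and IHS deformation-type from $X_0$ to all of $B$, and in particular to the fiber over $X$.

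For the Hodge isometry in part~(2), the map $\theta$ will be defined via a (quasi-)universal family $\cE$ on $M_\sigma(\Ku(X),\vv)\times X$ by the usual formula
\[\theta(\blank)=\left[p_{M*}\!\left(\ch(\cE)\cdot\pi^*\!\left(\blank\cdot\sqrt{\td_X}\right)\right)\right]_1\]
restricted to $\vv^\perp$. That $\theta$ is an isomorphism of integral Hodge structures preserving the Mukai pairing will again be checked on the fiber $X_0$, where it reduces to the classical Mukai isometry; compatibility with the local systems of Mukai lattices and of $H^2$ of $M_{\us}(\vv_B)\to B$, both locally constant by Ehresmann, will then transport the statement to the fiber over $X$. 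The hard part throughout will be the construction and Hodge-theoretic control of the family $g\colon\cX\to B$, in particular ensuring that $\cC_\vv$ is connected in the right way and meets the appropriate Hassett divisors; this is also the step in which the detailed arithmetic of the extended Mukai lattice of $\Ku(X)$ enters in an essential way.
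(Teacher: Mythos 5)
Your overall strategy -- deform within the Hodge locus for $\vv$ to a cubic with $\Ku(X_0)\simeq\Db(S_0,\alpha_0)$, use the relative moduli space over the connecting family, and transport the known K3-side results -- is exactly the paper's strategy. There are, however, two points where your proposal diverges from the paper in a way that matters.

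First, you treat $\tH_{\Hdg}(\Ku(X),\Z)=\tH_{\mathrm{alg}}(\Ku(X),\Z)$ as ``logically prior'' and propose to deduce it directly from Voisin's integral Hodge conjecture for cubic fourfolds. That is a valid alternative route, but it is not what the paper does, and the paper is deliberate about this: the paper only uses the \emph{rational} Hodge conjecture (in Lemma~\ref{lem:MukaiHomomorphismClosedPoints}), proves non-emptiness of $M_\sigma(\Ku(X),\vv)$ for primitive $\vv$ with $\vv^2\ge -2$ \emph{first} via the proper relative moduli space, and then obtains $\tH_{\Hdg}=\tH_{\mathrm{alg}}$ as a one-line corollary by writing any Hodge class as a sum of Hodge classes of nonnegative square. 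This order is what allows the paper to \emph{reprove} the integral Hodge conjecture for cubic fourfolds as Corollary~\ref{cor:integralHdg}; in your version, that corollary becomes circular. You should also note that your premise (``the integral statement is needed even to speak about $M_\sigma(\Ku(X),\vv)$'') is not quite right: the moduli stack is well-defined for any $\vv$ in the local system that is Hodge on every fiber, and non-emptiness is exactly what forces algebraicity.

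Second, there is a genuine gap in how you transport projectivity. You spread non-emptiness, smoothness, dimension, and IHS deformation type via properness and Ehresmann, but projectivity is \emph{not} a deformation-invariant property of irreducible holomorphic symplectic manifolds, and the relative moduli space over a curve is a priori only a proper algebraic space. The paper closes this gap using the relative positivity class $\ell_{\us}$ of Theorem~\ref{thm:PositivityLemmaFamily}: $\ell_{\sigma_{c_1}}$ is ample over a K3 fiber by \cite{BM:proj}, hence ample on a Zariski open set of fibers, and then semicontinuity of $h^0$ plus the Base Point Free Theorem (together with strict nefness of $\ell_\sigma$) force ampleness of $\ell_\sigma$ on the fiber over $X$. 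Some argument of this kind is required; ``spread projectivity from $X_0$'' is not available on its own. A more minor but related point is that the equivalence $\Ku(X_0)\simeq\Db(S_0,\alpha_0)$ must be arranged to take $\Stab^\dagger(\Ku(X_0))$ into $\Stab^\dagger(S_0,\alpha_0)$ (a ``$\dagger$-equivalence'' in the paper's terminology), which takes an extra lemma (Proposition~\ref{proposition-specialize-twisted}); otherwise the K3-side non-emptiness and Positivity Lemma results do not apply.
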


Here \emph{generic} means that $\sigma$ is not on a wall: since $\vv$ is primitive, this means that stability and semistability coincide for objects of Mukai vector $\vv$.

The embedding $\vv^\perp \into \tH(\Ku(X), \Z)$ identifies the latter with the Markman--Mukai lattice of $M_\sigma(\Ku(X), \vv)$, which determines the birational class of $M_\sigma(\Ku(X), \vv)$ by Markman's global Torelli theorem \cite[Corollary~9.9]{Eyal:survey}; moreover, the Hodge classes of $\tH(\Ku(X, \Z)$ control the Mori cone by \cite{BHT, Mongardi:note}.

\begin{Rem}
Assume now that $\vv$ is not primitive, i.e., $\vv=m\vv_0$, for some $m>1$, and $\sigma$ a $\vv$-generic stability condition.
Then the previous theorem implies immediately that the moduli space $M_{\sigma}(\Ku(X),\vv)$ is non-empty if $\vv_0^2\geqslant-2$; conversely, if $\vv_0^2 < -2$, one can show easily by induction on $m$ that $M_{\sigma}(\Ku(X),\vv)$ is empty. 
If the good moduli space $M_\sigma(\Ku(X),\vv)$ is normal, one can prove further that $M_\sigma(\Ku(X),\vv)$ is an irreducible proper algebraic space (by using a similar argument as in \cite[Theorem~4.4]{KLS:SingSymplecticModuliSpaces}). 
Moreover, either $\dim M_\sigma(\Ku(X),\vv)=\vv^2+2$ and $M_{\sigma}^{\st}(\Ku(X),\vv)\neq\emptyset$, or $m>1$ and $\vv^2\leqslant0$.
\end{Rem}

Theorem~\ref{thm:YoshiokaMain} is proved by deformation to the case where $\Ku(X)$ is known to be equivalent to the derived category of a K3 surface. 
Such deformation arguments rely on relative moduli spaces of Bridgeland stable objects, given by the following result.

\begin{Thm}\label{thm:Msigmarelative}
Let $\cX \to S$ be a family of cubic fourfolds, where $S$ is a connected quasi-projective variety over $\C$. 
Let $\vv$ be a primitive section of the local system of the Mukai lattices $\tH(\Ku(\cX_s), \Z)$ of the fibers, such that $\vv$ is algebraic on all fibers. 
Assume that, for a closed point $s_0 \in S$ not contained in any Hodge locus, there exists a stability condition $\tau_{s_0} \in \mathrm{Stab}^\dagger(\Ku(\cX_{s_0}))$ that is generic with respect to $\vv$, and whose central charge $Z_{s_0} \colon \tH_{\Hdg}(\Ku(\cX_{s_0}),\Z)\to \mathbb{C}$ is invariant
under the monodromy action induced by the inclusion $\tH_{\Hdg}(\Ku(\cX_{s_0}),\Z) \subset H^*(X, \Q)$.
\begin{enumerate}[{\rm (1)}]
\item \label{enum:algspace}
If $S = C$ is a curve, then there exists an algebraic space $\tM(\vv)$, and a smooth proper morphism $\tM(\vv) \to C$ that makes $\tM(\vv)$ a relative moduli space over $C$: 
the fiber over any point $c \in C$ is a coarse moduli space $M_{\sigma_c}(\Ku(\cX_c), \vv)$ of stable objects in the Kuznetsov component of the corresponding cubic fourfold for some stability condition $\sigma_c$.
\item \label{enum:ProjMorph}
There exist a non-empty open subset $S^0 \subset S$, a quasi-projective variety $M^0(\vv)$, and a smooth \emph{projective} morphism $M^0(\vv) \to S^0$ that makes $M^0(\vv)$ a relative moduli space over $S^0$.
\item \label{enum:goodrelativemoduli}
There exists an algebraic space $M(\vv)$ and a proper morphism $M(\vv) \to S$ such that every fiber is a
good moduli space $M_{\sigma_s}(\Ku(\cX_s), \vv)$ of semistable objects.
\end{enumerate}
In all cases, we can choose $\us$ such that
$M_{\sigma_{s_0}}(\Ku(\cX_{s_0}), \vv) = M_{\tau_{s_0}}(\Ku(\cX_{s_0}), \vv)$.
\end{Thm}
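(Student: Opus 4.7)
The plan is to construct a stability condition $\us$ on $\Ku(\cX/S)$ over $S$ whose restriction $\sigma_{s_0}$ lies in the same chamber as $\tau_{s_0}$ for $\vv$, and then extract the three statements from Theorem~\ref{thm:modulispacesArtinstacks} and Theorem~\ref{thm:PositivityLemmaFamily}. Part~\eqref{enum:goodrelativemoduli} is essentially automatic once $\us$ is in place: Theorem~\ref{thm:modulispacesArtinstacks}.\eqref{enum:GoodModuliChar0} gives a proper good moduli space $M(\vv) := M_\us(\vv) \to S$ in characteristic zero. The remaining parts reduce to identifying open subsets where stability and semistability coincide, and verifying smoothness and (relative) projectivity there.

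To build $\us$, I would proceed in three steps. First, using Proposition~\ref{prop:RotatingTiltStability} I produce a flat family of weak rotated-tilt stability conditions $\us^{\alpha,\beta\sharp\gamma}$ on $\Db(\cX/S)$ with respect to a suitable quotient of the relative numerical lattice; the parameters are chosen fiberwise as in \cite{BLMS}. Second, since $\cO_\cX$, $\cO_\cX(H)$, $\cO_\cX(2H)$ form a relative exceptional collection satisfying fiberwise the hypotheses of Corollary~\ref{cor:induce-relative-t-fiberwise}, and condition~\eqref{enum:boundednessforinducing} of Theorem~\ref{thm:InducingStabilityOverBase} follows from the absolute boundedness statement in \cite[Proposition~5.1]{BLMS} together with Lemma~\ref{lem:CPIvbounded}, I obtain an induced stability condition $\us^0$ on $\Ku(\cX/S)$ with respect to the lattice $\Lambda$ given by the monodromy-invariant sublattice of $\tH_{\Hdg}(\Ku(\cX_{s_0}),\Z)$; the hypothesis that $\vv$ is a global section of the Mukai local system, algebraic on all fibers, places $\vv$ in $\Lambda$. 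Third, the monodromy-invariance of $Z_{s_0}$ lets it factor through $\Lambda$, so $Z_{s_0} \in \Hom(\Lambda,\C)$; by Theorem~\ref{thm:deformfamiliystability} the forgetful map $\Stab_\Lambda(\Ku(\cX/S)) \to \Hom(\Lambda,\C)$ is a local isomorphism, and by the connectedness statement of Theorem~\ref{thm:ConnectedComponentStab} combined with Lemma~\ref{lem:wallcrossinglocallyfinite}, a path lift of a path from $\cZ(\us^0_{s_0})$ to $Z_{s_0}$ produces $\us$ with $\sigma_{s_0}$ in the same $\vv$-chamber as $\tau_{s_0}$, so that $M_{\sigma_{s_0}}(\Ku(\cX_{s_0}),\vv) = M_{\tau_{s_0}}(\Ku(\cX_{s_0}),\vv)$.

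Given $\us$, the open substack $\fM_\us^\st(\vv) \subset \fM_\us(\vv)$ is smooth over $S$, because in the $2$-Calabi--Yau setting the trace map $\Ext^2(E,E)\to\C$ is an isomorphism for simple objects, so all deformations are unobstructed and the relative dimension is $\vv^2+2$ on every geometric fiber by Theorem~\ref{thm:YoshiokaMain}.\eqref{enum:YoshiokaMain1}. For part~\eqref{enum:ProjMorph}, I define $S^0\subset S$ as the open locus where $\sigma_s$ is $\vv$-generic; it is the complement of a locally finite union of closed subsets (the intersections of Hodge loci in $S$ with wall-loci of $\vv$, using Lemma~\ref{lem:wallcrossinglocallyfinite}) and contains $s_0$. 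Over $S^0$, $\fM_\us(\vv) = \fM_\us^\st(\vv)$ is a $\G_m$-gerbe over the smooth algebraic space $M^0(\vv)$; Theorem~\ref{thm:PositivityLemmaFamily} provides a relatively strictly nef class $\ell_\us \in N^1(M^0(\vv)/S^0)$, whose restriction to each fiber is ample (this reduces to the absolute case, where it follows from \cite[Theorem~4.1]{BM:proj} together with the K3 surface case via the deformation used to prove Theorem~\ref{thm:YoshiokaMain}), hence $M^0(\vv) \to S^0$ is projective. For part~\eqref{enum:algspace}, when $S = C$ is a curve, at each of the countably many closed points $c \in C$ where an extra Hodge class of $\Ku(\cX_c)$ appears, a small perturbation of $Z_{s_0}$ within the $\vv$-chamber of $\tau_{s_0}$ kills the finitely many real-codimension-one wall conditions imposed at $c$; after lifting via Theorem~\ref{thm:deformfamiliystability}, we obtain $\us$ for which every $\sigma_c$ is $\vv$-generic, so $\tM(\vv) := M(\vv) \to C$ has smooth fibers of constant dimension $\vv^2+2$, and miracle flatness together with the smoothness of the fibers makes $\tM(\vv) \to C$ smooth and proper.

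The main obstacle is arranging $\vv$-genericity simultaneously on every fiber for part~\eqref{enum:algspace}: walls appear not only from $\Lambda$ but also from non-invariant Hodge classes specific to individual fibers, and although each such wall imposes only a real-codimension-one condition on the central charge, one must combine a genericity argument inside the finite-dimensional space $\Hom(\Lambda,\C)$ with the covering property of $\Stab_\Lambda(\Ku(\cX/S))$ and with the invariance of the $\vv$-chamber of $\tau_{s_0}$ under perturbation, to deduce that a monodromy-invariant perturbation exists avoiding all countably many extra walls encountered along $C$. The projectivity step also requires showing the positivity of $\ell_\us$ on fibers, which is where the absolute Positivity Lemma \cite[Theorem~4.1]{BM:proj} and the HK description in Theorem~\ref{thm:YoshiokaMain} are indispensable.
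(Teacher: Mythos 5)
The construction of $\us$ for part \eqref{enum:algspace} has a genuine gap. You propose to deform $Z_{s_0}$ inside $\Hom(\Lambda,\C)$, with $\Lambda$ the monodromy-invariant lattice, to achieve $\vv$-genericity on the finitely many special fibers $c\in C$ where extra Hodge classes appear, reasoning that each wall at such a $c$ imposes a real-codimension-one condition on the central charge. This is false: a wall at $c$ is defined by $Z_c(\ww)\in\R\,Z_c(\vv)$ for some $\ww\in\tH_{\Hdg}(\Ku(\cX_c),\Z)$, and when $Z$ is constrained to factor through $\Lambda$, this condition only depends on the image $\bar\ww$ of $\ww$ in $\Lambda\otimes\Q$. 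If $\bar\ww$ is proportional to $\bar\vv$ (for instance if $\ww$ is orthogonal to the monodromy-invariant sublattice, so $\bar\ww=0$), the wall pulls back to all of $\Hom(\Lambda,\C)$ and no perturbation of a central charge factoring through $\Lambda$ can avoid it. This is exactly the obstruction that the paper resolves by \emph{enlarging the lattice} --- Example~\ref{ex:modification}, Remark~\ref{rmk:changinglattice}, and Proposition~\ref{prop:Msigmarelative1}: for each bad fiber $c$, one adjoins to $\Lambda$ a direct summand $M_c^\vee$ built from $\tH_{\Hdg}(\Ku(\cX_c),\Z)$, so that the offending classes $\ww$ become distinguishable from $\vv$ in the enlarged lattice, and only then does a small deformation of the central charge (via Theorem~\ref{thm:deformfamiliystability} and Remark~\ref{rem:coveringproperty}) produce a stability condition over $C$ that is $\vv$-generic on every fiber. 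Your ``genericity argument inside $\Hom(\Lambda,\C)$'' cannot be rescued without this lattice modification.

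Two further issues. First, the construction of $\us^0$ requires realizing $\Ku(\cX/S)$ inside $\Db(\P^3_S,\cB_0^S)$ via a family of lines not contained in a plane, which need not exist over $S$ itself; the paper passes to a finite Galois cover (Example~\ref{ex:relFano}) and descends $\widetilde M(\vv)$ using Galois-invariance of $\us$ (for part~\eqref{enum:ProjMorph}, descent also uses Proposition~\ref{prop:ProductStabilityCondition} to check the cocycle condition). You omit this step. Second, your appeal to Theorem~\ref{thm:ConnectedComponentStab} in the path-lifting step is circular within the paper's logical structure: that theorem is proved \emph{after} Theorem~\ref{thm:Msigmarelative}, using Theorem~\ref{thm:YoshiokaMain}, which itself relies on part~\eqref{enum:algspace} via Corollary~\ref{cor:famstabcurvescubics}. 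What is actually needed and available at this point is only the covering property of Remark~\ref{rem:coveringproperty}.
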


Note that every fiber of the morphism $\tM(\vv) \to C$ is projective, but the morphism itself might not be. In contrast, we expect that the morphism
$M(\vv) \to S$ is always projective.

For a very general cubic fourfold, $\tH_\Hdg(\Ku(X), \Z)$ is isomorphic to the lattice $A_2$ generated by two roots $\llambda_1, \llambda_2$ with $(\llambda_1, \llambda_2) = -1$ and $\llambda_1^2=\llambda_2^2=2$; for example, we can set
\index{lambda1@$\llambda_1,\llambda_2$, roots of the $A_2$-lattice inside $\tH_\Hdg(\Ku(X), \Z)$}
\begin{equation} \label{eq:defl1l2}
\llambda_1=v(p(\cO_L(1)))\quad\text{and}\quad\llambda_2=v(p(\cO_L(2))),
\end{equation}
where $L$ is a line in $X$ and $p$ is the left adjoint of $\Ku(X)\hookrightarrow \Db(X)$.
Applying the theorem above to classes in $A_2$ yields the following result.

\begin{Cor}\label{cor:locfam20dim}
For any pair $(a,b)$ of coprime integers, there is a unirational locally complete $20$-dimensional family, over an open subset of the moduli space of cubic fourfolds, of smooth polarized irreducible holomorphic symplectic manifolds of dimension $2n+2$, where $n=a^2-ab+b^2$. 
The polarization has either degree $6n$ and divisibility $2n$ if $3$ does not divide $n$, or degree and divisibility $\frac{2}{3}n$ otherwise.
\end{Cor}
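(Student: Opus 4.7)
The plan is to apply Theorems~\ref{thm:YoshiokaMain} and~\ref{thm:Msigmarelative} to the primitive class $\vv = a\llambda_1 + b\llambda_2 \in \tH_\Hdg(\Ku(X), \Z)$, which lies in the $A_2$-lattice of every smooth cubic fourfold $X$. From the Gram matrix of $A_2$ one reads $\vv^2 = 2(a^2-ab+b^2) = 2n$, so Theorem~\ref{thm:YoshiokaMain}.\eqref{enum:YoshiokaMain1} produces, for any $\vv$-generic $\sigma \in \Stab^\dagger(\Ku(X))$, a smooth projective hyperk\"ahler manifold $M_\sigma(\Ku(X),\vv)$ of K3 type and dimension $2n+2$, together with a Hodge isometry $H^2(M_\sigma(\Ku(X),\vv),\Z) \cong \vv^\perp \subset \tH(\Ku(X),\Z)$.

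To globalize the construction over the coarse moduli $S$ of smooth cubic fourfolds, I would apply Theorem~\ref{thm:Msigmarelative}.\eqref{enum:ProjMorph}. The classes $\llambda_1, \llambda_2$ of \eqref{eq:defl1l2} are topological in nature: since $[L] \in H^6(X,\Z)$ is a fixed generator for every line $L$ on every smooth cubic $X$ (and the Fano variety $F(X)$ is always nonempty by Beauville--Donagi), they extend to global flat sections of the local system of Mukai lattices over $S$, hence the primitive, fiberwise algebraic section $\vv$ is globally defined. The monodromy on $H^4(X,\Q)$ therefore acts trivially on $A_2$, and any stability condition whose central charge factors through the projection $\Knum(\Ku(\cX_{s_0})) \onto A_2$ is automatically monodromy-invariant; a generic choice within this two-dimensional complex subspace yields $\tau_{s_0}$ satisfying all hypotheses of Theorem~\ref{thm:Msigmarelative}. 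The output is a smooth projective morphism $M^0(\vv) \to S^0$ over a non-empty open $S^0 \subset S$.

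Unirationality of $M^0(\vv)$ follows from the unirationality of $S = \P(\Sym^3 (\C^6)^\vee)^{ss}/\PGL_6$, transferred via the smooth projective morphism $M^0(\vv) \to S^0$. Local completeness reduces to checking that the composition $S^0 \to \cD_X \to \cD_\Sigma$ into the period domain of polarized hyperk\"ahler manifolds of K3 type is a local isomorphism: by Voisin's Torelli theorem the first map is \'etale, and the second is a local isomorphism of twenty-dimensional complex manifolds because the Hodge structure on $\vv^\perp$ together with the algebraic class $\vv$ recovers the Hodge structure on the entire Mukai lattice $\tH(\Ku(X),\Z)$, and hence on $H^4(X,\Z)$.

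Finally, to identify the polarization, I would apply Theorem~\ref{thm:PositivityLemmaFamily} to $\us$, producing a relatively strictly nef numerical Cartier class $l_{\us} \in N^1(M^0(\vv)/S^0)$. Under the Hodge isometry $H^2(M_\sigma(\Ku(X),\vv),\Z) \cong \vv^\perp$, this polarization is proportional to the primitive generator of $\vv^\perp \cap A_2$, namely $w := (2b-a)\llambda_1 + (b-2a)\llambda_2$. Using $\gcd(a,b)=1$ one verifies $\gcd(2b-a,b-2a)=1$, so $w$ is primitive in $A_2$, and the Gram-matrix calculation gives $w^2 = 6n$. Moreover, $3 \mid w$ in $A_2$ exactly when $3 \mid (a+b)$, which is equivalent to $3 \mid n$; in that case we replace $w$ by $w/3$, obtaining $(w/3)^2 = 2n/3$. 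The main obstacle is computing the divisibility of $w$ (resp.\ $w/3$) as a class in the full lattice $\vv^\perp \subset \tH(\Ku(X),\Z)$: this requires analyzing the index-three overlattice structure of $A_2 \oplus T$ inside the unimodular Mukai lattice $\tH(\Ku(X),\Z)$, where $T = A_2^\perp$ is the transcendental lattice of the very general $X$, and exploiting the identification $A_2^*/A_2 \cong \Z/3$ to track the contributions of the glue classes to the pairings $(w,\vv^\perp)$, producing the claimed divisibilities $2n$ and $2n/3$.
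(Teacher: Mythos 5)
Your proposal follows essentially the same route as the paper: the class $\vv = a\llambda_1 + b\llambda_2$, the appeal to Theorems~\ref{thm:YoshiokaMain} and~\ref{thm:Msigmarelative}.\eqref{enum:ProjMorph}, the identification of the polarization with the primitive generator of $\vv^\perp \cap A_2$, the Gram computation giving $w^2 = 6n$, and the observation that $w$ must be divided by $3$ exactly when $3 \mid n$ (equivalently $3 \mid a+b$). You also spell out the local-completeness argument (period maps into $\cD_X$ and then $\cD_\Sigma$), which the paper leaves implicit. However, there are three issues worth flagging.

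First, the unirationality assertion is misstated. You claim that unirationality of $M^0(\vv)$ follows from unirationality of $S$ via the smooth projective morphism $M^0(\vv)\to S^0$; but unirationality does not pass from the base to the total space of a fibration (a K3-fibration over $\mathbb{P}^1$ need not be unirational). The corollary's ``unirational family'' refers to the base $S^0$ being unirational, which is all the paper proves, and which your appeal to $\mathbb{P}(\mathrm{Sym}^3(\mathbb{C}^6)^\vee)$ does establish; the additional claim about $M^0(\vv)$ should be dropped.

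Second, a small internal inconsistency: you assert $\gcd(2b-a, b-2a)=1$ as a consequence of $\gcd(a,b)=1$, but this is only true when $3\nmid n$. In general $\gcd(2b-a,b-2a)\in\{1,3\}$, equal to $3$ precisely when $3\mid a+b$, i.e., when $3\mid n$. You do address the $3\mid n$ case in the next sentence, but the preceding claim of primitivity is wrong as stated.

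Third, and this is the substantive gap, the divisibility claim is not actually proved: you identify that it requires analyzing the glue between $A_2$ and $A_2^\perp$ in the unimodular Mukai lattice, but stop at ``producing the claimed divisibilities.'' The paper's argument at this point is both shorter and cleaner: since $\vv$ is primitive in the unimodular lattice $\tH(\Ku(X),\Z)$, one has $\mathrm{disc}(\vv^\perp) = \vv^2 = 2n$, while $\vv^\perp \cap w^\perp = A_2^\perp$ has discriminant $3$. If $\gamma$ denotes the divisibility of the polarization class $\ell$ in $\vv^\perp$, the relation $\mathrm{disc}(\vv^\perp)\cdot\bigl[\vv^\perp : \mathbb{Z}\ell \oplus (\ell^\perp\cap\vv^\perp)\bigr]^2 = \ell^2 \cdot \mathrm{disc}(\ell^\perp \cap \vv^\perp)$ together with $\bigl[\vv^\perp : \mathbb{Z}\ell\oplus\ell^\perp\bigr]=\ell^2/\gamma$ yields $\gamma = 2n$ when $\ell = w$ (so $\ell^2=6n$) and $\gamma = 2n/3$ when $\ell=w/3$ (so $\ell^2 = 2n/3$), matching the statement. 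Your route via the $A_2^*/A_2 \cong \mathbb{Z}/3$ gluing data would also work, but would need to be carried through, and is less transparent than the discriminant count.
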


It may be worth pointing out that, as observed for example in \cite{ATwoRatConJ}, an even integer $2n$ has the form $2n=2(a^2-ab+b^2)$ if and only if it satisfies Hassett's condition:
\begin{enumerate}
\item[$(\ast\ast)$] $2n$ is not divisible by $4$, $9$ or any odd prime $p\equiv 2$ $(\mod 3)$. 
\end{enumerate}
By \cite[Theorem~1.0.2]{Hassett:specialcubics}, integers satisfying $(\ast\ast)$ are related to cubic fourfolds with Hodge-theoretically associated K3 surfaces discussed in Corollary~\ref{cor:completeAT}.

\begin{Ex} \label{ex:4fold8fold}
Let $S$ be the moduli space of cubic fourfolds.
If we choose $\vv = \llambda_1+\llambda_2$ in Theorem~\ref{thm:Msigmarelative}, then by \cite{LiPertusiZhao:TwistedCubics} $S^0 = S$, and $M(\vv)$ is the relative Fano variety of lines over $S$.
For $\vv = 2\llambda_1 + \llambda_2$, still by \cite{LiPertusiZhao:TwistedCubics} (see also \cite{LLMS}), we have that $S^0 \subset S$ is the complement of cubics containing a plane,
and $M^0(\vv)$ is the family of irreducible holomorphic symplectic eightfolds constructed by Lehn, Lehn, Sorger and van Straten \cite{LLSvS}.
Finally, for $\vv = 2\llambda_1 + 2 \llambda_2$, in \cite{LiPertusiZhao:EllipticQuintics} the authors obtain an algebraic construction of a $20$-dimensional family of $10$-dimensional O'Grady spaces compactifying the twisted intermediate Jacobian fibration of the cubic hyperplane sections, birational to the one constructed in \cite{Voisin:twisted}.
\end{Ex}

Recall from Hassett's work on cubic fourfolds, \cite{Hassett:specialcubics}, that there is a countable union of divisors of special cubics with a Hodge-theoretically associated K3 surface.
In our notation, a cubic is contained in one of Hassett's special divisors if and only if $\tH_{\Hdg}(\Ku(X), \Z)$ contains a hyperbolic plane.

\begin{Cor}\label{cor:completeAT}
Let $X$ be a cubic fourfold.
Then $X$ has a Hodge-theoretically associated K3 if and only if there exists a smooth projective K3 surface $S$ and an equivalence $\Ku(X)\cong \Db(S)$.
\end{Cor}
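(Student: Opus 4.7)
The plan is as follows. The ``if'' direction is essentially formal: an equivalence $\Ku(X)\cong\Db(S)$ induces a Hodge isometry $\tH(\Ku(X),\Z)\cong\tH(S,\Z)$, and the latter lattice contains a hyperbolic plane of Hodge type $(1,1)$ generated by $v(\cO_S)$ and the class of a point, giving $X$ a Hodge-theoretically associated K3 surface.

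For the ``only if'' direction, suppose that $\tH_{\Hdg}(\Ku(X),\Z)$ contains a hyperbolic plane $U$. By Theorem~\ref{thm:YoshiokaMain}, $\tH_{\Hdg}(\Ku(X),\Z)=\tH_{\mathrm{alg}}(\Ku(X),\Z)$, so we may choose a primitive algebraic class $\vv\in U$ with $\vv^2=0$ and a class $\ww\in U$ with $(\vv,\ww)=1$. Fix a $\vv$-generic stability condition $\sigma\in\Stab^\dagger(\Ku(X))$. By Theorem~\ref{thm:YoshiokaMain}.\eqref{enum:YoshiokaMain1}, the moduli space $S:=M_\sigma(\Ku(X),\vv)$ is a smooth projective irreducible holomorphic symplectic variety of dimension $\vv^2+2=2$, hence a K3 surface. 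The existence of $\ww$ with $(\vv,\ww)=1$ ensures that the obstruction to the existence of a universal family vanishes, so there is a universal object $\cE\in\Db(X\times S)$ whose fibers lie in $\Ku(X)$.

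The Fourier--Mukai functor $\Phi:=\Phi_\cE\colon\Db(S)\to\Db(X)$ then factors through an exact functor $\Phi\colon\Db(S)\to\Ku(X)$. It remains to verify that $\Phi$ is an equivalence. The structure sheaves $\cO_s$ of points $s\in S$ form a spanning class for $\Db(S)$; their images under $\Phi$ are the $\sigma$-stable objects $\cE_s$ of class $\vv$, which are simple objects of a fixed phase and are pairwise $\Hom$-orthogonal in $\Ku(X)$. Since $\Ku(X)$ is $2$-Calabi--Yau and $\vv^2=0$, each $\cE_s$ satisfies $\ext^0(\cE_s,\cE_s)=\ext^2(\cE_s,\cE_s)=1$ and $\ext^1(\cE_s,\cE_s)=2$, matching the corresponding numbers for $\cO_s\in\Db(S)$. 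Bridgeland's criterion for Fourier--Mukai equivalences then implies that $\Phi$ is fully faithful; its essential image is an admissible subcategory of $\Ku(X)$. Finally, since $\Ku(X)$ is indecomposable as an admissible subcategory of $\Db(X)$ (equivalently, its Hochschild cohomology in degree zero is one-dimensional, as for a K3 surface) and $\Phi(\Db(S))$ has the same rank of numerical $K$-theory as $\Ku(X)$, the essential image equals $\Ku(X)$, producing the desired equivalence $\Db(S)\cong\Ku(X)$.

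The main obstacle is the verification that $\Phi$ is an equivalence, in particular checking the $\Ext$-vanishing in positive degree between fibers $\cE_s$ and $\cE_{s'}$ for $s\neq s'$ and the $\Ext$-computation for $s=s'$; the orthogonality follows from $\sigma$-stability of the fibers and the Calabi--Yau structure, while the self-$\Ext$ dimension is controlled by the smoothness and expected dimension of $M_\sigma(\Ku(X),\vv)$ provided by Theorem~\ref{thm:YoshiokaMain}. An alternative, if the direct verification of Bridgeland's criterion is cumbersome, is to use Theorem~\ref{thm:Msigmarelative}.\eqref{enum:algspace} to deform the pair $(X,\vv)$ in a one-parameter family to a point where $\Ku(X)\cong\Db(S')$ is already known by Addington--Thomas \cite[Theorem~1.1]{AT:CubicFourfolds}, and to propagate the equivalence along the relative moduli space, reducing the problem to showing that being a Fourier--Mukai equivalence is preserved under deformation of stable families of universal objects.
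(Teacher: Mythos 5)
Your proof is correct and arrives at the same final mechanism as the paper (a universal family giving a Fourier--Mukai functor $\Db(S)\to\Ku(X)$, fully faithful by Bridgeland's criterion, essentially surjective by the indecomposability of $\Ku(X)$), but it takes a different route to the K3 moduli space, and there is a logical subtlety you should be aware of.

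The paper derives Corollary~\ref{cor:completeAT} from the more general Proposition~\ref{prop:KuzHass} (the twisted version), whose proof reduces the claim ``$M_\sigma(\vv)$ is a smooth K3'' to Lemma~\ref{Lem-Ku-twisted-K3}; the latter uses only Theorem~\ref{thm:Mukai}, Theorem~\ref{thm:modulispacesArtinstacks}, the elementary fact that a smooth proper algebraic space of dimension $2$ with a symplectic form is a projective K3, and then the same quasi-universal-family $+$ Bridgeland $+$ Bridgeland's trick argument you give. You instead cite Theorem~\ref{thm:YoshiokaMain}.\eqref{enum:YoshiokaMain1} wholesale. This is a genuine shortcut, but within the paper's proof order it risks circularity: the proof of the projectivity claim in Theorem~\ref{thm:YoshiokaMain} explicitly invokes Proposition~\ref{prop:KuzHass}, which is what Corollary~\ref{cor:completeAT} is being deduced from. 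The circularity is not fatal --- for $\vv^2=0$ the moduli space has dimension $2$, where properness and smoothness (which are established independently of Proposition~\ref{prop:KuzHass}) already force projectivity --- but as written your argument does not flag this. It would be cleaner either to cite only the non-emptiness and smoothness parts of Theorem~\ref{thm:YoshiokaMain} together with the $2$-dimensional projectivity fact, or to follow the paper's route through Proposition~\ref{proposition-specialize-twisted} and Corollary~\ref{cor:famstabcurvescubics}, which is essentially the ``alternative'' you sketch at the end and is what the paper actually does. Two smaller remarks: your verification of Bridgeland's criterion is correct (note that the criterion imposes no constraint on $\Ext^1$ between distinct fibers, so the Calabi--Yau duality only needs to kill $\Hom$ in degrees $0$ and $>2$); and your observation that a class $\ww$ with $(\vv,\ww)=1$ trivializes the Brauer obstruction to a universal family is precisely the last sentence of Lemma~\ref{Lem:MukaiEquivalenceFM}.
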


This (literally) completes a result by Addington and Thomas, \cite[Theorem~1.1]{AT:CubicFourfolds}, who proved that every divisor described by Hassett contains an open subset of cubics admitting a derived equivalence as above.
A version of the corollary also holds for K3 surfaces with a Brauer twist, completing a result by Huybrechts \cite[Theorem~1.4]{Huy:cubics};
the corresponding Hodge-theoretic condition is the existence of a square-zero class in $\tH_\Hdg(\Ku(X),\Z)$ (see Proposition~\ref{prop:KuzHass}).
Partial results were also obtained in \cite{Kuz:fourfold, Mosch:cubics}.

As pointed out to us by Voisin, the non-emptiness of moduli spaces also produces enough algebraic cohomology classes to reprove her result on the integral Hodge conjecture for cubic fourfolds; we also refer to \cite[Corollary~0.3]{MongardiOttem:HodgeConj} for a different recent proof:

\begin{Cor}[{\cite[Theorem~18]{Voisin:Hodgeaspects}}]\label{cor:integralHdg}
	The integral Hodge conjecture holds for $X$.
\end{Cor}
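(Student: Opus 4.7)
The plan is to leverage the equality $\tH_{\Hdg}(\Ku(X),\Z) = \tH_{\mathrm{alg}}(\Ku(X),\Z)$ from Theorem~\ref{thm:YoshiokaMain} together with the non-emptiness of the moduli spaces to produce enough algebraic cycles representing every integer Hodge class. First, one reduces to the nontrivial case of $\alpha \in H^{2,2}(X) \cap H^4(X,\Z)$: the statement for $H^2(X, \Z)$ follows from the Lefschetz $(1,1)$ theorem and the ampleness of $H$, while $H^6(X, \Z)$ is generated by the class of a line.

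Given such an $\alpha$, the plan is to use the integer direct sum decomposition
\[
K_{\mathrm{top}}(X) = \tH(\Ku(X), \Z) \oplus \Z[\cO_X] \oplus \Z[\cO_X(H)] \oplus \Z[\cO_X(2H)],
\]
together with the Chern character isomorphism $K_{\mathrm{top}}(X) \otimes \Q \cong H^{\mathrm{even}}(X, \Q)$, to lift $\alpha$ to an integer Hodge class $\widetilde\alpha \in \tH_{\Hdg}(\Ku(X), \Z)$, modulo integer combinations of the $[\cO_X(iH)]$ (whose Chern classes are integer algebraic combinations of powers of $H$). After adding a large multiple of a fixed primitive algebraic class, we may assume $\widetilde\alpha^2 \geq -2$, so by Theorem~\ref{thm:YoshiokaMain}.\eqref{enum:YoshiokaMain1} there exists an object $E \in \Ku(X) \subset \Db(X)$ with $v(E) = \widetilde\alpha$. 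The Chern classes $c_i(E)$ are algebraic, being in the image of the cycle map $\CH^i(X) \to H^{2i}(X, \Z)$. Expanding $v(E) = \ch(E)\sqrt{\td(X)}$ in degree four then expresses $\alpha$ as an integer linear combination of the algebraic classes $c_2(E)$, $c_1(E)^2$, $c_1(E) \cdot H$, $H^2$, and further integer multiples of $H^2$ coming from the expansion of $\sqrt{\td(X)}$.

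The main obstacle is controlling the integrality of the lift $\widetilde\alpha$: although the $K$-theoretic decomposition is an integer direct sum, comparing its integer structure with the integer structure on $H^4(X, \Z)$ under the Chern character is not automatic, since the Chern character of an integer $K$-theory class is in general only rational in cohomology. This step is handled by the fact that the classes $c_i(\cO_X(jH)) = (jH)^i$ are themselves integer algebraic, so that any discrepancy between the two integrality structures can be absorbed into integer combinations of powers of $H$, all of which are algebraic.
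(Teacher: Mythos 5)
Your proposal is essentially the same as the paper's proof: both lift the Hodge class to $K_{\mathrm{top}}(X)$, project onto the Kuznetsov summand along the classes $[\cO_X(iH)]$, invoke Theorem~\ref{thm:YoshiokaMain} to realize the projection as the Mukai vector of an object $E \in \Ku(X)$, and then observe that all discrepancies in degree four are absorbed into integer multiples of $H^2$, which are algebraic. One small wrinkle: you route through Theorem~\ref{thm:YoshiokaMain}.\eqref{enum:YoshiokaMain1} after translating $\widetilde\alpha$ by a multiple of a fixed algebraic class to force $\widetilde\alpha^2 \geq -2$; but that part of the theorem requires the Mukai vector to be \emph{primitive}, and the translate $\widetilde\alpha + n v_0$ need not be. This is easily repaired: Theorem~\ref{thm:YoshiokaMain} already records the equality $\tH_{\Hdg}(\Ku(X),\Z)=\tH_{\mathrm{alg}}(\Ku(X),\Z)$ (itself deduced from non-emptiness by writing any Hodge class as a sum of classes of non-negative square), and citing that equality directly, as the paper does, sidesteps the primitivity issue and the translation step entirely.
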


\begin{Rem}
In \cite{IHC-CY2} our argument for proving the integral Hodge conjecture for cubic fourfolds has been further developed and generalized. 
In particular, a version of the integral Hodge conjecture is proved for suitable CY2 categories, without the use of stability conditions. 
\end{Rem}

Our results also provide the full machinery of \cite{BM:MMP_K3}, describing the birational geometry of $M_\sigma(\Ku(X), \vv)$ in terms of wall-crossing, but we will not discuss the details here.

Let us discuss the line of argument in the proofs of Theorems~\ref{thm:ConnectedComponentStab}, \ref{thm:YoshiokaMain} and \ref{thm:Msigmarelative}.
The key point is to generalize a deformation argument by Mukai, and show that the deformation of simple objects in the Kuznetsov components along a deformation of cubic fourfolds is unobstructed as long as their Mukai vector remains algebraic; combined with openness of stability this shows smoothness of the relative
moduli space.
The existence of the family of stability conditions is explained in Section~\ref{sec:Kuzstabfam}, while the above deformation argument is in Section~\ref{sec:Mukai}.
The proofs of all results are then in Section~\ref{sec:ProofFirstTheoremsCubics}.

\section{Stability conditions on families of Kuznetsov components}\label{sec:Kuzstabfam}

In this section we recast Kuznetsov's work in \cite{Kuz:Quadric} in the relative setting along the lines of \cite{BLMS}. The aim is to construct stability conditions on families of Kuznetsov components of cubic fourfolds.

\subsection{The Kuznetsov component in families}\label{subsec:KuzFamilies}

In this section we study Kuznetsov components for families of smooth cubic fourfolds. We start by reconsidering the results in \cite{Kuz:Quadric} and \cite[Section~7]{BLMS} for families of cubic fourfolds.
The proofs discussed here are very close to those presented in \cite{BLMS}, hence we will be concise.

Let $g\colon\cX\to S$ be a family of cubic fourfolds, where $S$ is a quasi-projective variety over $\C$. We let $\cO_{\cX}(1)$ denote the relative very ample line bundle.

\begin{Lem}\label{lem:CategoricalKuzFamily}
There exist an admissible subcategory $\Ku(\cX)\hookrightarrow\Db(\cX)$ and a strong $S$-linear semiorthogonal decomposition of finite cohomological amplitude
\[
\Db(\cX)=\langle\Ku(\cX),\alpha_{\cO_{\cX}}(\Db(S)),\alpha_{\cO_{\cX}(1)}(\Db(S)),\alpha_{\cO_{\cX}(2)}(\Db(S))\rangle,
\]
where $\alpha_{\cO_{\cX}(n)}(\Db(S)):=g^*\Db(S)\otimes\cO_{\cX}(n)$.
\end{Lem}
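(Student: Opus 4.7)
The plan is to obtain the decomposition as a direct application of the relative exceptional collection machinery developed earlier in the paper, specifically Lemma~\ref{lem-E1Em-sod}. The main work is therefore not in building new structure but in verifying that the line bundles $\cO_\cX, \cO_\cX(1), \cO_\cX(2)$ form a relative exceptional collection on $\cX$ over $S$; once that is in hand, admissibility, strength and finite cohomological amplitude come for free from the smoothness of $g$.

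First I would check that $(\cO_\cX, \cO_\cX(1), \cO_\cX(2))$ is a relative exceptional collection in $\Dperf(\cX)$ in the sense of Definition~\ref{def:relativeexceptional}. By the fiberwise criterion for relative exceptional collections (the lemma stated just after Lemma~\ref{Lem-cHom-bc}), and using the flatness of $g$, it suffices to check that for every closed point $s \in S$ the triple $(\cO_{\cX_s}, \cO_{\cX_s}(1), \cO_{\cX_s}(2))$ is an exceptional collection on the smooth cubic fourfold $\cX_s$. This is classical: since $\omega_{\cX_s} = \cO_{\cX_s}(-3)$, the Euler sequence together with Kodaira vanishing and Serre duality give $H^\bullet(\cX_s, \cO_{\cX_s}(k)) = 0$ for $k \in \{-1, -2\}$ and $H^\bullet(\cX_s, \cO_{\cX_s}) = \C$ concentrated in degree $0$, which establishes both exceptionality and semiorthogonality.

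Second I would feed this relative exceptional collection into Lemma~\ref{lem-E1Em-sod}. The morphism $g \colon \cX \to S$ is smooth and projective between noetherian schemes, hence proper and of finite Tor-dimension, so the hypotheses of the lemma are met. Its conclusion directly produces an $S$-linear semiorthogonal decomposition of finite cohomological amplitude
\[
\Db(\cX) = \langle \cD, \alpha_{\cO_\cX}(\Db(S)), \alpha_{\cO_\cX(1)}(\Db(S)), \alpha_{\cO_\cX(2)}(\Db(S)) \rangle,
\]
in which, by the smoothness of $g$, each factor $\alpha_{\cO_\cX(i)}(\Db(S))$ is admissible; combining Lemma~\ref{Lem-relexcept-adjoint} with the ``smooth and proper'' clause of Lemma~\ref{lem-E1Em-sod} and with Lemma~\ref{lem-smooth-proper-admissible}, the remaining component $\cD$ is also admissible, so the whole decomposition is strong. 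We then set $\Ku(\cX) := \cD$.

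Since each step reduces to a previously stated result, I do not expect any genuine obstacle; the only point requiring a bit of care is the fiberwise verification of exceptionality, which must be phrased with the relative $\cHom_S$ rather than pointwise $\RHom$, and for which the compatibility with base change from Lemma~\ref{Lem-cHom-bc} is used. Consequently the proof will be very short, essentially a citation of Lemma~\ref{lem-E1Em-sod} after the fiberwise cohomology computation.
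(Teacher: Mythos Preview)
Your proposal is correct and follows exactly the paper's approach: the paper's proof is the single sentence ``This follows immediately from Lemma~\ref{lem-E1Em-sod}, since $\cO_\cX,\cO_\cX(1),\cO_\cX(2)$ is a relative exceptional collection in $\Db(\cX)$,'' and your write-up simply unpacks the verification of that hypothesis and the conclusions of that lemma in detail.
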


\begin{proof}
This follows immediately from Lemma~\ref{lem-E1Em-sod}, since $\cO_\cX,\cO_\cX(1),\cO_\cX(2)$ is a relative exceptional collection in $\Db(\cX)$.
\end{proof}

Let us assume further that the family $g\colon \cX\to S$ comes with a family $\cL\subset\cX$ over $S$ of lines contained in the fibers of $g$
	\begin{equation}\label{eqn:FamilyLines}
		\xymatrix{
		\P_S(g_*\cO_\cL(1))=\cL\ar[rd]\ar@{^{(}->}[r]&\cX\ar[d]\ar@{^{(}->}[r]&\P_S(g_*\cO_\cX(1))\ar[ld]\\
		&S
		}
	\end{equation}
such that, for all $s\in S$, the line $\cL_s$ is not contained in a plane in $\cX_s$.

\begin{Ex}\label{ex:relFano}
Given any family $g\colon\cX\to S$ of cubic fourfolds, we can always find a base change $g'\colon\cX'\to S'$ satisfying this existence of an appropriate family of lines. For example, we can take $S'$ to be the open subset $F^0(\cX/S)$ of the relative Fano variety of lines $F(\cX/S)$ in $\cX$ consisting of all lines which are not contained in planes inside the fibers of $g$. The base-change $S'\to S$ can also be taken to be finite.
\end{Ex}

Let $\uPP_S\to \P_S(g_*\cO_\cX(1))$ be the blow-up along $\cL$.
We denote by $\varepsilon\colon \uXX\to \cX$ the strict transform of $\cX$ via this blow-up (or equivalently the blow-up of $\cX$ along $\cL$). Consider the projective bundle $\P_S(g_*\sI_\cL(1))$ which, for simplicity, we denote by $\P^3_S$, even though it is not trivial.
We let $\uPP_S \to \P^3_S$ be the $\P^2$-bundle induced by the projection from $\cL$, whose restriction to $\uXX$ induces a conic fibration $q\colon \uXX\to\P_S^3$.

As in \cite[Section~3]{Kuz:Quadric}, we denote by $\cB^S_0$ (resp.~$\cB^S_1$) the sheaf on $\P^3_S$ of even (resp.~odd) parts of Clifford algebras corresponding to this fibration and, for all $m\in\Z$,
\begin{equation*}
\cB^S_{2m+1}=\cB^S_{1}\otimes \cO_{\P^3_S}(m)\quad\text{and}\quad
\cB^S_{2m}=\cB^S_0\otimes \cO_{\P^3_S}(m).
\end{equation*}

According to~\cite[Theorem~4.2]{Kuz:Quadric} there is a strong $S$-linear semiorthogonal decomposition of finite cohomological amplitude
\[
\Db(\uXX) = \langle \Phi(\Db(\P^3_S,\cB^S_0)), q^*\Db(\P_S^3)\rangle.
\]
Here $\Phi\colon \Db(\P^3_S,\cB^S_0) \to \Db(\uXX)$ is the fully faithful Fourier--Mukai functor whose kernel is explicitly described in \cite[Section~4]{Kuz:Quadric} (it corresponds to $\Phi_{-1,0}$ in \cite[Proposition~4.9]{Kuz:Quadric}), but such an explicit description is not needed in this paper. 
Denote by $\Psi$ its left adjoint.

\begin{Prop}\label{prop:stabinfam}
The functor $\Psi\circ\varepsilon^*\colon\Ku(\cX)\to\Db(\P^3_S,\cB^S_0)$ is fully faithful and it induces a strong $S$-linear semiorthogonal decomposition of finite cohomological amplitude 
\[
 \Db(\P_S^3 ,\cB^S_0)=\gen{ \Psi ( \varepsilon^* \Ku(\cX)), \alpha_{\cB^S_1}(\Db(S)),\alpha_{\cB^S_2}(\Db(S)), \alpha_{\cB^S_3}(\Db(S))}.
\]
\end{Prop}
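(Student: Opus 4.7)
The plan is to adapt the proof of the absolute version of this statement in \cite{Kuz:Quadric} and \cite[Proposition~7.7]{BLMS} to the relative setting, using the base change results from Part~\ref{part:sod-t-structure-families}. The overall strategy is to obtain two strong $S$-linear semiorthogonal decompositions of $\Db(\uXX)$, one coming from the conic fibration $q\colon \uXX \to \P^3_S$ and one from the blowup $\varepsilon\colon \uXX \to \cX$, and then perform a sequence of mutations through relative exceptional subcategories to match the component involving $\varepsilon^*\Ku(\cX)$ with the component $\Phi(\Db(\P^3_S, \cB^S_0))$.

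First, I would combine the Kuznetsov decomposition $\Db(\uXX) = \langle \Phi(\Db(\P^3_S, \cB^S_0)), q^*\Db(\P^3_S) \rangle$ recalled in the text, with the relative Beilinson decomposition of $\Db(\P^3_S)$ obtained from the relative exceptional collection $\cO, \cO(1), \cO(2), \cO(3)$ via Lemma~\ref{lem-E1Em-sod}. Separately, using the relative blowup formula applied to $\varepsilon$ (whose exceptional divisor is a $\P^2$-bundle over the $\P^1$-bundle $\cL \to S$) together with the decomposition of $\Db(\cX)$ from Lemma~\ref{lem:CategoricalKuzFamily}, I would write another strong $S$-linear semiorthogonal decomposition of $\Db(\uXX)$ in which $\varepsilon^*\Ku(\cX)$ appears as one of the components, with the remaining components being of the form $\alpha_E(\Db(S))$ for a collection of explicit relative exceptional objects $E$ pulled back from $\cX$ or pushed forward from the exceptional divisor.

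Next, I would match these two descriptions via the same sequence of mutations performed in \cite[Section~3]{Kuz:Quadric} and \cite[Section~7]{BLMS}. Each mutation in that sequence goes through the subcategory $\alpha_E(\Db(S))$ generated by a single relative exceptional object $E \in \Dperf(\uXX)$ of the form $q^*(\textup{line bundle})$ or $\varepsilon^*\cO_\cX(i)$; by Lemma~\ref{Lem-relexcept-adjoint} and the smoothness of $g$, such subcategories are admissible in $\Db(\uXX)$, and their mutation functors $\rL_{E/S}, \rR_{E/S}$ are $S$-linear and of finite cohomological amplitude. Applying the functor $\Psi$ (which is the left adjoint of $\Phi$ and hence $S$-linear) identifies the mutated $\varepsilon^*\Ku(\cX)$ with the left orthogonal, inside $\Db(\P^3_S, \cB^S_0)$, of $\alpha_{\cB^S_1}(\Db(S)), \alpha_{\cB^S_2}(\Db(S)), \alpha_{\cB^S_3}(\Db(S))$; this also proves fully faithfulness of $\Psi\circ\varepsilon^*$ on $\Ku(\cX)$. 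Finally, to see that the resulting $S$-linear semiorthogonal decomposition is strong and of finite cohomological amplitude, I would invoke Lemma~\ref{lem-E1Em-sod} (using that each $\cB^S_i$ is a relative exceptional object in the relative Clifford category) together with the fact that each of the mutation functors used has finite cohomological amplitude.

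The main obstacle I anticipate is bookkeeping: verifying that each single mutation in Kuznetsov's sequence involves only $S$-linear admissible subcategories generated by genuine relative exceptional objects in the sense of Definition~\ref{def:relativeexceptional}, so that the whole argument descends to the relative setting without ever needing base change to a fiber. This boils down to checking that the various line bundles $\cO(i)$, twisted Clifford modules $\cB^S_i$, and blowup-related objects that appear have endomorphism complex equal to $\cO_S$; since they are all pulled back or pushed forward from morphisms of finite Tor-dimension, this follows from Lemma~\ref{Lem-cHom-bc} together with the corresponding fiberwise computation in \cite{Kuz:Quadric}.
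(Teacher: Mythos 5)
Your proposal is correct and takes essentially the same approach as the paper: the paper's proof consists of a single sentence asserting that the argument of \cite[Proposition~7.7]{BLMS} carries over verbatim to the relative setting, which is precisely the mutation-theoretic argument you outline, with the base change and relative exceptional collection machinery from Part~\ref{part:sod-t-structure-families} supplying the needed relative versions of the ingredients.
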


\begin{proof}
The result follows by repeating line by line the same proof as in \cite[Proposition~7.7]{BLMS} in the relative setting above.
\end{proof}

\subsection{Existence of stability conditions in families}\label{subsec:famstabcondKuz}

The next step consists in constructing stability conditions on $\Ku(\cX)$ over $S$ in the sense of Definitions~\ref{def:familyfiberstabilities} and \ref{def:fiberwisesupport}.

Let $g\colon\cX\to S$ be a family of cubic fourfolds over a connected quasi-projective variety $S$ over $\C$. 
Assume that $g \colon \cX \to S$ is equipped with a family 
$\cL\to S$ of lines which are not contained in planes in the fibers of $g$. 
Let $G:=\mathrm{Mon}(g)$ be the monodromy group of $g$. Its action on $H^*(\cX_s,\Q)$ preserves, and thus acts on, $\tH(\Ku(\cX_s),\Z)$, for all $s\in S$; for very general $s \in S$, it also preserves Hodge classes on $\cX_s$ (see \cite[Theorem~4.1]{Voisin:HodgeLoci}), and hence acts on $\tH_\Hdg(\Ku(\cX_s),\Z)$. 
The sublattice $\Fix(G):=\tH_\Hdg(\Ku(\cX_s),\Z)^G$, for $s\in S$ a very general point, is then naturally identified with a saturated sublattice of $\tH_\Hdg(\Ku(\cX_s),\Z)$, for all closed $s\in S$, by parallel transport.

Let $M$ be a saturated sublattice of $\Fix(G)$ containing $A_2$ generated as in \eqref{eq:defl1l2}, and denote by $M^\vee$ the dual of $M$. 
By assumption, we have a sequence of natural homomorphisms 
\[
v_s\colon\Knum(\Ku(\cX_s))\hookrightarrow\tH_\Hdg(\Ku(\cX_s),\Z)\hookrightarrow\tH_\Hdg(\Ku(\cX_s),\Z)^\vee\to M^\vee,
\]
for all closed points $s\in S$. 
These homomorphisms determine a relative Mukai homomorphism in the sense of Definition~\ref{def-relative-mukai}.

\begin{Lem}\label{lem:MukaiHomomorphismClosedPoints}
There is a relative Mukai homomorphism $v \colon \Knum(\Ku(\cX)/S) \to M^{\vee}$, such that for any closed point $s \in S$ the composition 
\begin{equation*}
\Knum(\Ku(\cX_s)) \to \Knum(\Ku(\cX)/S) \xrightarrow{\, v \,} M^{\vee}
\end{equation*} 
is equal to $v_s$.
If $S$ is smooth, $v$ factors via a homomorphism $\cN(\Ku(\cX)/S) \to M^{\vee}$ out of the 
uniformly numerical relative Grothendieck
group of $\Ku(\cX)$.
\end{Lem}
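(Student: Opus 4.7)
The plan is to define $v$ on the canonical generators of $\Knum(\Ku(\cX)/S)$ coming from closed points of $S$, and then verify the defining relations \eqref{eq:KnumDSrelation} using parallel transport of the topological Mukai vector. Since $S$ is quasi-projective over $\C$, hence Jacobson, the argument in the proof of Proposition and Definition~\ref{propdef:lattice} adapts to show that $\Knum(\Ku(\cX)/S)$ is generated by classes $[E_s]$ for closed $s \in S$: any generator supported at a non-closed $t$ is equated, via a relation coming from a family over (a resolution of) $\overline{\{t\}}$, to a rational multiple of a closed-point class, and $M^\vee$ is torsion-free. I thus set $v([E_s]) := v_s([E_s])$ on those generators.

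The core step is to prove: for any morphism $f \colon T \to S$ of finite type from a connected affine $\C$-scheme, any $T$-perfect $E \in \rD(\cX_T)$ with $E_t \in \Ku(\cX_t)$ for all $t$, and any two closed points $t_1, t_2 \in T$, one has $v_{f(t_1)}([E_{t_1}]) = v_{f(t_2)}([E_{t_2}])$ in $M^\vee$. Since the analytification of a connected finite-type $\C$-scheme is path-connected, I fix a continuous path $\gamma$ in $T^{\mathrm{an}}$ from $t_1$ to $t_2$. The topological Mukai vector $\wch(E_t)$ defines a horizontal section of the local system on $T^{\mathrm{an}}$ whose fiber at $t$ is $\tH(\Ku(\cX_{f(t)}), \Q)$, so parallel transport along $\gamma$ identifies $\wch(E_{t_1})$ with $\wch(E_{t_2})$. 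Meanwhile, by the definition of $G = \mathrm{Mon}(g)$ and the assumption $M \subset \Fix(G)$, parallel transport along $f \circ \gamma$ in the local system on $S^{\mathrm{an}}$ identifies $M \subset \tH(\Ku(\cX_{f(t_1)}), \Z)$ with $M \subset \tH(\Ku(\cX_{f(t_2)}), \Z)$. Hence, for every $m \in M$, the function $t \mapsto \langle \wch(E_t), m \rangle$ is locally constant and $\Z$-valued on $T^{\mathrm{an}}$, and so constant, giving the desired equality in $M^\vee$.

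For the final statement, assume $S$ is smooth and let $\alpha \in \Knum(\Ku(\cX)/S)$ satisfy $\chi(F, \alpha) = 0$ for every $F \in \Ku(\cX)_{\mathrm{perf}}$. Picking a closed $s$ with representative $\alpha_s$ of $\alpha$, we have $v(\alpha) = v_s(\alpha_s) \in M^\vee$, and must show $\langle v_s(\alpha_s), m \rangle = 0$ for every $m \in M$. The family of lines $\cL \to S$ from \eqref{eqn:FamilyLines} yields globally perfect objects $\cF_i := p(\cO_\cL(i)) \in \Ku(\cX)_{\mathrm{perf}}$ with $v(\cF_{i,s}) = \llambda_i$ (see \eqref{eq:defl1l2}), which, combined with the Mukai pairing identity $(v(F_s), v(E_s)) = -\chi(F_s, E_s)$, handles the classes $m \in A_2 \subset M$. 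For the remaining classes in $M$, I would invoke the rational Hodge conjecture for cubic fourfolds \cite{zucker:HodgeConjecture, ConteMurre:RationalHodge}---giving $M_\Q \subset \tH_{\mathrm{alg}}(\Ku(\cX_s), \Q)$---together with a spreading-out argument: since $\Fix(G)$ is locally constant as a sublattice and $\cMpug(\Ku(\cX)/S)$ is locally of finite presentation over $S$ by Proposition~\ref{proposition-cMpug(D/S)-algebraic}, a rational basis of $M_\Q$ lifts to classes of globally perfect objects on $\cX$, and the pairing argument then repeats.

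The principal difficulty will be the spreading-out step at the end: ensuring that every monodromy-invariant class in $M_\Q$ is rationally the Mukai vector of a globally defined perfect object in $\Ku(\cX)$. For the applications where $M = A_2$ this is handled directly by the family of lines; in the general case one must use the smoothness of $S$ and the algebraicity of $\cMpug(\Ku(\cX)/S)$ to extend fiberwise algebraic representatives of the extra Mukai classes, a technical but natural argument.
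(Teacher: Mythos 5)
Your parallel-transport argument is a genuinely different route for the well-definedness of $v$: the paper derives well-definedness as a corollary of the stronger $\cN$-factorization statement (so it invokes the rational Hodge conjecture from the outset), whereas your core observation---that for a $T$-perfect family $E$ over connected $T$ the function $t \mapsto \langle \wch(E_t), m\rangle$ is continuous, $\Z$-valued, hence constant on $T^{\mathrm{an}}$---gives a clean topological argument. As written, however, it only handles relations between \emph{closed} points of $T$. To verify that $v$ is well-defined on all of $\Knum(\Ku(\cX)/S)$ you must also show that your extension of $v$ to a generator at a non-closed point (via a chosen spreading-out pair $(T,E)$) is independent of the choice of $(T,E)$, not just independent of the closed point in $T$. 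That reduces to showing that two $T$-perfect families whose generic fibers are numerically equal have equal Mukai vectors at closed points---and this propagation from the generic point to closed points requires knowing that objects spread from the generic fiber rationally span $M$ over very general closed fibers, which is precisely the rational Hodge conjecture input you deploy later for the $\cN$-factorization. So the paper's ordering is not merely slicker: proving the $\cN$-factorization first and deducing well-definedness from it closes exactly this loop, and your first part does not actually avoid the rational Hodge conjecture.

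The second, related gap is the spreading-out step you flag yourself. Local finite presentation of $\cMpug(\Ku(\cX)/S)$ lets you spread an object \emph{from the generic point} of $S$ over an open subset; but the rational Hodge conjecture hands you objects over a very general \emph{closed} point $s$, and there is no reason those objects extend off their fiber. The paper's argument replaces $\cMpug$ by the relative Hilbert scheme $\Hilb(\cX/S)$: for each fixed Hilbert polynomial it is of finite type over $S$, so there are only countably many components, and a very general $s$ avoids the images of all non-dominant ones; thus the surfaces $Z \subset \cX_s$ provided by the rational Hodge conjecture automatically lie on dominant components. Taking a finite base change $T \to S$ with a rational section to such a component, restricting the universal ideal sheaf, extending arbitrarily over $\cX_T$, and pushing forward to $\Db(\cX)$ produces globally defined objects whose fiberwise classes (after projection compatible with $\Db(\cX)\to\Ku(\cX)$) span $M \otimes \Q$. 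You should replace your spreading-out sketch with this argument; the properness and finiteness of the Hilbert scheme are essential and are not supplied by the algebraicity of $\cMpug$.
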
 

\begin{proof}
For any non-closed point $s \in S$ we define a homomorphism 
\begin{equation*}
v_s \colon \Knum(\Ku(\cX_s))_{\overline{s}} \to M^{\vee} 
\end{equation*} 
as follows: 
\begin{enumerate}
\item \label{vs-setup} For $[F] \in \Knum(\Ku(\cX_s))_{\overline{s}}$, choose a finite type morphism $f \colon T \to S$ from a connected $\C$-variety $T$ with a point $t \in T$ such that $f(t) = s$, and a $T$-perfect object $E \in \rD(\cX_T)$ such that $E_u \in \Ku(\cX_u)$ for all $u \in T$ and $\eta_{t/s}^{\vee}[E_t] = [F]$ (recall Proposition and Definition~\ref{propdef:KnumDell}). 

\item \label{vs-definition} 
For $[F]$, $T$, and $E$ as in~\eqref{vs-setup}, choose a closed point $q \in T$, let $p = f(q)$, and set 
\begin{equation*} 
v_s([F]) = v_{p}( \eta^{\vee}_{q/p} [E_q] ) . 
\end{equation*} 
\end{enumerate}

We first explain why the choice in~\eqref{vs-setup} is always possible.
Write $[F] = \eta^{\vee}_{K/\kappa(s)}[E]$ where $K/\kappa(s)$ is a finite field extension and $E_K \in \Ku(\cX_K)$;
choose a finite type morphism $f \colon T \to S$ from a smooth irreducible $\C$-variety with fraction field $K$ realising the given field extension $K/\kappa(s)$;
finally, extend $E_K \in \Ku(\cX_t)$ to an object $E \in \Ku(\cX_T)$ (Lemma~\ref{lem-open-restriction-es}), which by the smoothness of $T$ and Lemma~\ref{lem-relations-S-perfect}\eqref{db-S-perfect} is automatically $T$-perfect. 

To prove independence of all choices, and that the $v_s$ induce a homomorphism from the relative numerical $K$-group $\Knum(\Ku(\cX)/S)$, we instead prove the stronger statement that they induce a homomorphism out of the uniformly numerical relative Grothendieck group $\cN(\Ku(\cX)/S)$ when $S$ is smooth; the statement in the general case follows by replacing $S$ with a resolution of singularities.

Thus it is enough to show that for $S$ smooth, and $s \in S$ a very general closed point, the composition
\[
K(\Ku(\cX)_{\perf}) \otimes \Q = K(\Ku(\cX)) \otimes \Q \to K(\Ku(\cX_s))\otimes \Q \xrightarrow{v_s} 
\tH_\Hdg(\Ku(\cX_s),\Q) \to M \otimes \Q
\]
is surjective, where the last map is given by orthogonal projection. This in turn is follows from the surjectivity of the composition
\[
K(\Db(\cX)) \to K(D^b(\cX_s)) \to H^*_{\Hdg}(\cX_s, \Q),
\]
where we use the morphism $H^*_{\Hdg}(\cX_s, \Q) \to M \otimes \Q$ that is compatible with the projection $\Db(\cX) \to \Ku(\cX)$.
Finally, the surjectivity of the last map is a consequence of the rational Hodge conjecture \cite{zucker:HodgeConjecture} for the very general fiber of $\cX \to S$.
Indeed, the only non-trivial statement is the surjectivity onto $H^{2, 2}(\cX_s, \Q)$.
The rational Hodge conjecture implies that classes of the form $\ch_2(I)$, for $I$ an ideal sheaf of a surface in $\cX_s$, where $I$ is in a component of the relative Hilbert scheme that dominates $S$, generate $H^{2, 2}(\cX_s, \Q)$. 
Taking a finite base change $T \to S$ admitting a rational section to the component containing $I$, extending the pullback of the universal ideal sheaf to $\cX_T$ in an arbitrary manner, and pushing forward along $\cX_T \to \cX$ produces an object in $\Db(\cX)$ whose restriction to $\cX_s$ has class proportional to $[I] \in K(\Db(\cX_s))$. This proves the claim.
\end{proof}

\begin{Prop}\label{prop:exfamstabKuz}
In the above setup, there exists a stability condition $\us$ on $\Ku(\cX)$ over $S$ with respect to $M^\vee$. 
\end{Prop}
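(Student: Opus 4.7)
\emph{Strategy.} The plan is to carry out in families the construction of Bridgeland stability conditions on $\Ku(X)$ from \cite[Section~9]{BLMS}, using the relative inducing theorem (Theorem~\ref{thm:InducingStabilityOverBase}) together with the construction of tilt-stability in families from Part~\ref{part:Tilting}. By Proposition~\ref{prop:stabinfam}, the functor $\Psi\circ\varepsilon^*$ identifies $\Ku(\cX)$ with a strong admissible $S$-linear component of $\Db(\P^3_S,\cB^S_0)$ whose complement is generated by the relative exceptional collection $\cB^S_1,\cB^S_2,\cB^S_3$. By $S$-linear transport of structure, it therefore suffices to produce a stability condition over $S$ on this component.

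\emph{Steps.} First I would establish tilt-stability for families of $\cB^S_0$-modules on $\P^3_S$. Slope-stability on $\Coh(\P^3_S,\cB^S_0)$ gives a weak stability condition over $S$ by the extension of Example~\ref{ex:LambdaFamilyChernClasses} to sheaves of coherent algebras, and the arguments of Propositions~\ref{prop:RotatingSlopeStability} and~\ref{prop:tiltstabfamily} apply in this setting using the fiberwise classical Bogomolov--Gieseker inequality for $\cB_0$-modules on $\P^3$ from \cite{Kuz:Quadric}. A further rotation via Proposition~\ref{prop:RotatingTiltStability} produces a weak stability condition $\us^{\alpha,\beta\sharp\gamma}$ on $\Db(\P^3_S,\cB^S_0)$ over $S$ for suitable parameters. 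I would then apply Theorem~\ref{thm:InducingStabilityOverBase} to the decomposition of Proposition~\ref{prop:stabinfam} with this weak stability condition and the relative exceptional collection $\cB^S_1,\cB^S_2,\cB^S_3$; the fiberwise hypotheses on the $\cB^S_i$ --- heart containment, the action of the Serre functor, and non-triviality of Mukai vectors modulo $\Lambda_0$ --- are precisely the computations of \cite[Section~9]{BLMS} for appropriate parameters. Transport along the equivalence $\Psi\circ\varepsilon^*$ then gives a stability condition $\us$ on $\Ku(\cX)$ over $S$. Since both the central charge and the support quadratic form are built from the globally defined relative polarization and Chern characters on $\cX$, they are $G$-invariant on every fiber and hence factor through $M\subseteq\Fix(G)$; via the Mukai homomorphism of Lemma~\ref{lem:MukaiHomomorphismClosedPoints} this yields a stability condition with respect to $M^\vee$.

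\emph{Main obstacle.} The most delicate step is the boundedness hypothesis~\eqref{enum:boundednessforinducing} of Theorem~\ref{thm:InducingStabilityOverBase}: for each $\vv\in M^\vee$ the collection over $S$ of tilt-semistable $\cB^S_0$-modules with class in $\vv+\Lambda_0$ satisfying $\chi(\cB^S_i|_{\P^3_s},-)\geq 0$ must be bounded. This requires the strong boundedness assertions built into Propositions~\ref{prop:RotatingSlopeStability} and~\ref{prop:tiltstabfamily}, transferred to the Clifford-twisted setting via the variant of the derived dual $\cHom(-,\cB^S_0)$ used in \cite[Lemma~2.19]{BLMS}; the extension to families is available because $\P^3_S\to S$ is smooth and the duality commutes with base change. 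Once this is granted, the remaining verifications are direct translations of the fiberwise arguments of \cite[Section~9]{BLMS} into our families-of-categories framework.
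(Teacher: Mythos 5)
Your overall strategy matches the paper's: realize $\Ku(\cX)$ inside $\Db(\P^3_S, \cB^S_0)$ via Proposition~\ref{prop:stabinfam}, build a weak stability condition over $S$ from slope-stability of $\cB^S_0$-modules, rotate and tilt via Propositions~\ref{prop:RotatingSlopeStability}, \ref{prop:tiltstabfamily}, and \ref{prop:RotatingTiltStability}, and then apply Theorem~\ref{thm:InducingStabilityOverBase}. You also correctly flag that the boundedness hypothesis is the delicate input from the tilting propositions. Up to that point your proposal is essentially the paper's argument.

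The gap is in your final sentence. The inducing theorem applied to the conic-fibration decomposition naturally produces a stability condition with respect to the lattice $\Lambda_1 := \ch_{\cB^S_0}(\Knum(\Ku(\cX)/S))$, which by \cite[Proposition~9.10]{BLMS} equals $A_2$, \emph{not} $M^\vee$. Passing from $A_2$ to a larger $M \supseteq A_2$ is not a matter of ``$G$-invariance of the central charge and quadratic form'': factoring $Z$ through the smaller quotient $A_2$ automatically gives a factorization through $M^\vee$, but the \emph{support property} with respect to $M^\vee$ is a genuinely new statement that needs to be established. Concretely, condition \eqref{enum:fiberwisesupport} of Definition~\ref{def:fiberwisesupport} requires a quadratic form $Q$ on $M^\vee_\R$ that is negative definite on $\ker Z$ and nonnegative on the Mukai vectors $v(E) \in M^\vee$ of all $\sigma_s$-semistable objects; since $M$ may strictly contain $A_2$, the class $v(E)$ records more information than its image in $A_2$, and the form you get for free on $A_2$ does not control it. The paper resolves this with a second step modeled on Bridgeland's argument for K3 surfaces \cite[Section~8]{Bridgeland:Stab}: one considers the period domain $\fP_0(M)$ of central charges whose real and imaginary parts span a positive-definite two-plane not orthogonal to any root $\delta$ non-orthogonal to $A_2$, verifies that the $A_2$-central charge constructed above lands there, and then deduces the support property for $M^\vee$ from the fact that any Mukai vector of a stable object in any fiber is either a root in $\Delta$ or has square $\geq 0$, hence is never in the kernel of a central charge in $\fP_0(M)$. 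That lattice-theoretic step is missing from your proposal and cannot be replaced by the invariance remark you make.
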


\begin{proof}
We first consider the case $M=A_2$. By Proposition~\ref{prop:stabinfam}, we can realize $\Ku(\cX)$ as an admissible subcategory of $\Db(\P_S^3,\cB_0^S)$. For any $s\in S$, as in \cite[Definition~9.1]{BLMS}, we can define a twisted Chern character $\ch_{\cB_{0}^s}\colon\Knum(\Db(\P^3_s,\cB_{0}^s))\to\Q^4$;
this induces a global morphism
\[
\ch_{\cB_{0}^S}:=\prod\ch_{\cB_{0}^s}\colon\Knum(\Db(\P^3_S,\cB_0^S)/S)\to\Q^4.
\]
In analogy to the notation in Section~\ref{sec:tiltingslope} we set:
\begin{gather*}
\Lambda:=\mathrm{im}\left(\ch_{\cB_{0}^S}\right) \qquad
\Lambda_0:=\mathrm{im}\left(\ch_{\cB_{0}^S,2}\oplus\ch_{\cB_{0}^S,3}\right)\qquad
\Lambda_0^\sharp:=\mathrm{im}\left(\ch_{\cB_{0}^S,3}\right)\\
\overline{\Lambda}:=\Lambda/\Lambda_0\qquad\overline{\Lambda}^\sharp:=\Lambda/\Lambda^\sharp_0.
\end{gather*}

By Example~\ref{ex:LambdaFamilyChernClasses} we have a weak stability condition $\us^1$ on $\Db(\P^3_S,\cB^S_0)$ with respect to the lattice $\overline{\Lambda}$. 
Propositions~\ref{prop:RotatingSlopeStability} and \ref{prop:tiltstabfamily} also apply in the twisted setting that we are considering here; the required Bogomolov inequality is given by \cite[Theorem~8.3]{BLMS}.
Thus we get a weak stability condition $\us^{\alpha,\beta}$ on $\Db(\P^3_S,\cB_0^S)$ over $S$ with respect to $\overline{\Lambda}^\sharp$, which coincides fiberwise with the weak stability conditions constructed in \cite[Proposition~9.3]{BLMS}. 
We can rotate one more time by Proposition~\ref{prop:RotatingTiltStability} getting the weak stability condition $\us^{\alpha,\beta\sharp\gamma}$.

Finally, we can apply Theorem~\ref{thm:InducingStabilityOverBase} and get a stability condition on $\Ku(\cX)$ over $S$ with respect to the lattice $\Lambda_1:=\ch_{\cB^S_0}(\Knum(\Ku(\cX)/S))$; the boundedness assumption \ref{thm:InducingStabilityOverBase}.\eqref{enum:boundednessforinducing} follows from the analogue of the boundedness statement in Proposition~\ref{prop:tiltstabfamily}. 
By \cite[Proposition~9.10]{BLMS}, we have $\Lambda_1=A_2$, thus completing the proof in the case $M=A_2$.

Now consider the case of a lattice $M$ containing $A_2$.
Let $\fP(M) \subset \Hom(M^\vee, \C)$ be the set of central charges such that $\Im Z$ and $\Re Z$ span a positive definite two-plane in $M \otimes \R$.
Let $\Delta \subset \tH(\Ku(\cX_s), \Z)$, for some arbitrary $s \in S$, be the set of $(-2)$-classes $\delta$ that are not orthogonal to $A_2$.
(This excludes classes that would become algebraic only over the Hassett divisor $\cC_2$, see \cite[Definition~3.1.3 and Section~4.4]{Hassett:specialcubics}.)
Let $\fP_0(M) \subset \cP(M)$ be the open subset where $\Im Z, \Re Z$ are not orthogonal to any $\delta \in \Delta$;
by standard arguments, e.g.~as in \cite[Section~8]{Bridgeland:Stab}, this is an open subset, and it contains the central charge of the stability condition we constructed above for $M = A_2$.
Now consider the Mukai vector $\vv' \in \tH_{\Hdg}(\Ku(\cX_{s'}), \Z)$ of any stable object in any fiber $\Ku(\cX_{s'})$; either parallel transport (which does not change its class in $M^\vee$) identifies it with a class in $\Delta$, or it satisfies $\vv'^2 \geqslant 0$.
In both cases, it is not contained in the kernel of any central charge in $\fP_0(M)$.
The same arguments as in \cite[Section~8]{Bridgeland:Stab} therefore imply the support property with respect to $M$ for any stability condition with central charge in $\fP_0(M)$.
\end{proof}

\begin{Rem} \label{rem:coveringproperty}
One can also show, just as in \cite[Section~8]{Bridgeland:Stab}, that Theorem~\ref{thm:deformfamiliystability} implies the existence of an open set in $\Stab_M(\Ku(\cX)/S)$ that covers a connected component of $\fP_0(M)$.
\end{Rem}

\begin{Rem}\label{rmk:changinglattice}
It will be crucial for us to be able to modify slightly the lattice $M$ at a closed subset, similar to the case of a one-dimensional base considered in Example~\ref{ex:modification}.
Let $S'\subseteq S$ be a closed subvariety of $S$ which is an irreducible component of the Hodge locus of the family $g$ (see \cite{CDK:absolute}).
Consider the base changed family $g'\colon\cX'\to S'$.
Let $G':=\mathrm{Mon}(g')$ and consider a saturated sublattice $M'\subseteq\mathrm{Fix}(G')$, such that $M'$ is orthogonal to all classes that remain algebraic along all of $S$. We can then modify the morphism $v$ to a map
\[
v'\colon\Knum(\Ku(\cX)/S)\to M^\vee \oplus M'^\vee,
\]
by setting the second component to be zero for $s \notin S'$. Since the full support property holds on each fiber of $g$, the statement in Proposition~\ref{prop:exfamstabKuz} holds true with respect to the image of $v'$ inside $(M\oplus M')^\vee$.

For example if $S$ is a curve and $S'$ is a finite set of closed points in the Hodge locus of $S$, we can choose $M':=\bigoplus_{s\in S'}\tH_\Hdg(\Ku(\cX_s),\Z)$.
\end{Rem}

Finally, we specialize to families over a one-dimensional base. Let $g\colon\cX \to C$ be a family of cubic fourfolds, where $C$ is a quasi-projective irreducible curve.
Assume that $g$ comes with a family $\cL\to C$ of lines not contained in a plane and that the fixed locus $M$ of the monodromy group $G$ of $g$ is $\tH_\Hdg(\Ku(\cX_c),\Z)$, for $c$ a very general point of $C$. Let $\vv$ be a primitive section of the local system of the Mukai lattices $\tH(\Ku(\cX_c), \Z)$ of the fibers, such that $\vv$ is of Hodge type on all fibers $\cX_c$.
In particular, in our case, this means that $A_2$ and $\vv$ are contained in $M$.

\begin{Prop}\label{prop:Msigmarelative1}
In the assumptions above, let $c_0 \in C$ be a closed point and let $\tau_{c_0} \in \mathrm{Stab}^\dagger(\Ku(\cX_{c_0}))$ be a stability condition which is $\vv$-generic.
Then there exist a lattice $M'\supseteq M$ and a stability condition $\us$ on $\Ku(\cX)$ over $C$ with respect to $(M')^\vee$ such that: 
\begin{enumerate}[{\rm (1)}] 
\item\label{Msigmarel1} $\sigma_c$ is $\vv$-generic for all $c\in C$; 
\item\label{Msigmarel2} $\sigma_{c_0}$ is a small deformation of $\tau_{c_0}$ satisfying $M_{\sigma_{c_0}}(\Ku(\cX_{c_0}), \vv) = M_{\tau_{c_0}}(\Ku(\cX_{c_0}), \vv)$;
\item\label{Msigmarel3} if $c_0 \in C$ is a smooth point and the central charge $Z_{c_0}$ of $\tau_{c_0}$ factors via $M^\vee$, then there is an open neighborhood $U \subset C$ of $c_0$ such that the central charge of $\us$ restricted over $U$ factors through $\cN(\Ku(\cX_U)/U)$. 
\end{enumerate}
\end{Prop}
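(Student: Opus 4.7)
The plan is to start with a stability condition on $\Ku(\cX)/C$ of the type constructed in Proposition~\ref{prop:exfamstabKuz}, enlarge the lattice using Remark~\ref{rmk:changinglattice} if necessary to accommodate $\tau_{c_0}$, deform the central charge to match $\tau_{c_0}$ at $c_0$, and finally perturb to achieve $\vv$-genericity on all fibers.

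First I would distinguish two cases according to whether the central charge $Z_{c_0}$ of $\tau_{c_0}$ factors via $M^\vee$. If it does (which covers the situation in \eqref{Msigmarel3}), set $M':=M$. Otherwise, take $S':=\{c_0\}$ and apply Remark~\ref{rmk:changinglattice} with $M'':=\tH_\Hdg(\Ku(\cX_{c_0}),\Z)$ orthogonally projected away from classes that remain algebraic along $C$; this produces a relative Mukai homomorphism $\Knum(\Ku(\cX)/C)\to (M\oplus M'')^\vee$, so we set $M':=M\oplus M''$. By construction $Z_{\tau_{c_0}}$ factors through $(M')^\vee$. Proposition~\ref{prop:exfamstabKuz} (applied with this $M'$) then produces a stability condition $\us^0$ on $\Ku(\cX)$ over $C$ with respect to $(M')^\vee$, with central charge $Z^0$ lying in the distinguished connected component $\fP_0^+(M')$ of the period domain.

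Next I would path-connect $Z^0$ to $Z_{\tau_{c_0}}$ inside $\fP_0^+(M')$ and lift this path to a path in $\Stab_{M'}(\Ku(\cX)/C)$ using Theorem~\ref{thm:deformfamiliystability} together with the covering property recorded in Remark~\ref{rem:coveringproperty}. The endpoint is a stability condition $\us^1$ on $\Ku(\cX)/C$ whose fiber $\sigma^1_{c_0}$ has central charge $Z_{\tau_{c_0}}$; by uniqueness of the lift to the connected component $\Stab^\dagger(\Ku(\cX_{c_0}))$, up to deforming the path slightly we have $\sigma^1_{c_0}=\tau_{c_0}$. Once this is in place, part~\eqref{Msigmarel3} follows from Lemma~\ref{lem:MukaiHomomorphismClosedPoints}: restricting to a smooth open neighborhood $U\subset C$ of $c_0$, the chosen relative Mukai homomorphism factors through $\cN(\Ku(\cX_U)/U)$, and hence so does the central charge of $\us^1|_U$.

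The last step is to achieve $\vv$-genericity on every fiber simultaneously. Since $\tau_{c_0}$ is $\vv$-generic, by Lemma~\ref{lem:wallcrossinglocallyfinite} applied fiberwise, any sufficiently small perturbation $\us$ of $\us^1$ will leave $\sigma_{c_0}$ inside the same chamber as $\tau_{c_0}$, so that $M_{\sigma_{c_0}}(\Ku(\cX_{c_0}),\vv)=M_{\tau_{c_0}}(\Ku(\cX_{c_0}),\vv)$. I would then perturb $Z$ slightly to move off all walls for $\vv$ that intersect a small neighborhood of $Z^1$ in $\fP_0^+(M')$, which suffices to guarantee $\vv$-genericity on all $c\in C$.

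The main obstacle is the last step: on different fibers the potentially destabilizing classes live in distinct lattices $\Knum(\Ku(\cX_c))$, yet the central charge $Z$ is a single functional on $(M')^\vee$. The key point is that, because $\vv$ is a fixed primitive class, any wall for $\vv$-stability on some fiber $c$ is cut out by a linear equation of the form $\Im(Z(\vv)\overline{Z(\ww)})=0$ for $\ww$ the image in $(M')^\vee$ of the class of a destabilizing subobject on $\cX_c$. Boundedness of the relevant moduli spaces (via Theorem~\ref{thm:modulispacesArtinstacks} and Theorem~\ref{mainthm:construction}), together with the support property with respect to the quadratic form inherited from $\us^0$, constrains the set of classes $\ww$ that can actually occur in a bounded region of the period domain to be finite; hence a generic perturbation avoids all walls simultaneously, yielding the desired $\us$.
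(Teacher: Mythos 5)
There is a genuine gap in your final step, and it is exactly the point that forces the paper to enlarge the lattice at every bad fiber, not only at $c_0$.

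You enlarge $M$ to $M'$ only to accommodate $Z_{\tau_{c_0}}$, and then try to achieve $\vv$-genericity on all fibers by a ``generic perturbation'' of the central charge on $(M')^\vee$, invoking finiteness of the destabilizing classes $\ww$. But finiteness is not the issue. On a fiber $c\neq c_0$ where $\tH_\Hdg(\Ku(\cX_c),\Z)$ is strictly larger than $M$, a potentially destabilizing Jordan--H\"older class $\ww$ need not be separated from $\vv$ after projecting to $(M')^\vee$: if the images of $\vv$ and $\ww$ in $(M')^\vee$ are proportional, the ``wall'' $\Im(Z(\vv)\overline{Z(\ww)})=0$ is satisfied by \emph{every} central charge on $(M')^\vee$, so it is not a codimension-one locus in the period domain and no perturbation of $Z$ can avoid it. This is not a pathological worry --- it is exactly what happens along Hassett divisors, and it is precisely what the genericity assumption on $\sigma$ in the one-fiber theory is designed to rule out. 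The paper's proof addresses this by a different decomposition: start with a $\us'$ over $C$ that is $\vv$-generic on a non-empty open subset, observe (by openness of geometric stability plus boundedness of $\fM_{\us'}(\vv)$) that the non-generic locus $C'\subset C$ is finite, and then enlarge $M$ to $M':=M\oplus\bigoplus_{c\in C'}M_c$ via Remark~\ref{rmk:changinglattice} \emph{at every point of $C'$}, choosing each $M_c$ so that the finitely many offending classes cease to be proportional to $\vv$ in $M^\vee\oplus M_c^\vee$ (for $c=c_0$ one additionally forces $Z_{\tau_{c_0}}$ to be defined over $M_{c_0}$). Only after the lattice has been enlarged at all bad fibers does the deformation argument of Remark~\ref{rem:coveringproperty} produce a $\us$ that is $\vv$-generic everywhere.

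The rest of your outline --- using Proposition~\ref{prop:exfamstabKuz} to produce a starting stability condition, path-lifting in $\Stab_{M'}(\Ku(\cX)/C)$ to adjust the central charge over $c_0$, and invoking Lemma~\ref{lem:MukaiHomomorphismClosedPoints} for part~\eqref{Msigmarel3} after shrinking to a smooth neighborhood of $c_0$ --- is consistent with the paper's treatment. The defect is localized in the step that claims genericity over all of $C$ without enlarging the lattice over the finitely many non-generic fibers.
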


Note that, if we are in the setting of Theorem \ref{thm:Msigmarelative}.\eqref{enum:algspace}, then up to taking a finite cover $C'\to C$ we can always assume that the assumptions in Proposition \ref{prop:Msigmarelative1} are satisfied, as well as the one in part~\eqref{Msigmarel3} (see \cite[Theorem~4.1]{Voisin:HodgeLoci}).

\begin{proof}
By Proposition~\ref{prop:exfamstabKuz} there exists a stability condition $\us'$ on $\Ku(\cX)$ over $C$ with respect to $M^\vee$ such that $\sigma'_c$ is $\vv$-generic for $c$ in a non-empty subset of $C$.
Let $C'$ be the subset of $C$ where $\sigma'_c$ is not $\vv$-generic.
By openness of geometric stability and boundedness of the relative moduli space $C'$ is a finite set.
For each $c \in C'$ there is (by the support property) a finite set in $\tH_\Hdg(\Ku(\cX_c), \Z)$ of Mukai vectors of Jordan--H\"older factors of objects in $\cM_{\us'}(\vv)$. We now apply
Remark~\ref{rmk:changinglattice} to enlarge the lattice $M$ to $M':=M \oplus_{c\in C'} M_c$ such that these classes are no longer proportional 
to $\vv$ in $M^\vee \oplus M_c^\vee$; for $c=c_0$ we just take $M_{c_0}$ such that the central charge $Z_{\tau_{c_0}}$ is defined over it. 
By Remark~\ref{rem:coveringproperty}, there exists a deformation $\us$ of $\us'$ such that \eqref{Msigmarel1} and \eqref{Msigmarel2} are satisfied.

Under the additional assumption in \eqref{Msigmarel3}, we do not need to modify $M$ in the fiber over $c_0$ and thus we can apply directly Lemma~\ref{lem:MukaiHomomorphismClosedPoints}. 
\end{proof}

\begin{Rem}\label{rmk:ExtendOverTrivialMonodromy}
Consider the analogous situation over a higher-dimensional base $S$.
Then $S' \subset S$ becomes a closed subset, contained in finite union of Hodge loci $S'_i$.
Assume that the monodromy group for each $S'_i$ acts trivially on the Mukai vectors of Jordan--H\"older factors of objects in $\cM_{\us}(\vv)$ occurring along $S'_i$.
Then we can apply exactly the same procedure as above and again produce a stability condition over $S$ that is $\vv$-generic on all fibers.
\end{Rem}

\section{Generalized Mukai's theorem}\label{sec:Mukai}

The result of this section is the following generalization to $\Ku(X)$ of well-known results by Mukai \cite{Mukai:Symplectic} and Inaba \cite{Inaba:Symplectic} for K3 surfaces.

Let $g\colon \cX\to S$ be a smooth family of cubic fourfolds, where $S$ is reduced and of finite type over $\C$. 
Let $\vv$ be a primitive section of the local system given by the Mukai lattices $\tH(\Ku(\cX_s),\Z)$ of the fibers over $s\in S(\C)$, such that $\vv$ is of Hodge type on all fibers.
Consider the locally of finite type algebraic stack $s\cM_\pug(\Ku(\cX)/S)(\vv)\to S$ parameterizing simple universally gluable $S$-perfect objects in $\Ku(\cX)$ with Mukai vector $\vv$. 
This is a $\mathbb{G}_m$-gerbe over a locally of finite type algebraic space $\rho \colon sM_{\pug}(\Ku(\cX)/S)(\vv) \to S$, see Lemma~\ref{Lem:SimpleAlgebraicSpace}. 

\begin{Thm}[Mukai]\label{thm:Mukai}
The morphism $\rho \colon sM_{\pug}(\Ku(\cX)/S)(\vv) \to S$ is smooth.
\end{Thm}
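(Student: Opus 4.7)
The question is local on $S$. Since $\rho$ is locally of finite presentation by Lemma~\ref{Lem:SimpleAlgebraicSpace}, smoothness reduces to the infinitesimal lifting criterion. Fix a closed point $(s, E)$, a square-zero extension $A' \onto A$ of Artinian local $\C$-algebras with residue field $\C$, and an $A$-valued point of $sM_{\pug}(\Ku(\cX)/S)(\vv)$ lying over $\Spec A' \to S$; the plan is to produce an $A'$-valued lift. Since the condition of lying in $\Ku$ is open (Lemma~\ref{lemma-D-Ds}), it suffices to lift as an object of $\Db(\cX_{A'})$: the lift automatically remains in $\Ku(\cX_{A'})$. By Lieblich's deformation theory~\cite{Lieblich:mother-of-all}, the obstruction lies in $\Ext^2_{\cX_s}(E, E) \otimes \ker(A' \to A)$; and the $2$-Calabi-Yau structure of $\Ku(\cX_s)$, combined with the simplicity of $E$, forces $\Ext^2_{\cX_s}(E, E) \iso \Hom(E, E)^\vee = \C$. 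A standard induction on the length of $A'$ then reduces the problem to showing that, for every $\xi \in T_{S, s}$, the scalar obstruction $o(E, \xi) \in \C$ associated to the first-order deformation $\C[\epsilon] \onto \C$ along $\xi$ vanishes.

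The central step is to express $o(E, \xi)$ as a Hodge-theoretic pairing in $\tH(\Ku(\cX_s), \C)$. Via the Atiyah-class formalism, $o(E, \xi) = \mathrm{at}(E) \circ \mathrm{KS}(\xi)$, where $\mathrm{KS}(\xi) \in H^1(T_{\cX_s})$ is the Kodaira-Spencer class of the family $g$ at $\xi$. Using the $2$-CY duality to identify $\Ext^2_{\cX_s}(E,E) \iso \C$ with $\tH^{0,2}(\Ku(\cX_s))$ (after fixing a generator of $\tH^{2,0}(\Ku(\cX_s))$), I expect to establish the identity
\[
o(E, \xi) \;=\; \pi^{0,2}\bigl(\nabla^{GM}_{\xi}\, \vv\bigr)
\]
up to a non-zero scalar, where $\nabla^{GM}$ is the Gauss-Manin connection on the local system of Mukai lattices and $\pi^{0,2}$ is the projection to the $(0,2)$-piece of the Hodge decomposition. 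This is the non-commutative analogue of Mukai's classical trace formula on K3 surfaces. It can be proved either by an HKR-type decomposition of the Hochschild cohomology of $\Ku(\cX_s)$, or more concretely by lifting the computation to $\Db(\cX_s)$: the classical Chern-character trace formula yields $\mathrm{tr}(o(E, \xi)) = \mathrm{ch}(E) \cup \mathrm{KS}(\xi)$ in $H^*(\cX_s, \C)$, and projecting to the Kuznetsov-orthogonal direction identifies the result with the claimed Hodge-theoretic pairing, using that $\vv$ is precisely the component of $\mathrm{ch}(E)$ orthogonal to $\mathrm{ch}(\cO_{\cX_s}), \mathrm{ch}(\cO_{\cX_s}(H)), \mathrm{ch}(\cO_{\cX_s}(2H))$ with respect to the Mukai pairing.

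Once the formula is established, the proof concludes as in Mukai's original argument. The hypothesis that $\vv$ has Hodge type $(1,1)$ on every fiber means $\pi^{0,2}(\vv_s) = 0$ for all $s \in S$. Differentiating this identity along $\xi$ (using that $\pi^{0,2}$ is the projection associated to the varying Hodge filtration, and Griffiths transversality to identify the variation modulo $F^1$) gives $\pi^{0,2}(\nabla^{GM}_{\xi}\, \vv) = 0$. Therefore $o(E, \xi) = 0$ for every $\xi \in T_{S,s}$, and $\rho$ is smooth at $(s, E)$. As $(s,E)$ was arbitrary, $\rho$ is smooth.

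The main obstacle is the rigorous proof of the Mukai-type formula for $o(E, \xi)$ in the Kuznetsov setting. In the K3 case this is due to Mukai~\cite{Mukai:Symplectic}, extended to complexes by Inaba~\cite{Inaba:Symplectic}. For cubic fourfolds, the cleanest route avoids non-commutative HKR machinery entirely: one works inside $\Db(\cX_s)$, applies the classical Atiyah-class trace formula, and uses the orthogonality of $E$ to the exceptional triple $\cO_{\cX_s}, \cO_{\cX_s}(H), \cO_{\cX_s}(2H)$ to pass from $\mathrm{ch}(E)$ to $\vv$. The only non-commutative ingredient required is the $2$-Calabi-Yau duality, which is already used above to identify $\Ext^2_{\cX_s}(E, E) = \C$.
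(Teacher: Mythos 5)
Your overall structure is right, and the Mukai-type trace formula you sketch (obstruction $=$ derivative of the Hodge class $\vv$ along the Kodaira--Spencer direction) is essentially the one the paper proves; the $2$-Calabi--Yau identification $\Ext^2(E,E) \cong \C$ and the idea of passing from $\ch(E)$ to its Kuznetsov-orthogonal projection $\vv$ are also correct in spirit. However, there is a genuine gap at the step where you claim that ``a standard induction on the length of $A'$'' reduces the lifting problem to first-order deformations over $\C[\epsilon] \onto \C$. Such an induction reduces to \emph{small} extensions $0 \to \C \to A' \to A \to 0$ with $A$ an arbitrary Artinian local $\C$-algebra, not to extensions of the residue field, and vanishing of all first-order obstructions does not imply smoothness of a deformation functor with nonzero obstruction space: the functor pro-represented by $\C[[t]]/(t^3)$ has one-dimensional tangent and obstruction spaces and all first-order obstructions vanish, yet it is not smooth. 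Moreover, for a non-split small extension the Kodaira--Spencer datum of $\cX_{A'}/\cX_A$ is a torsor rather than a canonical class, so the ``obstruction $= \kappa\cdot\ch(E)$'' computation does not directly apply.

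The paper closes this gap in two moves. It first reduces to the case where $S$ is smooth (checking smoothness of the fibers, invoking the valuative criterion of flatness, and then comparing dimensions of the smooth total space and smooth fibers), so that the relevant deformation functor becomes absolute over $\C$. It then invokes the $T^1$-lifting theorem of Kawamata, Ran and Fantechi--Manetti, which certifies formal smoothness provided that (a) the obstruction vanishes over the \emph{split} square-zero extensions $A_n[\epsilon]/(\epsilon^2)\to A_n$ --- precisely where the Huybrechts--Thomas criterion of Theorem~\ref{thm:HTsquarezero} and the trace argument via the isomorphism $\Tr_E(\blank\circ A(E))\colon\Ext^2(E,E)\to\Ext^3(\cO_{\cX_A},\Omega_{\cX_A/A})$ are clean --- and (b) the restriction maps $\Ext^1(E_{n+1},E_{n+1})\to\Ext^1(E_n,E_n)$ are surjective. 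Condition (b) is not automatic; the paper verifies it by showing, via relative Serre duality and a base-change spectral sequence, that the relative $\Ext^i$ sheaves on $\Spec A_{n+1}$ are locally free. Without some substitute for this $T^1$-lifting scaffold and the nontrivial $\Ext^1$-surjectivity it requires, the argument does not close.
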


\begin{Rem}
A generalization of Theorem~\ref{thm:Mukai} was proved in \cite{IHC-CY2}, where $\Ku(\cX)$ is replaced by any ``family of CY2 categories" over $S$. 
In fact, the proof of Theorem~\ref{thm:Mukai} below incorporates some clarifications and corrections from \cite{IHC-CY2} to the argument from the first version of this paper, in particular the use of the $T^1$-lifting theorem to 
reduce to split square-zero extensions in the deformation argument. 
\end{Rem} 

The proof of Theorem~\ref{thm:Mukai} involves a combination of Mukai's original argument and the methods of \cite{KM:symplectic}, which in turn build on ideas of Buchweitz and Flenner.
We start by recalling a result on the deformation theory of objects in the derived category; 
for a more complete treatment of this subject and related arguments, see \cite{HuybrechtsThomas:defo, Lieblich:mother-of-all, HMS:Orientation,AT:CubicFourfolds}. 

Let $\cY \to \Spec R$ be a smooth projective morphism, and let $\cY_0 \to \Spec R_0$ be its base change along $R \to R_0 = R/I$ where $I \subset R$ is a square-zero ideal. 
Note that $\cY \to \Spec R$ is a deformation of $\cY_0 \to \Spec R_0$ over $R$, and recall that the set of isomorphism classes of such deformations of $\cY_0$ forms a torsor under $\rH^1(\mathrm{T}_{\cY_0/\Spec R_0} \otimes I)$. 
If $R \to R_0$ is a split square-zero extension, i.e. admits a section $R_0 \to R$, then there is a trivial deformation of $\cY_0$ over $R$ given by base change along the section, so the set of deformation classes is canonically identified with $\rH^1(\mathrm{T}_{\cY_0/\Spec R_0} \otimes I)$ with the trivial deformation corresponding to $0$. 
In this case, we write 
\begin{equation*}
\kappa(\cY) \in \rH^1(\mathrm{T}_{\cY_0/\Spec R_0} \otimes I) \cong 
\Ext^1(\Omega_{\cY_0/R_0}, I) 
\end{equation*}
for the element corresponding to the deformation $\cY \to \Spec R$, called the 
\emph{Kodaira--Spencer class}. 
Further, recall that for any object $E_0 \in \Dperf(\cY_0)$ there is a canonical element 
\begin{equation*} 
A(E_0) \in \Ext^1(E_0, E_0 \otimes \Omega_{\cY_0/R_0})
\end{equation*} 
called the \emph{Atiyah class}. 
The following is then the main result of \cite{HuybrechtsThomas:defo}, simplified to the case where the structure morphism is smooth and the square-zero extension is split. 

\begin{Thm}\label{thm:HTsquarezero}
Let $\cY \to \Spec R$ be a smooth projective morphism, let $R \to R_0 = R/I$ be a split square-zero extension, and let $\cY_0 \to \Spec R_0$ be the base change of $\cY$. 
For any $E_0 \in \Dperf(\cY_0)$, there exists an object $E \in \Dperf(\cY)$ such that $E_{\cY_0} \cong E_0$ if and only if 
\begin{equation*}
 \kappa(\cY) \cdot A(E_0) = 0 \in \Ext^2(E_0, E_0 \otimes I), 
\end{equation*} 
in which case the set of isomorphism classes of such $E$ forms a torsor under $\Ext^1(E_0, E_0 \otimes I)$. 
\end{Thm}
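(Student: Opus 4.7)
The plan is to follow the approach of \cite{HuybrechtsThomas:defo}, substantially simplified by the split square-zero hypothesis. The splitting $R_0 \to R$ produces a trivial deformation $\cY_0^{\mathrm{triv}} := \cY_0 \times_{\Spec R_0} \Spec R$ of $\cY_0$ over $R$, which carries a tautological lift $E_0^{\mathrm{triv}}$ of $E_0$ obtained by pullback along the second projection. The geometric idea is that the original deformation $\cY$ differs from $\cY_0^{\mathrm{triv}}$ by precisely the Kodaira--Spencer class $\kappa(\cY) \in \Ext^1(\Omega_{\cY_0/R_0}, I \otimes \cO_{\cY_0})$, and this discrepancy is exactly what obstructs transporting $E_0^{\mathrm{triv}}$ from $\cY_0^{\mathrm{triv}}$ to $\cY$.

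First I would verify that the lifting problem is governed by a universal obstruction class $\omega(E_0, \cY) \in \Ext^2(E_0, E_0 \otimes I)$, using Illusie's cotangent formalism for perfect complexes: a lift $E \in \Dperf(\cY)$ of $E_0$ exists precisely when $\omega(E_0, \cY) = 0$; when it vanishes, the set of isomorphism classes of lifts is a torsor under $\Ext^1(E_0, E_0 \otimes I)$, while the automorphisms of a given lift restricting to $\mathrm{id}_{E_0}$ form a group under $\Hom(E_0, E_0 \otimes I)$. This step is formal once the cotangent machinery is invoked and is valid for an arbitrary square-zero extension; the perfectness of $E_0$ is needed to guarantee that $\Ext^i(E_0, E_0 \otimes I)$ computes the relevant derived functor and that local lifts always exist.

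Next I would identify $\omega(E_0, \cY)$ with the Yoneda product $\kappa(\cY) \cdot A(E_0)$. Recall that the Atiyah class $A(E_0) \in \Ext^1(E_0, E_0 \otimes \Omega_{\cY_0/R_0})$ arises from the short exact sequence of principal parts $0 \to \Omega_{\cY_0/R_0} \to \cP^1(\cO_{\cY_0}) \to \cO_{\cY_0} \to 0$ tensored with $E_0$, and measures the failure of $E_0$ to admit an $R_0$-linear algebraic connection. The class $\kappa(\cY)$ admits a dual description as the class of the extension $0 \to I \otimes \cO_{\cY_0} \to \cO_{\cY} \to \cO_{\cY_0} \to 0$ viewed as a morphism $\Omega_{\cY_0/R_0} \to I \otimes \cO_{\cY_0}[1]$. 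Composing these two triangles via the natural pairing gives an element of $\Ext^2(E_0, E_0 \otimes I)$, and one checks that this cup product agrees with $\omega(E_0, \cY)$.

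The main obstacle will be precisely this identification of $\omega(E_0, \cY)$ with $\kappa(\cY) \cdot A(E_0)$: the universal obstruction lives in the hypercohomology of a truncated cotangent complex, while $\kappa(\cY) \cdot A(E_0)$ lives in an ordinary $\Ext^2$ on $\cY_0$, and bridging the two requires careful manipulation of simplicial (or DG/$\infty$-categorical) resolutions of $E_0$ as worked out in \cite[Theorem~3.3]{HuybrechtsThomas:defo}, together with input from \cite[Theorem~8.1]{Lieblich:mother-of-all}. The torsor statement, once existence is established, is essentially standard: given a lift $E \in \Dperf(\cY)$, the exact triangle $E_0 \otimes I \to E \to E_0$ in $\Dperf(\cY)$ yields via its associated long exact $\Hom$-sequence both the transitivity of the $\Ext^1(E_0, E_0 \otimes I)$-action on the set of lifts and the fact that any two lifts differ by such an element.
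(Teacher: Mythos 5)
The paper does not actually prove Theorem~\ref{thm:HTsquarezero}: it is stated explicitly as ``the main result of \cite{HuybrechtsThomas:defo}, simplified to the case where the structure morphism is smooth and the square-zero extension is split,'' and is then invoked as a black box in the proof of Theorem~\ref{thm:Mukai}. Your sketch is essentially a summary of the Huybrechts--Thomas argument itself --- deformation theory via the cotangent formalism giving a universal obstruction class, followed by the identification of that obstruction with $\kappa(\cY)\cdot A(E_0)$ --- which is exactly the source the paper points to, so you and the paper take the same route; the only difference is that the paper simply cites the result rather than reproducing the argument. Your account is accurate, and you correctly flag the genuine content of the theorem: the matching of the cotangent-complex obstruction with the product of the Kodaira--Spencer and Atiyah classes, which is where all the work in \cite{HuybrechtsThomas:defo} lies. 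One small clarification that is worth spelling out in the smooth split case (and that the paper's surrounding discussion implicitly uses): the splitting $R_0 \to R$ pins down the \emph{trivial} deformation of $\cY_0$ over $R$, so the Kodaira--Spencer class $\kappa(\cY)$ is a well-defined element of $\Ext^1(\Omega_{\cY_0/R_0}, I)$ rather than only a torsor element, and smoothness of $\cY_0/R_0$ means the truncated cotangent complex reduces to $\Omega_{\cY_0/R_0}$, which is why the obstruction lives in the ordinary $\Ext^2$ group appearing in the statement rather than a hypercohomology group.
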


\begin{proof}[Proof of Theorem~\ref{thm:Mukai}]
First we claim that it is enough to prove the theorem when the base $S$ is smooth. 
Indeed, first note that for any morphism $S' \to S$ with based changed family $\cX'$, the relative moduli space $\rho' \colon sM_{\pug}(\Ku(\cX')/S')(\vv) \to S'$ is the base change of $\rho \colon sM_{\pug}(\Ku(\cX)/S)(\vv) \to S$ along $S' \to S$.
Applied to closed points of $S$ this shows that the fibers of $\rho$ are smooth. 
Applied to the spectrum of a DVR over $S$ (which factors via a smooth scheme of finite type over $S$), this proves the valuative criterion for flatness \cite[11.8.1]{EGA4-3}.
Combined, this shows that $\rho$ is smooth.

Thus from now on we may assume that $S$ is smooth. 
Assume we have proven that in this case the total space $sM_{\pug}(\Ku(\cX)/S)(\vv)$ is smooth. 
Let $0 \in S(\C)$ be a closed point, let $\cX_0$ be the fiber of $\cX \to S$, and let $\vv_0 \in \tH(\Ku(\cX_0), \Z)$ be the fiber of $\vv$. 
Then the moduli space $sM_{\pug}(\Ku(\cX_0))(\vv_0)$ ---i.e., the fiber of $\rho$ over $0$--- is smooth by our assumption (applied to the case where $S$ is a point), and of dimension $\vv_0^2 + 2$ (as this computes $\dim \Ext^1(E_0,E_0)$ for a simple universally gluable object in $\Ku(\cX_0)$ of class $\vv_0$). 
Therefore $\rho \colon sM_{\pug}(\Ku(\cX)/S)(\vv) \to S$ is a locally finite type morphism between smooth spaces whose closed fibers are smooth of constant dimension. 
It follows that $\rho$ is smooth. 

Thus to prove the theorem it suffices to show that $sM_{\pug}(\Ku(\cX)/S)(\vv)$ is smooth if $S$ is smooth. 
Since $sM_{\pug}(\Ku(\cX)/S)(\vv)$ is locally of finite type over $\C$, we just need to show that it is formally smooth at any $\C$-point. 
More precisely, let $E_0$ be a $\C$-point of $sM_{\pug}(\Ku(\cX)/S)(\vv)$ lying over a closed point $0 \in S$. 
Let $\Art_{\C}$ be the category of Artinian local $\C$-algebras with residue field $\C$, and let $F \colon \Art_{\C} \to \Sets$ be the deformation functor of the point $E_0$, i.e.\ $F(A)$ consists of pairs $(\Spec(A) \to S, E)$ where $\Spec(A) \to S$ takes the closed point $p \in \Spec(A)$ to $0 \in S$, and $E$ is an $A$-point of $sM_{\pug}(\Ku(\cX)/S)(\vv)$ whose restriction over $p \in S$ is isomorphic to $E_0$; 
for simplicity we often simply write $E \in F(A)$ suppressing the map $\Spec(A) \to S$ from the notation. 
(Note that in the definition of $F(A)$, instead of requiring $E$ to be an $A$-point of $sM_{\pug}(\Ku(\cX)/S)(\vv)$, it is equivalent to require $E \in \Dperf(\cX_{A})$;
indeed, an $A$-point $E$ of $sM_{\pug}(\Ku(\cX)/S)(\vv)$ must lie in $\Dperf(\cX_{A})$  since it is relatively perfect over $\Spec(A)$ and $\cX_A \to \Spec(A)$ is smooth, and conversely, the condition that an object $E \in \Dperf(\cX_{A})$ restricts over $p$ to $E_0$ guarantees that $E$ lies in $\Ku(\cX_{A})$, has class $\vv$, and is universally gluable, as $E_0$ has these properties.) 
To prove formal smoothness of $sM_{\pug}(\Ku(\cX)/S)(\vv)$ at $E_0$, we must show that $F$ is a smooth functor, i.e. for any surjection $A' \to A$ in $\Art_{\C}$, the map $F(A') \to F(A)$ is surjective. 

It will also be useful to consider the deformation functor $G \colon \Art_{\C} \to \Sets$ of the point $0 \in S$, whose value on $A \in \Art_{\C}$ consists of morphisms $\Spec(A) \to S$ taking the closed point to $0$. 
By definition, there is a morphism of functors $F \to G$. 

We start by considering a split square-zero extension $A' \to A$, that is $A' \cong A[\varepsilon]/(\varepsilon^2)$. 
We will prove that the fiber $\mathrm{Def}_E(A')$ of the map 
\begin{equation*}
F(A') \to G(A') \times_{G(A)} F(A)
\end{equation*} 
over any point $(\Spec(A') \to S, E)$ is a torsor under $\Ext^1(E,E)$. 
Let $\cX_{A'} \to \Spec A'$ and $\cX_{A} \to \Spec A$ be the base changes of our family $\cX \to S$ of cubic fourfolds. 
By Theorem~\ref{thm:HTsquarezero}, it suffices to show that 
\begin{equation} 
\label{kappa-AE-vanish}
\kappa(\cX_{A'}) \cdot A(E) = 0 \in \Ext^2(E, E). 
\end{equation} 
Note that $\Hom(E, E) \cong A$ since $E$ is simple, and relative Serre duality over $A$ gives that $\Ext^2(E, E)$ is a line bundle on $\Spec A$.
More precisely, a version of the arguments in \cite[Theorem~4.3]{KM:symplectic} relative to $A$ shows that
\begin{equation} \label{KMtrick}
\Ext^2(E, E ) \xrightarrow{\mathrm{Tr}_{E}(\blank \circ A(E))} 
\Ext^3(\cO_{\cX_A}, \Omega_{\cX_A/A}) 
\end{equation} 
is an isomorphism.
Now we claim that 
\begin{equation*}
 0 = \Tr_{E}\left(\kappa(\cX_{A'}) \cdot A(E)^2\right) \in \Ext^3(\cO_{\cX_A}, \Omega_{\cX_A/A}), 
\end{equation*}
so that by the isomorphism~\eqref{KMtrick} the desired vanishing~\eqref{kappa-AE-vanish} holds. 
Indeed, consider the formal exponential 
\begin{equation*}
 \exp(A(E))\colon E \to \bigoplus_{i\geqslant 0} E \otimes \Omega_{\cX_A/A}^i[i]. 
\end{equation*}
Due to the vanishing of $H^{i+2, i}$ for $i \neq 1$, we have
\[ \Tr_{E}\left(\kappa(\cX_{A'}) \cdot A(E)^2\right) = 2 \Tr_{E}(\kappa(\cX_{A'}) \cdot \exp(A(E))) = 2 \kappa(\cX_{A'}) \cdot \ch(E),
\]
which vanishes, as it is precisely the obstruction to $\vv$, or equivalently $\ch(E)$, remaining of Hodge type along $A'$.

Now we prove that $F$ is a smooth functor. 
We use the $T^1$-lifting theorem \cite{kawamata-T1, fantechi-T1} to reduce to the case of split square-zero extensions handled above. 
Namely, for any integer $n \geqslant 0$ set $A_n = \C[t]/(t^{n+1})$ and $A'_n = A_n[\varepsilon]/(\varepsilon^2)$. 
The $T^1$-lifting theorem says that $F$ is smooth if for every $n \geqslant 0$ the natural map 
\begin{equation*}
F(A'_{n+1}) \to F(A'_n) \times_{F(A_n)} F(A_{n+1}) 
\end{equation*} 
is surjective. 
Note that this map fits into a commutative diagram 
\begin{equation*}
\xymatrix{
F(A'_{n+1}) \ar[r] \ar[d] & F(A'_n) \times_{F(A_n)} F(A_{n+1}) \ar[d] \\ 
G(A'_{n+1}) \ar[r] & G(A'_n) \times_{G(A_n)} G(A_{n+1}) 
}
\end{equation*} 
where the bottom horizontal arrow is surjective because $0 \in S$ is a smooth point. 
Therefore, it suffices to prove the map 
\begin{equation} 
\label{T1-diagram} 
F(A'_{n+1}) \to G(A'_{n+1}) \times_{G(A'_n) \times_{G(A_n)} G(A_{n+1})} ( F(A'_n) \times_{F(A_n)} F(A_{n+1}) ) 
\end{equation} 
is surjective. Let $(\Spec(A'_{n+1}) \to S, E'_n, E_{n+1})$ be a point of the target of this map, 
and set $E_n = (E_{n+1})_{A_n} \cong (E'_n)_{A_n}$. 
By the previous paragraph, the set $\mathrm{Def}_{E_{n+1}}(A'_{n+1})$ of deformations 
of $E_{n+1}$ over $A'_{n+1}$ is an $\Ext^1(E_{n+1}, E_{n+1})$-torsor, 
and the set $\mathrm{Def}_{E_{n}}(A'_n)$ of deformations of $E_n$ over $A'_n$ is an 
$\Ext^1(E_n, E_n)$-torsor. 
Moreover, the restriction map $\mathrm{Def}_{E_{n+1}}(A'_{n+1}) \to \mathrm{Def}_{E_{n}}(A'_n)$ 
is compatible with the torsor structures under the restriction map 
\begin{equation}
\label{Ext1-restriction}
\Ext^1(E_{n+1}, E_{n+1}) \to \Ext^1(E_n, E_n). 
\end{equation}
We claim the map on $\Ext^1$ groups, and hence also $\mathrm{Def}_{E_{n+1}}(A'_{n+1}) \to \mathrm{Def}_{E_{n}}(A'_n)$, is surjective. 
Given this claim, we can choose an element $E'_{n+1} \in \mathrm{Def}_{E_{n+1}}(A'_{n+1})$ restricting to $E'_n \in \mathrm{Def}_{E_{n}}(A'_n)$; then $E'_{n+1}$ maps to $(\Spec(A'_{n+1}) \to S, E'_n, E_{n+1})$ under~\eqref{T1-diagram}, proving the required surjectivity. 

It remains to prove the map~\eqref{Ext1-restriction} is surjective. 
In fact, we will prove there is an isomorphism 
\begin{equation}
\label{Ext1-bc}
\Ext^1(E_{n+1}, E_{n+1}) \otimes_{A_{n+1}} A_n \cong 
\Ext^1(E_{n}, E_{n}) 
\end{equation} 
which implies the surjectivity of~\eqref{Ext1-restriction}. 
The isomorphism~\eqref{Ext1-bc} is a special case of \cite[Lemma~7.4(2)]{IHC-CY2}, but for completeness we include the argument. 
By base change (Lemma~\ref{Lem-cHom-bc}), we have 
\begin{equation*}
\cHom_{A_{n+1}}(E_{n+1}, E_{n+1}) \otimes_{A_{n+1}} A_n \cong 
\cHom_{A_{n}}(E_{n}, E_{n}) . 
\end{equation*} 
We will prove the cohomology groups $\Ext^i(E_{n+1}, E_{n+1})$ of $\cHom_{A_{n+1}}(E_{n+1}, E_{n+1})$ are locally free $A_{n+1}$-modules for all $i$, which by the above base change formula implies the isomorphism~\eqref{Ext1-bc}. 
Serre duality shows that $\Ext^i(E_{n+1}, E_{n+1})$ vanishes for $i \notin [0,2]$, and for $i = 0$ or $i = 2$ we already observed above that it is a line bundle. 
For $i = 1$, by the local criterion for flatness it suffices to show that $\rH^{-1}(j_0^*\Ext^1(E_{n+1}, E_{n+1})) = 0$, where $j_0 \colon \Spec(\C) \to \Spec(A_{n+1})$ is the closed point. 
Note that we have a spectral sequence with $E_2$-page 
\begin{equation*}
E_2^{i,j} = \rH^j(j^*_0 \Ext^i(E_{n+1}, E_{n+1})) \implies \Ext^{i+j}(E_0, E_0). 
\end{equation*} 
But $E_2^{0,j}$ and $E_2^{2,j}$ are $1$-dimensional for $j = 0$ and vanish for $j \neq 0$, and $\Ext^0(E_0, E_0)$ is $1$-dimensional, so the vanishing $\rH^{-1}(j_0^*\Ext^1(E_{n+1}, E_{n+1})) = 0$ follows. 
\end{proof}

\section{Specializing Kuznetsov components to twisted K3 surfaces} 

Combining Proposition~\ref{prop:Msigmarelative1}, Theorem~\ref{thm:modulispacesArtinstacks}, and Theorem~\ref{thm:Mukai}, we obtain the following.

\begin{Cor}\label{cor:famstabcurvescubics}
Let $X$ be a cubic fourfold, let $\vv\in\tH_\Hdg(\Ku(X),\Z)$ be a primitive vector, and let $\sigma\in\Stab^\dagger(\Ku(X))$ be a $\vv$-generic stability condition. 
Let $X'$ be another cubic fourfold in the Hodge locus where $\vv$ stays a Hodge class. 
Then there exist a family $g\colon\cX\to C$ of cubic fourfolds over a smooth connected quasi-projective curve and a stability condition $\us$ on $\Ku(\cX)$ over $C$ such that:
\begin{enumerate}[{\rm (1)}] 
\item\label{enum:staysalg} $\vv$ is a primitive vector in $\tH_\Hdg(\Ku(\cX_c),\Z)$ for all closed points $c\in C$;
\item\label{enum:atp0} there exists closed points $c_0, c_1 \in C$ such that $\cX_{c_0}=X$ and $\cX_{c_1}=X'$; 
\item \label{enum:smalldefo} $\sigma_{c_0}$ is a small deformation of $\sigma$ satisfying $M_{\sigma_{c_0}}(\Ku(\cX_{c_0}), \vv) = M_{\sigma}(\Ku(\cX_{c_0}), \vv)$;
\item\label{enum:vgeneric} $\sigma_c$ is $\vv$-generic for all $c\in C$;
\item\label{enum:famstabcurvescubics5} the relative moduli space $M_{\us}(\vv)$ is smooth and proper over $C$.
\end{enumerate}
Furthermore, if we assume that the central charge $Z$ of $\sigma$ is defined over $\Q[\ii]$ and that $X'$ is also in the Hodge locus where $\ell_\sigma\in\tH_\Hdg(\Ku(X),\Q)$ stays a Hodge class, then we can choose $\us$ so that there is an open neighborhood $U \subset C$ of $c_0$ 
such that the central charge of $\us$ restricted over $U$ factors through $\cN(\Ku(\cX_U)/U)$. 
\end{Cor}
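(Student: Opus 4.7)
The plan is to assemble the three main ingredients already developed in the paper: Proposition~\ref{prop:Msigmarelative1} for the existence of a fiberwise $\vv$-generic family of stability conditions, Theorem~\ref{thm:modulispacesArtinstacks} for the existence and properness of the relative moduli space, and Theorem~\ref{thm:Mukai} for smoothness. The preliminary step, and the one requiring the most care, is setting up a suitable family $g\colon \cX\to C$ containing both $X$ and $X'$ as fibers and meeting the hypotheses of Proposition~\ref{prop:Msigmarelative1}.

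First I would construct $C$. By the algebraicity of Hodge loci (Cattani--Deligne--Kaplan), the locus in the moduli space of cubic fourfolds over which $\vv$ remains a Hodge class is a countable union of closed algebraic subvarieties; we fix an irreducible component $S_\vv$ containing the moduli points of both $X$ and $X'$, and cut out an irreducible quasi-projective curve $\overline C \subset S_\vv$ through them. To equip the induced family with a section of the relative Fano variety of lines not contained in any plane, I pass to a finite cover as in Example~\ref{ex:relFano}, and then to a smooth model, obtaining a smooth connected quasi-projective curve $C$ with a family $g\colon \cX\to C$, a family of lines $\cL\subset \cX$, and closed points $c_0,c_1\in C$ such that $\cX_{c_0}\cong X$, $\cX_{c_1}\cong X'$. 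By the construction of $S_\vv$ and shrinking $C$ (removing any extra Hodge jumps away from $\set{c_0,c_1}$ is unnecessary; they are absorbed into the lattice modification of Remark~\ref{rmk:changinglattice}), the class $\vv$ is a primitive flat section of $\tH_\Hdg(\Ku(\cX_c),\Z)$ on all fibers, and the monodromy fixed sublattice $M=\Fix(\mathrm{Mon}(g))$ contains $\vv$ and $A_2$.

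Next I would apply Proposition~\ref{prop:Msigmarelative1} with $\tau_{c_0}=\sigma$. This produces a lattice $M'\supseteq M$ and a stability condition $\us$ on $\Ku(\cX)$ over $C$ with respect to $(M')^\vee$ satisfying conclusions \eqref{Msigmarel1} and \eqref{Msigmarel2} of that proposition; these yield parts \eqref{enum:staysalg}, \eqref{enum:atp0}, \eqref{enum:smalldefo} and \eqref{enum:vgeneric} of the corollary. For the properness and smoothness in \eqref{enum:famstabcurvescubics5}: since $\vv$ is primitive and each $\sigma_c$ is $\vv$-generic, semistability and stability coincide for objects of class $\vv$, so $\fM_{\us}(\vv)=\fM^{\st}_{\us}(\vv)$, and each stable object over an algebraically closed field is simple. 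By Theorem~\ref{thm:modulispacesArtinstacks}.\eqref{enum:MAlgStackFT} (resp.~\eqref{enum:GoodModuliChar0}), $\fM_{\us}(\vv)$ is a $\G_m$-gerbe over its coarse/good moduli space $M_{\us}(\vv)$, which is an algebraic space proper over $C$. On the other hand, $\fM_{\us}(\vv)$ is an open substack of $s\cM_{\pug}(\Ku(\cX)/C)(\vv)$, and after rigidifying the $\G_m$-gerbe, $M_{\us}(\vv)$ is an open subspace of $sM_{\pug}(\Ku(\cX)/C)(\vv)$, whose structure morphism to $C$ is smooth by Theorem~\ref{thm:Mukai}. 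Hence $M_{\us}(\vv)\to C$ is smooth and proper, yielding \eqref{enum:famstabcurvescubics5}.

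Finally, for the addendum on the central charge: under the hypotheses that $Z$ is defined over $\Q[\ii]$ and that $X'$ lies in the Hodge locus where $\ell_\sigma$ remains Hodge, I enlarge the component $S_\vv$ chosen above to the irreducible component of the joint Hodge locus where both $\vv$ and $\ell_\sigma$ stay algebraic (still containing $X$ and $X'$), and carry out the construction of $C$ inside this component. Then by Hodge-theoretic flat transport both $\vv$ and $\ell_\sigma$ extend to flat sections that are of Hodge type fiberwise, so after further shrinking $C$ we may assume $M$ contains both, and in particular $Z$ factors through $M^\vee$. Since $c_0$ is a smooth point of $C$, the hypothesis of part~\eqref{Msigmarel3} of Proposition~\ref{prop:Msigmarelative1} holds, giving an open neighborhood $U\subset C$ of $c_0$ on which the central charge of $\us$ factors through $\cN(\Ku(\cX_U)/U)$, as required. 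The main obstacle is the first step, namely producing a smooth curve $C$ that simultaneously lies in the correct component of the Hodge locus, passes through the two prescribed cubic fourfolds, and admits a compatible family of lines; once that geometric input is in hand, the remainder is a direct application of the machinery already established.
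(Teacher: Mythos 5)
Your construction of the curve $C$ through both cubic fourfolds inside a fixed irreducible component of the Hodge locus for $\vv$, the passage to a finite cover to obtain a family of lines, and the subsequent application of Proposition~\ref{prop:Msigmarelative1}, Theorem~\ref{thm:modulispacesArtinstacks} and Theorem~\ref{thm:Mukai} for parts~\eqref{enum:staysalg}--\eqref{enum:famstabcurvescubics5} are all sound and match the paper's argument (the paper is terser about the curve construction and defers the smoothness/properness argument to the sentence preceding the statement, but the content is the same).

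However, your treatment of the addendum has a genuine gap. You conclude that since $\vv$ and $\ell_\sigma$ flat-transport to Hodge classes on all fibers, $M$ contains both, ``and in particular $Z$ factors through $M^\vee$.'' This is not automatic. Writing $Z(\blank) = -(\mathbf{r}_Z + \ii\,\mathbf{i}_Z, \blank)$ via the Mukai pairing, the class $\ell_\sigma$ corresponds to the \emph{imaginary} part $\mathbf{i}_Z$ only; the addendum's hypothesis gives $\mathbf{i}_Z \in M\otimes\Q$, but says nothing about $\mathbf{r}_Z$. For $Z$ to factor through the projection $\tH_\Hdg(\Ku(\cX_{c_0}),\Z) \to M^\vee$ one needs $Z$ to vanish on $M^\perp\cap\tH_\Hdg$, i.e.\ one needs \emph{both} $\mathbf{r}_Z$ and $\mathbf{i}_Z$ to lie in $M\otimes\Q$, and $\mathbf{r}_Z$ need not. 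The paper handles this by first slightly deforming $\sigma$, keeping $\Im Z$ (hence $\ell_\sigma$) and $\vv$-genericity fixed and moving $\Re Z$ into $M\otimes\Q$, so that the hypothesis of Proposition~\ref{prop:Msigmarelative1}.\eqref{Msigmarel3} is satisfied; such a deformation stays in the same chamber and therefore does not change $M_{\sigma_{c_0}}(\vv)$. You should insert this perturbation before invoking Proposition~\ref{prop:Msigmarelative1}.\eqref{Msigmarel3}; shrinking $C$ does not fix the issue, since the problem lies with $\Re Z$ rather than with the family.
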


\begin{proof}
Parts \eqref{enum:staysalg}--\eqref{enum:famstabcurvescubics5} follow directly from Proposition~\ref{prop:Msigmarelative1}.
To clarify the last statement, we recall that the class $\ell_{\sigma}\in N^1(M_{\sigma}(\vv))$ of Theorem~\ref{thm:PositivityLemmaFamily} corresponds via the Mukai morphism (see Theorem~\ref{thm:YoshiokaMain}.\eqref{enum:YoshiokaMain2}) to a class in $\tH(\Ku(X),\R)$, which is actually rational under our assumptions.
Then, up to slightly deforming $\sigma$, we can assume that its central charge factors through $M:=\tH_\Hdg(\Ku(\cX_c),\Z)$, for $c$ a very general point in $C$.
The conclusion follows then directly from Proposition~\ref{prop:Msigmarelative1}.\eqref{Msigmarel3}.
\end{proof}


Corollary~\ref{cor:famstabcurvescubics} shows that to prove deformation invariant properties about $M_{\sigma}(\Ku(X), \vv)$, we may specialize the cubic fourfold $X$ within the Hodge locus for $\vv$. 
In the proofs of our results stated in Section~\ref{sec:MainResultsCubics}, we will use this observation to specialize to the case where $\Ku(X)$ is equivalent to the derived category of a twisted K3 surface $(S, \alpha)$, for which many results are already known \cite{BM:proj}. 
There is one subtlety: moduli spaces of $\sigma$-stable objects in $\Db(S, \alpha)$ are only well-understood when $\sigma$ lies in the connected component 
$\Stab^{\dagger}(S,\alpha)$ containing geometric stability conditions, i.e., those for which skyscraper sheaves of points are stable of the same phase. 
\index{StabDagger(S,alpha)@$\Stab^{\dagger}(S,\alpha)$, connected component of $\Stab(\Db(S,\alpha))$ containing geometric stability conditions}

\begin{Rem}
In the first version of this paper, the above subtlety was overlooked, but in the meantime it was addressed in \cite[Section~5.2]{GM-stability} in the context of Gushel--Mukai varieties. 
The arguments below closely follow those of \cite[Section~5.2]{GM-stability}, but are included for completeness. 
\end{Rem} 

\begin{Def}
Let $X$ be a cubic fourfold and $(S, \alpha)$ a twisted K3 surface. 
A \emph{$\dagger$-equivalence}\index{dagger-equivalence@$\dagger$-equivalence preserving geometric components} $\Ku(X) \simeq \Db(S, \alpha)$ is an equivalence under which $\Stab^{\dagger}(\Ku(X))$ maps to $\Stab^{\dagger}(S, \alpha)$. 
\end{Def} 

The following proposition gives the desired type of specialization, and is the main result of this section. 

\begin{Prop}
\label{proposition-specialize-twisted}
Let $X$ be a cubic fourfold and let $\vv\in\tH_\Hdg(\Ku(X),\Z)$. 
Then $X$ is deformation equivalent within the Hodge locus for $\vv$ to a cubic fourfold $X'$ such that there is a $\dagger$-equivalence $\Ku(X') \simeq \Db(S', \alpha')$ for a twisted K3 surface $(S', \alpha')$. 
\end{Prop}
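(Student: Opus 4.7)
The proof will proceed in three steps. The first step is a reduction: it suffices to find a deformation $X'$ of $X$, within the Hodge locus for $\vv$, such that $\tH_\Hdg(\Ku(X'),\Z)$ contains a primitive class $\ww$ with $\ww^2 = 0$. Indeed, Huybrechts' criterion, recalled in the discussion around Corollary~\ref{MainCor:IntroAT}, then produces a twisted K3 surface $(S',\alpha')$ together with an exact equivalence $\Ku(X') \simeq \Db(S',\alpha')$.

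The second step produces such an $X'$. Let $M := \langle A_2, \vv \rangle \subset \tH(\Ku(X),\Z)$ denote the saturated sublattice generated by $A_2$ and $\vv$, which has signature $(2, r_0)$ for some $r_0 \geq 0$. By Nikulin's theory of primitive embeddings of even lattices into the unimodular extended Mukai lattice, I can find a primitive overlattice $M' \subset \tH(\Ku(X),\Z)$ of signature $(2, r_0+1)$ containing $M$ and a primitive isotropic vector $\ww$. Surjectivity of the period map for cubic fourfolds, due to Voisin, Laza, and Looijenga, then guarantees that the Noether--Lefschetz locus in the moduli of cubic fourfolds on which $M'$ stays of Hodge type is non-empty and contains a smooth cubic fourfold $X'$. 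Since this locus is contained in the connected Hodge locus where $\vv$ stays algebraic, $X$ and $X'$ are deformation equivalent in the required sense.

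The third step, and the main obstacle, is to ensure that the equivalence $\Ku(X') \simeq \Db(S',\alpha')$ is $\dagger$-preserving. The distinguished components $\Stab^\dagger(\Ku(X'))$ and $\Stab^\dagger(S',\alpha')$ are both constructed as covers of period domains $\fP_0^+$, defined by lattice-theoretic positivity conditions avoiding the hyperplanes orthogonal to $(-2)$-classes. Huybrechts' equivalence induces a Hodge isometry of the corresponding extended Mukai lattices, hence a biholomorphism of the full period domains $\fP_0$, and the only obstruction is that this biholomorphism might swap the two connected components of $\fP_0$ distinguished by an orientation choice on the positive-definite part. I resolve this by composing, if necessary, with an orientation-reversing auto-equivalence of $\Db(S',\alpha')$, such as a shift combined with a derived duality twist, which interchanges the two components of $\fP_0$ on the twisted K3 side.

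To verify that after such a correction the distinguished components genuinely match, I reduce to an explicit base case: by choosing $\ww$ inside a further refined Hodge locus if needed, I can arrange that $X'$ lies in a Hassett divisor for which the derived equivalence with a twisted K3 surface is constructed geometrically in a way that manifestly respects the distinguished stability components, for example via Kuznetsov's work on cubic fourfolds containing a plane, whose Kuznetsov component is equivalent to the derived category of the discriminant twisted K3 surface of the associated quadric fibration. The family stability conditions developed in Section~\ref{sec:Kuzstabfam}, together with Corollary~\ref{cor:famstabcurvescubics}, then allow us to transport the $\dagger$-property along a deformation path inside the Hodge locus of $M'$, yielding the desired $\dagger$-equivalence at the original $X'$.
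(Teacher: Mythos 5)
Your first two steps—reducing to the existence of a primitive square-zero class in $\tH_\Hdg(\Ku(X'),\Z)$, and producing such an $X'$ by lattice theory plus surjectivity of the period map—are in the same spirit as the paper's argument, which specializes to the Hassett divisor $\cC_8$ and invokes \cite[Theorem~4.1]{AT:CubicFourfolds}. One caveat on the reduction: "Huybrechts' criterion" for twisted K3s, in the generality stated as Proposition~\ref{prop:KuzHass}, is proved \emph{using} the present proposition, so you can only invoke the Addington--Thomas and Huybrechts theorems for a dense open subset of the Hodge locus; this is a fixable issue since your $X'$ may be chosen very general there.

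The third step, however, has a genuine gap, and it is precisely the point the paper's Lemma~\ref{Lem-defo-spherical-dagger} is designed to handle. You argue that the only obstruction to the equivalence being $\dagger$-preserving is that the induced biholomorphism of period domains $\fP_0$ might swap the two connected components, and that this can be repaired by composing with an orientation-reversing autoequivalence. Neither claim holds. First, a derived equivalence of twisted K3 surfaces is known to induce an \emph{orientation-preserving} Hodge isometry, so orientation-reversing autoequivalences do not exist, and the period-domain components match automatically—this is a red herring. Second, and more importantly, matching the components of $\fP_0^+$ says nothing about matching the components of $\Stab$: the space $\Stab(S',\alpha')$ could have multiple connected components covering $\fP_0^+$, and the image of $\Stab^\dagger(\Ku(X'))$ under the equivalence could land in the wrong one. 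This is exactly Bridgeland's unresolved connectedness conjecture, so it cannot simply be assumed. Your fallback—claiming that Kuznetsov's geometric equivalence for cubics containing a plane "manifestly" respects the distinguished components—is unsubstantiated; comparing the \cite{BLMS} tilting construction on $\Ku(X)$ with geometric stability conditions on the twisted K3 through the Clifford-algebra description is a non-trivial check, and no argument is given. Without a verified base case, the transport along family stability conditions (which is the content of Lemma~\ref{Lem-defo-preserves-dagger}) has nothing to transport.

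The paper sidesteps all of this by a different idea: deform further, within the Hodge locus of the square-zero class, to a cubic $X''$ whose Kuznetsov component has no $(-2)$-classes at all. Then any simple object of the isotropic class with $\Ext^1 = \C^2$ is stable for \emph{every} stability condition by \cite[Lemma~A.4]{BLMS}, so one does not need to know which component any given stability condition lies in. One then builds a fresh Fourier--Mukai equivalence $\Ku(X'') \simeq \Db(T,\beta)$ from the moduli space of such objects for a $\vv$-generic $\sigma \in \Stab^\dagger(\Ku(X''))$; this equivalence is $\dagger$ by construction. This construction of a new equivalence, rather than a verification that a pre-existing one is $\dagger$, is the essential missing ingredient in your proposal.
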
 

We will prove the proposition at the end of this section, after some preliminary results. 

\begin{Lem}
\label{Lem-Ku-twisted-K3}
Let $X$ be a cubic fourfold, let $\vv\in\tH_\Hdg(\Ku(X),\Z)$ be primitive vector with $\vv^2 = 0$, and let $\sigma \in \Stab^{\dagger}(\Ku(X))$ be a $\vv$-generic stability condition. 
Assume there exists a $\sigma$-stable object in $\Ku(X)$ of class $\vv$. 
Then $S = M_{\sigma}(\Ku(X), \vv)$ is a smooth K3 surface and there is a $\dagger$-equivalence $\Ku(X) \simeq \Db(S, \alpha)$ for a certain Brauer class $\alpha \in \Br(S)$. 
\end{Lem}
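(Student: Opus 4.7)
The plan is to first establish that $S$ is a smooth projective surface with trivial canonical bundle, then construct a Fourier--Mukai functor from a twisted universal family and prove it is an equivalence, and finally verify the $\dagger$-property.

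Because $\vv$ is primitive and $\sigma$ is $\vv$-generic, $\sigma$-semistability and $\sigma$-stability coincide on objects of class $\vv$, so each such object is simple and --- by the $2$-Calabi--Yau property of $\Ku(X)$ combined with Serre duality --- universally gluable. Theorem~\ref{thm:Mukai} applied to the one-point base then yields smoothness of the moduli stack of simple universally gluable objects in $\Ku(X)$ of class $\vv$, with tangent space $\Ext^1(E,E)$ of dimension $\vv^2 + 2 = 2$. Consequently $S := M_\sigma(\Ku(X),\vv)$ is a smooth algebraic space of dimension $2$; it is proper by Theorem~\ref{thm:modulispacesArtinstacks}.\eqref{enum:MAlgStackFT} and projective via the strictly nef class of Theorem~\ref{thm:PositivityLemmaFamily}. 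The Serre pairing on $\Ext^1(E,E)$ provides a global nondegenerate $2$-form on $S$, so $\omega_S \cong \cO_S$.

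Since every geometric point of $S$ parametrizes a stable object with automorphism group $\mathbb{G}_m$, the stack $\fM^\st_\sigma(\Ku(X),\vv)$ is a $\mathbb{G}_m$-gerbe over $S$ with class $\alpha \in \Br(S)$; this produces a twisted universal family $\cE \in \Db(S \times X, \alpha^{-1} \boxtimes \cO_X)$. Define
\[
\Phi \colon \Db(S, \alpha) \to \Db(X), \qquad \Phi(F) = (p_X)_*(\cE \otimes p_S^*F).
\]
Since each fiber $\cE_s$ lies in $\Ku(X)$ and twisted skyscrapers classically generate $\Db(S, \alpha)$, $\Phi$ factors through $\Ku(X)$. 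A Bondal--Orlov type criterion then gives fully faithfulness: for distinct closed points $s \neq t$ the fibers $\cE_s \not\cong \cE_t$ are $\sigma$-stable of the same phase, hence $\Hom(\cE_s, \cE_t) = 0$, and by Serre duality $\Ext^2(\cE_s, \cE_t) = \Hom(\cE_t, \cE_s)^\vee = 0$; the remaining $\Ext^i$ vanish for $i \notin [0,2]$ by boundedness of the heart of $\sigma$. Together with $\Hom(\cE_s, \cE_s) = \C$, this verifies the hypotheses. The essential image of $\Phi$ is an admissible subcategory of $\Ku(X)$ closed under the Serre functor $[2]$, and its right complement is then also an admissible $2$-Calabi--Yau subcategory. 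Since $\Ku(X)$ is indecomposable as a $2$-Calabi--Yau category (equivalently $\mathrm{HH}^0(\Ku(X)) = \C$, cf.~\cite{Kuz:fourfold}), this complement vanishes and $\Phi$ is an equivalence onto $\Ku(X)$. Comparing ranks of the induced Mukai lattices $\tH(\Db(S, \alpha), \Z) \cong \tH(\Ku(X), \Z)$ --- rank $24$ on the right, equal to $24$ for $S$ a K3 but only $16$ for $S$ abelian --- we conclude $S$ is a K3 surface.

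For the $\dagger$-equivalence, consider $\sigma' := \Phi^{-1}(\sigma) \in \Stab(\Db(S, \alpha))$. Under $\sigma'$ the twisted skyscraper sheaves $k(s)$ are precisely the $\sigma'$-stable objects in the class corresponding to $\vv$, all of the same phase $\phi(\vv)$. By the description of the geometric component of $\Stab(\Db(S, \alpha))$ in \cite{Bridgeland:K3} and its twisted extension in \cite{HMS:Orientation}, any such stability condition lies in $\Stab^\dagger(S, \alpha)$. Since $\Phi$ induces a homeomorphism on stability spaces preserving connected components, it sends $\Stab^\dagger(\Ku(X))$ to $\Stab^\dagger(S, \alpha)$. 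The main obstacle is precisely this last step: the property that all twisted skyscrapers are $\sigma'$-stable of a common phase does not formally place $\sigma'$ in the \emph{connected component} containing geometric stability conditions; one must deform $\sigma'$ via Theorem~\ref{thm:deformstability} within the open locus where this stability persists to a standard geometric stability condition with central charge of the form $Z_{B, \omega} = -\int e^{-B - i\omega}\sqrt{\mathrm{td}_S}$, and invoke that these span a connected subset that, together with the $\wGL2$-orbit, exhausts $\Stab^\dagger(S, \alpha)$.
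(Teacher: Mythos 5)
Your proof is correct and follows essentially the same overall strategy as the paper (smoothness via the deformation argument, properness of the coarse space, symplectic form from Serre duality, quasi-universal family giving a Fourier--Mukai functor that is fully faithful and then essentially surjective because $\mathrm{HH}^0(\Ku(X)) = \C$). The minor variations do not change the substance: the paper distinguishes K3 from abelian by Hochschild homology rather than lattice rank, and concludes projectivity from the fact that a smooth proper $2$-dimensional algebraic space is automatically a projective surface rather than by invoking the positivity lemma.

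Two remarks worth making. First, your final paragraph talks itself out of a correct argument. Once you know $\sigma' = \Phi^{-1}(\sigma)$ has all twisted skyscrapers stable of a common phase, the results of \cite{Bridgeland:K3} and \cite{HMS:Orientation} that you cite \emph{do} characterize such stability conditions and show they all lie in the geometric component; the extra deformation to a $Z_{B,\omega}$-type central charge is precisely the content of those theorems and need not be redone. The paper simply states that $\Phi_\cE$ is a $\dagger$-equivalence ``by construction'' on the strength of the same observation. Second, you should say a word about connectedness of $S$ (the paper cites \cite[Proposition~A.7]{BLMS}): your argument does imply it implicitly, because an equivalence of $\Ku(X)$ with $\Db(S,\alpha)$ for disconnected $S$ would give a nontrivial orthogonal decomposition of $\Ku(X)$, contradicting $\mathrm{HH}^0(\Ku(X)) = \C$, but as written the step is silent. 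Finally, the vanishing of $\Ext^i(\cE_s,\cE_t)$ for $i > 2$ follows from the $2$-Calabi--Yau property via Serre duality rather than from ``boundedness of the heart,'' which only controls $i < 0$ and very large $i$; the conclusion is correct, but the stated reason is the wrong one.
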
 

\begin{proof}
The results of Section~\ref{subsec:famstabcondKuz} applied in the case where the base is a point show that $\sigma$ is a stability condition \emph{over $\Spec(\C)$}. 
Thus by Theorem~\ref{thm:modulispacesArtinstacks}.\eqref{enum:GoodModuliChar0}, 
$S$ is a proper algebraic space over $\Spec(\C)$. 
Moreover, by Theorem~\ref{thm:Mukai}, $S$ is also smooth over $\Spec(\C)$. 
Since $\Ku(X)$ is a 2-Calabi--Yau category, standard arguments show that $S$ has dimension $\vv^2 + 2 = 2$ and is equipped with a symplectic form (see \cite{KM:symplectic}). 
Since $S$ is a smooth proper $2$-dimensional algebraic space, it is in fact a smooth projective surface. 
Moreover, by \cite[Proposition~A.7]{BLMS}, $S$ is also connected. 

Now let $\cE$ be a quasi-universal family over $S \times X$ and $\alpha \in \Br(S)$ the associated Brauer class. 
By \cite{Bridgeland:EqFMT}, the exact functor $\Phi_\cE\colon\Db(S,\alpha)\to \Ku(X)$ is fully faithful. Since the $0$-th Hochschild cohomology $\mathrm{HH}^0(\Ku(X))$ is one-dimensional, $\Phi_{\cE}$ is also essentially surjective (Bridgeland's trick; see for example \cite[Proposition~5.1]{Kuz:CYcat}). 
By construction, the equivalence $\Phi_{\cE}$ is a $\dagger$-equivalence. 

As $S$ has a symplectic form, it is either a K3 or abelian surface. 
Since $\Db(S, \alpha) \simeq \Ku(X)$ has the same Hochschild homology as a K3 surface, $S$ must in fact be a K3 surface. 
\end{proof}

Next we observe that the existence of a $\dagger$-equivalence deforms along Hodge loci for square-zero classes. 

\begin{Lem}
\label{Lem-defo-preserves-dagger}
Let $X$ and $X'$ be cubic fourfolds which are deformation equivalent within the Hodge locus for a vector $\vv \in \tH_{\Hdg}(\Ku(X), \Z)$ with $\vv^2 = 0$. 
Then $\Ku(X)$ is $\dagger$-equivalent to the derived category of a twisted K3 surface if and only if $\Ku(X')$ is. 
\end{Lem}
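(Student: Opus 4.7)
The plan is to use the relative moduli space of $\vv$-stable objects as a bridge between $\Ku(X)$ and $\Ku(X')$. By symmetry it suffices to show that if $\Ku(X)$ is $\dagger$-equivalent to $\Db(S,\alpha)$ for some twisted K3 surface, then $\Ku(X')$ also admits such an equivalence. The key observation is that the existence of a $\dagger$-equivalence can be detected --- via Lemma~\ref{Lem-Ku-twisted-K3} --- by the non-emptiness of $M_\sigma(\Ku(X),\vv)$ for some (any) $\vv$-generic $\sigma \in \Stab^{\dagger}(\Ku(X))$, and this non-emptiness can be propagated along a family by smoothness and properness of the relative moduli space.

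First, I would pick a $\vv$-generic $\sigma \in \Stab^{\dagger}(\Ku(X))$. Under the assumed equivalence, $\sigma$ corresponds to a $\vv$-generic stability condition in $\Stab^{\dagger}(S,\alpha)$; invoking the established theory of Bridgeland moduli spaces on twisted K3 surfaces (Bayer--Macr\`i, Toda, building on Yoshioka) for a primitive Hodge class of square zero, one obtains that the corresponding moduli space is a non-empty K3 surface, hence $M_\sigma(\Ku(X),\vv)\neq\emptyset$. Next I would apply Corollary~\ref{cor:famstabcurvescubics} to $X$, $X'$, and $\sigma$ to produce a family $g\colon\cX\to C$ of cubic fourfolds over a smooth connected quasi-projective curve, with closed points $c_0, c_1\in C$ such that $\cX_{c_0}=X$ and $\cX_{c_1}=X'$, together with a stability condition $\us$ on $\Ku(\cX)$ over $C$ for which $\sigma_c$ is $\vv$-generic for every $c\in C$, $M_{\sigma_{c_0}}(\Ku(X),\vv)=M_\sigma(\Ku(X),\vv)$, and the relative moduli space $M_{\us}(\vv)\to C$ is smooth and proper.

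The rest is a short topological argument. Smooth morphisms of algebraic spaces of finite type are universally open, and proper morphisms are universally closed, so the image of $M_{\us}(\vv)\to C$ is clopen in $C$. Since it contains $c_0$ by the previous step and $C$ is connected, the image is all of $C$, and in particular the fiber $M_{\sigma_{c_1}}(\Ku(X'),\vv)$ is non-empty. A final application of Lemma~\ref{Lem-Ku-twisted-K3}, using the $\vv$-generic stability condition $\sigma_{c_1}\in\Stab^{\dagger}(\Ku(X'))$, then produces the desired $\dagger$-equivalence $\Ku(X')\simeq\Db(S',\alpha')$. The step most likely to conceal a subtlety is the initial appeal to non-emptiness on the twisted K3 side: one must verify that the image of $\vv$ under any $\dagger$-equivalence is again a primitive Hodge class of square zero and that the corresponding stability condition lies in the distinguished component where the cited results apply. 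Both checks should follow immediately from the definition of a $\dagger$-equivalence and preservation of the Mukai pairing and Hodge structure, but they are the bookkeeping that makes the argument go through.
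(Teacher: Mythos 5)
Your proof is correct and follows essentially the same route as the paper: deduce non-emptiness of $M_\sigma(\Ku(X),\vv)$ from the assumed $\dagger$-equivalence and \cite{BM:proj}, propagate this along the family over a curve provided by Corollary~\ref{cor:famstabcurvescubics} using that $M_{\us}(\vv)\to C$ is smooth and proper, and then invoke Lemma~\ref{Lem-Ku-twisted-K3} over $X'$. The only cosmetic difference is that you make the propagation explicit via the clopen-image argument, while the paper tersely asserts that the fiber over $X'$ is again a smooth K3 surface.
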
 

\begin{proof}
Assume $\Ku(X)$ is $\dagger$-equivalent to the derived category of a twisted K3 surface. 
Then if $\sigma \in \Stab^{\dagger}(\Ku(X))$ is $\vv$-generic, the moduli space 
$M_{\sigma}(\Ku(X), \vv)$ is a smooth K3 surface \cite{BM:proj}. 
By Corollary~\ref{cor:famstabcurvescubics}, it follows that for any $\vv$-generic $\sigma' \in \Stab^{\dagger}(\Ku(X'))$, the moduli space $M_{\sigma'}(\Ku(X'), \vv)$ is also a smooth K3 surface. 
By Lemma~\ref{Lem-Ku-twisted-K3}, we conclude $\Ku(X')$ is $\dagger$-equivalent to the derived category of a twisted K3 surface. 
\end{proof} 

The following is our key technical ingredient.
It allows us to replace a given equivalence $\Ku(X) \simeq \Db(S, \alpha)$ with a $\dagger$-equivalence, provided $X$ admits deformations with suitable Hodge-theoretic properties. 

\begin{Lem}
\label{Lem-defo-spherical-dagger}
Let $X$ be a cubic fourfold such that: 
\begin{enumerate}
\item There is an equivalence $\Ku(X) \simeq \Db(S, \alpha)$ for a twisted K3 surface $(S, \alpha)$. 
\item There exists a vector $\vv \in \tH_{\Hdg}(\Ku(X), \bZ)$ with $\vv^2 = 0$ such that $X$ is deformation equivalent within the Hodge locus for $\vv$ to a cubic fourfold $X'$ with the property that $\tH_{\Hdg}(\Ku(X'), \bZ)$ contains no vectors $\ww$ with $\ww^2 = -2$. 
\end{enumerate}
Then there exists a twisted K3 surface $(T, \beta)$, possibly different from $(S, \alpha)$, and a $\dagger$-equiva\-lence $\Ku(X) \simeq \Db(T, \beta)$. 
\end{Lem}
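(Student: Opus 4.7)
The strategy is to construct a $\dagger$-equivalence at $X'$, where the absence of $(-2)$-classes provides the crucial simplification, and then transport it back to $X$ via Lemma~\ref{Lem-defo-preserves-dagger}. The first step is to deform the given equivalence $\Phi \colon \Ku(X) \simeq \Db(S, \alpha)$ along a smooth connected curve $C$ in the deformation space of cubic fourfolds, connecting $X$ to $X'$ within the Hodge locus of $\vv$. The equivalence $\Phi$ induces a Hodge isometry of Mukai lattices, which parallel-transports along $C$ to a Hodge isometry $\tH(\Ku(X'), \Z) \cong \tH(S', \alpha', \Z)$, where $(S', \alpha')$ denotes the twisted K3 surface obtained by the corresponding parallel transport of the period of $(S, \alpha)$. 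By the derived Torelli theorem for cubic fourfolds and twisted K3 surfaces (cf.~\cite{Huy:cubics}), this Hodge isometry lifts to an equivalence $\Phi' \colon \Ku(X') \simeq \Db(S', \alpha')$.

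Next, I would exploit the absence of $(-2)$-classes in $\tH_{\Hdg}(\Ku(X'), \Z)$, which via $\Phi'$ transfers to $\tH_{\Hdg}(S', \alpha', \Z)$. Letting $\vv' \in \tH_{\Hdg}(\Ku(X'), \Z)$ be the parallel transport of $\vv$ and $\ww' := \Phi'(\vv')$, Bridgeland--Mukai theory for twisted K3 surfaces \cite{BM:proj} yields, for a generic $\tau \in \Stab^\dagger(S', \alpha')$, that the moduli space $T' := M_\tau(\Db(S', \alpha'), \ww')$ is a smooth projective K3 surface whose universal family induces a Fourier--Mukai $\dagger$-equivalence $\Psi \colon \Db(T', \beta') \simeq \Db(S', \alpha')$ for an appropriate Brauer class $\beta'$. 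The composite $\Xi := \Psi^{-1} \circ \Phi' \colon \Ku(X') \simeq \Db(T', \beta')$ matches $\vv'$ with the point class $(0,0,1) \in \tH_{\Hdg}(T', \beta', \Z)$.

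The hard part will be showing that $\Xi$ is in fact a $\dagger$-equivalence. The natural approach is to produce a simple universally gluable object $E \in \Ku(X')$ of class $\vv'$ that is $\sigma$-stable for some $\sigma \in \Stab^\dagger(\Ku(X'))$; Lemma~\ref{Lem-Ku-twisted-K3} would then directly yield a $\dagger$-equivalence $\Ku(X') \simeq \Db(T'', \beta'')$, and Lemma~\ref{Lem-defo-preserves-dagger} would transport this back to a $\dagger$-equivalence at $X$. The expected argument to produce such a $\sigma$ uses the fact that, in the absence of $(-2)$-classes, the period domain $\fP_0^+$ equals the full positive cone $\fP^+$ on both sides, and that one can post-compose $\Phi'$ with an autoequivalence of $\Db(S', \alpha')$ acting by a deck transformation on the covering $\Stab^\dagger(S', \alpha') \to \fP^+$ so that the distinguished components on both sides match; this verification, namely that $\Xi_*$ sends $\Stab^\dagger(\Ku(X'))$ into $\Stab^\dagger(\Db(T', \beta'))$, is the technically most delicate step and will likely require a careful analysis of the autoequivalence groups acting on connected components of $\Stab$.
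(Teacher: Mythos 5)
Your proposal takes a genuinely different route from the paper's, and it does not close. You try to drag the given equivalence $\Phi$ along the deformation to obtain $\Phi'\colon\Ku(X')\simeq\Db(S',\alpha')$, then compose with a Fourier--Mukai transform to land on the desired K3, and finally match distinguished components of the stability manifolds. You correctly flag the last step as the hard part, but then propose resolving it by ``a careful analysis of the autoequivalence groups acting on connected components of $\Stab$'' --- this is precisely the part for which no argument is supplied, and it is genuinely difficult (the analogue of showing that $\operatorname{Aut}(\Db(S,\alpha))$ acts transitively on connected components of $\Stab(\Db(S,\alpha))$, which is open even for K3 surfaces). Moreover, the first step --- ``parallel transport of the period of $(S,\alpha)$'' plus a derived Torelli theorem for the pair $(X',(S',\alpha'))$ --- is essentially circular: a derived Torelli statement valid for all cubic fourfolds on the Hodge locus of a square-zero class is, modulo the covering argument, what this lemma is being used to establish (compare Proposition~\ref{prop:KuzHass} and Corollary~\ref{cor:Huy}). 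The only version of derived Torelli available as an input is the open-dense result of Addington--Thomas and Huybrechts, which cannot be invoked at an arbitrary $X'$.

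The paper sidesteps both difficulties with a single observation you missed. Rather than deforming the \emph{equivalence}, it deforms an \emph{object}: starting from a simple universally gluable $E\in\Ku(X)$ of class $\vv$ with $\Ext^{<0}(E,E)=0$ (which exists because such objects exist in $\Db(S,\alpha)$ by \cite{BM:proj}), Theorem~\ref{thm:Mukai} gives smoothness of $sM_{\pug}(\Ku(\cX)/C)(\vv)\to C$, so over a Zariski-open $U\subset C$ there are simple objects $E_c\in\Ku(\cX_c)$ of class $\vv$ with $\Ext^1(E_c,E_c)\cong\C^2$. Choosing a very general $c\in U$ also makes $\tH_{\Hdg}(\Ku(\cX_c),\Z)$ free of $(-2)$-classes. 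The key fact, which removes all ambiguity about components, is \cite[Lemma~A.4]{BLMS}: in a 2--Calabi--Yau category, an object with $\Ext^1\cong\C^2$ in the absence of $(-2)$-classes is $\sigma$-stable for \emph{every} $\sigma\in\Stab(\Ku(X'))$, not just for some unspecified one. Lemma~\ref{Lem-Ku-twisted-K3} then yields a $\dagger$-equivalence at $X'$, and Lemma~\ref{Lem-defo-preserves-dagger} transports it back to $X$. Your proposal recognizes that the goal is to exhibit a stable object of class $\vv'$ in the distinguished component, but it neither produces the object (the smoothness of the relative moduli space over $C$, Theorem~\ref{thm:Mukai}, is what does this) nor notices that the no-$(-2)$-classes hypothesis makes the ``which component'' question disappear entirely.
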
 

\begin{proof}
Let $\cX \to C$ be a family of cubic fourfolds over a smooth connected quasi-projective curve $C$, such that $\cX_0 = X$ and $\cX_1 = X'$ for some points $0,1 \in C(\C)$, and $\vv$ remains a Hodge class along $C$. 
There exists a simple object $E \in \Ku(X)$ of class $\vv \in \tH_{\Hdg}(\Ku(X), \bZ)$ with $\Ext^{<0}(E, E) = 0$; 
indeed, even stronger, there exist $\sigma$-stable objects of class $\vv$ for a $\vv$-generic $\sigma \in \Stab^{\dagger}(S, \alpha)$ by \cite{BM:proj}. 
Thus by Theorem~\ref{thm:Mukai} there is a Zariski open $U \subset C$ such that for any $c \in U(\C)$ there exists a simple object $E_c \in \Ku(\cX_c)$ of class $\vv \in \tH_{\Hdg}(\Ku(\cX_c), \bZ)$ with $\Ext^{<0}(E_c, E_c) = 0$;
in particular, it follows that $\Ext^1(E_c, E_c) \cong \C^2$. 

The condition that $\tH_{\Hdg}(\Ku(\cX_c), \bZ)$ contains no vectors $\ww$ with $\ww^2 = -2$ holds for a very general $c \in U(\C)$, because it holds for $c = 1$ by assumption. 
Therefore, after possibly replacing $X'$ by a different fiber of $\cX \to C$, we may assume there is an object $E' \in \Ku(X')$ such that $\Ext^1(E', E') \cong \C^2$. 
It then follows from \cite[Lemma~A.4]{BLMS} that $E'$ is $\sigma$-stable for any $\sigma \in \Stab(\Ku(X'))$. 
Thus, Lemma~\ref{Lem-Ku-twisted-K3} shows $\Ku(X')$ is $\dagger$-equivalent to the derived category of a twisted K3 surface, and by Lemma~\ref{Lem-defo-preserves-dagger} we conclude the same for $\Ku(X)$. 
\end{proof} 

\begin{proof}[Proof of Proposition~\ref{proposition-specialize-twisted}]
By \cite[Theorem~4.1]{AT:CubicFourfolds}, $X$ is deformation equivalent within the Hodge locus for $\vv$ to a cubic 
fourfold in $\cC_8$, where $\cC_8$ is the discriminant $8$ Hassett divisor. 
Recall that $\cC_8$ can be described either as the irreducible divisor 
parameterizing cubic fourfolds containing a plane, or as the Hodge locus for a certain square-zero 
vector in $\tH_{\Hdg}(\Ku(X), \bZ)$, see \cite{Huy:cubics}. 
Therefore, to finish the proof, 
by Lemma~\ref{Lem-defo-preserves-dagger} it suffices to show there 
exists a $X \in \cC_{8}$ such that $\Ku(X)$ is $\dagger$-equivalent to the 
derived category of a twisted K3 surface. 

By \cite[Proposition~2.15]{Huy:cubics}, there exists a Hassett divisor $\cC_d$ such that $\cC_d$ is the Hodge locus for a square-zero vector in $\tH_{\Hdg}(\Ku(X), \bZ)$, and the very general point of $\cC_d$ parameterizes a cubic fourfold $X$ such that $\tH_{\Hdg}(\Ku(X), \bZ)$ contains no vectors $\ww$ with $\ww^2 = -2$. 
Moreover, again by \cite[Theorem~4.1]{AT:CubicFourfolds}, there exists a cubic fourfold $X \in \cC_d \cap \cC_8$ such that $\Ku(X) \simeq \Db(S, \alpha)$ for a twisted K3 surface $(S, \alpha)$ (and in fact we may take $\alpha = 0$). 
Thus, we conclude by Lemma~\ref{Lem-defo-spherical-dagger} that $\Ku(X)$ is $\dagger$-equivalent to the derived category of a twisted K3 surface, as required. 
\end{proof}


\section{Proofs of the main results}\label{sec:ProofFirstTheoremsCubics}

In this section, we apply the previous results to prove Theorems~\ref{thm:ConnectedComponentStab}, \ref{thm:YoshiokaMain}, and \ref{thm:Msigmarelative}, and their consequences, Corollaries~\ref{cor:locfam20dim}, \ref{cor:completeAT}, and \ref{cor:integralHdg}.

\begin{proof}[Proof of Theorem~\ref{thm:Msigmarelative}.\eqref{enum:algspace}.]
Let $\cX\to C$ be a family of cubic fourfolds.
Let $\widetilde{C} \to C$ be a finite (not necessarily \'etale) Galois cover such that $\cX_{\widetilde C}$ admits a family of lines in the fibers that are not contained in a plane.
Let $\us$ be the stability condition over $\widetilde{C}$ given by Proposition~\ref{prop:Msigmarelative1} that is $\vv$-generic in every fiber;
in the construction it is easy to ensure that $\us$ is Galois-invariant, in the naive sense that in each orbit, the stability conditions in the corresponding Kuznetsov component are identical. 

Now consider the relative moduli space $M_{\us}(\vv)$. 
By construction, there are no properly semistable objects, and so
$M_{\us}(\vv)$ is an open subspace of $sM_\pug(\Ku(\cX_{\widetilde{C}})/\widetilde{C})$. 
By Theorem~\ref{thm:modulispacesArtinstacks}.\eqref{enum:MAlgStackFT}, it is an algebraic space, proper over $\widetilde{C}$. By Theorem~\ref{thm:Mukai}, it is also smooth over $\widetilde{C}$.

Finally, by the Galois invariance of $\us$, the Galois group of $\widetilde{C} \to C$ also acts on $M_{\us}(\vv)$; therefore it descends to a smooth and proper morphism $\widetilde{M}(\vv) \to C$ with the properties described in the statement of the theorem.
\end{proof}
\begin{proof}[Proof of Theorem~\ref{thm:Msigmarelative}.\eqref{enum:goodrelativemoduli}.]
In the case where $S$ admits a family of lines over $S$, none of which are contained in a plane, this is just Theorem~\ref{thm:modulispacesArtinstacks} in our context. The general case can be reduced to that situation using a cover of $S$, just as in the previous proof.
\end{proof}

Next we prove Corollary~\ref{cor:completeAT}, in the following general form. 

\begin{Prop}\label{prop:KuzHass}
Let $X$ be a cubic fourfold.
Then there exist a smooth projective K3 surface $S$ and a Brauer class $\alpha\in\Br(S)$ such that $\Ku(X) \simeq \Db(S,\alpha)$ if and only if there exists a non-zero primitive Mukai vector $\vv\in\tH_\Hdg(\Ku(X),\Z)$ such that $\vv^2=0$.
Moreover, the class $\alpha$ can be chosen to be trivial if and only if there exists another Mukai vector $\vv'\in\tH_\Hdg(\Ku(X),\Z)$ such that $(\vv,\vv')=1$.
\end{Prop}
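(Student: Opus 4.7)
The plan is to assemble results already established in the excerpt, with the reverse direction being the substantive one. For the forward direction, given an equivalence $\Ku(X) \simeq \Db(S,\alpha)$, the class of a point of $S$, i.e., $(0,0,1)$ in the standard presentation of the twisted Mukai lattice of $(S,\alpha)$, is always a primitive Hodge class of square zero; transporting it yields the required $\vv \in \tH_\Hdg(\Ku(X),\Z)$. If $\alpha$ is trivial, then $(1,0,0)$, the class of $\cO_S$, is also Hodge and pairs to $1$ with the point class, yielding $\vv'$.

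For the reverse direction, suppose $\vv \in \tH_\Hdg(\Ku(X),\Z)$ is primitive with $\vv^2=0$. By Proposition~\ref{proposition-specialize-twisted}, $X$ is deformation equivalent, within the Hodge locus for $\vv$, to a cubic fourfold $X'$ admitting a $\dagger$-equivalence $\Ku(X') \simeq \Db(S',\alpha')$. Choosing a $\vv$-generic $\sigma' \in \Stab^\dagger(\Ku(X'))$ and applying the results of \cite{BM:proj} through this $\dagger$-equivalence, the moduli space $M_{\sigma'}(\Ku(X'),\vv)$ is a non-empty smooth projective K3 surface. Now invoke Corollary~\ref{cor:famstabcurvescubics} to build a family $\cX \to C$ over a smooth quasi-projective curve with $\cX_{c_0}=X$, $\cX_{c_1}=X'$, and a $\vv$-generic stability condition $\us$ over $C$ whose specialization at $c_1$ agrees with $\sigma'$ on the relevant moduli. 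The resulting relative moduli space $M_\us(\vv) \to C$ is smooth and proper, and its non-empty fiber over $c_1$ forces the fiber $M_{\sigma_{c_0}}(\Ku(X),\vv)$ over $X$ to be non-empty as well. Thus $\Ku(X)$ admits a $\sigma_{c_0}$-stable object of class $\vv$, and Lemma~\ref{Lem-Ku-twisted-K3} (applied to a small deformation of the original $\sigma$, so that it fits inside $\Stab^\dagger(\Ku(X))$) produces both a smooth projective K3 surface $S$ and a $\dagger$-equivalence $\Ku(X) \simeq \Db(S,\alpha)$ for some Brauer class $\alpha \in \Br(S)$.

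For the moreover statement, the forward direction is immediate as above. For the converse, the equivalence from the first part transports the pair $(\vv,\vv')$ to a pair in the twisted Mukai lattice of $(S,\alpha)$ whose pairing equals $1$. The Brauer class on $S$ arising in the moduli-theoretic construction is the obstruction for a quasi-universal family to be genuinely universal, and classically (cf.~\cite{BM:proj,Huy:cubics}) its order divides the greatest common divisor of $\{(\vv, \ww) : \ww \in \tH_\Hdg(\Ku(X),\Z)\}$. The hypothesis $(\vv,\vv')=1$ therefore forces $\alpha = 0$, so the equivalence takes the form $\Ku(X) \simeq \Db(S)$.

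The main obstacle I expect is the last step concerning the Brauer class: although the non-emptiness of moduli spaces of stable objects of class $\vv$ and the $\dagger$-equivalence follow cleanly from the apparatus of Part~\ref{part:CubicFourfolds}, controlling precisely which Brauer twists arise requires invoking the classical similitude/divisibility analysis of (quasi-)universal families of twisted sheaves. Everything else is essentially bookkeeping on top of Proposition~\ref{proposition-specialize-twisted}, Corollary~\ref{cor:famstabcurvescubics}, Theorem~\ref{thm:Mukai}, and Lemma~\ref{Lem-Ku-twisted-K3}.
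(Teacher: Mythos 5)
Your proof is correct and follows essentially the same route as the paper's, which reduces to showing that a $\vv$-generic $\sigma$ with $M_\sigma(\vv)$ a K3 surface exists and then cites Proposition~\ref{proposition-specialize-twisted}, Corollary~\ref{cor:famstabcurvescubics}, and \cite{BM:proj}. The main difference is that you invoke Lemma~\ref{Lem-Ku-twisted-K3} to produce the twisted K3 and then re-derive the Brauer-class triviality by the classical similitude/divisibility argument, whereas the paper instead invokes Lemma~\ref{Lem:MukaiEquivalenceFM}, whose ``moreover'' clause \emph{already} bundles the Brauer-class statement (a quasi-universal family is universal once $(\vv,\vv')=1$); so the obstacle you flag at the end is not a genuine gap, it is handled internally by that lemma. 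One small slip: Corollary~\ref{cor:famstabcurvescubics}, as stated, arranges for the relative stability condition to specialize (up to small deformation with the same moduli space) at the point $c_0$ where $\cX_{c_0}=X$, not at $c_1$; you should apply it with the roles of $X$ and $X'$ swapped, or observe that non-emptiness of the moduli space on the $\Db(S',\alpha')$ side is independent of the particular $\vv$-generic stability condition within $\Stab^\dagger$.
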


Note the following generalization of \cite{Mukai:BundlesK3}, which can also be regarded as an elaboration on Lemma~\ref{Lem-Ku-twisted-K3} (with slightly different hypotheses). 

\begin{Lem}\label{Lem:MukaiEquivalenceFM}
Let $\cD$ be a 2-Calabi--Yau category, i.e. an admissible subcategory of the derived category of a smooth projective variety, with Serre functor $\rS_{\cD} = [2]$. Assume $\mathrm{HH}^0(\cD) = \C$.
\begin{enumerate}[{\rm (1)}]
 \item\label{enum:MukaiEquivalenceFM1} If there exists a K3 surface $S$ and a Brauer class $\alpha\in\Br(S)$ such that $\cD \simeq \Db(S,\alpha)$, then there exists a primitive Mukai vector $\vv\in\tH_\Hdg(\cD,\Z)$ with $\vv^2=0$.
 \item\label{enum:MukaiEquivalenceFM2} If there exists a primitive Mukai vector $\vv$ with $\vv^2=0$ and a Bridgeland stability condition $\sigma$ which is $\vv$-generic and such that $M_{\sigma}(\vv)$ exists and it is isomorphic to a K3 surface $S$, then $\cD \simeq \Db(S,\alpha)$, for a certain $\alpha\in\Br(S)$.
\end{enumerate}
Moreover, the Brauer class $\alpha$ can be chosen to be trivial if and only if there exists another Mukai vector $\vv'\in\tH_\Hdg(\cD,\Z)$ with $(\vv',\vv)=1$.
\end{Lem}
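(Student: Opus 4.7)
The plan is to treat the two parts and the moreover clause separately, using standard techniques from the theory of twisted K3 surfaces and Bridgeland's equivalence criterion.

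For part~\eqref{enum:MukaiEquivalenceFM1}, the natural candidate is the image of the class of a skyscraper sheaf. An equivalence $\cD \simeq \Db(S,\alpha)$ induces an isometry of weight-two Hodge structures on Mukai lattices, so it suffices to exhibit a primitive isotropic Hodge class in $\tH_\Hdg(S,\alpha,\Z)$. I would take $\vv = [k(x)]$ for $x\in S$ a closed point; its Mukai vector is $(0,0,1) \in \tH(S,\alpha,\Z)$, which is evidently primitive and isotropic. Since skyscraper sheaves are coherent $\alpha$-twisted sheaves irrespective of the Brauer class (the Brauer obstruction for a reduced point is trivial), this class lies in $\tH_\Hdg(S,\alpha,\Z)$, and its pullback under the equivalence is the desired $\vv$.

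For part~\eqref{enum:MukaiEquivalenceFM2}, I would follow Bridgeland's trick. Choose an embedding $\cD \subset \Db(V)$ of the 2-Calabi--Yau category into the derived category of a smooth projective variety. The moduli space $M_\sigma(\vv) \cong S$ parametrizes $\sigma$-stable objects of class $\vv$; associated to it is a quasi-universal family $\cE$ on $S \times V$, which modulo the ambiguity in its definition gives rise to a Brauer class $\alpha \in \Br(S)$, such that $\cE$ is a genuine $\alpha^{-1}\boxtimes 1$-twisted universal family. The associated Fourier--Mukai functor $\Phi_\cE \colon \Db(S,\alpha) \to \cD$ is then well-defined. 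To show it is fully faithful, I would apply Bridgeland's equivalence criterion (as in \cite{Bridgeland:EqFMT}): since $\vv^2 = 0$ and $\sigma$ is $\vv$-generic, every geometric point of $S$ corresponds to a $\sigma$-stable object, which is simple with $\Ext^{<0}$-vanishing; and distinct points correspond to non-isomorphic stable objects of the same phase, so $\Hom$ between them vanishes in non-zero degrees by stability and Serre duality (using $\rS_{\cD} = [2]$). For essential surjectivity, the image of $\Phi_\cE$ is an admissible subcategory of $\cD$ closed under the Serre functor, so it yields a semiorthogonal decomposition; the assumption $\mathrm{HH}^0(\cD) = \C$ then forces the complement to be zero, since a non-trivial semiorthogonal decomposition would produce a non-trivial idempotent in $\mathrm{HH}^0(\cD)$.

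For the moreover clause, the equivalence $\Phi_\cE$ can be taken with $\alpha = 0$ precisely when the quasi-universal family on $S \times V$ can be lifted to an honest universal family. The standard theory of Brauer classes for moduli of stable twisted sheaves (as in \cite{Caldararu:thesis}, \cite{Yoshioka:Abelian}) identifies the order of $\alpha \in \Br(S)$ with the positive generator $d$ of the subgroup $(\vv, \Knum(\cD)) \subset \Z$: indeed, the obstruction to untwisting $\cE$ is measured by the image of the map $\Knum(\cD) \to \Z$ given by pairing with $\vv$. Under the equivalence, $\Knum(\Db(S,\alpha))$ is identified with $\Knum(\cD)$, which sits inside $\tH_\Hdg(\cD,\Z)$ via the Mukai morphism, so the claim reduces to showing that $\alpha = 0$ iff there exists $\vv' \in \tH_\Hdg(\cD,\Z)$ with $(\vv,\vv') = 1$. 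I would then deduce this: the ``if'' direction constructs an untwisting directly from $\vv'$, while the ``only if'' direction takes $\vv'$ to be the Mukai vector of any line bundle on $S$ transported through the untwisted equivalence.

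The main obstacle will be the moreover clause, specifically the rigorous identification of the Brauer class of $\cE$ with the cokernel of $(\vv,\blank) \colon \Knum(\cD) \to \Z$ in our abstract setting where $\cD$ is merely a 2-Calabi--Yau category rather than the derived category of a projective variety; some care will be required in carrying over the standard arguments on twisted moduli spaces to this context.
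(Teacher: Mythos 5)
Your proposal is correct and follows essentially the same route as the paper: part \eqref{enum:MukaiEquivalenceFM1} via the Mukai vector of a skyscraper sheaf, part \eqref{enum:MukaiEquivalenceFM2} via the quasi-universal family, Bridgeland's fully-faithfulness criterion, and essential surjectivity from $\mathrm{HH}^0(\cD)=\C$ (the paper cites \cite[Proposition~5.1]{Kuz:CYcat}, invoking indecomposability exactly as you do via the orthogonal decomposition given by Serre-invariance), and the moreover clause via the obstruction to untwisting the quasi-universal family. One small imprecision: for distinct $x\neq y$ you only get $\Ext^i(E_x,E_y)=0$ for $i\le 0$ and $i\ge 2$ (not all $i\neq 0$), but this is exactly what Bridgeland's criterion requires, so the argument goes through unchanged.
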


\begin{Rem}
The group of Hodge classes $\tH_{\Hdg}(\cD, \bZ)$ appearing in the lemma can be defined as in \cite{IHC-CY2};
however, in this paper we will only need the case where $\cD = \Ku(X)$ for a cubic fourfold $X$.
\end{Rem}

\begin{proof}
Part \eqref{enum:MukaiEquivalenceFM1} is clear, by taking $\vv$ to be the Mukai vector of a skyscraper sheaf (and, if the Brauer class is trivial, $\vv'$ to be that of the structure sheaf).
Part \eqref{enum:MukaiEquivalenceFM2} holds as in the proof of Lemma~\ref{Lem-Ku-twisted-K3}. 
Finally, if there exists another Mukai vector $\vv'\in\tH_\Hdg(\cD,\Z)$ with $(\vv',\vv)=1$, then the quasi-universal family is universal, and so $\alpha$ can be chosen to be trivial.
\end{proof}

\begin{proof}[Proof of Proposition~\ref{prop:KuzHass}.]
By Lemma~\ref{Lem:MukaiEquivalenceFM}, we only need to show that, given a non-zero primitive Mukai vector $\vv\in\tH_\Hdg(\Ku(X),\Z)$ such that $\vv^2=0$, there exists a $\vv$-generic stability condition $\sigma$ for which the moduli space $M_\sigma(\vv)$ is a K3 surface. 
This follows from Proposition~\ref{proposition-specialize-twisted}, Corollary~\ref{cor:famstabcurvescubics}, 
and the analogous statement for moduli spaces for stability conditions in the distinguished component of the stability manifold of a twisted K3 surface \cite{BM:proj}. 
\end{proof}

The above proposition is enough to extend \cite[Theorem~1.5.ii)]{Huy:cubics}, by using the Derived Torelli Theorem for twisted K3 surfaces \cite{Orlov:representability,HuybrechtsStellari:CaldararuConj}.

\begin{Cor}\label{cor:Huy}
	Let $d$ be a positive integer such that $d\equiv 0,2 \pmod{6}$ and  $n_i\equiv 0\pmod{2}$ 
 for all primes $p_i\equiv 2\pmod{3}$ occuring in the factorization $2d=\prod p_i^{n_i}$. 
	Then for cubic fourfolds $X,X'\in\cC_d$, there exists an equivalence  $\Ku(X)\simeq \Ku(X')$ if and only if there exists a Hodge isometry $\tH(\Ku(X),\Z)\cong\tH(\Ku(X'),\Z)$.
\end{Cor}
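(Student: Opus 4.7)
The plan is to prove Corollary~\ref{cor:Huy} in two directions. The forward implication ``$\Ku(X) \cong \Ku(X')$ implies the Hodge isometry'' is standard: any exact $\C$-linear equivalence $\Phi\colon\Ku(X)\xrightarrow{\sim}\Ku(X')$ is of Fourier--Mukai type (by the general arguments generalizing Orlov's theorem to admissible subcategories, see e.g.\ \cite{Kuz:base-change}) and therefore induces an isomorphism on topological $K$-theory which respects the Mukai pairing, the weight-two Hodge structure, and the lattice structure on $\tH(\Ku(-),\Z)$. So the content of the statement is the converse.

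For the converse, the first step is to reduce the existence of an equivalence to an equivalence of (twisted) derived categories of K3 surfaces via Proposition~\ref{prop:KuzHass}. Concretely, under Huybrechts' numerical hypothesis on $d$, the saturated sublattice of $\tH_\Hdg(\Ku(X),\Z)$ spanned by $\langle H^2, H\cdot h, h^2\rangle$ (where $h$ is the hyperplane class of the $K3$-type primitive associated to $\cC_d$, and $H$ the generic Hodge locus generator) contains a primitive isotropic vector; this is the lattice-theoretic calculation from \cite[Proposition~2.15, Section~2]{Huy:cubics}, and it applies to \emph{every} $X\in\cC_d$, not just very general ones, because the sublattice in question is already $G$-invariant. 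Applying Proposition~\ref{prop:KuzHass} to such a square-zero class gives a twisted K3 surface $(S,\alpha)$ with $\Ku(X)\simeq \Db(S,\alpha)$, and similarly $(S',\alpha')$ with $\Ku(X')\simeq\Db(S',\alpha')$.

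Next, the given Hodge isometry $\varphi\colon\tH(\Ku(X),\Z)\xrightarrow{\sim}\tH(\Ku(X'),\Z)$, composed with the Hodge isometries of Mukai lattices induced by the equivalences of the previous paragraph (which identify $\tH(\Ku(X),\Z)$ with $\tH(S,\alpha,\Z)$ and similarly for $X'$), yields a Hodge isometry of the twisted Mukai lattices
\[
\tH(S,\alpha,\Z)\xrightarrow{\sim}\tH(S',\alpha',\Z).
\]
The Derived Torelli Theorem for twisted K3 surfaces (\cite{Orlov:representability,HuybrechtsStellari:CaldararuConj}) then supplies an equivalence $\Db(S,\alpha)\simeq\Db(S',\alpha')$ (after possibly composing with a sign or with a shift to land on an orientation-preserving isometry, as in the twisted K3 case). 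Composing the three equivalences gives $\Ku(X)\simeq\Ku(X')$, completing the argument.

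The main obstacle is not the derived side, which is essentially a citation of twisted derived Torelli combined with Proposition~\ref{prop:KuzHass}, but rather the lattice-theoretic step: verifying that the numerical hypothesis on $d$ forces the existence of a primitive isotropic class in $\tH_\Hdg(\Ku(X),\Z)$ for \emph{every} (not just very general) $X\in\cC_d$. This is precisely the content of Huybrechts' numerical analysis in \cite[Section~2]{Huy:cubics}, where the congruences $d\equiv 0,2\pmod 6$ together with the $2$-adic condition on the prime factorization of $2d$ are shown to be equivalent to the existence of a rank-two overlattice of the $\cC_d$-lattice containing an isotropic vector, which persists as a primitive isotropic vector in the Hodge lattice of every $X\in\cC_d$. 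With that lattice computation in hand, the proof above is immediate.
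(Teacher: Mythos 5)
Your proposal is correct and takes essentially the same route as the paper: apply Proposition~\ref{prop:KuzHass} (the completed Addington--Thomas/Huybrechts characterization) to obtain $\Ku(X)\simeq\Db(S,\alpha)$ and $\Ku(X')\simeq\Db(S',\alpha')$, transport the given Hodge isometry to one between twisted Mukai lattices, and invoke twisted Derived Torelli. The paper states this in one sentence, while you correctly supply the two hidden ingredients --- that the numerical condition produces a $G$-invariant square-zero class so Proposition~\ref{prop:KuzHass} applies to \emph{every} $X\in\cC_d$, and that the orientation issue in twisted Derived Torelli can be fixed by composing with an appropriate sign or reflection.
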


\begin{proof}[Proof of Theorem~\ref{thm:YoshiokaMain}.]
We divide the proof in a few steps.
For the moment, we think of $\Stab^\dagger(\Ku(X))$ as a non-empty connected open subset. We postpone the proof that it is a whole connected component of the space of stability conditions to later on in the section.

\subsection*{Non-emptiness}
We first deal with the non-emptiness statement in part \eqref{enum:YoshiokaMain1} of the theorem.
One implication follows immediately from the properties of the Mukai pairing: since $\vv$ is primitive and $\sigma$ is $\vv$-generic, then $M_\sigma(\vv)=M_\sigma^{\st}(\vv)$ and any object $E$ in $M_\sigma(\vv)$ satisfies $\vv^2=-\chi(E,E)\geqslant-2$.

For the converse, let $\vv\in\tH_{\Hdg}(\Ku(X),\Z)$ be a primitive Mukai vector with $\vv^2\geqslant-2$ and $\sigma\in\Stab^\dagger(\Ku(X))$ be a $\vv$-generic stability condition.
We apply one more time Corollary~\ref{cor:famstabcurvescubics} as in the proof of Proposition~\ref{prop:KuzHass}: the Hodge locus where $\vv$ stays a Hodge class will intersect the divisor $\cC_{8}$. Since, by \cite[Corollary~6.9]{BM:proj}, the moduli space is non-empty there, it is non-empty on all fibers, in particular over $X$, as we wanted.

\subsection*{Hodge classes are algebraic}
The equality $\tH_{\Hdg}(\Ku(X),\Z)=\tH_{\mathrm{alg}}(\Ku(X),\Z)$ is now immediate: every Hodge class can be written as sum of Hodge classes having positive square.
The non-emptiness statement above guarantees that these are algebraic, as we wanted.

\subsection*{Projectivity}
The moduli space $M_\sigma(\vv)$ is smooth and proper, by Theorem~\ref{thm:Mukai} and Theorem~\ref{thm:modulispacesArtinstacks}.\eqref{enum:GoodModuliChar0}.
Moreover, by \cite[Theorem~2.2]{KM:symplectic}, since $M_\sigma(\vv)$ parameterizes stable objects in the K3 category $\Ku(X)$, Serre duality gives a non-degenerate closed symplectic $2$-form on $M_\sigma(\vv)$, and so it has trivial canonical bundle.
We want to prove that $M_\sigma(\vv)$ is projective.\footnote{The following argument was suggested to us by Giulia Sacc\`a; it greatly simplifies the one in an earlier version of this paper.}

We claim, in a slightly more precise version of Corollary~\ref{cor:famstabcurvescubics}, that there exists a family $g\colon\cX\to C$ of cubic fourfolds, parameterized by a smooth connected quasi-projective curve $C$, $c_0,c_1\in C$, and a stability condition $\us$ on $\Ku(\cX)$ over $C$ whose central charge factors via $\cN(\Ku(\cX)/C)$ such that:
\begin{enumerate}[{\rm (1)}] 
\item $\vv$ is a primitive vector in $\tH_\Hdg(\Ku(\cX_c),\Z)$ for all closed points $c\in C$;
\item $\cX_{c_0}=X$ and $M_{\sigma_{c_0}}(\Ku(\cX_{c_0}), \vv) = M_{\sigma}(\Ku(\cX_{c_0}), \vv)$;
\item\label{enum:projectivity2} $\Ku(\cX_{c_1})\cong\Db(S,\alpha)$, for some twisted K3 surface $(S,\alpha)$;
\item $\sigma_c$ is $\vv$-generic for all $c\in C$;
\item the relative moduli space $M_{\us}(\vv)$ is smooth and proper over $C$.
\end{enumerate}

If we assume the claim, we can conclude the proof as follows.
Let us consider the relatively nef real numerical Cartier divisor class $\ell_{\us}\in N^1(M_{\us}(\vv)/C)$ of Theorem~\ref{thm:PositivityLemmaFamily}.
By slightly deforming $\us$ and taking multiples if necessary, we can assume it is actually an integral class.
By Proposition~\ref{proposition-specialize-twisted} and \cite[Corollary~7.5]{BM:proj}, we have that the divisor class $\ell_{\sigma_{c_1}}$ is ample.
Since ampleness is an open property, $\ell_{\sigma_c}$ is ample for all $c$ in a Zariski open subset $U$ of $C$.
By relative Serre vanishing, we can further assume it has no higher cohomology, for all $c\in U$. 
Hence, $h^0(M_{\sigma_c}(\vv),\ell_{\sigma_{c}}^{\otimes m})=\chi(M_{\sigma_c}(\vv),\ell_{\sigma_{c}}^{\otimes m})$, for all $c\in U$ and $m>0$, and thus it is independent on $c$.

By semicontinuity, this shows that $h^0(M_\sigma(\vv),\ell_{\sigma}^{\otimes m})$ has maximal growth, for $m\gg0$, and therefore it is big on $M_\sigma(\vv)$.
Since $M_\sigma(\vv)$ has trivial canonical bundle and $\ell_{\sigma}$ is nef as well, the Base Point Free Theorem \cite[Theorem~3.3]{KollarMori} (for algebraic spaces see also \cite{Ancona:Vanishing}) implies that a multiple of $\ell_{\sigma}$ is globally generated on $M_\sigma(\vv)$. 
But $\ell_{\sigma}$ has the stronger positivity property of intersecting any curve strictly positively; this shows that $\ell_{\sigma}$ is actually ample, and thus $M_\sigma(\vv)$ is projective, as we wanted.

Finally, we prove the claim above.
By Corollary~\ref{cor:famstabcurvescubics}, there exists a family $g\colon\cX\to C$ and a stability condition $\us$ with all the properties except that the central charge might not factor via $\cN(\Ku(\cX)/C)$.
This works if we replace $C$ with an open subset $C'\subseteq C$ containing $c_0$ but $c_1$ might not be in $C'$. 

The claim follows if we show that we can take $C$ with infinitely many points satisfying \eqref{enum:projectivity2}.
The argument is rather elementary and we will briefly discuss it. Let $N$ be the smallest saturated sublattice of $\tH_\Hdg(\Ku(X),\Z)$ generated by the natural sublattice $A_2$, $\vv$ and $\ell_\sigma$.
By \cite[Proposition~2.3]{AT:CubicFourfolds}, the saturated sublattice $N^\perp$ of $\tH(\Ku(X),\Z)$ identifies with a saturated sublattice $N'$ in $H^4(X,\Z)$ (up to sign).
Thus $X$ is contained in the Hodge locus $\cC_M$ of the moduli space of cubic fourfolds $\cC$ determined by the lattice $M:=(N')^\perp$.
By Proposition~\ref{prop:KuzHass}, the set of cubic fourfolds satisfying \eqref{enum:projectivity2} is a countable collection of divisors in $\cC$ and each of them is a Hodge locus determined by a rank-$2$ sublattice of the fourth integral cohomology lattice of the cubic fourfold with non-trivial intersection with $M$, due to the signature of the lattice orthogonal to the saturated sublattice generated by $N$ and the additional class of square zero.
Thus the intersection of $\cC_M$ with any such divisor $\cC'$ has codimension at most $1$ in $\cC_M$.
In order to show that the intersection $\cC_M\cap\cC'$ is nonempty for infinitely many $\cC'$ as above, by the explicit description of the image of the period map for cubic fourfolds \cite{Loo:period,Laza:cubic4folds2}, we just need to show that this intersection is not contained in the two special divisors $\cC_2$ and $\cC_6$ in the period domain of cubic fourfolds.
But if, for infinitely many $\cC'$ the intersection is contained in either of the two divisors then, by the density of such Hodge loci, we would get that $\cC_M$ is contained either in $\cC_2$ or $\cC_6$ which contradicts the fact that $X$ is in $\cC_M$.
Hence we can pick the curve $C$ in $\cC_M$ so that all the properties above are satisfied and \eqref{enum:projectivity2} holds true for infinitely many points.
The same is then true when passing to an open subset $C'$ containing $c_0$.

\subsection*{The holomorphic symplectic structure}
Since $M$ is deformation equivalent to a Hilbert scheme of points on a K3 surface, it is a irreducible holomorphic symplectic manifold.
Moreover, the existence of a quasi-universal family on the relative moduli space guarantees that the morphism $\theta$ does behave well in family as well, and thus Theorem~\ref{thm:YoshiokaMain}.\eqref{enum:YoshiokaMain2} follows by again reducing to the case of (twisted) K3 surfaces and using \cite[Theorem~6.10]{BM:proj} therein.
Theorem~\ref{thm:YoshiokaMain} is proven.
\end{proof}

We can finally complete the proof of Theorem~\ref{thm:Msigmarelative}.

\begin{proof}[Proof of Theorem~\ref{thm:Msigmarelative}.\eqref{enum:ProjMorph}.]
First of all, as in the first part of the proof of Proposition~\ref{prop:Msigmarelative1}, we can base change with a finite cover $u\colon\widetilde{S}\to S$ to have a family of lines over $\widetilde{S}$ and so that the fixed locus $M$ of the monodromy group is $\tH_\Hdg(\Ku(\cX_{s_0}),\Z)$.
This gives a stability condition $\us$ on $\Ku(\cX)$ over $\widetilde{S}$ with respect to $M^\vee$ and with the property that $\us_{s_0}=\sigma_{s_0}$. 
We can shrink $S$ and assume further that the cover $\widetilde{S}\to S$ is \'etale.

Stability is open; in particular, the set of points $s\in\widetilde{S}$ such that $\us_s$ is $\vv$-generic is open.
We set $\widetilde{S}^0$ this open subset.
We consider the relative moduli space $\widetilde{M}^0(\vv):=M_{\us}(\vv)$ over $\widetilde{S}^0$.
This is proper and smooth over $\widetilde{S}^0$, and since now the lattice is fixed, comes also with a relatively ample divisor class $l_{\us}$, as defined in the proof of Theorem~\ref{thm:YoshiokaMain}.

By \citestacks{0D30}, $\widetilde{M}^0(\vv)$ is a smooth projective integral scheme over $\widetilde{S}^0$.
Let $S^0:=u(\widetilde{S}^0)\subset S$; it is open.
By Proposition~\ref{prop:ProductStabilityCondition}, $\widetilde{M}^0(\vv)$ gives a descent datum for $\widetilde{M}^0(\vv)/\widetilde{S}^0/S^0$ (see \citestacks{023U}).
Hence, $\widetilde{M}^0(\vv)$ does descend to an algebraic space $M^0(\vv)\to S^0$.
Since the central charge is monodromy-invariant, the relatively ample divisor class does descend as well.
Therefore $M^0(\vv)$ is a scheme which is smooth projective over $S$ and a relative moduli space, which is what we wanted.
\end{proof}

\begin{proof}[Proof of Corollary~\ref{cor:locfam20dim}.]
Let $S$ be an open subset of the $20$-dimensional moduli space of smooth cubic fourfolds admitting a universal family $g\colon\cX\to S$;
for example, we can choose the open set of cubics that have no automorphism.
For a very general point $s_0\in S$, we have $\Knum(\Ku(\cX_{s_0}))=A_2$, which is monodromy-invariant by construction.

For a pair of integers $(a,b)$ as in the statement, consider the vector $\vv:=a\llambda_1+b\llambda_2\in A_2$.
Take $\sigma_{s_0}\in\Stab^\dagger(\Ku(\cX_{s_0}))$ a $\vv$-generic stability condition.
Since $\eta(\sigma_{s_0})\in A_2\otimes\C$, the central charge of $\sigma_{s_0}$ is monodromy invariant.

By Theorem~\ref{thm:Msigmarelative}.\eqref{enum:ProjMorph}, there is a non-empty open subset $S^0\subseteq S$ and a relative moduli space $g\colon M^0(\vv)\to S^0$.
By Theorem~\ref{thm:YoshiokaMain}, the fibers of $g$ are irreducible holomorphic symplectic manifolds of dimension $\vv^2+2=2(a^2-ab+b^2)+2$.
This proves the first part of the statement.

As explained in the proof of Theorem~\ref{thm:YoshiokaMain}, $M^0(\vv)$ is endowed with a relative line bundle $l$ such that $l_s=l_{\sigma_s}$.
By Theorem~\ref{thm:YoshiokaMain}.\eqref{enum:YoshiokaMain2}, $l_s$ is orthogonal to $\vv$ for all $s\in S^0$ and must be a vector in $A_2$, since for a very general point $s_0$ we have $\Knum(\Ku(\cX_{s_0}))=A_2$.
Hence $l_s$ is proportional to $\ww=-(2b-a)\llambda_1+(2a-b)\llambda_2$.
Given $a,b$ coprime, the class $\ww$ is primitive if $a^2-ab+b^2$ is not divisible by $3$, and divisible by $3$ otherwise.
A simple computation yields the degree.
Another application of Theorem~\ref{thm:YoshiokaMain}.\eqref{enum:YoshiokaMain2} shows that the divisibility of the polarisation is equal to the divisibility of $\ww$ or $\frac{\ww}3$ in $\vv^\perp$.
The claim then follows from the fact that $\vv^\perp$ has discriminant $2n$, whereas $\vv^\perp \cap \ww^\perp = A_2^\perp$ has discriminant $3$.

To get a unirational family, we observe that $S^0$ is open in the moduli space of cubic fourfolds which is unirational by construction. 
\end{proof}

\begin{Rem}
Using a descent argument as in the proof of Theorem~\ref{thm:Msigmarelative}.\eqref{enum:ProjMorph}, one can extend this family to a larger subset of the DM stack of cubic fourfolds.
The precise locus $S \setminus S^0$ we are forced to omit can be determined as in \cite{DebarreMacri} as the locus where a stability condition $\sigma$ with $\eta(\sigma) \in A_2 \otimes \C$ cannot be $\vv$-generic, or, equivalently, where $l$ cannot extend to a polarisation.
\end{Rem}

\begin{Ex}\label{ex:extendLLSvS}
Let $S$ be the moduli space of smooth cubic fourfolds, and $\vv = 2\llambda_1 + \llambda_2$ as in Example~\ref{ex:4fold8fold}.
By \cite[Theorem~1.2]{LiPertusiZhao:TwistedCubics}, we can take $S^0$ to be the complement of the divisor $\cC_8$ of cubics containing a plane.
(Alternatively, this consequence could also be deduced from the extension of \cite[Theorem~5.7]{BM:MMP_K3} to our context; this would also show that along $\cC_8$, stability conditions with central charge in $A_2$ lie on a wall, induced by the additional Hodge classes;
in other words, $S^0$ is the maximal possible subset in which Theorem~\ref{thm:Msigmarelative}.\eqref{enum:ProjMorph} holds.)
It follows from the first statement of \cite[Proposition~5.2.1]{Hassett:specialcubics} that the monodromy on $\cC_8$ acts trivially on these additional Hodge classes.
Therefore, we are in the setting of Remark~\ref{rmk:ExtendOverTrivialMonodromy}, and can deform the stability conditions with central charge in $A_2$ to some that are generic on all fibers; the associated relative coarse moduli space extends $M^0(\vv) \to S^0$ to a proper morphism $M(\vv) \to S$ of algebraic spaces, with all fibers being smooth and projective.
Over $\cC_8$, it agrees with the moduli spaces of stable objects constructed by Ouchi in \cite{Ouchi:8fold}.
\end{Ex}

\begin{proof}[Proof of Theorem~\ref{thm:ConnectedComponentStab}.]
Let $\sigma$ be a stability condition in the boundary of the open subset $\Stab^\dag(\Ku(X)) \subset \Stab(\Ku(X))$; by the covering map property, its central charge is on the boundary of $\fP^+_0(\Ku(X))$.
This means that in the kernel of $Z$ there is either a root $\delta \in \tH_\Hdg(X,\Z), \delta^2 = -2$, or there is a \emph{real} class $\ww \in \tH_\Hdg(X,\Z) \otimes \R$ with $\ww^2 \geqslant 0$.
	
By definition of the topology on $\Stab(\Ku(X))$, the set where a given object is semistable is closed;
therefore, the non-emptiness of Theorem~\ref{thm:YoshiokaMain} still holds for $\sigma$.
In the former case, this is a direct contradiction to $Z(\delta) = 0$.
In the latter case, let $Q$ be the quadratic form giving the support property for $\sigma$, and consider a sequence $\ww_i \in \tH_\Hdg(X, \Z) \otimes \Q$ with $\ww_i^2 \geqslant 0$ and $\ww_i \to \ww$.
As an integral multiple of $\ww_i$ is the Mukai vector of a $\sigma$-semistable object, we have $Q(\ww_i) \geqslant 0$, a contradiction to $Q(\ww) < 0$.
\end{proof}

\begin{proof}[Proof of Corollary~\ref{cor:integralHdg}]
Take a class $\vv\in H^4(X,\Z)\cap H^{2,2}(X)$.
By \cite[Section~2.5]{AH:Ktop} (see also \cite[Theorem~2.1(3)]{AT:CubicFourfolds}), there is $\ww\in K_\mathrm{top}(X)$ such that $v(\ww)=\vv+\widetilde{\vv}$, where $\widetilde{\vv}\in H^6(X,\Q)\oplus H^8(X,\Q)$.

Consider the projection $\ww'$ of $\ww$ to $\tH(\Ku(X),\Z)$ which is induced by the projection functor.
It is clear that the identity $\ww'=\ww+a_0[\cO_W]+a_1[\cO_W(1)]+a_2[\cO_W(2)]$ holds in $K_\mathrm{top}(X)$, where for $a_0,a_1,a_2\in\Z$ (see, for example, \cite[Section~2.4]{AT:CubicFourfolds}).
Note that $c_2(\ww')$ and $c_2(\ww)=c_2(\vv)$ differ by a multiple of $h^2$, so one is algebraic if and only if the other one is.
The vector $\ww'$ is in $\tH_\Hdg(\Ku(X),\Z)=\tH_{\mathrm{alg}}(\Ku(X),\Z)$ since the projection preserves the Hodge structure (for the latter equality we used Theorem~\ref{thm:YoshiokaMain}).
Hence, there exists $E\in\Ku(X)$ such that $v(E)=\ww'$ and $c_2(\ww')$ is algebraic.
%
\end{proof}

\newpage
\bibliography{all}
\bibliographystyle{alphaspecial}

\printindex

\end{document}